\documentclass[10pt]{amsart}

\subjclass{60J60, 60H07, 60H10, 60F99, 34E10}
\keywords{Heteroclinic networks, exit problems, vanishing noise limit, rare events, polynomial decay, Malliavin calculus,  saddle points, metastability, invariant distributions, hierarchy of timescales, homogenization}
\usepackage{amsmath,amsfonts,latexsym,amssymb,amscd,
  graphicx,amsthm}
\usepackage{enumitem}
\usepackage{ulem}
\usepackage{cancel}
\usepackage[hidelinks]{hyperref} \usepackage{xcolor}
\pagecolor{white}
\parskip= 2pt 

\usepackage{mathrsfs}

\usepackage{tikz}
\usetikzlibrary{arrows.meta} \usetikzlibrary{decorations.pathreplacing,decorations.markings} 

\tikzset{
on each segment/.style={
    decorate,
    decoration={
      show path construction,
      moveto code={},
      lineto code={
        \path [#1]
        (\tikzinputsegmentfirst) -- (\tikzinputsegmentlast);
      },
      curveto code={
        \path [#1] (\tikzinputsegmentfirst)
        .. controls
        (\tikzinputsegmentsupporta) and (\tikzinputsegmentsupportb)
        ..
        (\tikzinputsegmentlast);
      },
      closepath code={
        \path [#1]
        (\tikzinputsegmentfirst) -- (\tikzinputsegmentlast);
      },
    },
  },
mid arrow/.style={postaction={decorate,decoration={
        markings,
        mark=at position .5 with {\arrow[#1]{Stealth[]}}
      }}},
}

\usepackage{verbatim}   

\newif\ifshowtikz

\showtikztrue

\let\oldtikzpicture\tikzpicture
\let\oldendtikzpicture\endtikzpicture

\renewenvironment{tikzpicture}{\ifshowtikz\expandafter\oldtikzpicture \else\comment \fi
}{\ifshowtikz\oldendtikzpicture \else\endcomment \fi
}

\allowdisplaybreaks[1]

\newcommand{\bpf}[1][Proof]{{\noindent {\sc #1: }}}
\newcommand{\bpfm}[1][Heuristics for the model case]{{\noindent {\sc #1: }}}
\newcommand{\epf}{{{\hfill $\Box$ \smallskip}}}
\newcommand{\ONE}{{\mathbf{1}}}

\newcommand{\N}{{\mathbb N}}

\newcommand{\Fc}{\mathcal{F}}
\newcommand{\Nc}{\mathcal{N}}
\newcommand{\Pp}{\mathsf{P}}
\newcommand{\Psc}{\mathsf{Q}}
\newcommand{\Z}{{\mathbb Z}}

\newcommand{\dtime}{T} \newcommand{\E}{\mathsf{E}}
\newcommand{\Leb}{\mathrm{Leb}}

\newcommand{\R}{{\mathbb R}}
\newcommand{\TT}{{\mathbb T}}

\newcommand{\cc}{{\mathbf c}}

\newcommand{\fR}{\Pi}
\newcommand{\eqpm}{\asymp_\pm}

\newcommand{\eps}{{\varepsilon}}

\newcommand{\lam}{\lambda}

\newcommand{\Lf}{A}

\newtheorem{theorem}{Theorem}[section]
\newtheorem{remark}{Remark}[section]
\newtheorem{lemma}{Lemma}[section]
\newtheorem{proposition}{Proposition}[section]
\newtheorem{corollary}{Corollary}[section]

\numberwithin{equation}{section}

\renewenvironment{proof}[1][Proof]{{\noindent {\sc #1: }}
}{{{\hfill $\Box$ \smallskip}}
}
\makeatletter
\let\orgdescriptionlabel\descriptionlabel
\renewcommand*{\descriptionlabel}[1]{\let\orglabel\label
  \let\label\@gobble
  \phantomsection
  \edef\@currentlabel{#1}\let\label\orglabel
  \orgdescriptionlabel{#1}}
\makeatother

\newcommand{\indistr}{\stackrel{d}{\longrightarrow}}
\newcommand{\inprob}{\stackrel{P}{\longrightarrow}}

\newcommand{\e}{\varepsilon}
\newcommand{\sgn}{\mathop{\mathrm{sgn}}}
\newcommand{\Wu}{\mathcal{W}^{\mathrm{u}}}
\newcommand{\Ws}{\mathcal{W}^{\mathrm{s}}}

\newcommand{\vk}{\varkappa}

\newcommand{\stindex}{stability index }
\newcommand{\GoodMeasures}{\mathcal{M}}
\newcommand{\flow}{\varphi}
\newcommand{\kstar}{\kappa}

 \newcommand{\scx}{x}
\newcommand{\tscx}{{y}} \newcommand{\z}{z} \renewcommand{\P}{\Pp}
\newcommand{\Prob}[1]{\Pp\left\{ #1 \right\}}
\newcommand{\Probx}[2]{\Pp^{#1}\left\{ #2 \right\}}
\newcommand{\Probxy}[1]{\Pp^{\eps \scx}\left\{ #1 \right\}}
\newcommand{\Exy}[1]{\E^{\eps \scx} #1 }

\newcommand{\Ind}[1]{\ONE_{\{ #1\}}}

\newcommand{\li}{{\lambda^i}}
\newcommand{\lj}{{\lambda^j}}
\newcommand{\lk}{{\lambda^k}}
\newcommand{\blam}{\overline{\lambda}}
\newcommand{\ulam}{\underline{\lambda}}

\newcommand{\smallo}[1]{{o\left(#1\right)}}

\newcommand{\D}{\mathcal{D}} \newcommand{\Hil}{\mathcal{H}} \newcommand{\pbracket}[1]{\left[#1\right]_p}
\newcommand{\pbracketX}[2]{\left[#1\right]_{#2}}

\newcommand{\Tone}{{T_1}}

\newcommand{\cZZ}{\mathcal{Z}}

\newcommand{\htau}{{\zeta}}

\newcommand{\superpoly}{o_e(1)}

\newcommand{\cU}{\mathcal{U}}
\newcommand{\frU}{\mathcal{U}}
\newcommand{\Bepm}{B^\eps_\pm}
\newcommand{\Aepm}{A^\eps_\pm}

\newcommand{\Bc}{\mathcal{B}}
\newcommand{\Xc}{\mathcal{X}}

\newcommand{\CC}{\mathbf{C}}
\newcommand{\pp}{\mathtt{p}}
\newcommand{\NN}{\mathcal{N}}
\newcommand{\bA}{\overline{A}}
\newcommand{\bB}{\overline{B}}
\newcommand{\YY}{\mathbf{Y}}

\newcommand{\tYY}{\widetilde{\YY}}
\newcommand{\ZZ}{\mathbf{Z}}

\newcommand{\bZZ}{\overline{\mathbf{Z}}}
\newcommand{\hZZ}{\widehat{\mathbf{Z}}}

\newcommand{\bZ}{\overline{Z}}
\newcommand{\qd}[1]{\left\langle #1 \right\rangle}

\renewcommand{\for}{\text{for }}
\renewcommand{\and}{\text{ and }}
\newcommand{\bH}{\overline{H}}
\newcommand{\sigmaUN}{\sigma_{(U^{\leq\nu}_T,N^{>\nu}_T)}}

\newcommand{\trcurve}{\chi}
\newcommand{\NProj}{M}

\newcommand{\Sph}{\mathbb{S}}
\newcommand{\irc}{\textit{in rectified coordinates}}

\newcommand{\eqmodlp}{\stackrel{\text{\rm w.h.p.}}{=}}

\begin{document}
\title{Rare transitions in noisy heteroclinic networks}
\author{Yuri Bakhtin, Hong-Bin Chen,\and Zsolt Pajor-Gyulai}
\address{Courant Institute of Mathematical Sciences, New York University}
\email{bakhtin@cims.nyu.edu}
\email{hbchen@cims.nyu.edu}
\email{zsolt@gcims.nyu.edu}
\begin{abstract} We study small white noise perturbations of planar dynamical systems with heteroclinic networks in the limit of vanishing noise. We show that the probabilities of transitions between various cells that the network tessellates the plane into decay as powers of the noise magnitude. We show that the most likely scenario for the realization of these rare transition events involves spending atypically long times in the neighborhoods of certain saddle points of the network. We describe the hierarchy of time scales and clusters of accessibility associated with these rare transition events. We discuss applications of our results to homogenization problems and to the invariant distribution asymptotics. At the core of our results are local limit theorems for exit distributions obtained via methods of Malliavin calculus.
\end{abstract}
\maketitle

\tableofcontents
\section{Introduction}
\subsection{The setting, the motivation, and the goal of the paper.}
In this paper, we study the long-term behavior of smooth dynamical systems with heteroclinic networks under small white noise perturbations.

Solutions of It\^o SDEs  like
\begin{equation}
\label{eq:basic-sde}
dX_{\e,t}=b(X_{\e,t})dt+\e \sigma(X_{\e,t})dW_t,
\end{equation}
in $\R^d$ where $W$ is the Wiener process  with $d$ independent components, 
have very simple asymptotic behavior in the vanishing noise limit $\e\to0$ if considered on a finite time interval. Under very broad assumptions on the drift $b$ and diffusion~$\sigma$ coefficients, they converge, as $\e\to 0$, to solutions of the deterministic ODE
\begin{equation}
\label{eq:basic-ode}
\dot X_{0,t}=b(X_{0,t}).
\end{equation}
However, the behavior of solutions of~\eqref{eq:basic-sde} on infinite time intervals or intervals growing to infinity as  $\e\to 0$, may drastically differ from that of solutions of~\eqref{eq:basic-ode}. 

The asymptotic properties depend crucially on the geometry of the phase portrait generated by $b$ and typically do not depend much on $\sigma$ once an assumption of boundedness and uniform ellipticity (nondegeneracy) of $\sigma$  is made.

The most celebrated mathematical achievement in this area is the Freidlin--Wentzell theory of metastability and related concepts, studying the situation where the solution of~\eqref{eq:basic-sde} spends very long times near locally stable attractors making rare and rapid transitions between them.  In chemistry and physics, the exponential in $\e^{-2}$ growth of transition times is known as Kramers' asymptotics \cite{KRAMERS1940284}. The classical mathematical reference for these asymptotic results and other vanishing noise problems is~\cite{FW}.

In this paper, we continue the study of SDE~\eqref{eq:basic-sde} in the vanishing noise limit that we began in~\cite{Bakhtin2011} (see also an informal exposition in~\cite{Bakhtin2010:MR2731621}), under the assumption that $b$ generates a heteroclinic network. 

A heteroclinic network is a feature of the phase portrait of a dynamical system consisting of multiple hyperbolic critical points (saddles) and heteroclinic orbits connecting them, see Figure~\ref{fig:heteroc-example} for an example of a planar heteroclinic network. A heteroclinic orbit, also called a heteroclinic connection, belongs to, or coincides with, the unstable manifold of one saddle and the stable manifold of another saddle.  

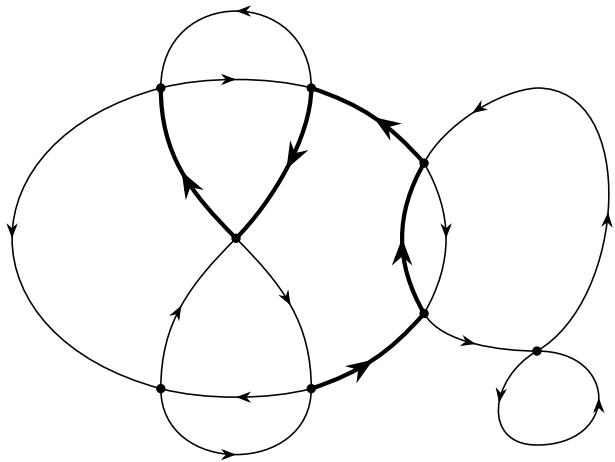
\begin{figure}[ht]

\centering
\begin{tikzpicture}

    \node  (0) at (0, 0)     {};
    \node  (1) at (-1, 2)    {};
    \node  (2) at (1, 2)     {};
    \node  (3) at (-1, -2)   {};
    \node  (4) at (1, -2)    {};
    \node  (5) at (2.5, 1)   {};
    \node  (6) at (2.5, -1)  {};
    \node  (7) at (4, -1.5)  {};
    \node  (8) at (4, 2)     {};
    \node  (9) at (4, -2.75) {};

\foreach \n in {0,1,2,3,4,5,6,7}
        \node at (\n)[circle,fill,inner sep=1.25pt]{};

    \path [draw=black, ultra thick, postaction={on each segment={mid arrow=black}}] 
    [in=45, out=-90, looseness=0.75] 
    (2.center) to (0.center);
    \path [draw=black, ultra thick, postaction={on each segment={mid arrow=black}}] 
    [in=-90, out=135] 
    (0.center) to (1.center);
    \path [draw=black, semithick, postaction={on each segment={mid arrow=black}}] 
    [bend right=90, looseness=1.75] 
    (2.center) to (1.center);
    \path [draw=black, semithick, postaction={on each segment={mid arrow=black}}] 
    [in=-135, out=90] 
    (3.center) to (0.center);
    \path [draw=black, semithick, postaction={on each segment={mid arrow=black}}] 
    [in=90, out=-45] 
    (0.center) to (4.center);
    \path [draw=black, semithick, postaction={on each segment={mid arrow=black}}] 
    [bend right=90, looseness=1.50] 
    (3.center) to (4.center);
    \path [draw=black, semithick, postaction={on each segment={mid arrow=black}}] 
    [bend left=15, looseness=0.75] 
    (1.center) to (2.center);
    \path [draw=black, semithick, postaction={on each segment={mid arrow=black}}] 
    [bend left=15, looseness=0.75] 
    (4.center) to (3.center);
    \path [draw=black, semithick, postaction={on each segment={mid arrow=black}}] 
    [bend right=75, looseness=1.75] 
    (1.center) to (3.center);
    \path [draw=black, ultra thick, postaction={on each segment={mid arrow=black}}] 
    [bend right=15] 
    (5.center) to (2.center);
    \path [draw=black, ultra thick, postaction={on each segment={mid arrow=black}}] 
    [bend right=15] 
    (4.center) to (6.center);
    \path [draw=black, semithick, postaction={on each segment={mid arrow=black}}] 
    [bend left] 
    (5.center) to (6.center);
    \path [draw=black, ultra thick, postaction={on each segment={mid arrow=black}}] 
    [bend left] 
    (6.center) to (5.center);
    \path [draw=black, semithick, postaction={on each segment={mid arrow=black}}] 
    [in=-180, out=-60, looseness=0.75] 
    (6.center) to (7.center);
    \path [draw=black, semithick, postaction={on each segment={mid arrow=black}}] 
    [bend right, looseness=0.75] 
    (8.center) to (5.center);
    \path [draw=black, semithick, postaction={on each segment={mid arrow=black}}] 
    [in=0, out=30] 
    (7.center) to (8.center);
    \path [draw=black, semithick, postaction={on each segment={mid arrow=black}}] 
    [in=180, out=-150, looseness=1.50] 
    (7.center) to (9.center);
    \path [draw=black, semithick, postaction={on each segment={mid arrow=black}}] 
    [in=0, out=0, looseness=2.25] 
    (9.center) to (7.center);

    \end{tikzpicture}

\caption{A planar heteroclinic network and a heteroclinic chain escaping a cell.}
\label{fig:heteroc-example}
\end{figure}

It is natural to presume that a diffusion near such a heteroclinic network mimics the process of sequential random decision making: it spends a lot of time in a small neighborhood of a critical point where the drift is very small, until eventually the noise pushes the solution in one of the unstable directions (thus, a decision on the exit direction is made). From here, the drift takes over, carrying the solution away from the equilibrium along a heteroclinic orbit towards the next critical point. This picture resembles a random walk on the directed graph with vertices representing saddles and directed edges corresponding to heteroclinic connections.

However, it turns out that diffusion near a heteroclinic network in vanishing noise limit may and often does look drastically different from a Markovian  random walk. In many instances, the outcome of the decision on the exit direction is influenced and even largely predetermined by the history of the process, thus exhibiting non-Markovian limiting behavior and departing from the random walk picture. A rigorous mathematical theory of this was given in~\cite{Bakhtin2011} although non-rigorous approaches had existed before
  \cite{Stone-Holmes:MR1050910}, \cite{Stone-Armbruster:doi:10.1063/1.166423},  \cite{Armbruster-Stone-Kirk:MR1964965}. 
 
More precisely, the typical behavior of diffusions with small noise near heteroclinic networks was described in~\cite{Bakhtin2011} for time scales logarithmic in the noise magnitude. In particular, that paper showed 
that the diffusion spends time of order $\log\e^{-1}$ near a saddle, travels along a heteroclinic orbit in time of order~$1$, then
spends time of order $\log\e^{-1}$ near the next saddle, etc. Moreover, for any finite sequence of saddles and heteroclinic connections between them, 
the limiting probability of evolution along those connections was computed, in the limit of vanishing noise. These limiting probabilities often equal~$0$ or~$1$, which means that traveling along certain pathways through the graph of heteroclinic connections is extremely unlikely on the logarithmic time scale. This  results in a limited vocabulary of observable pathways and, often, in heteroclinic cycling, where the process is trapped in a small region of the network and intermittently follows one of a few available cycles, occasionally switching between them.

The core of the analysis in~\cite{Bakhtin2011} is the study of exit problems for certain regions around the saddles and the connections, with scaling limit theorems for the exit time and location. It shows that certain transitions in noisy heteroclinic networks are unlikely and certain ones are typical. The typical ones completely define the limiting dynamics on timescales logarithmic in noise intensity. However, in order to study the behavior of the system over long or infinite time intervals, one must carry out a finer study of the unlikely transitions. This is exactly the goal of the present paper: to study the exit problems of~\cite{Bakhtin2011} in more detail and analyze the unlikely events responsible for the departure from the typical scenario described in this paper. This is the natural next step in the ambitious program to understand the limiting behavior of invariant distributions in the compact phase space case (on a torus) and homogenization and effective diffusivity for periodic heteroclinic networks.

\subsection{The main result: the polynomial rates of rare transitions and the underlying slowdown mechanism.}
\label{sec:main_result_descr}
We restrict ourselves to dynamics in the Euclidean plane~$\R^2$ or torus~$\TT^2$. Working with other $2$-dimensional manifolds, in charts, and with Stratonovich noise, is not much harder but would obscure our main points. We also  expect the picture to be similar in higher dimensions, especially for heteroclinic networks of saddles with 1-dimensional unstable manifolds.

In two dimensions, heteroclinic networks admit a relatively simple description: under fairly general regularity assumptions they all can be viewed as locally finite collections of closed curves with simple mutual intersections and self-intersections, see Figure~\ref{fig:heteroc-example}. They tessellate the plane into cells, the boundary of each cell being a union of several heteroclinic connections, which are either all oriented clockwise or all oriented counter-clockwise.
 
In this paper, we quantify rare transitions between neighboring cells and compute the asymptotic transition rates. More precisely, for each sequence of heteroclinic connections
 on the boundary of one cell, we compute the decay rate, as $\e\to0$, of the probability of escaping the cell immediately after following that sequence. An example of such a transition  is shown in Figure~\ref{fig:heteroc-example}, where a chain of
 heteroclinic connections almost entirely belongs to the boundary of one cell and the last heteroclinic connection escapes from this cell.

Our main result (see Theorem~\ref{th:multi-saddle-escape} for a precise statement and Figure~\ref{fig:main-heteroc} for a more detailed illustration of the setting) is that, depending on the contraction and expansion rates near each saddle of the sequence, and on the character of the scaling of the distance from the initial condition to the network, three situations are possible. As $\e\to0$, the probability of escape either 
\begin{enumerate}
\item \label{item:limit-probability} converges to a positive constant (as described in \cite{Bakhtin2011}), or
\item\label{item:maindescript-poly-decay} decays as   $h\e^\theta(1+o(1))$ for some numbers $\theta,h>0$, or
\item  \label{item:superpoly-decay}
decays faster than any power of $\e$.
\end{enumerate}
 
Under several technical assumptions, Theorem~\ref{th:multi-saddle-escape}  gives a detailed characterization of the conditions for each of these cases to occur, and in case \ref{item:maindescript-poly-decay} computes the scaling exponent~$\theta$, see~\eqref{eq:theta}. This exponent can also be defined as $\theta=0$ for case~\ref{item:limit-probability} and as  $\theta=\infty$ for case~\ref{item:superpoly-decay}. Moreover,
in case~\ref{item:superpoly-decay}, we actually prove a more precise estimate: the probability of escape is bounded by $\exp[-(\log \eps^{-1})^{1+\delta}]=\eps^{(\log \eps^{-1})^{\delta}}$ for some $\delta>0$. 
 
\medskip
 
The case~\ref{item:maindescript-poly-decay}  is the central, most interesting,  and hardest part of this paper. Compared to the results of~\cite{Bakhtin2011} where the analysis was performed at the level of weak convergence of appropriately scaled exit distributions, to obtain the  power asymptotics in part~\ref{item:maindescript-poly-decay}, we need to study the exit distributions zooming into finer scales and proving local limit theorems. We are able to prove local equidistribution results by studying the densities of the distributions involved with the help of estimates from~\cite{bally2014} based on  Malliavin calculus. The approach developed in~\cite{long_exit_time}, \cite{long_exit_time-1d-part-2}, \cite{YB-and-HBC:10.1214/20-AAP1599}, \cite{YB-and-HBC:10.1214/20-AOP1479}, \cite{B-PG:doi:10.1142/S0219493719500229} for exit problems near critical points of source type thus gets extended to the harder case of critical points of saddle type.

Our analysis also reveals the mechanism through which the rare transitions are realized. It turns out that imposing the condition on the process to leave the cell after passing a given saddle point effectively influences the behavior of the entire trajectory before the visit to that saddle point. The exit is prepared by getting atypically close to the network while visiting neighborhoods of preceding saddles. More precisely, there are certain slowdown saddles near which the process spends an abnormally long time thus extending the exposure to contraction towards the boundary of the cell in comparison
with the typical scenario.

The exponent $\theta$ in the power asymptotics of our main result is determined by the contraction and expansion rates near all the saddles involved. However, the definition of $\theta$ is not straightforward. One must find all the slowdown saddles via a special procedure and take into account that each of them contributes a factor of order of a power of $\e$, with the exponent being a nontrivial nonlocal function of the entire sequence of corresponding contraction/expansion rates.

\subsection{The hierarchy of time scales and clusters of accessibility.}
The polynomial decay rate of the escape probabilities in our main theorem suggests that the shortest time scale on which we can expect deviations from the typical behavior is of the order $\e^{-\theta}$ (up to a logarithmic factor) for some $\theta > 0$. Moreover, different transitions often have different associated exponents, implying an entire hierarchy of polynomially growing time scales on which more and more transitions become accessible for the dynamics and larger and larger clusters of points accessible at those time scales emerge. Under the requirement that the network is stable (exponentially attracting nearby initial conditions in the absence of noise), the noisy dynamics can be described as a multiscale process dominated by transitions between clusters at various levels.

This is akin to metastable cycling described in \cite{FW} where rare transitions between metastable states are described at the level of large deviations. They occur on time scales of order $e^{\theta\e^{-2}}$ with $\theta>0$ obtained by minimizing an appropriate action functional over paths connecting the metastable states involved.

The hierarchical structure of polynomial time scales and associated clusters emerging in our setting is discussed in Section~\ref{sec:hierarchy}. In that section, we also draw a connection to the general abstract picture of metastable cycling introduced in~\cite{FK-metastable:MR3652517}. We study a concrete  example of a heteroclinic network on the torus $\TT^2$ which, if lifted to a $\Z^2$-periodic cellular flow on $\R^2$, allows for Gaussian limit theorems for sufficiently large time scales. These can be viewed as homogenization results on effective diffusivity for second order parabolic PDEs, with the scaling limit given by the heat equation. Such a result would be hard to obtain via PDE methods (see, however, the Appendix in \cite{HIKNPG:MR3773377}).

In addition, for the torus case, we show how to compute the limit of the invariant distribution for the diffusion process as $\e\to0$. It is  always a mixture of Dirac masses at saddle points but the computation of the weights of individual atoms requires a multi-level iterative procedure based on the hierarchical structure.

We decided not to pursue rigorous exposition in Section~\ref{sec:hierarchy}, postponing that to a later publication.

\subsection{The structure of the paper.} In order to motivate and explain the new results, we have to start with recalling the results of~\cite{Bakhtin2011} in Section~\ref{sec:typical}. For our new results, we need to supplement the scaling limits of Section~\ref{sec:typical} with more detailed analysis. Some useful terminology and notation is introduced in Section~\ref{sec:notation}. In Section~\ref{sec:2saddles}, we study a relatively simple case where an N-shaped heteroclinic chain (see Figure~\ref{fig:2saddles}) is composed of two saddle points and three heteroclinic connections, the last one escaping from the cell making a ``wrong turn''. Section~\ref{sec:long-escape-chains} gives the main result for a
chain of arbitrary length.  After that, in Section~\ref{sec:hierarchy}, we give an informal discussion of the emerging hierarchy of clusters and time scales, and its implications.

\medskip

We must comment on the style of our exposition.
In Sections~\ref{sec:typical}--\ref{sec:long-escape-chains}, we give complete rigorous statements of results but not all explanations are
rigorous, some of them being heuristics for a simplified model case rather than complete proofs.
These sections should be read first in order to understand the whole picture.
The rigorous proofs of those statements in complete generality are given in Sections~\ref{sec:rectified}--\ref{section:density_est}, with the most technical part on local limit theorems for exit densities
being Sections~\ref{sec:Gaussian-approx}--\ref{section:density_est}.

\bigskip

{\bf Acknowledgments.} 
We thank Mark Freidlin  and Leonid Koralov for multiple stimulating discussions. Yuri Bakhtin thanks NSF for partial support via Award DMS-1811444. Zsolt Pajor-Gyulai is thankful to the Courant Institute where this work was initiated during his tenure as a Courant Instructor.

\section{The typical behavior}\label{sec:typical} 
The goal of this section is to  recall the results of \cite{Bakhtin2011}
(see also \cite{Bakhtin2010:MR2731621}, \cite{Almada-Bakhtin:MR2802310}, \cite{Almada-Bakhtin:MR2739004}) since they serve as an important starting point. We aim at a minimal description relevant for this paper, not a comprehensive one.

\subsection{Notation}
\label{sec:notation1}

We denote the Borel $\sigma$-algebra on $\R$ by $\Bc$. We call $\nu:\R\times \Bc\to [0,\infty)$ a transition kernel if for each $x\in\R$, $\nu(x,\cdot)$ is a Borel measure on $\R$, and for each $B\in\Bc$, $\nu(\cdot,B)$ is a Borel measurable function.

For any $m\in \N$, we use superscripts to denote the coordinates of points
$x=(x^1,x^2,\ldots,x^m)\in\R^m$. For  $a,b\in\R^m$, we write
\begin{align*}
    a\cdot b =\sum_{i=1}^m a^i b^i.
\end{align*}

For $m,n\in\N$ and $k\in\N\cup\{0\}$, an $\R^m$-valued function $f$ defined on an open set in $\R^n$ is said to belong to $C^k$ if $f$ is continuously differentiable up to the $k$-th order. If, in addition, the partial derivatives of $f$ of all orders up to $k$ are bounded, it is said to belong to $C^k_{\mathrm{b}}$. If, moreover, $f$ is a bijection and $f^{-1}\in C^k_\mathrm{b}$, then $f$ is called a $C^k_\mathrm{b}$-diffeomorphism.

Two vectors in $\R^m$ are called collinear if one of them is a multiple of another. In particular, the zero vector is collinear with any other vector. 

The Lebesgue measure on any Euclidean space is denoted by $\Leb$. 

The locally uniform convergence (i.e., uniform convergence on compact sets) is often abbreviated to convergence in LU-topology or simply in LU.  

The symmetric difference between two sets $A, B$ is denoted by $A\triangle B$.

For $a, b\in \R$, we write $a\wedge b = \min\{a,b\}$ and $a\vee b = \max\{a,b\}$.

Each statement involving signs $\pm$ and $\mp$ represents two statements: the first one where every $\pm$ is replaced by $+$, every  $\mp$ by $-$; and the second one where every $\pm$ is replaced by $-$, and every  $\mp$ by $+$.

 For
 $a_-,a_0,a_+\in\R$, we write
\begin{align}\label{eq:eqpm_notation}
    a_0\eqpm a_\pm 
\end{align}
if and only if $a_-\leq a_0\leq a_+$.

We usually work with a complete probability space $(\Omega,\Fc,\Pp)$ equipped with a filtration $(\Fc_t)_{t\ge0}$  satisfying the usual conditions. We assume that this probability space is rich enough to support all r.v.'s (random variables)  emerging in the paper. This is not necessary but makes notation a little lighter since we can use notation like $\Pp\{\xi\in[a,b]\}$ for a distributional limit $\xi$ of r.v.'s $\xi_\e$ defined on this probability space. At times it will be also convenient to use 
other probability spaces and measures. Irrespective of the details of the setting, we denote convergence of r.v.'s in distribution by $\indistr$ and in probability by $\inprob$. By $W=(W^1,W^2)$ we denote the standard two-dimensional Wiener process, i.e., $W^1$ and $W^2$ are independent
standard one-dimensional Wiener processes with respect to $(\Fc_t)$.  All stochastic integrals are understood in the It\^o sense.

We will denote by $g_c(x)$ the centered Gaussian density with variance $c>0$:
\begin{align}\label{eq:g_c_Gaussian}
    g_c(x)=\frac{1}{\sqrt{2\pi c}} e^{-\frac{x^2}{2c}},\quad x\in\R.
\end{align}
The associated  distribution function is denoted by $\psi_c$:
\begin{equation}
\label{eq:gaussian-cdf}
\psi_c(x)=\int_{-\infty}^x g_c(x')dx',\quad x\in\R.
\end{equation}  

We will often omit the dependence of $X_{\eps,t}$, a solution of~\eqref{eq:basic-sde} on the noise magnitude $\eps$. For example, the joint distribution of  $((X_t)_{t\ge0}, (W_t)_{t\ge 0})$ solving~\eqref{eq:basic-sde} conditioned on the initial value $X_0=x_0\in\R^2$ will be denoted by $\Pp^{x_0}$ with $\eps$ omitted.

When using the $o(\cdot)$ notation and its modifications, we mean taking limits as $\e\to 0$, unless stated otherwise.  

Throughout the paper we use $C$ to denote various constants whose values may differ from instance to instance.

More notation and terminology is collected in Section~\ref{sec:notation}.

\subsection{Exiting a neighborhood of a saddle}Of course, the main strategy is to surround each saddle by a neighborhood and study the exit problems in each neighborhood and transitions between those neighborhoods along heteroclinic connections. 

In this section, we consider a family of  diffusions $(X_\e)_{\e>0}$ near one saddle point in $\R^2$. 
An archetypal and relatively simple situation is where the drift is linear and the noise is additive and diagonal:
\begin{align}
 dX_{\e,t}^1&=\lambda X_{\e,t}^1dt+\e dW^1_t,\label{eq:linear-system1}\\
 dX_{\e,t}^2&=-\mu X_{\e,t}^2dt+\e dW^2_t\label{eq:linear-system2},
\end{align}
where $\lambda,\mu>0$. Here the origin is a hyperbolic fixed point for the associated deterministic linear 
dynamics. Its stable manifold coincides with the second coordinate axis, and the unstable one coincides with the
first coordinate axis.

Our goal is to show that if the initial condition has a distributional scaling limit, then the exit distribution also has a distributional scaling limit, with a new exponent and limiting distribution.

Let us equip the system~\eqref{eq:linear-system1}--\eqref{eq:linear-system2}
with the following initial condition:
\begin{align}
X_{\e,0}^1&=\e^{\alpha}\xi_\e,
\label{eq:entrance-distr-linear1}
\\
X_{\e,0}^2&=L,
\label{eq:entrance-distr-linear2}
\end{align}
where $L\in\R\setminus\{0\}$, $\alpha\in(0,1]$, and $(\xi_\e)_{\e>0}$ is a family of r.v.'s independent of the realization of the noise on $[0,\infty)$.
Let us assume that as $\e\to 0$, $\xi_\e\indistr\xi$ for some r.v. $\xi$. If $\alpha<1$, we will additionally assume that 
\begin{equation}
\label{eq:no_atom_at_zero}
\Pp\{\xi=0\}=0.
\end{equation}

Let us fix a threshold $R>0$ and  define the exit time from the domain 
\begin{equation}
\label{eq:strip}
D=\{(x^1,x^2): |x^1|<R\}.
\end{equation}
by
\begin{align}
\label{eq:def-exit-time}
\tau_\e&=\inf\{t\ge0:\ |X_{\e,t}^1|\ge R\} 
\\
\notag
&
=\inf\{t\ge0:\ X_{\e,t}\notin D\} = \inf\{t\ge0:\ X_{\e,t}\in \partial D\}
\end{align}
as the hitting time for $\partial D=\partial_+\cup\partial_-$, where 
\begin{equation}
\label{eq:boundary-of-strip}
\partial_\pm=\{x=(x^1,x^2)\in\R^2: x^1=\pm R\}.
\end{equation} 

The main result of \cite{Kifer1981} states that $\tau_\e/(\frac{1}{\lambda}\log{\eps^{-1}})\inprob1$ and the distribution of the exit location~$X_\e(\tau_\e)$ asymptotically concentrates near the points of intersection of the unstable manifold with the boundary, i.e. points
\begin{equation*}
 q_\pm=(\pm R,0).
\end{equation*}

Let us analyze the exit problem in more detail. We start by using Duhamel's principle: 
\begin{align}
\label{eq:Duhamel1}X_{\e,t}^1&=e^{\lambda t}(X_{\e,0}^1+\e U^{1}_t),\\
\label{eq:Duhamel2}X_{\e,t}^2&=e^{-\mu t}(X_{\e,0}^2+\e U^{2}_t)=e^{-\mu t}X_{\e,0}^2+\e N^{2}_t,
\end{align}
where
\begin{align}
\notag
U^{1}_t&=\int_0^te^{-\lambda s}dW^1_s,
\\
\label{eq:U-in-model-case}
 U^{2}_t&=\int_0^te^{\mu s}dW^2_s,
\\
\notag
N^2_t&=e^{-\mu t} U_t^2= \int_0^te^{-\mu (t-s)}dW^2_s.
\end{align}
The process $(U^{1}_t,N^{2}_t)$ is Gaussian, so it easy to check that
\begin{equation}
\label{eq:lim-of-U}
(U^{1}_t,N^{2}_t)\indistr (\frU,\NN),\quad t\to\infty,
\end{equation}
where $(\frU,\NN)$ is a centered Gaussian random vector with independent components and variances
\begin{align}
\label{eq:var-of-N}
\cc_1&=\int_0^{\infty}e^{-2\lambda s}ds=(2\lambda)^{-1},
\\
\cc_2&= \int_{-\infty}^0 e^{2\mu s}ds=(2\mu)^{-1}.  \notag
\end{align}
In fact, a.s.-convergence holds for the first component in~\eqref{eq:lim-of-U}.

The definition~\eqref{eq:def-exit-time} and~\eqref{eq:Duhamel1} imply
\begin{equation}
\label{eq:for_tau}
R=e^{\lambda \tau_{\e}}|X_{\e,0}^1+\e U^1_{\tau_{\e}}|.
\end{equation}
It is easy to check that $\tau_{\e}\inprob\infty$ as $\eps\to 0$. Together with~\eqref{eq:lim-of-U}, this suggests (although more arguments are required for a rigorous proof):
\begin{equation}
\label{eq:weak_convergence_to_Gauss}
(U^1_{\tau_{\e}}, N^2_{\tau_{\e}})\indistr (\frU,\NN).
\end{equation} 
Therefore, expressing $\tau_{\e}$ from~\eqref{eq:for_tau}, we obtain
\begin{equation}
\label{eq:expr-for-tau}
\tau_{\e}=\frac{1}{\lambda}\log\frac{R}{|\e^\alpha\xi_\e+\e U^1_{\tau_{\e}}|}=\frac{1}{\lambda}\log\frac{R}{\e^\alpha |Z_\eps|},
\end{equation}
where
\begin{equation}
\label{eq:Z_eps}
Z_\e=\xi_\e+\e^{1-\alpha} U^1_{\tau_{\e}}.
\end{equation}
Thus,
\begin{equation}
\label{eq:typical-exit-time-distr}
\tau_{\e}-\frac{\alpha}{\lambda}\log \e^{-1}=
\frac{1}{\lambda}\log\frac{R}{|Z_\e|} 
\indistr \frac{1}{\lambda}\log\frac{R}{|Z|},
\end{equation}
where
\begin{equation*}
Z =\xi+\ONE_{\alpha=1}\frU
\end{equation*}
is the distributional limit of $Z_\e$ as $\e\to0$. Hence, the exit typically happens around time $\frac{\alpha}{\lambda}\log \e^{-1}$.

We also notice that the direction of exit is given by
\begin{equation}
\sgn X_{\e,\tau_\e}^1=\sgn Z_\e,
\label{eq:exit-direction}
\end{equation}
with distributional limit~$\sgn Z$.
In particular, the limiting probabilities of exit  on the right (i.e., through $\{+R\}\times\R$) and on the left  (i.e., through $\{-R\}\times\R$)
are $p_+=\Pp\{Z>0\}$ and $p_-=\Pp\{Z<0\}$. Note that  $\Pp\{Z=0\}=0$: if  $\alpha<1$, this is a consequence of \eqref{eq:no_atom_at_zero}; and if  $\alpha=1$, then $Z$ is absolutely continuous being a Gaussian convolution.

In particular, if $\alpha<1$ and $\Pp\{\xi>0\}=1$, we have $p_+=1$ and $p_-=0$ and if  $\Pp\{\xi<0\}=1$, then $p_+=0$ and $p_-=1$, i.e., the direction of exit is asymptotically deterministic as $\eps\to0$.

If the distribution of $\xi$ is symmetric, then $p_+=p_-=1/2$.

To find out the asymptotics of the  exit location distribution, we use~\eqref{eq:typical-exit-time-distr} in~\eqref{eq:Duhamel2}:
\begin{align}
\label{eq:X_2-at-exit-time}
X_{\e,\tau_\e}^2&=e^{-\mu \tau_\e}X_{\e,0}^2+\e N^{2}_{\tau_\e}
=\e^{\alpha\rho}\frac{L}{R^{\rho}}|Z_\e|^{\rho}+\e N^{2}_{\tau_\e},
\end{align}
where the \stindex  $\rho$ measuring the strength of contraction relative to expansion near the saddle point is defined by
\begin{equation}
\label{eq:rho}
\rho=\mu/\lambda.
\end{equation}
It plays a crucial role throughout the paper. Introducing 
\begin{equation}
\label{eq:new-alpha}
\alpha'=\alpha\rho \wedge 1,
\end{equation}
we obtain
\begin{equation}
\label{eq:scaling-for-exit}
\frac{X_{\e,\tau_\e}^2}{\e^{\alpha'}}\indistr \xi',
\end{equation}
where
\begin{equation}
\label{eq:scaling-limit-distr}
\xi'= \frac{L}{R^{\rho}}|Z|^{\rho}\ONE_{\alpha\rho\le 1}+\NN\ONE_{\alpha\rho\ge 1}.
\end{equation}

It is important to distinguish between the cases where (i)~$\alpha\rho<1$, (ii)~$\alpha\rho>1$, and (iii)~$\alpha\rho=1$.
If $\alpha\rho>1$, then $\alpha'=1$, $\xi'=\NN$, and  we can rewrite~\eqref{eq:scaling-for-exit} informally as
\begin{equation*}
X_{\e,\tau_\e}^2\stackrel{d}{\sim}\e \NN.
\end{equation*}
We recall that $\NN$ is a symmetric Gaussian r.v. The scaling factor in front of $\NN$ is~$\eps^1$, which is the same order of magnitude as the noise.

Note that the limiting behavior in this case does not depend on the initial condition, neither on $L$ nor on $(\xi_\e)$, nor on $\alpha>\rho^{-1}$.

If $\alpha'=\alpha\rho<1$, then $\xi'= \frac{L}{R^{\rho}}|Z|^{\rho}$, and  we can rewrite~\eqref{eq:scaling-for-exit} informally as
\begin{equation*}
X_{\e,\tau_\e}^2\stackrel{d}{\sim}\e^{\alpha'} \frac{L}{R^{\rho}}|Z|^{\rho}.
\end{equation*}
The distribution of $\xi'$ in this case is one-sided, i.e., it is concentrated on $(0,\infty)$ if $L>0$ and on $-(0,\infty)$ if $L<0$.
Moreover, the noise magnitude $\eps$ is smaller than the scaling~$\eps^{\alpha'}$.

In the intermediate case, $\alpha\rho=1$,  both terms in~\eqref{eq:scaling-limit-distr} are nonzero, so we obtain an asymmetric  distribution supported on the entire~$\R$.

The drastic difference in the asymptotic behavior may be explained as follows. If $\alpha\rho>1$, i.e., $\alpha/\lambda>1/\mu$ , the exit time of order $\frac{\alpha}{\lambda}\log \e^{-1}$ is long enough to allow the contraction (happening at exponential rate~$\mu$) to eliminate the dependence on the initial condition, whereas  if $\rho\alpha<1$, i.e., $\alpha/\lambda<1/\mu$, this exit time is so short that the exit typically happens sooner than the contraction along the stable manifold towards
the unstable one has taken place, hence the exit happens on the same side of the unstable manifold as the starting point. Let us also note that if the distribution of $\xi$ is absolutely continuous (has a Lebesgue density), then so is the distribution of  $\xi'$. Also, if $\alpha\rho \ge 1$, then the distribution of $\xi'$ is absolutely continuous, being either  Gaussian or  a Gaussian convolution.

The analysis above is done for a simplified system at a heuristic level. 
A rigorous general version of the reasoning and results above may be found in~\cite{Bakhtin2011} and~\cite{Almada-Bakhtin:MR2802310}. Let us give a summary, in the form of a theorem, of what we need to move~on. 
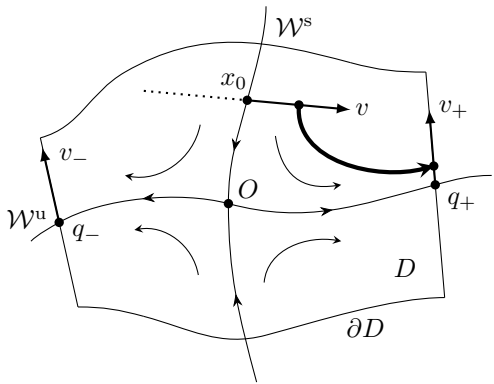
\begin{figure}[ht]
\centering
\begin{tikzpicture}

    \node (0) at (-0.625, 0.25) {};
    \node (1) at (2.875, 0.625) {};
    \node (2) at (-3.25, -0.25) {};
    \node (3) at (-3.125, 1.125) {};
    \node (4) at (-2.625, -1.125) {};
    \node (5) at (2, 2) {};
    \node (6) at (2.25, -1) {};
    \node (7) at (-2, 2) {};
    \node (8) at (1, 2.125) {};
    \node (9) at (-1.75, -1.25) {};
    \node (10) at (-0.125, -1.5) {};
    \node (11) at (-0.125, 2.875) {};
    \node (12) at (-0.375, 1.625) {};
    \node (14) at (-1, 1.3) {};
    \node (15) at (-2, 0.625) {};
    \node (16) at (0, 1.15) {};
    \node (17) at (0.875, 0.5) {};
    \node (18) at (-1.875, -0.1) {};
    \node (19) at (-1.025, -0.7) {};
    \node (20) at (-0.15, -0.8) {};
    \node (21) at (0.875, -0.25) {};
    \node (22) at (-0.25, -2.125) {};
    \node (23) at (-1.75, 1.75) {};
    \node (24) at (1, 1.5) {};
    \node (25) at (-2.875, 0) {};
    \node (26) at (2.125, 0.5) {};

    \node (27) at (0.3125, 1.5625) {}; \node (28) at (2.105, 0.75) {}; 

    \node [right] at (-0.625,0.45) {$O$};
    \node [left] at (2.7, 1.5) {$v_+$};
    \node [left] at (2.8, 0.28) {$q_+$};
    \node [right] at (-3, 0.86) {$v_-$};
    \node [right] at (-2.825, -0.15) {$q_-$};
    \node [right] at (0.94, 1.5) {$v$};
    \node [above] at (-0.55, 1.65) {$x_0$};
    \node [left] at (2, -0.6) {$D$};
    \node [below] at (1.2, -1.15) {$\partial D$};
    \node [above] at (-3.3, -0.2) {$\Wu$};
    \node [right] at (-0.125, 2.6) {$\Ws$};

\foreach \n in {0,12,25,26,27,28}
        \node at (\n)[circle,fill,inner sep=1.25pt]{};

    \draw [] [in=210, out=15] (3.center) to (7.center);
    \draw [] [in=165, out=30] (7.center) to (8.center);
    \draw [] [in=-180, out=-15] (8.center) to (5.center);
    \draw [] [in=165, out=0] (4.center) to (9.center);
    \draw [] [in=-165, out=-15] (9.center) to (10.center);
    \draw [] [in=180, out=15, looseness=0.75] (10.center) to (6.center);
    \draw [] [in=75, out=-90, looseness=0.75] (11.center) to (12.center);
    \draw [-stealth] [bend left=45] (14.center) to (15.center);
    \draw [-stealth] [bend right=45, looseness=1.25] (16.center) to (17.center);
    \draw [stealth-] [bend left=45] (18.center) to (19.center);

    \draw [ -stealth] [in=170, out=85] (20.center) to (21.center);

    \draw [postaction={on each segment={mid arrow=black}}] [in=90, out=-105] (12.center) to (0.center);
    \draw [postaction={on each segment={mid arrow=black}}] [in=-90, out=105] (22.center) to (0.center);
    \draw [thick, dotted] (23.center) to (12.center);
    \draw [thick, -latex] (12.center) to (24.center);
    \draw [] (3.center) to (25.center);
    \draw [] (25.center) to (4.center);
    \draw [postaction={on each segment={mid arrow=black}}] [in=30, out=165, looseness=0.75] (0.center) to (25.center);
    \draw [] [in=45, out=-150, looseness=0.75] (25.center) to (2.center);
    \draw [] (5.center) to (26.center);
    \draw [] (26.center) to (6.center);
    \draw [postaction={on each segment={mid arrow=black}}] [in=-165, out=-15] (0.center) to (26.center);
    \draw [] [in=-180, out=15] (26.center) to (1.center);

    \draw [thick, -latex] (25.center) to (-3.1, 1);
    \draw [thick, -latex] (26.center) to (2.045, 1.5);

    \draw [ultra thick, -stealth] [in=195, out=-95] (27.center) to (28.center);

\end{tikzpicture}
\caption{Dynamics near a saddle point}
{\label{fig:1saddle}}
\end{figure}

Let us first describe the setting and notation.

\begin{enumerate}[label={\bf(\Alph*)}, series=setting]
\item
\label{setting:general}
Let $X_\e$ solve equation~\eqref{eq:basic-sde} in $\R^2$.  
We assume that $b\in C^2_{\mathrm{b}}$. We assume that $\sigma\in C^3_\mathrm{b}$ and that it is  uniformly elliptic: the eigenvalues of
$\sigma(x)\sigma^*(x)$ are bounded away from zero. In particular, the flow $(\flow^t)_{t\in\R}$ generated by the vector field~$b$ is well defined by
\begin{align}
\begin{cases}\label{eq:flow}
\frac{d}{dt}\flow^t x&= b(\flow^t x),\quad t\in\R,\\
\flow^0x&=x.
\end{cases}
\end{align} 

\item\label{setting:geometry-domain}  
Suppose a simply connected domain $D$ with a simple closed boundary $\partial D$  contains~$O$,\
a hyperbolic critical point of $b$ with eigenvalues of the linearization of $b$ at~$O$ being $\lambda>0$ and $-\mu<0$. (We refer to Sections~2.7 and~2.8 of~\cite{Perko:MR1801796} for the basics of local theory near hyperbolic critical points: the Hadamard--Perron theorem,  invariant stable/unstable manifolds, Hartman--Grobman theorem.) Let $x_0\in D$ belong to the  stable manifold~$\Ws$ of~$O$
\[
\Ws=\left\{x\in\R^2: \lim_{t\to+\infty}\flow^tx=O\right\}.
\]
Let $v$ be a vector not collinear with  $b(x_0)$ and such that $x_0+[-1,1]v\subset D$. Let  $\Wu$ be the unstable manifold 
\[
\Wu=\left\{x\in\R^2: \lim_{t\to-\infty}\flow^tx=O\right\},
\]
and assume that on both sides of~$O$, it intersects $\partial D$ at points $q_\pm$ and there are no other points of intersection between $q_+$ and $q_-$.

Let us assume that there are vectors $v_\pm$  such that  $q_\pm+[-1,1]v_\pm\subset \partial  D$, and $v_\pm$ is not collinear with $b(q_\pm)$ (i.e., $\Wu$ is transversal to $\partial D$ at $q_\pm$). We also need to specify orientations for $v$ and $v_{\pm}$. We choose $v$ to point towards $q_+$
and $v_\pm$ to point towards $x_0$, see Figure~\ref{fig:1saddle}. We also require that if $y\in [-1,0)\cup (0,1]$,  then the trajectory $(\flow^t (x_0+yv))_{t\ge 0}$ exits $D$ transversally to $\partial D$ at $q_{+}+\pi(y)v_{+}$ (if $y>0$) or $q_{-}+\pi(y)v_{-}$ (if~$y<0$) for some $\pi(y)\in(-1,1)$.

\item \label{setting:initial-cond}
The initial condition satisfies
\begin{equation}
 X_{\e,0}=x_0+\e^\alpha \xi_\eps v,\quad \e>0,
\label{eq:entrance-scaling} 
\end{equation}
for some $\alpha\in(0,1]$ and a family $(\xi_\e)_{\e>0}$
of r.v.'s 
satisfying
$\eps^\alpha|\xi_\e|\le 1$ and
measurable with respect to~$\Fc_0$ (and thus independent of the noise realizations).
\item \label{setting:entrance-scaling-limit}  As $\e\to0$,  $\xi_\e$ converge in 
distribution to  a r.v.~$\xi$. If $\alpha<1$, then $\xi$ has no atom at~$0$, i.e., $\Pp\{\xi=0\}=0$.
\end{enumerate}

Conditions \ref{setting:initial-cond} and \ref{setting:entrance-scaling-limit} are tightly related to one another but 
in the coming sections it will be convenient to use them separately.

For each $\e>0$, we define
\[\tau_\e=\inf\{t\ge 0: X_{\e,t}\in\partial D\}.\]
and
\begin{equation}
\label{eq:def-Apm}
A_{\pm,\e}=\{X_{\e,\tau_\e}\in q_\pm+[-1,1]v_\pm\}.
\end{equation}

\begin{theorem}[\cite{Bakhtin2011},\cite{Almada-Bakhtin:MR2802310}]\label{th:poincare-saddle} 
Under assumptions~\ref{setting:general}, \ref{setting:geometry-domain}, \ref{setting:initial-cond}, \ref{setting:entrance-scaling-limit}, let us 
introduce  $\alpha'$ by~\eqref{eq:rho}--\eqref{eq:new-alpha} and define 
 r.v.'s $(\xi'_{\e})_{\e>0}$
 on $A_{\pm,\e}$ by
\begin{equation}
\label{eq:scaling-at-exit}
 X_{\e,\tau_\eps}=q_\pm+\e^{\alpha'}\xi'_{\eps}v_{\pm},
\end{equation}
and arbitrarily outside of $A_{+,\e}\cup A_{-,\e}$.

Then  there is
a r.v.~$\xi'$ with no atom at~$0$ and a partition into events $A_\pm$ (i.e., $A_+$ and $A_-$ are disjoint and $p_\pm=\Pp(A_\pm)$ satisfy
$p_++p_-=1$), such that 
\begin{enumerate}
\item 
As $\e\to 0$, 
\[
(\ONE_{A_{\e,+}},\ONE_{A_{\e,-}},\xi'_{\e})\indistr (\ONE_{A_+},\ONE_{A_-}, \xi').
\] 

\item \label{item:distr-of-xi-prime}
\begin{enumerate}
 \item
 If $\alpha'=\alpha\rho<1$, then  $\Pp\{\xi'>0\}=1$.
 \item If $\alpha\rho>1$,
then the distribution of $\xi'$ is symmetric Gaussian. 
\end{enumerate}

\item\label{item:prob-exit} 
 \begin{enumerate}
  \item If $\alpha<1$ and $\Pp\{\xi>0\}=1$, then $p_+=1$, $p_-=0$. 
  \item  If $\alpha<1$ and $\Pp\{\xi<0\}=1$, then $p_+=0$, $p_-=1$. \item
If the distribution of $\xi$ is symmetric, then $p_+=p_-=1/2$.
\end{enumerate}
\item\label{item:exit_time_is_log}  As $\e\to 0$, 
\begin{equation}
\label{eq:exit-time-rough-asymp}
\frac{\tau_\e}{\frac{\alpha}{\lambda}\log \e^{-1}}\inprob 1. 
\end{equation}
\end{enumerate}
\end{theorem}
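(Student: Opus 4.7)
The strategy is to reduce to the linear model \eqref{eq:linear-system1}--\eqref{eq:linear-system2}, whose exit behavior is already analyzed in the excerpt up to \eqref{eq:scaling-limit-distr}. I would split the trajectory into three phases: (a) the nearly-deterministic approach along $\Ws$ from $x_0$ to the boundary of a small neighborhood $U$ of $O$; (b) the genuinely stochastic exit from $U$; (c) the nearly-deterministic transport along $\Wu$ from $\partial U$ to $\partial D$. As a prerequisite, choose coordinates in $U$ in which $\Ws$ and $\Wu$ lie along the axes and $b$ is the linear diagonal field of \eqref{eq:linear-system1}--\eqref{eq:linear-system2} plus a smooth quadratic remainder; this rectification is precisely the content of Section~\ref{sec:rectified}.

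For phase (a), exponential contraction along $\Ws$ combined with standard small-noise pathwise control shows that, in rectified coordinates, $X_\eps$ hits $\partial U$ at a point $(\eps^{\alpha}\widetilde{\xi}_\eps, L_\eps)$ with $\widetilde{\xi}_\eps \indistr \widetilde{\xi}$ and $L_\eps\to L\neq 0$, where $\widetilde{\xi}$ is a smooth image of $\xi$ that inherits the no-atom-at-zero property when $\alpha<1$. This matches \eqref{eq:entrance-distr-linear1}--\eqref{eq:entrance-distr-linear2} up to lower-order terms. For phase (b), apply Duhamel to the rectified SDE to obtain analogues of \eqref{eq:Duhamel1}--\eqref{eq:Duhamel2} with additional remainder terms, and make the heuristic \eqref{eq:weak_convergence_to_Gauss}--\eqref{eq:scaling-limit-distr} rigorous via three ingredients: (i) $\tau_\eps \inprob \infty$, obtained by sandwiching the unstable component between linear comparison equations; (ii) the Gaussian weak limit \eqref{eq:weak_convergence_to_Gauss}, from the a.s.\ convergence in \eqref{eq:lim-of-U} combined with $\tau_\eps\to\infty$ and tightness of the remainders; (iii) a bound showing that the quadratic corrections shift the exit location by $o(\eps^{\alpha'})$ and leave $\sgn Z_\eps$ unchanged on the event of interest. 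Phase (c) is a deterministic computation: smooth dependence of $\flow^t$ on initial data together with transversality of $\Wu$ to $\partial D$ at $q_\pm$ (assumption~\ref{setting:geometry-domain}) transports the $\eps^{\alpha'}$-scale fluctuation of the exit from $U$ to an $\eps^{\alpha'}$-scale fluctuation of the hitting point on $\partial D$, yielding \eqref{eq:scaling-at-exit}.

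The four conclusions are then read off from the linear analysis. Part (1) is the joint weak convergence assembled from phases (a)--(c). Part (2) follows from \eqref{eq:scaling-limit-distr}: when $\alpha\rho<1$ the limit is a positive power of $|Z|$ and when $\alpha\rho>1$ it is the centered Gaussian $\NN$. Part (3) follows from the discussion of signs of $Z = \widetilde{\xi} + \ONE_{\alpha=1}\frU$ below \eqref{eq:exit-direction}, since one-sidedness of $\xi$ transfers to $Z$ when $\alpha<1$ and symmetry transfers in both cases. Part (4) follows from \eqref{eq:expr-for-tau}, since $|Z_\eps|$ is tight and bounded away from $0$ in probability thanks to the no-atom assumption in \ref{setting:entrance-scaling-limit} (or, when $\alpha=1$, to the Gaussian convolution structure of $Z$).

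The main obstacle is step (iii) of phase (b): controlling the quadratic remainders uniformly over the logarithmically long window $\tau_\eps \asymp \lambda^{-1}\log\eps^{-1}$. A naive Gronwall bound loses precisely the factor needed. The remedy is to exploit the rectified splitting between stable and unstable coordinates---the stable component inherits the exponential contraction at rate $\mu$ and decays to the noise floor $\eps$ well before $\tau_\eps$, while the unstable component remains much smaller than $R$ until just before $\tau_\eps$, so that any mixed stable/unstable term in the remainder is $o(\eps^{\alpha'})$ with high probability. This coordinate-wise treatment, enabled by the rectification of Section~\ref{sec:rectified}, is what makes the reduction to the fully explicit linear model legitimate.
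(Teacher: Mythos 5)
Your proposal is correct and follows essentially the route the paper itself indicates for Theorem~\ref{th:poincare-saddle}: the paper does not reprove it but cites \cite{Bakhtin2011} and \cite{Almada-Bakhtin:MR2802310}, describing exactly your strategy of a linearizing conjugacy followed by the explicit computation for the model system \eqref{eq:linear-system1}--\eqref{eq:linear-system2}, and your three-phase decomposition (approach along $\Ws$, exit from a rectified neighborhood, transport along $\Wu$) is precisely the one made rigorous later in Lemma~\ref{lem:3-stages} and Sections~\ref{sec:rectified}--\ref{sec:original-coords}. The one caveat is that your rectification step presupposes a smooth linearizing conjugacy as in \ref{setting:conjugacy}, which Theorem~\ref{th:poincare-saddle} does not assume; in resonant cases no such conjugacy exists and one must instead control resonant monomials in the normal form, which is the additional work carried out in \cite{Almada-Bakhtin:MR2802310}.
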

In fact, more precise asymptotics for the exit time similar to~\eqref{eq:typical-exit-time-distr} is available but for our purposes,~\eqref{eq:exit-time-rough-asymp}~is sufficient.

One can say that this lemma describes the asymptotics of the random Poincar\'e map defined by the system~\eqref{eq:linear-system1},\eqref{eq:linear-system2} in the neighborhood of the origin. It claims that if the entrance point to the neighborhood satisfies the scaling relation~\eqref{eq:entrance-scaling}, then as $\e\to 0$, the probabilities of exiting along the branches of the invariant manifold associated to the main eigenvalue of the linearization stabilize to limiting values $p_+$ and $p_-=1-p_+$. For each of the two points of concentration of the exit distribution, the random exit point in its vicinity satisfies a scaling relation of the same
type~ \eqref{eq:scaling-at-exit}, with new scaling $\e^{\alpha'}$ in front of a random vector $\xi'v_+$ on $A_+$ and  $\xi'v_-$ on $A_-$. 

To prove this theorem, one must apply a simplifying conjugacy. According to the Hartman--Grobman theorem, for every hyperbolic critical point, there is a continuous change of coordinates in a sufficiently small neighborhood conjugating the dynamics generated by~\eqref{eq:basic-ode} to linear dynamics. Typically, this conjugacy possesses more smoothness, so one can apply the It\^o calculus and obtain, in new coordinates, a system similar to 
\eqref{eq:linear-system1}--\eqref{eq:linear-system2} but with small corrections and possibly non-diagonal diffusion matrix. This was studied in~\cite{Bakhtin2011}. In special resonant cases, conjugacy to a linear system is impossible, the appropriate normal
form contains resonant monomials of higher order but they also can be controlled and that was done in~\cite{Almada-Bakhtin:MR2802310}.

\subsection{Dynamics along heteroclinic connections}

In principle, one can take the domain $D$ to contain an arbitrarily large piece of the unstable manifold, and that is how we are going to proceed studying saddle after saddle. However, it is useful to remind that this is due to the fact that it typically takes nearly constant time to travel between neighborhoods of saddle points, and the character of the scaling does not change during this period. This is a special case of a more general and detailed theorem from~\cite{Almada-Bakhtin:MR2739004}: 
\begin{theorem}\label{th:along-hetero} 
 Let $D\subset \R^2$ be a domain with simple closed boundary $\partial D$. 
Suppose that $q\in D$
and  assume that the solution of the deterministic  equation~\eqref{eq:basic-ode} started at $q$ 
reaches  $\partial D$  in finite time~$T$ at a point ~$y$. We assume that there is  a vector $u$ not collinear with~$b(y)$ 
such that $y+[-1,1]u\subset \partial D$. 

Let $X_\e$, $\e>0$, solve the SDE~\eqref{eq:basic-sde} with initial condition
\[
X_{\e,0}=q+ \e^{\alpha}\xi_\e v,
\]
where $\alpha\in(0,1]$,  $v$ is a  vector not collinear with $b(q)$ and 
 r.v.'s $\xi_\e$ converge in distribution to some $\xi$ with no atom at $0$. 
 We assume that $u$ and $v$ point to the same side of the orbit of $q$, see Figure~\ref{fig:levinson}.
 
\begin{figure}[ht]
\centering
\begin{tikzpicture}

    \node (2) at (-4, 0) {};
    \node (3) at (4, 0) {};
    \node (4) at (2.75, 1.5) {};
    \node (5) at (2.75, -1.25) {};
    \node (6) at (-1.75, 1) {};
    \node (7) at (-1.95, -0.25) {};
    \node (q) at (-1.86, 0.34) {};
    \node (y) at (2.75, 0.045) {};
    \node (u) at (2.75, 1) {};
    \node (9) at (-1.5, 1.75) {};
    \node (10) at (-1.75, -1.25) {};

\foreach \n in {q,y}
        \node at (\n)[circle,fill,inner sep=1.25pt]{};

    \draw [thick, postaction={on each segment={mid arrow=black}}] [in=180, out=15] (2.center) to (3.center);
    \draw [thick][in=15, out=150, looseness=0.75] (4.center) to (9.center);
    \draw [thick][in=165, out=-165, looseness=1.50] (9.center) to (10.center);
    \draw [thick][in=-165, out=-15, looseness=0.75] (10.center) to (5.center);
    \draw [thick] (4.center) to (5.center);
    \draw [thick, dotted] (7.center) to (q.center);
    \draw [ultra thick, -latex] (q.center) to (6.center);
    \draw [ultra thick, -latex] (y.center) to (u.center);

    \node [left] at (2.75, 0.26) {$y$};
    \node [left] at (2.75, 1) {$u$};
    \node [left] at (1, 1) {$D$};
    \node [right] at (2.75, -0.8) {$\partial D$};
    \node [left] at (-1.86, 0.55) {$q$};
    \node [above] at (-1.75, 1) {$v$};
\end{tikzpicture}

\caption{Exit problem along a heteroclinic connection away from saddle points.}
\label{fig:levinson}
\end{figure}
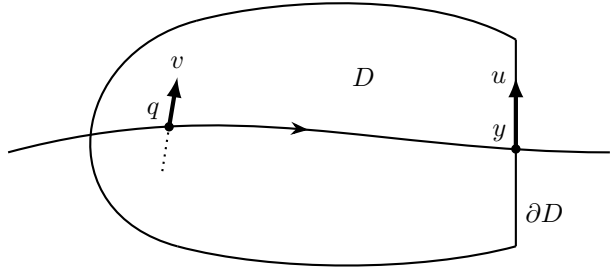

 Let \[\tau_\e=\inf\{t\ge 0:\ X_{\e,t}\in\partial D\}.\]

Then 
\[\tau_\e\inprob T,\]
and 
there  are numbers $a,b>0$ and r.v.'s $(\eta_\e)_{\e>0}$ such that
\[
\Pp\{X_{\e,\tau_\e}=y+\e^{\alpha}\eta_\e u\}\to 1,
\]
and
\[
\eta_\e\indistr a\xi  + b N \ONE_{\alpha=1},\quad \e \to 0,
\]
where   $N$ is a standard Gaussian r.v.\   independent of $\xi$. 
\end{theorem}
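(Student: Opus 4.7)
The plan is to compare $X_\eps$ with the deterministic orbit $\flow^t(q)$ on the compact interval $[0,T]$ via first-order perturbation theory, and then convert uniform approximation into convergence of the exit coordinate using the transversality of the orbit at~$y$.

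First I would show $\tau_\eps\inprob T$. Because $\eps^\alpha|\xi_\eps|\le 1$, the initial conditions converge to $q$ in probability, and a standard Gronwall/Freidlin--Wentzell estimate applied to \eqref{eq:basic-sde} gives $\sup_{t\le T+\delta}|X_{\eps,t}-\flow^t(q)|\inprob 0$ for any $\delta>0$. Since $b(y)$ is not collinear with $u$, the orbit crosses $\partial D$ transversally at $y$, and the implicit function theorem promotes this uniform convergence to $\tau_\eps\inprob T$.

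Next I would perform a first-variation expansion along the orbit. Let $J_t=D\flow^t(q)$ denote the flow Jacobian at $q$. Expanding $b(X_{\eps,t})$ around $\flow^t(X_{\eps,0})$ and applying Gronwall's lemma to the drift difference gives
\[
X_{\eps,t}=\flow^t(q)+\eps^\alpha \xi_\eps J_t v+\eps G_t+r_\eps(t),\qquad G_t=\int_0^t J_tJ_s^{-1}\sigma(\flow^s(q))\,dW_s,
\]
with $\sup_{t\le T+\delta}|r_\eps(t)|=o_P(\eps^\alpha\vee\eps)$. Evaluating at $t=\tau_\eps$ and freezing the coefficients at $T$ by continuity, we obtain
\[
X_{\eps,\tau_\eps}-\flow^{\tau_\eps}(q)=\eps^\alpha\xi_\eps J_T v+\eps G_T+o_P(\eps^\alpha\vee\eps).
\]
In the basis $\{b(y),u\}$, the deterministic residual $\flow^{\tau_\eps}(q)-y$ is tangent to the orbit at $y$ and hence lies in $\mathrm{span}\{b(y)\}$ to leading order, contributing nothing to the $u$-component. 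Parameterizing $\partial D$ near $y$ as $y+su+O(s^2)$ and projecting $X_{\eps,\tau_\eps}=y+\eps^\alpha\eta_\eps u$ onto $u$ along $b(y)$ yields
\[
\eta_\eps=\xi_\eps [J_T v]_u+\eps^{1-\alpha}[G_T]_u+o_P(1),
\]
where $[\,\cdot\,]_u$ denotes the $u$-component in the $\{b(y),u\}$-decomposition. Setting $a=[J_T v]_u$ (positive by the orientation assumption that $u$ and $v$ lie on the same side of the orbit) and writing $[G_T]_u=bN$ with $N$ standard Gaussian and $b>0$ by uniform ellipticity of $\sigma$, the independence of $N$ from $\xi$ follows from the $\Fc_0$-measurability of $\xi_\eps$, and Slutsky's theorem delivers $\eta_\eps\indistr a\xi+bN\ONE_{\alpha=1}$.

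The delicate step is the treatment of the deterministic residual $\flow^{\tau_\eps}(q)-y$: its magnitude is of order $|\tau_\eps-T|$, which itself depends on the perturbation, so one must verify that to first order this residual is purely in the $b(y)$-direction. This is conveniently formalized by applying the implicit function theorem directly to $F(t)=\pi_u(X_{\eps,t}-y)$, where $\pi_u$ is the projection onto $u$ along $b(y)$; this bypasses a separate analysis of $\tau_\eps$ and yields the first-order formula for $\eta_\eps$ without having to decouple the random stopping time from the transverse displacement by hand.
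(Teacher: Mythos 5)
Your proposal is correct and follows essentially the same route the paper takes: Theorem~\ref{th:along-hetero} is quoted from \cite{Almada-Bakhtin:MR2739004}, whose stated main ingredient is precisely the linearization of the stochastic dynamics along the deterministic orbit, and the paper's own proofs of the extensions (Lemmas~\ref{lem:expansion} and~\ref{lem:scal_lim}) carry out exactly your first-variation expansion $X_{\eps,t}=\flow^t(q)+\eps^\alpha\xi_\eps J_tv+\eps G_t+r_\eps(t)$ with Gronwall control of $r_\eps$, followed by projection onto the transversal direction along $b(y)$. Your flagged ``delicate step'' — that the tangential residual $\flow^{\tau_\eps}(q)-y$ contributes only $O(|\tau_\eps-T|^2)=o(\eps^\alpha)$ to the $u$-component — is handled identically in the paper via the estimate $I_1\le\eps^{2\alpha'-\alpha}$ with $\alpha'\in(\alpha/2,\alpha)$.
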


The main ingredient in the proof of this theorem is the linearization of the stochastic dynamics along the deterministic orbit of $x_0$. We will use the same approach to prove  Lemmas~\ref{lem:expansion} and~\ref{lem:scal_lim} extending Theorem~\ref{th:along-hetero}.
 
\subsection{A heteroclinic chain}We continue rewriting the results of~\cite{Bakhtin2011} in a convenient way, also preparing the ground for the new results. Now we can apply Theorem~\ref{th:poincare-saddle} iteratively and compute the asymptotic probability of traveling
along each finite pathway through the graph of heteroclinic connections. 

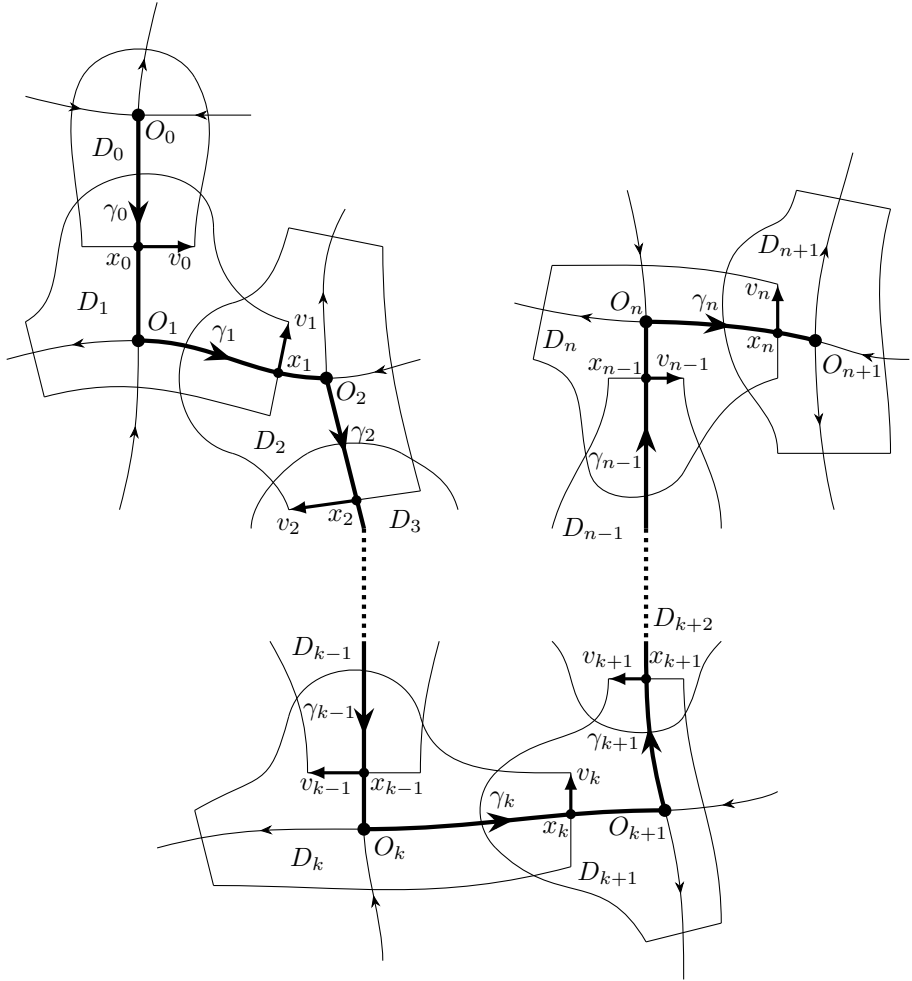
\begin{figure}[ht]
\centering
\begin{tikzpicture}

    \node (0) at (-5.75, 6.25) {};
    \node (1) at (-4.25, 6) {};
    \node (2) at (-4, 7.5) {};
    \node (3) at (-2.75, 6) {};
    \node (4) at (-4.25, 3) {};
    \node (5) at (-6, 2.75) {};
    \node (6) at (-4.5, 0.75) {};
    \node (7) at (-1.75, 2.5) {};
    \node (8) at (-1.5, 4.75) {};
    \node (9) at (-0.5, 2.75) {};
    \node (10) at (-1.25, 0.5) {};
    \node (11) at (0.75, 3.5) {};
    \node (12) at (2.5, 3.25) {};
    \node (13) at (2.25, 5) {};
    \node (14) at (4.75, 3) {};
    \node (15) at (5.25, 5.5) {};
    \node (16) at (6, 2.75) {};
    \node (17) at (5, 0.75) {};
    \node (18) at (2.5, 0.5) {};
    \node (19) at (-1.25, -1) {};
    \node (20) at (-1.25, -3.5) {};
    \node (21) at (-4, -3.75) {};
    \node (22) at (-1, -5.25) {};
    \node (23) at (2.75, -3.25) {};
    \node (24) at (2.5, -1) {};
    \node (25) at (4.25, -3) {};
    \node (26) at (3, -5.5) {};
    \node (27) at (-5, 4.25) {};
    \node (28) at (-5, 6.5) {};
    \node (29) at (-3.5, 6.5) {};
    \node (30) at (-3.5, 4.25) {};
    \node (31) at (-5.75, 3.25) {};
    \node (32) at (-5.5, 2.25) {};
    \node (33) at (-5.25, 4.5) {};
    \node (34) at (-3.25, 4.5) {};
    \node (35) at (-2.25, 3.25) {};
    \node (36) at (-2.5, 2) {};
    \node (37) at (-2.25, 4.5) {};
    \node (38) at (-1, 4.25) {};
    \node (39) at (-3, 3.5) {};
    \node (40) at (-3, 1.5) {};
    \node (41) at (-2.25, 0.75) {};
    \node (42) at (-0.5, 1) {};
    \node (43) at (-2.75, 0.5) {};
    \node (44) at (-2, 1.5) {};
    \node (45) at (-0.75, 1.5) {};
    \node (46) at (0, 0.75) {};
    \node (47) at (1.25, 4) {};
    \node (48) at (1, 2.75) {};
    \node (49) at (4.25, 3.75) {};
    \node (50) at (4.25, 2.5) {};
    \node (51) at (1.75, 1.25) {};
    \node (52) at (3, 1.25) {};
    \node (53) at (2, 2.5) {};
    \node (54) at (3, 2.5) {};
    \node (55) at (1.25, 0.5) {};
    \node (56) at (3.5, 0.5) {};
    \node (57) at (4.5, 5) {};
    \node (58) at (5.75, 4.75) {};
    \node (59) at (5.75, 1.5) {};
    \node (60) at (4.25, 1.5) {};
    \node (61) at (3.75, 4.25) {};
    \node (62) at (3.75, 2.5) {};
    \node (63) at (-2.5, -1) {};
    \node (64) at (-2, -2.75) {};
    \node (65) at (-0.5, -2.75) {};
    \node (66) at (-0.25, -1) {};
    \node (67) at (-3.5, -3.25) {};
    \node (68) at (-3.25, -4.25) {};
    \node (69) at (-2.25, -2) {};
    \node (70) at (-0.5, -2) {};
    \node (71) at (1.5, -2.75) {};
    \node (72) at (1.5, -4) {};
    \node (73) at (0.5, -2.75) {};
    \node (74) at (0.5, -3.75) {};
    \node (75) at (2.5, -5) {};
    \node (76) at (3.5, -4.75) {};
    \node (77) at (3, -1.5) {};
    \node (78) at (2, -1.5) {};
    \node (79) at (1.25, -1) {};
    \node (80) at (1.75, -2) {};
    \node (81) at (3, -2) {};
    \node (82) at (3.5, -1) {};

    \node (x0) at (-4.25, 4.25) {};
    \node (v0) at (-3.5, 4.25) {};
    \node (x1) at (-2.39, 2.57) {};
    \node (v1) at (-2.25, 3.25) {};
    \node (x2) at (-1.35, 0.875) {};
    \node (v2) at (-2.25, 0.75) {};
    \node (xk-1) at (-1.25, -2.75) {};
    \node (vk-1) at (-2, -2.75) {};
    \node (xk) at (1.5, -3.3) {};
    \node (vk) at (1.5, -2.75) {};
    \node (xk+1) at (2.5, -1.5) {};
    \node (vk+1) at (2, -1.5) {};
    \node (xn-1) at (2.5, 2.5) {};
    \node (vn-1) at (3, 2.5) {};
    \node (xn) at (4.25, 3.1) {};
    \node (vn) at (4.25, 3.75) {};

\foreach \n in {1, 4, 7, 12, 14, 20, 23}
        \node at (\n)[circle,fill,inner sep=1.75pt]{};

        \foreach \n in {x0, x1, x2, xk-1, xk, xk+1, xn-1,xn}
        \node at (\n)[circle,fill,inner sep=1.35pt]{};

\draw [very thick, -latex] (x0.center) to (v0.center);
    \draw [very thick, -latex] (x1.center) to (v1.center);
    \draw [very thick, -latex] (x2.center) to (v2.center);
    \draw [very thick, -latex] (xk-1.center) to (vk-1.center);
    \draw [very thick, -latex] (xk.center) to (vk.center);
    \draw [very thick, -latex] (xk+1.center) to (vk+1.center);
    \draw [very thick, -latex] (xn-1.center) to (vn-1.center);
    \draw [very thick, -latex] (xn.center) to (vn.center);

    \draw [postaction={on each segment={mid arrow=black}}] [in=180, out=-15, looseness=0.75] (0.center) to (1.center);
    \draw [postaction={on each segment={mid arrow=black}}] [in=360, out=180] (3.center) to (1.center);
    \draw [postaction={on each segment={mid arrow=black}}] [in=-105, out=90, looseness=0.75] (1.center) to (2.center);
    \draw [ultra thick, postaction={on each segment={mid arrow=black}}] (1.center) to (4.center);
    \draw [postaction={on each segment={mid arrow=black}}] [in=15, out=180] (4.center) to (5.center);
    \draw [ultra thick, postaction={on each segment={mid arrow=black}}] [in=180, out=0] (4.center) to (7.center);
    \draw [postaction={on each segment={mid arrow=black}}] [in=360, out=-165] (9.center) to (7.center);
    \draw [postaction={on each segment={mid arrow=black}}] [in=270, out=75] (6.center) to (4.center);
    \draw [postaction={on each segment={mid arrow=black}}] [in=-120, out=90] (7.center) to (8.center);
    \draw [ultra thick, postaction={on each segment={mid arrow=black}}] (7.center) to (10.center);
    \draw [postaction={on each segment={mid arrow=black}}] [in=-15, out=180, looseness=0.75] (12.center) to (11.center);
    \draw [ultra thick, postaction={on each segment={mid arrow=black}}] [in=165, out=0, looseness=0.75] (12.center) to (14.center);
    \draw [postaction={on each segment={mid arrow=black}}] [in=90, out=-75, looseness=0.75] (13.center) to (12.center);
    \draw [ultra thick, postaction={on each segment={mid arrow=black}}] [in=270, out=90] (18.center) to (12.center);
    \draw [postaction={on each segment={mid arrow=black}}] [in=-15, out=180, looseness=1.25] (16.center) to (14.center);
    \draw [postaction={on each segment={mid arrow=black}}] [in=255, out=90] (14.center) to (15.center);
    \draw [postaction={on each segment={mid arrow=black}}] [in=105, out=-90, looseness=0.75] (14.center) to (17.center);
    \draw [postaction={on each segment={mid arrow=black}}] [in=15, out=180] (20.center) to (21.center);
    \draw [ultra thick, postaction={on each segment={mid arrow=black}}] [in=90, out=-90] (19.center) to (20.center);
    \draw [postaction={on each segment={mid arrow=black}}] [in=-90, out=90, looseness=0.75] (22.center) to (20.center);
    \draw [ultra thick, postaction={on each segment={mid arrow=black}}] [in=180, out=0] (20.center) to (23.center);
    \draw [postaction={on each segment={mid arrow=black}}] [in=0, out=-150, looseness=0.50] (25.center) to (23.center);
    \draw [ultra thick, postaction={on each segment={mid arrow=black}}] [in=270, out=105] (23.center) to (24.center);
    \draw [postaction={on each segment={mid arrow=black}}] [in=90, out=-75] (23.center) to (26.center);

    \draw [in=-120, out=90, looseness=0.75] (27.center) to (28.center);
    \draw [in=120, out=60] (28.center) to (29.center);
    \draw [in=90, out=-60] (29.center) to (30.center);
    \draw (27.center) to (30.center);
    \draw (31.center) to (32.center);
    \draw [in=-105, out=30] (31.center) to (33.center);
    \draw [in=90, out=75, looseness=1.25] (33.center) to (34.center);
    \draw [in=165, out=-90] (34.center) to (35.center);
    \draw (35.center) to (36.center);
    \draw [in=15, out=165] (36.center) to (32.center);
    \draw [in=15, out=-105] (37.center) to (39.center);
    \draw [in=165, out=-165, looseness=1.25] (39.center) to (40.center);
    \draw [in=105, out=-15, looseness=0.75] (40.center) to (41.center);
    \draw (41.center) to (42.center);
    \draw [in=270, out=105] (42.center) to (38.center);
    \draw (38.center) to (37.center);
    \draw [in=-150, out=90, looseness=0.75] (43.center) to (44.center);
    \draw [in=150, out=30, looseness=0.75] (44.center) to (45.center);
    \draw [in=105, out=-30] (45.center) to (46.center);
    \draw (47.center) to (48.center);
    \draw [in=105, out=-15, looseness=1.25] (48.center) to (51.center);
    \draw [in=-120, out=-75] (51.center) to (52.center);
    \draw [in=-165, out=60] (52.center) to (50.center);
    \draw (49.center) to (50.center);
    \draw [in=165, out=0] (47.center) to (49.center);
    \draw (53.center) to (54.center);
    \draw [in=-90, out=75] (55.center) to (53.center);
    \draw [in=90, out=-90] (54.center) to (56.center);
    \draw (57.center) to (58.center);
    \draw [in=90, out=-105] (58.center) to (59.center);
    \draw (59.center) to (60.center);
    \draw [in=-60, out=90] (60.center) to (62.center);
    \draw [in=-135, out=120, looseness=0.75] (62.center) to (61.center);
    \draw [in=-105, out=45, looseness=1.50] (61.center) to (57.center);
    \draw [in=90, out=-60] (63.center) to (64.center);
    \draw (64.center) to (65.center);
    \draw [in=255, out=90] (65.center) to (66.center);
    \draw (67.center) to (68.center);
    \draw [in=-165, out=0] (68.center) to (72.center);
    \draw (72.center) to (71.center);
    \draw [in=-105, out=30, looseness=1.50] (67.center) to (69.center);
    \draw [in=105, out=75, looseness=1.25] (69.center) to (70.center);
    \draw [in=180, out=-75] (70.center) to (71.center);
    \draw [in=45, out=-90, looseness=1.25] (78.center) to (73.center);
    \draw [in=135, out=-135] (73.center) to (74.center);
    \draw [in=120, out=-45, looseness=1.25] (74.center) to (75.center);
    \draw (75.center) to (76.center);
    \draw [in=-90, out=90] (76.center) to (77.center);
    \draw (77.center) to (78.center);
    \draw [in=135, out=-45] (79.center) to (80.center);
    \draw [in=-120, out=-45, looseness=0.75] (80.center) to (81.center);
    \draw [in=-135, out=60, looseness=1.25] (81.center) to (82.center);

    \draw [ultra thick, dotted] (10.center) to (19.center);
    \draw [ultra thick, dotted] (18.center) to (24.center);

    \node [below] at (-3.95, 6.05) {$O_0$};
    \node at (-4.65, 5.55) {$D_0$};
    \node [left] at (-4.25, 4.7) {$\gamma_0$};
    \node [left] at (-4.2, 4.05) {$x_0$};
    \node [left] at (-3.4, 4.05) {$v_0$};

    \node [right] at (-2.4, 2.7) {$x_1$};
    \node [right] at (-2.3, 3.25) {$v_1$};
    \node [right] at (-4.25, 3.23) {$O_1$};
    \node [left] at (-4.5, 3.5) {$D_1$};
    \node [above] at (-3.1, 2.85) {$\gamma_1$};

    \node [below] at (-1.55, 0.875) {$x_2$};
    \node [below] at (-2.25, 0.75) {$v_2$};
    \node [right] at (-1.75, 2.3) {$O_2$};
    \node at (-2.5, 1.65) {$D_2$};
    \node at (-1.25, 1.75) {$\gamma_2$};

    \node [below] at (-0.7, 0.875) {$D_3$};

    \node [below] at (-0.8, -2.7) {$x_{k-1}$};
    \node [below] at (-1.75, -2.7) {$v_{k-1}$};
    \node at (-1.8, -1.1) {$D_{k-1}$};
    \node [below] at (-1.7, -1.7) {$\gamma_{k-1}$};

    \node [right] at (1, -3.5) {$x_k$};
    \node [right] at (1.45, -2.8) {$v_k$};
    \node  at (-0.9, -3.75) {$O_k$};
    \node  at (-2, -3.9) {$D_k$};
    \node [above] at (0.6, -3.35) {$\gamma_k$};

    \node [above] at (2.9, -1.5) {$x_{k+1}$};
    \node [above] at (2, -1.5) {$v_{k+1}$};
    \node at (2.4, -3.5) {$O_{k+1}$};
    \node at (2, -4.1) {$D_{k+1}$};
    \node [left] at (2.55, -2.35) {$\gamma_{k+1}$};

    \node [above] at (3, -1) {$D_{k+2}$};

    \node [above] at (2.1, 2.4) {$x_{n-1}$};
    \node [above] at (3, 2.45) {$v_{n-1}$};
    \node [above] at (1.8, 0.2) {$D_{n-1}$};
    \node [above] at (2.1, 1.15) {$\gamma_{n-1}$};

    \node [left] at (4.35, 2.95) {$x_n$};
    \node [left] at (4.3, 3.65) {$v_n$};
    \node at (2.25, 3.5) {$O_n$};
    \node at (1.36, 3) {$D_n$};
    \node at (3.3, 3.5) {$\gamma_n$};

    \node at (5.25, 2.65) {$O_{n+1}$};
    \node at (4.4, 4.25) {$D_{n+1}$};
\end{tikzpicture}

\caption{A heteroclinic chain.}
{\label{fig:heteroc-chain}}
\end{figure}

Let us describe the geometric setup first, see Figure~\ref{fig:heteroc-chain}.
\begin{enumerate}[label={\bf(\Alph*)}, resume*=setting]
\item\label{setting:het-chain}
There is a sequence of saddle points $O_0,O_1,O_2,\ldots,O_n,O_{n+1}$ (in principle, one does not really need points $O_0$ and $O_{n+1}$ but we include them for notational convenience)
with eigenvalues of linearization at $O_k$ being $\lambda_k>0$ and $-\mu_k<0$,  \stindex $\rho_k=\mu_k/\lambda_k$,
and heteroclinic orbits $\gamma_0,\gamma_1,\ldots,\gamma_{n}$ between them as on Figure~\ref{fig:heteroc-chain}, so
that $\gamma_k$ connects $O_k$ to $O_{k+1}$ for $k=0,\ldots,n$. These heteroclinic connections are said to form a
{\it heteroclinic chain}.

For each $k=0,\ldots,n+1$, we plot a domain $D_k$ containing $O_k$, so that
for all $k=0,\ldots,n$, the following holds:
$D_k\cap \gamma_k$ and $D_{k+1}\cap\gamma_{k}$ are connected sets;
$D_{k}\cap D_{k+1} \cap \gamma_k\ne\emptyset$;  $\partial D_k$ intersects $\gamma_k$ at a point $x_k$ transversally, moreover,
there is a vector $v_k$ not collinear with $b(x_k)$ such that $x_k+[-1,1]v_k\subset \partial D_k$. For $k\ge 1$ out of two possible directions for $v_k$ we choose~$v_k$ to point towards $\gamma_{k-1}$. For $k=0$, out of two possible directions for $v_0$ we choose $v_0$ to point towards~$\gamma_1$.

\item \label{setting:scaling-at-saddle-0} The diffusion starts near $x_0$ and, for some $\alpha_0\in(0,1]$ and r.v. $\xi_{0,\e}$,  satisfies 
\begin{equation}
\label{eq:initial-condition-near-chain}
X_{\e,0}=x_0+\e^{\alpha_0}\xi_{0,\e}v_0,\quad \e>0.
\end{equation}
\item \label{setting:scaling-limit-at-saddle-0}
As $\e\to0$, $\xi_{0,\e}$ converges in distribution to a r.v.\  $\xi_0$.  If  $\alpha_0<1$, then we require  that  $\Pp\{\xi_0=0\}=0$.
\end{enumerate}

Let us define a sequence of stopping times $(\tau^k_\e)_{k=0,\dots,n}$ iteratively:
we set $\tau^0_\e=0$ and then, sequentially, for  $k=1,\ldots,n$, we  set
\begin{gather}
    \nu^k_\e=\inf\{t\ge \tau^{k-1}_\e: X_\e(t)\in\partial D_k\}, \notag\\
    \tau^k_\e=\begin{cases}
    \nu^k_\e,&  \text{\rm \  if\ } \nu^k_\e <\infty \text{\rm \  and\ } X_\e(\nu^k_\eps)\in x_k+[-1,1]v_k,\\
    \infty,& \text{\rm otherwise}.
    \end{cases}\label{eq:tau^k_eps}
\end{gather}

Using the strong Markov property and Theorem~\ref{th:poincare-saddle} iteratively, we obtain sequences 
  $(\alpha_k)_{k=0,\ldots,n}$ of exponents, r.v.'s $(\xi_{k,\e})_{k=1,\ldots,n;\ \eps>0}$, 
 $(\xi_k)_{k=1,\ldots,n}$ and events
  $(A_k)_{k=1,\ldots,n}$ such that 
\begin{equation}
\label{eq:seq-scaling}
X_{\e,\tau^k_\e}=x_k+\e^{\alpha_k}\xi_{k,\e}v_k,\quad \e>0,  
\end{equation}
holds on
\begin{align}\label{eq:A_k}
    A_{k,\e}=\{\tau^k_\eps<\infty\}
\end{align}
for $k=0,1,\ldots,n$, 
and
\begin{equation}
\label{eq:distr-conv-in-scaling}
(\ONE_{A_{k,\e}},\ONE_{A_{k,\e}}\xi_{k,\e})\indistr(\ONE_{A_k},\ONE_{A_k}\xi_k),\quad \e\to 0.
\end{equation}

Due to ~\eqref{eq:new-alpha},the sequence $(\alpha_k)_{k=0,\ldots,n}$ of exponents satisfies a recursive relation
\begin{equation}
\label{eq:recursion_alpha}
\alpha_k=(\alpha_{k-1}\rho_k) \wedge 1.
\end{equation}

The relations~\eqref{eq:seq-scaling},~\eqref{eq:distr-conv-in-scaling} and the definition~\eqref{eq:recursion_alpha} are really  meaningful only if $p_k>0$,
where
\begin{equation}
\label{eq:prob-of-sequence}
p_k=\Pp(A_k)=\lim_{\e \to 0}\Pp(A_{k,\e}),
\end{equation}
is always well-defined.

However there are multiple situations where $p_k=0$. In fact, it follows from Theorem~\ref{th:poincare-saddle}~\eqref{item:distr-of-xi-prime} that if for some $k$, 
$\alpha_k<1$, then, conditioned on $A_k$,  the distribution of $\xi_k$ is concentrated on $(0,+\infty)$. Theorem~\ref{th:poincare-saddle}~\eqref{item:prob-exit} implies now that if  $\gamma_{k-1}$ and $\gamma_{k+1}$ are on the opposite sides of $\gamma_k$, i.e.,
  the union of curves $\gamma_{k-1},\gamma_{k},\gamma_{k+1}$ is N-shaped (see, e.g., curves $\gamma_0,\gamma_1,\gamma_2$ in Figure~\ref{fig:heteroc-chain})
then $p_k=0$. 

In other words, due to insufficient contraction near $O_{k}$ (and the preceding saddles of the heteroclinic chain), the probability of crossing the  heteroclinic connection~$\gamma_k$
while traveling along it from $O_k$ to $O_{k+1}$ is asymptotically zero, so while the 
diffusion near the
heteroclinic chain experiences insufficient contraction
(i.e., $\alpha_k<1$ for exponents $\alpha_k$ defined via~\eqref{eq:recursion_alpha}) it will typically stay on one side of the heteroclinic chain.  However, once a value $\alpha_k=1$ is reached due to the presence of strong contraction 
(the stability index $\rho_k$ is large enough to ensure $\alpha_{k-1}\rho_{k}\ge 1$), there is a nonvanishing positive  chance to cross $\gamma_k$.

We can summarize the above as a theorem:
\begin{theorem}\label{thm:typical}
 Under the conditions~\ref{setting:general}, \ref{setting:het-chain}, \ref{setting:scaling-at-saddle-0}, \ref{setting:scaling-limit-at-saddle-0}, the following holds true:
 \begin{enumerate} 
 \item
The numbers $p_k, k=1,\ldots,n$, are well-defined by~\eqref{eq:prob-of-sequence}. 
\item If $p_{k}=0$ for some $k\in\{1,2,\ldots,n-1\}$, then $p_{k+1}=p_{k+2}=\ldots=p_n=0$.
\item Suppose $p_{k}>0$ for some $k\in\{1,2,\ldots,n-1\}$.
\begin{enumerate}
\item  \label{item:if-exp=1-bifurcation} If  $\alpha_k=1$, then $0<p_{k+1}<1$.
\item  If $\alpha_k<1$ and if $\gamma_{k-1}$ and $\gamma_{k+1}$ are on the same side of $\gamma_k$, then $p_{k+1}=p_k$.
\item \label{item:improbable-trans} If  $\alpha_k<1$, and if $\gamma_{k-1}$ and $\gamma_{k+1}$ are on the opposite sides of $\gamma_k$, then
$p_{k+1}=0$.
\item \label{item:time-heteroc-chain} If $p_n>0$, then, conditioned on $A_{n,\e}$,  
\[
\frac{\tau^n_\e}{\chi \log \e^{-1}} \inprob 1,\quad \e\to0,
\]
where
\[
\chi=\sum_{i=1}^n \frac{\alpha_{i-1}}{\lambda_i}.
\]
\end{enumerate} 
\end{enumerate}
\end{theorem}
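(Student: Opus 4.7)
The plan is to proceed by induction on $k$, repeatedly applying the strong Markov property at the stopping times $\tau^{k-1}_\e$ and combining Theorem~\ref{th:poincare-saddle} (exit from a saddle neighborhood) with Theorem~\ref{th:along-hetero} (transit along a heteroclinic connection). The inductive hypothesis at step $k-1\ge 0$ is that the scaling relation~\eqref{eq:seq-scaling} and convergence~\eqref{eq:distr-conv-in-scaling} hold, that $\alpha_{k-1}$ is given by~\eqref{eq:recursion_alpha}, and that the limit $\xi_{k-1}$ conditional on $A_{k-1}$ has no atom at zero whenever $\alpha_{k-1}<1$. The base case $k-1=0$ is precisely~\ref{setting:scaling-at-saddle-0}--\ref{setting:scaling-limit-at-saddle-0}.

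For the inductive step, I condition on $A_{k-1,\e}$ and restart the diffusion from $X_{\e,\tau^{k-1}_\e}=x_{k-1}+\e^{\alpha_{k-1}}\xi_{k-1,\e}v_{k-1}$. Theorem~\ref{th:along-hetero} propagates the scaling along $\gamma_{k-1}$ up to $\partial D_k$, yielding a new entrance scaling of exponent $\alpha_{k-1}$ with limiting r.v.\ an affine (possibly Gaussian-convolved) image of $\xi_{k-1}$, which again has no atom at zero in the relevant regime. Theorem~\ref{th:poincare-saddle} applied inside $D_k$ then produces the exit representation at level $k$ with exponent $\alpha_k=(\alpha_{k-1}\rho_k)\wedge 1$, establishing part (1) and the recursion~\eqref{eq:recursion_alpha}. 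Part (2) follows from the nesting $A_{k+1,\e}\subset A_{k,\e}$ encoded in~\eqref{eq:tau^k_eps}. For part (3), I apply Theorem~\ref{th:poincare-saddle}(\ref{item:distr-of-xi-prime})--(\ref{item:prob-exit}) at $O_{k+1}$ conditional on $A_k$: if $\alpha_k=1$, the limit $\xi_k\mid A_k$ has a symmetric Gaussian component and thus gives positive mass to both exit sides, proving (a); if $\alpha_k<1$, the same theorem says $\xi_k\mid A_k$ is concentrated on a half-line, so the whole mass goes to the side dictated by the geometry of $\gamma_{k-1}$ relative to $\gamma_k$, yielding $p_{k+1}=p_k$ in the same-side case (b) and $p_{k+1}=0$ in the opposite-side case (c).

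For part (3)(d), I decompose $\tau^n_\e=\sum_{k=1}^n(\tau^k_\e-\tau^{k-1}_\e)$. On $A_{n,\e}$, each increment is the sum of a heteroclinic transit time, which converges in probability to a finite deterministic constant by Theorem~\ref{th:along-hetero}, and a saddle exit time from $D_k$, which by Theorem~\ref{th:poincare-saddle}(\ref{item:exit_time_is_log}) (sharpened by~\eqref{eq:typical-exit-time-distr}) is asymptotic to $(\alpha_{k-1}/\lambda_k)\log\e^{-1}$ in probability. Summing over $k$ and dividing by $\log\e^{-1}$ absorbs the order-one transit contributions and produces the constant $\chi=\sum_{i=1}^n \alpha_{i-1}/\lambda_i$.

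The main obstacle is the bookkeeping needed to ensure that the hypotheses of Theorem~\ref{th:poincare-saddle} really do propagate across the composition of a transit along $\gamma_{k-1}$ and the subsequent exit from $D_k$: in particular, one must verify that the no-atom-at-zero property of the entrance r.v.\ is preserved whenever $\alpha_k<1$ (so that the next application of Theorem~\ref{th:poincare-saddle} is legitimate) and that all conditioning is done on events of \emph{positive} limiting probability $p_k$, the case $p_k=0$ being handled trivially by part (2). Tracking the one-sidedness of $\xi_k\mid A_k$ when $\alpha_k<1$, and relating it to the geometric side of $\gamma_k$ through the orientation conventions on $v_k$ in~\ref{setting:het-chain}, is the delicate geometric ingredient that drives the dichotomy in part (3)(b)--(c) and encodes the fact that, absent strong contraction, trajectories cannot cross the heteroclinic chain.
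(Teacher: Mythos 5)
Your proposal is correct and takes essentially the same route as the paper: the paper derives Theorem~\ref{thm:typical} exactly by iterating Theorem~\ref{th:poincare-saddle} via the strong Markov property at the times $\tau^k_\e$, propagating the scaling relation~\eqref{eq:seq-scaling}, the recursion~\eqref{eq:recursion_alpha}, and the no-atom-at-zero/one-sidedness properties of the limits $\xi_k$, with the heteroclinic transits contributing only order-one times and affine (possibly Gaussian-convolved) maps of the scaling variables. The only cosmetic difference is that the paper absorbs the transit along $\gamma_{k-1}$ into an enlarged exit domain for a single application of Theorem~\ref{th:poincare-saddle}, whereas you compose Theorem~\ref{th:along-hetero} with the saddle exit explicitly; the two bookkeepings are equivalent, as the paper itself notes.
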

Part \ref{item:time-heteroc-chain} of Theorem~\ref{thm:typical} means that this theorem is relevant for  time scales logarithmic in $\e^{-1}$.
 It describes 
typical and unlikely sequences of heteroclinic connections followed by the diffusion over those times.
However, it does not describe the rate of the improbable transitions or the mechanism of their emergence and thus implies little for longer time scales. The quantitative analysis of asymptotically improbable transitions described in 
part~\ref{item:improbable-trans} of the theorem is the main goal of this paper.

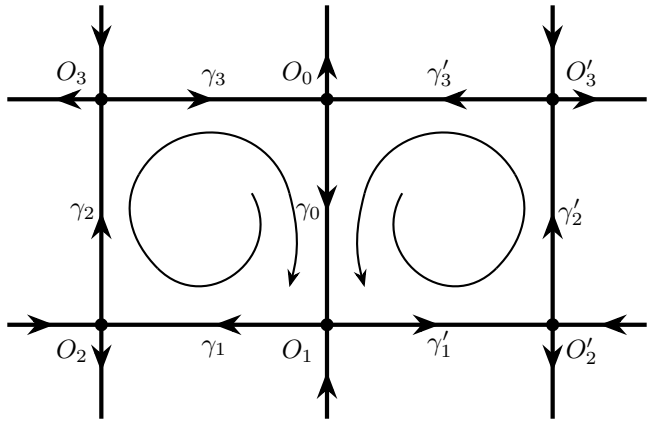
\begin{figure}[ht]
\centering
\begin{tikzpicture}

    \node (0) at (0, 0) {};
    \node (1) at (3, 0) {};
    \node (2) at (3, 3) {};
    \node (3) at (0, 3) {};
    \node (4) at (-3, 3) {};
    \node (5) at (-3, 0) {};
    \node (6) at (-3, -1.25) {};
    \node (7) at (0, -1.25) {};
    \node (8) at (3, -1.25) {};
    \node (9) at (4.25, 0) {};
    \node (10) at (4.25, 3) {};
    \node (11) at (3, 4.25) {};
    \node (12) at (0, 4.25) {};
    \node (13) at (-3, 4.25) {};
    \node (14) at (-4.25, 3) {};
    \node (15) at (-4.25, 0) {};
    \node (16) at (-1, 1.75) {};
    \node (18) at (-2.25, 0.75) {};
    \node (19) at (-2.5, 2) {};
    \node (21) at (-0.5, 1.75) {};
    \node (22) at (-0.5, 0.5) {};
    \node (23) at (1, 1.75) {};
    \node (24) at (2.25, 0.75) {};
    \node (25) at (2.5, 2) {};
    \node (26) at (0.5, 1.75) {};
    \node (27) at (0.5, 0.5) {};

\foreach \n in {0, 1, 2, 3, 4, 5}
        \node at (\n)[circle,fill,inner sep=1.75pt]{};

    \draw [ultra thick, postaction={on each segment={mid arrow=black}}] (3.center) to (0.center);
    \draw [ultra thick, postaction={on each segment={mid arrow=black}}] (0.center) to (1.center);
    \draw [ultra thick, postaction={on each segment={mid arrow=black}}] (0.center) to (5.center);
    \draw [ultra thick, postaction={on each segment={mid arrow=black}}] (5.center) to (4.center);
    \draw [ultra thick, postaction={on each segment={mid arrow=black}}] (1.center) to (2.center);
    \draw [ultra thick, postaction={on each segment={mid arrow=black}}] (4.center) to (3.center);
    \draw [ultra thick, postaction={on each segment={mid arrow=black}}] (2.center) to (3.center);
    \draw [ultra thick, postaction={on each segment={mid arrow=black}}] (2.center) to (10.center);
    \draw [ultra thick, postaction={on each segment={mid arrow=black}}] (4.center) to (14.center);
    \draw [ultra thick, postaction={on each segment={mid arrow=black}}] (9.center) to (1.center);
    \draw [ultra thick, postaction={on each segment={mid arrow=black}}] (15.center) to (5.center);
    \draw [ultra thick, postaction={on each segment={mid arrow=black}}] (5.center) to (6.center);
    \draw [ultra thick, postaction={on each segment={mid arrow=black}}] (13.center) to (4.center);
    \draw [ultra thick, postaction={on each segment={mid arrow=black}}] (7.center) to (0.center);
    \draw [ultra thick, postaction={on each segment={mid arrow=black}}] (3.center) to (12.center);
    \draw [ultra thick, postaction={on each segment={mid arrow=black}}] (11.center) to (2.center);
    \draw [ultra thick, postaction={on each segment={mid arrow=black}}] (1.center) to (8.center);
    \draw [thick] [in=-120, out=135] (18.center) to (19.center);
    \draw [thick] [in=-45, out=-60, looseness=1.50] (16.center) to (18.center);
    \draw [thick] [in=105, out=60, looseness=1.25] (19.center) to (21.center);
    \draw [thick, -Stealth] [in=75, out=-75] (21.center) to (22.center);
    \draw [thick] [in=-135, out=-120, looseness=1.50] (23.center) to (24.center);
    \draw [thick] [in=-60, out=45] (24.center) to (25.center);
    \draw [thick] [in=75, out=120, looseness=1.25] (25.center) to (26.center);
    \draw [thick, -Stealth] [in=105, out=-105] (26.center) to (27.center);

    \node [left] at (-0.05, 3.35) {$O_0$};
    \node [left] at (-3.05, 3.35) {$O_3$};
    \node [right] at (3.05, 3.35) {$O'_3$};
    \node [left] at (-0.05, -0.35) {$O_1$};
    \node [left] at (-3.05, -0.35) {$O_2$};
    \node [right] at (3.05, -0.35) {$O'_2$};

    \node [left] at (0.05, 1.5) {$\gamma_0$};
    \node [left] at (-2.95, 1.5) {$\gamma_2$};
    \node [right] at (2.95, 1.5) {$\gamma'_2$};
    \node [above] at (1.5, 3.05) {$\gamma'_3$};
    \node [above] at (-1.5, 3.05) {$\gamma_3$};
    \node [above] at (1.5, -0.55) {$\gamma'_1$};
    \node [above] at (-1.5, -0.55) {$\gamma_1$};
\end{tikzpicture}

\caption{In this example, the \stindex at saddle $O_k$ is $\rho_k$.  The left and right cells are mirror images of each other, so $\rho_2$ is also the \stindex at $O'_2$  and $\rho_3$ at $O'_3$. Additionally, we assume $\rho_1,\rho_2,\rho_3<1$ but $\rho_0\rho_1\rho_2\rho_3\ge 1$.}
{\label{fig:two-cells}}
\end{figure}

Let us briefly discuss an example depicted in Figure~\ref{fig:two-cells}, two neighboring cells of a certain cellular flow. Assuming that
$\rho_1,\rho_2,\rho_3<1$ but $\rho_0\rho_1\rho_2\rho_3\ge 1$ and starting with $\alpha_0=1$, we obtain that the scaling  exponents $\alpha_k$ after passing the neighborhood of a saddle point $O_k$ are given by $\alpha_0=1$, $\alpha_1=\rho_1$, $\alpha_2=\rho_2\rho_1$, $\alpha_3=\rho_3\rho_2\rho_1$ and then $\alpha_0=1$ again. As a result, on logarithmic time scales, it is unlikely for the diffusion to escape the union of two cells. Near~$O_1$, the diffusion may choose one of the outgoing connections, to $O_2$ or $O'_2$, but once this choice is made, the next choices are predetermined with high probability, and  diffusion visits sequentially either $O_1,O_2,O_3,O_0$ or $O_1,O'_2,O'_3,O_0$. However at $O_0$ the contraction is strong enough to result in the exit exponent being $1$ again and the scaling limit
is symmetric Gaussian, so the process of making a choice of the exit direction at~$O_1$ and then cycling through one of the sequences 
$O_1,O_2,O_3,O_0$ or $O_1,O'_2,O'_3,O_0$, repeats, etc. This behavior, with the boundary $\gamma_0$ between these two cells being permeable and the boundary of the union of these cells impenetrable remains typical on the logarithmic time scales. The results that we obtain in this paper apply to this specific example, so we will be able to quantify the decay (as $\e\to0$) of probabilities of exiting the union of these cells through  connections $\gamma_1,\gamma_2,\gamma_3,\gamma'_1,\gamma'_2,\gamma'_3$, (it turns out they decay as a power of $\e$), find the most likely exit scenarios  and estimate the exit times. We will also be able to describe exit times and typical exit scenarios for pretty general cell complexes with boundaries composed of heteroclinic connections.

\section{Some notation and terminology}\label{sec:notation}

As we have seen, crossing the  heteroclinic network may be a rare event. Which scenarios  lead to those rare events
and what can be said about the decay of their probabilities as $\e\to0$?
To answer this question, we need to distinguish between various degrees of unlikeliness. We will mostly be interested in the events with probabilities that decay to 0 as a power of $\e$ as $\e \to 0$. Some events are even more unlikely, 
with probabilities decaying faster than any power of~$\e$. To describe these events, we will need to make sense of the claim that a r.v.\ is essentially of order~$\eps^\alpha$, up to logarithmic corrections. In this short section, we introduce appropriate definitions and notations (see Section~\ref{sec:notation1}  for more notational agreements).

\bigskip

For $\e>0$ and $\vk>0$, $\alpha,\beta\in\R$ satisfying $\alpha \ge\beta $, we denote
\begin{align}
l_\e&=\log\e^{-1},\notag
\\
K_\vk(\e)&
=[- l^\vk_\e,  l^\vk_\e]\subset \R. \label{eq:def_K(eps)}
\end{align}

If there is $\varkappa>1$ such that $f(\e)=o(e^{-l_\e^\varkappa})$, $\e\to 0$, we write $f(\e)=o_e(1)$. If $f(\e)=o_e(1)$, then $f(\e)$ converges to zero, as $\eps\to 0$, faster than any power of $\eps$  because for all $\varkappa>1$ and $p>0$,
\[
\frac{e^{-l_\e^\varkappa}}{\e^p}= e^{-l_\e^{\varkappa}+p l_\e}\to 0,\quad \e\to0.
\]

The following definitions describe certain properties of {\it families} of events indexed by $\e>0$ but, for brevity, we abuse the terminology slightly and speak of events themselves.

We say that events $(A_\e)_{\e>0}$  happen with high probability (w.h.p.) if $\Pp(A_\e(x))=1-o_e(1)$. We say that events $(A_\e)_{\e>0}$  happen with low probability (w.l.p.) if $\Pp(A_\e(x))=o_e(1)$. We also call them high (respectively, low) probability events. 

Suppose we have a family of events $(A_\e(x))_{x\in I_\e, \e>0,}$ and probability measures $(\Pp_\e^x)_{x\in I_\e,\e>0}$ depending on $\e>0$ and $x$ ranging through some some set  $I_\e$ which in turn depends on $\e$. 
We say that  $A_\e(x)$  happen w.l.p.\ under $\Pp_\e^x$ uniformly over $I_\e$ if \[\sup_{x\in I_\e} \Pp_\e^x(A_\e(x))=o_e(1).\] The complements of $A_\e(x)$ are then said to happen  w.h.p.\ under $\Pp_\e^x$ uniformly over~$I_\e$.

We say that $B_\e$ happens on $A_\e$ w.h.p.\ if $A_\e\setminus B_\e$ happens w.l.p. 

We say that $(\xi_\e)_{\e>0}$ are of order $\e^\alpha$ if for some~$\vk>0$, $\xi_\e\in \e^\alpha [ l^{-\vk}_\e, l^\vk_\e]$ w.h.p.

If for some $\alpha$ and all sufficiently large~$\vk$, $\xi_\e>  \e^\alpha l^{-\vk}_\e$ w.h.p., we say that $\xi_\e$ is of order at least $\eps^\alpha$. 
 
If for some $\alpha$ and some~$\vk>0$, $\xi_\e\in[0,\e^\alpha l^\vk_\e)$ w.h.p., we say that $\xi_\e$ is of order at most $\eps^\alpha$.

If for some $\alpha$ and some $\vk>0$,  $\xi_\e\in [0,\e^\alpha l^{-\vk}_\e)$ w.h.p., we say that the order of~$\xi_\e$ is below $\eps^\alpha$. 
 
If for some $\alpha$ and all sufficiently large~$\vk$, $\xi_\e>\e^\alpha l^\vk_\e$ w.h.p., we say that the order of $\xi_\e$ is above $\eps^\alpha$.

If in the definitions above ``w.h.p.'' is replaced by ``a.s.'', the  r.v.'s  $\xi_\e$ are said to be {\it strictly} of order $\e^\alpha$,   {\it strictly} of order below $\e^\alpha$, etc.  

Instead of ``order $\e^0$'', we often say ``order $1$''.\

If  for some $\vk>0$,  $|\xi_\e|< l^\vk_\e$ w.h.p., we call   r.v.'s $\xi_\e$ tame.

If $(\xi_\e^x)_{x\in I_\e, \e>0}$ is a family of r.v.'s also indexed by some parameter $x$ and for some $\vk>0$,  $|\xi_\e^x|< l^\vk_\e$ w.h.p.\ uniformly in $x\in I_\e$, then we say that $\xi_\e^x$ are uniformly tame  in $x\in I_\e$.

We write $\xi_\e\eqmodlp \xi'_\e$ if $\{\xi_\e\ne \xi'_\e\}$ is a low probability event. We write $A_\e\eqmodlp A'_\e$  if $A_\e\triangle A'_\e$ is a low probability event. In addition, for events $A,B$, we write $A\stackrel{\Pp}{=}B$ if $\Pp(A\triangle B)=0$.

\section{Two saddles}\label{sec:2saddles}

The results of Section~\ref{sec:typical} imply that the 2-dimensional diffusion near a heteroclinic network often tends to stay on one side of the network mostly
exploring a single cell.
In this section we consider a very short 
$N$-shaped heteroclinic chain composed of heteroclinic connections $\gamma_0,\gamma_1,\gamma_2$, 
see Figure~\ref{fig:2saddles}, and compute the asymptotics of the probabilities 
of $A_2=\{\tau^2_\e<\infty\}$, i.e., the probability that the diffusion 
starting  near $x_0$ first exits from $D_1$ into $D_2\setminus D_1$ through  $x_1+[-1,1]v_1$ and then exits from~$D_2$ into $D_3\setminus D_2$ through $x_2+[-1,1]v_2$. For the latter, it needs to make a ``wrong turn'' near $O_2$, so this may be a small probability event.  Longer heteroclinic chains will be considered in the next section.

\begin{figure}[ht]
\centering
\begin{tikzpicture}

    \node (0) at (1.75, -0.25) {};
    \node (1) at (-2.5, 0) {};
    \node (2) at (-2.25, 3.25) {};
    \node (3) at (-5, -1) {};
    \node (4) at (4.75, 0.5) {};
    \node (5) at (-2.25, -2.5) {};
    \node (6) at (3, 3) {};
    \node (7) at (2.25, -3.25) {};
    \node (8) at (-4.5, 0) {};
    \node (9) at (-4.25, -1.25) {};
    \node (10) at (-3, 2.25) {};
    \node (11) at (-1.75, 2.25) {};
    \node (12) at (0.25, 0.5) {};
    \node (13) at (0.25, -1) {};
    \node (14) at (2, 2.5) {};
    \node (15) at (3.5, 2.25) {};
    \node (16) at (3, -2.5) {};
    \node (17) at (1.25, -2.5) {};
    \node (18) at (-0.5, -1.25) {};
    \node (19) at (-0.75, 0.75) {};
    \node (20) at (0.5, -3) {};
    \node (21) at (1.25, -1.25) {};
    \node (22) at (2.25, -1.25) {};
    \node (23) at (3.75, -3) {};
    
    \node (x0) at (-2.505, 1) {};
    \node (v0) at (-1.75, 1.1) {};
    \node (v0-) at (-3.15, 0.9) {};
    \node (x1) at (0.25, -0.185) {};
    \node (v1) at (0.25, 0.4) {};
    \node (x2) at (2.13, -2.5) {};
    \node (v2) at (1.4, -2.5) {};

\foreach \n in {x0,x1,x2,1,0}
        \node at (\n)[circle,fill,inner sep=1.25pt]{};

    \draw [postaction={on each segment={mid arrow=black}}][in=30, out=-180, looseness=0.75] (1.center) to (3.center);
    \draw [ultra thick, postaction={on each segment={mid arrow=black}}][in=-180, out=0] (1.center) to (0.center);
    \draw [postaction={on each segment={mid arrow=black}}][in=0, out=195] (4.center) to (0.center);
    \draw [ultra thick, postaction={on each segment={mid arrow=black}}][in=90, out=-105] (2.center) to (1.center);
    \draw [postaction={on each segment={mid arrow=black}}][in=-90, out=105] (5.center) to (1.center);
    \draw [postaction={on each segment={mid arrow=black}}][in=-120, out=75] (0.center) to (6.center);
    \draw [ultra thick, postaction={on each segment={mid arrow=black}}][in=90, out=-105, looseness=0.75] (0.center) to (7.center);
    \draw (8.center) to (9.center);
    \draw [in=-135, out=30, looseness=1.50] (8.center) to (10.center);
    \draw [in=120, out=45] (10.center) to (11.center);
    \draw [in=180, out=15] (9.center) to (13.center);
    \draw (13.center) to (12.center);
    \draw [in=165, out=-60, looseness=1.25] (11.center) to (12.center);
    \draw [in=165, out=-135] (19.center) to (18.center);
    \draw [in=120, out=-15, looseness=1.25] (18.center) to (17.center);
    \draw (17.center) to (16.center);
    \draw [in=-120, out=105] (16.center) to (15.center);
    \draw (15.center) to (14.center);
    \draw [in=45, out=-120, looseness=1.50] (14.center) to (19.center);
    \draw [in=-165, out=90, looseness=0.75] (20.center) to (21.center);
    \draw [in=165, out=15] (21.center) to (22.center);
    \draw [in=105, out=-15] (22.center) to (23.center);

    \draw [thick, -latex] (x0.center) to (v0.center);
    \draw [thick, dotted] (v0-.center) to (x0.center);
    \draw [thick, -latex] (x1.center) to (v1.center);
    \draw [thick, -latex] (x2.center) to (v2.center);

    \node [left] at (-2.5, 1.15) {$x_0$};
    \node [above] at (-1.87, 1.05) {$v_0$};
    \node [right] at (0.22, -0.05) {$x_1$};
    \node [right] at (0.22, 0.4) {$v_1$};
    \node [below] at (1.95, -2.5) {$x_2$};
    \node [below] at (1.5, -2.5) {$v_2$};
    \node [below] at (-2.75, 0) {$O_1$};
    \node [above] at (2.13, -0.25) {$O_2$};
    \node [above] at (-3.3, 0.1) {$D_1$};
    \node [above] at (1.2, 0.5) {$D_2$};
    \node [right] at (2.67, -2.9) {$D_3$};
    \node [right] at (-2.5, 1.8) {$\gamma_0$};
    \node [above] at (-0.48, -0.1) {$\gamma_1$};
    \node [right] at (1.87, -1.6) {$\gamma_2$};
\end{tikzpicture}
\caption{A short N-shaped heteroclinic chain: $\gamma_0,\gamma_1,\gamma_2$.}
{\label{fig:2saddles}}
\end{figure}
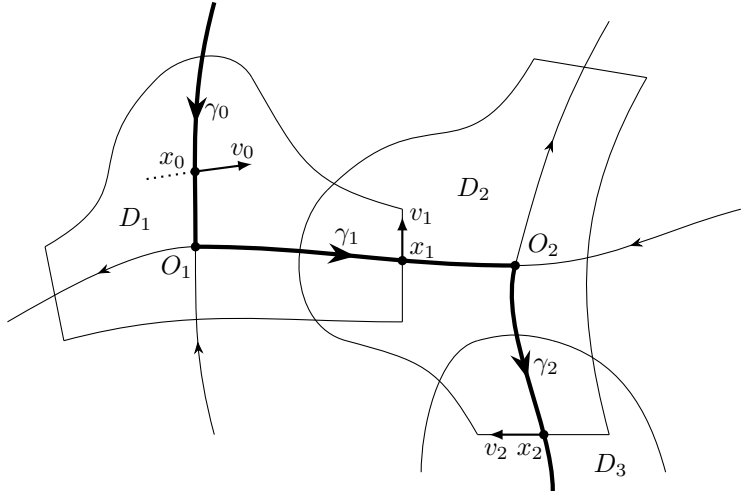

Our analysis below shows that there are three main cases:  (1) $\alpha_1= 1$; (2) $\alpha_1<1$ and $\alpha_0<1$;  (3)  $\alpha_1<1$ and $\alpha_0= 1$.

In the first case,   the  probability of following the  connections $\gamma_0,\gamma_1,\gamma_2$ has a positive limit, as we know from Section~\ref{sec:typical}.

In the other two cases, $\alpha_1<1$, so the scaling limit along $\gamma_1$ is one-sided and the probability of following  connections $\gamma_0,\gamma_1,\gamma_2$ converges to $0$. The typical exits from~$D_1$ are too far from $\gamma_1$ which typically results in exits from $D_2$ in the direction opposite to $\gamma_2$.  The main question then is: how improbable are the exits from~$D_1$ that are $\e$-close to $\gamma_1$? It turns out that if $\alpha_0<1$, this probability decays superpolynomially while if $\alpha_0=1$, it decays as a power of $\e$.

Before stating the main result of this section we must introduce an extra assumption that  we will use.
\begin{enumerate}[label={\bf(\Alph*)}, resume*=setting]
\item \label{setting:conjugacy} For a saddle point with eigenvalues of linearization $\lambda>0$ and $-\mu<0$ there is an (open) neighborhood  $U$ of the saddle and an orientation-preserving $C^5_\mathrm{b}$-diffeomorphism $f$ mapping~$U$ onto a neighborhood of $0\in\R^2$ and conjugating the vector field $b$ to the linear vector field $\bar b(x)=Ax$, where $A=\mathop{\mathrm{diag}}(\lambda,-\mu)$ is a 2-by-2 diagonal matrix:
 \[
 Df(x) b(x)=A f(x).   
 \]
Here, $Df$ denotes the derivative (Jacobian) of the map $f$. Shrinking $U$ if necessary, we may assume  that $x_0+[-1,1]v$ does not intersect the closure of $U$ and that the trajectory $(\flow^tx)_{t\ge 0}$ of every point $x\in U\setminus \Ws$ intersects $q_{+}+(-1,1)v_+$ or  $q_{-}+(-1,1)v_-$ transversally. We also note that if $f$ is a linearizing conjugacy then so is $cf$ for any $c>0$. Thus, we can choose $f$  to make $f(U)$ as large as needed.
\end{enumerate}

We need the $C^5_\mathrm{b}$ assumption on the linearizing change of coordinates to ensure that the second derivatives emerging in the It\^o formula for this linearization are~$C^3_\mathrm{b}$, which is required for our Malliavin calculus techniques to work in Section~\ref{section:density_est}. A~sufficient condition for existence of such a smooth linearization is that $b\in C^\infty$ and there are no resonances between $\lambda$ and $-\mu$, i.e.,  neither of them can be represented as  
$\alpha \lambda-\beta\mu$ with some $(\alpha,\beta)\in\Z_+^2$  satisfying $\alpha+\beta\ge 2$ (see the discussion in~\cite[Section~4]{Almada-Bakhtin:MR2802310}). We believe that our conclusions still hold true even without this restriction, say, for area-preserving flows, where $\lambda=\mu$. When such resonances are present, one has to control the emerging resonant monomial terms in the normal form and extend the results of~\cite{Almada-Bakhtin:MR2802310}. We think that this is possible in our setting but we do not pursue this generality here. We also note that~\ref{setting:conjugacy} implies $b\in C^4_\mathrm{b}$ because $b(x)=Df^{-1}(y)\bar b(y)$ for $y=f(x)$. This is a stronger smoothness requirement on $b$ than stated in \ref{setting:general}.

In the remaining part of this  paper, we will require that  a smooth linearization exists locally near each saddle point of the heteroclinic chain:
\begin{enumerate}[label={\bf(\Alph*)}, resume*=setting]
\item \label{setting:conjugacy-all} 
The conjugacy condition \ref{setting:conjugacy} holds near each saddle point $O_1,\ldots, O_n$.
\end{enumerate}

Let us state the main result of this section. It will be  generalized to longer heteroclinic chains in the next one.
\begin{theorem}\label{th:2saddles}
Assume the setting described by conditions \ref{setting:general},  \ref{setting:het-chain},~\ref{setting:scaling-at-saddle-0},~\ref{setting:scaling-limit-at-saddle-0},~\ref{setting:conjugacy-all}  with $n=2$ and with $\gamma_0$ and $\gamma_2$ on different sides of $\gamma_1$, see Figure~\ref{fig:2saddles}. 
 Assume that $p_1>0$ (in the case of $\alpha<1$, this means that $\Pp\{\xi_{0}>0\}>0$.)

Recall that $\alpha_1=\alpha_0\rho_1 \wedge 1$ according to~\eqref{eq:recursion_alpha}.

\begin{enumerate}

\item\label{item:typical-case}
Suppose $\alpha_1=1\le\alpha_0 \rho_1$. Then  $p_2>0$.

\item \label{item:power-asymp-for-2saddles} Suppose $\alpha_1=\rho_0<1$ and $\alpha_0=1$. In addition, we assume that $\xi_{0,\e}$ is tame. Then there is a number $h>0$ such that
\begin{equation}
\label{eq:polynomial-asymp-2saddles}
\Pp(A_{2,\eps})=h \e^{\frac{1}{\rho_1}-1}\left(1+o\left(1\right)\right),\quad \e\to 0.
\end{equation}
In this case, conditioned on $A_{2,\e}$,
\begin{equation}
\label{eq:exit-time-asymp-2saddles}
\frac{\tau_{2,\e}}{(\frac{1}{\mu_1}+\frac{1}{\lambda_2}) \log \e^{-1}}\inprob 1,\quad \e\to0.
\end{equation}
\item \label{item:too-far-from-network}
Suppose $\alpha_1=\alpha_0\rho_1<1$ and $\alpha_0<1$. In addition, assume that $\xi_{0,\e}$ is of order~$1$. Then $A_{2,\e}$ happen w.l.p.
\end{enumerate}
\end{theorem}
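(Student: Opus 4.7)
I would organize the proof by the three regimes. Part~\ref{item:typical-case} is immediate from Theorem~\ref{thm:typical}(3)(a) with $k=1$: the hypothesis $\alpha_1=1$ is precisely the borderline condition guaranteeing $0<p_2<1$.

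The plan for part~\ref{item:power-asymp-for-2saddles} is to linearize near each saddle via~\ref{setting:conjugacy-all}, carry out the Duhamel analysis of Section~\ref{sec:typical} at both $O_1$ and $O_2$, and reduce the problem to integrating a conditional Gaussian tail against the density of $\xi_{1,\eps}$. At $O_1$, the computation leading to \eqref{eq:scaling-for-exit}--\eqref{eq:scaling-limit-distr} yields, up to smooth nonlinear remainders from the conjugacy,
\[
\xi_{1,\eps}=\tfrac{L_1}{R^{\rho_1}}|Z_\eps^{(1)}|^{\rho_1}+\eps^{1-\rho_1}N_\eps^{(1)},\qquad Z_\eps^{(1)}=\xi_{0,\eps}+U_\eps^{(1)},
\]
so $\xi_{1,\eps}>0$ w.h.p. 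Applying the same analysis inside $D_2$ gives $A_{2,\eps}\eqmodlp\{\xi_{1,\eps}+\eps^{1-\rho_1}U_\eps^{(2)}<0\}$. Conditioning on $\Fc_{\tau^1_\eps}$ and invoking the asymptotic independence and Gaussianity of $U_\eps^{(2)}$,
\[
\Pp(A_{2,\eps})\approx\E\bigl[\Psi\bigl(\eps^{\rho_1-1}\xi_{1,\eps}\bigr)\bigr],\qquad \Psi(u):=\Pp\{\mathcal{U}<-u\},
\]
with $\mathcal U$ the limiting Gaussian. The main technical obstacle is a uniform local limit theorem for $\xi_{1,\eps}$ at scale $\eps^{1-\rho_1}$: its density must be shown comparable to $c_0\,y^{(1-\rho_1)/\rho_1}$ there. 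The leading term $|Z_\eps^{(1)}|^{\rho_1}$ has density of exactly this form by change of variables applied to the Gaussian convolution $Z_\eps^{(1)}$ (whose density is bounded and continuous at $0$ because $\xi_{0,\eps}$ is tame), and the noise correction $\eps^{1-\rho_1}N_\eps^{(1)}$ together with the nonlinear remainders must be controlled by Malliavin-based density estimates in the spirit of~\cite{bally2014}. Granting the local limit, Fubini and the substitution $y=\eps^{1-\rho_1}u$ give
\[
\Pp(A_{2,\eps})=(1+o(1))\,h\,\eps^{1/\rho_1-1},\quad h:=c_0\int_0^\infty u^{(1-\rho_1)/\rho_1}\Psi(u)\,du,
\]
which is~\eqref{eq:polynomial-asymp-2saddles}. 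For the time asymptotic~\eqref{eq:exit-time-asymp-2saddles}, on $A_{2,\eps}$ one has $|Z_\eps^{(1)}|\asymp\eps^{1/\rho_1-1}$ and $|Z_\eps^{(2)}|\asymp\eps^{1-\rho_1}$; inserting these into $\tau_\eps^k-\tau_\eps^{k-1}=\lambda_k^{-1}\log(R/|Z_\eps^{(k)}|)+O(1)$ and using $\lambda_1\rho_1=\mu_1$ sums to $(\mu_1^{-1}+\lambda_2^{-1})\log\eps^{-1}(1+o(1))$.

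Part~\ref{item:too-far-from-network} is comparatively short. Since $\alpha_0<1$ and $\xi_{0,\eps}$ is of order~$1$, the perturbation $\eps^{1-\alpha_0}U_\eps^{(1)}$ has strictly smaller order than $\xi_{0,\eps}$, so w.h.p.\ $Z_\eps^{(1)}$ remains of order~$1$ and hence $\xi_{1,\eps}\asymp|Z_\eps^{(1)}|^{\rho_1}$ has order at least $l_\eps^{-\vk\rho_1}$ w.h.p. The wrong-direction exit at $O_2$ then forces the Gaussian $U_\eps^{(2)}$ to exceed $\eps^{\alpha_1-1}\xi_{1,\eps}\gtrsim \eps^{-(1-\alpha_1)}l_\eps^{-\vk\rho_1}$ in absolute value, an event of probability at most $\exp\bigl(-c\,\eps^{-2(1-\alpha_1)}l_\eps^{-2\vk\rho_1}\bigr)=o_e(1)$, uniformly in the $\Fc_{\tau^1_\eps}$-measurable data. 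Hence $\Pp(A_{2,\eps})=o_e(1)$.
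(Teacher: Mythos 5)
Your overall route for part~\ref{item:power-asymp-for-2saddles} is the same as the paper's: establish a local limit theorem for the exit location from $D_1$ at the scale $\eps^1$ (equivalently, for $\xi_{1,\eps}$ at scale $\eps^{1-\rho_1}$), pair it with the Gaussian approximation $\Psi$ for the wrong-turn probability at $O_2$, and integrate. Parts~\ref{item:typical-case} and~\ref{item:too-far-from-network} also match the paper (the latter is exactly Lemmas~\ref{lem:exit-on-the-same-whp} and~\ref{lem:alpha<1-and-rho-alpha<1-- concentration at exit}).

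There is, however, one genuine error in your part~\ref{item:power-asymp-for-2saddles}. You assert that the rescaled density of $\xi_{1,\eps}$ at $y=\eps^{1-\rho_1}u$ is ``exactly'' $c_0\,u^{(1-\rho_1)/\rho_1}$, coming from the leading term $|Z^{(1)}_\eps|^{\rho_1}$ alone, with the additive noise $\eps^{1-\rho_1}N^{(1)}_\eps$ to be ``controlled.'' But that noise term lives at precisely the scale $\eps^{1-\rho_1}$ being probed, so it cannot be shown subdominant: it enters the limit at leading order. The correct limiting object (Lemma~\ref{lem:local-limit-theorem}\eqref{item:power-asymp}) is the measure $\nu$ with $\nu((-\infty,z])=\frac{R}{L^{1/\rho_1}}\,\E\big((z-\NN)\vee 0\big)^{1/\rho_1}$, i.e.\ density proportional to $\E\big[((z-\NN)\vee 0)^{1/\rho_1-1}\big]$, supported on all of $\R$ rather than $(0,\infty)$; here $\NN$ is the Gaussian limit of $N^{(1)}_\eps$, and its appearance requires the asymptotic independence of $U^{(1)}$ and $N^{(1)}$ at the exit time (the decoupling in Lemma~\ref{lem:asymp_decouple}), not merely a density bound. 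Consequently your formula $h=c_0\int_0^\infty u^{(1-\rho_1)/\rho_1}\Psi(u)\,du$ is not the correct constant --- the paper's is $h=\E[g_c(\xi_0)]\int_\R\psi_s(-z)\,\nu(dz)$ --- although the exponent $\tfrac{1}{\rho_1}-1$ and the positivity of $h$ survive, so the statement of the theorem is still reached. A smaller point: your final ``Fubini and substitution'' step hides a real issue, namely that the local limit theorem gives uniform convergence of $\nu_{x,\eps}$ on intervals, and integrating the non-smooth function $\psi_s(-z)$ against $\nu_{x,\eps}-\nu_x$ requires the monotone-decomposition argument of the paper (or some equivalent quantitative control), not just weak convergence.
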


Without making an assumption on the order of $\xi_{0,\e}$ in part~\ref{item:too-far-from-network}, we may end up with a situation where other orders of magnitude are present with small probabilities that may still dominate the picture. 

\medskip

We will derive this theorem from a sequence of lemmas studying both, the exit from $D_1$ and then the exit from $D_2$, in more detail than in Section~\ref{sec:typical}. However, within this section, we only give heuristic arguments for these lemmas and only for the case of  the simpler case of the  linear system~\eqref{eq:linear-system1}--\eqref{eq:linear-system2} in the strip $D$ given by~\eqref{eq:strip},
with initial conditions~\eqref{eq:entrance-distr-linear1}--\eqref{eq:entrance-distr-linear2}. We will refer to this as {\it the model case.} 

The full generality needs rigorous proofs taking into account the nonlinear geometry and  correction terms, some of which present massive technical
difficulties and
will be given in Sections~\ref{sec:rectified}--\ref{section:density_est}. 

For the model case, we will need the following auxiliary result on processes defined in~\eqref{eq:U-in-model-case} and the exit time $\tau_\e$ defined in~\eqref{eq:def-exit-time}, which is an easy consequence of the exponential martingale inequality (see Lemma~\ref{lem:exp-marting-ineq}). More general results with rigorous proofs are Lemmas~\ref{lem:estimates-for-terms} and \ref{lem:second-coord-at-exit}.

\begin{lemma}\label{lem:tail-control-for-basic-rvs-in-Duhamel}
Consider the diffusion in a neighborhood of a saddle in the model case. Then the r.v.'s 
$\sup_{t\in[0,\tau_\e]}|U^1_t|$,  $\sup_{t\in[0,\tau_\e]}|N^2_t|$, and~$\tau_\e$, are
 uniformly tame over all initial conditions
and there is a constant $C$ such that, for every $\vk>0$ and every $\alpha \in (0,1]$,
\begin{align*}
\sup_{x \in K_\vk(\eps)}\Pp^{x_0 + \eps^\alpha x v}\left\{\sup_{[0,\tau_\e]} \left|U^1_t\right|> z  \right\}\le C e^{-z^2/C},\quad z>0.\\
\sup_{x \in K_\vk(\eps)}\Pp^{x_0 + \eps^\alpha x v}\left\{ |N^2_{\tau_\e}|> z  \right\}\le C e^{-z^2/C},\quad z>0.
\end{align*}
\end{lemma}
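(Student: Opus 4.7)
The plan is to prove the three claimed tameness assertions and the two sub-Gaussian tail bounds essentially independently, exploiting that in the model system~\eqref{eq:linear-system1}--\eqref{eq:linear-system2} the two spatial coordinates fully decouple. The bound on $\sup_{t\in[0,\tau_\e]}|U^1_t|$ is immediate from the exponential martingale inequality (Lemma~\ref{lem:exp-marting-ineq}): since $U^1_t$ is a continuous martingale with $\langle U^1\rangle_t=\int_0^t e^{-2\lambda s}\,ds\le(2\lambda)^{-1}$ uniformly in $t\ge 0$, we obtain $\Pp\{\sup_{t\ge 0}|U^1_t|>z\}\le 2e^{-\lambda z^2}$, which dominates the same expression with $\tau_\e$ in place of $\infty$ and does not involve the initial condition. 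Uniform tameness of $\sup_{[0,\tau_\e]}|U^1_t|$ then follows by taking, say, $z=l_\e$.

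For $|N^2_{\tau_\e}|$ I exploit the independence of the two Brownian components: in the model system $X^1_{\e,\cdot}$ depends only on $W^1$ and the $\Fc_0$-measurable $X_{\e,0}$, so $\tau_\e\in\sigma(W^1,X_{\e,0})$ is independent of the process $N^2\in\sigma(W^2)$. Conditioning on $\tau_\e=t$, the r.v.\ $N^2_t$ is centered Gaussian with variance $(1-e^{-2\mu t})/(2\mu)\le(2\mu)^{-1}$, giving $\Pp\{|N^2_t|>z\}\le 2e^{-\mu z^2}$ for each deterministic $t$; averaging over the law of $\tau_\e$ preserves the bound and yields simultaneously the displayed tail estimate and uniform tameness. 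For the tameness of $\tau_\e$ itself I use Duhamel's formula~\eqref{eq:Duhamel1} at a deterministic time $T$: the inclusion $\{\tau_\e>T\}\subseteq\{|X^1_{\e,T}|<R\}=\{|X^1_{\e,0}+\e U^1_T|<Re^{-\lambda T}\}$ is trivial. Conditional on $X_{\e,0}$, the r.v.\ $X^1_{\e,0}+\e U^1_T$ is Gaussian with variance $\e^2(1-e^{-2\lambda T})/(2\lambda)\ge c\e^2$ for $T\ge \lambda^{-1}$, so Gaussian anti-concentration (which is uniform in the mean) yields $\Pp\{\tau_\e>T\}\le CRe^{-\lambda T}/\e$, uniformly in $x\in K_\vk(\e)$. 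Choosing $T=\lambda^{-1}l_\e^2$ makes the right-hand side equal to $CR\e^{-1}e^{-l_\e^2}=o_e(1)$, which is precisely uniform tameness of $\tau_\e$.

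I anticipate the tameness of $\tau_\e$ to be the most delicate step. The typical exit-time scale predicted by~\eqref{eq:exit-time-rough-asymp} is logarithmic, $\tau_\e\sim\lambda^{-1}l_\e$, while the ``w.h.p.'' in tameness calls for a superpolynomial probability of deviation. One therefore cannot cut off at the natural logarithmic scale; instead $T$ must be pushed to $\lambda^{-1}l_\e^{\vk'}$ with $\vk'>1$ (still well within the tame regime), which converts the polynomial-in-$\e$ loss from anti-concentration into a superpolynomial gain $e^{-l_\e^{\vk'}}$ that easily absorbs the $\e^{-1}$ factor and lands in $o_e(1)$.
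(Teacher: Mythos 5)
Your arguments for $\sup_{[0,\tau_\e]}|U^1_t|$ and for the tameness of $\tau_\e$ are correct, and your treatment of $|N^2_{\tau_\e}|$ via the decoupling $\tau_\e\in\sigma(W^1,X_{\e,0})\perp N^2\in\sigma(W^2)$ is a valid shortcut specific to the model case. The paper itself only asserts that this lemma is "an easy consequence of the exponential martingale inequality" and defers rigor to the general versions (Lemmas~\ref{lem:estimates-for-terms} and~\ref{lem:second-coord-at-exit}); there the coordinates do not decouple, so the tail of $N^2$ at the stopping time is obtained instead by a union bound over unit time intervals combined with a localization of $\tau_\e$ to a window of bounded length. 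Your independence argument is more elementary and avoids that localization, at the price of not generalizing beyond the model case — which is acceptable here.

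There is, however, one genuine omission: the lemma asserts uniform tameness of $\sup_{t\in[0,\tau_\e]}|N^2_t|$, not merely of $|N^2_{\tau_\e}|$, and your proposal only controls the latter. The fix stays entirely within your framework. By the same independence, conditioning on $\tau_\e=t$ leaves $\sup_{s\in[0,t]}|N^2_s|$ with its unconditional law, and writing $N^2_s=e^{-\mu s}M_s$ with $M_s=\int_0^s e^{\mu r}dW^2_r$ one gets, for each unit interval $[k,k+1]$,
\begin{align*}
\Pp\Bigl\{\sup_{s\in[k,k+1]}|N^2_s|>z\Bigr\}
\le \Pp\bigl\{e^{-\mu k}|M_k|>z/2\bigr\}
+\Pp\Bigl\{e^{-\mu k}\sup_{s\in[k,k+1]}\Bigl|\int_k^s e^{\mu r}dW^2_r\Bigr|>z/2\Bigr\}
\le 4e^{-z^2/C}
\end{align*}
by Lemma~\ref{lem:exp-marting-ineq}, uniformly in $k$. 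Summing over $k\le t$ gives $\Pp\{\sup_{s\le t}|N^2_s|>z\}\le C(t+1)e^{-z^2/C}$; splitting on $\{\tau_\e\le \lambda^{-1}l_\e^{2}\}$ (which holds w.h.p.\ by your own tameness bound for $\tau_\e$) and taking $z=l_\e^{\vk}$ with $\vk>1/2$ then yields the missing tameness claim. With that paragraph added, the proof is complete.
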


\smallskip

We begin with the following general statement which is a more precise version of one of the claims of  Theorem~\ref{th:poincare-saddle}.
We recall that events $A_{\pm,\e}$ were defined in~\eqref{eq:def-Apm}.

\begin{lemma}\label{lem:exit-straight} 
Under assumptions~\ref{setting:general}, \ref{setting:geometry-domain}, \ref{setting:initial-cond}, and  \ref{setting:conjugacy}, if $\xi_\eps$ is tame, then $A_{+,\e}\cup A_{-,\e}$ happen w.h.p.\ and $\xi'_\e$ defined on this event  uniquely via~\eqref{eq:scaling-at-exit} is tame.
 
Moreover, for every $\vk>0$, 
\begin{align}\label{eq:stronger_exit-straight1}
    \sup_{x\in K_\vk(\eps)}\Pp^{x_0+ \eps^\alpha x v}((A_{+,\e}\cup A_{-,\e})^c)=o_e(1),
\end{align}
and there is $\vk'>0$ such that
\begin{align}\label{eq:stronger_exit-straight}
    \sup_{x\in K_\vk(\eps)}\Pp^{x_0+ \eps^\alpha x v}\left(\left\{X_{\eps,\tau_\eps}\not\in q_{\pm}+\eps^{\alpha'}K_{\vk'}(\eps)v_{\pm}\right\}\cap\ A_{\pm,\eps}\right)=o_e(1).
\end{align}
\end{lemma}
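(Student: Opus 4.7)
The plan is to reduce to the model analysis of Section~\ref{sec:typical} via the linearizing conjugacy furnished by \ref{setting:conjugacy}, then control everything uniformly in $x \in K_\vk(\eps)$ by means of the Gaussian tail bounds of Lemma~\ref{lem:tail-control-for-basic-rvs-in-Duhamel}. First I would fix $\vk>0$ and choose an auxiliary exponent $\vk_0>\vk$ to be calibrated below, and define the good event
\[
G_\eps = \Bigl\{\sup_{t\in[0,\tau_\e]}|U^1_t| \le l_\e^{\vk_0},\ |N^2_{\tau_\e}|\le l_\e^{\vk_0},\ \tau_\e \le l_\e^{\vk_0}\Bigr\}.
\]
By Lemma~\ref{lem:tail-control-for-basic-rvs-in-Duhamel} applied with $z=l_\e^{\vk_0}$, and since the tameness of $\tau_\e$ is also part of the lemma, we have $\Pp^{x_0+\eps^\alpha x v}(G_\e^c)=o_e(1)$ uniformly in $x\in K_\vk(\eps)$.

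Next I would work out the exit in linearizing coordinates. By Duhamel \eqref{eq:Duhamel1}, on $G_\eps$ the first coordinate satisfies
\[
|X^1_{\eps,t}| \le e^{\lambda t}\bigl(\eps^\alpha l_\e^\vk + \eps\, l_\e^{\vk_0}\bigr),
\]
which together with $|X^2_{\eps,t}|\le e^{-\mu t}|L|+\eps l_\e^{\vk_0}$ and the transversality at $q_\pm$ from \ref{setting:geometry-domain} implies that the trajectory first meets $\partial D$ through one of the arcs $q_\pm+[-1,1]v_\pm$: the trajectory remains within an $\eps l_\e^{\vk_0}$-tube around a deterministic orbit that must exit through one of these arcs by condition \ref{setting:conjugacy}. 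Hence $A_{+,\eps}\cup A_{-,\eps}\supset G_\eps$ up to an $o_e(1)$-correction, giving~\eqref{eq:stronger_exit-straight1}.

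For~\eqref{eq:stronger_exit-straight}, I would use the explicit Duhamel representation on $G_\eps$:
\[
X^2_{\eps,\tau_\eps} = e^{-\mu \tau_\eps} L + \eps N^2_{\tau_\eps},
\]
together with \eqref{eq:expr-for-tau}, which yields $e^{-\mu\tau_\eps}=R^{-\rho}|\eps^\alpha \xi_\eps + \eps U^1_{\tau_\eps}|^\rho = \eps^{\alpha\rho}|Z_\eps|^\rho/R^\rho$. On $G_\eps$ we have $|Z_\eps|\le l_\e^\vk + \eps^{1-\alpha}l_\e^{\vk_0}$, so $|Z_\eps|^\rho$ is tame (bounded by $l_\e^{\rho \vk_0}$ for large enough $\vk_0$). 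Recalling $\alpha'=(\alpha\rho)\wedge 1$, factoring out $\eps^{\alpha'}$ and using $|N^2_{\tau_\eps}|\le l_\e^{\vk_0}$ gives
\[
\eps^{-\alpha'}|X^2_{\eps,\tau_\eps}| \le R^{-\rho}|Z_\eps|^\rho \eps^{(\alpha\rho-1)_+} + \eps^{1-\alpha'}l_\e^{\vk_0} \le l_\e^{\vk'}
\]
for $\vk'$ chosen larger than $\max(\rho,1)\vk_0$. This delivers both the tameness of $\xi'_\eps$ on $A_{\pm,\eps}\cap G_\eps$ and the uniform bound~\eqref{eq:stronger_exit-straight}.

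The main obstacle, which is why this is only a heuristic here and the rigorous argument is deferred to Sections~\ref{sec:rectified}--\ref{section:density_est}, is that in the true nonlinear setting the conjugacy of \ref{setting:conjugacy} turns the SDE \eqref{eq:basic-sde} into a linear system plus state-dependent correction terms in drift and diffusion (arising from It\^o's formula applied to $f\circ X_\eps$). The clean Duhamel formulas \eqref{eq:Duhamel1}--\eqref{eq:Duhamel2} must be replaced by variation-of-constants identities with extra stochastic integrals; the analogue of Lemma~\ref{lem:tail-control-for-basic-rvs-in-Duhamel} then requires a bootstrap argument together with Gronwall and Bernstein-type martingale inequalities to show that these corrections remain tame on $G_\eps$ and do not disrupt the exit location estimate.
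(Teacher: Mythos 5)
Your proposal matches the paper's own treatment: at this point the paper also proves the lemma only heuristically for the linear model case, using Lemma~\ref{lem:tail-control-for-basic-rvs-in-Duhamel} to make $Z_\e$ and $N^2_{\tau_\e}$ tame and then reading the tameness of $\xi'_\e$ and the uniform bounds off the Duhamel representation \eqref{eq:X_2-at-exit-time}, exactly as you do. You also correctly identify where the real work lies — the rigorous version is carried out later in rectified coordinates (via the analogues of your good event in Lemmas~\ref{lem:estimates-for-terms} and~\ref{lem:second-coord-at-exit}) and then transferred to the original coordinates by the three-stage decomposition.
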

\begin{remark}\rm In fact, a stronger claim holds under the conditions of this lemma: the order of the maximum (over times $t\le \tau_\e$) distance from $X_{\e,t}$ to the heteroclinic network is at most $\e^\beta$ for some positive $\beta$.
\end{remark}

\bpfm
Here, we consider only the case of the linear system~\eqref{eq:linear-system1}--\eqref{eq:linear-system2} with initial conditions~\eqref{eq:entrance-distr-linear1}--\eqref{eq:entrance-distr-linear2}. Lemma~\ref{lem:tail-control-for-basic-rvs-in-Duhamel} implies that $|Z_\e|$ given in~\eqref{eq:Z_eps} is of order at most~$1$. Therefore we obtain that the absolute values of both terms in \eqref{eq:X_2-at-exit-time} are of order at most~$\e^{\alpha'}$,  which implies our first two claims. Relation~\eqref{eq:stronger_exit-straight} follows from a similar argument with $\xi_{\e}$ replaced by $x\in K_\vk(\eps)$.
\epf

As we know, the exit from $D_1$ happens near $\gamma_1$. Which exit locations contribute most to  $\Pp(A_{2,\e})$? The next lemma applied to diffusion in $D_2$ shows that the contribution from the exits that are not $\e$-close to $\gamma_1$, decays superpolynomially.

In the rest of this section, under assumptions~\ref{setting:general},~\ref{setting:geometry-domain}, for $x\in\R$, we denote by~$\Psc^x=\Pp^{x_0+\e x v}$ the distribution of the diffusion with initial condition
\begin{equation}
 X_{\e,0}=x_0+\e x v.
\label{eq:entrance-scaling-conditioned} 
\end{equation}
We recall that, according to our convention from Section~\ref{sec:notation1}, we still denote a generic probability measure by $\Pp$ when working with r.v.'s whose distribution is unambiguously clear from the context.
\begin{lemma}
\label{lem:exit-on-the-same-whp}
Let us assume conditions~\ref{setting:general},~\ref{setting:geometry-domain}, and~\ref{setting:conjugacy}. Then  $A_{-,\e}$ happen w.l.p.\ under $\Psc^x$, uniformly in $x \in [ l^\vk_\eps,\eps^{-1}]$, for sufficiently large~$\vk$.
\end{lemma}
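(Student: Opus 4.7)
The plan is to reduce to the model linear system via the linearizing conjugacy of assumption \ref{setting:conjugacy}, and then show that the exit side is governed by the sign of $f^1(X_{\eps,0}) + \eps U^1_{\tau_\eps} + (\text{tame correction})$, with the initial term of order $\eps x \geq \eps l_\eps^\vk$. The event $A_{-,\eps}$ would then require the Gaussian noise integral $U^1$ to beat $l_\eps^\vk$, which by the sub-Gaussian tail of Lemma~\ref{lem:tail-control-for-basic-rvs-in-Duhamel} has probability $O(\exp(-l_\eps^{2\vk}/C)) = o_e(1)$ uniformly in~$x$, once $\vk > 1/2$.

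First I would propagate the initial condition $x_0 + \eps x v$ under the flow into the linearization neighborhood $U$. Using a uniform-in-$x$ variant of Theorem~\ref{th:along-hetero} (this is exactly the kind of estimate that is made rigorous in the later Sections~\ref{sec:rectified}--\ref{sec:Gaussian-approx}), within bounded deterministic time the process reaches an entry section to $U$ at a point whose first rectified coordinate has the form $\eps x \, c(x) + \eps \eta_\eps$, where $c(x)$ is bounded and bounded away from zero uniformly in $x$, and $\eta_\eps$ is tame. Since $\eps l_\eps^\vk$ dominates $\eps$ by a polylogarithmic factor, the rectified first coordinate at the entry is positive and of order at least $\eps l_\eps^\vk$ with high probability, uniformly in $x\in[l_\eps^\vk,\eps^{-1}]$.

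Next I would apply Itô's formula to $Y_t = f(X_{\eps,t})$ to get an SDE close to \eqref{eq:linear-system1}--\eqref{eq:linear-system2}, with It\^o corrections of smaller order arising from the $C^5_{\mathrm{b}}$ conjugacy. Duhamel's principle then gives
\begin{equation*}
Y^1_t = e^{\lambda_1 t}\bigl(Y^1_0 + \eps U^1_t + R_t\bigr),
\end{equation*}
where the remainder $R_t$ comes from the non-constant part of the diffusion matrix and from nonlinear corrections absorbed in the vector field. Standard Gronwall-type estimates combined with the Gaussian tail control of Lemma~\ref{lem:tail-control-for-basic-rvs-in-Duhamel} (applied iteratively) give that $R_{\tau_\eps}/\eps$ is tame. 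Consequently, on the complement of the $o_e(1)$ event $\{|U^1_{\tau_\eps}| + |R_{\tau_\eps}|/\eps \geq \tfrac{1}{2} x\}$, the first rectified coordinate stays positive all the way to exit, so the diffusion leaves through $\partial_+$.

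The main obstacle is enforcing the uniformity across the wide range $x\in[l_\eps^\vk,\eps^{-1}]$. The delicate end is not the small-$x$ end (where the linearization argument applies directly) but rather the upper end, where $\eps x$ is of order one and the starting point is far from the stable manifold, possibly outside $U$. There I would split the range: for $x$ such that $\eps x$ is bounded below by a fixed constant, the displacement is macroscopic, the deterministic orbit of $x_0+\eps x v$ exits transversally at $q_+ + \pi(\eps x)v_+\in q_+ +(0,1]v_+$ by \ref{setting:geometry-domain}, and classical large-deviation continuity bounds (or a direct Grönwall comparison with the deterministic flow on a bounded time interval) give $\Psc^x(A_{-,\eps}) = o_e(1)$ uniformly; for the intermediate and small $\eps x$, the linearization argument above applies. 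Gluing the two regimes and taking $\vk$ large enough yields the uniform $o_e(1)$ bound claimed.
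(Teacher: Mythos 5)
Your proposal is correct and follows essentially the same route as the paper: the macroscopic regime $\eps x\gtrsim 1$ is handled by transversality of the deterministic orbit plus uniform Freidlin--Wentzell-type continuity (the paper's Lemma~\ref{lem:hit_loc}), while for smaller $\eps x$ one transports the initial condition to the entry section of the linearization neighborhood with only an $\eps\cdot(\text{tame})$ error, and then in rectified coordinates the exit side is read off from the sign of $Y^1_0+\eps U^1_{\tau}$, with $\Pp\{\sup_t|U^1_t|>l_\eps^{\vk}\}=o_e(1)$ by the exponential martingale inequality. The only caveat is that the threshold on $\vk$ is not simply $\vk>1/2$ once the polylogarithmic losses from the transport step are accounted for, but you correctly conclude with "taking $\vk$ large enough," which matches the paper's statement.
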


\bpfm In the model case~\eqref{eq:linear-system1}--\eqref{eq:linear-system2}, the proof is straightforward. Using~\eqref{eq:exit-direction} and~\eqref{eq:Z_eps} with $\alpha=1$, we obtain
\begin{align*}
\Psc^x(A_{-,\e})=
\Psc^x\left\{ \sgn X_{\e,\tau_\e}^1=-1\right\}&\le
 \Pp\left\{\sup_{t\in[0,\tau_\eps]}\left|U^1_t\right|>l_\e^{\vk}\right\}
=o_e(1)
\end{align*}
for sufficiently large $\vk$ due to Lemma~\ref{lem:tail-control-for-basic-rvs-in-Duhamel}.\epf

This lemma means that, conditionally on the exit from $D_1$ at distance from $\gamma_1$ being of order above $\e^1$, the probability of $A_{\e,-}$ decays to zero superpolynomially.

The next lemma  means that conditionally on the exit from $D_1$ at distance from~$\gamma_1$ being of order 
at most $\e^1$, those probabilities converge to a positive limit. This gives slightly more detail than Theorem~\ref{th:poincare-saddle}.

\begin{lemma}\label{lem:postive-limit-prob-if-close-to-manifold}
Assume conditions~\ref{setting:general},~\ref{setting:geometry-domain}, and~\ref{setting:conjugacy}. Then, there is $s>0$ such that for every $\vk>0$
\begin{align*}
    \sup_{x\in K_\vk(\eps)}\left|\Psc^x(A_{-,\eps}) - \psi_s(-x) \right|=\smallo{\eps^\delta},
\end{align*}
for some $\delta>0$, where $K_\vk(\eps)$ is defined in~\eqref{eq:def_K(eps)} and $\psi_s$ is defined in~\eqref{eq:gaussian-cdf}.
\end{lemma}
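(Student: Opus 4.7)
The plan is to reduce to the model-case calculation underlying the heuristic for Lemma~\ref{lem:exit-straight}, where the exit direction equals $\sgn Z_\e$ with $Z_\e=x+U^1_{\tau_\e}$ and $U^1_{\tau_\e}$ is approximately Gaussian, then to upgrade the distributional convergence to a uniform polynomial rate $o(\e^\delta)$.

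First, via the $C^5_\mathrm{b}$-linearizing conjugacy $f$ from~\ref{setting:conjugacy}, I would pass to rectified coordinates $Y_{\e,t}=f(X_{\e,t})$. It\^o's formula gives $dY_{\e,t}=AY_{\e,t}\,dt+\e\tilde\sigma(Y_{\e,t})\,dW_t+\e^2\tilde b(Y_{\e,t})\,dt$ with $A=\mathrm{diag}(\lambda,-\mu)$, $\tilde\sigma,\tilde b\in C^3_\mathrm{b}$, $\tilde\sigma\tilde\sigma^*$ uniformly elliptic, and the initial condition $Y_{\e,0}=y_0+\e x w+O(\e^2 x^2)$, where $y_0=f(x_0)$ sits on the stable axis and $w=Df(x_0)v$ has first (unstable) coordinate $\gamma\neq 0$; by our orientation convention we may take $\gamma>0$. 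Duhamel's principle on the first coordinate reads
\begin{align*}
Y^1_{\e,t}=e^{\lambda t}\bigl(\e\gamma x+\e U^1_t+\e^2 R_t\bigr),
\end{align*}
with $U^1_t=\int_0^t e^{-\lambda s}\tilde\sigma^1(Y_{\e,s})\cdot dW_s$ and $R$ uniformly tame. Up to a superpolynomial error controlled as in Lemma~\ref{lem:exit-straight}, the event $A_{-,\e}$ coincides with the sign event $\{\gamma x+U^1_{\tau_\e}+\e R_{\tau_\e}<0\}$.

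Second, I would establish the quantitative Gaussian approximation $U^1_{\tau_\e}=\bar{\mathcal U}+E_\e$, where $\bar{\mathcal U}:=\int_0^\infty e^{-\lambda s}\tilde\sigma^1(\bar Y_s)\cdot dW_s$ with $\bar Y_s=e^{As}y_0$ is a centered Gaussian (independent of $x$) of variance $\bar s=\int_0^\infty e^{-2\lambda s}|\tilde\sigma^1(\bar Y_s)|^2\,ds$, and $\Pp\{|E_\e|>\e^\delta\}=o_e(1)$ uniformly in $x\in K_\vk(\e)$ for some $\delta\in(0,1)$. Two steps: (i)~replace $\tilde\sigma^1(Y_{\e,s})$ by $\tilde\sigma^1(\bar Y_s)$: by It\^o isometry the $L^2$-error is $\lesssim\int_0^{\tau_\e} e^{-2\lambda s}\E|Y_{\e,s}-\bar Y_s|^2\,ds$, and Duhamel applied to $Y_\e-\bar Y$ gives $|(Y_\e-\bar Y)^1|\lesssim \e l_\e^C e^{\lambda s}$ and $|(Y_\e-\bar Y)^2|\lesssim \e l_\e^C$ w.h.p.\ on $[0,\tau_\e]$, so the integral is $O(\e^2 l_\e^C)$; (ii)~truncate the tail: $\bar{\mathcal U}-\int_0^{\tau_\e}e^{-\lambda s}\tilde\sigma^1(\bar Y_s)\cdot dW_s$ is $\Fc_{\tau_\e}$-conditionally Gaussian with variance $\lesssim e^{-2\lambda\tau_\e}$, and on the w.h.p.\ event $\{\tau_\e\ge\lambda^{-1}\log\e^{-1}-C\log l_\e\}$ (from Lemma~\ref{lem:tail-control-for-basic-rvs-in-Duhamel} and $|x|\le l_\e^\vk$) this is $O(\e^2 l_\e^C)$.

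Third, with $s:=\bar s/\gamma^2$ the r.v.\ $\gamma^{-1}\bar{\mathcal U}$ is $\NN(0,s)$, and $\psi_s$ is $\|g_s\|_\infty$-Lipschitz. Splitting on $\{|E_\e|\le\e^\delta\}$ and its w.l.p.\ complement yields
\begin{align*}
\bigl|\Psc^x(A_{-,\e})-\psi_s(-x)\bigr|\le C\e^\delta+o_e(1)=O(\e^\delta)
\end{align*}
uniformly in $x\in K_\vk(\e)$. The main obstacle is the uniformity of this polynomial rate: both $U^1_\cdot$ and $\tau_\e$ depend on $x$, so the Gaussian coupling and all the bounds above must hold on a single probability space with errors uniform in $x\in K_\vk(\e)$. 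The required uniform moment bounds generalize Lemma~\ref{lem:tail-control-for-basic-rvs-in-Duhamel} from the linear model case to the rectified nonlinear system across the whole $\e$-indexed family of initial conditions in $K_\vk(\e)$; this is the technical content developed in Sections~\ref{sec:rectified}--\ref{section:density_est}.
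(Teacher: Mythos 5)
Your core argument in rectified coordinates is a genuinely different, and more elementary, route than the paper's. The paper deduces this lemma from the general Gaussian approximation Lemma~\ref{lem:gaussian_approx}, which is proved by an $N$-step iteration scheme combined with Malliavin-calculus density estimates (Section~\ref{section:density_est}), because that lemma must control the law of $U^1_\zeta$ on intervals of size $\eps^\xi$ with error $\smallo{\eps^{\xi+\delta}}$ — density-level accuracy needed for the local limit theorems. You observe, correctly, that for the present statement only CDF-level accuracy on half-lines is needed, so a direct pathwise coupling suffices: freeze the diffusion coefficient along the deterministic orbit, bound the coupling error via the exponential martingale inequality (your Chebyshev/It\^o-isometry phrasing only gives a polynomial bound on $\Pp\{|E_\eps|>\eps^\delta\}$, not $o_e(1)$, but either is enough here), and use the Lipschitz continuity of $\psi_s$ to absorb the $\eps^\delta$-shift. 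This buys simplicity for this one lemma; it would not replace Lemma~\ref{lem:gaussian_approx} where it is actually used (e.g.\ in Lemma~\ref{lem:local-limit-theorem}), since a coupling error of size $\eps^\delta$ is useless on windows of width $\eps$.

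There is, however, one concrete gap in your reduction. You set $Y_{\eps,t}=f(X_{\eps,t})$ with initial condition $f(x_0+\eps x v)=y_0+\eps x w+O(\eps^2x^2)$, which presumes $x_0$ lies in the domain $U$ of the linearizing conjugacy. Condition~\ref{setting:conjugacy} arranges exactly the opposite: $U$ is shrunk so that $x_0+[-1,1]v$ does not meet $\overline U$. Consequently the proof must be run in three stages, as in Lemma~\ref{lem:3-stages}: (i) finite-time transport from $x_0+\eps xv$ to the entrance segment $f^{-1}([-R,R]\times\{L\})$, which by Lemma~\ref{lem:expansion} lands at $f^{-1}(0,L)+\eps(\bar c x+\overline\NProj+r_\eps)$ with $\overline\NProj$ Gaussian and independent of the subsequent noise; (ii) your rectified-coordinates argument started from that (random) entrance point, uniformly over entrance points in $(\eps K_{\vk'}(\eps))\times\{L\}$; (iii) finite-time transport from the lateral exit of the box to $\partial D$, which does not change the side of exit up to an $o_e(1)$ probability error, cf.~\eqref{eq:A_Pi-vs-A}. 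Stage (i) is not cosmetic: it contributes an additional independent Gaussian, so the correct variance is not your $\bar s/\gamma^2$ but the one obtained by composing $x\mapsto\E\,\psi_{\bar s/\gamma^2}(-\bar cx-\overline\NProj)$ into a single $\psi_{s'}(-x)$. Since the lemma only asserts existence of some $s>0$, this repair preserves your conclusion, but as written the first step of your proof is not available in the paper's setting.
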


\bpfm For the system~\eqref{eq:linear-system1}--\eqref{eq:linear-system2}, we recall that the direction of exit is determined by the sign of $Z_\e=x+U^1_{\tau_\e}\indistr x+\frU$. Defining $s$ to be $\cc_1$, the variance of $\frU$, see~\eqref{eq:var-of-N}, we obtain
\[
\Psc^x(A_{-,\e})\to \Pp\{\frU<-x\}=\psi_s(-x).
\]
\epf

These lemmas, especially Lemma~\ref{lem:exit-on-the-same-whp}, show that 
in the case where $\alpha_1<1$, we need to study how the diffusion is set up to be at distance of order at most~$\eps^1$ from~$\gamma_1$ when exiting $D_1$,  even if this means an atypical scenario near $O_1$. 

It turns out that the probability of such a scenario differs drastically between the situations where $\alpha_0<1$ and $\alpha_0=1$. We address them in the following two lemmas.

First, we address the situation where the entrance distribution is concentrated at scale $\e^\alpha$ with $\alpha<1$ and $\alpha\rho<1$.

\begin{lemma} \label{lem:alpha<1-and-rho-alpha<1-- concentration at exit} 
Under conditions~\ref{setting:general},~\ref{setting:geometry-domain},~\ref{setting:initial-cond}, and~\ref{setting:conjugacy}, let us assume that
\begin{equation}
\label{eq:a<1-and-ar<1}
\alpha<1\quad \text{\rm and}\quad  \alpha\rho<1,
\end{equation}
and that $\xi_\eps$ is of order $1$. Then
\[
X_{\e,\tau_\e}=q_++\eps^{\alpha\rho} \xi'_\e v_+, 
\]
where $\xi'_\e$ is of order $1$.
\end{lemma}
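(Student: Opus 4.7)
The plan is to give the heuristic derivation in the model case~\eqref{eq:linear-system1}--\eqref{eq:linear-system2}, parallel to the computations in Section~\ref{sec:typical} but keeping track of the refined quantitative information ``of order~$1$'' rather than only ``tame''. The strategy has two steps: first, show that under the hypothesis $\alpha<1$ the quantity $Z_\e$ from~\eqref{eq:Z_eps} remains of order~$1$ and positive w.h.p.; second, show that under $\alpha\rho<1$ the contribution of scale $\e^{\alpha\rho}$ to the exit location in~\eqref{eq:X_2-at-exit-time} dominates the additive $\e N^2_{\tau_\e}$ term.

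For the first step, I would apply Lemma~\ref{lem:tail-control-for-basic-rvs-in-Duhamel} to conclude that $\sup_{t\in[0,\tau_\e]}|U^1_t|$, $|N^2_{\tau_\e}|$, and $\tau_\e$ are all tame. Since by hypothesis $\xi_\e\in[l_\e^{-\vk},l_\e^\vk]$ w.h.p.\ for some $\vk>0$ and $1-\alpha>0$, the correction $\e^{1-\alpha}U^1_{\tau_\e}$ is $o_e(1)$ relative to $\xi_\e$, so
$$Z_\e=\xi_\e+\e^{1-\alpha}U^1_{\tau_\e}=\xi_\e\bigl(1+o_e(1)\bigr)$$
is of order~$1$ and positive w.h.p. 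In particular, by~\eqref{eq:exit-direction}, $X^1_{\e,\tau_\e}>0$ w.h.p., confirming the exit near $q_+$; and substituting $Z_\e$ into~\eqref{eq:expr-for-tau} yields $\tau_\e=\tfrac{\alpha}{\lambda}\log\e^{-1}+O(\log l_\e)$.

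For the second step, plug this exit time into~\eqref{eq:X_2-at-exit-time}, giving
$$X^2_{\e,\tau_\e}=\e^{\alpha\rho}\tfrac{L}{R^\rho}|Z_\e|^\rho+\e N^2_{\tau_\e}.$$
Under the assumption $\alpha\rho<1$, the first summand is of order exactly $\e^{\alpha\rho}$ because $|Z_\e|^\rho$ is of order~$1$, while the second is of order $\e^1$, which lies below $\e^{\alpha\rho}$ by a factor $\e^{1-\alpha\rho}$. Defining $\xi'_\e$ by~\eqref{eq:scaling-at-exit} and solving gives
$$\xi'_\e=\tfrac{L}{R^\rho}|Z_\e|^\rho+\e^{1-\alpha\rho}N^2_{\tau_\e},$$
which is of order~$1$, as claimed.

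To upgrade this sketch to the general nonlinear setting of the statement, one applies the $C^5_\mathrm{b}$ smooth linearizing conjugacy~\ref{setting:conjugacy-all}, writes the SDE in linearized coordinates via It\^o's formula, and transfers the exit domain $D$ and the transverse segments $x_0+[-1,1]v$ and $q_\pm+[-1,1]v_\pm$ into those coordinates. The main obstacle is controlling the resulting It\^o correction terms (which inherit $C^3_\mathrm{b}$-regularity from $f\in C^5_\mathrm{b}$) together with the curvature of the transformed boundaries uniformly, so that neither disrupts the dominant $\e^{\alpha\rho}$-scaling above; this is precisely the technical content developed in Sections~\ref{sec:rectified}--\ref{section:density_est}.
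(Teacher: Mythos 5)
Your argument is correct and follows essentially the same route as the paper: the model-case heuristic there likewise shows that $Z_\e$ is of order $1$ via Lemma~\ref{lem:tail-control-for-basic-rvs-in-Duhamel}, factors out $\e^{\alpha\rho}$ in \eqref{eq:X_2-at-exit-time}, and defers the nonlinear case to the rectified-coordinate computation on \eqref{eq:expr-for-xi-prime} and the three-stage decomposition. One small notational caution: the ratio $\e^{1-\alpha}U^1_{\tau_\e}/\xi_\e$ is only polynomially small (of order at most $\e^{1-\alpha}l_\e^{2\vk}$), not $o_e(1)$ in the paper's super-polynomial sense, but this does not affect your conclusion that $Z_\e$ is of order $1$ and positive w.h.p.
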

 
\bpfm 
First, due to Lemma~\ref{lem:exit-on-the-same-whp}, the exit happens through $\partial_+$ (see the definition~\eqref{eq:boundary-of-strip}), w.h.p., uniformly over values of $x$  of order above $1$.

We can rewrite ~\eqref{eq:X_2-at-exit-time} as
\begin{align}
\label{eq:X_2-at-exit-time-alpha-rho}
X_{\e,\tau_\e}^2
= \e^{\alpha\rho}\left(\frac{L}{R^{\rho}}|Z_\eps|^{\rho}+\e^{1-\alpha\rho} N^2_{\tau_\e}\right).
\end{align}
Recalling~\eqref{eq:Z_eps} and using Lemma
\ref{lem:tail-control-for-basic-rvs-in-Duhamel}, we obtain that $Z_\eps$ is of 
order~1.
Applying Lemma
\ref{lem:tail-control-for-basic-rvs-in-Duhamel} to  the right-hand side of~\eqref{eq:X_2-at-exit-time-alpha-rho}, we now obtain the statement of the lemma.\epf

Thus, under~\eqref{eq:a<1-and-ar<1}, the exit at scale at most $\e^1$ is extremely unlikely. Let us consider the remaining case where $\alpha=1$,
which is actually the most interesting and technical part of our program. The lemma we are about to state describes exits at scale $\e^\beta$, where $\beta\in(\rho,1]$. In this section, we are mostly interested in $\beta=1$ but we will need this lemma in full generality in the next section when considering longer heteroclinic chains. 

We denote by $\GoodMeasures$ the set of all nonzero absolutely continuous measures $\nu$ on $\R$ satisfying
\begin{align}
&\nu((0,\infty)) >0, \label{eq:GoodMeasures>0}
\\
&\nu((-\infty,z])\le C\left(1+z^C\right),\quad z\ge 0, \label{eq:GoodMeasures}
\\
& \frac{d\nu}{d\Leb}(z) \leq C\left(1+|z|^C\right),\quad z\in \R, \label{eq:GoodMeasure_density}
\end{align}
for some $C>0$. The elements of $\GoodMeasures$ are called (absolutely continuous) measures of polynomial growth.

\begin{lemma} \label{lem:local-limit-theorem}
Under conditions~\ref{setting:general},~\ref{setting:geometry-domain}, and~\ref{setting:conjugacy}, suppose 
\begin{equation*}
\alpha=1, \qquad  \alpha\rho=\rho<1, \quad \text{\rm and}\quad \beta\in(\rho,1].
\end{equation*}
Then the following holds:
\begin{enumerate}\item \label{item:power-asymp}
There are constants $c,\delta >0$, and $\nu\in\GoodMeasures$, such that for all~$\vk,\vk'>0$,
\begin{align*}
    \sup_{\substack{x\in K_\vk(\e), \\ [a,b]\subset K_{\vk'}(\e)}}\left|\eps^{-(\frac{\beta}{\rho}-1)}\Psc^x\left\{X_{\e,\tau_\e}\in q_++\e^\beta [a,b]v_+ \right\}- g_c(x)\nu([a,b])\right|= \smallo{\eps^\delta}.
\end{align*}

\item\label{item:large-negative-values-w.l.p.}
For every $\vk,\vk'>0$, 
\begin{align}\label{eq:tau_cvg_in_prob_quantified}
    \sup_{x\in K_\vk(\eps)}\Psc^x\left\{
\left|\frac{\tau_\e}{\frac{\beta}{\mu} l_\eps }- 1\right|>\delta,\  A_{+,\eps},\  X_{\e,\tau_\e}\in q_++\e^\beta (-\infty,l_\e^{\vk'}]v_+
\right\}=O\left(\e^{{\frac{\beta(1+\delta)}{\rho}-1}}\right).
\end{align}

\item \label{item:exit-below-unlikely}
For any $\vk>0$ and any $\vk'>\frac{1}{2}$,
\begin{align*}
    \sup_{x\in K_\vk(\e)}\Psc^x\left(A_{+,\e}\cap \left\{X_{\e,\tau_\e}\notin q_++\e [-l_\e^{\vk'},+\infty)v_+\right\}\right)= o_e(1).
\end{align*}
 
\end{enumerate}
\end{lemma}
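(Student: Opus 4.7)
The strategy is to reduce to the linear model via the smooth linearization~\ref{setting:conjugacy}, use Duhamel's formula for the linearized system, and track small corrections in rectified coordinates as in the later sections. The key structural identity combines the exit condition on the unstable direction with the contraction along the stable direction: on $A_{+,\e}$ one has $R = e^{\lambda\tau_\e} \e |Z_\e|$ (up to corrections), with $Z_\e = x + U^1_{\tau_\e}$, and hence $e^{-\mu\tau_\e} = (\e|Z_\e|/R)^\rho$. Substituting this into Duhamel for the stable coordinate yields
\begin{align*}
X^2_{\e,\tau_\e} = \frac{L}{R^\rho}\,\e^\rho\,|Z_\e|^\rho + \e N^2_{\tau_\e} + (\text{lower-order corrections}),
\end{align*}
which drives the whole lemma (assume $L > 0$; the other sign is symmetric).

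For part~\eqref{item:power-asymp}, we invert this formula. The event $\{X^2_{\e,\tau_\e}\in \e^\beta[a,b]\}$ translates to a condition on $Z_\e$ involving the power map $z\mapsto (L/R^\rho) z^\rho$, shifted by $\e^{1-\beta} N^2_{\tau_\e}$: for $\beta<1$ this shift is of lower order and absorbed into the $o(\e^\delta)$ error, while for $\beta=1$ it persists and produces a Gaussian convolution. Since $\beta/\rho>1$, the condition forces $Z_\e$ into a shrinking window of length $O(\e^{\beta/\rho-1})$ near the origin. A local limit theorem for $Z_\e=x+U^1_{\tau_\e}$ (the main technical ingredient, discussed below) shows that the density of $Z_\e$ near $0$ equals $g_c(x)$ with $c=(2\lambda)^{-1}$, up to $o(\e^\delta)$ and uniformly in $x\in K_\vk(\e)$. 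Multiplying the density by the window length produces $g_c(x)\,\nu([a,b])$, where $\nu$ is the pushforward of Lebesgue measure on $(0,\infty)$ under $z\mapsto (L/R^\rho) z^\rho$ (smoothed by a centered Gaussian when $\beta=1$). Its density $\frac{1}{\rho}(R^\rho/L)^{1/\rho} z^{1/\rho-1}\ONE_{z>0}$ (or its Gaussian convolution) manifestly satisfies the polynomial bounds \eqref{eq:GoodMeasures>0}--\eqref{eq:GoodMeasure_density}, so $\nu\in\GoodMeasures$.

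For part~\eqref{item:large-negative-values-w.l.p.}, the constraint $X^2_{\e,\tau_\e}\le \e^\beta l_\e^{\vk'}$ forces $Le^{-\mu\tau_\e}\le \e^\beta l_\e^{\vk'}+\e l_\e^C$ w.h.p., so $\tau_\e\ge \frac{\beta}{\mu}l_\e(1-o(1))$, which rules out the deviation $\tau_\e<(1-\delta)\frac{\beta}{\mu}l_\e$ outside a w.l.p.\ set. The remaining case $\tau_\e>(1+\delta)\frac{\beta}{\mu}l_\e$ is equivalent to $0<Z_\e<C\e^{\beta(1+\delta)/\rho-1}$, whose probability is controlled, again by the local limit estimate on $Z_\e$, by $O(g_c(x)\,\e^{\beta(1+\delta)/\rho-1})$, provided $\delta$ is chosen small enough. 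Part~\eqref{item:exit-below-unlikely} is easier: an exit with $v_+$-coefficient below $-\e l_\e^{\vk'}$ forces $N^2_{\tau_\e}<-l_\e^{\vk'}$ (since $Le^{-\mu\tau_\e}\ge 0$), and Lemma~\ref{lem:tail-control-for-basic-rvs-in-Duhamel} bounds the probability by $C\exp(-l_\e^{2\vk'}/C) = o_e(1)$ when $\vk'>1/2$.

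The central obstacle is the local limit theorem for $Z_\e=x+U^1_{\tau_\e}$ on a vanishing neighborhood of the origin, uniformly in $x\in K_\vk(\e)$. The a.s.\ convergence $U^1_{\tau_\e}\to \frU$ from~\eqref{eq:lim-of-U} yields only weak convergence and is far from sufficient to control the density away from the typical support. Precise pointwise density estimates obtained via Malliavin calculus are needed, and the $C^5_\mathrm{b}$ smoothness of the conjugacy in~\ref{setting:conjugacy} is essential for those---this is the heart of the later Sections~\ref{sec:Gaussian-approx}--\ref{section:density_est}. A further technical layer is to track the nonlinear corrections from the passage from the linear model to the general setting and show they perturb the density of $Z_\e$ only by $o(\e^\delta)$, rather than altering its leading behavior at the small scale $\e^{\beta/\rho-1}$.
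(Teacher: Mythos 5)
Your overall route is the one the paper actually follows: the Duhamel identity $X^2_{\e,\tau_\e}=\e^\rho \tfrac{L}{R^\rho}|Z_\e|^\rho+\e N^2_{\tau_\e}$ plus corrections, inversion of the power map so that the exit event confines $Z_\e=x+U^1_{\tau_\e}$ to a window of length $O(\e^{\beta/\rho-1})$, a Malliavin-calculus local limit theorem for $Z_\e$ at that scale (the paper's Lemma~\ref{lem:gaussian_approx} built on the density estimates of Section~\ref{section:density_est}), and the three-stage passage between original and rectified coordinates. Your treatment of parts~\eqref{item:large-negative-values-w.l.p.} and~\eqref{item:exit-below-unlikely} matches the paper's. (One cosmetic slip: in the general setting the variance of the limiting Gaussian is not $(2\lambda)^{-1}$ but $\cc_1=\int_0^\infty e^{-2\lambda s}|F^1(0,e^{-\mu s}L)|^2ds$, further transformed by the entrance map when returning to original coordinates.)

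The genuine gap is in part~\eqref{item:power-asymp} for $\beta=1$. There you need the \emph{joint} law of $(U^1_{\tau_\e},N^2_{\tau_\e})$, not just the marginal of $U^1_{\tau_\e}$, and your formula for $\nu$ as a Gaussian convolution tacitly assumes that the limits $\frU$ and $\NN$ are independent. That is true in the diagonal model case, but under \ref{setting:general}--\ref{setting:conjugacy} the rectified system has a full state-dependent noise matrix, so $U^1$ and $N^2$ are driven by the same Wiener process through coupled coefficients and their dependence must be controlled. The paper devotes a separate, substantial argument to exactly this point (Lemma~\ref{lem:asymp_decouple} and its supporting lemmas): the trajectory is stopped at an intermediate time $\htau$ at distance $\e^\theta$ from the stable manifold, one shows that the decisive noise contribution to $U^1$ accumulates before $\htau$ while that to $N^2$ accumulates after $\htau$, and a joint density estimate for the pair (Lemma~\ref{lem:density_est_R^2}, parts~(3)--(4)) is used to replace $N^2_{\tau_\e}$ by an independent Gaussian with variance $\cc_2$. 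Without this decoupling step the factorized limit $g_c(x)\,\nu([a,b])$ with $\nu$ a Gaussian convolution does not follow from a local limit theorem for $Z_\e$ alone; you would only obtain a joint law with an unidentified dependence structure. The rest of your outline is consistent with the paper's proof.
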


Note that, in~\eqref{item:power-asymp}, due to $\eps^\beta l_\eps^{\vk'}\leq 1$ for small $\eps$, we have $\{X_{\e,\tau_\e}\in q_++\e^\beta [a,b]v_+\}\subset A_{+,\eps}$ uniformly in $[a,b]\subset K_{\vk'}(\eps)$ for sufficiently small $\eps$.

In comparison with other results given in this section, a complete proof of this local limit theorem requires a lot of technical work involving multiple approximations, techniques based on Malliavin calculus, an iteration scheme similar to those of~\cite{long_exit_time},\cite{long_exit_time-1d-part-2}, \cite{YB-and-HBC:10.1214/20-AAP1599}, \cite{YB-and-HBC:10.1214/20-AOP1479} helping to gradually extend the analysis of the diffusion to longer and longer times, and detailed analysis of tails of exit times.

Let us stress that although the natural scale for $X_\e(\tau_\e)$ is $\e^\alpha$ with $\alpha<1$,  Lemma~\ref{lem:local-limit-theorem} shows 
that the distribution of $X_\e(\tau_\e)$  has local regularity (approximate equidistribution) at smaller scales down to order $\e^1$ and thus can be viewed as a local limit theorem.

Note that the limit $g_c(x) \nu([a,b]) $  in the local limit theorem (part \ref{item:power-asymp} of Lemma~\ref{lem:local-limit-theorem}) is a product 
of two factors depending only on the initial condition $x$ and the exit location $[a,b]$ respectively.
 This indicates an asymptotic loss of memory that will be useful in the proof of Theorem~\ref{th:2saddles} and in the analysis of longer heteroclinic chains.

Together, Lemma~\ref{lem:local-limit-theorem}~\eqref{item:power-asymp} and \eqref{item:large-negative-values-w.l.p.} imply that for every fixed $x$, under $\Psc^x$ conditioned on $A_{+,\eps}\cap\{ X_{\e,\tau_\e}\in 
q_++\e^\beta (-\infty,l_\e^{\vk'}]v_+\},$
we have
\begin{equation*}
\frac{\tau_\e}{\frac{\beta}{\mu} l_\eps }\inprob 1,\quad \e\to 0.
\end{equation*}

A rigorous proof of Lemma~\ref{lem:local-limit-theorem} is given in Section~\ref{sec:original-coords}. It requires a lot of
preparatory work in Sections~\ref{sec:rectified}--\ref{section:density_est}.
\medskip

\bpfm 
Using~\eqref{eq:X_2-at-exit-time}, we can write
\begin{equation}
\label{eq:converting-to-Z}
\Psc^{x}\left\{X_{\e,\tau_\e}\in q_++\e^\beta [a,b]v_+\right \}=\Psc^{x}\left\{\e^{\rho}\frac{L}{R^{\rho}}|Z_\e|^{\rho}+\e N^{2}_{\tau_\e}\in\e^\beta[a,b]\right\},
\end{equation}
where, similarly to~\eqref{eq:Z_eps},
\begin{equation*}
Z_\e=x+U^1_{\tau_{\e}}.
\end{equation*}
Since the exit happens near $q_+$, i.e., through $\partial_+$, we have $Z_\e>0$ on our event.

Due to~\eqref{eq:lim-of-U}, we only make a small error computing instead
\begin{align}
\label{eq:local-limit-computation}
&\Pp\left\{\e^{\rho}\frac{L}{R^{\rho}}(x+\frU)^{\rho}+\e \NN\in\e^{\beta}[a,b],\ x+\frU>0\right\}
\\
\notag
=\ &
\Pp\left\{(x+\frU)^\rho\in \frac{R^\rho}{L}\e^{\beta-\rho}[a-\e^{1-\beta}\NN,b-\e^{1-\beta}\NN],\ x+\frU>0 \right\}.
\end{align} 
If $\beta=1$, the right-hand side equals
\[
\Pp\left\{\frU\in \frac{R}{L^{1/\rho}}\e^{\frac{1}{\rho}-1}[((a-\NN)\vee 0)^{1/\rho},((b-\NN)\vee 0)^{1/\rho}] -x \right\},
\]
and, using the independence and Gaussianity of $\frU$ and $\NN$,  for small~$\eps$, due to $\frac{1}{\rho}-1>0$, we can approximate this probability by
\begin{align*}
&
g_{\cc_1}(x)\frac{R}{L^{1/\rho}}\e^{\frac{1}{\rho}-1} \E \left(((b-\NN)\vee 0)^{1/\rho}-((a-\NN)\vee 0)^{1/\rho}\right),
\end{align*} 
where $\cc_1$ is the variance of $\frU$ given in~\eqref{eq:var-of-N}.
Defining~$\nu\in\GoodMeasures$ by 
\[
\nu((-\infty,z])=  \frac{R}{L^{1/\rho}} \E ((z-\NN)\vee 0)^{1/\rho},\quad z\in\R.
\]
we complete the proof of part~\eqref{item:power-asymp} for $\beta=1$. 

In the case of $\beta\in(\rho,1)$, the right-hand side of \eqref{eq:local-limit-computation} can be approximated by
\begin{align}
\label{eq:local-limit-computation-beta>rho}
\approx\ & 
\Pp\left\{\frU\in \frac{R}{L^{1/\rho}}\e^{\frac{\beta}{\rho}-1}[(a\vee 0)^{1/\rho},(b\vee 0)^{1/\rho}] -x \right\}.
\end{align} 
Since $\frac{\beta}{\rho}-1>0$, this probability can be approximated for small $\e$ by
\begin{align*}
&
 g_{\sigma^2_{\frU}}(x)\frac{R}{L^{1/\rho}}\e^{\frac{\beta}{\rho}-1} ((b\vee 0)^{1/\rho}-(a\vee 0)^{1/\rho}).
\end{align*} 
Now it remains to define~$\nu\in\GoodMeasures$ by 
\[
\nu((-\infty,z])=  \frac{R}{L^{1/\rho}} (z\vee 0)^{1/\rho},\quad z\in\R,
\]
and part~\eqref{item:power-asymp} for $\beta\in(\rho,1)$ follows.

To prove part~\eqref{item:exit-below-unlikely}, we similarly compute for large $\vk'$:
\begin{align*}
&\Pp\left\{\e^{\rho}\frac{L}{R^{\rho}}(x+\frU)^{\rho}+\e \NN\notin\e[-l_\e^{\vk'},+\infty),\ x+\frU>0\right\}
\\
=&
\Pp\left\{(x+\frU)^\rho\le \frac{R^\rho}{L}\e(-l_\e^{\vk'}-\NN),\ x+\frU>0 
\right\}
\\
\le&
\Pp\left\{-l_\e^{\vk'}-\NN>0\right\}=\Pp\{\NN<-l_\e^{\vk'}\}=o_e(1).
\end{align*}

To prove part~\eqref{item:large-negative-values-w.l.p.}, we note that up to small errors, similarly to~\eqref{eq:converting-to-Z} and~\eqref{eq:local-limit-computation-beta>rho},
\begin{align*}
\left\{X_{\e,\tau_\e}\in q_++\e^\beta (-\infty,l_\e^{\vk'}]v_+ \right\}\approx \left\{0< x+\frU\le \frac{R}{L^{1/\rho}}\e^{\frac{\beta}{\rho}-1}((l_\e^{\vk'}-\eps^{1-\beta}\NN)\vee 0)^{1/\rho} \right\},
\end{align*}
and on the latter event
\begin{align}
\tau_{\e}&=\frac{1}{\lambda}\log\frac{R}{\e |Z_\eps|}\approx \frac{1}{\lambda}\log\frac{R}{\e |x+\frU|} \label{eq:tau_formula_heuristic}
\\ &\ge\frac{\beta}{\mu} l_\eps  + \frac{1}{\mu} \log\frac{L}{l_\e^{\vk'}-\e^{1-\beta}\NN},
\notag
\end{align}
and $l_\e^{\vk'}-\e^{1-\beta}\NN>0$. Thus, on this event,
$\tau_\e <(1-\delta)\frac{\beta}{\mu} l_\eps $ implies
\begin{align*}
\e^{1-\beta}\NN-l_\e^{\vk'}<-L\eps^{-\beta\delta},
\end{align*}
which is a low probability event. For a matching upper bound on $\tau_\e$, we note that
\begin{align*}
\Pp\left\{\tau_\e>(1+\delta)\frac{\beta}{\mu} l_\eps  \right\}\approx \Pp\left\{|x+\frU|<R\e^{\frac{\beta(1+\delta)}{\rho}-1}\right\}=O\left(\e^{\frac{\beta(1+\delta)}{\rho}-1}\right).
\end{align*}
These estimates imply~\eqref{eq:tau_cvg_in_prob_quantified}.
\epf

This lemma providing the power asymptotics $\e^{\frac{\beta}{\rho}-1}$ for the probability of the unlikely event of approaching the outgoing heteroclinic connection at distance of order below $\e^\beta$, also  describes the mechanism responsible for creating these events.

We see that the exit time needed to realize the rare event is about $\frac{\beta}{\mu} l_\eps $ which, due to $\rho=\mu/\lambda<1$, is much longer than the typical exit times concentrating near $\frac{1}{\lambda} l_\eps $, see the limit theorem in~\eqref{eq:typical-exit-time-distr} or the more general claim~\eqref{item:exit_time_is_log} of Theorem~\ref{th:poincare-saddle}. We saw before that those typical exit times are not long enough for the contraction to bring the diffusion close enough to the unstable manifold. However, if the diffusion happens to be exposed to contraction while withstanding the repulsion out of a neighborhood of a saddle for a longer period~$\sim \frac{\beta}{\mu} l_\eps $ (this is a rare event with probability of order $\e^{\frac{\beta}{\rho}-1}$ as we just computed),  then this is enough for the diffusion to approach the unstable heteroclinic connection at a distance of order at most~$\eps^\beta$. 

We give a more precise study of tails of the exit times in~Section~\ref{sec:LLT}. For the proof of Theorem~\ref{th:2saddles} we only need one more estimate on the exit time, which can be viewed as a stronger version of Theorem~\ref{th:poincare-saddle}~\eqref{item:exit_time_is_log}.
\begin{lemma}\label{lem:exit_time_is_log}
Under conditions~\ref{setting:general},~\ref{setting:geometry-domain}, and~\ref{setting:conjugacy}, for each $\delta>0$,
\begin{align*}
    \sup_{x\in K_\vk(\eps)}\Psc^x\left\{\left|\frac{\tau_\e}{\frac{1}{\lambda} l_\eps }-1\right|>\delta\right\}=O\left(\eps^\delta\right).
\end{align*}
\end{lemma}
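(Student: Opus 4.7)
I would quantify the convergence in probability~\eqref{eq:exit-time-rough-asymp} into a polynomial rate. After the linearization provided by~\ref{setting:conjugacy} and the Duhamel representation used in Sections~\ref{sec:rectified}--\ref{sec:Gaussian-approx}, the unstable coordinate has the form
\begin{equation*}
X^1_{\eps,t}=e^{\lambda t}\bigl(\eps x+\eps\tilde U^1_t\bigr),\qquad \tilde U^1_t=\int_0^t e^{-\lambda s}\tilde\sigma^1(X_{\eps,s})\,dW_s,
\end{equation*}
where $\tilde U^1$ is a continuous martingale whose increments enjoy the subgaussian tail bound of Lemma~\ref{lem:tail-control-for-basic-rvs-in-Duhamel}. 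The exit relation $|X^1_{\eps,\tau_\eps}|=R$ gives, modulo a w.l.p.\ correction,
\begin{equation*}
\tau_\eps=\frac{1}{\lambda}\log\frac{R}{\eps |Z_\eps|},\qquad Z_\eps:=x+\tilde U^1_{\tau_\eps},
\end{equation*}
so the event in the statement coincides, up to a w.l.p.\ event, with $\{|Z_\eps|<R\eps^\delta\}\cup\{|Z_\eps|>R\eps^{-\delta}\}$.

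The lower-tail event $\{|Z_\eps|>R\eps^{-\delta}\}$ forces $\sup_{t\le\tau_\eps}|\tilde U^1_t|$ to exceed a quantity of order $\eps^{-\delta}$ (since $|x|\le l_\eps^\vk$), and Lemma~\ref{lem:tail-control-for-basic-rvs-in-Duhamel} bounds this by $Ce^{-\eps^{-2\delta}/C}=o_e(1)$, which is $o(\eps^\delta)$.

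The main work is the upper-tail event $\{|Z_\eps|<R\eps^\delta\}$. Let $T_\eps:=(1+\delta/2)l_\eps/\lambda$; on our event $\tau_\eps\ge T_\eps$. Writing $\tilde U^1_{\tau_\eps}=\tilde U^1_{T_\eps}+M_\eps$ with $M_\eps=\int_{T_\eps}^{\tau_\eps}e^{-\lambda s}\tilde\sigma^1(X_{\eps,s})\,dW_s$, the quadratic variation of $M_\eps$ is bounded by $\|\tilde\sigma^1\|_\infty^2\int_{T_\eps}^\infty e^{-2\lambda s}ds=O(\eps^{2+\delta})$, so Doob's inequality yields $|M_\eps|\le\eps^{1+\delta/3}$ w.h.p. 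Consequently the upper-tail event is contained, up to a w.l.p.\ event, in $\{|x+\tilde U^1_{T_\eps}|<2R\eps^\delta\}$, and a uniform-in-$\eps$ density-boundedness estimate for the law of $\tilde U^1_{T_\eps}$, which follows from the Malliavin calculus techniques of Section~\ref{section:density_est} and is also an input to Lemma~\ref{lem:local-limit-theorem}, gives probability $O(\eps^\delta)$ uniformly in $x\in K_\vk(\eps)$.

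The principal obstacle is precisely the density bound in the last step: mere weak convergence of $\tilde U^1_{T_\eps}$ does not yield the sharp $\eps^\delta$ rate, and one must establish absolute continuity of its law with uniformly bounded density, at least on intervals of length $\eps^\delta$. Once this is in hand, the remaining ingredients --- the Duhamel identity, the subgaussian tail of $\tilde U^1$, and the Doob estimate for the tail martingale --- are elementary, and the uniform handling of the rectified-coordinate corrections is taken care of by the estimates of Sections~\ref{sec:rectified}--\ref{sec:Gaussian-approx}.
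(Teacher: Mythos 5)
Your plan is essentially the paper's proof: the short-exit tail is killed by the subgaussian bound on $\sup_t|U^1_t|$ (Lemma~\ref{lem:estimates-for-terms}), and the long-exit tail $\{\tau>\frac{1+\delta}{\lambda}l_\eps\}=\{|Z_\eps|<R\eps^\delta\}$ is handled exactly as in Proposition~\ref{prop:long_exit_time_asym_linear_box_case} via Lemma~\ref{lem:gaussian_approx}, whose proof consists of precisely your two steps — replacing the stopped process by the process at a deterministic logarithmic time (Lemma~\ref{lem:rough_approx_new}) and then a uniform density/Gaussian bound at that time. The one imprecision is your claim that the density bound at $T_\eps=(1+\delta/2)l_\eps/\lambda$ ``follows from'' Section~\ref{section:density_est}: the Malliavin estimates there are only valid for times $T\le\bar\theta l_\eps$ with $\bar\theta$ a small constant depending on $\lambda,\mu$, so reaching $T_\eps$ requires the Markov-property iteration scheme of Lemma~\ref{lem:iterative_est}, which is how the paper upgrades the short-time density estimate to the needed time scale.
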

\bpfm 
Using the expression~\eqref{eq:tau_formula_heuristic}, we obtain
\begin{align*}
    \Psc^x\left\{\tau_\e<\frac{1-\delta}{\lambda} l_\eps \right\} \approx \Psc^x\{ |x+\frU|>\eps^{-\delta}R\} = o_e(1),\\
    \Psc^x\left\{\tau_\e>\frac{1+\delta}{\lambda} l_\eps \right\} \approx \Psc^x\{ |x+\frU|<\eps^{\delta}R\} = O\left(\eps^\delta\right),
\end{align*}
uniformly in $x\in K_\vk(\eps)$.
\epf

\medskip
With Lemmas~\ref{lem:exit-straight}--\ref{lem:exit_time_is_log} at hand, we can  give a rigorous proof of Theorem~\ref{th:2saddles}.

\medskip

\bpf[Proof of Theorem~\ref{th:2saddles}] Part~\ref{item:typical-case} follows from part~\ref{item:if-exp=1-bifurcation} of Theorem~\ref{thm:typical}. Part~\ref{item:too-far-from-network} is a consequence of Lemmas~\ref{lem:exit-on-the-same-whp} and \ref{lem:alpha<1-and-rho-alpha<1-- concentration at exit}.

To prove part \ref{item:power-asymp-for-2saddles}, we will combine Lemmas~\ref{lem:local-limit-theorem} and~\ref{lem:postive-limit-prob-if-close-to-manifold}. First, we write
\begin{align*}
\Pp(A_{2,\e})&=\E [\Pp (A_{2,\e}| X_{\e,\tau^1_\e})]=I_1+I_2,
\end{align*}
where
\begin{align*}
I_1&=\E [\Pp (A_{2,\e}| X_{\e,\tau^1_\e}) \ONE_{\{X_{\e,\tau^1_\e}\in x_1+\e K_{\vk'}(\e)v_1 \}}],\\
I_2&=\E [\Pp (A_{2,\e}| X_{\e,\tau^1_\e}) \ONE_{\{X_{\e,\tau^1_\e}\notin  x_1+\e K_{\vk'}(\e)v_1\}}].
\end{align*}
We can also write
\begin{align*}
I_1=I_{11}+I_{12}
\end{align*}
where
\begin{align*}
I_{11}&= \E [\Pp (A_{2,\e}| X_{\e,\tau^1_\e }) \ONE_{\{X_{\e,0}\in x_0+\e K_{\vk}(\eps)v;\  X_{\e,\tau^1_\e}\in x_1+\e K_{\vk'}(\e)v_1\}}],\\
I_{12}&= \E [\Pp (A_{2,\e}| X_{\e,\tau^1_\e }) \ONE_{\{X_{\e,0}\notin x_0+\e K_{\vk}(\eps)v;\  X_{\e,\tau^1_\e}\in x_1+\e K_{\vk'}(\e)v_1\}}].
\end{align*}
Let us prove that the leading term $I_{11}$ satisfies
\begin{align}
\label{eq:I_11}
I_{11}= \e^{\frac{1}{\rho}-1} (h +o(1)),
\end{align}
where
\begin{align}\label{eq:h_2_saddles}
    h = \E [g_c(\xi_{0})]  \int_{\R} \psi_s(-z)\nu(dz).
\end{align}
Since $g_c$ (given in \eqref{eq:g_c_Gaussian}) and $\psi_s$ (given in \eqref{eq:gaussian-cdf}) are positive everywhere, and since $\nu\in\GoodMeasures$, we have $h\in(0,\infty)$.

Let us introduce additional notation. Extending the definition of $\Psc^x$ as the distribution associated with the initial condition given by~\eqref{eq:entrance-scaling-conditioned}, we will denote by~$\Psc^x_i,$ $i=1,2$, the distribution of the diffusion with initial condition
\begin{equation*}
X_{\e,0}=x_{i-1}+\e x v_{i-1}.
\end{equation*}

For brevity, we write $\nu_x(dz) = g_{c}(x)\nu(dz)$, where the measure $\nu\in\GoodMeasures$ and constant $c>0$ are introduced in Lemma~\ref{lem:local-limit-theorem} applied to the 
diffusion near the saddle point $O_1$, and
\begin{gather*}
    \nu_{x,\eps}(dz) = \eps^{-(\frac{1}{\rho}-1)}\Psc^x_1\{X_{\e,\tau^1_\e} \in x_1 +\eps  (dz)\, v_1 \} .
\end{gather*}
Using this notation, we can rewrite
\begin{align}\label{eq:I_11_rewrite}
    I_{11} = \eps^{\frac{1}{\rho}-1} \E\ONE_{\xi_{0,\eps}\in K_\vk(\eps)}\int_{K_{\vk'}(\eps)}\Psc^z_2(A_{2,\eps})\nu_{\xi_{0,\eps},\eps}(dz).
\end{align}
Let us show that the following is $o(1)$ uniformly over $x\in K_\vk(\eps)$:
\begin{align}
    &\left|\int_{K_{\vk'}(\eps)}\Psc^z_2(A_{2,\e})\nu_{x, \eps}(dz) - \int_{K_{\vk'}(\eps)}\psi_s(-z)\nu_{x}(dz)\right| \notag\\
   & \leq \left| \int_{K_{\vk'}(\eps)}\psi_s(-z)\big(\nu_{x, \eps}(dz)-\nu_x(dz)\big)\right| + \left| \int_{K_{\vk'}(\eps)}\big(\Psc^z_2(A_{2,\e})-\psi_s(-z)\big)\nu_{x,\eps}(dz)\right|\notag\\
    & =II_{1} + II_{2}. \label{eq:II_1+II_2}
\end{align}
To estimate $II_1$, we first note that $z\mapsto \psi_s(-z)$ is decreasing and takes values in~$[0,1]$. Hence, setting $n_\eps = \lfloor \eps^{-\delta/2}\rfloor+1$ with $\delta$ given in Lemma~\ref{lem:local-limit-theorem}~\eqref{item:power-asymp}, we can find, for $1\leq i\leq n_\eps$,
closed intervals $E_i\subset K_{\vk'}(\eps)$ with disjoint interiors whose union is $K_{\vk'}(\eps)$ such that $\psi_s(-z)\in [\frac{i-1}{n_\eps},\frac{i}{n_\eps}]$ for $z\in E_i$. Then, we have 
\begin{align*}
    &\left|\int_{E_i}\psi_s(-z)\left(\nu_{x, \eps}(dz)-\nu_x(dz)\right)\right|\\
    &\leq \max\left\{\left|\frac{i}{n_\eps}\nu_{x,\eps}(E_i) - \frac{i-1}{n_\eps}\nu_x(E_i)\right|,\ \left|\frac{i}{n_\eps}\nu_x(E_i) - \frac{i-1}{n_\eps}\nu_{x,\eps}(E_i)\right|\right\}\\
    &\leq \max\left\{\left|\frac{i}{n_\eps}\nu_{x,\eps}(E_i) - \frac{i}{n_\eps}\nu_x(E_i)\right|,\ \left|\frac{i-1}{n_\eps}\nu_x(E_i) - \frac{i-1}{n_\eps}\nu_{x,\eps}(E_i)\right|\right\}+\frac{1}{n_\eps}\nu_x(E_i)
    \\
    &\leq \left|\nu_{x,\eps}(E_i)-\nu_x(E_i)\right| + \frac{1}{n_\eps}\nu_x(E_i).
\end{align*}
Then,
\begin{align*}
    II_1 \leq  \sum_{i=1}^{n_\eps}\left|\nu_{x,\eps}(E_i)-\nu_x(E_i)\right| + \frac{1}{n_\eps}\nu_x(K_{\vk'}(\eps)).
\end{align*}
Now, using Lemma~\ref{lem:local-limit-theorem}~\eqref{item:power-asymp}, the boundedness of $g_c$, and the fact that $\nu\in\GoodMeasures$, we conclude that $II_1 = o(1)$ uniformly in $x\in K_\vk(x)$. To estimate $II_2$, we note that  Lemma~\ref{lem:postive-limit-prob-if-close-to-manifold} and Lemma~\ref{lem:local-limit-theorem}~\eqref{item:power-asymp} imply that, for some $\delta,\delta'>0$,
\begin{align*}
    II_2 \leq o(\eps^{\delta}) \nu_{x,\eps}(K_{\vk'}(\eps))\leq o(\eps^{\delta})\left(\nu_x(K_{\vk'}(\eps))+\smallo{\eps^{\delta'}}\right)=o(1),
\end{align*}
where the last equality is due to $\nu_x = g_c(x)\nu$, the boundedness of $g_c$ and the fact that $\nu\in\GoodMeasures$. Hence, the last term in \eqref{eq:II_1+II_2} is $o(1)$. Using this, \eqref{eq:I_11_rewrite}, and our definition of $\nu_x$, we obtain
\begin{align*}
    I_{11}= \e^{\frac{1}{\rho}-1} (h_\eps +o(1)),
\end{align*}
where
\begin{align*}
    h_\eps = \E \left [ g_c(\xi_{0,\eps})\ONE_{\xi_{0,\eps}\in K_\vk(\eps)}\right] \int_{K_{\vk'}(\eps)}\psi_s(-z)\nu(dz).
\end{align*}
Using the tameness of $\xi_{0,\eps}$ and the boundedness of $g_c$, we have 
\begin{align*}
     \E \left [ g_c(\xi_{0,\eps})\ONE_{\xi_{0,\eps}\in K_\vk(\eps)}\right] = \E [ g_c(\xi_{0,\eps})]+o_e(1)
\end{align*}
for sufficiently large $\kappa$. Using the convergence of $\xi_{0,\eps}$ in distribution to $\xi_0$ (see \ref{setting:scaling-limit-at-saddle-0}), we have $\lim_{\eps\to0}\E [g_c(\xi_{0,\eps})]  = \E[g_c(\xi_0)]$. Using the exponential decay of $\psi_s(-z)$ as $z\to\infty$ (see the definition of $\psi_s$ in \eqref{eq:gaussian-cdf}), and the polynomial growth of $\nu$ given in~\eqref{eq:GoodMeasures}, we can see that the integral in the definition of $h_\eps$ converges to the integral in the definition of $h$ as $\eps\to0$. Hence, we have $\lim_{\eps\to 0}h_\eps = h$, and thus \eqref{eq:I_11} follows.

\medskip
The proof of~\eqref{eq:polynomial-asymp-2saddles} will be complete if we show that $I_{12}$ and~$I_2$ are~$o_e(1)$. 
Since $|\xi_{0,\e}|$ is of order at most $1$, we obtain $I_{12}=o_e(1)$ for sufficiently large $\vk$.
To estimate~$I_2$, we write
\begin{align*}
I_2\le I_{21}+I_{22},
\end{align*}
where
\begin{align*}
I_{21}&=\Pp\left(A_{2,\e}\cap\{ X_{\e,\tau^1_\e}\in x_1+\e (-\infty,-l_\e^\vk] v_1\}\right),
\\
I_{22}&=\Pp\left(A_{2,\e}\cap \{ X_{\e,\tau^1_\e}\in x_1+\e (l_\e^\vk,\infty) v_1\}\right).
\end{align*}
For sufficiently large $\vk$, $I_{21}=o_e(1)$  due to Lemma~\ref{lem:local-limit-theorem}\eqref{item:exit-below-unlikely}, and $I_{22}=o_e(1)$ due to Lemma~\ref{lem:exit-on-the-same-whp}, so~\eqref{eq:polynomial-asymp-2saddles} follows, with $h$ given in \eqref{eq:h_2_saddles}. 

Now, we turn to~\eqref{eq:exit-time-asymp-2saddles}.  For $\delta>0$, we write 
\begin{align}
    &\Pp\left\{\left|\tau^2_\e - (\mu_1^{-1}+\lambda_2^{-1}) l_\eps \right|>2\delta l_\eps ,\ A_{2,\eps}\right\}\notag
    \\
    \leq&\, \Pp\left\{\left|\tau^1_\e - \mu_1^{-1} l_\eps \right|>\delta l_\eps ,\ A_{2,\eps}\right\} +  \Pp\left\{\left|\tau^2_\e -\tau^1_\e- \lambda_2^{-1} l_\eps \right|>\delta l_\eps ,\ A_{2,\eps}\right\}.\label{eq:tau_2,tau_1}
\end{align}
To bound the first term in~\eqref{eq:tau_2,tau_1}, we use the assumption that $\xi_{0,\eps}$ is tame, and apply Lemma~\ref{lem:exit-on-the-same-whp} 
to the diffusion near $O_2$ and Lemma~\ref{lem:local-limit-theorem}~\eqref{item:exit-below-unlikely} to the diffusion near the saddle point $O_1$ to see that for all sufficiently large $\vk>0$,
\begin{align}\label{eq:X_tau_1_A_2}
    X_{\e,\tau_1}\in x_1+\eps [-l^\vk_\eps, l^\vk_\eps]  v_1
\end{align}
on $A_{2,\eps}$ except for an exceptional set of low probability. Then, due to~\eqref{eq:tau_cvg_in_prob_quantified} and the tameness of $\xi_{0,\eps}$, the first term in~\eqref{eq:tau_2,tau_1} is $o(\eps^{\frac{1}{\rho_1}-1})$. Due to~\eqref{eq:X_tau_1_A_2}, the second term can be bounded from above by
\begin{align*}
    o_e(1)+ \E\left[\Pp\left(\left|\tau^2_\e -\tau^1_\e- \lambda_2^{-1} l_\eps \right|>\delta l_\eps ,\ A_{2,\eps}\Big| X_{\e,\tau_1}\right)\ONE_{X_{\e,\tau_1}\in q_++\eps [-l^\vk_\eps, l^\vk_\eps]v_1}\right].
\end{align*}
Now combining~ Lemma~\ref{lem:exit_time_is_log}  and Lemma~\ref{lem:local-limit-theorem}~\eqref{item:power-asymp}, we can see that the second term in~\eqref{eq:tau_2,tau_1} is $o(\eps^{\frac{1}{\rho_1}-1})$. Hence,~\eqref{eq:exit-time-asymp-2saddles} follows from these and~\eqref{eq:polynomial-asymp-2saddles}.
\epf

\section{Heteroclinic chains of arbitrary length}\label{sec:long-escape-chains}
\subsection{Introduction}\label{sec:intro-to-long-escape-chains}

The goal of this section is to give a rigorous statement of our main result described briefly in Section~\ref{sec:main_result_descr}, give some intuition behind it, and a combination of heuristic and rigorous arguments. Sections~\ref{sec:rectified}--\ref{section:density_est} contain the proofs  adding rigor to the heuristic arguments.

Our main result concerns the decay rates (as $\e\to0$) for probabilities to follow arbitrarily long heteroclinic chains of the kind shown in Figure~\ref{fig:main-heteroc}, where curves $\gamma_0,\gamma_1,\ldots,\gamma_{n-1}$ belong to the boundary of one cell (of arbitrary orientation, clockwise or counterclockwise, see  Figure~\ref{fig:heteroc-example}), and $\gamma_n$ does not belong to that boundary, ``making a wrong turn''. We call such a heteroclinic chain a cell escape chain.

The setting for this section is described by conditions \ref{setting:general},~\ref{setting:het-chain},~\ref{setting:scaling-at-saddle-0},~\ref{setting:scaling-limit-at-saddle-0},~\ref{setting:conjugacy-all}, and the requirement that $(\gamma_0,\gamma_1,\ldots,\gamma_{n})$ is a cell escape chain.
We recall  \eqref{eq:A_k}, the definition of the event $A_{n,\eps}$ describing sequential exits from domains $D_1,D_2,\ldots,D_n$.
In the case of a cell escape chain, it is natural to say that  on $A_{n,\eps}$ the diffusion escapes from the cell along the sequence $(\gamma_0,\gamma_1,\ldots,\gamma_{n})$. However, it is important to distinguish between the escape and the first exit. In principle, it is possible for the diffusion to cross $\gamma_k$ for some $k<n$ (thus exiting the cell) and still follow the remaining heteroclinic connections of the chain closely.

Similarly to the case of short heteroclinic chains considered in the previous section, we will describe conditions under which, in the limit $\eps\to0$,
the
probability 
of $A_{n,\eps}$ 
either converges to a positive number, or decays to~$0$, either as a power of~$\e$ or faster than any power of~$\e$, see Theorem~\ref{th:multi-saddle-escape}.

\begin{figure}[ht]
\centering
\begin{tikzpicture}

    \node (0) at (-5, 0) {};
    \node (1) at (-2.25, 0) {};
    \node (2) at (2.25, 0) {};
    \node (3) at (4.75, 0) {};
    \node (4) at (-5.875, -1.55) {};
    \node (5) at (-4.25, -1.5) {};
    \node (6) at (-2.925, -1.6) {};
    \node (7) at (-1.25, -1.5) {};
    \node (8) at (1.375, -1.675) {};
    \node (9) at (2.75, -1.25) {};
    \node (10) at (3.9, -2.25) {};
    \node (11) at (6, -1.5) {};
    \node (12) at (-5.25, 2.25) {};
    \node (13) at (-0.25, 0.95) {};
    \node (14) at (0.75, 0.95) {};
    \node (15) at (6.25, 1.5) {};
    \node (16) at (-6, -0.75) {};
    \node (17) at (-5.25, -1.25) {};
    \node (18) at (-3, -0.25) {};
    \node (19) at (-2.5, 1) {};
    \node (20) at (-2.25, -1.25) {};
    \node (21) at (-3.25, -1) {};
    \node (22) at (-0.625, 0.325) {};
    \node (23) at (-0.475, 1.575) {};
    \node (24) at (1.25, -1) {};
    \node (25) at (2, -1.25) {};
    \node (26) at (3.9, 0.075) {};
    \node (27) at (4.25, 1) {};
    \node (28) at (0.575, 0.275) {};
    \node (29) at (1.375, 0) {};
    \node (30) at (1.825, 1.15) {};
    \node (31) at (0.75, 1.775) {};
    \node (32) at (3.5, -1.25) {};
    \node (33) at (4.75, -1.5) {};
    \node (34) at (5.475, 1.475) {};
    \node (35) at (6, 0.7) {};
    \node (36) at (0, 1.925) {};
    \node (37) at (-0.5, -0.5) {};

    \node (x0) at (-5.2, 1) {};
    \node (x1) at (-2.75, 0.365) {};
    \node (x2) at (-0.557, 0.942) {};
    \node (xn-2) at (1.63, 0.625) {};
    \node (xn-1) at (4.07, 0.523) {};
    \node (xn) at (4.142, -1.37) {};
    \node (v0) at (-4.75, 1.125) {};
    \node (v1) at (-2.5, 1) {};
    \node (v2) at (-0.475, 1.575) {};
    \node (vn-2) at (1.825, 1.15) {};
    \node (vn-1) at (4.25, 1) {};
    \node (vn) at (3.5, -1.25) {};

\foreach \n in {0, 1, 2, 3}
        \node at (\n)[circle,fill,inner sep=1.75pt]{};

    \foreach \n in {x0, x1, x2, xn-2, xn-1, xn}
        \node at (\n)[circle,fill,inner sep=1.35pt]{};

\draw [thick, -latex] (x0.center) to (v0.center);
    \draw [thick, -latex] (x1.center) to (v1.center);
    \draw [thick, -latex] (x2.center) to (v2.center);
    \draw [thick, -latex] (xn-2.center) to (vn-2.center);
    \draw [thick, -latex] (xn-1.center) to (vn-1.center);
    \draw [thick, -latex] (xn.center) to (vn.center);

    \draw [ultra thick, dotted] (13.center) to (14.center);

    \draw [ultra thick, postaction={on each segment={mid arrow=black}}] [in=105, out=-90] (12.center) to (0.center);
    \draw [postaction={on each segment={mid arrow=black}}] [in=75, out=-135, looseness=0.75] (0.center) to (4.center);
    \draw [postaction={on each segment={mid arrow=black}}] [in=-75, out=120, looseness=0.75] (5.center) to (0.center);
    \draw [ultra thick, postaction={on each segment={mid arrow=black}}] [in=135, out=45] (0.center) to (1.center);
    \draw [postaction={on each segment={mid arrow=black}}] [in=75, out=-120, looseness=0.75] (1.center) to (6.center);
    \draw [postaction={on each segment={mid arrow=black}}] [in=315, out=120, looseness=0.75] (7.center) to (1.center);
    \draw [ultra thick, postaction={on each segment={mid arrow=black}}] [in=180, out=60] (1.center) to (13.center);
    \draw [ultra thick, postaction={on each segment={mid arrow=black}}] [in=120, out=0, looseness=0.75] (14.center) to (2.center);
    \draw [postaction={on each segment={mid arrow=black}}] [in=75, out=-120, looseness=0.50] (2.center) to (8.center);
    \draw [postaction={on each segment={mid arrow=black}}] [in=-60, out=105, looseness=0.75] (9.center) to (2.center);
    \draw [ultra thick, postaction={on each segment={mid arrow=black}}] [in=135, out=60, looseness=1.25] (2.center) to (3.center);
    \draw [ultra thick, postaction={on each segment={mid arrow=black}}] [in=75, out=-120] (3.center) to (10.center);
    \draw [postaction={on each segment={mid arrow=black}}] [in=315, out=120] (11.center) to (3.center);
    \draw [postaction={on each segment={mid arrow=black}}] [in=-150, out=60, looseness=0.75] (3.center) to (15.center);
    \draw (16.center) to (17.center);
    \draw [in=-180, out=60, looseness=0.75] (17.center) to (18.center);
    \draw (19.center) to (18.center);
    \draw [in=90, out=150, looseness=1.75] (19.center) to (16.center);
    \draw (20.center) to (21.center);
    \draw [in=165, out=120, looseness=1.75] (21.center) to (23.center);
    \draw (23.center) to (22.center);
    \draw [in=60, out=-165] (22.center) to (20.center);
    \draw (25.center) to (24.center);
    \draw [in=150, out=105, looseness=1.75] (24.center) to (27.center);
    \draw (27.center) to (26.center);
    \draw [in=60, out=-180, looseness=0.75] (26.center) to (25.center);
    \draw [in=135, out=0, looseness=0.75] (28.center) to (29.center);
    \draw (29.center) to (30.center);
    \draw [in=0, out=120] (30.center) to (31.center);
    \draw (33.center) to (32.center);
    \draw [in=180, out=105, looseness=1.75] (32.center) to (34.center);
    \draw (34.center) to (35.center);
    \draw [in=75, out=-135] (35.center) to (33.center);
    \draw [in=150, out=180, looseness=1.75] (36.center) to (37.center);

    \node at (-5.4, 0.9) {$x_0$};
    \node at (-2.87, 0.6) {$x_1$};
    \node at (-0.75, 1.15) {$x_2$};
    \node at (2.1, 0.635) {$x_{n-2}$};
    \node at (4.55, 0.55) {$x_{n-1}$};
    \node at (4.4, -1.6) {$x_{n}$};

    \node at (-4.59, 1.125) {$v_0$};
    \node at (-2.76, 0.97) {$v_1$};
    \node at (-0.475, 1.7) {$v_2$};
    \node at (2.15, 1.26) {$v_{n-2}$};
    \node at (4.4, 1.17) {$v_{n-1}$};
    \node at (3.4, -1.35) {$v_{n}$};

    \node at (-5.3, 0.1) {$O_1$};
    \node at (-2.6, 0) {$O_2$};
    \node at (2.75, 0) {$O_{n-1}$};
    \node at (4.4, 0) {$O_{n}$};

    \node at (-5, 1.455) {$\gamma_0$};
    \node at (-3.9, 0.85) {$\gamma_1$};
    \node at (-1.7, 0.95) {$\gamma_2$};
    \node at (0.95, 1.1) {$\gamma_{n-2}$};
    \node at (3, 1) {$\gamma_{n-1}$};
    \node at (4.65, -0.75) {$\gamma_{n}$};

    \node at (-5.2, -0.75) {$D_1$};
    \node at (-2.3, -0.75) {$D_2$};
    \node at (-0.5, -0.06) {$D_3$};
    \node at (0.7, 0.5) {$D_{n-2}$};
    \node at (1.6, -0.25) {$D_{n-1}$};
    \node at (3.85, -0.8) {$D_{n}$};

\end{tikzpicture}

\caption{A cell escape heteroclinic chain is almost entirely, except the last heteroclinic connection, a part of the boundary of one cell.}
\label{fig:main-heteroc}
\end{figure}
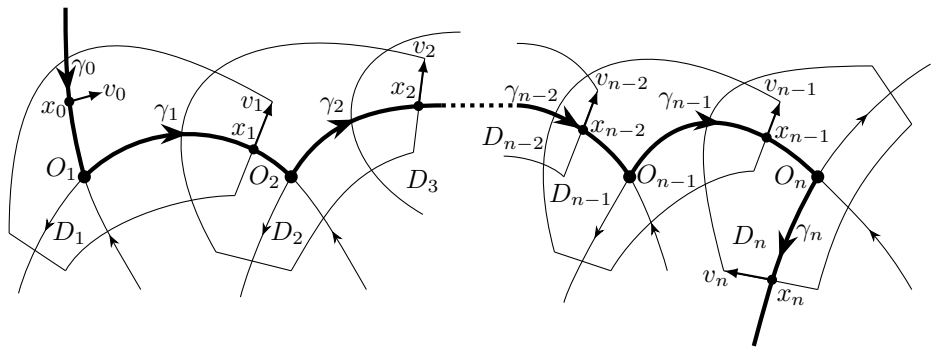

Let us discuss the ideas behind our approach first.
To study the decay of $\Pp(A_{n,\e})$ we need to supplement results of Sections~\ref{sec:typical},~\ref{sec:2saddles} with more precise analysis of how the distance from the diffusing particle to the heteroclinic chain changes upon passing near a saddle point. More precisely, we need to quantify how probable or improbable transitions between various orders of magnitude are.  

We already know that some transitions are typical, some are unlikely, and probabilities of some transitions decay as a power of $\e$. 

In addition to this, we will also prove (see Lemma~\ref{lem:if-entrance-far-exit-far}) that if $\alpha\le 1$, $\alpha\rho\le 1$ (so that $\alpha'=\alpha\rho$), and the distance from the starting point of diffusion to the heteroclinic chain is of order above $\eps^\alpha$, then the exit from the neighborhood of the saddle happens w.h.p.\ on the same side of the chain, at distance of order above~$\eps^{\alpha\rho}$. Iterating this statement, we can work with a  sequence of exponents~$(\bar\alpha_i)_{i\ge\kstar}$ for some $\kstar\in\{0,1,\dots,n-1\}$, such that if the distance from the exit point from a neighborhood of a saddle point~$O_i$,  to the heteroclinic connection is above order~$\eps^{\bar\alpha_i}$, then w.h.p.\ the same holds for exit locations for all saddle points $O_i'$ with $i'>i$ (see Lemma~\ref{lem:new-scaling}). Requiring that $\bar\alpha_{n-1}=1$ (this fixes a concrete sequence $(\bar\alpha_i)$) and using the fact that  if the diffusion enters the neighborhood of $O_n$ at distance of order above~$\eps^1$, then w.h.p.\ it exits on the same side of the heteroclinic chain (see Lemma~\ref{lem:exit-on-the-same-whp}) and thus the cell escape does not happen, we can conclude that conditioned on one of those events of exiting too far from the network, the escape event happens with low probability. This allows to conclude that by restricting the diffusion to exit all saddles $O_i$ through a window of size of order $\eps^{\bar\alpha_i}$, we only make a tiny (``low probability'') error when computing $\Pp(A_{n,\e})$ (see Lemma~\ref{lem:restricting-to-a-narrow-corridor}).

We will see that once the diffusion exits are restricted to those windows, all transitions after the saddle point $\kstar$ can be classified into two types: transitions from scale $\e^\alpha$ to 
scale~$\e^{\alpha\rho}$ for $\alpha<1$ and $\alpha\rho\le1$; transitions from scale~$\e^1$ to scale~$\e^\beta$ with $\beta\in(\rho,1]$. A transition of the former type is typical, i.e., it gets realized with probability converging to~$1$. The probability of a transition of the latter type decays as a power of $\eps$.  Thus it is plausible that the probability to realize all of those transitions behaves as the product of these powers of $\eps$, i.e., it is a power of $\eps$ itself. However, in order to make this argument rigorous and prove that the escape probability equals $h\e^\theta(1+o(1))$ for some constant $h>0$ (see~\eqref{eq:main_polynomial_asymptotics}) we have to study scaling limits of transition kernels between those windows and obtain results in a form that allows for iterative analysis of convolutions of those kernels (see Lemmas~\ref{lem:trans_ker},  \ref{lem:typical_chain_exit}, \ref{lem:local-limit-for-H}), with limiting measures defined as certain nonlinear transformations of Gaussian distributions.

\subsection{The new sequence of effective exponents and the main result}
To state the main result (Theorem~\ref{th:multi-saddle-escape} below) we need to define a new sequence of exponents $(\bar\alpha_i)$ agreeing with the original sequence $(\alpha_i)$ up to a certain index~$\kstar$ and  describing the 
scales $\e^{\bar\alpha_k}$ on which the distributions of $X_{\e,\tau^k_\e}$ concentrate in order to realize the ``wrong turn'', i.e., the event $A_{n,\e}$. 
The definition of the new sequence $(\bar \alpha_i)$ may seem unintuitive at the first sight but 
it follows the logic described in Section~\ref{sec:intro-to-long-escape-chains}, and 
in Lemma~\ref{lem:restricting-to-a-narrow-corridor} 
we will give an approximation to the escape or ``wrong turn'' event of interest $A_{n,\e}$ in terms of $(\bar \alpha_i)$. Namely, we will show that  on~$A_{n,\e}$  (up to an exceptional  low probability event),  for all~$i\ge\kstar$, 
the diffusion exits the saddle $O_i$ at distance of order at most $\e^{\bar \alpha_i}$ from its unstable manifold. Then the main task will be to analyze the convolutions of  the transition kernels from scale $\e^{\bar\alpha_i}$  
to scale $\e^{\bar\alpha_{i+1}}$ over all $i\ge\kstar$.

Let us recall that given $\alpha_0\in(0,1]$, the exponents $(\alpha_k)_{k=0}^n$ are computed iteratively using~\eqref{eq:recursion_alpha}. Let
\begin{equation}\label{eq:def_kstar}
\kstar =\max\{k:\  0\le k \le n-1,\ \alpha_k=1 \}.
\end{equation}
If $\alpha_i<1$ for all $i=0,\ldots,n-1$, then $\kstar$ is not defined.

If $0\le k < j\le n-1$, we define
\begin{equation*}
\rho_{kj}=\prod_{i=k+1}^j\rho_{i}.
\end{equation*}
Agreeing that a product over an empty set equals $1$, we also set $\rho_{kk}=1$ for all $k$. 
We call $k\in\{\kstar+1,\ldots, n-1\}$ and its  associated saddle $O_k$ {\it binding} if  $\rho_{kj}<1$ for all $j\in\{k+1,\ldots,n-1\}$.
For $k=n-1$,
the latter set is empty and the condition is trivially true,  so $n-1$ is always binding. Let us denote the set of all binding indices by $H$.

We will also need the sets 
\begin{align}
H'&=H\cup \{\kstar\}\setminus \{n-1\},\label{eq:H'}\\
J&=H' + 1=\{k+1:k\in H'\}. \label{eq:def_J}
\end{align}
As we will see, in order to realize $A_{n,\e}$, up to a low probability event, the diffusion must stay $\e^1$-close to the heteroclinic connection upon passing near each binding saddle, and  near each saddle $i\in J$ it must spend abnormally long time getting from scale $\e^1$ to scale $\e^{\bar\alpha_i}$ with $\bar\alpha_i\in(\rho_i,1]$.  
These are the saddles that Lemma~\ref{lem:local-limit-theorem} will be applied to. They may be called  {\it the slowdown saddle points}.

For $i\in\{\kstar+1,\ldots,n-1\}$, let $k(i)=\min\{k\in H:\ k\ge i\}$. We define the new exponents $\bar\alpha_i$ by 
\begin{equation}
\bar\alpha_i=
\begin{cases}\rho_{i,k(i)}^{-1},& \kstar+1\le i\le n-1,\\
\alpha_i, &i\le \kstar.
\end{cases}
\label{eq:bar-alpha}
\end{equation}

It is not explicit in the definition but the sequence $(\bar\alpha_i)$ is uniquely defined by the sequence $(\alpha_i)$.
Lemma~\ref{lem:structure-of-reverse-exponents} contains this claim and other properties of $(\alpha_i)$ and~$H$.
Figure~\ref{fig:alphas} gives an example
of  $(\alpha_i)$ and the associated $(\bar \alpha_i)$.

Having defined $H$ and $(\bar\alpha_i)$, we are ready to state the main result of
the paper.

\begin{theorem}
\label{th:multi-saddle-escape} Let us assume the setting defined by conditions~\ref{setting:general}, \ref{setting:het-chain},~\ref{setting:scaling-at-saddle-0}, ~\ref{setting:scaling-limit-at-saddle-0}, and~\ref{setting:conjugacy-all}. Let us also assume  that the heteroclinic chain is a cell escape chain shown on Figure~\ref{fig:main-heteroc}.

Let $\xi_{0,\e}$ be tame and, if $\alpha_0<1$,  let  $\Pp\{\xi_0>0\}>0$
(we recall that $\xi_{0,\e}$ is used to define
the initial condition in 
~\eqref{eq:initial-condition-near-chain} of~\ref{setting:scaling-at-saddle-0} and $\xi_0$ is the distributional limit of $\xi_{0,\e}$ from
\ref{setting:scaling-limit-at-saddle-0}).

Then the following holds true:
\begin{enumerate}
\item\label{item:k-star=n-1}
If $\kstar=n-1$, i.e., $\alpha_{n-1}=1$, then there is $p_n>0$ such that $\Pp(A_{n,\e})\to p_n$.

\item\label{item:cell-escape-probability}
If  $\kstar<n-1$, then there is a constant $h>0$ such that
\begin{equation}
\label{eq:main_polynomial_asymptotics}
\Pp(A_{n,\e})= h\e^\theta(1+o(1)),\quad \e\to0,
\end{equation}
where
\begin{equation}
\label{eq:theta}
\theta=\sum_{i\in J}  \left(\frac{\bar\alpha_{i}}{\rho_{i}}-1\right)>0.
\end{equation}
In this case, conditioned on $A_{n,\e}$,
\begin{align}\label{eq:tau_main_th}
    \frac{\tau^n_\e}{\bar\chi\log \e^{-1}} 
\inprob 1,\quad \e\to0,
\end{align}
where
\begin{align}\label{eq:bar_chi_def}
\bar\chi
=\sum_{i\notin J}\frac{\bar\alpha_{i-1}}{\lambda_i}+\sum_{i\in J} \frac{\bar\alpha_i }{\mu_i}.
\end{align}
\item\label{item:k-star-not-defined}
If $\alpha_i<1$ for all $i=0,1,\ldots,n-1$ (i.e., $\kstar$ is not defined) and $\xi_{0,\e}$ is of order~1, then $A_{n,\e}$ happens with low probability.
\end{enumerate}
\end{theorem}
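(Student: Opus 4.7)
Parts~\ref{item:k-star=n-1} and~\ref{item:k-star-not-defined} are fairly direct. When $\alpha_{n-1}=1$, part~\ref{item:k-star=n-1} follows from Theorem~\ref{thm:typical}\ref{item:if-exp=1-bifurcation} applied at the last saddle. For part~\ref{item:k-star-not-defined}, when all $\alpha_i<1$, I would iterate Lemma~\ref{lem:alpha<1-and-rho-alpha<1-- concentration at exit} along the chain to show that the exit from each $D_i$ happens at distance of order strictly larger than $\eps$ from the heteroclinic network w.h.p., and then invoke Lemma~\ref{lem:exit-on-the-same-whp} at $O_n$ to force the wrong-turn exit to occur with low probability. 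This is the logic used in Theorem~\ref{th:2saddles}\ref{item:too-far-from-network}, iterated $n-1$ times.

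The bulk of the work is part~\ref{item:cell-escape-probability}. My plan, following the sketch in Section~\ref{sec:intro-to-long-escape-chains}, starts by interpreting $(\bar\alpha_i)$ defined in~\eqref{eq:bar-alpha} as the ``narrow-corridor'' scales and showing that $A_{n,\eps}$ can be replaced, up to a low-probability error, by its intersection with the corridor event $\{|X_{\eps,\tau_\eps^i}-x_i|\leq \eps^{\bar\alpha_i}l^\vk_\eps\text{ for all }i\ge\kstar\}$. This rests on a forward propagation ``entrance too far implies exit too far'' (Lemma~\ref{lem:if-entrance-far-exit-far}), iterated along the chain (Lemma~\ref{lem:new-scaling}), and on Lemma~\ref{lem:exit-on-the-same-whp} at $O_n$ which rules out the wrong turn whenever the entrance scale at $O_n$ stays above $\eps^1$. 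The composite of these facts is Lemma~\ref{lem:restricting-to-a-narrow-corridor}. Saddles up to and including $O_{\kstar}$ are passed typically with $\bar\alpha_{\kstar}=\alpha_{\kstar}=1$ and the entrance law into $D_{\kstar+1}$ has Gaussian-type density, by the scaling limits of Section~\ref{sec:typical} together with Lemma~\ref{lem:local-limit-theorem}~\ref{item:power-asymp}.

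Inside the corridor the remaining saddles split into two classes. At a non-slowdown saddle $i\notin J$ the entrance is at scale $\eps^{\bar\alpha_{i-1}}$ with $\bar\alpha_{i-1}<1$, so a quantified version of the one-sided scaling limit in Theorem~\ref{th:poincare-saddle}\ref{item:distr-of-xi-prime} (Lemma~\ref{lem:typical_chain_exit}) shows that the rescaled exit is an asymptotically deterministic function of the rescaled entrance and the transition kernel contributes no power of~$\eps$. At a slowdown saddle $i\in J$ the entrance is at scale $\eps^1$ and the exit at scale $\eps^{\bar\alpha_i}$ with $\bar\alpha_i\in(\rho_i,1]$, so Lemma~\ref{lem:local-limit-theorem}~\ref{item:power-asymp} produces both a factor $\eps^{\bar\alpha_i/\rho_i-1}$ and a limit kernel of the \emph{product} form $g_c(x)\,\nu(dz)$ with $\nu\in\GoodMeasures$. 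The decoupling between entrance and exit variables at each slowdown saddle is exactly what makes iterative convolution along the chain tractable (Lemmas~\ref{lem:trans_ker}, \ref{lem:local-limit-for-H}): one pushes forward a bounded density against the next $\nu$, invokes the quantified local limit theorem to extract the next factor $\eps^{\bar\alpha_i/\rho_i-1}$ and a new density up to $o(1)$, and feeds it into the next step, exactly as in the two-saddle argument behind~\eqref{eq:polynomial-asymp-2saddles}. Multiplying the $|J|$ resulting factors gives $\eps^\theta$ with $\theta$ as in~\eqref{eq:theta}, and positivity of $h$ in~\eqref{eq:main_polynomial_asymptotics} follows from the positivity of the Gaussian densities and from $\nu((0,\infty))>0$ in~\eqref{eq:GoodMeasures>0}.

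The time asymptotic~\eqref{eq:tau_main_th} is then obtained by summing, on the corridor event, the typical contribution $\bar\alpha_{i-1}\lambda_i^{-1}\log\eps^{-1}$ at each $i\notin J$ (Lemma~\ref{lem:exit_time_is_log}) and the slowdown contribution $\bar\alpha_i\mu_i^{-1}\log\eps^{-1}$ at each $i\in J$ (Lemma~\ref{lem:local-limit-theorem}~\ref{item:large-negative-values-w.l.p.}), mimicking the chain of estimates~\eqref{eq:tau_2,tau_1}--\eqref{eq:exit-time-asymp-2saddles} one saddle at a time. The main obstacle I anticipate is maintaining a single uniform induction hypothesis---Gaussian-type density on the entrance side, $\GoodMeasures$-type tail on the exit side, error bounds uniform over entrance windows $K_\vk(\eps)$---through both typical and slowdown transitions and across an arbitrary number of saddles without accumulating polylogarithmic errors that could swamp the $\eps^\theta$ rate. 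This bookkeeping is precisely what Lemmas~\ref{lem:trans_ker}, \ref{lem:typical_chain_exit}, \ref{lem:local-limit-for-H} are designed to manage, and it is where the bulk of the technical work will live.
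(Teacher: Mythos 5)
Your proposal is correct and follows essentially the same route as the paper: parts (1) and (3) via Theorem~\ref{thm:typical} and iteration of Lemma~\ref{lem:alpha<1-and-rho-alpha<1-- concentration at exit} respectively, and part (2) via restriction to the $\eps^{\bar\alpha_i}$-corridors (Lemma~\ref{lem:restricting-to-a-narrow-corridor}), the product-form local limit kernels $g_c(x)\nu(dz)$ at slowdown saddles, and the iterative convolution machinery of Lemmas~\ref{lem:trans_ker}, \ref{lem:typical_chain_exit}, \ref{lem:local-limit-for-H}, with the time asymptotics assembled saddle-by-saddle exactly as in the two-saddle case. The only minor imprecision is describing the non-slowdown transitions as asymptotically deterministic; they are one-sided but still carry Gaussian corrections through the functions $\Phi^k_{i,\eps}(x,N_k)$, which is precisely why the monotone-decomposition bookkeeping in Lemma~\ref{lem:trans_ker} is needed.
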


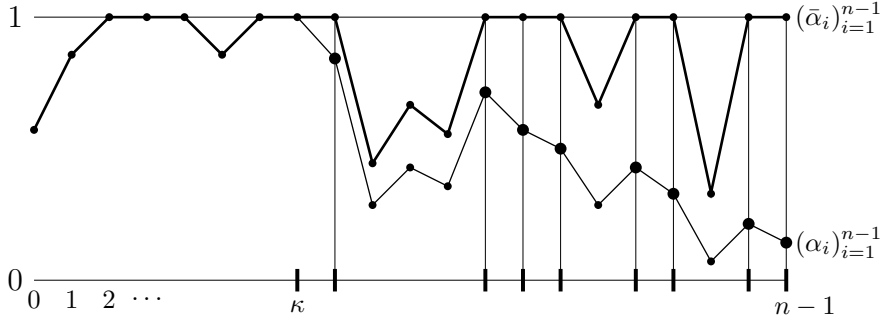
\begin{figure}[ht]

\centering

\begin{tikzpicture}[scale=0.5]

\draw [line width=0.3] (4,3) --(24,3);
\draw [line width=0.3] (4,10) --(24,10);
\draw [line width=0.5] (4,7)--(5,9)--(6,10)--(7,10)--(8,10)--(9,9)--(10,10)
--(11,10)--(12,8.9)--(13,5)--(14,6)--(15, 5.5)--(16,8)--(17,7)--(18,6.5)--(19,5)--(20,6.0)--(21,5.3)
--(22,3.5)--(23,4.5)--(24,4);
\draw [line width=1] (4,7)--(5,9)--(6,10)--(7,10)--(8,10)--(9,9)--(10,10)
--(11,10)--(12,10)--(13,6.111)--(14,7.666)--(15, 6.888)--(16,10)--(17,10)--(18,10)--(19,7.666)--(20,10)--(21,10)
--(22,5.333)--(23,10)--(24,10);

\fill[black] (4,7) circle (3pt);
\fill[black] (5,9) circle (3pt);
\fill[black] (6,10) circle (3pt);
\fill[black] (7,10) circle (3pt);
\fill[black] (8,10) circle (3pt); 
\fill[black] (9,9) circle (3pt);
\fill[black] (10,10) circle (3pt);
\fill[black] (11,10) circle (3pt);  
\fill[black] (12,8.9) circle (4.5pt); 
\draw [line width=0.3] (12,3) --(12,10);
\fill[black] (13,5) circle (3pt); 
\fill[black] (14,6) circle (3pt);
\fill[black] (15, 5.5) circle (3pt);

\fill[black] (16,8) circle (4.5pt); 
\draw [line width=0.3] (16,3) --(16,10);

\fill[black] (17,7) circle (4.5pt); 
\draw [line width=0.3] (17,3) --(17,10);

\fill[black] (18,6.5) circle (4.5pt); 
\draw [line width=0.3] (18,3) --(18,10);

\fill[black] (19,5) circle (3pt);

\fill[black] (20,6.0) circle (4.5pt);  
\draw [line width=0.3] (20,3) --(20,10);

\fill[black] (21,5.3) circle (4.5pt);  
\draw [line width=0.3] (21,3) --(21,10);

\fill[black] (22,3.5) circle (3pt);

\fill[black] (23,4.5) circle (4.5pt); 
\draw [line width=0.3] (23,3) --(23,10);   

\fill[black] (24,4) circle (4.5pt);   \draw [line width=0.3] (24,3) --(24,10); 

 \fill[black] (12,10) circle (3pt);
 \fill[black] (13,6.111) circle (3pt);
 \fill[black] (14,7.666) circle (3pt);
 \fill[black] (15,6.888) circle (3pt);
 \fill[black] (16,10) circle (3pt);
 \fill[black] (17,10) circle (3pt);
 \fill[black] (18,10) circle (3pt);
 \fill[black] (19,7.666) circle (3pt);
 \fill[black] (20,10) circle (3pt);
 \fill[black] (21,10) circle (3pt);
 \fill[black] (22,5.3) circle (3pt);
 \fill[black] (23,10) circle (3pt);
  \fill[black] (24,10) circle (3pt);

\node at (3.5,3){\Large $0$};

\node at (3.5,10){\Large $1$};

\node at (4,2.5){$0$};
\node at (5,2.5){$1$};
\node at (6,2.5){$2$};
\node at (7,2.5){$\ldots$};

\node at (11,2.3){$\kstar$}; 
\draw [line width=1.5] (11,2.7) --(11,3.3);
\draw [line width=1.5] (12,2.7) --(12,3.3);
\draw [line width=1.5] (16,2.7) --(16,3.3);
\draw [line width=1.5] (17,2.7) --(17,3.3);
\draw [line width=1.5] (18,2.7) --(18,3.3);
\draw [line width=1.5] (20,2.7) --(20,3.3);
\draw [line width=1.5] (21,2.7) --(21,3.3);
\draw [line width=1.5] (23,2.7) --(23,3.3);
\draw [line width=1.5] (24,2.7) --(24,3.3);

\node at (24.5,2.3){$n-1$};

\node at (25.4,4){$(\alpha_i)_{i=1}^{n-1}$};
\node at (25.4,10){$(\bar\alpha_i)_{i=1}^{n-1}$};

\end{tikzpicture}
\caption{The lower graph is an example of the sequence $(\alpha_i)_{i=1}^{n-1}$. The upper one is the corresponding $(\bar\alpha_i)_{i=1}^{n-1}$. The two sequences coincide up to $\kstar$. The tickmarks on the horizontal axis show the binding points (elements of $H$) and $\kstar$. The bold dots show the record values of the 
sequence~$(\alpha_i)$ inspected from $n-1$ down to $\kstar+1$. The vertical lines pass through binding points and the associated record values. Note that the values of $\bar\alpha_i$ corresponding to those records are equal to 1.  
Various properties of the set $H$ and the sequence $(\bar\alpha_i)_{i=1}^{n-1}$ are discussed in Lemma~\ref{lem:structure-of-reverse-exponents}.}
\label{fig:alphas}
\end{figure}

\begin{remark}\rm
Let us compare this with Theorem~\ref{th:2saddles} on two saddles, where $n=2$. If $\alpha_1=1$, then $\kstar=1=n-1$, and we obtain the equivalence between part \ref{item:k-star=n-1}  of Theorem~\ref{th:multi-saddle-escape} and part \ref{item:typical-case} of Theorem~\ref{th:2saddles}.
If both $\alpha_0<1$ and $\alpha_1<1$, then $\kstar$ is not defined, and part \ref{item:k-star-not-defined} of Theorem~\ref{th:multi-saddle-escape}
coincides with part ~\ref{item:too-far-from-network} of Theorem~\ref{th:2saddles}. Finally, if $\alpha_0=1$ and $\alpha_1<1$, then $\kstar=0<n-1$,  
$H=\{1\}$, $J=\{1\}$, $\bar\alpha_1=1$, $\theta=1/\rho_1-1$, so Theorem~\ref{th:multi-saddle-escape}~\eqref{item:cell-escape-probability}
coincides with Theorem~\ref{th:2saddles}~\eqref{item:power-asymp-for-2saddles}.
\end{remark}

\begin{remark}\rm
Theorem~\ref{th:multi-saddle-escape}~\eqref{item:k-star=n-1}
is a specific case of Theorem~\ref{thm:typical}.  
Part~\ref{item:k-star-not-defined}
follows by induction from Lemma~\ref{lem:alpha<1-and-rho-alpha<1-- concentration at exit}. 
\end{remark}

\begin{remark} \rm
The requirement that $\Pp\{\xi_0=0\}=0$ in condition \ref{setting:scaling-limit-at-saddle-0}  for the case where $\alpha_0<1$
allows to avoid the situation where the distribution of the initial condition for the diffusion has a macroscopic component concentrated at scales smaller than $\e^{\alpha_0}$. However, one can modify our proof and show that even in that situation, in the case $\kstar<n-1$, under natural additional assumptions,~\eqref{eq:main_polynomial_asymptotics} holds with the same $\theta$ given by~\eqref{eq:theta}.
A step in that direction is Lemma~\ref{lem:rare_trans_upp_bdd}.
\end{remark}

\subsection{Proof of (\ref{eq:main_polynomial_asymptotics}) of Theorem~~\ref{th:multi-saddle-escape}~(\ref{item:cell-escape-probability})}

Here, we give a  proof of Theorem~\ref{th:multi-saddle-escape}~\eqref{item:cell-escape-probability} that is partially rigorous and partially heuristic. The heuristic arguments will be replaced by rigorous ones in Sections~\ref{sec:rectified}--\ref{section:density_est}. Also, the proof of Lemma~\ref{lem:typical_loc_lim} is postponed to Section~\ref{sec:original-coords}.

The main point in the proof is to show that the event $A_{n,\e}$ is realized, up to small probability events, by transitions described by local limit theorems of Lemma~\ref{lem:local-limit-theorem} that involve spending abnormal time near 
{\it slowdown} saddle points and typical transitions (described by Theorem~\ref{th:poincare-saddle}) near all other saddle points.

\subsubsection{Properties of the new exponent sequence} First, we
 collect various properties of the set $H$ of binding points and the exponents 
$(\bar\alpha_i)$ in
Lemma~\ref{lem:structure-of-reverse-exponents} below,
illustrated by Figure~\ref{fig:alphas}
Parts~\ref{item:record-setting1},~\ref{item:record-setting-algorithm}, and~\ref{item:record-setting2} of the lemma can be viewed as alternative definitions of~$H$  
describing it as the set of record points of the sequence $(\rho_{i,n-1})_{i=n-1,n-2,\ldots,\kstar+1}$.
Parts \ref{item:backward-recursion1} and \ref{item:backward-recursion2}
can be viewed as alternative definitions of $(\bar\alpha_i)_{i=0,\ldots,n-1}$.
Part~\ref{item:slowdown-saddles} will allow us to apply Lemma~\ref{lem:local-limit-theorem} to the diffusion near slowdown saddle points. 

\begin{lemma}
\label{lem:structure-of-reverse-exponents} Let us assume that $\kstar$ is well-defined.
\begin{enumerate}\item \label{item:old-alphas-after-k-star} For all $i\in\{\kstar+1,\ldots,n-1\}$, $\alpha_i=\alpha_{i-1}\rho_i<1$.

 \item\label{item:record-setting1}  For $i\in\{\kstar+1,\ldots,n-1\}$, $i\in H$ iff
 \begin{equation}
 \label{eq:def-of-record}
 \rho_{i,n-1}< \rho_{j,n-1},\quad j\in\{i+1,\ldots,n-1\}.
 \end{equation}
 
 \item \label{item:record-setting-algorithm}
  The set $H$ can be constructed via the following algorithm:
  \begin{itemize}
  \item
  initialize  $H:=\{n-1\}$, $j:=n-1$; 
   \item repeat the following cycle until the stop condition is met: 
   \begin{itemize}
   \item $A:=\left\{i\in\{\kstar+1,\ldots,j-1\}:\ \rho_{i,n-1}<\rho_{j,n-1}\right\}$;
   \item if $A=\emptyset$, then stop; \\else  redefine $j:=\max A$ and $H:=H\cup\{j\}$.
  \end{itemize}
\end{itemize}

 \item\label{item:record-setting2} For $i\in\{\kstar+1,\ldots,n-2\}$,  $i\in H$ iff 
 \begin{equation}
 \label{eq:record}
 \rho_{i,k(i+1)}<1.
 \end{equation}

 \item\label{item:alpha_bar_div_rho} For $i\in\{\kstar+1,\ldots,n-2\}$, $\bar\alpha_{i+1}/\rho_{i+1}>1$ iff $i\in H$.
 
  \item \label{item:backward-recursion1} For $i>\kstar$, one can define $\bar\alpha_i$  recursively via $\bar\alpha_{n-1}=1$ and 
 then for $i=n-2,n-3,\ldots,\kstar+1$ setting
 \begin{equation}
 \label{eq:backward-iteration1}
 \bar\alpha_i=\begin{cases} 1,& i\in H,\\
 \bar\alpha_{i+1}/\rho_{i+1},& i\notin H.
 \end{cases}
 \end{equation} 
 \item \label{item:backward-recursion2} One can define $\bar\alpha_i$ for $i>\kstar$ recursively via $\bar\alpha_{n-1}=1$ and 
  then for $i=n-2,n-3,\ldots,\kstar+1$ setting
\begin{equation*}
\bar\alpha_i=(\bar\alpha_{i+1}/\rho_{i+1} )\wedge 1.
\end{equation*}
In particular, for all $i$, we have 
$\bar\alpha_i\le 1.$
 
\item \label{item:bar-alpha-rho-less-1} For all $i\in\{\kstar,\ldots, n-2\}$, $\bar \alpha_{i}\rho_{i+1}\le 1$.

\item \label{item:between-binding}
For all $i\in\{\kstar+1,\ldots,n-1\}$, $\bar\alpha_i=\alpha_i/\alpha_{k(i)}> \alpha_i$.

\item \label{item:slowdown-saddles} If $i\in H'$, then $\bar\alpha_{i+1}\in(\rho_{i+1},1]$.

\item \label{item:alpha_bar_defined_by_alpha} The sequence $(\alpha_i)$ defines the set $H$ and 
the sequence $(\bar\alpha_i)$ uniquely.

\end{enumerate}
\end{lemma}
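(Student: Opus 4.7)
The proof is essentially a sequence of algebraic manipulations once two foundational identities are in hand: the product factorization $\rho_{i,n-1}=\rho_{i,j}\rho_{j,n-1}$ for $i\le j\le n-1$, and (from part \eqref{item:old-alphas-after-k-star}) the closed form $\alpha_i=\rho_{\kstar,i}$ for $i\ge\kstar$. My plan is to establish the statements in the order listed, using each as a lemma for the next.

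I would start with \eqref{item:old-alphas-after-k-star}. By the defining recursion $\alpha_k=(\alpha_{k-1}\rho_k)\wedge 1$, the maximality of $\kstar$ means $\alpha_i<1$ for $i>\kstar$; but $\alpha_i<1$ forces the minimum to be attained by the first argument, so $\alpha_i=\alpha_{i-1}\rho_i$. Iterating from $\alpha_\kstar=1$ yields $\alpha_i=\rho_{\kstar,i}$. Part \eqref{item:record-setting1} is then immediate: $\rho_{i,j}<1$ for all $j\in\{i+1,\dots,n-1\}$ is equivalent, via the factorization, to $\rho_{i,n-1}<\rho_{j,n-1}$ for all such $j$. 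Part \eqref{item:record-setting-algorithm} follows by unrolling \eqref{item:record-setting1}: starting from $j=n-1$ (trivially binding), any $i\in H$ must strictly undercut $\rho_{j,n-1}$ for the currently smallest binding $j>i$, and the algorithm grabs exactly such indices (verified by showing that every index passed over satisfies $\rho_{\cdot,n-1}\ge\rho_{j,n-1}$, so it cannot be binding). For \eqref{item:record-setting2}, since $k(i+1)=\mathrm{argmin}_{j>i,\,j\le n-1}\rho_{j,n-1}$, condition $\rho_{i,n-1}<\rho_{j,n-1}$ for all $j>i$ reduces to $\rho_{i,n-1}<\rho_{k(i+1),n-1}$, which factors as $\rho_{i,k(i+1)}<1$.

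The remaining statements follow by unpacking the definition $\bar\alpha_i=\rho_{i,k(i)}^{-1}$. For \eqref{item:alpha_bar_div_rho}, a direct computation gives
\begin{equation*}
\frac{\bar\alpha_{i+1}}{\rho_{i+1}}=\frac{1}{\rho_{i+1}\,\rho_{i+1,k(i+1)}}=\frac{1}{\rho_{i,k(i+1)}},
\end{equation*}
and \eqref{item:record-setting2} converts the condition $>1$ to $i\in H$. For \eqref{item:backward-recursion1}, I split on $i\in H$ (then $k(i)=i$, giving $\bar\alpha_i=1$) and $i\notin H$ (then $k(i)=k(i+1)$, giving $\bar\alpha_i=\rho_{i,k(i+1)}^{-1}=\bar\alpha_{i+1}/\rho_{i+1}$). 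Part \eqref{item:backward-recursion2} combines \eqref{item:alpha_bar_div_rho} and \eqref{item:backward-recursion1}: if $i\in H$ then $\bar\alpha_{i+1}/\rho_{i+1}>1$, so the minimum equals $1=\bar\alpha_i$; if $i\notin H$ then $\bar\alpha_{i+1}/\rho_{i+1}\le 1$, so it equals $\bar\alpha_i$. Part \eqref{item:bar-alpha-rho-less-1} is then a one-liner from \eqref{item:backward-recursion2} (with the boundary case $i=\kstar$ handled by the observation that $\rho_{\kstar+1}<1$, since $\alpha_{\kstar+1}<1$). For \eqref{item:between-binding}, multiply: $\bar\alpha_i\alpha_{k(i)}=\rho_{i,k(i)}^{-1}\rho_{\kstar,k(i)}=\rho_{\kstar,i}=\alpha_i$, and $\alpha_{k(i)}<1$ since $k(i)>\kstar$.

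Part \eqref{item:slowdown-saddles} splits by the definition of $H'$: if $i\in H\setminus\{n-1\}$, apply \eqref{item:alpha_bar_div_rho} and \eqref{item:backward-recursion2} directly; if $i=\kstar$, compute $\bar\alpha_{\kstar+1}/\rho_{\kstar+1}=\rho_{\kstar,k(\kstar+1)}^{-1}=\alpha_{k(\kstar+1)}^{-1}>1$, since $k(\kstar+1)>\kstar$ forces $\alpha_{k(\kstar+1)}<1$. Part \eqref{item:alpha_bar_defined_by_alpha} is bookkeeping: the sequence $(\alpha_i)$ determines $\kstar$, and hence $\rho_i=\alpha_i/\alpha_{i-1}$ for $i>\kstar$, which is all that is needed to compute $\rho_{i,j}$ for $i,j>\kstar$, hence $H$, hence $(\bar\alpha_i)$ (noting that for $i\le\kstar$ we simply have $\bar\alpha_i=\alpha_i$). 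The main ``obstacle'' is not conceptual but organizational: the boundary cases $i=\kstar$, $i=n-1$, and the dichotomy $i\in H$ vs.\ $i\notin H$ interact with each part differently, and one must consistently apply $k(i)=i$ when $i\in H$ and $k(i)=k(i+1)$ otherwise.
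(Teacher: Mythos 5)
Your proposal is correct and follows essentially the same route as the paper: part \eqref{item:old-alphas-after-k-star} via the recursion and maximality of $\kstar$, parts \eqref{item:record-setting1}--\eqref{item:record-setting2} via the factorization $\rho_{i,n-1}=\rho_{ij}\rho_{j,n-1}$ and the record-point reading of $H$, and the remaining parts by unpacking $\bar\alpha_i=\rho_{i,k(i)}^{-1}$ together with the dichotomy $k(i)=i$ for $i\in H$ versus $k(i)=k(i+1)$ otherwise. The only cosmetic differences are that you prove \eqref{item:between-binding} by a direct product identity where the paper uses induction, and your \eqref{item:alpha_bar_defined_by_alpha} recovers $\rho_i=\alpha_i/\alpha_{i-1}$ rather than citing the identity of \eqref{item:between-binding}; both are sound.
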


\bpf Part~\ref{item:old-alphas-after-k-star} follows from the definition of $\kstar$ and~\eqref{eq:recursion_alpha}.
Part~\ref{item:record-setting1} holds since $\rho_{ij}=\rho_{i,n-1}/\rho_{j,n-1}$, so~\eqref{eq:def-of-record} is equivalent to  $\rho_{ij}<1$. This part describes
$H$ as the set of record points of the the sequence $(\rho_{i,n-1})$ explored from $n-1$ down to $\kstar+1$.

This is made precise in the obvious algorithm described in part~\ref{item:record-setting-algorithm}. It discovers the record points one by one. Let us just comment that at any point of execution of this algorithm, $j$ is the latest found record point of $(\rho_{i,n-1})_{i=n-1,n-2,\ldots,\kstar+1}$ and~$H$ is the set of all already discovered record points. The algorithm stops when no new record points can be found.

Part~\ref{item:record-setting2} follows, since~\eqref{eq:record} means that $i$ is the next  record point discovered by the algorithm after discovering~$k(i+1)$.

Part~\ref{item:alpha_bar_div_rho} follows from part~\ref{item:record-setting2} since
\[
\bar\alpha_{i+1}/\rho_{i+1}=\rho_{i+1,k(i+1)}^{-1}/\rho_{i+1}=\rho_{i,k(i+1)}^{-1}.
\]

Part~\ref{item:backward-recursion1} directly follows from~\eqref{eq:bar-alpha} and the last display. Indeed, if $i\in H$, then $k(i)=i$ and thus $\bar \alpha_i=1$ due to~\eqref{eq:bar-alpha}. If $i\not\in H$, then $k(i)>i$ and $k(i)=k(i+1)$. Hence, $\bar\alpha_i =\rho^{-1}_{i,k(i+1)}=\bar\alpha_{i+1}/\rho_{i+1} $ due to~\eqref{eq:bar-alpha}.

Part~\ref{item:backward-recursion2} follows from parts~\ref{item:backward-recursion1},~\ref{item:alpha_bar_div_rho} and the definition~\eqref{eq:bar-alpha}

For  $i>\kstar$, part~\ref{item:bar-alpha-rho-less-1}  follows directly from part~\ref{item:backward-recursion2}. To prove it for $i=\kstar<n-1$, it suffices
to notice that $\alpha_{\kstar}=\bar\alpha_{\kstar}=1$ and $1>\alpha_{\kstar+1}=\alpha_{\kstar}\rho_{\kstar+1}=\rho_{\kstar+1}$.

To prove the identity in part~\ref{item:between-binding}, we note that it is trivially true for $i\in H$ and then parts~\ref{item:old-alphas-after-k-star} and~\ref{item:backward-recursion1} allow to extend it by induction to the remaining values of~$i$. The inequality then also follows since due to part~\ref{item:old-alphas-after-k-star}, $\alpha_{k(i)}<1$.

According to part~\ref{item:backward-recursion2},  $\bar\alpha_i\leq 1$ for all $i$. This and part~\ref{item:alpha_bar_div_rho} imply  part~\ref{item:slowdown-saddles} holds for $i\in H\setminus\{n-1\}$. For $i=\kstar$, since $\alpha_\kstar=\bar\alpha_\kstar=1$ and $\rho_{\kstar+1}<1$, part~\ref{item:between-binding} implies
$
\bar\alpha_{\kstar+1}> \alpha_{\kstar+1}=\rho_{\kstar+1},$ completing the proof of part~~\ref{item:slowdown-saddles}.

Let us prove part~\ref{item:alpha_bar_defined_by_alpha}.
Since the condition $\rho_{kj}<1$ for all $j>k$ is equivalent to $\alpha_j<\alpha_k$ for all $j>k$, we conclude that $H$ is uniquely
defined by $(\alpha_i)$. Therefore, the values $(k(i))_{i>\kstar}$ are also uniquely defined
by $(\alpha_i)$.
Now our claim follows from the identity in part~\ref{item:between-binding}.
\epf

\bigskip

\subsubsection{Preliminaries for analysis of transitions}

To make our proof of Theorem~\ref{th:multi-saddle-escape} work, we actually need a better understanding of the typical case discussed in Theorem~\ref{th:poincare-saddle}. In particular, we need
to control the tails of the distributions involved and to deal with measures from $\GoodMeasures$ instead of probability distributions.

\begin{lemma} 
\label{lem:if-entrance-far-exit-far}
Let us assume~\ref{setting:general},~\ref{setting:geometry-domain},~\ref{setting:initial-cond},~\ref{setting:conjugacy},  and
suppose $\alpha\le 1$ and $\alpha\rho\le1$. Suppose that 
in~\eqref{eq:entrance-scaling}, $\xi_\e$ is of order above 1. 
On $A_{+,\e}$, we define $\xi'_\e$
by~\eqref{eq:scaling-at-exit}.  
Then $\xi'_\e$ is of order above $1$. 
Moreover, this is true uniformly in initial conditions. More precisely, for every $\vk'>0$ and sufficiently large $\vk>0$,
\begin{align*}
    \sup_{x\in (l^{\vk}_\eps,\eps^{-\alpha}] }\Pp^{x_0+\eps^\alpha x v}\left\{ \xi_\eps'\leq l_\eps^{\vk'},\ A_{+,\eps}\right\} = o_e(1).
\end{align*}

\end{lemma}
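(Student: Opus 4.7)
The plan is to reduce to the model case via the linearizing conjugacy in~\ref{setting:conjugacy} and then to argue directly from Duhamel's representation, much as in the heuristic proof of Lemma~\ref{lem:alpha<1-and-rho-alpha<1-- concentration at exit}, but keeping careful track of uniformity in the initial condition. The fact that $\xi_\eps$ is of order \emph{above}~$1$ supplies a lower bound that is polynomially larger than the fluctuations of $U^1_{\tau_\eps}$, so the proof is essentially an order-of-magnitude computation once the tail control from Lemma~\ref{lem:tail-control-for-basic-rvs-in-Duhamel} (and its more general version Lemma~\ref{lem:estimates-for-terms}) is in place.

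First I would work in the model system~\eqref{eq:linear-system1}--\eqref{eq:linear-system2}. With the initial condition $X_{\eps,0}^1 = \eps^{\alpha}x$, $X_{\eps,0}^2 = L$, the Duhamel formulas~\eqref{eq:Duhamel1}--\eqref{eq:Duhamel2} and the exit identity~\eqref{eq:X_2-at-exit-time} give, on $A_{+,\eps}$,
\begin{align*}
\xi'_\eps \;=\; \eps^{-\alpha\rho}X_{\eps,\tau_\eps}^2 \;=\; \frac{L}{R^{\rho}}|Z_\eps|^{\rho} + \eps^{1-\alpha\rho} N^2_{\tau_\eps},
\qquad Z_\eps = x + \eps^{1-\alpha}U^1_{\tau_\eps}.
\end{align*}
For $x\in (l_\eps^\vk,\eps^{-\alpha}]$ and $\vk$ large, Lemma~\ref{lem:tail-control-for-basic-rvs-in-Duhamel} gives $|U^1_{\tau_\eps}|<l_\eps^{\vk_0}$ with high probability, uniformly in the initial condition. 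Since $\alpha\le 1$, the term $\eps^{1-\alpha}|U^1_{\tau_\eps}|$ is bounded by $l_\eps^{\vk_0}$, hence much smaller than $x$ on this w.h.p.\ event, so $|Z_\eps|\ge x/2 \ge l_\eps^\vk/2$. Consequently $|Z_\eps|^\rho \ge c\, l_\eps^{\rho\vk}$ w.h.p.\ uniformly in $x$.

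Next I would absorb the Gaussian correction: by Lemma~\ref{lem:tail-control-for-basic-rvs-in-Duhamel}, $|N^2_{\tau_\eps}|<l_\eps^{\vk_1}$ w.h.p.\ uniformly in $x$, and since $\alpha\rho\le 1$ the prefactor $\eps^{1-\alpha\rho}\le 1$, so the second summand is tame. Choosing $\vk$ large enough that $\rho\vk$ dominates any given target exponent $\vk'$, we obtain $|\xi'_\eps|\ge \tfrac{|L|}{2R^\rho}|Z_\eps|^\rho - l_\eps^{\vk_1} \ge l_\eps^{\vk'}$ w.h.p.\ uniformly in $x\in(l_\eps^\vk,\eps^{-\alpha}]$. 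Combining this with Lemma~\ref{lem:exit-on-the-same-whp} (which says that $A_{+,\eps}$ happens w.h.p.\ uniformly over such $x$) yields the stated uniform bound $\sup_{x\in(l_\eps^\vk,\eps^{-\alpha}]}\Pp^{x_0+\eps^\alpha xv}\{\xi'_\eps\le l_\eps^{\vk'},\, A_{+,\eps}\} = o_e(1)$.

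For the general nonlinear case, I would apply the $C^5_\mathrm{b}$-conjugacy from~\ref{setting:conjugacy} to transfer the SDE to rectified coordinates near~$O$ (the rigorous setup of which is done in Section~\ref{sec:rectified}). In these coordinates the drift is linear and the diffusion coefficient is a smooth bounded perturbation of a constant matrix, so the Duhamel-type representation for the first two coordinates differs from the model case only by stochastic integrals whose suprema admit the same subgaussian tail bounds as in Lemma~\ref{lem:estimates-for-terms}. The same $|Z_\eps|^\rho$-dominance argument then goes through verbatim, and mapping back by the conjugacy only distorts distances by a bi-Lipschitz factor, which preserves the ``order above $1$'' conclusion. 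The main obstacle is making sure that the tail estimate for $U^1_{\tau_\eps}$ and $N^2_{\tau_\eps}$ is genuinely uniform over $x\in(l_\eps^\vk,\eps^{-\alpha}]$ despite the stopping time $\tau_\eps$ depending on $x$; this is precisely what the exponential martingale estimate underlying Lemma~\ref{lem:tail-control-for-basic-rvs-in-Duhamel} delivers, since the bound there is on the supremum over $[0,\tau_\eps]$ and holds uniformly in the starting point.
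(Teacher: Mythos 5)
Your model-case computation reproduces the paper's own argument for this lemma: the representation $\xi'_\eps=\tfrac{L}{R^\rho}\left|x+\eps^{1-\alpha}U^1_{\tau_\eps}\right|^\rho+\eps^{1-\alpha\rho}N^2_{\tau_\eps}$, the uniform subgaussian tails for $\sup_t|U^1_t|$ and for $N^2_{\tau_\eps}$, and the choice $\vk>\vk'/\rho$ so that $l_\eps^{\rho\vk}$ dominates $l_\eps^{\vk'}$ are exactly what the rectified-coordinates proof in Section~\ref{sec:rec-lem:if-entrance-far-exit-far} does (via \eqref{eq:expr-for-xi-prime} and Lemmas~\ref{lem:estimates-for-terms} and~\ref{lem:second-coord-at-exit}). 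That part is correct. The incidental appeal to Lemma~\ref{lem:exit-on-the-same-whp} is unnecessary: since the quantity to bound is the probability of an intersection with $A_{+,\eps}$, you only need the lower bound on $\xi'_\eps$ to hold w.h.p.\ on $A_{+,\eps}$, not that $A_{+,\eps}$ itself is likely.

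The gap is in the passage to the nonlinear case, which you claim goes through ``verbatim.'' First, by construction in~\ref{setting:conjugacy} the segment $x_0+[-1,1]v$ is disjoint from the closure of the linearization neighborhood $U$, so the initial condition $x_0+\eps^\alpha xv$ cannot be fed directly into the rectified system: one must transport it by the flow into $f(U)$ and, after exiting $f(U)$, back out to $\partial D$, and verify that the property ``of order above $\eps^\alpha$'' (resp.\ ``above $\eps^{\alpha\rho}$'') survives both finite-time stages. This is what Lemma~\ref{lem:3-stages}, via \eqref{eq:|X_zeta-phi(X_0)|} and \eqref{eq:|t_X_t_tau-phi(t_X_0)|} together with Lipschitz bounds on the transport maps, provides; it requires adjusting the exponents $\vk$ at each stage rather than a one-line bi-Lipschitz remark. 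Second, and more substantively, for $x\in(c_0\eps^{-\alpha},\eps^{-\alpha}]$ the starting point is at distance of order $1$ from the stable manifold, the deterministic orbit exits $D$ transversally at $q_++\pi(y)v_+$ without approaching the saddle, and no linearization argument applies at all. The paper handles this range separately with the uniform Freidlin--Wentzell estimate of Lemma~\ref{lem:hit_loc} and the transversality hypothesis in~\ref{setting:geometry-domain}, obtaining $\xi'_\eps\ge c'\eps^{-\alpha\rho}$ directly. Your proof needs this case split to be complete.
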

\bpfm 
Recalling~\eqref{eq:X_2-at-exit-time}, we obtain
\begin{align}
\label{eq:X_2-at-exit-time-with-Z-plugged in}
X^2_{\e,\tau_\e}&=\e^{\alpha\rho}\frac{L}{R^{\rho}}|\xi_\e+\e^{1-\alpha}U^1_{\e,\tau_\e}|^{\rho}+\e U^2_{\e,\tau_\e}.
\end{align}
Using Lemma~\ref{lem:tail-control-for-basic-rvs-in-Duhamel},
we obtain that the first term is of 
order above $\e^{\alpha\rho}$.
Using this lemma once again,  we obtain that the entire expression is of order above $\e^{\alpha\rho}$,
so $\xi'_\e$ is of order above $1$. 
\epf

On $A_{k,\e}$ defined in~\eqref{eq:A_k}, $k\in{1,\ldots,n-1},$ we can define $\eta_k$ via 
\begin{equation*}
X_{\e,\tau^k_\e}=x_k+\eta_{k,\e}v_k,\quad \e>0.  
\end{equation*}
Using the definition of $\tau^k_\eps$ in~\eqref{eq:tau^k_eps}, we have $\eta_{k,\e}\in [-1,1]$.
The difference with~\eqref{eq:seq-scaling} is that there is no scaling factor in front of $\eta_{k,\e}$.

\begin{lemma}\label{lem:new-scaling}
Let~\ref{setting:general}, \ref{setting:het-chain},~\ref{setting:scaling-at-saddle-0},~\ref{setting:scaling-limit-at-saddle-0}, \ref{setting:conjugacy-all} hold. Let us assume that $i\geq\kstar$.  
If $\eta_{i,\e}$ is of order above $\e^{\bar\alpha_i}$, then  $\eta_{n-1,\eps}$ is of order above $\e^1$.
Moreover, this  is true uniformly in initial conditions.
More precisely, for every $\vk_{n-1}>0$ and sufficiently large $\vk_{i}>0$, it holds for every $\widetilde\vk_0$ that
\begin{align*}
    \sup_{x\in K_{\widetilde \vk_0}(\eps)}\Pp^{x_0+\eps^\alpha x v}\{\eta_{i,\eps}>\eps^{\bar\alpha_i}l^{\vk_i}_\eps,\ \eta_{n-1,\eps}\leq \eps l^{\vk_{n-1}}_\eps,\ A_{n-1,\eps}\}=o_e(1).
\end{align*}
\end{lemma}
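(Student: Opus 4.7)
The strategy is to propagate the invariant ``$\eta_{k,\e}$ is of order above $\e^{\bar\alpha_k}$, uniformly over $x\in K_{\widetilde\vk_0}(\e)$'' from $k=i$ up to $k=n-1$ by induction on $k$, using the strong Markov property at each $\tau^k_\e$. Since $\bar\alpha_{n-1}=1$, reaching $k=n-1$ yields the conclusion of the lemma. The base case $k=i$ is the hypothesis.

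For the inductive step from $k$ to $k+1$, I condition on $\Fc_{\tau^k_\e}$. After the exit from $D_k$ at $x_k+\eta_{k,\e}v_k$ with $\eta_{k,\e}$ of order above $\e^{\bar\alpha_k}$, Theorem~\ref{th:along-hetero} transports the scaling unchanged along the deterministic segment following $\gamma_k$ up to a transversal section bounding a conjugating neighborhood of $O_{k+1}$. There Lemma~\ref{lem:if-entrance-far-exit-far} applies with $\alpha=\bar\alpha_k$ and $\rho=\rho_{k+1}$. Its two hypotheses $\alpha\le 1$ and $\alpha\rho\le 1$ are furnished by Lemma~\ref{lem:structure-of-reverse-exponents}~\ref{item:backward-recursion2} and~\ref{item:bar-alpha-rho-less-1} respectively, and its conclusion is that $\eta_{k+1,\e}$ is of order above $\e^{\bar\alpha_k\rho_{k+1}}$, uniformly over the entrance scaling variable.

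To close the step I verify that ``of order above $\e^{\bar\alpha_k\rho_{k+1}}$'' entails ``of order above $\e^{\bar\alpha_{k+1}}$''. If $k\notin H'$, Lemma~\ref{lem:structure-of-reverse-exponents}~\ref{item:backward-recursion1} yields $\bar\alpha_k\rho_{k+1}=\bar\alpha_{k+1}$ outright. If $k\in H'$ then $\bar\alpha_k=1$ and Lemma~\ref{lem:structure-of-reverse-exponents}~\ref{item:slowdown-saddles} gives $\bar\alpha_{k+1}\in(\rho_{k+1},1]=(\bar\alpha_k\rho_{k+1},1]$, so $\e^{\bar\alpha_{k+1}}<\e^{\bar\alpha_k\rho_{k+1}}$ for small $\e$ and the entailment holds a fortiori. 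Thus the invariant is propagated in all cases.

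The main technical obstacle is the bookkeeping of the logarithmic tolerances $\vk$ implicit in ``order above''. Because Lemma~\ref{lem:if-entrance-far-exit-far} converts a sufficiently large input tolerance into a prescribed output tolerance, the induction must be arranged backward: starting from the prescribed $\vk_{n-1}$, pick tolerances $\vk_{n-2},\ldots,\vk_{i+1},\vk_i$ in turn, at each saddle demanding an input large enough to produce the tolerance required downstream. Uniformity over $x\in K_{\widetilde\vk_0}(\e)$ is preserved by the corresponding uniform statements of Theorem~\ref{th:along-hetero} and Lemma~\ref{lem:if-entrance-far-exit-far}, and the finitely many exceptional events along the chain combine by a union bound into an $o_e(1)$ total error.
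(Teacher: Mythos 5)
Your proof is correct and follows essentially the same route as the paper's: an iterated application of Lemma~\ref{lem:if-entrance-far-exit-far} saddle by saddle, with its hypotheses $\bar\alpha_k\le 1$ and $\bar\alpha_k\rho_{k+1}\le 1$ supplied by Lemma~\ref{lem:structure-of-reverse-exponents}, and with the logarithmic tolerances chosen backward from $\vk_{n-1}$. The only (cosmetic) difference is that the paper splits the iteration at the first binding index $k(i)$ and carries the raw product $\rho_{k(i),n-1}<1$ in the second phase, whereas you keep the single invariant ``order above $\eps^{\bar\alpha_k}$'' throughout, relaxing it a fortiori at binding saddles — both yield the same conclusion since $\bar\alpha_{n-1}=1$.
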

We will also introduce $\vk_0$ later. It will be useful to distinguish between $\widetilde\vk_0$ and~$\vk_0$ in case $\kstar=0$.

\bpf First we use part~\ref{item:backward-recursion1} of Lemma~\ref{lem:structure-of-reverse-exponents}
in order to apply 
Lemma~\ref{lem:if-entrance-far-exit-far} iteratively to saddles $k(i), k(i)-1,\ldots,i+2,i+1$ concluding that, for every $\vk_{k(i)}$ and sufficiently large $\vk_i$, we have $\eta_{k(i),\eps}>\eps^1 l_\eps^{\vk_{k(i)}}$ w.h.p.\ on the event $\{\eta_{i,\eps}>\eps^{\bar\alpha_i}l^{\vk_i}_\eps,\ A_{n-1,\eps}\}$. Then, applying Lemma~\ref{lem:if-entrance-far-exit-far} iteratively to saddles $n-1,n-2,\ldots,k(i)+2,k(i)+1$ and adjusting $\vk_k$'s iteratively if necessary, we derive $\eta_{n-1,\eps}>\e^{ \rho_{k(i),n-1}}l^{\vk_n}_\eps$ w.h.p.\ on the same event for every $\vk_n>0$ and sufficiently large $\vk_i$. The proof is completed by taking into account that $k(i)\in H$ so that  $\rho_{k(i),n-1}<1$.
\epf

\subsubsection{Restricting the cell escape event to scales  defined by exponents $(\bar\alpha_i)$.}
Our next local goal is Lemma~\ref{lem:restricting-to-a-narrow-corridor} allowing to restrict further analysis to scales defined by exponents $(\bar\alpha_i)$.

\begin{lemma}\label{lem:typical_scale}
Under conditions~\ref{setting:general}, \ref{setting:het-chain}, ~\ref{setting:conjugacy-all} , we have that,
for all $k\in \{1,2,\dots,n-1\}$ and every $\vk_0>0$, there is $\vk_k> 0$ such that
\begin{align*}
    \sup_{x\in K_{\vk_0}(\eps)}\Pp^{x_0 + \eps^{\alpha_0}xv_0}\{X_\eps(\tau^k_\eps)\not\in x_k +\eps^{\alpha_k}K_{\vk_k}(\eps)v_k,\ A_{k,\eps}\}=o_e(1).
\end{align*}

\end{lemma}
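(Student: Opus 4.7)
The plan is to proceed by induction on $k$, applying Lemma~\ref{lem:exit-straight} at each saddle passage and invoking the strong Markov property at the stopping times $\tau^j_\eps$ to chain the single-saddle estimates. The key observation is that the scaling recursion $\alpha_{k+1}=(\alpha_k\rho_{k+1})\wedge 1$ from~\eqref{eq:recursion_alpha} matches exactly the transition $\alpha\mapsto\alpha'$ in Lemma~\ref{lem:exit-straight}, so each inductive step advances the exponent $\alpha_k$ by one and the window width $\vk_k$ by a bounded amount.

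For the base case $k=1$, I would apply Lemma~\ref{lem:exit-straight} directly with $\xi_\eps\equiv x$ (deterministic, hence $\Fc_0$-measurable; tameness holds because $|x|\le l_\eps^{\vk_0}$). Since $x_0$ lies on $\gamma_0\subset \Ws(O_1)$ and in $D_1$, assumption~\ref{setting:geometry-domain} is satisfied for $O_1$ and $D_1$, and Lemma~\ref{lem:exit-straight} yields $\vk_1>0$ such that, uniformly in $x\in K_{\vk_0}(\eps)$,
\[
\Pp^{x_0+\eps^{\alpha_0} x v_0}\left(\left\{X_\eps(\tau^1_\eps)\notin x_1+\eps^{\alpha_1} K_{\vk_1}(\eps)v_1\right\}\cap A_{1,\eps}\right)=o_e(1).
\]
For the inductive step, assuming the estimate at level $k$ with some $\vk_k$, the strong Markov property at $\tau^k_\eps$ bounds the probability for level $k+1$ by the sum of (i) the probability that $A_{k,\eps}$ occurs but $X_\eps(\tau^k_\eps)\notin x_k+\eps^{\alpha_k} K_{\vk_k}(\eps) v_k$, which is $o_e(1)$ by the inductive hypothesis, and (ii) an expectation, over the good window of $X_\eps(\tau^k_\eps)$, of a fresh exit probability from $D_{k+1}$. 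For this inner probability I would apply Lemma~\ref{lem:exit-straight} to the saddle $O_{k+1}$ in the domain $D_{k+1}$ with reference point $x_k$: the geometric hypotheses hold because $x_k\in\gamma_k\subset\Ws(O_{k+1})$, $x_k$ lies in the interior of $D_{k+1}$ thanks to the overlap condition $D_k\cap D_{k+1}\cap\gamma_k\ne\emptyset$ combined with $x_k\in\partial D_k\cap\gamma_k$, and $v_k$ is transversal to $\gamma_k$ and hence to $b(x_k)$. The lemma furnishes $\vk_{k+1}>0$ for which the inner probability is $o_e(1)$ uniformly in starting locations within the good window, closing the induction.

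The main obstacle is bookkeeping rather than a deep analytic difficulty: at each step one must verify that~\ref{setting:initial-cond} is satisfied with the updated exponent $\alpha_k$ and polylog window $\vk_k$---in particular the tameness requirement $\eps^{\alpha_k}|\xi_\eps|\le 1$, which follows for small $\eps$ from $\alpha_k>0$ together with $|\xi_\eps|\le l_\eps^{\vk_k}$---and that the geometric data $(x_k, v_k, D_{k+1}, O_{k+1})$ fit the framework of Lemma~\ref{lem:exit-straight}. Since there are only $n-1$ iterations and each contributes a uniform super-polynomially small error over the corresponding $K_{\vk_k}(\eps)$ window, the total error remains $o_e(1)$, yielding the claim.
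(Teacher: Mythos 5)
Your proposal is correct and follows exactly the route the paper takes: the paper's proof is a one-line "iterative application of Lemma~\ref{lem:exit-straight}," and your induction with the strong Markov property at $\tau^k_\eps$, the matching of the recursion $\alpha_{k+1}=(\alpha_k\rho_{k+1})\wedge 1$ to the exponent update in Lemma~\ref{lem:exit-straight}, and the uniformity over the polylog windows is precisely the bookkeeping that iteration entails.
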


\begin{proof}
This follows from an iterative application of Lemma~\ref{lem:exit-straight}. 
\end{proof}

\begin{lemma}\label{lem:one-step_lower_bound}
Under assumptions~\ref{setting:general}, \ref{setting:geometry-domain}, and \ref{setting:conjugacy},
for every $\vk>0$, for every $\beta\in[0,1]$, and for every $\vk'>\frac{1}{2}\ONE_{\beta=1}$, we have 
\begin{align*}
    \sup_{x\in K_{\vk}(\eps)}\Pp^{x_0 + \eps^{\alpha}xv}\{X_\eps(\tau_\eps)\in q_+ +\eps^{\beta}(-\infty,- l_\eps^{\vk'})v_+,\ A_{+,\eps}\}=o_e(1).
\end{align*}
\end{lemma}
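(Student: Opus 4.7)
The plan is to adapt the heuristic arguments used in the proofs of Lemmas~\ref{lem:alpha<1-and-rho-alpha<1-- concentration at exit} and~\ref{lem:local-limit-theorem}~\eqref{item:exit-below-unlikely}: reduce to the linear model via the smooth conjugacy guaranteed by~\ref{setting:conjugacy}, decompose the exit location via Duhamel's principle into a deterministic contraction term (of definite sign in the $v_+$ direction) and a Gaussian-like noise term, observe that the event in question forces the noise to overwhelm the contraction in the direction opposite to $v_+$, and conclude by the Gaussian tail bound of Lemma~\ref{lem:tail-control-for-basic-rvs-in-Duhamel}.

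Carrying out this program in rectified coordinates, one has the decomposition~\eqref{eq:X_2-at-exit-time-with-Z-plugged in}
\[
X^2_{\e,\tau_\e} = \e^{\alpha\rho}\frac{L}{R^\rho}|Z_\e|^\rho + \e N^2_{\tau_\e},\qquad Z_\e = x + \e^{1-\alpha} U^1_{\tau_\e}.
\]
By the orientation convention in~\ref{setting:geometry-domain} choosing $v_+$ to point toward $x_0$, the first (contraction) term is nonnegative as a multiple of $v_+$. Hence on the event under consideration, the noise term alone must account for the entire $-\e^\beta l_\e^{\vk'}$ displacement in the $-v_+$ direction, giving
\[
N^2_{\tau_\e} < -\e^{\beta-1} l_\e^{\vk'}
\]
(or the sign-flipped analog, depending on the orientation of $v_+$), regardless of whether $\alpha\rho \le 1$ or $\alpha\rho > 1$.

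Applying Lemma~\ref{lem:tail-control-for-basic-rvs-in-Duhamel} with $z = \e^{\beta-1} l_\e^{\vk'}$ yields, uniformly in $x \in K_\vk(\e)$, the bound $\Pp^{x_0 + \e^\alpha x v}\{N^2_{\tau_\e} < -z\} \le C e^{-z^2/C}$. For $\beta < 1$ any $\vk' > 0$ suffices since $z^2 = \e^{2(\beta-1)} l_\e^{2\vk'} \ge l_\e^{1+\delta}$ for any $\delta > 0$ and all sufficiently small $\e$; for $\beta = 1$ the assumption $\vk' > \tfrac{1}{2}$ gives $z^2 = l_\e^{2\vk'}$ with $2\vk' > 1$, so again $z^2 \ge l_\e^{1+\delta}$ with $\delta = 2\vk' - 1$. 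In either case $e^{-z^2/C} = o_e(1)$, establishing the claim. The main obstacle I expect is the standard one of upgrading this heuristic model-case computation to the full nonlinear setting of~\ref{setting:general}: one needs to control the nonlinear remainder and Duhamel correction terms that appear after linearization by the conjugacy in~\ref{setting:conjugacy}, and to verify that the sign of the contraction term survives conjugation; these are precisely the considerations addressed rigorously in Sections~\ref{sec:rectified}--\ref{section:density_est}.
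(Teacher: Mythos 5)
Your proposal is correct and follows essentially the same route as the paper: the paper's proof also writes the second exit coordinate as the sum of a nonnegative contraction term $\eps^{\alpha\rho}R^{-\rho}L|x+\eps^{1-\alpha}U^1_\tau|^\rho$ and the noise term $\eps N_\tau$, observes that the event forces $|N_\tau|\geq \eps^{\beta-1}l_\eps^{\vk'}$, and concludes with the uniform Gaussian tail bound (Lemma~\ref{lem:second-coord-at-exit} in the rigorous rectified-coordinates version, with the passage to original coordinates handled by Lemmas~\ref{lem:3-stages} and~\ref{lem:hit_loc}). Your exponent check for the two cases $\beta<1$ and $\beta=1$ with $\vk'>\tfrac12$ matches what is needed for the $o_e(1)$ conclusion.
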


\bpfm
Using the notation~\eqref{eq:seq-scaling}, the formula for one-step transition~\eqref{eq:X_2-at-exit-time-with-Z-plugged in} in the model case, and \eqref{eq:weak_convergence_to_Gauss}, we can approximate the probability above by 

\begin{multline*}
    \Pp^{x_0 + \eps^{\alpha_0}xv_0}\left\{\eps^{\alpha_0\rho_1}LR^{-\rho_1}|x+\eps^{1-\alpha_0}\frU|^{\rho_1}+\eps \NN< - \eps^{\beta}l_\eps^{\vk'} \right\} \\ \leq \Pp^{x_0 + \eps^{\alpha_0}xv_0}\left\{|\NN|\geq \eps^{\beta-1}l_\eps^{\vk'}\right\}=o_e(1)
\end{multline*} 
uniformly in $x\in K_{\vk_0}(\eps)$.
\epf

\begin{lemma}\label{lem:transition_lower-bound}
Under conditions~\ref{setting:general}, \ref{setting:het-chain},~\ref{setting:conjugacy-all}
for every $k\in \{1,2,\dots,n-1\}$, every $\vk_0>0$,  every $\beta\in[0,1]$, and every $\vk>\frac{1}{2}\ONE_{\beta=1}$, we have
\begin{align*}
    \sup_{x\in K_{\vk_0}(\eps)}\Pp^{x_0 + \eps^{\alpha_0}xv_0}\{X_\eps(\tau^k_\eps)\in x_k +\eps^{\beta}(-\infty,- l_\eps^\vk)v_k,\ A_{k,\eps}\}=o_e(1).
\end{align*}
\end{lemma}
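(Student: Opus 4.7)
The plan is to reduce the $k$-th exit problem to the one-saddle estimate of Lemma~\ref{lem:one-step_lower_bound} applied at the saddle $O_k$, using the strong Markov property at the stopping time $\tau^{k-1}_\eps$ together with Lemma~\ref{lem:typical_scale} to first restrict to the typical exit scale from $D_{k-1}$. The case $\beta=0$ is vacuous for small $\eps$: since $A_{k,\eps}$ forces the exit point to lie in $x_k+[-1,1]v_k$, the event becomes empty once $l_\eps^\vk>1$, so I can assume $\beta\in(0,1]$. The base case $k=1$ is immediate from Lemma~\ref{lem:one-step_lower_bound} with $q_+=x_1$, $v_+=v_1$, $\alpha=\alpha_0$, $v=v_0$, since $\tau^0_\eps=0$; so the real work is the inductive reduction for $k\ge 2$.

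For $k\ge 2$, I would first apply Lemma~\ref{lem:typical_scale} at level $k-1$ to obtain $\vk_{k-1}>0$ such that
\[
\sup_{x\in K_{\vk_0}(\eps)}\Pp^{x_0+\eps^{\alpha_0}xv_0}\left\{X_\eps(\tau^{k-1}_\eps)\notin x_{k-1}+\eps^{\alpha_{k-1}}K_{\vk_{k-1}}(\eps)v_{k-1},\ A_{k-1,\eps}\right\}=o_e(1).
\]
Since $A_{k,\eps}\subset A_{k-1,\eps}$, it suffices to bound the target probability intersected with the ``typical scale'' event $\{X_\eps(\tau^{k-1}_\eps)\in x_{k-1}+\eps^{\alpha_{k-1}}K_{\vk_{k-1}}(\eps)v_{k-1}\}$. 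Applying the strong Markov property at $\tau^{k-1}_\eps$ and taking the supremum over the conditioning value $y\in K_{\vk_{k-1}}(\eps)$ then reduces the estimate to
\[
\sup_{y\in K_{\vk_{k-1}}(\eps)}\Pp^{x_{k-1}+\eps^{\alpha_{k-1}}yv_{k-1}}\left\{X_\eps(\tilde\tau_\eps)\in x_k+\eps^\beta(-\infty,-l_\eps^\vk)v_k,\ A_{+,\eps}\right\},
\]
where $\tilde\tau_\eps$ denotes the exit time from $D_k$ and $A_{+,\eps}$ the event that this exit occurs in $x_k+[-1,1]v_k$. By~\ref{setting:het-chain}, $x_{k-1}\in\gamma_{k-1}\subset\Ws_{O_k}$ and $v_{k-1}$ is not collinear with $b(x_{k-1})$, so the one-saddle framework specified in~\ref{setting:general},~\ref{setting:geometry-domain},~\ref{setting:conjugacy} applies at $O_k$ within $D_k$ with $x_{k-1},v_{k-1},x_k,v_k$ playing the roles of $x_0,v,q_+,v_+$. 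Lemma~\ref{lem:one-step_lower_bound} with $\alpha\leftarrow\alpha_{k-1}$, $\vk\leftarrow\vk_{k-1}$, $\vk'\leftarrow\vk$ then shows this supremum is $o_e(1)$, which combined with the previous display yields the claim.

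The only nontrivial point that needs care is the geometric bookkeeping needed to verify that the one-saddle hypotheses~\ref{setting:geometry-domain} at $O_k$ are truly inherited from~\ref{setting:het-chain}: in particular, the segment $x_{k-1}+[-1,1]v_{k-1}$ lies in $\partial D_{k-1}$ but one must ensure it sits inside (the closure of) $D_k$ so that the post-$\tau^{k-1}_\eps$ dynamics is genuinely a diffusion in $D_k$ starting on $\Ws_{O_k}$. This follows from the overlap requirement $D_{k-1}\cap D_k\cap\gamma_{k-1}\ne\emptyset$ together with transversality of $\partial D_{k-1}$ to $\gamma_{k-1}$ at $x_{k-1}$, both built into~\ref{setting:het-chain}. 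Beyond this geometric check, the argument is a clean composition of Lemmas~\ref{lem:typical_scale} and~\ref{lem:one-step_lower_bound} via the strong Markov property and requires no new probabilistic estimates.
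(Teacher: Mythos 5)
Your proposal is correct and follows essentially the same route as the paper, whose proof is exactly the one-line combination of Lemma~\ref{lem:typical_scale} (to restrict $X_\eps(\tau^{k-1}_\eps)$ to the typical scale $\eps^{\alpha_{k-1}}K_{\vk_{k-1}}(\eps)$) with the strong Markov property and Lemma~\ref{lem:one-step_lower_bound} applied at the saddle $O_k$. Your extra remarks on the vacuous case $\beta=0$ and on the geometric compatibility of the segment $x_{k-1}+[-1,1]v_{k-1}$ with the domain $D_k$ are sound points of care that the paper leaves implicit.
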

\begin{proof}
The claim follows from  Lemma~\ref{lem:one-step_lower_bound} and an iterative application of Lemma~\ref{lem:typical_scale}. 
\end{proof}

Combining Lemmas~\ref{lem:exit-on-the-same-whp}, \ref{lem:new-scaling} and~\ref{lem:transition_lower-bound}, we obtain the following claim:
\begin{lemma} \label{lem:restricting-to-a-narrow-corridor} Let us denote, for $k\ge \kstar$, and $\vk,\e>0$, 
\begin{equation}
\label{eq:def-of-corridor}
\bar A_{k,\vk,\e} = A_{k,\e}\cap \left\{X_{\e,\tau^k_\e}\in x_k+\e^{\bar \alpha_k}K_\vk(\e) v_k\right\}.
\end{equation}
There is sequence of positive constants $(\vk'_k)_{k\in \{\kstar \}\cup H\cup J }$ such that, for every sequence $(\vk_k)_{k\in \{\kstar \}\cup H\cup J }$ satisfying $\vk_k\geq \vk'_k$ for every $k$, it holds for every $\widetilde \vk_0>0$ that, uniformly in $x\in K_{\widetilde\vk_0}(\eps)$,
\begin{equation}\label{eq:approximate_A_n}
\Pp^{x_0+\eps^{\alpha_0} x v}(A_{n,\e})=\Pp^{x_0+\eps^{\alpha_0} x v}\left( A_{n,\e}\cap \bigcap_{k\in \{\kstar\} \cup H \cup J} \bar A_{k,\vk_k,\e} \right)+o_e(1).
\end{equation}

\end{lemma}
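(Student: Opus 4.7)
The plan is to reduce~\eqref{eq:approximate_A_n} to a per-$k$ estimate via a union bound, and for each $k\in\{\kstar\}\cup H\cup J$ split the discrepancy event by the sign of $\eta_{k,\eps}$ (and further by $\eta_{n-1,\eps}$) so that exactly one of Lemmas~\ref{lem:exit-on-the-same-whp},~\ref{lem:new-scaling},~\ref{lem:transition_lower-bound} dispatches each resulting piece.

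I would first observe that $A_{n,\eps}\subset A_{k,\eps}$ for every $k\le n$ (since $\tau^n_\eps<\infty$ forces $\tau^k_\eps<\infty$ by the iterative definition of~$\tau^k_\eps$), so that $A_{n,\eps}\cap\bar A_{k,\vk_k,\eps}^{\mathrm{c}}=A_{n,\eps}\cap\{\eta_{k,\eps}\notin\eps^{\bar\alpha_k}K_{\vk_k}(\eps)\}$ with $\eta_{k,\eps}$ well-defined. A union bound over the finite set $\{\kstar\}\cup H\cup J$ then reduces the claim to showing, for each fixed $k$ and uniformly in $x\in K_{\widetilde\vk_0}(\eps)$, that $\Pp(A_{n,\eps}\cap\{\eta_{k,\eps}\notin\eps^{\bar\alpha_k}K_{\vk_k}(\eps)\})=o_e(1)$. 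I would split this event by sign into $E_k^-=A_{k,\eps}\cap\{\eta_{k,\eps}<-\eps^{\bar\alpha_k}l_\eps^{\vk_k}\}$ and $E_k^+=A_{n,\eps}\cap\{\eta_{k,\eps}>\eps^{\bar\alpha_k}l_\eps^{\vk_k}\}$. Since $\bar\alpha_k\in(0,1]$ by Lemma~\ref{lem:structure-of-reverse-exponents}~\eqref{item:backward-recursion2}, Lemma~\ref{lem:transition_lower-bound} with $\beta=\bar\alpha_k$ immediately gives $\Pp(E_k^-)=o_e(1)$, provided $\vk_k>\tfrac12\ONE_{\bar\alpha_k=1}$.

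The main content is $E_k^+$, which I would further decompose via $\eta_{n-1,\eps}$:
\[
E_k^+\subset\underbrace{\{\eta_{k,\eps}>\eps^{\bar\alpha_k}l_\eps^{\vk_k},\ \eta_{n-1,\eps}\le\eps l_\eps^{\vk_{n-1}},\ A_{n-1,\eps}\}}_{E_k^{++}}\cup\underbrace{A_{n,\eps}\cap\{\eta_{n-1,\eps}>\eps l_\eps^{\vk_{n-1}}\}}_{E_k^{+++}}.
\]
Every $k\in\{\kstar\}\cup H\cup J$ satisfies $k\ge\kstar$, so Lemma~\ref{lem:new-scaling} (with $i=k$) yields $\Pp(E_k^{++})=o_e(1)$ as long as $\vk_k$ is chosen sufficiently large for the target $\vk_{n-1}$. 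For $E_k^{+++}$ I would condition on $\Fc_{\tau^{n-1}_\eps}$ and use the strong Markov property: on this event the restarted diffusion, after the heteroclinic transit from $x_{n-1}$ (sign-preserving for large enough $\vk_{n-1}$ by Theorem~\ref{th:along-hetero}), enters the neighborhood of $O_n$ with a positive displacement from $\gamma_{n-1}$ of order above $\eps^1$; the cell-escape orientation identifies $A_{n,\eps}$ with exit through the branch of the unstable manifold of $O_n$ opposite to the entering side, which Lemma~\ref{lem:exit-on-the-same-whp} shows to be w.l.p.\ once $\vk_{n-1}$ is large. The essentially routine obstacle is the bookkeeping of constants: one fixes $\vk'_{n-1}$ first, large enough to satisfy both the threshold of Lemma~\ref{lem:exit-on-the-same-whp} and the condition $\vk'_{n-1}>\tfrac12$ of Lemma~\ref{lem:transition_lower-bound} (since $\bar\alpha_{n-1}=1$ as $n-1\in H$), and then for each remaining $k\in(\{\kstar\}\cup H\cup J)\setminus\{n-1\}$ picks $\vk'_k$ large enough to meet both Lemma~\ref{lem:new-scaling} with target $\vk'_{n-1}$ and the threshold $\vk'_k>\tfrac12\ONE_{\bar\alpha_k=1}$.
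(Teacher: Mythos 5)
Your proof is correct and follows essentially the same route as the paper's: Lemma~\ref{lem:exit-on-the-same-whp} to force $\eta_{n-1,\eps}\le\eps l_\eps^{\vk'_{n-1}}$ on $A_{n,\eps}$, Lemma~\ref{lem:new-scaling} to propagate the upper bound back to each $k\ge\kstar$, and Lemma~\ref{lem:transition_lower-bound} for the lower bound, with the same closing monotonicity-in-$\vk_k$ bookkeeping.
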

\bpf 
Lemma~\ref{lem:exit-on-the-same-whp} implies that $\eta_{n-1,\e}\leq\e l^{\vk'_{n-1}}_\e$ w.h.p.\ on $A_{n,\eps}$ for some $\vk'_{n-1}>0$. This and Lemma~\ref{lem:new-scaling} yield that,
on $A_{n,\eps}\cap\{\eta_{n-1,\e}\leq\e l^{\vk'_{n-1}}_\e\}$, we have $\eta_{i,\e}\leq\eps^{\bar \alpha_i} l^{\vk'_i}_\eps$ w.h.p.\ for some $\vk'_i>0$. 
We can make $\vk'_i$ larger to ensure $\vk'_i>\frac{1}{2}$.
Lemma~\ref{lem:transition_lower-bound} implies $\eta_{i,\e}\geq -\eps l^{\vk'_i}_\eps$ w.h.p. These all hold uniformly in $x\in K_{\widetilde \vk_0}$. Combining these estimates, we obtain \eqref{eq:approximate_A_n} for $(\vk'_k)$. Since the main term on the right of \eqref{eq:approximate_A_n} is also smaller than the left-hand side, we conclude that \eqref{eq:approximate_A_n} holds for larger $\vk_k$.
\epf
\begin{remark}\rm
The proof shows that the lemma would still hold if we changed the definition of~$\bar A_{k,\vk,\e}$ to
$
\bar A_{k,\vk,\e} = A_{k,\e}\cap \left\{X_\e (\tau^k_\e)\in x_k+[-\e l_\e^\vk, \e^{\bar \alpha_k}l_\e^\vk] v_k\right\}$. 
We use a symmetric set $K_\vk(\e)$ in \eqref{eq:def-of-corridor} for brevity, which will eventually be useful in lengthy estimates
involving iterated
integration.
\end{remark}

The significance of this lemma is that one can compute the probability on the r.h.s.\ approximately, using the strong Markov property
and the fact that for $k>\kstar$,
\begin{equation*}
\Pp( \bar A_{k,\vk,\e}| \bar A_{k-1,\vk,\e})=\begin{cases} c_k+o(1),& k\notin J,\\
c_k \e^{\frac{\bar\alpha_{k}}{\rho_{k}}-1}(1+o(1)),& k\in J,  
\end{cases}
\end{equation*}
for constants $c_k>0$. This leads to polynomial decay rates.

To make this plan rigorous, we need a detailed study of appropriately rescaled kernels describing sequential transitions that the process undergoes between times $\tau_\eps^{k-1}$ and $\tau_\eps^{k}$ for all $k$, and convolutions of those kernels. This is the material of the next three subsections.

\subsubsection{A basic estimate on transition kernel convolutions}

\begin{lemma}\label{lem:trans_ker}
Let $\nu_\eps,\bar\nu_\eps,\mu_\eps,\bar \mu_\eps$ be transition kernels indexed by $\eps\in(0,1)$ and let $\vk,\vk',\vk''>0$. Suppose
\begin{enumerate}[label={\rm(h\arabic*)}]
    \item \label{item:nu-bar_nu} there is $\delta>0$ such that
\begin{gather*}
    \sup_{\substack{x\in K_\vk(\eps), \\ [a,b]\subset K_{\vk'}(\eps)}}\left|\nu_\eps(x,[a,b])- \bar \nu_\eps(x,[a,b])\right|=\smallo{\eps^\delta};
\end{gather*}

    \item \label{item:mu-bar_mu} there is $\delta'>0$ such that
\begin{gather*}
    \sup_{\substack{y\in K_{\vk'}(\eps), \\ [a,b]\subset K_{\vk''}(\eps)}}\left|\mu_\eps(y,[a,b])- \bar\mu_\eps(y,[a,b])\right|=\smallo{\eps^{\delta'}};
\end{gather*}

    \item \label{item:nu(K)} there is $p\geq0$ such that
    \begin{align*}
        \sup_{x\in K_\vk(\eps)}\bar\nu_\eps(x,K_{\vk'}(\eps))\leq l_\eps^p;
    \end{align*}
    \item \label{item:mu_decomp} there is $p'\geq0$ such that for all sufficiently small $\eps$ and every $[a,b]\subset K_{\vk''}(\eps)$, there are two monotone real-valued functions $\phi_{+,\eps},\phi_{-,\eps}$ bounded by $l_\eps^{p'}$ in absolute value and 
     such that
\begin{align*}
    \bar\mu_\eps(\cdot, [a,b]) = \phi_{+,\eps}(\cdot) + \phi_{-,\eps}(\cdot).
\end{align*}
\end{enumerate}
Then, there is $\delta''>0$ such that
\begin{align}\label{eq:nu_mu_K}
    \sup_{\substack{x\in K_\vk(\eps), \\ [a,b]\subset K_{\vk''}(\eps)}}\left|\int_{K_{\vk'}(\eps)} \nu_\eps(x, dy)\mu_\eps(y,[a,b])- \int_{K_{\vk'}(\eps)} \bar \nu_\eps(x, dy)\bar\mu_\eps(y,[a,b])\right| = o(\eps^{\delta''}).
\end{align}

\end{lemma}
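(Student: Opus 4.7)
The plan is to write the difference on the left-hand side of \eqref{eq:nu_mu_K} as $I+II$, where
\begin{align*}
I &= \int_{K_{\vk'}(\eps)} \nu_\eps(x,dy)\bigl(\mu_\eps(y,[a,b])-\bar\mu_\eps(y,[a,b])\bigr),\\
II &= \int_{K_{\vk'}(\eps)} \bar\mu_\eps(y,[a,b])\,(\nu_\eps-\bar\nu_\eps)(x,dy),
\end{align*}
and to bound each by $\smallo{\eps^{\delta''}}$ for some $\delta''>0$. For $I$, hypothesis \ref{item:mu-bar_mu} gives the pointwise bound $|\mu_\eps(y,[a,b])-\bar\mu_\eps(y,[a,b])|=\smallo{\eps^{\delta'}}$ uniformly in $y\in K_{\vk'}(\eps)$ and $[a,b]\subset K_{\vk''}(\eps)$, while combining \ref{item:nu-bar_nu} applied with $[a,b]=K_{\vk'}(\eps)$ and \ref{item:nu(K)} yields the mass bound $\nu_\eps(x,K_{\vk'}(\eps))\le 2 l_\eps^{p}$ uniformly in $x\in K_\vk(\eps)$ for all sufficiently small $\eps$. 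Multiplying these two bounds gives $|I|=\smallo{\eps^{\delta'}}$, which is acceptable for any $\delta''<\delta'$.

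The estimate on $II$ is the main step, and it is where hypothesis \ref{item:mu_decomp} enters. For each fixed $[a,b]\subset K_{\vk''}(\eps)$, decompose $\bar\mu_\eps(\cdot,[a,b])=\phi_{+,\eps}+\phi_{-,\eps}$ with $\phi_{\pm,\eps}$ monotone on $K_{\vk'}(\eps)$ and bounded in absolute value by $l_\eps^{p'}$. It suffices to bound $II_\pm=\int_{K_{\vk'}(\eps)}\phi_{\pm,\eps}(y)\,(\nu_\eps-\bar\nu_\eps)(x,dy)$. Mimicking the discretization scheme from the proof of Theorem~\ref{th:2saddles}, set $n_\eps=\lfloor \eps^{-\delta/2}\rfloor+1$ and partition $K_{\vk'}(\eps)$ into closed intervals $E_1,\ldots,E_{n_\eps}$ with pairwise disjoint interiors, on each of which the oscillation of $\phi_{\pm,\eps}$ is at most $2l_\eps^{p'}/n_\eps$; this is possible by monotonicity, for instance by equipartitioning the range of $\phi_{\pm,\eps}$ and pulling back. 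Picking any sample $y_i\in E_i$ and applying the triangle inequality,
\[
|II_\pm|\le \sum_{i=1}^{n_\eps}|\phi_{\pm,\eps}(y_i)|\,\bigl|(\nu_\eps-\bar\nu_\eps)(x,E_i)\bigr|+\frac{2l_\eps^{p'}}{n_\eps}\bigl(\nu_\eps+\bar\nu_\eps\bigr)(x,K_{\vk'}(\eps)).
\]
Since each $E_i\subset K_{\vk'}(\eps)$ is an interval, hypothesis \ref{item:nu-bar_nu} bounds the first sum by $n_\eps\cdot l_\eps^{p'}\cdot\smallo{\eps^\delta}=\smallo{\eps^{\delta/2}l_\eps^{p'}}$, while the second term is $O(l_\eps^{p+p'}\eps^{\delta/2})$ by the mass bound already established for $\nu_\eps$ and~$\bar\nu_\eps$. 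Both contributions are $\smallo{\eps^{\delta''}}$ for any $\delta''<\delta/2$, so the final $\delta''$ in the statement may be taken smaller than $\min(\delta,\delta')/2$.

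The main obstacle is that $\nu_\eps-\bar\nu_\eps$ is controlled only in its action on subintervals of $K_{\vk'}(\eps)$ through \ref{item:nu-bar_nu}, and not, e.g., in total variation. Hypothesis \ref{item:mu_decomp} is precisely what rescues the estimate of $II$: writing the test function $\bar\mu_\eps(\cdot,[a,b])$ as a sum of uniformly bounded monotone pieces allows its replacement by a step function on a partition of tunable size, after which \ref{item:nu-bar_nu} can be applied interval-by-interval. The choice $n_\eps\sim\eps^{-\delta/2}$ is tuned to balance the discretization error ($\sim l_\eps^{p+p'}/n_\eps$) against the cumulative pointwise integration error ($\sim n_\eps l_\eps^{p'}\smallo{\eps^\delta}$).
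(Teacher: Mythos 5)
Your proposal is correct and follows essentially the same route as the paper's proof: the same splitting into the two error terms, the same mass bound from \ref{item:nu-bar_nu} and \ref{item:nu(K)} to handle the first, and the same discretization of the monotone pieces of $\bar\mu_\eps(\cdot,[a,b])$ into $\sim\eps^{-\delta/2}$ level sets (which are intervals by monotonicity) so that \ref{item:nu-bar_nu} can be applied interval by interval. The balance between discretization error and cumulative pointwise error is also tuned exactly as in the paper.
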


\begin{remark} \rm Condition~\ref{item:mu_decomp} is very close to a total variation bound but it is convenient for us to work with the decomposition into monotone functions directly.
\end{remark}

\begin{proof}
For $[a,b] \subset K_{\vk''}(\eps)$, we write
\begin{align*}
    \phi_\eps(y)  = \mu_\eps(y,[a,b]),\qquad 
    \bar \phi_\eps(y) =\bar\mu_\eps(y,[a,b]).
\end{align*}
We want to estimate     
\begin{align*}
    &\left|\int_{K_{\vk'}(\eps)}\phi_\eps(y)\nu_\eps(x,d y) - \int_{K_{\vk'}(\eps)}\bar \phi_\eps(y)\bar\nu_\eps(x,d y)\right|\notag
    \\
    &\leq \left|\int_{K_{\vk'}(\eps)}(\phi_\eps(y)-\bar\phi_\eps(y))\nu_\eps(x,d y)\right| + \left|\int_{K_{\vk'}(\eps)}\bar\phi_\eps(y)(\nu_\eps(x,dy)-\bar\nu_\eps(x,d y))\right|    \notag
    \\
    &=\mathtt{I}+\mathtt{II}
\end{align*}
uniformly over $x\in K_\vk(\eps)$ and $[a,b]\in K_{\vk'}(\eps)$. Using~\ref{item:nu-bar_nu} and~\ref{item:nu(K)}, we have
\begin{align*}
    \sup_{x\in K_\vk(\eps)}\nu_\eps(x,K_{\vk'}(\eps))\leq 2 l_\eps^p
\end{align*}
for sufficiently small $\e$.
This and~\ref{item:mu-bar_mu} yield that, for some $\delta_1>0$, $\mathtt{I}=o(\eps^{\delta_1})$  uniformly in $x$ and $[a,b]$.

It remains to estimate $\mathtt{II}$. We use~\ref{item:mu_decomp}  to decompose $\bar\phi_\eps$ into a sum of two monotone functions $\phi_{\pm,\eps}$ with values bounded in absolute value by  $l^{p'}_\eps$. 
For $\delta$ from  condition~\ref{item:nu-bar_nu}, setting
\begin{align}\label{eq:n_eps}
    n_\eps = \lfloor \e^{-\delta/2}\rfloor +1,
\end{align}
we can decompose $K_{\vk'}(\eps)$ into a union of closed intervals
\begin{align*}
    E_{\eps,i}^\pm,\qquad i=-n_\eps+1,\,-n_\eps+2,\,\dots,\,n_\eps,
\end{align*}
with disjoint interiors such that $\phi_{\pm,\eps}\in [\frac{i-1}{n_\eps}l^{p'}_\eps,\frac{i}{n_\eps}l^{p'}_\eps]$ on $E_{\eps,i}^\pm$. Then, using the monotonicity of $\phi_{\pm,\eps}$, we have
\begin{align*}
    &\left|\int_{E^\pm_{\eps i}}\phi_{\pm,\eps}(y)\left(\nu_\eps(x, dy)-\bar\nu_\eps(x,dy)\right)\right|
    \\
    &\leq \max\left\{\left|\frac{i}{n_\eps}l^{p'}_\eps\nu_{\eps}(x,E_{\eps,i}^\pm) - \frac{i-1}{n_\eps}l^{p'}_\eps\bar\nu_\eps(x,E_{\eps,i}^\pm)\right|,\ \left|\frac{i}{n_\eps}l^{p'}_\eps\bar\nu_\eps(x,E_{\eps,i}^\pm) - \frac{i-1}{n_\eps}l^{p'}_\eps\nu_{\eps}(x,E_{\eps,i}^\pm)\right|\right\}
    \\
    &\leq l^{p'}_\eps\left|\nu_{\eps}(x,E_{\eps,i}^\pm)-\bar\nu_\eps(x,E_{\eps,i}^\pm)\right| + \frac{1}{n_\eps}l^{p'}_\eps\bar\nu_{\eps}(x,E_{\eps,i}^\pm).
\end{align*}
Summing up these estimates over all $i$, we obtain
\begin{align*}
   \mathtt{II}\le\left(\sum_{\bullet\in\{+,-\}}\sum_{i=-n_\eps+1}^{n_\eps}l^{p'}_\eps\left|\nu_{\eps}(x,E_{\eps,i}^\bullet)-\bar\nu_\eps(x,E_{\eps,i}^\bullet)\right|\right) + 2 \frac{l^{p'}_\eps}{n_\eps}\bar\nu_{\eps}(x,K_{\vk'}(\eps)).
\end{align*}
Due to~\ref{item:nu-bar_nu},~\ref{item:nu(K)} and the definition of $n_\eps$ in~\eqref{eq:n_eps}, this is bounded by
\begin{align*}
    4(\eps^{-\frac{\delta}{2}}+1)l^{p'}_\eps \smallo{\eps^\delta}+2l^{p'+p}_\eps \eps^\frac{\delta}{2} = o(\eps^{\delta_2})
\end{align*}
for some $\delta_2>0$ uniformly for $x\in K_\vk(\eps)$.
\end{proof}

\subsubsection{Typical transitions}
We begin with a result on  the one-step typical transition. Its proof is postponed to Section~\ref{sec:pf_one-step_typical_trans}.

\begin{lemma}\label{lem:typical_loc_lim}
Suppose that conditions~\ref{setting:general},~\ref{setting:geometry-domain},~\ref{setting:initial-cond},~\ref{setting:conjugacy} hold.
Let $\alpha\in(0,1]$ and $\alpha'=(\alpha\rho)\wedge 1$. Let
\[
m=\begin{cases}
3,&\rho<1,\\ 
4,& \rho\ge 1.
\end{cases}
\]
Then there are:
\begin{itemize} 
    \item an $m$-dimensional centered Gaussian vector $N$,
    \item deterministic continuous functions $\Phi_{1,\eps}$, $\Phi_{2,\eps}:\R\times \R^m\to\R$ indexed by $\eps\in(0,1)$,
\end{itemize}
such that 

\begin{enumerate}

\item \label{item:comp_in_lem_typical_loc_lim} for every $\vk,\vk'>0$, there is $\delta>0$ such that
\begin{align*}
    \sup_{\substack{x\in K_\vk(\eps), \\ [a,b]\subset K_{\vk'}(\eps)}}\left|\Pp^{x_0+\eps^\alpha xv}\Big\{X_\tau\in q_++\eps^{\alpha'}[a,b]v_+\right\}-\Pp\left\{\Phi_{1,\eps}(x,N)\in [a,b],\ \Phi_{2,\eps}(x,N)\geq 0\Big\} \right|=\smallo{\eps^\delta};
\end{align*}

    \item \label{item:cvg_Phi}
    there are constants $c_1,c_2>0$ and vectors $u_1\in\{0\}^2\times (0,\infty)^{m-2}$, $u_2\in(0,\infty)^2\times \{0\}^{m-2}$
    such that, 
     for $i=1,2$, $\Phi_{i,\eps}$ converges in LU, as $\eps\to0$, to a continuous function $\Phi_i$, defined for all $(x,y)\in\R\times\R^m$, by 
    \begin{align*}
        \Phi_1(x,y) & = (1-\ONE_{\rho>1,\,\alpha\rho>1}) c_1|\Phi_2(x,y)|^\rho + \ONE_{\alpha'=1}u_1\cdot y,
        \\
        \Phi_2(x,y) & = c_2x+\ONE_{\alpha=1}u_2\cdot y;
    \end{align*}
   these functions $\Phi_i$, $i=1,2$, satisfy the following:
    \begin{itemize}
    \item if $\alpha = 1$, then for all $x\in\R$,
        \begin{itemize}
            \item $\Leb\{y:\Phi_i(x,y)=0\}=0$, $i=1,2$, 
            \item $\Leb\{y:\Phi_1(x,y)\wedge\Phi_2(x,y)\geq 0\}>0$;
        \end{itemize}
    \item if $\alpha < 1$, then 
        \begin{itemize}
            \item $\Leb\{y:\Phi_i(x,y)=0\}=0$ for all $x\neq 0$, $i=1,2$,
            \item $\Leb\{y:\Phi_1(x,y)\wedge \Phi_2(x,y)\geq 0\}>0$  for all $x >0$,
            \item $\Leb\{y:\Phi_2(x,y)\geq 0\}=0$ for all $x<0$;
        \end{itemize}
    \end{itemize}
     
    \item \label{item:nondecreasing_Phi} for every $y\in\R^m$ and every $\eps\in(0,1)$, the function $\Phi_{1,\eps}(\cdot,y)$ is nondecreasing on $\{x:\Phi_{2,\eps}(x,y)\geq 0\}$ and the function $\Phi_{2,\eps}(\cdot,y)$ is nondecreasing on~$\R$;
    \item \label{item:phi_pm_decomp} for every $[a,b]\subset \R$ and every $\eps\in(0,1)$ sufficiently small, the function 
    \begin{align*}
        x\mapsto \Pp\left\{\Phi_{1,\eps}(x,N)\in [a,b],\ \Phi_{2,\eps}(x,N)\geq 0\right\}
    \end{align*}
    can be written as a sum of two monotone functions $\phi_{+,\eps}$ and $\phi_{-,\eps}$, both with values in $[-1,1]$;
    \item \label{item:Phi_1_range} for each $\vk>0$ and sufficiently large $\vk'>0$, 
\begin{align*}
    \sup_{x\in K_\vk(\eps)}\Pp\{\Phi_{1,\eps}(x,N)\not\in K_{\vk'}(\eps)\} = o_e(1);
\end{align*}
    \item \label{item:Phi>|x|^p} 
    if $\alpha\rho\leq 1$, then there are constants $C,R,p,q>0$ such that, 
    \begin{align*}
        |\Phi_{1,\eps}(x,y)|\geq C|x|^p, \qquad \text{for all }\ \eps\in(0,1),\ |x|\geq R,\ |y|_\infty\leq |x|^q,
    \end{align*}
    where $|y|_\infty = \max\left\{|y^i|:i\in\{1,2,\dots,m\}\right\}$.

\end{enumerate}

\end{lemma}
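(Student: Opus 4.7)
The plan is to reduce, via the smooth conjugacy of~\ref{setting:conjugacy}, to a small perturbation of the model linear system~\eqref{eq:linear-system1}--\eqref{eq:linear-system2}, and then use the Duhamel representation to read off explicit expressions for the exit data in terms of It\^o integrals analogous to $U^1_t$ and $N^2_t$ (together with two additional martingales originating from the off-diagonal entries of $Df\cdot\sigma$). Their weak limits as $t\uparrow\infty$ will form the Gaussian vector $N$. The dimension will be $m=3$ when $\rho<1$ (only one stable-direction contribution survives at the exit scale $\eps^{\alpha'}=\eps^{\alpha\rho}$) and $m=4$ when $\rho\ge 1$ (both the diagonal and off-diagonal stable-direction martingales appear at the exit scale $\eps^{\alpha'}=\eps$).

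Tracing~\eqref{eq:expr-for-tau}--\eqref{eq:X_2-at-exit-time} through the linearized coordinates motivates defining
\[
\Phi_{2,\eps}(x,y)=c_2 x+\ONE_{\alpha=1}\,u_2\cdot y+r_{2,\eps}(x,y),
\]
\[
\Phi_{1,\eps}(x,y)=(1-\ONE_{\rho>1,\alpha\rho>1})\,c_1|\Phi_{2,\eps}(x,y)|^\rho+\ONE_{\alpha'=1}\,u_1\cdot y+r_{1,\eps}(x,y),
\]
where $c_2>0$ comes from the derivative of the conjugacy at $O$, $c_1$ combines $L/R^\rho$ with the analogous derivative factor, the zero patterns of $u_1,u_2$ reflect whether each entry of $N$ comes from the stable or unstable direction, and $r_{i,\eps}\to 0$ in LU collect vanishing corrections. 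With these choices $A_{+,\eps}\approx\{\Phi_{2,\eps}(x,N)\ge 0\}$ and $X_{\eps,\tau_\eps}^2/\eps^{\alpha'}\approx\Phi_{1,\eps}(x,N)$, which gives~\ref{item:cvg_Phi} after discarding $r_{i,\eps}$; the Lebesgue-measure non-degeneracy statements follow from absolute continuity of $u_j\cdot N$ together with positivity of $c_1,c_2$.

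The main obstacle is the quantitative local limit theorem~\ref{item:comp_in_lem_typical_loc_lim}, with rate $o(\eps^\delta)$ uniform in $x\in K_\vk(\eps)$ and $[a,b]\subset K_{\vk'}(\eps)$; weak convergence is insufficient because $[a,b]$ may have length as small as $\eps^{\alpha'}l_\eps^{\vk'-\vk}$, which is down at the noise scale relative to the entrance scale. The plan is to compare densities pointwise: the density of $X_{\eps,\tau_\eps}$ on the exit boundary, rescaled by $\eps^{\alpha'}$, should lie within $\eps^\delta$ of the pushforward density of $\Phi_{1,\eps}(x,N)$. This requires a lower bound on the Malliavin covariance of $(U^1_{\tau_\eps},N^2_{\tau_\eps})$ holding on a high-probability event, which one then feeds into the density estimates of~\cite{bally2014}; this is exactly the Malliavin-calculus machinery developed in the upcoming Sections~\ref{sec:Gaussian-approx}--\ref{section:density_est}. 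Tail control on $\tau_\eps$ of the kind supplied by Lemma~\ref{lem:tail-control-for-basic-rvs-in-Duhamel} together with the iterative schemes of~\cite{long_exit_time,long_exit_time-1d-part-2,YB-and-HBC:10.1214/20-AAP1599,YB-and-HBC:10.1214/20-AOP1479} will handle the exceptional low-probability events.

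The remaining items are then structural. Part~\ref{item:nondecreasing_Phi} reads off the closed form: $\Phi_{2,\eps}(\cdot,y)$ has positive slope $c_2$, and on $\{\Phi_{2,\eps}\ge 0\}$ the function $\Phi_{1,\eps}(\cdot,y)$ is a nondecreasing function of $\Phi_{2,\eps}(\cdot,y)$ plus a $y$-only term. Part~\ref{item:phi_pm_decomp}: for each $y$ the set $\{x:\Phi_{1,\eps}(x,y)\in[a,b],\ \Phi_{2,\eps}(x,y)\ge 0\}$ is an interval by part~\ref{item:nondecreasing_Phi}, so averaging its indicator over $N$ produces a difference of two nondecreasing functions of $x$ with values in $[0,1]$, which is then re-split as $\phi_{+,\eps}+\phi_{-,\eps}$ with values in $[-1,1]$. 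Part~\ref{item:Phi_1_range} is a Gaussian tail bound since $|\Phi_{1,\eps}(x,N)|\lesssim|x|^\rho+|u_1\cdot N|$ up to tame error. Part~\ref{item:Phi>|x|^p}: when $\alpha\rho\le 1$ the dominant term is $c_1|\Phi_{2,\eps}|^\rho$, and for $q$ small one has $|\Phi_{2,\eps}(x,y)|\ge \tfrac{c_2}{2}|x|$ on $\{|y|_\infty\le|x|^q\}$, yielding the advertised lower bound with $p=\rho$ (or $p=1$ in the sub-case where the linear $u_1\cdot y$ term is absent and a direct estimate on the linear part is used).
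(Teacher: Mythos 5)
Your overall architecture---rectify near the saddle, read off the exit data from a Duhamel representation, prove a quantitative local limit theorem via Malliavin density estimates, then verify the structural properties of $\Phi_{1,\eps},\Phi_{2,\eps}$---is the paper's architecture, and your treatment of parts \ref{item:nondecreasing_Phi}--\ref{item:Phi>|x|^p} is essentially correct. But two steps of the plan would not go through as written. First, the conjugacy of~\ref{setting:conjugacy} is only defined on a neighborhood $U$ of the saddle, and by assumption $x_0+[-1,1]v$ does not meet the closure of $U$; likewise the target segment $q_++[-1,1]v_+$ sits on $\partial D$, outside the rectified box. So ``reduce via the smooth conjugacy and apply Duhamel'' cannot by itself describe the event $\{X_\tau\in q_++\eps^{\alpha'}[a,b]v_+\}$: you must splice in the finite-time transitions along the heteroclinic orbit from $x_0+\eps^\alpha xv$ to the entrance cross-section of the box, and from its exit cross-section back to $q_++[-1,1]v_+$ (Lemmas~\ref{lem:expansion} and~\ref{lem:scal_lim}). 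These two legs are exactly where two of the $m$ components of $N$ come from---the transversal Gaussian fluctuations accumulated along the orbit outside $U$---not from off-diagonal entries of $Df\cdot\sigma$; the box itself contributes only the two Gaussians $\frU$ and $\NN$ of Proposition~\ref{prop:typical}, with $\NN$ surviving at the exit scale only when $\rho\ge 1$, whence $m=3$ versus $m=4$. Your count of $m$ is right for the wrong reason, and without the leg maps $\phi,\tilde\phi$ and their derivatives you cannot construct $c_1,c_2,u_1,u_2$ with the stated zero/sign patterns nor verify part~\ref{item:cvg_Phi} (splitting $U^1_t$ and $N^2_t$ into their two stochastic-integral summands does not produce components entering $\Phi_1$ and $\Phi_2$ with the disjoint supports $u_1\in\{0\}^2\times(0,\infty)^{m-2}$, $u_2\in(0,\infty)^2\times\{0\}^{m-2}$, which are used downstream in Lemma~\ref{lem:typical_chain_exit}).

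Second, for part~\ref{item:comp_in_lem_typical_loc_lim} you propose to lower-bound the Malliavin covariance of $(U^1_{\tau_\eps},N^2_{\tau_\eps})$ and feed it into the density estimates of~\cite{bally2014}. The machinery of Sections~\ref{sec:Gaussian-approx}--\ref{section:density_est} never applies Malliavin calculus at the random exit time: hitting times are not Malliavin differentiable in the required sense, and the density comparisons are proved only for deterministic times $T(\eps)\le\bar\theta l_\eps$. The actual route first replaces the exit time by a deterministic time up to a superpolynomially small error (Lemma~\ref{lem:rough_approx_new}), then iterates over deterministic time steps to reach the logarithmic scale (Lemma~\ref{lem:iterative_est}), and disentangles the stable coordinate via Lemma~\ref{lem:asymp_decouple}; the result is packaged as Proposition~\ref{prop:typical}, which is what the paper composes with the two leg maps, case by case in $\rho$ and $\alpha\rho$, to prove the lemma. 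As stated, ``the density of $X_{\eps,\tau_\eps}$ on the exit boundary'' is not an object the cited estimates control, so this step needs to be rerouted through the deterministic-time approximation.
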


We will use the above lemma to prove the following result on the typical sequential transitions not involving slowdown saddle points.

\begin{lemma}\label{lem:typical_chain_exit}
Suppose that conditions~\ref{setting:general}, \ref{setting:het-chain}, \ref{setting:scaling-at-saddle-0}, \ref{setting:scaling-limit-at-saddle-0}, \ref{setting:conjugacy-all} hold.

Then for each $k\in\{1,2,\dots,n-1\}$, there are
\begin{itemize}
    \item $m_k\in\N$ and an $m_k$-dimensional centered Gaussian vector $N_k$,
    \item deterministic continuous functions $\Phi^k_{1,\eps},\Phi^k_{2,\eps}:\R\times \R^{m_k}\to \R$ indexed by $\eps\in(0,1)$,
\end{itemize}
such that the following holds:

\begin{enumerate}
\item \label{item:use_Phi_to_approx}
for every $\vk>0$ and sufficiently large $\vk'>0$, there is $\delta>0$ such that
\begin{multline}
\label{eq:use_Phi_to_approx}
    \sup_{\substack{x\in K_{\vk}(\eps), \\ [a,b]\subset K_{\vk'}(\eps)}}
    \Big|\Pp^{x_0+\eps^{\alpha_0} xv_0}\{X_{\eps, \tau^k_\eps}\in  x_k+\eps^{\alpha_k}[a,b]v_k\}\\
-\Pp\{\Phi^k_{1,\eps}(x,N_k)\in [a,b],\ \Phi^k_{2,\eps}(x,N_k)\geq 0\} \Big|=\smallo{\eps^\delta};
\end{multline}
    \item \label{item:cvg_Phi^k} for $i=1,2$, $\Phi^k_{i,\eps}$ converges in LU, as $\eps\to0$, to a continuous function~$\Phi^k_{i}$ satisfying the following: 
    \begin{itemize}
    \item if $\alpha_0 = 1$, then for all $x\in\R$;,
        \begin{itemize}
            \item $\Leb\{y:\Phi^k_i(x,y)=0\}=0$ for $i=1,2$,
            \item $\Leb\{y:\Phi^k_1(x,y)\wedge\Phi^k_2(x,y)\geq 0\}>0$,
        \end{itemize}
   \item if $\alpha_0 < 1$, then 
        \begin{itemize}
            \item $\Leb\{y:\Phi^k_i(x,y)=0\}=0$ for all $x\neq 0$, $i=1,2$,
            \item $\Leb\{y:\Phi^k_1(x,y)\wedge\Phi^k_2(x,y)\geq 0\}>0$ for all $x>0$,
            \item $\Leb\{y:\Phi^k_2(x,y)\geq 0\}=0$ for all $x<0$;
        \end{itemize} 
    \end{itemize} 
    \item \label{item:nondecreasing_Phi^k} for every $y\in\R^{m_k}$ and every $\eps\in(0,1)$, the functions $\Phi^k_{1,\eps}(\cdot,y)$ and $\Phi^k_{2,\eps}(\cdot,y)$ are nondecreasing on $\{x:\Phi^k_{2,\eps}(x,y)\geq 0\}$;
    \item \label{item:phi_pm_decomp_k} for every $[a,b]\subset \R$ and every $\eps\in(0,1)$ sufficiently small, the function 
    \begin{align*}
        x\mapsto \Pp\{\Phi^k_{1,\eps}(x,N_k)\in [a,b],\ \Phi^k_{2,\eps}(x,N_k)\geq 0\}
    \end{align*}
    can be written as a sum of two monotone functions $\phi_{+,\eps}$ and $\phi_{-,\eps}$, both with values in $[-1,1]$;
    
    \item \label{item:Phi_1_range_multi_saddle}  
    for each $\vk>0$ and sufficiently large $\vk'>0$,
    \begin{align*}
        \sup_{x\in K_\vk(\eps)}\Pp\{\Phi^k_{1,\eps}(x,N_k)\not\in K_{\vk'}(\eps)\}=o_e(1);
    \end{align*}

    \item \label{item:k_chain_Phi>|x|^p} if 
    \begin{align*}\alpha_i\rho_{i+1}\leq 1,\qquad \text{for all}\ i\in\{0,1,\dots,k-1\},
     \end{align*}
    then there are positive constants $C,R,p,q$ such that, uniformly in $\eps$,
    \begin{align*}
        |\Phi^k_{1,\eps}(x,y)|\geq C|x|^p,\qquad \text{for all}\  |x|\geq R,\ |y|_\infty\leq |x|^q,
    \end{align*}
    where $|y|_\infty = \max\{|y^i|:i\in\{1,2,\dots,m_k\}\}$. 
\end{enumerate}
\end{lemma}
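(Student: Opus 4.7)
My plan is to proceed by induction on $k$. The base case $k=1$ is immediate from Lemma~\ref{lem:typical_loc_lim} applied to saddle $O_1$ with $\alpha = \alpha_0$: identify $\Phi^1_{i,\eps}$ with $\Phi_{i,\eps}$ and $N_1$ with $N$, so that items \eqref{item:use_Phi_to_approx}--\eqref{item:k_chain_Phi>|x|^p} of the current lemma reduce to the corresponding items of Lemma~\ref{lem:typical_loc_lim}.

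For the inductive step, I would use the strong Markov property at time $\tau^{k-1}_\eps$ to write the probability in \eqref{eq:use_Phi_to_approx} as a convolution $\int \nu_\eps(x,dy)\mu_\eps(y,[a,b])$, where $\nu_\eps$ is the $(k{-}1)$-step exit kernel (entering at scale $\eps^{\alpha_0}$ near $x_0$, exiting at scale $\eps^{\alpha_{k-1}}$ near $x_{k-1}$) and $\mu_\eps$ is the one-step exit kernel at $O_k$. The inductive hypothesis supplies an approximation $\bar\nu_\eps$ of $\nu_\eps$, and Lemma~\ref{lem:typical_loc_lim} applied to $O_k$ (with $\alpha=\alpha_{k-1}$, $\rho=\rho_k$) supplies an approximation $\bar\mu_\eps$ of $\mu_\eps$ via fresh functions $\widetilde\Phi^{(k)}_{i,\eps}$ and an independent Gaussian $N^{(k)}$. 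Lemma~\ref{lem:trans_ker} then controls the convolution error: (h1), (h2) are these approximations; (h3) holds with $p=0$ since $\bar\nu_\eps$ is a subprobability kernel; (h4) is item \eqref{item:phi_pm_decomp} of Lemma~\ref{lem:typical_loc_lim} for $\bar\mu_\eps$. The tail contribution from $y \notin K_{\vk'}(\e)$ is $o_e(1)$ by inductive item \eqref{item:Phi_1_range_multi_saddle} applied with sufficiently large $\vk'$.

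I will then set $N_k = (N_{k-1}, N^{(k)})$ and define
\[
\Phi^k_{1,\eps}(x,(y,z)) = \widetilde\Phi^{(k)}_{1,\eps}\bigl(\Phi^{k-1}_{1,\eps}(x,y),\, z\bigr), \qquad \Phi^k_{2,\eps}(x,(y,z)) = \min\bigl(\Phi^{k-1}_{2,\eps}(x,y),\; \widetilde\Phi^{(k)}_{2,\eps}(\Phi^{k-1}_{1,\eps}(x,y),\, z)\bigr).
\]
Item \eqref{item:use_Phi_to_approx} is then immediate, item \eqref{item:cvg_Phi^k} follows from LU-convergence of the compositions (with the non-degeneracy assertions inherited structurally from the analogous properties of the factors), and items \eqref{item:Phi_1_range_multi_saddle}--\eqref{item:k_chain_Phi>|x|^p} follow by concatenation: \eqref{item:Phi_1_range_multi_saddle} via a two-step concentration estimate combining inductive \eqref{item:Phi_1_range_multi_saddle} with item \eqref{item:Phi_1_range} of Lemma~\ref{lem:typical_loc_lim}, and \eqref{item:k_chain_Phi>|x|^p} by iterating the polynomial lower bound of item \eqref{item:Phi>|x|^p} of Lemma~\ref{lem:typical_loc_lim} and adjusting constants.

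The main obstacle is items \eqref{item:nondecreasing_Phi^k}--\eqref{item:phi_pm_decomp_k}, because the global monotonicity of $\Phi_{2,\eps}(\cdot,y)$ available at level 1 is lost after one composition: a minimum of a globally nondecreasing function and one nondecreasing only on a subset need not be globally nondecreasing. My remedy is to strengthen the induction to carry the following invariant: the success event $B^k(x):=\{\Phi^k_{2,\eps}(x,N_k)\ge 0\}$ is upward-closed in $x$ (i.e., $x_1<x_2$ with $\omega\in B^k(x_1)$ implies $\omega\in B^k(x_2)$), and both $\Phi^k_{i,\eps}(\cdot, N_k)$ are nondecreasing on $B^k$. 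Propagation is clean: if $\omega\in B^k(x_1)\subset B^{k-1}(x_1)$ and $x_1<x_2$, then by the inductive invariant $\omega\in B^{k-1}(x_2)$ with $\Phi^{k-1}_{1,\eps}(x_2,N_{k-1})\ge \Phi^{k-1}_{1,\eps}(x_1,N_{k-1})$, and global monotonicity of $\widetilde\Phi^{(k)}_{2,\eps}(\cdot,N^{(k)})$ (Lemma~\ref{lem:typical_loc_lim}~\eqref{item:nondecreasing_Phi}) lifts this to $\omega\in B^k(x_2)$. Given this invariant, item \eqref{item:phi_pm_decomp_k} follows from the identity
\[
\Pp\{\Phi^k_{1,\eps}(x,N_k)\in[a,b],\, B^k(x)\} = \Pp\{\Phi^k_{1,\eps}(x,N_k)\ge a,\, B^k(x)\} - \Pp\{\Phi^k_{1,\eps}(x,N_k)>b,\, B^k(x)\},
\]
since each term on the right is the probability of an upward-closed event in $x$, hence nondecreasing with values in $[0,1]$.
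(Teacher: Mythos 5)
Your proposal is correct and follows essentially the same route as the paper's proof: induction on $k$, the strong Markov property plus Lemma~\ref{lem:trans_ker} to control the kernel convolution (with the same verification of hypotheses (h1)--(h4) and the same use of the inductive concentration estimate to discard the tail), and the identical composition/minimum construction of $\Phi^k_{1,\eps}$ and $\Phi^k_{2,\eps}$. The upward-closedness invariant you add for the success set $\{x:\Phi^k_{2,\eps}(x,y)\ge 0\}$ is precisely what the paper's derivation of item~\eqref{item:phi_pm_decomp_k} from item~\eqref{item:nondecreasing_Phi^k} uses implicitly (it propagates because $\Phi_{2,\eps}(\cdot,y')$ at each single saddle is nondecreasing on all of $\R$), so making it an explicit part of the induction hypothesis is a harmless and arguably clarifying refinement rather than a departure.
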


\begin{remark} \rm In the proof of this lemma, we actually give an explicit recursive definition of functions $\Phi^k_{i,\eps}$ and $\Phi^k_i$ based on 
compositions of functions $\Phi_{i,\e}$, $\Phi_i$ introduced in Lemma[\ref{lem:typical_loc_lim}, see \eqref{eq:Phi_k+1_1}, \eqref{eq:Phi_k+1_2}, \eqref{eq:Phi_k+1_1=Phi'...}.
\end{remark}

\begin{proof}
The base case $k=1$ is covered by Lemma~\ref{lem:typical_loc_lim}. 
Now, let us assume that  the lemma holds for $k$ and prove it for $k+1$. Using the induction hypothesis~\eqref{item:use_Phi_to_approx} and defining
\begin{align*}
    \nu_{\eps}(x, d y) & = \Pp^{x_0+\eps^{\alpha_0} xv_0}\{X_{\eps,\tau^{k}_\eps}\in x_{k}+\eps^{\alpha_{k}}( d y)v_{k}\}, \\
    \bar \nu_{\eps}(x, d y) & = \Pp\{\Phi^k_{1,\eps}(x,N_{k})\in d y,\ \Phi^k_{2,\eps}(x,N_{k})\geq 0\},
\end{align*}
we have that the Lemma~\ref{lem:trans_ker}~\ref{item:nu-bar_nu} is satisfied for sufficiently large $\vk'$. Since $\bar\nu(x,\cdot)$ is always a sub-probability measure, Lemma~\ref{lem:trans_ker}~\ref{item:nu(K)} also holds.  
Let us fix any $\vk''>0$.
Applying Lemma~\ref{lem:typical_loc_lim} to the saddle $O_{k+1}$, we can find $m'\in\N$, an $m'$-dimensional centered Gaussian vector $N'$ and functions $\Phi'_{1,\eps},\,\Phi'_{2,\eps}:\R\times \R^{m'}\to \R$ satisfying properties described in that lemma such that the measures
\begin{align*}
    \mu_\eps(y,dz) & = \Pp \{X_{\eps,\tau^{k+1}_\eps}\in x_{k+1}+\eps^{\alpha_{k+1}}(dz)v_{k+1}\ |\  X_{\eps,\tau^{k}_\eps}=x_{k}+\eps^{\alpha_{k}}yv_{k}\},\\
    \bar \mu_\eps(y,dz) &= \Pp\{\Phi'_{1,\eps}(y,N')\in dz,\ \Phi'_{2,\eps}(y,N')\geq 0\},
\end{align*}
satisfy Lemma~\ref{lem:trans_ker}~\ref{item:mu-bar_mu} and~\ref{item:mu_decomp}. Hence, we can invoke Lemma~\ref{lem:trans_ker} to see that $\nu_\eps,\bar\nu_\eps,\mu_\eps,\bar\mu_\eps$ satisfy~\eqref{eq:nu_mu_K}. 

Now, we proceed to derive part~\eqref{item:use_Phi_to_approx}.
Using Lemma~\ref{lem:typical_scale} and adjusting $\vk'$ if necessary,  we can rewrite the first integral in~\eqref{eq:nu_mu_K} as
\begin{align*}
    &\int_{K_{\vk'}(\eps)} \nu_\eps(x, dy)\mu_\eps(y,[a,b])\\
    &=\Pp^{x_0+\eps^{\alpha_0}xv_0}\{X_{\eps,\tau^{k+1}_\eps} \in x_{k+1}+\eps^{\alpha_{k+1}}[a,b]v_{k+1},\ X_{\eps,\tau^{k}_\eps} \in x_{k}+\eps^{\alpha_{k}}K_{\vk'}(\eps)v_{k}\}\\
    & = \Pp^{x_0+\eps^{\alpha_0}xv_0}\{X_{\eps,\tau^{k+1}_\eps} \in x_{k+1}+\eps^{\alpha_{k+1}}[a,b]v_{k+1}\}+o_e(1)
\end{align*}
uniformly in $x\in K_\vk(\eps)$ and $[a,b]\subset K_{\vk''}(\eps)$. This is the first term in~\eqref{eq:use_Phi_to_approx} for~$k+1$.

\smallskip

Let us now treat the second integral in~\eqref{eq:nu_mu_K}.
Using the induction assumption~\eqref{item:Phi_1_range_multi_saddle}, we can rewrite it  as 
\begin{align*}
    &\int_{K_{\vk'}(\eps)} \bar \nu_\eps(x, dy)\bar\mu_\eps(y,[a,b])\\
    &=\Pp\Big\{\Phi'_{1,\eps}(\Phi^k_{1,\eps}(x,N_{k}),N')\in[a,b],\quad  \Phi'_{2,\eps}(\Phi^k_{1,\eps}(x,N_{k}),N')\geq 0, \\  
    &\qquad\qquad\qquad\qquad\qquad \qquad\qquad  \Phi^k_{1,\eps}(x,N_k)\in K_{\vk'}(\eps),\quad  \Phi^k_{2,\eps}(x,N_{k})\ge 0\Big\}\\
    &= \Pp\left \{\Phi'_{1,\eps}(\Phi^k_{1,\eps}(x,N_{k}),N')\in[a,b],\quad \Phi^k_{2,\eps}(x,N_{k})\wedge \Phi'_{2,\eps}(\Phi^k_{1,\eps}(x,N_{k}),N')\geq 0\right \}\\
    & \qquad\qquad\qquad\qquad\qquad\qquad\qquad\qquad\qquad\qquad + o_e(1),
\end{align*}
uniformly in $x\in K_\vk(\eps)$ and $[a,b]\subset K_{\vk''}(\eps)$. For $(x,(y,y')) \in \R\times (\R^{m_k}\times \R^{m'})$, we define
\begin{align}
    &\Phi^{k+1}_{1,\eps}(x,(y,y')) = \Phi'_{1,\eps}(\Phi^k_{1,\eps}(x,y),y'), \label{eq:Phi_k+1_1}
    \\
    & \Phi^{k+1}_{2,\eps}(x,(y,y')) = \Phi^k_{2,\eps}(x,y)\wedge \Phi'_{2,\eps}(\Phi^k_{1,\eps}(x,y),y').\label{eq:Phi_k+1_2}
\end{align}
We set $m_{k+1}= m_k+m'$ and define $N_{k+1} = (N_k,N')$. The second integral in~\eqref{eq:nu_mu_K} becomes
\begin{align*}
    \Pp\{\Phi^{k+1}_{1,\eps}(x,N_{k+1})\in [a,b],\ \Phi^{k+1}_{2,\eps}(x,N_{k+1})\geq 0\} + o_e(1),
\end{align*}
where the main term is exactly the expression appearing in~\eqref{eq:use_Phi_to_approx} for $k+1$.
This completes our verification of part~\eqref{item:use_Phi_to_approx} for $k+1$.

\bigskip

Now, we turn to~\eqref{item:cvg_Phi^k}. For brevity, we write $\bar y= (y,y')$. In view of \eqref{eq:Phi_k+1_1} and~\eqref{eq:Phi_k+1_2}, using the continuity and the LU convergence of $\Phi'_{i,\eps}$ and $\Phi^k_{i,\eps}$ for $i=1,2$ (due to Lemma~\ref{lem:typical_loc_lim}~\eqref{item:cvg_Phi} and the induction hypothesis~\eqref{item:cvg_Phi^k}), we can easily derive the LU convergence of $\Phi^{k+1}_{i,\eps}$, for $i=1,2$ and that the limits are given by 
\begin{align}
    \Phi^{k+1}_{1}(x,\bar y) & = \Phi'_{1}(\Phi^k_{1}(x,y),y') \label{eq:Phi_k+1_1=Phi'...}
    \\
    & = (1- \ONE_{\rho_{k+1}>1,\, \alpha_k\rho_{k+1}>1})c_1\left|c_2 \Phi^k_1(x,y) + \ONE_{\alpha_k=1} u_2\cdot y'\right|^{\rho_{k+1}}+\ONE_{\alpha_{k+1}=1}u_1\cdot y', \notag
    \\
    \Phi^{k+1}_{2}(x,\bar y) & = \Phi^k_2(x,y)\wedge \left(c_2 \Phi^k_1(x,y)+\ONE_{\alpha_k=1}u_2\cdot y'\right), \notag
\end{align}
for constants $c_1,c_2$ and vectors $u_1,u_2$ given in Lemma~\ref{lem:typical_loc_lim}~\eqref{item:cvg_Phi}, where $\Phi^k_i$ is the limit of $\Phi^k_{i,\eps}$. Moreover, due to Lemma~\ref{lem:typical_loc_lim}~\eqref{item:cvg_Phi}, for all possible values of $\alpha_k$ and $\rho_{k+1}$, we have
\begin{align*}
    \Leb\{y': \Phi'_1(x,y')\}=0,\quad \text{if } x\neq 0,
\end{align*}
which along with \eqref{eq:Phi_k+1_1=Phi'...} implies
\begin{align*}
    \Leb\{\bar y:\Phi^{k+1}_1(x,\bar y)=0,\, \Phi^k_1(x,y)\neq 0\}=0.
\end{align*}
Then, \eqref{item:cvg_Phi^k} follows from the induction assumptions, the orthogonality between $u_1$ and $u_2$, and
\begin{align*}
    \{\bar y:\Phi^{k+1}_1(x,\bar y)  = 0\}&\subset \{\bar y:\Phi^{k+1}_1(x,\bar y)=0,\, \Phi^k_1(x,y)\neq 0\}\cup \{\bar y:\Phi^k_1(x,y) = 0\},
    \\
    \{\bar y: \Phi^{k+1}_1(x,\bar y) \geq 0\}&\supset  \{\bar y: u_1\cdot y'\geq0\},
    \\
    \{\bar y: \Phi^{k+1}_2(x, \bar y)  = 0\}&\subset \{\bar y:\Phi^k_2(x,y) = 0\}\cup \{\bar y: u_2\cdot y' = -c_2 \Phi^k_1(x,y) \}\cup \{\bar y:\Phi^k_1(x,y)=0\}, 
    \\
    \{\bar y: \Phi^{k+1}_2(x,\bar y) \geq 0\}&\supset \{\bar y:\Phi^k_1(x,y)\wedge\Phi^k_2(x,y)\geq 0\}\cap \{\bar y: u_2\cdot y'\geq 0\},
    \\
    \{\bar y: \Phi^{k+1}_2(x,\bar y) \geq 0\} & \subset \{\bar y:\Phi^k_2(x,y)\geq 0\}. 
\end{align*}

Let us verify~\eqref{item:nondecreasing_Phi^k}. Fix $(y,y')$ and $\eps$. Due to~\eqref{eq:Phi_k+1_2}, on the set
\begin{align*}
    A= \{x:\Phi^{k+1}_{2,\eps}(x,(y,y'))\geq 0\},
\end{align*}
we have
\begin{align} \label{eq:Phi^k_2_geq_0}
    \Phi^{k}_{2,\eps}(x,y)\geq 0,\qquad\text{and}\qquad \Phi'_{2,\eps}(\Phi^k_{1,\eps}(x,y),y')\geq 0.
\end{align}
Due to the induction assumption~\eqref{item:nondecreasing_Phi^k}, the first inequality in \eqref{eq:Phi^k_2_geq_0} implies that
\begin{align}\label{eq:x_to_Phi^k_nond}
    x\mapsto \Phi^k_{i,\eps}(x,y)\text{ is nondecreasing on }A,\qquad i=1,2.
\end{align}
Lemma~\ref{lem:typical_loc_lim}~\eqref{item:nondecreasing_Phi} states that $ \Phi'_{2,\eps}(\cdot,y')$ is nondecreasing on $\R$. This along with~\eqref{eq:Phi_k+1_2} and~\eqref{eq:x_to_Phi^k_nond} yields that $\Phi^{k+1}_{2,\eps}(\cdot,(y,y'))$ is nondecreasing on $A$. 

Lemma~\ref{lem:typical_loc_lim}~\eqref{item:nondecreasing_Phi} also gives that $\Phi'_{1,\eps}(\cdot,y')$ is nondecreasing on $\{z:\Phi'_{2,\eps}(z,y')\geq 0\}$. From this, the definition of $\Phi^{k+1}_{1,\eps}$ in \eqref{eq:Phi_k+1_1}, the second inequality in \eqref{eq:Phi^k_2_geq_0}, and \eqref{eq:x_to_Phi^k_nond}, we can deduce that $\Phi^{k+1}_{1,\eps}(\cdot,(y,y'))$ is nondecreasing on $A$. This completes the verification of \eqref{item:nondecreasing_Phi^k} for $k+1$. 

\medskip

Setting
\begin{align*}
    \phi_{+,\eps}(x) & = \Pp\{\Phi_{1,\eps}^{k+1}(x,N_{k+1}) \geq a,\ \Phi^{k+1}_{2,\eps}(x,N_{k+1})\geq 0\},
    \\
    \phi_{-,\eps}(x) & = - \Pp\{\Phi^{k+1}_{1,\eps}(x,N_{k+1}) >b,\ \Phi^{k+1}_{2,\eps}(x,N_{k+1})\geq 0\},
\end{align*}
we can see that \eqref{item:phi_pm_decomp_k} for $k+1$ follows from \eqref{item:nondecreasing_Phi^k} for $k+1$ proved above.

\medskip

Let us verify~\eqref{item:Phi_1_range_multi_saddle}. For any $\vk''>0$, 
\begin{align*}
    &\Pp\{\Phi^{k+1}_{1,\eps}(x,N_{k+1})\not\in K_{\vk'}(\eps)\} = \Pp\{\Phi'_{1,\eps}(\Phi^k_{1,\eps}(x,N_{k}),N')\not\in K_{\vk'}(\eps)\}\\
    &\leq \Pp\{\Phi'_{1,\eps}(\Phi^k_{1,\eps}(x,N_{k}),N')\not\in K_{\vk'}(\eps),\  \Phi^k_{1,\eps}(x,N_{k})\in K_{\vk''}(\eps)\}\\
    &\qquad + \Pp\{\Phi^k_{1,\eps}(x,N_{k})\not\in K_{\vk''}(\eps)\}.
\end{align*}
Due to the induction assumption~\eqref{item:Phi_1_range_multi_saddle}, the last term is $o_e(1)$ uniformly in $x\in K_\vk(\eps)$ for large $\vk''$.
Choosing $\vk'$ sufficiently large and using~\eqref{item:Phi_1_range} in Lemma~\ref{lem:typical_loc_lim}, we can see that the first term after the inequality sign is $o_e(1)$.
Thus,~\eqref{item:Phi_1_range_multi_saddle} holds for $k+1$.

\medskip

Let us verify~\eqref{item:k_chain_Phi>|x|^p}. Applying Lemma~\ref{lem:typical_loc_lim}~\eqref{item:Phi>|x|^p} to $\Phi'_{1,\eps}$, we can find constants $C',R',p',q'$ such that
\begin{align*}
    |\Phi'_{1,\eps}(x',y')|\geq C'|x'|^{p'}
\end{align*}
for
\begin{align}\label{eq:x'_cond}
    |x'|\geq R',\qquad |y'|_\infty\leq |x'|^{q'}.
\end{align}
Using the induction assumption~\eqref{item:k_chain_Phi>|x|^p} for $k$, we get that, for $C,R,p,q>0$,
\begin{align}\label{eq:proof_Phi_1>}
    |\Phi^k_{1,\eps}(x,y)|\geq C|x|^{p}
\end{align}
for $|x|\geq R$ and $|y|_\infty\leq |x|^q$. This still holds if we make  $R>1$ larger and $q$ smaller. We can do so to ensure $C|x|^p\geq R'$ and $ |x|^q\leq (C|x|^p)^{q'}$ whenever $|x|\geq R$. This along with~\eqref{eq:proof_Phi_1>} allows us to see that whenever $|x|\geq R$ and $|y|_\infty,|y'|_\infty\leq |x|^q$,~\eqref{eq:x'_cond} is satisfied with $x'$ replaced by $\Phi^k_{1,\eps}(x,y)$. Therefore, we obtain that
\begin{align*}
    \left|\Phi'_{1,\eps}(\Phi^k_{1,\eps}(x,y),y')\right|\geq CC'|x|^{pp'}
\end{align*}
as desired, for $|x|>R$ and $|(y,y')|_\infty \leq |x|^q$. The left-hand side of the above display is exactly $\Phi^{k+1}_{1,\eps}(x,(y,y'))$ due to~\eqref{eq:Phi_k+1_1}. This completes the verification of~\eqref{item:k_chain_Phi>|x|^p} for $k+1$.
\end{proof}

\subsubsection{Transitions near binding saddle points}
For each $k\in\{0,1,\dots,n\}$, we define
\begin{equation}
\label{eq:theta_k_H}
\theta_k=\sum_{j:\ j\le k,\  j\in J}  \left(\frac{\bar\alpha_{j}}{\rho_{j}}-1\right).
\end{equation}
Due to the definition of $J$ in \eqref{eq:def_J},
\begin{align}\label{eq:theta_k=0,k_leq_kstar}
    \theta_k = 0,\qquad k \leq \kstar.
\end{align}
For $ k\in\{\kstar,\ldots,n-1\},\  \e>0,$ and a Borel set $E\subset\R$, we denote 
\begin{equation}
\label{eq:events-B}
B_{k,\e,E}=A_{k,\eps}\cap \{ X_{\e,\tau^k_\e}\in x_k+\e^{\bar\alpha_k}E v_k\}.
\end{equation}
For these $k$ and $\e$, 
and for every vector $(\vk)=(\vk_i)_{i\in \{\kstar\}\cup H\cup J}$
we define a transition kernel $\nu_{k,\e}(\cdot,\cdot)=\nu_{k,(\vk),\e}(\cdot,\cdot)$ by
\begin{equation}
\label{eq:nu_k}
\nu_{k,(\vk),\e}(x,E)=\eps^{-\theta_k}\Pp^{x_0+\eps^{\alpha_0} xv_0}\left( B_{k,\e,E}\cap\bigcap_{i\in (\{\kstar\}\cup H\cup J)\cap\{0,1,\dots , k\} } \bar A_{i,\vk_i,\e}  \right).
\end{equation}
Note that we can rewrite
\begin{align}\label{eq:inter_bar_A}
    \bigcap_{i\in (\{\kstar\}\cup H\cup J)\cap\{0,1,\dots , k\} } \bar A_{i,\vk_i,\e}=\bigcap_{i\in H'\cap\{0,\ldots,k-1\} }( \bar A_{i,\vk_i,\e}\cap \bar A_{i+1,\vk_{i+1},\e}).
\end{align}

For $k\in \{\kappa+1,\dots,n-1\}$, we set
\begin{align}\label{eq:underline_k}
    \underline{k} = \max\{j\in J: j\leq k\}.
\end{align}

We recall that the collection $\GoodMeasures$ of measures is defined  by \eqref{eq:GoodMeasures>0}--\eqref{eq:GoodMeasure_density}.

\begin{lemma}\label{lem:local-limit-for-H}

Suppose  conditions~\ref{setting:general}, \ref{setting:het-chain}, \ref{setting:scaling-at-saddle-0}, \ref{setting:scaling-limit-at-saddle-0}, \ref{setting:conjugacy-all} hold
and assume that~$\kstar$ is well-defined. Then, there is
a family of transition kernels $\bar \nu_{k,\eps}$ indexed by
$k\in \{\kappa+1,\dots,n-1\}$ and $\eps\in(0,1)$ with the following property:
for every $\vk_0>0$ and every vector $(\vk'_k)_{k\in\{\kstar\}\cup H\cap J}$ of positive numbers, there are $(\vk_k)_{k\in\{\kstar\}\cup H\cap J}$ satisfying $\vk_k>\vk'_k$ for each $k$, and 
a constant $\delta>0$, such that for each $k$,
\begin{align}\label{eq:local_limit_nu-bar_nu}
    \sup_{\substack{x\in K_{\vk_0}(\eps), \\ [a,b]\subset K_{\vk_{k}}(\eps)}}\left|\nu_{k,\eps}(x,[a,b])- \bar \nu_{k,\eps}(x,[a,b])\right|=\smallo{\eps^\delta},
\end{align}
and
\begin{align}\label{eq:bar_nu_k,eps}
    \bar\nu_{k,\eps}(x, dy) = h_{k,\eps}(x)\varsigma_{k,\eps}(dy),
\end{align}
where 
\begin{enumerate}[label=(\roman*)]
    \item \label{item:h_k_cvg} the measurable functions $h_{k,\eps}:\R\to[0,\infty)$ indexed by $\eps\in(0,1)$ are bounded uniformly in $\eps$, and converge as $\eps \to 0$ to a bounded continuous function
    \begin{itemize}
        \item $h_k:\R\to(0,\infty)$ in LU on $\R$, if $\alpha_0=1$,
        \item $h_k:\R\setminus\{0\}\to[0,\infty)$ in LU on $\R\setminus\{0\}$, satisfying $h_k=0$ on $(-\infty,0)$ and $h_k>0$ on $(0,\infty)$,  if $\alpha_0<1$;
    \end{itemize}
    
    \item \label{item:gamma_k,k=underbar_k} if $k=\underline{k}$, then $\varsigma_{k,\eps} = \varsigma_k$ is independent of $\eps$ for some $\varsigma_k \in \GoodMeasures$;
    \item \label{item:gamma_k,k>underbar_k} if $k>\underline{k}$, then $\varsigma_{k,\eps}$ is given by
    \begin{align}\label{eq:gamma_k,eps_formula}
        \varsigma_{k,\eps}(dy) = \int_{K_{\vk_{\underline{k}}}(\eps)} \varsigma_k(dz)\Pp\left\{\Phi^k_{1,\eps}(z,N_k)\in dy,\  \Phi^k_{2,\eps}(z,N_k)\geq 0\right\},
    \end{align}
    where
    \begin{itemize}
        \item the Borel measure $\varsigma_k$ does not  depend on $\eps$ and satisfies 
        \begin{align}\label{eq:gamma_good}
            \varsigma_{k}\in\GoodMeasures,
        \end{align}
        
        \item $N_k$ is an $m_k$-dimensional centered Gaussian vector for some $m_k$,
        \item functions $\Phi^k_{1,\eps}, \Phi^k_{2,\eps}:\R\times\R^{m_k}\to\R$ 
        are the functions given by Lemma~\ref{lem:typical_chain_exit} applied to the transition from the vicinity of $x_{\underline{k}}$ at scale $\bar\alpha_{\underline{k}}$ to the vicinity of $x_{k}$ at scale $\bar\alpha_k$, and thus satisfy properties~\eqref{item:cvg_Phi^k}, \eqref{item:nondecreasing_Phi^k}, \eqref{item:phi_pm_decomp_k},~\eqref{item:Phi_1_range_multi_saddle}, and~\eqref{item:k_chain_Phi>|x|^p} in Lemma~\ref{lem:typical_chain_exit}.
            \end{itemize}
\end{enumerate}

\end{lemma}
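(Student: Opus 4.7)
The plan is to argue by induction on $k\in\{\kstar+1,\dots,n-1\}$: at each step I use the strong Markov property at $\tau^k_\eps$ together with Lemma~\ref{lem:trans_ker} to combine the inductive form of $\nu_{k,\eps}(x,\cdot)$ with a scaled one-step transition kernel from a neighborhood of $O_k$ (entered at scale $\eps^{\bar\alpha_k}$) to a neighborhood of $O_{k+1}$ (exited at scale $\eps^{\bar\alpha_{k+1}}$). Which kernel to use is dictated by a dichotomy: if $k+1\in J$, then $k\in H'$, $\bar\alpha_k=1$, and by Lemma~\ref{lem:structure-of-reverse-exponents}\ref{item:slowdown-saddles} we have $\bar\alpha_{k+1}\in(\rho_{k+1},1]$, so we are in the slowdown regime of Lemma~\ref{lem:local-limit-theorem} with $\beta=\bar\alpha_{k+1}$, contributing the factor $\eps^{\bar\alpha_{k+1}/\rho_{k+1}-1}$ that accounts exactly for $\theta_{k+1}-\theta_k$; otherwise the single step is typical and described by Lemma~\ref{lem:typical_loc_lim} without any change of scaling exponent.

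For the \emph{base case} $k=\kstar+1\in J$ I apply Lemma~\ref{lem:typical_chain_exit} to the first $\kstar$ transitions---all typical, since $\alpha_j=\bar\alpha_j=1$ for $j\le\kstar$---to approximate the law at $\tau^\kstar_\eps$ by $\Pp\{\Phi^\kstar_{1,\eps}(x,N_\kstar)\in dy,\ \Phi^\kstar_{2,\eps}(x,N_\kstar)\ge 0\}$, and then invoke Lemma~\ref{lem:local-limit-theorem} together with Lemma~\ref{lem:trans_ker} to obtain $\bar\nu_{\kstar+1,\eps}(x,dz)=h_{\kstar+1,\eps}(x)\,\nu(dz)$ with
\begin{equation*}
h_{\kstar+1,\eps}(x)=\E\bigl[g_c(\Phi^\kstar_{1,\eps}(x,N_\kstar))\ONE_{\Phi^\kstar_{2,\eps}(x,N_\kstar)\ge 0}\bigr],\qquad \varsigma_{\kstar+1}=\nu\in\GoodMeasures.
\end{equation*}
The continuity, boundedness, and positivity properties of $h_{\kstar+1}$ required in~\ref{item:h_k_cvg} (and the split between $\alpha_0=1$ and $\alpha_0<1$) follow from the LU convergence and Lebesgue-nondegeneracy of $\Phi^\kstar_i$ in Lemma~\ref{lem:typical_chain_exit}\ref{item:cvg_Phi^k}. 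The \emph{inductive step at $k+1\in J$} is essentially identical: writing $\bar\nu_{k,\eps}=h_{k,\eps}\varsigma_{k,\eps}$, Lemma~\ref{lem:local-limit-theorem} combined with Lemma~\ref{lem:trans_ker} yields $\bar\nu_{k+1,\eps}(x,dz)=\bigl(h_{k,\eps}(x)\int_{K_{\vk_k}(\eps)}g_{c_k}(y)\,\varsigma_{k,\eps}(dy)\bigr)\,\nu_k(dz)$ with $\varsigma_{k+1}=\nu_k\in\GoodMeasures$; the integral factor tends to a positive constant since $\varsigma_{k,\eps}$ has a nonzero limit and $g_{c_k}>0$.

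In the \emph{inductive step at $k+1\notin J$}, one has $\underline{k+1}=\underline{k}$ and $\theta_{k+1}=\theta_k$, and the single-step kernel of Lemma~\ref{lem:typical_loc_lim} at $O_{k+1}$ gives, after Lemma~\ref{lem:trans_ker},
\begin{equation*}
\bar\nu_{k+1,\eps}(x,dz)=h_{k,\eps}(x)\int_{K_{\vk_k}(\eps)}\varsigma_{k,\eps}(dy)\,\Pp\bigl\{\Phi'_{1,\eps}(y,N')\in dz,\ \Phi'_{2,\eps}(y,N')\ge 0\bigr\}.
\end{equation*}
Substituting the integral form~\eqref{eq:gamma_k,eps_formula} for $\varsigma_{k,\eps}$ and using the composition identities~\eqref{eq:Phi_k+1_1}--\eqref{eq:Phi_k+1_2} of Lemma~\ref{lem:typical_chain_exit}, one identifies this double integral with the push-forward of $\varsigma_{\underline{k}}$ under $(\Phi^{k+1}_{1,\eps},\Phi^{k+1}_{2,\eps})$, exactly reproducing~\eqref{eq:gamma_k,eps_formula} at level $k+1$ with $h_{k+1,\eps}=h_{k,\eps}$.

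The delicate part of the argument is the verification of hypotheses~\ref{item:nu(K)} and~\ref{item:mu_decomp} of Lemma~\ref{lem:trans_ker} at each iterate. The polylogarithmic bound~\ref{item:nu(K)} on $\varsigma_{k,\eps}(K_{\vk'}(\eps))$ is immediate from~\eqref{eq:GoodMeasures} when $k=\underline{k}$, and otherwise must be propagated through the nonlinear push-forward~\eqref{eq:gamma_k,eps_formula} using the coercivity estimate in Lemma~\ref{lem:typical_chain_exit}\ref{item:k_chain_Phi>|x|^p}, whose hypothesis $\alpha_i\rho_{i+1}\le 1$ on each segment between binding saddles is exactly Lemma~\ref{lem:structure-of-reverse-exponents}\ref{item:bar-alpha-rho-less-1}. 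Hypothesis~\ref{item:mu_decomp} is supplied, for typical steps, by Lemma~\ref{lem:typical_loc_lim}\ref{item:phi_pm_decomp}; for slowdown steps the limit factor $y\mapsto g_{c_k}(y)\nu_k([a,b])$ splits canonically into a nondecreasing piece on $(-\infty,0]$ (constant on $[0,\infty)$) plus a nonincreasing piece on $[0,\infty)$ (constant on $(-\infty,0]$), with $\|g_{c_k}\|_\infty\nu_k(K_{\vk''}(\eps))\le C l_\eps^{C\vk''}$ again by~\eqref{eq:GoodMeasures}. Finally the sequence $(\vk_k)_{k\in\{\kstar\}\cup H\cup J}$ is initialized by Lemma~\ref{lem:restricting-to-a-narrow-corridor} and enlarged finitely many times to accommodate the successive applications of Lemmas~\ref{lem:local-limit-theorem} and~\ref{lem:typical_loc_lim}.
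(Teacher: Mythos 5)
Your proposal is correct and follows essentially the same route as the paper: induction combined with Lemma~\ref{lem:trans_ker}, alternating the slowdown kernel of Lemma~\ref{lem:local-limit-theorem} at saddles in $J$ with typical kernels elsewhere, the product form $h_{k,\eps}\varsigma_{k,\eps}$ propagated by the same formulas, coercivity from Lemma~\ref{lem:typical_chain_exit}~\eqref{item:k_chain_Phi>|x|^p} for integrability, and the monotone splitting of the Gaussian density for hypothesis~\ref{item:mu_decomp}. The only organizational difference is that you iterate saddle by saddle via Lemma~\ref{lem:typical_loc_lim} and then re-identify the composition with the multi-step functions of Lemma~\ref{lem:typical_chain_exit} (which works, up to the uniformly $o_e(1)$ window-mismatch you should absorb into~\eqref{eq:local_limit_nu-bar_nu} rather than claim exact equality), whereas the paper runs the induction over binding indices and treats each block of typical transitions in one application of Lemma~\ref{lem:typical_chain_exit}.
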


\begin{remark}\rm
In fact, for $\varsigma_k$ in \eqref{eq:gamma_k,eps_formula}, we always have $\varsigma_k = \varsigma_{\underline{k}}$, which will be clear from the proof. Since $\Phi^k_{i,\eps}$ and $N_k$ account for the transition from the vicinity of~$x_{\underline{k}}$ to that of $x_k$, a more accurate but heavier notation would be $\Phi^{\underline{k}\to k}_{i,\eps}$ and $N_{\underline{k}\to k}$. Hence, it would be more precise to rewrite \eqref{eq:gamma_k,eps_formula} as
\begin{align*}
        \varsigma_{k,\eps}(dy) = \int_{K_{\vk_{\underline{k}}}(\eps)} \varsigma_{\underline k}(dz)\Pp\left\{\Phi^{\underline{k}\to k}_{1,\eps}(z,N_{\underline{k}\to k})\in dy,\  \Phi^{\underline{k}\to k}_{2,\eps}(z,N_{\underline{k}\to k})\geq 0\right\}.
    \end{align*}
For brevity, however, we stick to the notation of the lemma.
\end{remark}

\begin{proof} In this proof,
we will use Lemmas~\ref{lem:local-limit-theorem}, \ref{lem:structure-of-reverse-exponents}, \ref{lem:trans_ker}, and \ref{lem:typical_chain_exit}. Among them, only Lemmas~\ref{lem:local-limit-theorem} and~\ref{lem:typical_chain_exit} impose restrictions on $\vk$'s, but both of them allow us to choose $\vk_k$ arbitrarily large. Hence, whenever these two lemmas are applied in this proof, we choose the relevant $\vk_k$ sufficiently large to ensure $\vk_k>\vk'_k$. With this clarified, we will omit mentioning this technicality for brevity.

We will use induction, sequentially showing that the result holds for all $k\in\{\kstar+1,\ldots,j\}$, where $j$ runs through elements of $H$.

\smallskip

\textbf{Basis of induction.}
We first verify that our claim holds for $j=\min H$. Due to the definition of $\underline k$ in \eqref{eq:underline_k}, we have
\begin{align}\label{eq:underline_k_base}
    \underline k = \kstar + 1.
\end{align}

We split the argument into four steps.
Step~1: we use Lemma~\ref{lem:typical_chain_exit} on typical transitions to approximate the distribution of $X_{\eps,\tau^\kappa_\eps}$. Step~2: to
approximate the distribution of~$X_{\eps,\tau^{\kappa+1}_\eps}$, we apply Lemma~\ref{lem:local-limit-theorem} to atypical 
transitions from $X_{\eps,\tau^\kappa_\eps}$ to~$X_{\eps,\tau^{\kappa+1}_\eps}$. Step~3: if $j> \kstar +1$, we 
approximate the distribution of $X_{\eps,\tau^{k}_\eps}$ applying Lemma~\ref{lem:typical_chain_exit} to typical transitions from $X_{\eps,\tau^{\kappa+1}_\eps}$ to $X_{\eps,\tau^{k}_\eps}$.
Step~4: for approximations obtained in Steps~2 and~3, we verify their  properties
claimed in the lemma.

\bigskip
Step~1. We study the distribution of $X_{\eps,\tau^\kstar_\eps}$. 
Recalling that $\bar \alpha_{\kstar}= \alpha_{\kstar} =1$ and rewriting \eqref{eq:nu_k} with $k$ replaced by $\kstar$: 
\[
    \nu_{\kappa, \eps}(x,dy) = \Pp^{x_0+\eps^{\alpha_0} xv_0}\{\tau^\kstar_\eps<\infty,\ X_{\eps,\tau^\kstar_\eps}\in x_\kstar+\eps^1(dy)v_n\},
\] 
setting
\[
 \bar\nu_{\kappa,\eps}(x,dy) = \Pp\{\Phi^\kstar_{1,\eps}(x,N_\kstar)\in dy,\ \Phi^\kstar_{2,\eps}(x,N_\kstar)\geq 0\},
\]
and applying Lemma~\ref{lem:typical_chain_exit} to these measures, 
we  see that they satisfy conditions~\ref{item:nu-bar_nu} and~\ref{item:nu(K)} of Lemma~\ref{lem:trans_ker}. 

\bigskip

Step~2. We study the distribution of $X_{\eps,\tau^{\kstar+1}_\eps}$. Note that $\alpha_{\kstar}=1$ due to the definition of $\kstar$ in~\eqref{eq:def_kstar}.
Applying Lemma~\ref{lem:local-limit-theorem} to the transition from $X_{\eps,\tau^\kappa_\eps}$ to $X_{\eps,\tau^{\kappa+1}_\eps}$, we have that 
for some $c_{\kstar+1}\geq 0$ and 
\begin{align}\label{eq:bar_mu_kstar+1_Good}
    \bar\mu_{\kstar+1}\in\GoodMeasures,
\end{align}
the kernels given by
\begin{align}
    \mu_{\kappa+1,\eps}(x,dy)& = \eps^{-(\frac{\bar\alpha_{\kappa+1}}{\rho_{\kappa+1}}-1)}\Pp^{x_{\kappa}+\eps x v_{\kappa}}\left\{\tau^{\kstar+1}_\e<\infty,\ X_{\e,\tau^{\kstar+1}_\e}\in x_{\kstar+1}+\e^{\bar\alpha_{\kappa+1}} (dy)v_{\kstar+1} \right\},   \label{eq:mu_kstar+1}
    \\
    \bar \mu_{\kappa+1,\eps}(x,dy)& = g_{c_{\kappa+1}}(x)\bar\mu_{\kappa+1}(dy)\label{eq:bar_mu_kstar+1}
\end{align}
satisfy condition~\ref{item:mu-bar_mu} of Lemma~\ref{lem:trans_ker}.
Note that in fact $\bar\mu_{\kstar+1,\eps}$ does not depend on~$\eps$.
Due to~\eqref{eq:bar_mu_kstar+1_Good} and property~\eqref{eq:GoodMeasures} enjoyed by measures in $\GoodMeasures$, for every $\vk_{\kstar+1}>0$, there is $p>0$ such that
\begin{align}\label{eq:mu_kstar(K)}
    \bar\mu_{\kstar+1}(K_{\vk_{\kstar+1}}(\eps))\leq l_\eps^p, 
\end{align}
Using this and the fact that $g_{c_{\kappa+1}}$ is a Gaussian density (see~\eqref{eq:g_c_Gaussian}), we derive that
condition~\ref{item:mu_decomp}  of Lemma~\ref{lem:trans_ker} also holds for $\bar\mu_{\kappa+1,\eps}$. In fact, we can explicitly decompose $g_{c_{\kappa+1}}(x)$ into a sum of two bounded monotone functions:
\begin{align}
    g_{c_{\kappa+1}}(x) & = g^+_{c_{\kappa+1}}(x) + g^-_{c_{\kappa+1}}(x) \notag
    \\
    &= \left(g_{c_{\kappa+1}}(x)\ONE_{x< 0}+g_{c_{\kappa+1}}(0)\ONE_{x\geq 0}\right) + \left(g_{c_{\kappa+1}}(x)\ONE_{x\geq 0}-g_{c_{\kappa+1}}(0)\ONE_{x\geq 0}\right). \label{eq:g_decomp}
\end{align}

Having checked all the conditions of Lemma~\ref{lem:trans_ker} for $\nu_{\kstar,\eps},\bar \nu_{\kstar,\eps}, \mu_{\kstar+1,\eps}, \bar \mu_{\kstar+1,\eps}$, we can now
apply it and obtain that, for any $\vk_{\kstar}>0$, the kernels given by
\begin{align}
    \nu_{\kstar+1,\eps}(x,dy)& = \int_{K_{\vk_{\kstar}}(\eps)}\nu_{\kstar,\eps}(x,dz)\mu_{\kstar+1,\eps}(z,dy), \notag
    \\
    \bar \nu_{\kstar+1,\eps}(x,dy) &= \int_{K_{\vk_{\kstar}}(\eps)}\bar\nu_{\kstar,\eps}(x,dz)\bar\mu_{\kstar+1,\eps}(z,dy)\label{eq:bar_nu_kstar+1}
\end{align}
also satisfy condition~\ref{item:nu-bar_nu} of Lemma~\ref{lem:trans_ker}. In particular, \eqref{eq:local_limit_nu-bar_nu} with $k=\kstar+1$ holds for $\nu_{\kstar+1,\eps}, \bar\nu_{\kstar+1,\eps}$.

For later use, we note that $\bar\nu_{\kstar+1,\e}$ satisfies condition~\ref{item:nu(K)} of Lemma~\ref{lem:trans_ker}, as a result of~\eqref{eq:mu_kstar(K)}, the boundedness of~$g_{c_{\kstar+1}}$, and the fact that $\bar\nu_{\kstar,\eps}(x,dy)$ is a sub-probability measure.

\bigskip

Step~3. If $k\in\{\kstar+2,\ldots, j\}$ for $j=\min H$, then, to study the distribution $X_{\eps,\tau^{k}_\eps}$, we need to study the transition from $X_{\eps, \tau^{\kstar+1}_\eps}$ to $X_{\eps,\tau^{k}_\eps}$. 
The scaling upon the exit from saddle   $O_{\kstar+1}$ described by \eqref{eq:mu_kstar+1} is $\eps^{\bar\alpha_{\kstar+1}}$. Since  there are no elements of $H$ between $\kstar$ and $j$, part~\ref{item:backward-recursion1} of Lemma~\ref{lem:structure-of-reverse-exponents} guarantees that $\bar \alpha_{k}=\bar \alpha_{k-1} \rho_{k}\le 1$ for all
$k\in\{\kstar+2,\ldots,j\}$ and, moreover, $\bar\alpha_{j}=1$. Therefore, the dynamics of exponents $\bar \alpha_k$ for these saddles is described by \eqref{eq:new-alpha},
i.e.\ the evolution is typical and described by
 Lemma~\ref{lem:typical_chain_exit}.
Applying parts~\eqref{item:use_Phi_to_approx} and~\eqref{item:phi_pm_decomp_k} of this lemma to the dynamics starting near $x_{\kstar+1}$, we  see that the kernels given by 
\begin{align}
    \mu_{k,\eps}(x,dy)& = \Pp^{x_{\kappa+1}+\eps^{\bar\alpha_{\kappa+1}} x v_{\kappa+1}}\left\{\tau^{k}_\e<\infty,\  X_{\e,\tau^{k}_\e}\in x_{k}+\e^{\bar\alpha_k} (dy)v_{k} \right\},\notag
    \\
    \bar \mu_{k,\eps}(x,dy)& = \Pp\{\Phi^k_{1,\eps}(x,N_k)\in dy ,\ \Phi^k_{2,\eps}(x,N_k)\geq 0\} \label{eq:bar-mu_k,eps}
\end{align}
for $k\in\{\kstar+2,\ldots,j\}$ 
satisfy conditions~\ref{item:mu-bar_mu} and~\ref{item:mu_decomp} of Lemma~\ref{lem:trans_ker}.

This, along with the conclusions from Step~2, allows to apply Lemma~\ref{lem:trans_ker} to $\nu_{\kappa+1,\eps}$, $\bar \nu_{\kappa+1,\eps}$, $ \mu_{k,\eps}$, $\bar \mu_{k,\eps}$ and obtain that, for some $\vk_{\kstar+1}>0$, the kernels given by
\begin{align}
\notag
    \nu_{k,\eps}(x,dy)& = \int_{K_{\vk_{\kstar+1}}(\eps)}\nu_{\kappa+1,\eps}(x,dz)\mu_{k,\eps}(z,dy),
    \\
    \label{eq:bar_nu_k_eps}
    \bar \nu_{k,\eps}(x,dy) &= \int_{K_{\vk_{\kstar+1}}(\eps)}\bar\nu_{\kappa+1,\eps}(x,dz)\bar\mu_{k,\eps}(z,dy)
\end{align}
satisfy condition~\ref{item:nu-bar_nu} of Lemma~\ref{lem:trans_ker}, which yields the desired result~\eqref{eq:local_limit_nu-bar_nu}.
Combining this with Step~2, we can conclude that~\eqref{eq:local_limit_nu-bar_nu} holds for $k\le j$.

\bigskip

Step~4. We verify that $\nu_{k,\eps}$ and $\bar\nu_{k,\eps}$ are of the desired form as in \eqref{eq:nu_k} and~\eqref{eq:bar_nu_k,eps}, respectively.

First, we verify this for $\nu_{k,\eps}$. We can check, using the definition~\eqref{eq:theta_k_H}, that $\theta_k = \frac{\bar\alpha_{\kappa+1}}{\rho_{\kappa+1}}-1$ for all $k\in \{\kstar+1,\ldots,\min H\}$.
Tracing the definitions of these kernels, we can see that, in agreement with~\eqref{eq:nu_k} (and \eqref{eq:inter_bar_A}),  
\begin{align*}
    \nu_{k,\eps}(x,[a,b]) = \eps^{-\theta_k}\Pp^{x_0+\eps^{\alpha_0}xv_0}(B_{k,\eps,[a,b]}\cap \bar A_{\kstar,\vk_{\kstar},\eps} \cap \bar A_{\kstar+1,\vk_{\kstar+1},\eps}  ).
\end{align*}

Next, we verify the decomposition~\eqref{eq:bar_nu_k,eps} for $\bar\nu_{k,\eps}$ along with \ref{item:h_k_cvg}, \ref{item:gamma_k,k=underbar_k}, \ref{item:gamma_k,k>underbar_k}.

For all $k\in\{\kstar+1,\ldots,j\}$, we set  
\begin{equation}
\label{eq:def_gamma_k}
\varsigma_k=\bar\mu_{\kstar+1}\in \GoodMeasures
\end{equation} and
\begin{align}
    h_{k,\eps}(x) & = \E\left[g_{c_{\kstar+1}}\left(\Phi^\kstar_{1,\eps}(x,N_\kstar)\right)\ONE_{\Phi^\kstar_{1,\eps}(x,N_\kstar)\in K_{\vk_\kstar}(\eps),\ \Phi^\kstar_{2,\eps}(x,N_\kstar)\geq 0}\right]  \label{eq:h_k,eps(x)}.
\end{align}
Recalling $\underline k = \kstar+1$ from~\eqref{eq:underline_k_base}, we thus have $\zeta_k = \zeta_{\underline k}$ and $h_{k,\eps} = h_{\underline k,\eps}$ for all $k\in \{\kappa+1,\ldots,j\}$.

For $k=\kstar+1$ (equivalently, $k=\underline{k}$), relation~\eqref{eq:bar_nu_k,eps} follows from \eqref{eq:bar_mu_kstar+1} and~\eqref{eq:bar_nu_kstar+1},
and property~\ref{item:gamma_k,k=underbar_k} follows from~\eqref{eq:def_gamma_k}.

For $k\in\{\kstar+2,\ldots,j\}$,  we define $\varsigma_{k,\eps}$ by  \eqref{eq:gamma_k,eps_formula} and~\eqref{eq:def_gamma_k}.
Due to $\underline k = \kstar +1$, relation~\eqref{eq:bar_nu_k,eps}  for these values of $k$ follows now from \eqref{eq:bar_mu_kstar+1}, \eqref{eq:bar_nu_kstar+1}, \eqref{eq:bar-mu_k,eps}, \eqref{eq:bar_nu_k_eps},
\eqref{eq:def_gamma_k}, \eqref{eq:h_k,eps(x)}. We also obtain \eqref{eq:gamma_good} from \eqref{eq:def_gamma_k}. Since $N_k,\Phi^k_{1,\eps},\Phi^k_{2,\eps}$ were introduced through the application of Lemma~\ref{lem:typical_chain_exit}, they also have the desired properties. Therefore, \ref{item:gamma_k,k>underbar_k} holds.

It remains to verify \ref{item:h_k_cvg}.
It is clear that $h_{k,\eps}$ is bounded uniformly in $\eps$. Let us show that they converge in LU and the limit is strictly positive everywhere. Setting
\begin{align}\label{eq:tilde_h_k,eps(x)}
    \tilde h_{k,\eps}(x) = \E\left[g_{c_{\kstar+1}}\left(\Phi^\kstar_{1,\eps}(x,N_\kstar)\right)\ONE_{\Phi^\kstar_{2,\eps}(x,N_\kstar)\geq 0}\right],
\end{align}
and using the fact that  $g_{c_{\kstar+1}}$ is a Gaussian density and Lemma~\ref{lem:typical_chain_exit}~(\ref{item:Phi_1_range_multi_saddle}), we obtain
\begin{align*}
    \|  h_{k,\eps} -\tilde h_{k,\eps}\|_\infty = o_e(1).
\end{align*}
Hence, it suffices to show that $\tilde h_{k,\eps}$ satisfies the desired properties. Recalling the decomposition in \eqref{eq:g_decomp}, we define
\begin{align*}
    \tilde h^\pm_{k,\eps}(x) = \E\left[g^\pm_{c_{\kstar+1}}\left(\Phi^\kstar_{1,\eps}(x,N_\kstar)\right)\ONE_{\Phi^\kstar_{2,\eps}(x,N_\kstar)\geq 0}\right]
\end{align*}
satisfying
\begin{align*}
\tilde h_{k,\eps}(x) = \tilde h^+_{k,\eps}(x) + \tilde h^-_{k,\eps}(x).
\end{align*}

Let us first show that $\tilde h^\pm_{k,\eps}$ converges pointwise, and then upgrade this to convergence in LU.
Fix any $x\in\R$ if $\alpha_0=1$, or $x\in \R\setminus\{0\}$ if $\alpha_0<1$.
Using the convergence of $\Phi^\kstar_{i,\eps}$ given in Lemma~\ref{lem:typical_chain_exit}~\eqref{item:cvg_Phi^k}, we have that $g^\pm_{c_{\kstar+1}}\left(\Phi^\kstar_{1,\eps}(x,N_\kstar)\right)$ converges a.s. Using the property of the limit $\Phi^\kstar_2$ of $\Phi^\kstar_{2,\eps}$ described in Lemma~\ref{lem:typical_chain_exit}~\eqref{item:cvg_Phi^k}, we get that $\Phi^\kstar_2(x,N_k)\neq 0$ a.s.\ and thus $\ONE_{ \Phi^\kstar_{2,\eps}(x,N_\kstar)\geq 0}$ converges a.s. Then, in view of \eqref{eq:tilde_h_k,eps(x)}, the bounded convergence theorem yields that $\tilde h^\pm_{k,\eps}$ converges pointwise to
\begin{align*}h^\pm_{k}: x \mapsto \E\left[g^\pm_{c_{\kstar+1}}\left(\Phi^\kstar_{1}(x,N_\kstar)\right)\ONE_{ \Phi^\kstar_{2}(x,N_\kstar)\geq 0}\right].
\end{align*}
It is clear that $h^\pm_{k}$ is bounded, and the continuity of $h^\pm_{k}$ follows from the properties of $\Phi^\kstar_i$ in Lemma~\ref{lem:typical_chain_exit}~\eqref{item:cvg_Phi^k}.
Since $\pm g^\pm_{c_{\kstar+1}}$ is nondecreasing, using Lemma~\ref{lem:typical_chain_exit}~\eqref{item:nondecreasing_Phi^k}, we can see that $\pm \tilde h^\pm_{k,\eps}(x)$ is nondecreasing, which upgrades the pointwise convergence to LU convergence on $\R$ if $\alpha_0=1$ or on $\R\setminus\{0\}$ if $\alpha_0<1$.

Combining this with the above displays, we obtain the LU convergence of  $h_{k,\eps}$~to
\begin{align*}
    h_k:x\mapsto \E\left[g_{c_{\kstar+1}}\left(\Phi^\kstar_{1}(x,N_\kstar)\right)\ONE_{ \Phi^\kstar_{2}(x,N_\kstar)\geq 0}\right]=h_k^+(x)+h_k^-(x).
\end{align*}
Since $g_{c_{\kstar+1}}$ is positive everywhere, Lemma~\ref{lem:typical_chain_exit}~\eqref{item:cvg_Phi^k} implies that this expectation is positive for all $x\in\R$ if $\alpha_0=1$, and all $x\in(0,\infty)$ if $\alpha_0<1$; it is identical zero for all $x\in(-\infty,0)$ if $\alpha_0<1$. The boundedness and continuity of $h_k$ follows from those properties for  $h^\pm_k$. This completes the verification of properties of $h_{k,\eps}$.

\medskip

This completes the proof of the basis case, i.e., for $k\in\{\kstar+1, \ldots, \min H\}$.

\bigskip

\textbf{Induction step.} Let us assume that the desired result holds for all $\kappa+1,\kappa+2,\dots,j$ for some $j\in H$. Let $\overline{j} = \min\{i\in H: i>j\}$. Our goal is to extend the result to~ values $k\in \{j+1,\dots, \overline{j}\}$.
Note that
\begin{align}\label{eq:underline_k=j+1}
    \underline{k}=j+1.
\end{align}

\medskip
The argument is very similar to that for the base case. We split it into three steps.
Step~1: we use Lemma~\ref{lem:local-limit-theorem} on atypical transitions to obtain an approximation for the distribution of $X_{\eps,\tau^{j+1}_\eps}$. Step~2: if $k> j +1$, we approximate the distribution of $X_{\eps,\tau^{k}_\eps}$ applying 
Lemma~\ref{lem:typical_chain_exit} on typical transitions to the transition from $X_{\eps,\tau^{j+1}_\eps}$ to $X_{\eps,\tau^{k}_\eps}$. Step~3:
for the approximations obtained in Steps 1 and 2, we verify the properties claimed in the lemma.
\bigskip

Step~1. We study the distribution of $X_{\eps,\tau^{j+1}_\eps}$ through the transition from $X_{\eps,\tau^{j}_\eps}$ to $X_{\eps,\tau^{j+1}_\eps}$. Using the induction assumption (in particular,~\eqref{eq:local_limit_nu-bar_nu}), we have that $\nu_{j,\eps}$ given in~\eqref{eq:nu_k} and some measure $\bar\nu_{j,\eps}$ of the form~\eqref{eq:bar_nu_k,eps} satisfy Lemma~\ref{lem:trans_ker}~\ref{item:nu-bar_nu}. 
In addition, $\bar \nu_{j,\eps}$ satisfies Lemma~\ref{lem:trans_ker}~\ref{item:nu(K)} due to the uniform boundedness of $h_{j,\eps}$,~\eqref{eq:bar_nu_k,eps} and~\eqref{eq:gamma_good} (see the property~\eqref{eq:GoodMeasures} for measures in $\GoodMeasures$).

Due to~\eqref{eq:backward-iteration1}, $\bar \alpha_{j}=1$. Now applying Lemma~\ref{lem:local-limit-theorem}, we obtain that the kernels   $\mu_{j+1,\eps},\bar \mu_{j+1,\eps}$ given by
\begin{align}
    \mu_{j+1,\eps}(x,dy)& = \eps^{-(\frac{\bar\alpha_{j+1}}{\rho_{j+1}}-1)}\Pp^{x_{j}+\eps x v_{j}}\left\{\tau^{j+1}_\e<\infty,\ X_{\e,\tau^{j+1}_\e}\in x_{j+1}+\e^{\bar\alpha_{j+1}} (dy)v_{j+1} \right\},\label{eq:mu_k+1,eps} 
    \\
    \bar \mu_{j+1,\eps}(x,dy)& = g_{c_{j+1}}(x)\bar\mu_{j+1}(dy) \label{eq:bar_mu_k+1,eps},
\end{align}
satisfy Lemma~\ref{lem:trans_ker}~\ref{item:mu-bar_mu} and that
\begin{align}\label{eq:bar_mu_k+1_Good}
    \bar \mu_{j+1} \in \GoodMeasures.
\end{align}
Similarly to the argument used to derive~\eqref{eq:mu_kstar(K)}, we have that for every $\vk>0$ there is $p>0$ such that
\begin{align}\label{eq:mu_k+1(K)}
    \bar\mu_{j+1}(K_\vk(\eps))\leq l_\eps^p. 
\end{align}
Using a decomposition similar to~\eqref{eq:g_decomp}, we can verify that $\bar\mu_{j+1,\eps}$ satisfies Lemma~\ref{lem:trans_ker}~\ref{item:mu_decomp}.

Hence, we are now allowed to apply Lemma~\ref{lem:trans_ker} to $\nu_{j,\eps},\bar \nu_{j,\eps}, \mu_{j+1,\eps}, \bar \mu_{j+1,\eps}$ to see that the kernels 
$\nu_{j+1,\eps}, \bar \nu_{j+1,\eps}$
given by
\begin{align}
    \nu_{j+1,\eps}(x,dy)& = \int_{K_{\vk_{j}}(\eps)}\nu_{j,\eps}(x,dz)\mu_{j+1,\eps}(z,dy),\label{eq:nu_k+1,eps}
    \\
    \bar \nu_{j+1,\eps}(x,dy) &= \int_{K_{\vk_{j}}(\eps)}\bar\nu_{j,\eps}(x,dz)\bar\mu_{j+1,\eps}(z,dy)\label{eq:bar_nu_k+1,eps},
\end{align}
satisfy Lemma~\ref{lem:trans_ker}~\ref{item:nu-bar_nu}.
One can easily check that this definition of $\nu_{j+1,\eps}$ coincides with~\eqref{eq:nu_k} for $j+1$.
Since we have shown that $\bar\nu_{j,\eps}$ satisfies Lemma~\ref{lem:trans_ker}~\ref{item:nu(K)}, displays~\eqref{eq:bar_mu_k+1,eps}, \eqref{eq:mu_k+1(K)} and the boundedness of the Gaussian density $g_{c_{j+1}}$ imply that $\bar\nu_{j+1,\eps}$ satisfies~\ref{item:nu(K)}.

\bigskip

Step~2. For $k\in\{j+2,\ldots, \overline{j} \}$, we study the distribution of $X_{\eps,\tau^{k}_\eps}$ through the transition from $X_{\eps,\tau^{j+1}_\eps}$ to $X_{\eps,\tau^{k}_\eps}$.
The scaling upon the exit from saddle   $O_{j+1}$ described by \eqref{eq:mu_k+1,eps} is $\eps^{\bar\alpha_{j+1}}$. Since  there are no elements of $H$ between $j$ and $k$, part~\ref{item:backward-recursion1} of Lemma~\ref{lem:structure-of-reverse-exponents} guarantees that $\bar \alpha_{k}=\bar \alpha_{k-1} \rho_{k}\le 1$ for all
$k\in\{j+2,\ldots,\overline{j}\}$ and, moreover, $\bar\alpha_{\overline{j}}=1$. Therefore, the dynamics of exponents $\bar \alpha_k$ for these saddles is described by \eqref{eq:new-alpha},
i.e. the evolution is typical and described by
 Lemma~\ref{lem:typical_chain_exit}.

Applying parts~\eqref{item:use_Phi_to_approx} and~\eqref{item:phi_pm_decomp_k} of this lemma to the dynamics starting near $x_{j+1}$ shows that kernels given by 
\begin{align}
    \mu_{k,\eps}(x,dy)& = \Pp^{x_{j+1}+\eps^{\bar\alpha_{j+1}} x v_{j+1}}\left\{\tau^{k}_\e<\infty,\ X_{\e,\tau^{k}_\e}\in x_{k}+\e^{\bar\alpha_k} (dy)v_{k} \right\}\label{eq:mu_k',eps}\\
    \bar \mu_{k,\eps}(x,dy)& = \Pp\{\Phi^{k}_{1,\eps}(x,N_{k})\in dy ,\ \Phi^{k}_{1,\eps}(x,N_{k})\geq 0\}, \label{eq:mu_bar_k'_eps}
\end{align}
for $k\in\{j+2,\ldots, \overline{j} \}$,
satisfy Lemma~\ref{lem:trans_ker}~\ref{item:mu-bar_mu} and~\ref{item:mu_decomp}.
This and the result in Step~1 allow us to apply Lemma~\ref{lem:trans_ker} to $\nu_{j+1,\eps}, \bar \nu_{j+1,\eps}, \mu_{k,\eps}, \bar \mu_{k,\eps}$ to get that, for any $\vk_{j+1}>0$, the kernels  $\bar \nu_{k,\eps},  \nu_{k,\eps}$ defined by
\begin{align}
    \nu_{k,\eps}(x,dy)& = \int_{K_{\vk_{j+1}}(\eps)}\nu_{j+1,\eps}(x,dz)\mu_{k,\eps}(z,dy),\label{eq:nu_k',eps} 
    \\
    \bar \nu_{k,\eps}(x,dy) &= \int_{K_{\vk_{j+1}}(\eps)}\bar\nu_{j+1,\eps}(x,dz)\bar\mu_{k,\eps}(z,dy) \label{eq:bar_nu_k',eps},
\end{align}
satisfy~\eqref{eq:local_limit_nu-bar_nu}.
This, along with Step~1, completes the verification of~\eqref{eq:local_limit_nu-bar_nu} for $k\in\{j+1,\ldots, \overline{j} \}$.

\medskip

Step~3. We show that $\nu_{k,\eps}$ and $\bar\nu_{k,\eps}$ coincide with those given by~\eqref{eq:nu_k} and~\eqref{eq:bar_nu_k,eps}.

First, we verify that $\nu_{k,\eps}$ of \eqref{eq:nu_k',eps} coincides with~\eqref{eq:nu_k}.
Using the expressions for $\nu_{k,\eps}$ in \eqref{eq:nu_k',eps}, for $\nu_{j+1,\eps}$ in \eqref{eq:nu_k+1,eps}, for $\mu_{k,\eps}$ in \eqref{eq:mu_k',eps}, for $\mu_{j+1,\eps}$ in \eqref{eq:mu_k+1,eps}, and~$\nu_{j,\eps}$~in~\eqref{eq:nu_k}, we can compute that 
\begin{align*}
    &\nu_{k,\eps}(x,[a,b]) = \int_{K_{\vk_{j}}(\eps)}\nu_{j,\eps}(x,dz')\int_{K_{\vk_{j+1}}(\eps)}\mu_{j+1,\eps}(z',dz)\mu_{k,\eps}(z,[a,b])
    \\
    &=\eps^{-\theta_k -(\frac{\bar\alpha_{j+1}}{\rho_{j+1}}-1)}\Pp^{x_0 + \eps^{\alpha_0}xv_0}\Bigg(\bigcap_{i\in H'\cap\{0,\ldots,j-1\} }( \bar A_{i,\vk_i,\e}\cap \bar A_{i+1,\vk_{i+1},\e}) \cap B_{j,\eps,K_{\vk_{j}}(\eps)}
    \\
    &\quad\cap\left\{\tau^{k}_\e<\infty,\  X_{\eps,\tau^{j+1}_\eps}\in x_{j+1}+\eps^{\bar\alpha_{j+1}}K_{\vk_{j+1}}(\eps)v_{j+1},\ X_{\eps,\tau^{k}_\eps}\in x_{k}+\eps^{\bar\alpha_{k}}[a,b]v_{k}\right\}  \Bigg)
\end{align*}
The right-hand side of this display coincides with the right-hand side of~\eqref{eq:nu_k} (for~$k$ in the range that we are considering). To see this, we need to note a few things. First, we use
the definition of $\theta_{j}$ in~\eqref{eq:theta_k_H} and the fact that there are no elements of~$H$ between $j$ and $k$ to see that $\theta_{k}=\theta_{j}+ \frac{\bar\alpha_{j+1}}{\rho_{j+1}}-1$. Next,
due to the~\eqref{eq:def-of-corridor} and~\eqref{eq:events-B}, we have $B_{j,\eps,K_{\vk_j}}(\eps) = \bar A_{j,\vk_j,\eps}$. Also, the event in the last line of the last display is exactly  $\bar A_{j+1,\vk_{j+1},\eps}\cap B_{k,\eps,[a,b]}$.
Finally, we have
\begin{align*}
    H'\cap\{0,\dots,k-1\} = \left(H'\cap\{0,\dots,j-1\}\right)\cup\{j\}.
\end{align*}
Applying these observations to the last display together with \eqref{eq:inter_bar_A}, we complete the proof of \eqref{eq:nu_k}.

\smallskip

Let us check the properties of $\bar\nu_{k,\eps}$ claimed in Lemma~\ref{lem:local-limit-for-H}, namely, decomposition~\eqref{eq:bar_nu_k,eps} along with \ref{item:h_k_cvg}, \ref{item:gamma_k,k=underbar_k}, \ref{item:gamma_k,k>underbar_k}. Recall $\underline k= j+1$ as in \eqref{eq:underline_k=j+1}.

If $k=j+1$ (equivalently, $k=\underline{k}$), then, using the expressions for $\bar\nu_{j+1,\eps}$ in \eqref{eq:bar_nu_k+1,eps}, $\bar\mu_{j+1,\eps}$ in~\eqref{eq:bar_mu_k+1,eps}, and $\bar\nu_{j,\eps}$ in~\eqref{eq:bar_nu_k,eps} (applied to $j$), we can see that \eqref{eq:bar_nu_k,eps} holds for
\begin{align}
    h_{k,\eps}(x) & = \left(\int_{K_{\vk_{j}}(\eps)}\varsigma_{j,\eps}(dz')g_{c_{j+1}}(z')\right)h_{j,\eps}(x),\label{eq:h_k,eps(x)_induction}
\end{align}
and $\varsigma_{k,\eps}= \bar\mu_{j+1}\in \GoodMeasures$ (due to \eqref{eq:bar_mu_k+1_Good}), verifying \ref{item:gamma_k,k=underbar_k}.

If $k\in\{j+2,\ldots, \overline{j} \}$, then, using the expressions for $\bar \nu_{k,\eps}$ in \eqref{eq:bar_nu_k',eps}, 
$\bar\nu_{j+1,\eps}$ in~\eqref{eq:bar_nu_k+1,eps}, $\bar\mu_{k,\eps}$ in \eqref{eq:mu_bar_k'_eps}, $\bar\mu_{j+1,\eps}$ in~\eqref{eq:bar_mu_k+1,eps}, and $\bar\nu_{j,\eps}$ in~\eqref{eq:bar_nu_k,eps} (applied to $j$), we can see that~\eqref{eq:bar_nu_k,eps} holds for $h_{k,\eps}$ defined in \eqref{eq:h_k,eps(x)_induction} and
\begin{align*}
    \varsigma_{k,\eps}(dy) & = \int_{K_{\vk_{j+1}}(\eps)}\bar\mu_{j+1}(dz)\Pp\{\Phi^{k}_{1,\eps}(z,N_{k})\in dy ,\ \Phi^{k}_{1,\eps}(z,N_{k})\geq 0\}. \end{align*}
Since $\underline{k} =j+1$ in this case, we can set  $\varsigma_{k}=\bar\mu_{j+1}$ to guarantee~\eqref{eq:gamma_k,eps_formula}. Now~\eqref{eq:gamma_good} follows from~\eqref{eq:bar_mu_k+1_Good}. The random vector $N_{k}$ and the map $\Phi^{k}_{i,\eps}$ were introduced in Step~2 through the application of Lemma~\ref{lem:typical_chain_exit}.
Thus they possess  the desired properties automatically. Hence, we have verified \ref{item:gamma_k,k>underbar_k}.

It remains to show \ref{item:h_k_cvg}, which will follow from the induction assumption on $h_{j,\eps}$ once we show that
\begin{align}\label{eq:int_gamma_k,eps_g}
    \int_{K_{\vk_{j}}(\eps)}\varsigma_{j,\eps}(dz')g_{c_{j+1}}(z')
\end{align}
is bounded uniformly in $\eps$ and converges as $\eps\to 0$ to a positive constant. 
To that end,
we expand \eqref{eq:int_gamma_k,eps_g} using the induction assumption on $\varsigma_{k,\eps}$:
\begin{align}\label{eq:int_gamma_kE[...]}
    \int_{K_{\vk_{\underline{j}}}(\eps)}\varsigma_{j}(dz)\E \left[g_{c_{j+1}}\left(\Phi^{j}_{1,\eps}(z,N_{j})\right)\ONE_{\Phi^{j}_{1,\eps}(z,N_{j})\in K_{\vk_{j}}(\eps),\ \Phi^{j}_{2,\eps}(z,N_{j})\geq 0}\right].
\end{align}
Since part~\eqref{item:k_chain_Phi>|x|^p} of Lemma~\ref{lem:typical_chain_exit} holds for $\Phi^{j}_{1,\eps}$, the fact that $g_{c_{j+1}}$ is a Gaussian density and the Gaussianity of $N_{j}$ imply 
\begin{align*}
    &\E\left[g_{c_{j+1}}\left(\Phi^{j}_{1,\eps}(z,N_{j})\right)\right]\\
    &= \E\left[g_{c_{j+1}}\left(\Phi^{j}_{1,\eps}(z,N_{j})\right)\left(\ONE_{|z|<R}+\ONE_{|z|\geq R,\, |N_{j}|_\infty\leq |z|^q}+\ONE_{|z|\geq R,\, |N_{j}|_\infty>|z|^q}\right)\right]\\
    &\leq C\left(\ONE_{|z|<R}+ e^{-c|z|^{2p}}+e^{-c|z|^{2q}} \right)
\end{align*}
for some $C,c>0$.
Using this and~\eqref{eq:GoodMeasures} enjoyed by $\varsigma_{j}$ (due to \eqref{eq:gamma_good}), the boundedness of the expression in~\eqref{eq:int_gamma_k,eps_g} is immediate.
Moreover, the integrand in \eqref{eq:int_gamma_kE[...]} is dominated by a function integrable with respect to $\varsigma_j$. Since in the limit,
as $\e\to 0$, $K_{\vk_j}(\eps)$ and $K_{\vk_{\underline {j}}}(\e)$ 
expand to cover the entire $\R$, we can use arguments similar to those in Step~4 of the basis case to conclude that the integrand converges pointwise everywhere. Therefore, the dominated convergence theorem gives the convergence of \eqref{eq:int_gamma_k,eps_g} to
\begin{align*}
    \int_\R\varsigma_{j}(dz)\E \left[g_{c_{j+1}}\left(\Phi^{j}_{1}(z,N_{j})\right)\ONE_{ \Phi^{j}_{2}(z,N_{j})\geq 0}\right].
\end{align*}
The induction assumption guarantees that $\varsigma_{j}\in\GoodMeasures$. In particular, \eqref{eq:GoodMeasures>0} holds for~$\varsigma_{j}$.  Thus, 
to show the positivity of the above integral, it suffices to show the integrand is positive for every $z\in(0,\infty)$. In turn, this follows since
the Gaussian density~$g_{c_{j+1}}$ is positive and  the condition on $\Phi_2^{j}$ in part \eqref{item:cvg_Phi^k} of Lemma~\ref{lem:typical_chain_exit} holds.
Hence, the expression in \eqref{eq:int_gamma_k,eps_g} converge pointwise everywhere to a function that is positive everywhere, and so does~$h_{j,\eps}$. 
Using monotonicity similarly to Step~4 of the base case, we upgrade pointwise convergence to LU convergence.

\smallskip
This completes the proof of the induction step and  of the entire Lemma~\ref{lem:local-limit-for-H}.
\end{proof}

\subsubsection{Proof of \eqref{eq:main_polynomial_asymptotics}}

Since $\bar \alpha_{n-1} =1$ (see Lemma~\ref{lem:structure-of-reverse-exponents}~\eqref{item:backward-recursion1}), we set
\begin{align*}
    \phi_\eps(x) = \Pp^{x_{n-1}+\eps xv_{n-1}}(A_{n,\eps}).
\end{align*}
We start by choosing $\vk_0$ and  $\vk_k$'s used in the definition for $\nu_{n-1,\eps}=\nu_{n-1,(\vk),\eps}$ given in \eqref{eq:nu_k}. First, we 
use the tameness of $\xi_{0,\eps}$ to
choose $\vk_0$ sufficiently large enough to ensure $\Pp\{|\xi_{0,\eps}|>l^{\vk_0}_\eps\}=o_e(1)$. Then, we choose $\vk_k$ in $\nu_{n-1,\eps}$ large to ensure that Lemmas~\ref{lem:restricting-to-a-narrow-corridor} and~\ref{lem:local-limit-for-H} are applicable. We note that
if $\kappa =0$, then  $\vk_0$ is used in the definition of $\nu_{n-1,\eps}$. In this case, we simply make the previously chosen~$\vk_0$ larger, and adjust the others accordingly.

Using Lemma~\ref{lem:restricting-to-a-narrow-corridor} with $\widetilde \vk_0$ replaced by $\vk_0$ therein, we have, uniformly in $x\in K_{\vk_0}(\eps)$,
\begin{align}\label{eq:P(A_n,eps)_approx}
    \Pp^{x_0+\eps^{\alpha_0} x v_0}(A_{n,\eps}) = \eps^{\theta_{n-1}}\int_{K_{\vk_{n-1}}(\eps)}\nu_{n-1,\eps}(x,dy)\phi_\eps(y) + o_e(1),
\end{align}
for $\nu_{n-1,\eps}$ given in~\eqref{eq:nu_k} and $\theta_{n-1}$ defined in~\eqref{eq:theta_k_H}. In fact,  $\theta_{n-1}=\theta$, where the latter is defined in~\eqref{eq:theta}.

The limiting behavior of the right-hand side of \eqref{eq:P(A_n,eps)_approx}, can be analyzed using Lemma~\ref{lem:trans_ker}. The latter is actually
targeted at transition kernel convolutions but we can make it work for this simpler case.

Applying Lemma~\ref{lem:local-limit-for-H}, we have that $\nu_{n-1,\eps}$ and $\bar \nu_{n-1,\eps}$ (given in \eqref{eq:bar_nu_k,eps}) satisfy Lemma~\ref{lem:trans_ker}~\ref{item:nu-bar_nu}. Due to~\eqref{eq:gamma_k,eps_formula} and~\eqref{eq:gamma_good}, Lemma~\ref{lem:trans_ker}~\ref{item:nu(K)} is satisfied by $\bar \nu_{n-1,\eps}$.  Lemma~\ref{lem:postive-limit-prob-if-close-to-manifold} implies that, for some constant $s>0$, kernels given by
\begin{align*}
    \mu_{n,\eps}(x,dy) &= \phi_\eps(x)\delta_0(dy),\\
    \bar \mu_{n,\eps}(x,dy) &= \psi_s(-x)\delta_0(dy),
\end{align*}
where $\delta_0$ is the Dirac mass at $0$ (any probability measure that does not depend on~$\eps$ would work equally well) and $\psi_s$ is given in that lemma, satisfy Lemma~\ref{lem:trans_ker}~\ref{item:mu-bar_mu}. Due to the definition of $\psi_s$ in~\eqref{eq:gaussian-cdf}, Lemma~\ref{lem:trans_ker}~\ref{item:mu_decomp} is satisfied by $\bar \mu_{n,\eps}$, as $\psi_s$ is monotone. Therefore, we can apply Lemma~\ref{lem:trans_ker} to $\nu_{n-1,\eps}, \bar\nu_{n-1,\eps}, \mu_{n,\eps},\bar\mu_{n,\eps}$ to see that
\begin{align}\label{eq:int_nu_phi_pf_main_thm_3}
    \sup_{x\in K_{\vk_0}(\eps)}\left|\int_{K_{\vk_{n-1}}(\eps)}\nu_{n-1,\eps}(x,dy)\phi_\eps(y) - \int_{K_{\vk_{n-1}}(\eps)}\bar\nu_{n-1,\eps}(x,dy)\psi_s(-y)\right| = \smallo{\eps^\delta}
\end{align}
for some $\delta>0$. In view of \eqref{eq:P(A_n,eps)_approx}, it remains to verify that the second integral in the above display converges to a positive constant as $\eps\to 0$.

The expression for $\bar \nu_{n-1,\eps}$ in~\eqref{eq:bar_nu_k,eps} (for $k=n-1$) allows us to compute that, for some $\vk'_{n-1}>0$,
\begin{align}
    &\int_{K_{\vk_{n-1}}(\eps)}\bar\nu_{n-1,\eps}(x,dy)\psi_s(-y) = h_{n-1,\eps}(x) \label{eq:approx_pf_main_thm_3}\\
    &\times \int_{K_{\vk'_{n-1}}(\eps)}\varsigma_{n-1}(dz) \E\left[\psi_s\left(-\Phi^{n-1}_{1,\eps}(z,N_{n-1})\right)\ONE_{\Phi^{n-1}_{1,\eps}(z,N_{n-1})\in K_{\vk_{n-1}}(\eps), \ \Phi^{n-1}_{2,\eps}(z,N_{n-1})\geq 0}\right]. \notag
\end{align}
Lemma~\ref{lem:local-limit-for-H} ensures that $h_{n-1,\eps}$ is bounded uniformly in $\eps$ and that $h_{n-1,\eps}$ converges in LU to some positive bounded continuous function on $\R$ if $\alpha_0 =1$; or a nonnegative bounded continuous function on $\R\setminus\{0\}$, which is positive on $(0,\infty)$, if $\alpha_0<1$. The argument we used to derive the convergence of \eqref{eq:int_gamma_kE[...]} yields the convergence of the integral on the right-hand side of \eqref{eq:approx_pf_main_thm_3} to a positive constant. Hence, the left-hand side of \eqref{eq:approx_pf_main_thm_3}, viewed as a function of $x$, is bounded uniformly in $\eps$ and converges in LU to some bounded continuous function $\bar h:\R\to (0,\infty)$ if $\alpha_0=1$, or $\bar h:\R\setminus\{0\}\to [0,\infty)$, satisfying $\bar h>0$ on $(0,\infty)$, if $\alpha_0<1$.

This along with \eqref{eq:P(A_n,eps)_approx} and \eqref{eq:int_nu_phi_pf_main_thm_3} implies that the function
\begin{align}\label{eq:def_bar_h_eps}
    \bar h_\eps : x\mapsto \eps^{-\theta_{n-1}}\P^{x_0+\eps^{\alpha_0} x v_0}(A_{n,\eps}) \ONE_{x\in K_{\vk_0}(\eps)}
\end{align}
is bounded uniformly in $\eps$, and converges in LU to $\bar h$ as $\eps \to 0$. 
We have
\begin{align}
\label{eq:P(A_n,e)_bar_h}
   \Pp(A_{n,\eps}) &= \E\left[ \P^{x_0+\eps^{\alpha_0} \xi_{0,\eps} v_0}(A_{n,\eps})\right] = \E\left[ \P^{x_0+\eps^{\alpha_0} \xi_{0,\eps} v_0}(A_{n,\eps})\ONE_{\xi_{0,\eps}\in K_{\vk}(\eps)}\right] + \Delta_\e
   \\ &= \eps^{\theta_{n-1}}\E\bar h_\eps (\xi_{0,\eps}) + \Delta_\e, \notag
\end{align}
where 
\begin{equation}
\label{eq:Delta_for_P_A_n}
0\le\Delta_\e\le \Pp\{\xi_{0,\eps}\notin K_{\vk}(\eps)\}.
\end{equation}
Due to the tameness of $\xi_{0,\eps}$, we have $\Delta_\e=o_e(1)$, so
\begin{align*}
    \Pp(A_{n,\eps})
     = \eps^{\theta_{n-1}}\E\bar h_\eps (\xi_{0,\eps}) + o_e(1).
\end{align*}
It remains to verify
\begin{align}\label{eq:E-bar-h_eps_lim}
    \lim_{\eps\to0} \E \bar h_\eps(\xi_{0,\eps}) = \E \bar h(\xi_0)>0.
\end{align}
First, we consider the case $\alpha_0=1$. We start with the upper bound
\begin{align}
    &\left|\E \bar h_\eps(\xi_{0,\eps}) - \E \bar h(\xi_0)\right|  \leq \left|\E \bar h_\eps(\xi_{0,\eps}) - \E \bar h(\xi_{0,\eps})\right| + \left|\E \bar h(\xi_{0,\eps}) - \E \bar h(\xi_0)\right|\notag
    \\
    & \leq \E \left[ \left|\bar h_\eps(\xi_{0,\eps}) - \bar h(\xi_{0,\eps})\right|\ONE_{|\xi_{0,\eps}|\leq R} \right] + C\Pp\{|\xi_{0,\eps}|> R\} + \left|\E \bar h(\xi_{0,\eps}) - \E \bar h(\xi_0)\right|\label{eq:split_pf_main_3}
\end{align}
which holds for some $C>0$ and all $R>0$.
The second term on the right-hand side can be made arbitrarily small, uniformly in small $\eps$, by choosing $R$ sufficiently large. The third term decays to zero 
as $\eps\to0$ due to condition  \ref{setting:scaling-limit-at-saddle-0}.
The first term in \eqref{eq:split_pf_main_3} converges to $0$ due to the LU convergence proved above. Hence, we conclude that \eqref{eq:E-bar-h_eps_lim} holds and the right-hand side is positive due to the positivity of $\bar h$. 

The argument is similar for $\alpha_0<1$. The estimate \eqref{eq:split_pf_main_3} is replaced by
\begin{align*}
    &\left|\E \bar h_\eps(\xi_{0,\eps}) - \E \bar h(\xi_0)\right|  
    \\
    & \leq \E \left[ \left|\bar h_\eps(\xi_{0,\eps}) -  \bar h(\xi_{0,\eps})\right|\ONE_{|\xi_{0,\eps}|\in[\delta, R]} \right] + C\Pp\{|\xi_{0,\eps}|\not \in [\delta,R]\} + \left|\E \bar h(\xi_{0,\eps}) - \E \bar h(\xi_0)\right|.
\end{align*}
Here the second term can be made arbitrarily small by choosing sufficiently small $\delta>0$ and sufficiently large $R>0$. The first term converges to 0 due to the LU convergence of $\bar h_\e$ to $\bar h$. 
To deduce the convergence of the last term to 0, besides the weak convergence of $\xi_{0,\eps}$ to $\xi_0$, we also use the fact that the only discontinuity point
$0$ of $\bar h$ is not an atom of the distribution of $\xi_0$. We also note that the right-hand side of \eqref{eq:E-bar-h_eps_lim} is positive
because of our assumption  $\Pp\{\xi_0>0\}>0$ and the fact that  $\bar h>0$ on $(0,\infty)$ and non-negative elsewhere.
 This completes the proof of \eqref{eq:main_polynomial_asymptotics}  of Theorem~\ref{th:multi-saddle-escape}~\eqref{item:cell-escape-probability}. 
 \epf

\subsection{Proof of (\ref{eq:tau_main_th}) in Theorem~\ref{th:multi-saddle-escape}~(\ref{item:cell-escape-probability})}

We need the following lemma describing the typical exit time near a saddle point where the initial condition is of order $\eps^\alpha$ for $\alpha\in(0,1)$. Here, we recall that Lemma~\ref{lem:exit_time_is_log} describes the typical exit time for $\alpha=1$.
\begin{lemma}\label{lem:exit_time_is_log_alpha<1} 
Under conditions~\ref{setting:general},~\ref{setting:geometry-domain}, and~\ref{setting:conjugacy}, for $\alpha\in(0,1)$ and every $\vk,\delta>0$, there is $\delta'>0$ such that
\begin{align*}
    \Pp^{x_0 + \eps^{\alpha}xv_0}\left\{\left|\frac{\tau_\e}{\frac{\alpha}{\lambda} l_\eps }-1\right|>\delta\right\}\leq \ONE_{|x|\leq \eps^{\delta'}} +  o_e(1).
\end{align*}
holds uniformly in $x\in K_\vk(\eps)$.
\end{lemma}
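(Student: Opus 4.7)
The plan is to extend the heuristic argument of Lemma~\ref{lem:exit_time_is_log} (which covered $\alpha=1$) to $\alpha\in(0,1)$, working first in the model case~\eqref{eq:linear-system1}--\eqref{eq:linear-system2} with initial condition $X^1_{\e,0}=\e^\alpha x$. Applying Duhamel's formula~\eqref{eq:Duhamel1} together with the exit condition $|X^1_{\e,\tau_\e}|=R$ gives
\[
\tau_\e \;=\; \frac{\alpha}{\lambda}\,l_\e \;+\; \frac{1}{\lambda}\log\frac{R}{|x+\e^{1-\alpha}U^1_{\tau_\e}|},
\]
so the deviation event $\{|\tau_\e/(\tfrac{\alpha}{\lambda}l_\e)-1|>\delta\}$ is equivalent~to
\[
\bigl\{\,|x+\e^{1-\alpha}U^1_{\tau_\e}|\notin[R\e^{\delta\alpha},\,R\e^{-\delta\alpha}]\,\bigr\}.
\]
The two branches of this event can then be treated separately.

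For the upper branch, $\{|x+\e^{1-\alpha}U^1_{\tau_\e}|>R\e^{-\delta\alpha}\}$, I would use the tameness of $x\in K_\vk(\e)$ together with Lemma~\ref{lem:tail-control-for-basic-rvs-in-Duhamel} to conclude that, w.h.p., the left-hand side is at most $l_\e^\vk+\e^{1-\alpha}l_\e^{\vk'}$, which is dwarfed by the negative-power threshold $R\e^{-\delta\alpha}$, giving $o_e(1)$. For the lower branch, $\{|x+\e^{1-\alpha}U^1_{\tau_\e}|<R\e^{\delta\alpha}\}$, the reverse triangle inequality forces $|x|<R\e^{\delta\alpha}+\e^{1-\alpha}|U^1_{\tau_\e}|$. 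To balance these two summands I would first exploit monotonicity of the event in $\delta$ to reduce, without loss of generality, to $\delta\leq(1-\alpha)/(2\alpha)$, which guarantees $\delta\alpha<1-\alpha$. Then choosing $\delta'=\delta\alpha/2$, so that $\delta'<\delta\alpha\wedge(1-\alpha)$, Lemma~\ref{lem:tail-control-for-basic-rvs-in-Duhamel} ensures $R\e^{\delta\alpha}+\e^{1-\alpha}l_\e^{\vk'}<\e^{\delta'}$ w.h.p. for small $\e$, so on $\{|x|>\e^{\delta'}\}$ this branch contributes only $o_e(1)$; on $\{|x|\leq\e^{\delta'}\}$ it is bounded by the indicator $\ONE_{|x|\leq\e^{\delta'}}$, yielding the claimed estimate.

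Uniformity in $x\in K_\vk(\e)$ should be immediate since every tameness bound invoked from Lemma~\ref{lem:tail-control-for-basic-rvs-in-Duhamel} is uniform in initial conditions. The main obstacle, as for the other heuristic arguments in Sections~\ref{sec:typical}--\ref{sec:long-escape-chains}, is passing from the clean linear model to the general setting of~\ref{setting:conjugacy}: the Duhamel identity is replaced by its analogue in the rectified coordinates of Section~\ref{sec:rectified}, and the scalar r.v.\ $U^1_{\tau_\e}$ by a noise integral whose Gaussian limit is controlled in Section~\ref{sec:Gaussian-approx}. Since all remainder terms introduced by this conjugacy are of lower order than the main $\e^{\alpha}$-scale contributions that drive the argument above, the same two-branch estimate goes through with only cosmetic changes, but verifying this rigorously relies on the machinery developed later in the paper.
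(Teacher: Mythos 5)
Your proposal is correct and follows essentially the same route as the paper: express $\tau_\e$ via Duhamel's formula, split the deviation event into the short-exit branch (killed by the uniform Gaussian-type tail of $U^1_{\tau_\e}$ and the tameness of $x$) and the long-exit branch (which forces $|x|\lesssim \e^{\delta\alpha}+\e^{1-\alpha}|U^1_{\tau_\e}|$ and hence is absorbed into $\ONE_{|x|\le\e^{\delta'}}+o_e(1)$), and then transfer to the general setting through the rectified-coordinates/three-stage machinery. Your explicit choice $\delta'=\delta\alpha/2$ after the harmless reduction to small $\delta$ is a slightly more concrete bookkeeping than the paper's, but the argument is the same.
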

\bpfm  Due to \eqref{eq:weak_convergence_to_Gauss}, \eqref{eq:expr-for-tau}, \eqref{eq:Z_eps}, we have $\tau_\eps\approx \frac{1}{\lambda}\log\frac{R}{|\eps^\alpha x+\eps \frU|}$.
Thus, uniformly in $x\in K_\vk(\eps)$,
\begin{align*}
    \Pp^{x_0 + \eps^{\alpha}xv_0}\left\{\tau_\e<\frac{\alpha-\delta}{\lambda} l_\eps \right\} \approx \Pp^{x_0 + \eps^{\alpha}xv_0}\{ |x+\eps^{1-\alpha}\frU|>\eps^{-\delta}R\},
    \\
    \Pp^{x_0 + \eps^{\alpha}xv_0}\left\{\tau_\e>\frac{\alpha+\delta}{\lambda} l_\eps \right\} \approx \Pp^{x_0 + \eps^{\alpha}xv_0}\{ |x+\eps^{1-\alpha}\frU|<\eps^{\delta}R\}.
\end{align*}
The first display is $o_e(1)$ due to the Gaussianity of $\frU$ and $x\in K_\vk(\eps)$. The Gaussianity of $\frU$ yields that the second display is bounded above by $\ONE_{|x|\leq \eps^{\delta'}}+  o_e(1)$ for some $\delta'>0$.
\epf

Slightly extending the proof of \eqref{eq:main_polynomial_asymptotics} in Theorem~\ref{th:multi-saddle-escape}, we obtain the following
lemma, where the scaling limit assumption~\ref{setting:scaling-limit-at-saddle-0}
is replaced by the tameness of the initial condition:

\begin{lemma}\label{lem:rare_trans_upp_bdd}
Under conditions~\ref{setting:general}, \ref{setting:het-chain},~\ref{setting:scaling-at-saddle-0}, and \ref{setting:conjugacy-all}, if $\kstar<n-1$, then, for each $\vk_0>0$ and for $\theta$ defined in~\eqref{eq:theta},
\begin{align*}
    \sup_{x\in K_{\vk_0}(\e)} \Pp^{x_0+\e^{\alpha_0}x v_0}(A_{n,\eps})= O(\eps^\theta).
\end{align*}

\end{lemma}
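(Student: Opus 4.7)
The plan is to recycle the argument that established \eqref{eq:main_polynomial_asymptotics} in Theorem~\ref{th:multi-saddle-escape}~\eqref{item:cell-escape-probability}, observing that everything up to the very last step produces a \emph{pointwise} upper bound of the form $\eps^{\theta_{n-1}}\bar h_\eps(x)$ that is uniform in $x\in K_{\vk_0}(\eps)$, with $\bar h_\eps$ bounded uniformly in $\eps$. The scaling-limit hypothesis \ref{setting:scaling-limit-at-saddle-0} was only used in \eqref{eq:P(A_n,e)_bar_h}--\eqref{eq:E-bar-h_eps_lim} to integrate $\bar h_\eps$ against the distribution of $\xi_{0,\eps}$ and to identify a strictly positive limiting constant; it plays no role in obtaining the upper bound itself.

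Concretely, first apply Lemma~\ref{lem:restricting-to-a-narrow-corridor} with $\widetilde\vk_0=\vk_0$ to obtain, uniformly in $x\in K_{\vk_0}(\eps)$,
\begin{equation*}
\Pp^{x_0+\eps^{\alpha_0}xv_0}(A_{n,\eps}) \;=\; \eps^{\theta_{n-1}}\int_{K_{\vk_{n-1}}(\eps)}\nu_{n-1,\eps}(x,dy)\,\phi_\eps(y) \;+\; o_e(1),
\end{equation*}
where $\phi_\eps(y)=\Pp^{x_{n-1}+\eps y v_{n-1}}(A_{n,\eps})$ and $\theta_{n-1}=\theta$. Next, combine Lemma~\ref{lem:local-limit-for-H} (giving the approximation of $\nu_{n-1,\eps}$ by $\bar\nu_{n-1,\eps}$) with Lemma~\ref{lem:postive-limit-prob-if-close-to-manifold} (approximating $\phi_\eps$ by $\psi_s(-\cdot)$) and Lemma~\ref{lem:trans_ker} to get, as in \eqref{eq:int_nu_phi_pf_main_thm_3},
\begin{equation*}
\sup_{x\in K_{\vk_0}(\eps)}\left|\int_{K_{\vk_{n-1}}(\eps)}\nu_{n-1,\eps}(x,dy)\phi_\eps(y) - \bar h_\eps(x)\cdot\eps^{-\theta_{n-1}}\cdot\eps^{\theta_{n-1}}\right| = o(\eps^\delta),
\end{equation*}
where $\bar h_\eps$ is defined through \eqref{eq:approx_pf_main_thm_3}--\eqref{eq:def_bar_h_eps}.

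The key observation, already established in the proof of \eqref{eq:main_polynomial_asymptotics}, is that $\bar h_\eps$ is bounded uniformly in $\eps$ and in $x\in K_{\vk_0}(\eps)$: this boundedness follows from the uniform-in-$\eps$ boundedness of $h_{n-1,\eps}$ guaranteed by Lemma~\ref{lem:local-limit-for-H}~\ref{item:h_k_cvg}, together with the control of the integral in \eqref{eq:approx_pf_main_thm_3} via the polynomial-growth bounds for $\varsigma_{n-1}\in\GoodMeasures$ (inequality \eqref{eq:GoodMeasures}) and the growth bound \eqref{item:k_chain_Phi>|x|^p} of Lemma~\ref{lem:typical_chain_exit} for $\Phi^{n-1}_{1,\eps}$. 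Since $\psi_s\in[0,1]$, one even has $\bar h_\eps(x)\le \|h_{n-1,\eps}\|_\infty\cdot\varsigma_{n-1}(\R)$, which is bounded uniformly in~$\eps$.

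Putting these together yields
\begin{equation*}
\sup_{x\in K_{\vk_0}(\eps)}\Pp^{x_0+\eps^{\alpha_0}xv_0}(A_{n,\eps}) \;\le\; C\eps^{\theta_{n-1}} \;+\; o_e(1) \;=\; O(\eps^\theta),
\end{equation*}
since $\theta>0$. There is no real obstacle here: the work was already done in the proof of \eqref{eq:main_polynomial_asymptotics}; we simply stop the argument one step earlier, before integrating against the law of $\xi_{0,\eps}$, so that the distributional assumption \ref{setting:scaling-limit-at-saddle-0} never needs to be invoked.
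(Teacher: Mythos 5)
Your argument is correct and is essentially the paper's own proof: the paper likewise just recalls that the uniform-in-$\eps$ boundedness of $\bar h_\eps$ from \eqref{eq:def_bar_h_eps} was already established in the proof of \eqref{eq:main_polynomial_asymptotics}, and notes that for the deterministic initial condition $\xi_{0,\eps}=x\in K_{\vk_0}(\eps)$ the correction $\Delta_\eps$ in \eqref{eq:Delta_for_P_A_n} vanishes, so no appeal to \ref{setting:scaling-limit-at-saddle-0} is needed. (Only your parenthetical bound $\bar h_\eps(x)\le \|h_{n-1,\eps}\|_\infty\,\varsigma_{n-1}(\R)$ is imprecise, since measures in $\GoodMeasures$ need not have finite total mass; the correct control is the one you also cite, via \eqref{eq:GoodMeasures} and Lemma~\ref{lem:typical_chain_exit}~\eqref{item:k_chain_Phi>|x|^p}.)
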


\begin{proof}
In our proof of \eqref{eq:main_polynomial_asymptotics} in Theorem~\ref{th:multi-saddle-escape}, for an arbitrary initial condition $\xi_{0,\e},$ we obtained~\eqref{eq:P(A_n,e)_bar_h}, an expression for $\Pp^{x_0+\e^{\alpha_0}x v_0}(A_{n,\eps})$ in
terms of a function $\bar h_\eps$ defined in \eqref{eq:def_bar_h_eps} and a small correction $\Delta_\eps$.
To finish the proof, it now suffices to recall that we showed that $\bar h_\eps$  is bounded uniformly in $\eps$ and to note that 
\eqref{eq:Delta_for_P_A_n} implies that  for $\xi_{0,\e}=x\in K_\vk(\eps)$,  $\Delta_\e=0$. 
\end{proof}

Now, we are ready to prove \eqref{eq:tau_main_th} in Theorem~\ref{th:multi-saddle-escape}. For brevity, we write
\begin{align*}
    \chi_i = 
    \begin{cases}
    \frac{\bar \alpha_{i-1}}{\lambda_i}, & i\in \{1,\dots,n\}\setminus J,
    \\
    \frac{\bar \alpha_i}{\mu_i},& i\in J.
    \end{cases}
\end{align*}
Comparing this with \eqref{eq:bar_chi_def}, we have $\bar \chi = \sum_{i=1}^n \chi_i$. We also set $\tau^0_\eps =0$. Let $\delta>0$, and we have
\begin{align*}
    &\Pp^{x_0 + \eps^{\alpha_0}xv_0}\left\{\left|\tau^n_\eps - \bar\chi l_\eps \right|>n\delta l_\eps ,\ A_{n,\eps}\right\} \le  \sum_{i=1}^n P_i,
 \end{align*}
  where
  \begin{align*}
  P_i=P_i(x)=\Pp^{x_0 + \eps^{\alpha_0}xv_0}\left\{\left|\tau^i_\eps-\tau^{i-1}_\eps - \chi_i l_\eps \right|>\delta l_\eps ,\ A_{n,\eps}\right\}.
\end{align*}
Due to~\eqref{eq:main_polynomial_asymptotics}, it suffices to show that for all $i$, $P_i=o(\eps^\theta)$ uniformly
in $x\in K_{\vk_{0}}(\eps)$ for~$\theta$ from~\eqref{eq:theta}. 
Using Lemma~\ref{lem:restricting-to-a-narrow-corridor} and the strong Markov property, for $\vk_k$'s chosen as in the proof of Theorem~\ref{th:multi-saddle-escape}~\eqref{eq:main_polynomial_asymptotics}, we have, uniformly in  $x\in K_{\vk_0}(\eps)$,
\begin{align}
   &P_i = \Pp^{x_0+\eps^\alpha x v}\left\{ D_\e\cap A_{n,\e}\cap \bigcap_{k\in \{\kstar\} \cup H \cup J} \bar A_{k,\vk_k,\e} \right\}+o_e(1)\notag
   \\
   & = \E^{x_0 + \eps^{\alpha_0}xv_0}\left[\ONE_{\bar A_{\leq i-1,(\vk),\eps}}\E\left[\ONE_{D_\eps\cap\bar A_{i,\vk_i,\eps}}\Pp\left(A_{n,\eps}\cap\bar A_{\geq  i+1,(\vk),\eps}\big|X_{\eps,\tau^i_\eps}\right)\Big| X_{\eps,\tau^\eps_{i-1}}\right]\right]+o_e(1), \label{eq:prob_tau^i-tau^i-1}
\end{align}
where
\begin{gather*}
    D_\eps = \{|\tau^i_\eps - \tau^{i-1}_\eps - \chi_i l_\eps |>\delta  l_\eps \},
    \\
    \bar A_{\leq i-1,(\vk),\eps}= \bigcap_{\substack{k\leq i-1 \\ k\in \{\kstar\} \cup H\cup J}}\bar A_{k,\vk_k,\eps},\\
    \bar A_{\geq  i+1,(\vk),\eps} = \bigcap_{\substack{k\geq i+1\\ k \in \{\kstar\} \cup H\cup J}}\bar A_{k,\vk_k,\eps}.
\end{gather*}

To estimate \eqref{eq:prob_tau^i-tau^i-1}, we consider three transitions separately: from $X_{\eps,0}$ to $X_{\eps,\tau^{i-1}_\eps}$, from $X_{\eps,\tau^{i-1}_\eps}$ to $X_{\eps,\tau^{i}_\eps}$, and from $X_{\eps,\tau^{i}_\eps}$ to $X_{\eps,\tau^{n}_\eps}$. We will apply Lemma~\ref{lem:typical_chain_exit} or Lemma~\ref{lem:local-limit-for-H} to the first part, Lemma~\ref{lem:local-limit-theorem}~\eqref{item:large-negative-values-w.l.p.} to the second part, and Lemma~\ref{lem:rare_trans_upp_bdd} to the third part. 

First, we consider the third part, i.e., the transition from $X_{\eps,\tau^{i}_\eps}$ to $X_{\eps,\tau^{n}_\eps}$. Let us first assume $i\leq n-1$.
Our goal is to apply Lemma~\ref{lem:rare_trans_upp_bdd} to the diffusion along the heteroclinic chain $(O_i, \gamma_i,O_{i+1},\ldots,\gamma_{n-1},O_n,\gamma_{n+1}, O_{n+1})$ with initial condition belonging to $I_\e=x_i+\e^{\bar \alpha_i}K_{\vk_i}(\e) v_i$. 
This initial condition is, in fact, given by $X_{\eps,\tau^i_\eps}$; it belongs to $I_\e$ on $\bar A_{i,\vk_i,\eps}$, see the definition of the latter in~\eqref{eq:def-of-corridor}.

To apply Lemma~\ref{lem:rare_trans_upp_bdd}, we need to introduce a new sequence of exponents playing the role of $(\alpha_0,\alpha_1,\ldots,\alpha_{n-1})$ in Theorem~\ref{th:multi-saddle-escape} and Lemma~\ref{lem:rare_trans_upp_bdd}, where the role of~$\alpha_0$ is played by $\bar\alpha_i$, 
and compute all the other elements of the construction of the exponent $\theta$.

So we define a new sequence~$(\tilde\alpha_j)_{j\in\{i,\ldots,n\}}$ recursively by $\tilde\alpha_{i} = \bar\alpha_{i}$ and $\tilde\alpha_{j+1} = \tilde\alpha_{j}\rho_{j+1} \wedge 1$. We set
$\tilde\kstar = \max\{j:i\leq j\leq n-1,\ \tilde \alpha_j =1\}$, then we define the set~$\tilde H$ of binding indices for this stage of evolution.
Similarly to~\eqref{eq:H'},~\eqref{eq:def_J}, \eqref{eq:bar-alpha}, we
define~${\tilde H}',\tilde J$ and a new sequence $(\bar \alpha_j)_{j=i}^{n-1}$. Using Lemma~\ref{lem:structure-of-reverse-exponents}~\eqref{item:backward-recursion1}, we  see that 
\begin{align*}
    \tilde\kstar\leq n-1,\quad \tilde H = H\cap \{i,\dots,n-1\},\quad \tilde J = J\cap \{i+1,\dots ,n\},
\end{align*}
and the new $(\bar \alpha_j)_{j=i}^{n-1}$ is simply the restriction of $(\bar \alpha_j)_{j=0}^{n-1}$
to $j\in\{i,\ldots,n-1\}$.

Therefore, applying Lemma~\ref{lem:rare_trans_upp_bdd} to this stage of evolution we see that, uniformly on the event $\bar A_{i,\vk_i,\eps}$, we have $\Pp\left(A_{n,\eps}\cap\bar A_{\geq  i+1,(\vk),\eps}|X_{\eps,\tau^i_\eps}\right) = O(\eps^{\tilde\theta})$ where
\begin{align*}
    \tilde \theta = \sum_{j\in\tilde J} \left(\frac{\bar\alpha_j}{\rho_j}-1\right)= \sum_{j\in J\cap\{i+1,\dots,n\}} \left(\frac{\bar\alpha_j}{\rho_j}-1\right).
\end{align*}
Therefore, \eqref{eq:prob_tau^i-tau^i-1} can be continued as
\begin{align*}
    P_i= O(\eps^{\tilde\theta})\E^{x_0 + \eps^{\alpha_0}xv_0}\left[\ONE_{\bar A_{\leq i-1,(\vk),\eps}}\Pp\left(D_\eps\cap \bar A_{i,\xi,\eps}\Big| X_{\eps,\tau^{i-1}_\eps}\right)\right]+o_e(1).
\end{align*}

If $i=n$, then $\tilde J$ is empty and the above bound is still valid with $\tilde \theta =0$. To see this, we simply apply $\Pp(A_{n,\eps}\cap\bar A_{\geq  n+1,\vk,\eps}|X_{\eps,\tau^n_\eps})\le1$ in \eqref{eq:prob_tau^i-tau^i-1}.

Next, 
we study the transition from $X_{\eps,\tau^{i-1}_\eps}$ to $X_{\eps,\tau^{i}_\eps}$. If $i-1 \in \{\kstar\}\cup H$, we apply Lemma~\ref{lem:local-limit-theorem}~\eqref{item:large-negative-values-w.l.p.}. If $i-1 \not\in \{\kstar\}\cup H$, we apply Lemma~\ref{lem:exit_time_is_log} for $\bar\alpha_{i-1}=1$ or Lemma~\ref{lem:exit_time_is_log_alpha<1} for $\bar\alpha_{i-1}<1$. Then, the last display implies
that, uniformly in $x\in K_{\vk_0}(\eps)$,
\begin{align}
   P_i= O(\eps^{\theta'})\P^{x_0 + \eps^{\alpha_0}xv_0}\left(\eps^{-\bar\alpha_{i-1}}\left|X_{\eps,\tau^{i-1}_\eps}-x_{i-1}\right|\leq \eps^{\delta'}, \bar A_{\leq i-1,(\vk),\eps}\right)+o_e(1), \label{eq:tran_time_special_i}
    \\
    \text{if }i-1\not\in\{\kstar\}\cup H,\text{ and }\bar \alpha_{i-1}<1;  \notag
    \\
    P_i=o(\eps^{\theta'+\delta_i})\P^{x_0 + \eps^{\alpha_0}xv_0}\left( \bar A_{\leq i-1,(\vk),\eps}\right)+o_e(1),\quad \text{otherwise}, \label{eq:tran_time_otherwise}
\end{align}
for some $\delta_i>0$. 
Here
\begin{align*}
    \theta' = \sum_{j\in J\cap\{i,i+1,\dots,n\}} \left(\frac{\bar\alpha_j}{\rho_j}-1\right)
    =
    \begin{cases}
    \tilde \theta + \frac{\bar \alpha_i}{\rho_i}-1, &\quad \text{if }i-1 \in \{\kstar\}\cup H,
    \\
    \tilde \theta ,  &\quad \text{if }i-1 \not\in \{\kstar\}\cup H.
    \end{cases}
\end{align*}

Lastly, we study the transition from $X_{\eps,0}$ to $X_{\eps,\tau^{i-1}_\eps}$. Recalling the definition of~$\theta_{i-1}$ in~\eqref{eq:theta_k_H} and that of~$\theta$ in~\eqref{eq:theta}, we obtain $\theta_{i-1} + \theta'=\theta$. 
If $i-1\leq\kstar$ (implying $\theta_{i-1}=0$ by \eqref{eq:theta_k=0,k_leq_kstar} and thus $\theta'=\theta$), we apply Lemma~\ref{lem:typical_chain_exit} to $k=i-1$. If $i-1>\kstar$, we apply Lemma~\ref{lem:local-limit-for-H}. 
Then, we obtain the following results. 

First we estimate~\eqref{eq:tran_time_special_i}. 
Under the condition $i-1\not\in\{\kstar\}\cup H$ and $\bar \alpha_{i-1}<1$, the main term in \eqref{eq:tran_time_special_i}
can be bounded from above by
\begin{align*}
    & O(\eps^\theta) \Pp\{\Phi^{i-1}_{1,\eps}(x,N_{i-1})\in |v_{i-1}|^{-1}[-\eps^{\delta'},\eps^{\delta'}] \},\quad \text{if } i-1<\kstar , 
    \\ 
    & O(\eps^\theta) \bar\nu_{i-1,\eps}(x,|v_{i-1}|^{-1}[-\eps^{\delta'},\eps^{\delta'}]),\quad \text{if } i-1>\kstar.
\end{align*}

Next, we estimate \eqref{eq:tran_time_otherwise}.
When $i-1\leq \kstar$,  we bound the probability in~\eqref{eq:tran_time_otherwise} by~$1$ and thus the main term in \eqref{eq:tran_time_otherwise} is $o(\eps^\theta)$. When $i-1> \kstar$, using Lemma~\ref{lem:local-limit-for-H} and \eqref{eq:nu_k}, we can bound the probability on the r.h.s.\ of \eqref{eq:tran_time_otherwise} by $\eps^{\theta_{i-1}}(\bar\nu_{i-1,\eps}(x,K_{\vk_{i-1}}(\eps))+o(\eps^\delta))$ for some $\delta>0$, uniformly in $x\in K_{\vk_0}(\eps)$. 
Recalling the expression for~$\bar \nu_{i-1,\eps}$ in \eqref{eq:bar_nu_k_eps}, the boundedness of $h_{i-1,\eps}$ in Lemma~\ref{lem:local-limit-for-H}~\ref{item:h_k_cvg}, the expression for~$\varsigma_{i-1,\eps}$ in Lemma~\ref{lem:local-limit-for-H}~\ref{item:gamma_k,k=underbar_k} and \ref{item:gamma_k,k>underbar_k} where $\varsigma_{i-1}$ satisfies \eqref{eq:GoodMeasures} due to $\varsigma_{i-1}\in\GoodMeasures$, we can see that $\bar\nu_{i-1,\eps}(x,K_{\vk_{i-1}}(\eps))\leq l_\eps^p$ for some $p>0$. We can conclude that the main term 
in~\eqref{eq:tran_time_otherwise} is $o(\eps^{\theta'+\theta_{i-1}} )= o(\eps^\theta)$.

Hence, in view of the tameness of $\xi_{0,\eps}$, to prove \eqref{eq:tau_main_th}, it suffices to verify:
\begin{align}
    \lim_{\eps\to 0} \Pp\{\Phi^{i-1}_{1,\eps}(\xi_{0,\eps},N_{i-1})\in |v_{i-1}|^{-1}[-\eps^{\delta'},\eps^{\delta'}] \} = 0, \label{eq:P(xi,N)_to_0}
    \\
    \lim_{\eps\to 0}\E\bar\nu_{i-1,\eps}(\xi_{0,\eps},|v_{i-1}|^{-1}[-\eps^{\delta'},\eps^{\delta'}]) =0. \label{eq:E_nu(xi)_to_0}
\end{align}

To prove  \eqref{eq:P(xi,N)_to_0}, we will  show that $\lim_{\eps\to 0}\E w_\eps(\xi_{0,\eps}) = 0$ for
\begin{gather*}
    w_\eps(x) = \Pp\{\Phi^{i-1}_{1,\eps}(x,N_{i-1})\in |v_{i-1}|^{-1}[-\eps^{\delta'},\eps^{\delta'}] \}.
\end{gather*}
Denoting 
\[
v_\eta(x)=
\Pp\{\Phi^{i-1}_{1}(x,N_{i-1})\in [-2\eta,2\eta]\},
\]
we use Lemma~\ref{lem:typical_chain_exit}~\eqref{item:cvg_Phi^k} to obtain
\begin{align}\label{eq:lim_eta_0_Phi}
    \lim_{\eta\to0}v_\eta(x)=0,
\end{align}
for every $x\in\R$ if $\alpha_0=1$ or for every $x\in \R\setminus\{0\}$ if $\alpha_0<1$.
Due to our assumption on $\xi_0$, this implies
\[
    \lim_{\eta\to0}v_\eta(\xi_0)\stackrel{a.s.}{=}0.
\]
For each $\eta\in(0,1)$, let $\zeta_\eta:\R\to[0,1]$ be a smooth bump function that is constantly $1$ on $[-\eta,\eta]$ and supported on $[-2\eta,2\eta]$. Hence, setting
\begin{align*}
    u_\eta(x) = \E \zeta_\eta\circ \Phi^{i-1}_1(x,N_{i-1}),
\end{align*}
we obtain that $u_\eta(\xi_0)$ converges to $0$ a.s.\ as $\eta\to 0$, which implies $\lim_{\eta\to 0} \E u_\eta(\xi_0)=0$.
Now, fixing any $\delta>0$, we choose $\eta$ sufficiently small so that
\begin{align}\label{eq:Eu_eta<delta}
    \E u_\eta(\xi_0)\leq \delta.
\end{align}
Setting
\begin{align*}
    u_{\eta,\eps}(x) = \E \zeta_\eta\circ \Phi^{i-1}_{1,\eps}(x,N_{i-1}),
\end{align*}
we want to estimate
\begin{align*}
    |\E u_{\eta,\eps}(\xi_{0,\eps})-\E u_\eta(\xi_0)| \leq |\E u_{\eta,\eps}(\xi_{0,\eps})-\E u_\eta(\xi_{0,\eps})| + |\E u_{\eta}(\xi_{0,\eps})-\E u_\eta(\xi_0)|.
\end{align*}
Since $u_\eta$ is bounded and continuous (due to the continuity of $\Phi^{i-1}_1$ ensured by Lemma~\ref{lem:typical_chain_exit}), and since $\xi_{0,\eps}\indistr \xi_0$, the second term on the right can be made arbitrarily small for sufficiently small $\eps$. To treat the first term, we bound it by
\begin{align*}
    \E |u_{\eta,\eps}(\xi_{0,\eps})-\E u_\eta(\xi_{0,\eps})|\ONE_{|\xi_{0,\eps}|\leq R} + 2\Pp\{|\xi_{0,\eps}|> R\}.
\end{align*}
Due to the LU convergence of $\Phi^{i-1}_{1,\eps}$ given in Lemma~\ref{lem:typical_chain_exit}~\eqref{item:cvg_Phi^k}, and the smoothness of~$\zeta_\eta$, we see that $u_{\eta,\eps}$ converges in LU to $u_{\eta}$. Hence, choosing $R$ large and then~$\eps$ sufficiently small, the above can be made arbitrarily small. In view of \eqref{eq:Eu_eta<delta}, we can conclude that $\E u_{\eta,\eps}(\xi_{0,\eps})\leq 2\delta$ 
for sufficiently small $\eps$.

Since $\delta>0$ is arbitrary and  $\E w_\eps(\xi_{0,\eps})\leq \E u_{\eta,\eps}(\xi_{0,\eps})$ for sufficiently small $\eps$, we can thus conclude \eqref{eq:P(xi,N)_to_0}.

Now, we turn to \eqref{eq:E_nu(xi)_to_0}. Using \eqref{eq:bar_nu_k,eps} and \ref{item:h_k_cvg} in Lemma~\ref{lem:local-limit-for-H}, the expectation 
in~\eqref{eq:E_nu(xi)_to_0}
is bounded by a constant times
\begin{align*}
   E_\e= \E \varsigma_{i-1,\eps}(|v_{i-1}|^{-1}[-\eps^{\delta'},\eps^{\delta'}]).
\end{align*}
If $\varsigma_{i-1,\eps}$ is given by Lemma~\ref{lem:local-limit-for-H}~\ref{item:gamma_k,k=underbar_k}, i.e., it does not depend on $\eps$ and belongs to~$\GoodMeasures$ (thus being absolutely continuous), then $\lim_{\eps\to0}E_\eps=0$. If $\varsigma_{i-1,\eps}$ is given by Lemma~\ref{lem:local-limit-for-H}~\ref{item:gamma_k,k>underbar_k}, then
\begin{align*}
E_\eps\le    \int \varsigma_{i-1}(d z)\Pp\{\Phi^{i-1}_{1,\eps}(z,N_{i-1})\in [-\eta,\eta]\}
\end{align*}
for every $\eta\in(0,1)$ and sufficiently small $\eps$. 
Due to Lemma~\ref{lem:typical_chain_exit}~\eqref{item:k_chain_Phi>|x|^p}, there is $q>0$ such that the following holds for all sufficiently large $L$: if $|z|> L$ and $|N_{i-1}|_\infty \leq |z|^q$, then $|\Phi^{i-1}_{1,\eps}(z,N_{i-1})|\geq 1>\eta$. Hence,
\begin{align*}
E_\eps\le    \int_{|z|\leq L} \varsigma_{i-1}(d z)\Pp\{\Phi^{i-1}_{1,\eps}(z,N_{i-1})\in [-\eta,\eta]\} + \int_{|z|>L} \varsigma_{i-1}(d z)\Pp\{|N_{i-1}|_\infty >|z|^q\}.
\end{align*}
Due to the Gaussian tail of $N_{i-1}$ and property \eqref{eq:GoodMeasures} enjoyed by $\varsigma_{i-1}$, the second term on the right-hand side can be made arbitrarily small by choosing $L$ sufficiently large. 
Noting that 
\begin{gather*}
    \limsup_{\eps\to 0}\Pp\{\Phi^{i-1}_{1,\eps}(z,N_{i-1})\in [-\eta,\eta]\}  \le  \Pp\{\Phi^{i-1}_{1}(z,N_{i-1})\in [-2\eta,2\eta]\},
\end{gather*}
using Fatou's lemma, \eqref{eq:lim_eta_0_Phi}, and choosing $\eta$
to be small we obtain that the first 
term can be made arbitrarily small as $\eps\to0$. This completes our proof. \epf

\section{Long-term asymptotics of diffusions near heteroclinic networks}
\label{sec:hierarchy}

In this section, we use the main result of Section~\ref{sec:long-escape-chains} to discuss --- briefly and informally, without any attempt at rigor --- the behavior of diffusions near heteroclinic networks over long periods of time.

We will work with a specific example but the picture of hierarchy of clusters and timescales that we describe holds for arbitrary planar stable heteroclinic networks. The periodic structure of our example allows to approach the question of homogenization.

Combining the vector field shown in Figure~\ref{fig:two-cells} with its own reflection we obtain a vector field on the torus  $\TT^2$ shown on Figure~\ref{fig:cellular-flow}. Once can also view this vector
field as $\Z^2$-periodic with a square fundamental domain, and lift the diffusion from~$\TT^2$ to its universal cover,  $\R^2$. 

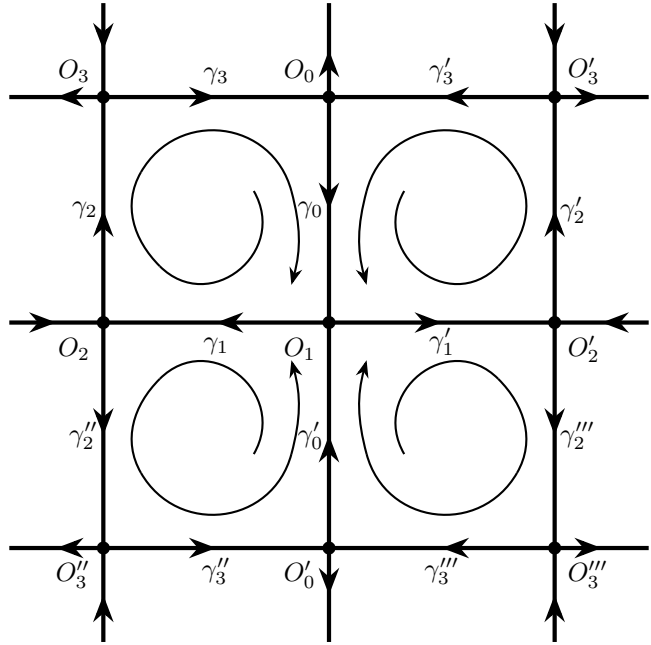
\begin{figure}[ht]
\centering
\begin{tikzpicture}

    \node (0) at (0, 0) {};
    \node (1) at (3, 0) {};
    \node (2) at (3, 3) {};
    \node (3) at (0, 3) {};
    \node (4) at (-3, 3) {};
    \node (5) at (-3, 0) {};
    \node (6) at (-3, -3) {};
    \node (7) at (0, -3) {};
    \node (8) at (3, -3) {};
    \node (9) at (4.25, 0) {};
    \node (10) at (4.25, 3) {};
    \node (11) at (3, 4.25) {};
    \node (12) at (0, 4.25) {};
    \node (13) at (-3, 4.25) {};
    \node (14) at (-4.25, 3) {};
    \node (15) at (-4.25, 0) {};
    \node (16) at (-1, 1.75) {};
    \node (18) at (-2.25, 0.75) {};
    \node (19) at (-2.5, 2) {};
    \node (21) at (-0.5, 1.75) {};
    \node (22) at (-0.5, 0.5) {};
    \node (23) at (1, 1.75) {};
    \node (24) at (2.25, 0.75) {};
    \node (25) at (2.5, 2) {};
    \node (26) at (0.5, 1.75) {};
    \node (27) at (0.5, 0.5) {};
    \node (27) at (0.5, 0.5) {};
    \node (28) at (-4.25, -3) {};
    \node (29) at (-3, -4.25) {};
    \node (30) at (0, -4.25) {};
    \node (31) at (3, -4.25) {};
    \node (32) at (4.25, -3) {};
    \node (33) at (-1, -1.75) {};
    \node (34) at (-2.25, -0.75) {};
    \node (35) at (-2.5, -2) {};
    \node (36) at (-0.5, -1.75) {};
    \node (37) at (-0.5, -0.5) {};
    \node (38) at (1, -1.75) {};
    \node (39) at (2.25, -0.75) {};
    \node (40) at (2.5, -2) {};
    \node (41) at (0.5, -1.75) {};
    \node (42) at (0.5, -0.5) {};

\foreach \n in {0, 1, 2, 3, 4, 5, 6, 7, 8}
        \node at (\n)[circle,fill,inner sep=1.75pt]{};

    \draw [ultra thick, postaction={on each segment={mid arrow=black}}] (3.center) to (0.center);
    \draw [ultra thick, postaction={on each segment={mid arrow=black}}] (0.center) to (1.center);
    \draw [ultra thick, postaction={on each segment={mid arrow=black}}] (0.center) to (5.center);
    \draw [ultra thick, postaction={on each segment={mid arrow=black}}] (5.center) to (4.center);
    \draw [ultra thick, postaction={on each segment={mid arrow=black}}] (1.center) to (2.center);
    \draw [ultra thick, postaction={on each segment={mid arrow=black}}] (4.center) to (3.center);
    \draw [ultra thick, postaction={on each segment={mid arrow=black}}] (2.center) to (3.center);
    \draw [ultra thick, postaction={on each segment={mid arrow=black}}] (2.center) to (10.center);
    \draw [ultra thick, postaction={on each segment={mid arrow=black}}] (4.center) to (14.center);
    \draw [ultra thick, postaction={on each segment={mid arrow=black}}] (9.center) to (1.center);
    \draw [ultra thick, postaction={on each segment={mid arrow=black}}] (15.center) to (5.center);
    \draw [ultra thick, postaction={on each segment={mid arrow=black}}] (5.center) to (6.center);
    \draw [ultra thick, postaction={on each segment={mid arrow=black}}] (13.center) to (4.center);
    \draw [ultra thick, postaction={on each segment={mid arrow=black}}] (7.center) to (0.center);
    \draw [ultra thick, postaction={on each segment={mid arrow=black}}] (3.center) to (12.center);
    \draw [ultra thick, postaction={on each segment={mid arrow=black}}] (11.center) to (2.center);
    \draw [ultra thick, postaction={on each segment={mid arrow=black}}] (1.center) to (8.center);
    \draw [thick] [in=-120, out=135] (18.center) to (19.center);
    \draw [thick] [in=-45, out=-60, looseness=1.50] (16.center) to (18.center);
    \draw [thick] [in=105, out=60, looseness=1.25] (19.center) to (21.center);
    \draw [thick, -Stealth] [in=75, out=-75] (21.center) to (22.center);
    \draw [thick] [in=-135, out=-120, looseness=1.50] (23.center) to (24.center);
    \draw [thick] [in=-60, out=45] (24.center) to (25.center);
    \draw [thick] [in=75, out=120, looseness=1.25] (25.center) to (26.center);
    \draw [thick, -Stealth] [in=105, out=-105] (26.center) to (27.center);
    \draw [ultra thick, postaction={on each segment={mid arrow=black}}](6.center) to (28.center);
    \draw [ultra thick, postaction={on each segment={mid arrow=black}}](6.center) to (7.center);
    \draw [ultra thick, postaction={on each segment={mid arrow=black}}](8.center) to (7.center);
    \draw [ultra thick, postaction={on each segment={mid arrow=black}}](8.center) to (32.center);
    \draw [ultra thick, postaction={on each segment={mid arrow=black}}](29.center) to (6.center);
    \draw [ultra thick, postaction={on each segment={mid arrow=black}}](7.center) to (30.center);
    \draw [ultra thick, postaction={on each segment={mid arrow=black}}](31.center) to (8.center);
    \draw [thick] [in=45, out=60, looseness=1.50] (33.center) to (34.center);
    \draw [thick] [in=120, out=-135] (34.center) to (35.center);
    \draw [thick] [in=-105, out=-60, looseness=1.25] (35.center) to (36.center);
    \draw [thick, -Stealth] [in=-75, out=75] (36.center) to (37.center);
    \draw [thick] [in=135, out=120, looseness=1.50] (38.center) to (39.center);
    \draw [thick] [in=60, out=-45] (39.center) to (40.center);
    \draw [thick] [in=-75, out=-120, looseness=1.25] (40.center) to (41.center);
    \draw [thick, -Stealth] [in=-105, out=105] (41.center) to (42.center);

    \node [left] at (-0.05, 3.35) {$O_0$};
    \node [left] at (-3.05, 3.35) {$O_3$};
    \node [right] at (3.05, 3.35) {$O'_3$};
    \node [left] at (-0.05, -0.35) {$O_1$};
    \node [left] at (-3.05, -0.35) {$O_2$};
    \node [right] at (3.05, -0.35) {$O'_2$};
    \node [left] at (-3.05, -3.35) {$O''_3$};
    \node [left] at (-0.05, -3.35) {$O'_0$};
    \node [right] at (3.05, -3.35) {$O'''_3$};

    \node [left] at (0.05, 1.5) {$\gamma_0$};
    \node [left] at (-2.95, 1.5) {$\gamma_2$};
    \node [right] at (2.95, 1.5) {$\gamma'_2$};
    \node [above] at (1.5, 3.05) {$\gamma'_3$};
    \node [above] at (-1.5, 3.05) {$\gamma_3$};
    \node [above] at (1.5, -0.55) {$\gamma'_1$};
    \node [above] at (-1.5, -0.55) {$\gamma_1$};
    \node [above] at (1.5, -3.6) {$\gamma'''_3$};
    \node [above] at (-1.5, -3.6) {$\gamma''_3$};
    \node [left] at (-2.95, -1.5) {$\gamma''_2$};
    \node [left] at (0.05, -1.5) {$\gamma'_0$};
    \node [right] at (2.95, -1.5) {$\gamma'''_2$};
\end{tikzpicture}

\caption{An example on $\TT^2$ or, by $\Z^2$-periodicity, on $\R^2$}
{\label{fig:cellular-flow}}
\end{figure}

On the torus, points $O_0,O'_0$ are identified, and so are  $O_2,O'_2$, and $O_3,O'_3, O^{\prime\prime}_3,O^{\prime\prime\prime}_3$.

We already know that under the assumptions on stability indices $\rho_0,\rho_1,\rho_2,\rho_3$ made in Section~\ref{sec:typical}, for small $\e$, the diffusion started near the connection $\gamma_0$ 
stays within the union of two cells on both sides of $\gamma_0$ at least for times comparable with $ l_\eps $, circulating near the boundaries of these two cells and making occasional transitions between them upon passing the neighborhood of $O_0$. The exit distribution upon passing $O_0$ is symmetric Gaussian, scaling as $\e^1$, and the next distributions from $O_1$, $O_2$ (or $O'_2$), $O_3$ (or $O'_3$) scale as $\e^{\rho_1}$,  
$\e^{\rho_1\rho_2}$, $\e^{\rho_1\rho_2\rho_3}$, respectively, and the scaling limit distributions are one-sided.

However, the elliptic diffusion on the torus must have an invariant distribution absolutely continuous with respect to the Lebesgue measure, so the process must eventually visit arbitrarily small neighborhoods of all points of the torus escaping from the pair of cells and realizing a rare transition or a sequence of those, on time scales longer than logarithmic.

Theorem~\ref{th:multi-saddle-escape} explains how cell escapes get realized. If we start at distance of order~$\e$ from $\gamma_0$, then it is easy to see that $\bar\alpha_1=\bar\alpha_2=\bar\alpha_3=1$, so the escape through $\gamma_1$ (or $\gamma'_1$), $\gamma_2$ (or $\gamma'_2$),   $\gamma_3$ (or $\gamma'_3$)  happens with probability of order $\e^{\frac{1}{\rho_1}-1}$, 
$\e^{\frac{1}{\rho_1}+\frac{1}{\rho_2}-2}$, $\e^{\frac{1}{\rho_1}+\frac{1}{\rho_2}+\frac{1}{\rho_2}-3}$, respectively. If the escape attempt is unsuccessful, the process typically returns to a neighborhood of 
the connection $\gamma_0$, passing it at a distance of order $\e$. To see a successful escape one needs to make about  $\e^{-(\frac{1}{\rho_1}-1)}$, 
$\e^{-(\frac{1}{\rho_1}+\frac{1}{\rho_2}-2)}$, $\e^{-(\frac{1}{\rho_1}+\frac{1}{\rho_2}+\frac{1}{\rho_2}-3)}$ attempts, respectively. Each of them takes time of order $ l_\eps $. Therefore, by a time $t(\e)$
satisfying
\begin{equation}
\label{eq:times-transitions-type-0}
 l_\eps \ll t(\e) \ll \e^{-(\frac{1}{\rho_1}-1)} l_\eps ,
\end{equation}
it is likely for the diffusion to visit small neighborhoods of all the saddle points multiple times but
it is unlikely to see any transitions between cells except crossing $\gamma_0$ and $\gamma'_0$ (let us call them transitions of type~$0$). Moreover,  one can easily compute the limit of  the empirical measure of the process
\[
\nu_{t(\e)}(A)=\frac{1}{t(\e)}\int_{0}^{t(\e)} \ONE_{X_{\e,s}\in A} ds.
\]
Since during one cycle, $X_\e$ spends time of order $\frac{\alpha_{i-1}}{\lambda_i} l_\eps $ near a saddle $O_i$ and the time it takes to travel between those saddles is of order of constant, we obtain that the limit is given by
\begin{equation}
\label{eq:limit-emp-measure}
    m_0\delta_{O_0}+ m_1\delta_{O_1}+ m_2\delta_{O_2}+ m_3\delta_{O_3}.
\end{equation}
Here
\begin{align*}
    m_0= \frac{\rho_1\rho_2\rho_3}{\lambda_0 Z},\quad m_1= \frac{1}{\lambda_1 Z},\quad  
    m_2= \frac{\rho_1}{\lambda_2 Z},\quad 
    m_3= \frac{\rho_1\rho_2}{\lambda_3 Z},
\end{align*}
with $Z$ being the normalizing constant
\[
    Z=  \frac{1}{\lambda_1}+\frac{\rho_1}{\lambda_2}+ \frac{\rho_1\rho_2}{\lambda_3}+\frac{\rho_1\rho_2\rho_3}{\lambda_0}.
\] 
By a time $t(\e)$ satisfying
\begin{equation}
\label{eq:times-transitions-type-1}
\e^{-(\frac{1}{\rho_1}-1)}   l_\eps \ll t(\e) \ll \e^{-(\frac{1}{\rho_1}+\frac{1}{\rho_2}-2)}   l_\eps ,
\end{equation}
it is likely to see a growing number of transitions through connections $\gamma_1,\gamma'_1$ (let us call them transitions of type~$1$) but no other new transitions. So the process circulates within the pair of cells for a long time making only transitions of type~$0$, then, at a random time, via a transition of type $1$, escapes to the neighboring pair of cells, where the same process begins anew, etc. For these times $t(\e)$,
the process is still confined, with high probability, to the 4-cell cluster composed of two 2-cell clusters. At longer time scales though, for $t(\e)$ satisfying
 \begin{equation}
\label{eq:times-transitions-type-2}
 \e^{-(\frac{1}{\rho_1}+\frac{1}{\rho_2}-2)}   l_\eps \ll t(\e) \ll  \e^{-(\frac{1}{\rho_1}+\frac{1}{\rho_2}+\frac{1}{\rho_2}-3)}   l_\eps ,
\end{equation}
we will see multiple transitions through $\gamma_2,\gamma'_2,\gamma^{\prime\prime}_2,\gamma^{\prime\prime\prime}_2$ (transitions of type~$2$) but still, typically, no transitions through  $\gamma_3,\gamma'_3,\gamma^{\prime\prime}_3,\gamma^{\prime\prime\prime}_3$ (transitions of type $3$). Between those transitions of type $2$, there will be multiple transitions of type~$1$, and between those there will be multiple transitions of type~$0$.   If one views the diffusion as a process on $\R^2$,  then the entire infinite strip composed of 4-cell complexes separated by heteroclinic connections of type~$2$, is accessible for the diffusion for these times.
 
For times $t(\e)$ satisfying
\begin{equation}
\label{eq:times-transitions-type-3}
  t(\e) \ge  \e^{-(\frac{1}{\rho_1}+\frac{1}{\rho_2}+\frac{1}{\rho_2}-3)}   l_\eps ,
\end{equation}
transitions of type $3$ finally become typical, making all cells in the entire $\R^2$ accessible for the diffusion.  

In effect, we have the following hierarchy of clusters: singular cells, 2-cell complexes, 4-cell complexes, strips of cells, the entire plane. Each cluster is equipped with a range of time scales
on which the diffusion is typically confined to it.
One can deduce from Theorem~\ref{th:multi-saddle-escape} that such a picture, in fact, emerges for a broad class of planar heteroclinic networks under a stability assumption.
In our example, viewed as a diffusion on the torus, due to the symmetry of the model, one can actually claim that for times belonging to any of the scales described by any of the relations~\eqref{eq:times-transitions-type-0}, \eqref{eq:times-transitions-type-1}, \eqref{eq:times-transitions-type-2}, \eqref{eq:times-transitions-type-3}, the limit of the empirical distribution as $\e\to0$ is the same and given by~\eqref{eq:limit-emp-measure}. In particular, it also gives the limit of the invariant measure for the Markov semigroup
associated with SDE~\eqref{eq:basic-sde}. In general, though,  the limiting empirical distribution for each cluster (or timescale) of the hierarchy  can be computed by averaging the limiting distributions associated with the subordinate clusters.

The hierarchical structure that we are describing is reminiscent of the hierarchy of  cycles in the Freidlin--Wentzell theory of metastability. One key difference though is that in the metastability theory, transitions happen at times exponential in $\e^{-2}$ whereas in our picture the transition times are polynomial.

Metastable cycling was studied in \cite{FK-metastable:MR3652517} in the more abstract setting of a Markov chain on a graph where probabilities of various transitions 
depend on a small parameter $\e$ and 
are of different order of magnitude. It was shown under mild regularity assumptions that there is a sequence of time scales
\[
  1\equiv T_0(\e)\ll T_1(\e)\ll\dots\ll T_N(\e)\ll T_{N+1}(\e) \equiv \infty
\] 
and a family of measures $(\mu_i^j)$ called  metastable distributions such that
if~$t(\e)$  satisfies $T_i(\e)\ll t(\e) \ll T_{i+1}(\e)$ for some $i$, then the process equilibrates to one of~$\mu_i^j$ over time~$t(\e)$. Here $i$ enumerates timescales and $j$ enumerates clusters, i.e., elements  of the partition of the state space associated with a particular timescale.

Although our setting is described by 
the construction of~\cite{FK-metastable:MR3652517}  only approximately, we still can draw a connection. The timescales are given by $T_i(\e) = \e^{-\theta_i} l_\eps $ for $i \ge 1$ and an increasing sequence of exponents~$\theta_i$ determined by the network geometry and contraction/expansion rates near all saddles. 
Each saddle point produces four graph vertices, one per incident cell. Edges of the graph correspond to heteroclinic connections.
The diffusion spends a logarithmic in $\e^{-1}$ time near each saddle point, so one can say that for times  $t(\e)$ satisfying  $T_0\equiv 1\ll t(\e)\ll T_1(\e)= l_\eps =\e^{0}  l_\eps $ (i.e., $\theta_1=0$), the empirical measure equilibrates to the delta measure at one of the saddles. The next level clusters are composed of  vertices/saddles on the boundary of cells that are  mutually accessible in logarithmic times. At time scales between
$T_1(\e)= l_\eps $ and $T_2(\e)=\e^{-\theta_2} l_\eps $, the diffusion equilibrates to a mixture of the delta measures at those saddle points. For longer time scales, 
similarly to our cellular flow example,
more and more transitions become 
available, so
more massive clusters emerge 
and the metastable distributions at each level are mixtures of metastable distributions a level below. 
Imposing additional recurrence conditions, one can use the top level of the hierarchy to claim  convergence of stationary distributions of the diffusion to a limiting measure and describe the mixing properties.

\medskip

In general, diffusions near planar noisy heteroclinic networks can exhibit a variety of behaviors. 
In our relatively simple cellular flow example, the vector field and the heteroclinic network are $\Z^2$-periodic, so at the time scales~\eqref{eq:times-transitions-type-2} and ~\eqref{eq:times-transitions-type-3}, one can approximate
the diffusion with a  symmetric random walk on $\Z^1$ and $\Z^2$ respectively (just recording the $\Z^2$ coordinates of the cell occupied by the process), obtaining Gaussian approximations.
One can conjecture a  Central Limit Theorem for the regime~\eqref{eq:times-transitions-type-2}: there is a constant $c^2>0$ (the effective diffusivity) such that
\[
\frac{X_{\e,t(\e)}^1}{\displaystyle\left(\frac{t(\e)}{\e^{-(\frac{1}{\rho_1}+\frac{1}{\rho_2}-2)}   l_\eps }\right)^{1/2}}\indistr \Nc(0,c^2),
\]
and, moreover, for each $T>0$, the process 
\[
Z_{\eps,s}=\frac{X_{\e,s t(\e)}^1}{\displaystyle\left(\frac{t(\e)}{\e^{-(\frac{1}{\rho_1}+\frac{1}{\rho_2}-2)}   l_\eps }\right)^{1/2}}, \quad s\in[0,T],
\]
converges in distribution to a Brownian motion on $[0,T]$. In the regime~\eqref{eq:times-transitions-type-3}, 
a Gaussian scaling limit also should hold, albeit two components must scale differently:
defining the diagonal scaling matrix $D_{\e}$ by
\[
D_\e = \frac{l_\e}{t(\e)}\textrm{diag}\left(\e^{\frac{1}{\rho_1}+\frac{1}{\rho_2}-2}, \e^{\frac{1}{\rho_1}+\frac{1}{\rho_2}+\frac{1}{\rho_3}-3}\right),
\]  
we conjecture that $\sqrt{D_\e}X_{\e, t(\e)}$ converges in distribution to a centered Gaussian vector with independent components.

These statements can also be reformulated in terms of homogenization for a Fokker--Planck PDE with small diffusion but it seems that it is harder to obtain such a result by PDE methods.

We expect similar but perhaps more sophisticated scaling limits to hold for more complex heteroclinic networks. 

 An important feature of the example considered in this section is the stability of the network. Due to
 the relation $\rho_0\rho_1\rho_2\rho_3 >1$, one typically has to wait for the first departure
from a small neighborhood of the network for a very long time. In general,
although the process travels over large scales only when close to the network, one also has to take into account the time spent away from the network. This leads to a subordinated Brownian limit  in the Hamiltonian dynamics case where the network is given by a level set of the Hamiltonian, see~\cite{HKPG:MR3573288} 
and~\cite{HIKNPG:MR3773377}.

For general systems with multiple attractors, departures from the heteroclinic network towards other attractors may also be an intrinsic part of the picture thus giving rise to longer (Kramers--Freidlin--Wentzel) characteristic time scales.  This is related to the concept of excitability, see, e.g.,~\cite{AP:MR3566895}.

\section{Proofs in rectified coordinates}
\label{sec:rectified}
In Sections~\ref{sec:rectified}--\ref{section:density_est}, we give rigorous proofs of all lemmas that were studied heuristically in preceding sections.

Using assumption~\ref{setting:conjugacy} 
in a neighborhood of a saddle point,  
changing coordinates by the conjugacy $f$ introduced in~\ref{setting:conjugacy}, we can begin our program with studying the process $Y_{\e,t}=f(X_{\e,t})$, in a setting  that is simpler than general, where the domain is a small rectangle containing the saddle point at the origin and the drift is linear up to a $O(\eps^2)$ correction. We collect useful preliminary results on processes related to $Y_{\eps,t}$ in Sections~\ref{sec:basic_estimates} and~\ref{sec:est_N}. 
We will describe the setting in more detail in Section~\ref{sec:rect-setting}. In this simpler setting, in Section~\ref{sec:proofs_of_lemmas_rectified}, we will 
use local limit theorems from Sections~\ref{sec:Gaussian-approx} and~\ref{sec:LLT} to 
give rigorous
proofs of the lemmas that were only proved heuristically in Sections~\ref{sec:2saddles} and~\ref{sec:long-escape-chains}. We will prove them in full generality in Section~\ref{sec:original-coords}.

\subsection{Basic estimates}\label{sec:basic_estimates}
Let $\lambda >0>-\mu$ and let $F^1_1,F^1_2,F^2_1,F^2_2,G^1,G^2:\R^2\to\R$ be continuous and bounded.
We assume that the matrix $F(x)=(F^k_l(x))_{k,l=1}^2$ is uniformly elliptic (see condition~\ref{setting:general}).
For each $\e>0$,  we consider the SDE
\begin{align}\label{eq:def_SDE_near_a_saddle_point}
\begin{split}
        dY^1_t& = \lam Y^1_tdt + \eps F^1_l(Y_t)dW^l_t+\eps^2G^1(Y_t)dt,\\
    dY^2_t &= -\mu Y^2_tdt + \eps F^2_l(Y_t)dW^l_t+\eps^2G^2(Y_t)dt,
\end{split}
\end{align}
where~$(W_t,\mathcal{F}_t)$ is a standard  $2$-dimensional Wiener process, and the Einstein convention of summation over repeated indices is used. In Section~\ref{sec:rect-setting} we show that $Y_{\eps,t}=f(X_{\e,t})$ solves an equation of this form with coefficients 
$F$ and~$G$ that we compute.

Starting with this section, we will often suppress the dependence of various processes on $\e$, e.g., $Y_t=Y_{\e,t}$ in~\eqref{eq:def_SDE_near_a_saddle_point}. 

The joint distribution of $((Y_t)_{t\ge0}, (W_t)_{t\ge 0})$ given that $Y_0=y\in\R^2$
will be denoted by $\Pp^y$.  We also follow the convention of Section~\ref{sec:notation1}
denoting various probability measures by $\Pp$ if the joint distribution of r.v.'s involved is unambiguously defined. The expectation w.r.t.\ $\Pp^y$ is denoted by $\E^y$.

Let us define
\begin{gather}\label{eq:def_V_M_U_N}
\begin{aligned}
    V^1_t& = \int_0^te^{-\lam s} G^1(Y_s)ds, & V^2_t &= \int_0^te^{\mu s} G^2(Y_s)ds,\\
    M^1_t&=\int_0^te^{-\lam s}F^1_l(Y_s)dW^l_s,& M^2_t&=\int_0^te^{\mu s}F^2_l(Y_s)dW^l_s,\\
    U^i_t&=M^i_t+\eps V^i_t, \quad i=1,2,\\
S_t&=e^{-\mu t} M_t^2=e^{-\mu t}\int_0^te^{\mu s}F^2_l(Y_s)dW^l_s,\\
N_t&=N^2_t =e^{-\mu t}U^2_t= S_t+\eps e^{-\mu t}V^2_t.
\end{aligned}
\end{gather}
This notation and Duhamel's formula allow to write the solutions of~\eqref{eq:def_SDE_near_a_saddle_point}:
\begin{align}\label{eq:SDE_near_a_saddle_point_after_duhamel1}
        Y^1_t&=e^{\lam t}(Y^1_0+\eps U^1_t)= e^{\lam t}(Y^1_0+\eps M^1_t+\eps^2 V^1_t ),\\
        \label{eq:SDE_near_a_saddle_point_after_duhamel2}
        Y^2_t&=e^{-\mu t}Y^2_0+\eps N_t=e^{-\mu t}(Y^2_0+\eps U^2_t)=e^{-\mu t}Y^2_0+\eps S_t +\eps^2 e^{-\mu t} V^2_t.
\end{align}

In this section we prove various useful estimates on processes introduced in~\eqref{eq:def_V_M_U_N}.

Let us first state 
the following well-known exponential martingale inequality (see, e.g., Problem~12.10 in~\cite{Bass}):

\begin{lemma}\label{lem:exp-marting-ineq}
Let $M_t$ be a continuous local martingale satisfying $M_0=0$, with quadratic variation process $\qd{M}_t$. Then, for any $a,b>0$,
\[
\Pp\left\{\sup_{t\geq 0}|M_t|\geq a;\; \qd{M}_{\infty}\leq b\right\}\leq 2e^{-a^2/(2b)}.
\]
\end{lemma}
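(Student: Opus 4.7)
The plan is to use the standard exponential supermartingale argument. For any $\theta\in\R$, define
\[
E^\theta_t = \exp\!\Big(\theta M_t - \tfrac{\theta^2}{2}\qd{M}_t\Big).
\]
By It\^o's formula, $E^\theta$ is a nonnegative continuous local martingale, hence a supermartingale, so $\E[E^\theta_{\tau\wedge n}]\le 1$ for every $n\in\N$ and every stopping time~$\tau$. The argument is completely standard; the only care needed is that the stopping time we want to use may be infinite on part of the event of interest, so one cannot apply optional stopping directly.

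I would treat the two sides separately. For the upper tail, fix $\theta>0$ and let $\tau=\inf\{t\ge 0:\ M_t\ge a\}$, with $\inf\emptyset=\infty$. On the event
\[
A_+:=\left\{\sup_{t\ge 0}M_t\ge a,\ \qd{M}_\infty\le b\right\},
\]
continuity of $M$ gives $\tau<\infty$ and $M_\tau=a$, while $\qd{M}_\tau\le \qd{M}_\infty\le b$, so
\[
E^\theta_\tau \ge \exp\!\Big(\theta a - \tfrac{\theta^2}{2}b\Big) \quad \text{on } A_+.
\]
Applying the supermartingale inequality with the bounded stopping time $\tau\wedge n$, taking $n\to\infty$, and using Fatou's lemma together with the a.s.\ convergence $E^\theta_{\tau\wedge n}\ONE_{A_+}\to E^\theta_\tau \ONE_{A_+}$, one obtains
\[
\exp\!\Big(\theta a - \tfrac{\theta^2}{2}b\Big)\,\Pp(A_+) \le \E\!\left[E^\theta_\tau \ONE_{A_+}\right] \le 1.
\]
Optimizing in $\theta>0$ with the choice $\theta=a/b$ yields $\Pp(A_+)\le e^{-a^2/(2b)}$.

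For the lower tail, apply the same argument to the continuous local martingale $-M$, which has $(-M)_0=0$ and the same quadratic variation $\qd{M}$, obtaining
\[
\Pp\!\left\{\sup_{t\ge 0}(-M_t)\ge a,\ \qd{M}_\infty\le b\right\}\le e^{-a^2/(2b)}.
\]
A union bound over the two halves, using $\{\sup_t|M_t|\ge a\}=\{\sup_t M_t\ge a\}\cup\{\sup_t(-M_t)\ge a\}$, produces the factor of $2$ and completes the proof. The only slightly non-routine step is the truncation-plus-Fatou argument that legitimizes optional stopping at the possibly infinite $\tau$; beyond this there is no real obstacle.
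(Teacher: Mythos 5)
Your proof is correct and is precisely the standard exponential-supermartingale argument that the paper has in mind: it gives no proof of its own, citing only Problem~12.10 of Bass's book, and your truncation-plus-Fatou handling of the possibly infinite stopping time, the optimization $\theta=a/b$, and the union bound over $\pm M$ are all exactly the canonical route. Nothing is missing.
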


\begin{lemma}\label{lem:estimates-for-terms}  Processes introduced in~\eqref{eq:def_V_M_U_N} satisfy the following: 
\leavevmode
\begin{enumerate}
\item \label{item:tail-of_M_1}
There is a constant $C>0$ such for all $\e>0$, $r>0$,  $y\in\R^2$, the process~$M^1$, defined in~\eqref{eq:def_V_M_U_N}, satisfies
\[
\Pp^y\left\{\sup_{t\ge 0}|M_t^1|\ge r\right\} \le 2e^{-r^2/C}.
\]
In particular,  $\sup_{t\ge 0}|M^1_t|$ are tame under $\Pp^y$, uniformly  over $y\in\R^2$.
\item   \label{item:V-bounded} There is a constant $C>0$ such that for all $\e>0$ and all $y\in\R^2$,
\[
\sup_{t\ge 0}|V_t^1|<C,\quad \Pp^y\text{\rm -a.s.}
\]
\[
\sup_{t\ge T}|V_t^1-V_T^1|<Ce^{-\lambda T},\quad T\geq 0,\quad \Pp^y\text{\rm -a.s.}
\]
and 
\[\sup_{t\ge 0} e^{-\mu t} |V_t^2|<C, \quad \Pp^y\text{\rm -a.s.}\]
\item \label{item:tail-of_U_1}
There are constants $C,c>0$ such for all $\e\in(0,1)$, $r>c$,  $y\in\R^2$, the process~$U^1$, defined in~\eqref{eq:def_V_M_U_N}, satisfies
\[
\Pp^y\left\{\sup_{t\ge 0}|U_t^1|\ge r\right\} \le 2e^{-r^2/C}. 
\]
In particular,  under $\Pp^y$, $\sup_{t\ge 0}|U_t^1|$ is  tame  uniformly in $y\in\R^2$,    and, uniformly in $y\in\R^2$, $\e\in(0,1)$, has bounded moments of all orders.

\item \label{item:sup_of-S-over-large-intervals}
There is $C>0$ such that for all $\Delta>0$, all $\e>0$, all $r>0$, all $y\in\R^2$.
\[
\sup_{T\ge 0}\Pp^y\left\{\sup_{t\in[T,T+\Delta]}|S_t|\ge r\right\}\le 4([\Delta]+1) e^{-r^2/C}.
\]
\item  \label{item:sup_of-N-over-large-intervals}

There are $C,c>0$ such that for all $\Delta>0$, all $\e>0$, all $r>c$, all $y\in\R^2$,
\begin{equation}
\label{eq:tail-of-N}
\sup_{T\ge 0}\Pp^y\left\{\sup_{t\in[T,T+\Delta]}|N_t|\ge r\right\}\le 4([\Delta]+1) e^{-r^2/C}.
\end{equation}
In particular, for every $p\geq 1$ and every $\Delta\geq0$, there is a constant $C>0$ such that the following holds for every $y\in\R^2$, every $T\geq 0$ and every $\eps\in(0,1)$:
\begin{align}\label{eq:moment-N}
    \E^y\sup_{t\in[T,T+\Delta]}|N_t|^p\leq C.
\end{align}

\item \label{item:lemma:N^2_sigma}
For each $\beta>0$, there are $C,c>0$ such that
\begin{align*}
   \Probx{y}{\sup_{t\in[0,\zeta]}|N_t|> r} \leq C\left(\beta l_\eps +1\right)e^{-r^2/C} + \Probx{y}{\zeta \geq \beta  l_\eps  }
\end{align*}
holds for every stopping time $\zeta$, every $\eps\in(0,1)$, every $r>c$, and every $y\in\R^2$.

\item
\label{item:localization-of-times} 
For any $\Delta>0$,
there is $C>0$ such that if 
deterministic times $(t_\e)_{\e>0}$, stopping times $(\tau_\e)_{\e>0}$, events $(B_\e)_{\e>0}$, and parameter $\e_0>0$ satisfy
\begin{equation}
\label{eq:range-of-stopping-time}
\Pp^y(B_\e\cap \{\tau_\e\notin [t_\e,t_\e+\Delta]\})= 0,\quad \e\in(0,\e_0),\ y\in I_\e,
\end{equation}
then
the following estimate holds:
\begin{equation*}
\Pp^y(B_\e\cap\{|S_{\tau_\e}|>r\})\le C e^{-r^2/C},\quad  \e\in(0,\e_0),\ r>0,\ y\in I_\e.
\end{equation*}
\end{enumerate}
\end{lemma}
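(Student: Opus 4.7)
My plan is to derive all seven parts from a single ingredient—the exponential martingale inequality of Lemma~\ref{lem:exp-marting-ineq}—applied to stochastic integrals whose quadratic variations are uniformly bounded thanks to the boundedness of $F$ and $G$. The key point throughout is that $\eps\in(0,1)$ and the coefficients are bounded, which will give all constants uniform in $y\in\R^2$ and $\eps$.

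For (1), the quadratic variation of $M^1$ is $\int_0^t e^{-2\lam s}F^1_l(Y_s)F^1_l(Y_s)\,ds\le \|F\|_\infty^2/(2\lam)$, so Lemma~\ref{lem:exp-marting-ineq} applied with $b=\|F\|_\infty^2/(2\lam)$ immediately yields the Gaussian tail bound. The three estimates in (2) are purely deterministic and follow from integrating the explicit definition of $V^1,V^2$ against the bounded integrand $G$. Part (3) follows by combining (1) with the deterministic bound $\eps|V^1|\le C$ from (2); the moment control follows by integrating the Gaussian tail.

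The most delicate part is (4), since $S_t=e^{-\mu t}M^2_t$ is not itself a martingale. My plan is to partition $[T,T+\Delta]$ into at most $[\Delta]+1$ unit subintervals and, on each $[T+k,T+k+1]$, use the semigroup identity $S_t=e^{-\mu(t-T-k)}S_{T+k}+e^{-\mu(t-T-k)}\tilde M^{T+k}_t$, where $\tilde M^{T+k}_t:=\int_{T+k}^t e^{\mu(s-T-k)}F^2_l(Y_s)\,dW^l_s$ is a martingale on $[T+k,T+k+1]$ with quadratic variation at most $\|F\|_\infty^2(e^{2\mu}-1)/(2\mu)$. For the endpoint value $S_{T+k}$, I will apply Lemma~\ref{lem:exp-marting-ineq} to the auxiliary martingale $t\mapsto e^{-\mu(T+k)}\int_0^t e^{\mu s}F^2_l(Y_s)\,dW^l_s$, whose total quadratic variation is bounded uniformly by $\|F\|_\infty^2/(2\mu)$. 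Applying Lemma~\ref{lem:exp-marting-ineq} to both pieces gives Gaussian tails, and summing over the at most $[\Delta]+1$ subintervals produces the estimate in (4).

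Parts (5)--(7) then follow by short reductions. For (5), the decomposition $N_t=S_t+\eps e^{-\mu t}V^2_t$ combined with (2) shows that for $r$ larger than some constant $c$ we have $\{|N_t|\ge r\}\subset\{|S_t|\ge r/2\}$, so (4) gives \eqref{eq:tail-of-N}, and \eqref{eq:moment-N} follows by integrating the Gaussian tail in $r$. Part (6) is obtained by splitting on $\{\zeta<\beta l_\eps\}$ versus $\{\zeta\ge\beta l_\eps\}$ and applying (5) on the first event with $\Delta=\beta l_\eps$. Finally, part (7) is essentially a rewording of (4): the hypothesis \eqref{eq:range-of-stopping-time} forces $\tau_\eps\in[t_\eps,t_\eps+\Delta]$ on $B_\eps$, so $B_\eps\cap\{|S_{\tau_\eps}|>r\}\subset\{\sup_{t\in[t_\eps,t_\eps+\Delta]}|S_t|>r\}$, and (4) produces the bound. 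The main obstacle throughout is ensuring uniformity of the constants in $y$ and $\eps$, which is automatic once one verifies that the quadratic variation bounds depend only on $\|F\|_\infty,\lam,\mu$, and the fixed length $\Delta$.
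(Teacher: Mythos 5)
Your proposal is correct and follows essentially the same route as the paper: every part is reduced to the exponential martingale inequality via uniformly bounded quadratic variations, with the same unit-interval decomposition of $S$ for part (4) and the same reductions for parts (5)--(7). The only point to tighten is that the auxiliary martingale $t\mapsto e^{-\mu(T+k)}\int_0^t e^{\mu s}F^2_l(Y_s)\,dW^l_s$ has infinite quadratic variation at $t=\infty$, so you must stop it at $t=T+k$ (where its quadratic variation is indeed $\le \|F\|_\infty^2/(2\mu)$) before invoking Lemma~\ref{lem:exp-marting-ineq}.
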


\bpf Part~\ref{item:tail-of_M_1} is directly implied by the exponential martingale inequality of Lemma~\ref{lem:exp-marting-ineq} and the boundedness of $F$.
Part~\ref{item:V-bounded} follows from the boundedness of~$G$. 
Part~\ref{item:tail-of_U_1} follows from parts~\ref{item:tail-of_M_1} and~\ref{item:V-bounded}.
To prove part~\ref{item:sup_of-S-over-large-intervals},
we write
\begin{align*}
\sup_{T\ge 0}\Pp^y\left\{\sup_{t\in[T,T+\Delta]}|S_t|\ge r\right\}
&\le \sup_{T\ge 0}\sum_{k\in\N\cup\{0\}: k \le \Delta }\Pp^y\left\{\sup_{t\in[T+k,T+k+1]}|S_t|\ge r\right\}
\\
&\le ([\Delta]+1)\sup_{u\ge 0} \Pp^y\left\{\sup_{t\in[u,u+1]}|S_t|\ge r\right\}.
\end{align*} 
\begin{multline*}
\Pp^y\left\{\sup_{t\in[u,u+1]}|S_t|\ge r\right\}\le \Pp^y\left\{e^{-\mu u} |M_u^2|\ge r/2 \right\} \\
+\Pp^y\left\{e^{-\mu u} \sup_{t\in[u,u+1]}\left|\int_u^te^{\mu s} F^2(Y_s)dW_s\right|\ge r/2\right\},
\end{multline*}
and each term on the right-hand side may be estimated by $2e^{-r^2/C}$ for some $C$ and all $u,r,\e$ due to the exponential martingale inequality and
boundedness of $F$, so our claim follows.

Part~\ref{item:sup_of-N-over-large-intervals} follows from parts~\ref{item:sup_of-S-over-large-intervals}, \ref{item:V-bounded} (we integrate by parts with respect to $r$ to obtain~\eqref{eq:moment-N}.)

To prove part~\ref{item:lemma:N^2_sigma}, we apply \eqref{eq:tail-of-N} to each term in the sum on the r.-h.s of
\begin{align*}
    &\Probx{y}{\sup_{t\in[0,\zeta]}|N_t|> r}
    \leq \Probx{y}{\zeta \geq \beta l_\eps }+\sum_{n=0}^{\lfloor \beta l_\eps \rfloor}\Probx{y}{\sup_{t\in[n,n+1]}|N_t|> r}.
\end{align*}

To prove part~\ref{item:localization-of-times} , we 
use~\eqref{eq:range-of-stopping-time} and write
\begin{align*} 
\Pp^y(B_\e\cap\{|S_{\tau_\eps}|>r\})=&\Pp^y\left(B_\e\cap\{|S_{\tau_\e}|>r \}\cap  \{\tau_\e\in [t_\e,t_\e+\Delta]\}\right)
\\ \le& \Pp^y\left\{\sup_{t\in[t_\e,t_\e+\Delta]} |S_{t}|>r\right\},
\end{align*}
so our claim follows from part~\ref{item:sup_of-S-over-large-intervals}.
\epf

Let us give a useful identity for the (non-Markov) process $S_t$ defined in~\eqref{eq:def_V_M_U_N}. It can be viewed as a  generalization of the Ornstein--Uhlenbeck semigroup property.

Due to the strong uniqueness of solutions of SDEs,  for $\Fc_0$-measurable $Y_0$, we can write 
\begin{equation}
\label{eq:OU-solution}
S_t=S_t(Y_0,dW_\cdot), 
\end{equation}
where by $dW_\cdot$ we mean
the collection of increments $(W_t-W_0)_{t\ge 0}$.
Let $\Theta^t$ denote the time shift of the Wiener path:
\[
\Theta^t W_s=W_{t+s}-W_t,\quad s\ge 0,
\]
so that for any stopping time $\tau$, the random  shift $\Theta^{\tau}W$ is also a Wiener process.
\begin{lemma}
If stopping times $\tau,\tau'$ satisfy $\tau'\ge \tau\ge 0$, then, with probability $1$,
\begin{align}
\label{eq:OU-property-stopping}
S_{\tau'}(Y_0,dW_\cdot)=e^{-\mu (\tau'-\tau)}S_{\tau}(Y_0,dW_\cdot)+S_{\tau'-\tau}(Y_{\tau},d(\Theta^{\tau}W)_\cdot).
\end{align}
\end{lemma}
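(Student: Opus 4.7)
The plan is to decompose the integral defining $S_{\tau'}$ at the stopping time $\tau$ and then identify the two pieces with the two terms on the right-hand side of \eqref{eq:OU-property-stopping} via a time-shift change of variables for stochastic integrals, combined with strong uniqueness of solutions to \eqref{eq:def_SDE_near_a_saddle_point}.

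First I would write, directly from the definition of $S_t$ in \eqref{eq:def_V_M_U_N},
\[
S_{\tau'} = e^{-\mu\tau'}\int_0^{\tau'} e^{\mu s}F^2_l(Y_s)\,dW^l_s
= e^{-\mu(\tau'-\tau)}S_\tau + e^{-\mu\tau'}\int_\tau^{\tau'} e^{\mu s}F^2_l(Y_s)\,dW^l_s,
\]
which gives the first term on the right-hand side of \eqref{eq:OU-property-stopping} for free. So the content of the lemma lies entirely in showing that the remaining integral equals $S_{\tau'-\tau}(Y_\tau, d(\Theta^\tau W)_\cdot)$.

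Next I would perform the change of variable $u = s - \tau$ in the stochastic integral. Since $\tau$ is a stopping time, $(\Theta^\tau W)_u = W_{\tau+u} - W_\tau$ is a Brownian motion with respect to the shifted filtration $(\mathcal{F}_{\tau+u})_{u\ge 0}$, and stochastic integrals transform in the expected manner: almost surely,
\[
\int_\tau^{\tau'} e^{\mu s}F^2_l(Y_s)\,dW^l_s = \int_0^{\tau'-\tau} e^{\mu(u+\tau)} F^2_l(Y_{\tau+u})\,d(\Theta^\tau W)^l_u.
\]
Multiplying by $e^{-\mu\tau'}$ and pulling the factor $e^{\mu\tau}$ inside yields
\[
e^{-\mu\tau'}\int_\tau^{\tau'} e^{\mu s}F^2_l(Y_s)\,dW^l_s = e^{-\mu(\tau'-\tau)}\int_0^{\tau'-\tau} e^{\mu u}F^2_l(Y_{\tau+u})\,d(\Theta^\tau W)^l_u.
\]

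Finally, I would invoke strong uniqueness for \eqref{eq:def_SDE_near_a_saddle_point}, which was used to make the notation \eqref{eq:OU-solution} meaningful in the first place. By the strong Markov property, the process $u \mapsto Y_{\tau+u}$ is, a.s., the unique strong solution of \eqref{eq:def_SDE_near_a_saddle_point} driven by the Brownian motion $\Theta^\tau W$ with the $\mathcal{F}_\tau$-measurable initial condition $Y_\tau$. Substituting this identification into the right-hand side of the last display gives exactly $S_{\tau'-\tau}(Y_\tau, d(\Theta^\tau W)_\cdot)$, completing the proof.

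The main (and essentially only) technical point is the justification of the change of variable $u = s-\tau$ inside the It\^o integral when $\tau$ is a random stopping time rather than a deterministic time; I would handle this by first proving the identity for elementary (simple) integrands, where the statement is immediate from the definition of the It\^o integral as a finite sum, and then passing to the limit using the $L^2$ isometry together with the fact that, on $\{\tau \le t\}$, the shift $\Theta^\tau$ preserves quadratic-variation norms. Everything else is algebraic manipulation and the invocation of strong uniqueness.
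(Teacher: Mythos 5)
Your argument is correct in outline, but it takes a genuinely different route from the paper, and the difference is instructive. You decompose the It\^o integral at $\tau$ and then perform the random time-shift $u=s-\tau$ inside the stochastic integral, reducing everything to the identity $\int_\tau^{\tau'}H_s\,dW_s=\int_0^{\tau'-\tau}H_{\tau+u}\,d(\Theta^\tau W)_u$; you correctly flag this as the only real technical point. The paper deliberately avoids that manipulation (it even remarks that the direct computation works only for deterministic times and that "for arbitrary stopping times, we need to be more careful"): it augments the state by adjoining $S$ and $\widetilde W$ as extra coordinates of an autonomous SDE system $(Y,S,\widetilde W)$, so that the desired identity becomes the flow/cocycle property of strong solutions at a stopping time combined with Duhamel's formula applied twice --- no stochastic integral is ever re-indexed at a random time. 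What your route buys is directness and transparency; what the paper's route buys is that the only probabilistic input is the strong Markov property of the augmented diffusion, which is already established machinery. The weakest link in your proposal is the claimed justification of the shift identity by simple integrands: with a random left endpoint $\tau$ the partition points are themselves random, so the identity is not "immediate from the definition" for elementary integrands, and the $L^2$-isometry limit passage needs either a localization/optional-stopping argument or the strong Markov property of $W$ --- at which point you are essentially reconstructing the paper's argument. Also note that you should check that $\tau'-\tau$ is a stopping time for the shifted filtration $(\Fc_{\tau+u})_{u\ge0}$ and that $u\mapsto Y_{\tau+u}$ is adapted to it, so that the shifted integral is well defined; both facts hold but deserve a sentence. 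With those points filled in, your proof is complete.
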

\bpf
For deterministic times $\tau$ and $\tau'$,~\eqref{eq:OU-property-stopping} is a result of a direct computation which is a simple version of the reasoning below. For arbitrary stopping times, we need to be more careful. Let us introduce two auxiliary SDE's,
\begin{align}
\label{eq:SDE-for-S}
dS_t&=-\mu S_t dt +F^2(Y_t)dW_t,\\
\label{eq:SDE-for-W}
d\widetilde W_t&=dW_t.
\end{align}

The system of autonomous SDEs  
\eqref{eq:def_SDE_near_a_saddle_point},\eqref{eq:SDE-for-S},\eqref{eq:SDE-for-W} 
generates unique strong solutions, a strong Markov semigroup, and  an adapted flow of solution maps
\[
(Y,S,\widetilde W)_t((Y,S,\widetilde W)_0, dW.).
\]
By Duhamel's principle, we have that for any random initial  conditions $(Y,S,W)_0$,  with probability~$1$,
\begin{equation}
\label{eq:duhamel-for-S}
S_t((Y,S, \widetilde W)_0, dW_\cdot)=e^{-\mu t}S_0+ e^{-\mu t}\int_0^t e^{\mu s} F^2\left(Y_s\left((Y,S,\widetilde W )_0,dW_\cdot\right)\right)dW_s, \quad t\ge0.
\end{equation}
Comparing this to \eqref{eq:OU-solution}, we see that, with probability~$1$,
\begin{equation}
\label{eq:two-def-of-S}
S_t((Y_0,0, 0), dW_\cdot) =S_t(Y_0,dW_\cdot),\quad t\ge 0.
\end{equation}

Combining~\eqref{eq:duhamel-for-S} with the strong Markov property, we obtain that if $\tau$ is a stopping time, then with probability 1, we have, for all $t\ge0$,
\begin{align*}
&S_{\tau+t}\left((Y,S,\widetilde W)_0,\, dW_\cdot\right) 
=S_t\left((Y,S,\widetilde W)_{\tau}\left((Y,S,\widetilde W)_0,\,dW_\cdot\right) ,\,d(\Theta^\tau W)_\cdot\right)
\\
&\qquad = e^{-\mu t}S_\tau\left((Y,S,\widetilde W)_0,\,dW_\cdot\right)
\\
&\qquad + e^{-\mu t}\int_0^t e^{\mu s} F\left(Y_s\left((Y,S,\widetilde W)_{\tau}\left((Y,S,\widetilde W)_{0},  dW_\cdot\right),\,d(\Theta^\tau W)_\cdot\right)\right)d\Theta^\tau W_s.
\end{align*}
Now, plugging in the values $t=\tau'-\tau$, $S_0=\widetilde W_0=0$ and using~\eqref{eq:two-def-of-S} to interpret both sides of this identity, we obtain~\eqref{eq:OU-property-stopping}. \epf

\subsection{Estimating the stopped process \texorpdfstring{$N$}{N} }\label{sec:est_N}

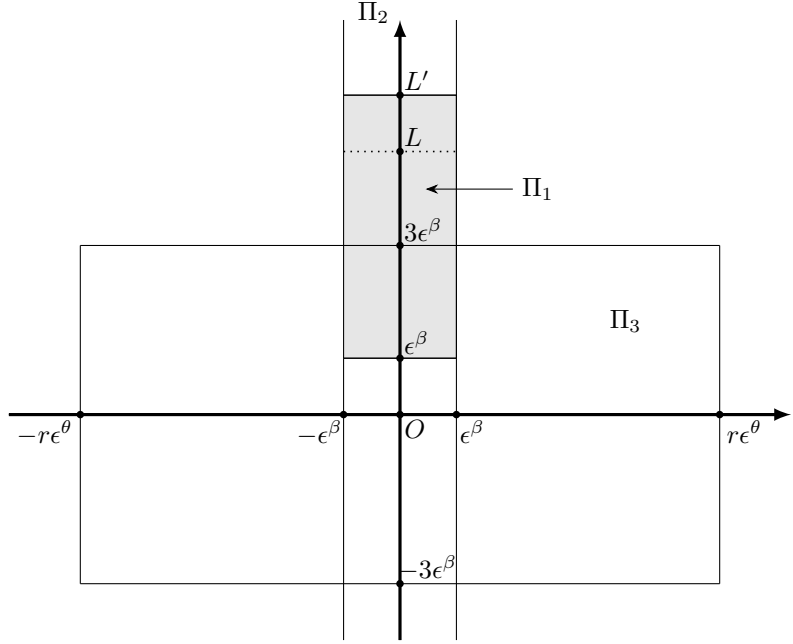
\begin{figure}[ht]
\centering
\begin{tikzpicture}

    \node (0) at (-5.2, 0) {};
    \node (1) at (5.2, 0) {};
    \node (2) at (0, 0) {};
    \node (3) at (-4.25, 0) {};
    \node (4) at (4.25, 0) {};
    \node (5) at (-4.25, 2.25) {};
    \node (6) at (4.25, 2.25) {};
    \node (7) at (-4.25, -2.25) {};
    \node (8) at (4.25, -2.25) {};
    \node (9) at (0, 5.25) {};
    \node (10) at (0, -3) {};
    \node (11) at (-0.75, 0.75) {};
    \node (12) at (0.75, 0.75) {};
    \node (13) at (-0.75, 5.25) {};
    \node (14) at (0.75, 5.25) {};
    \node (15) at (-0.75, -3) {};
    \node (16) at (0.75, -3) {};
    \node (17) at (-0.75, 4.25) {};
    \node (18) at (0.75, 4.25) {};
    \node (19) at (-0.75, 3.5) {};
    \node (20) at (0.75, 3.5) {};
    \node (21) at (0, 0.75) {};
    \node (22) at (0, 3.5) {};
    \node (23) at (0, 4.25) {};
    \node (24) at (0, 2.25) {};
    \node (25) at (0.75, 0) {};
    \node (26) at (-0.75, 0) {};
    \node (27) at (0, -2.25) {};

    \draw[fill=gray!20]    (-0.75, 0.75) -- (0.75, 0.75) -- (0.75, 4.25) -- (-0.75, 4.25)  ;

    \node [below] at (4.58, 0.05) {$r\epsilon^\theta$};
    \node [below] at (-4.72, 0.05) {$-r\epsilon^\theta$};
    \node [below] at (0.96, 0.05) {$\epsilon^\beta$};
    \node [below] at (-1.07, 0.05) {$-\epsilon^\beta$};
    \node [right] at (-0.07, 0.96) {$\epsilon^\beta$};
    \node [right] at (-0.07, 2.46) {$3\epsilon^\beta$};
    \node [right] at (-0.15, -2.04) {$-3\epsilon^\beta$};
    \node [right] at (-0.07, 3.7) {$L$};
    \node [right] at (-0.07, 4.45) {$L'$};
    \node [below] at (0.2,0.05) {$O$};
    \node at (3, 1.25) {$\Pi_3$};
    \node at (-0.35, 5.35) {$\Pi_2$};
    \node [right] at (1.5, 3) {$\Pi_1$};

    \draw [-Stealth] (1.5, 3) to (0.34, 3);

\foreach \n in {3, 2, 4, 21, 22, 23, 24, 25, 26, 27}
        \node at (\n)[circle,fill,inner sep=1pt]{};

    \draw (0.center) to (3.center);
    \draw (3.center) to (2.center);
    \draw (2.center) to (4.center);
    \draw [very thick, -latex] (0.center) to (1.center);
    \draw (10.center) to (2.center);
    \draw [very thick, -latex] (10.center) to (9.center);
    \draw (3.center) to (5.center);
    \draw (5.center) to (6.center);
    \draw (6.center) to (4.center);
    \draw (4.center) to (8.center);
    \draw (8.center) to (7.center);
    \draw (7.center) to (3.center);
    \draw (11.center) to (13.center);
    \draw (12.center) to (14.center);
    \draw (11.center) to (12.center);
    \draw (15.center) to (11.center);
    \draw (16.center) to (12.center);
    \draw (17.center) to (18.center);
    \draw [semithick, dotted] (19.center) to (20.center);

\end{tikzpicture}

\caption{Narrow channels around the invariant manifolds.}
\label{fig:cross-channels}
\end{figure}

Let $L,L'>0$ satisfy $L'>L$.
Recall the definition of $\rho$ in \eqref{eq:rho}.
Throughout this section we assume that constants $\alpha$, $\rho$, $\theta$ satisfy
\begin{align}
\label{eq:condition_on_alpha_rho_gamma}
\alpha\in(0,1],\qquad 0\leq \theta& <  \frac{1}{2}\wedge \frac{\alpha}  {1+\rho^{-1}}.
\end{align}
We also fix $r>0$ and define
\begin{align}\label{eq:tau_gamma}
    \tau &= \tau_{r,\theta,\eps} = \inf\left\{t>0:Y_t\not\in [-r\eps^\theta, r\eps^\theta]\times [-L', L']\right\},\\
\label{eq:exit-from-strip}
    \zeta&= \zeta_{r,\theta,\eps} = \inf\left\{t>0:Y_t\not\in [-r\eps^\theta, r\eps^\theta]\times\R \right\}.
\end{align}

 We will be later interested in a specific case  of the exit time for $Y$ from
\begin{equation}
\label{eq:main_rectangle}
\fR=[-R,R]\times[-L',L']
\end{equation}
for some $R>0$.   This time is denoted by $\tau_\fR = \tau_{\fR,\eps}$ and satisfies
\begin{align}\label{eq:tau_fR}
    \tau_\fR = \tau_{R,0,\eps}.
\end{align}

For $\theta=0$, $r=R$, we have $\tau=\tau_\fR$.

Let us fix an arbitrary $\beta$  satisfying
\begin{equation}
\label{eq:condition_on_beta1}
\theta<\beta < \frac{1}{2}\wedge \frac{\alpha}  {1+\rho^{-1}}.
\end{equation}

For small $\e$, we introduce domains 
\begin{align*}
\Pi_{1}&=\Pi_{1,\e}=[-\e^\beta,\e^\beta]\times[\e^\beta,L'], \\
\Pi_{2}&=\Pi_{2,\e}=[-\e^\beta,\e^\beta]\times\R, \\
\Pi_{3}&=\Pi_{3,\e}=[-r\eps^\theta,r\eps^\theta]\times[-3\e^\beta,3\e^\beta], \\
\end{align*}
shown in Figure~\ref{fig:cross-channels}.
Let us also define $D_\e= \Pi_{1,\e}\cup \Pi_{3,\e}$ and 
\begin{align}\label{eq:bar_tau_r,theta,eps}
    \bar \tau=\bar \tau_{r,\theta,\e}=\inf\{t\ge0:\  Y_t\notin D_\e\}.
\end{align}
and observe that $\bar\tau \leq\tau\leq\zeta$. Defining (note that $\alpha>\beta$)
\begin{equation*}
I_\e=[-\e^\alpha,\e^\alpha]\times\{L\},
\end{equation*} 
we can state the main result of this section:

\begin{lemma}\label{lem:second-coord-at-exit}
Let $\tau,\zeta,\bar\tau$ be given in \eqref{eq:tau_gamma}, \eqref{eq:exit-from-strip}, \eqref{eq:bar_tau_r,theta,eps}, respectively.
If  \eqref{eq:condition_on_alpha_rho_gamma} and \eqref{eq:condition_on_beta1} hold,  then there are constants $C,\e_0>0$ and a family of events $(B_\e)_{\e>0}$ such that
the following holds:

\begin{enumerate}
\item \label{item:1_lem:second-coord-at-exit}
\begin{equation*}
\sup_{y\in I_\e}\Pp^y(B_\e^c)=o_e(1);
\end{equation*}
on $B_\e$, we have
\[
Y_{\bar\tau}\in \{-r\eps^\theta,r\eps^\theta\}\times[-3\e^\beta,3\e^\beta]
\]
(i.e., the exit happens through the lateral sides of $\Pi_3$),
and 
\begin{equation}
\label{eq:lem:second-coord-at-exit-for-N}
\sup_{y\in I_\e}\Pp^y\left(B_\e\cap\{|N_{\bar\tau}|>z\}\right)\le C e^{-z^2/C}, \quad  \e\in(0,\e_0),\ z>0.
\end{equation}
In particular, $N_{\bar \tau}$ is  tame under $\Pp^y$, uniformly in $y\in I_\e$.
\item \label{item:bartau-tame}$\bar\tau$ is tame under $\Pp^y$, uniformly in $y\in I_\e$.
\item  \label{item:bartau=tau}
On $B_\e$, $\tau =\bar\tau=\zeta$, so $\tau$, $N_\tau$, $\zeta$, $N_\zeta$ are also tame under $\Pp^y$, uniformly in $y\in I_\e$. 
\end{enumerate} 
\end{lemma}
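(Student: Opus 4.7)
On a suitably chosen good event $B_\e$, the trajectory $(Y_{\e,t})$ first descends through the vertical channel $\Pi_1$ until the stable coordinate $Y^2=e^{-\mu t}L+\e N_t$ drops below $\e^\beta$, and then traverses the horizontal channel $\Pi_3$ until the unstable coordinate $Y^1=e^{\lambda t}(y^1+\e U^1_t)$ reaches $|Y^1|=r\e^\theta$.  I will use the Duhamel representations~\eqref{eq:SDE_near_a_saddle_point_after_duhamel1}--\eqref{eq:SDE_near_a_saddle_point_after_duhamel2} together with the tameness and sub-Gaussian estimates from Lemma~\ref{lem:estimates-for-terms}.

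\textbf{Good event, descent, and exit geometry.}  Fix $\vk>1$, set $T_\e^\star=l_\e^\vk$, and let $B_\e$ be the intersection of $\{\sup_{t\le T_\e^\star}|U^1_t|\le l_\e^\vk\}$, $\{\sup_{t\le T_\e^\star}|N_t|\le l_\e^\vk\}$, and $\{\bar\tau\le T_\e^\star\}$.  Parts~\eqref{item:tail-of_U_1} and~\eqref{item:sup_of-N-over-large-intervals} of Lemma~\ref{lem:estimates-for-terms} give the first two events probability $1-o_e(1)$ uniformly in $y$; for the third, the Duhamel formula gives $\{\bar\tau>T_\e^\star\}\subset\{|y^1+\e U^1_{T_\e^\star}|\le r\e^\theta e^{-\lambda T_\e^\star}\}$, and a uniform density bound $\le C/\e$ on $\e U^1_T$ (via the Malliavin estimates in Section~\ref{section:density_est}, under the uniform ellipticity of~$F$) bounds this by $Cr\e^{\theta-1}e^{-\lambda l_\e^\vk}=o_e(1)$, proving part~\eqref{item:bartau-tame}.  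Let $\sigma_1=\inf\{t:Y^2_t\le\e^\beta\}$.  Duhamel gives $\sigma_1\le\beta\mu^{-1}l_\e+O(1)$ on $B_\e$, hence $e^{\lambda\sigma_1}\le C\e^{-\beta/\rho}$; the strict exponent gaps $\alpha-\beta/\rho-\beta>0$ and $1-\beta/\rho-\beta>0$ from~\eqref{eq:condition_on_beta1} then yield
\[
\sup_{t\le\sigma_1}|Y^1_t|\le C\bigl(\e^{\alpha-\beta/\rho}+\e^{1-\beta/\rho}l_\e^\vk\bigr)\le\tfrac12\e^\beta,
\]
so the trajectory stays in $\Pi_1$ during phase~1.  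For $t\in[\sigma_1,\bar\tau]$, monotonicity of $e^{-\mu t}L$, the definition of $\sigma_1$, and tameness of~$N$ give $|Y^2_t|\le\e^\beta+2\e l_\e^\vk<3\e^\beta$, so $Y$ stays in $\Pi_3$ until $|Y^1|$ reaches $r\e^\theta$; since $\beta>\theta$, these lateral sides of $\Pi_3$ are also the boundary points of the rectangle and the strip first hit, hence $\tau=\bar\tau=\zeta$ on $B_\e$, giving part~\eqref{item:bartau=tau} and the geometric claim of~\eqref{item:1_lem:second-coord-at-exit}.

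\textbf{Sub-Gaussian tail of $N_{\bar\tau}$.}  By~\eqref{eq:def_V_M_U_N} and Lemma~\ref{lem:estimates-for-terms}~\eqref{item:V-bounded}, $|N_{\bar\tau}-S_{\bar\tau}|\le C\e$, so it remains to show $\sup_{y\in I_\e}\Pp^y(B_\e\cap\{|S_{\bar\tau}|>z\})\le Ce^{-z^2/C}$ with $C$ independent of~$\e$.  This is the main technical obstacle, because the fluctuations of $\bar\tau$ due to $\log|y^1+\e U^1_\infty|$ are of polylog order on $B_\e$ and cannot be suppressed to $O(1)$ with $o_e(1)$ failure probability; in particular Lemma~\ref{lem:estimates-for-terms}~\eqref{item:localization-of-times} does not apply verbatim and the polylog prefactor in~\eqref{item:lemma:N^2_sigma} is incompatible with a uniform constant.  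My plan is to exploit the OU contraction of $S$ by iterating the semigroup identity~\eqref{eq:OU-property-stopping} at a fixed scale $\Delta>0$: setting $K=\lfloor\bar\tau/\Delta\rfloor$ and $\sigma_j=\bar\tau-(K-j)\Delta$, iteration gives
\[
S_{\bar\tau}=e^{-\mu K\Delta}\,S_{\sigma_0}+\sum_{k=0}^{K-1}e^{-\mu(K-1-k)\Delta}\,W_k,
\]
where each $W_k=S_\Delta(Y_{\sigma_k},d\Theta^{\sigma_k}W)$ is the value at time $\Delta$ of a freshly-restarted OU process, satisfies $\E[W_k\mid\mathcal{F}_{\sigma_k}]=0$, and has a conditional sub-Gaussian tail with constant depending only on $\mu,\|F^2\|_\infty,\Delta$ by Lemma~\ref{lem:estimates-for-terms}~\eqref{item:sup_of-S-over-large-intervals}.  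The geometric weights satisfy $\sum_k e^{-2\mu(K-1-k)\Delta}\le(1-e^{-2\mu\Delta})^{-1}$ independently of~$K$, so a conditional sub-Gaussian concentration applied to the martingale $M_j=\sum_{k<j}e^{-\mu(K-1-k)\Delta}W_k$ produces a uniform $Ce^{-z^2/C}$ bound on the sum; combined with the Gaussian tail of $|S_{\sigma_0}|$ at the stopping time $\sigma_0\in[0,\Delta)$ from the same lemma, this yields~\eqref{eq:lem:second-coord-at-exit-for-N}.
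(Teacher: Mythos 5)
Your geometric analysis (descent through $\Pi_1$, then traversal of $\Pi_3$, with the exponent gaps $\alpha-\beta/\rho-\beta>0$ and $1-\beta/\rho-\beta>0$ from \eqref{eq:condition_on_beta1} keeping the trajectory inside the channels) matches the paper's Lemma~\ref{lem:controlling-OU} and is fine. The genuine gap is in your treatment of the sub-Gaussian tail \eqref{eq:lem:second-coord-at-exit-for-N}, which you correctly identify as the hard step but do not resolve. Your iterated decomposition uses the ``times'' $\sigma_j=\bar\tau-(K-j)\Delta$: these are not stopping times (they anticipate $\bar\tau$), $K$ is not $\Fc_{\sigma_k}$-measurable, and the weights $e^{-\mu(K-1-k)\Delta}$ attached to $W_k$ depend on the future. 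Consequently the claims $\E[W_k\mid\Fc_{\sigma_k}]=0$, the conditional sub-Gaussianity of $W_k$ via the strong Markov property, and the martingale structure of $M_j$ all fail as stated; identity \eqref{eq:OU-property-stopping} is only proved (and only true in this form) for ordered stopping times. Conditioning on $K=m$ and union-bounding over $m\le l_\e^{\vk}/\Delta$ reintroduces exactly the polylog prefactor you were trying to kill. The paper's resolution is different: it splits at the genuine stopping times $\tau^1$ (exit from $\Pi_1$), $\tilde\tau^2$ (exit from the small square $\Pi_2$), $\tilde\tau^3$ (exit from $\Pi_3$), applies \eqref{eq:OU-property-stopping} twice, and observes that the only phase whose duration is merely polylog-bounded (the middle one) enters the sum multiplied by the contraction factor $e^{-\mu(\tilde\tau^3-\tilde\tau^2)}\le\e^{\mu(\beta-\theta)/(2\lambda)}$, thanks to the deterministic lower bound on the last phase's duration (Lemma~\ref{lem:exit-from_Pi_3_cannot-be-short}). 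This converts the bound $C\,(l_\e^{\vk}+1)\exp\{-z^2\e^{-\mu(\beta-\theta)/\lambda}/C\}$ into a clean $Ce^{-z^2/C}$ uniformly in $\e$. Some version of this contraction-absorbs-the-prefactor idea is needed; your scheme does not supply a substitute.

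A secondary problem is your proof of part~\eqref{item:bartau-tame}: you invoke a density bound of order $C/\e$ on $\e U^1_{T}$ at $T=l_\e^{\vk}$ via Section~\ref{section:density_est}, but those Malliavin estimates are established only for deterministic times $T\le\bar\theta l_\e$ (see \eqref{eq:T(eps)_range}), precisely so that $e^{|\lambda|T}$ stays polynomially bounded; they do not cover $T=l_\e^{\vk}$ with $\vk>1$, and applying them at $T=\bar\theta l_\e$ instead yields only a polynomially small bound, not $o_e(1)$. The paper avoids this by an elementary Markov-chain iteration for the exit time from the small square (Lemmas~\ref{lem:Markov-chain-exit1} and~\ref{lem:exit-from_Pi-cannot-take-too-long}), which gives $\sup_y\Pp^y\{\bar\tau>l_\e^{\vk}\}=o_e(1)$ directly.
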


Most of the conclusions of this lemma do not depend on a particular choice of $\beta$ satisfying \eqref{eq:condition_on_beta1}. Also, if  \eqref{eq:condition_on_alpha_rho_gamma} holds, then one can make 
$\alpha$ smaller still retaining this condition. Thus, 
recalling the definition of $K_\vk(\eps)$ in \eqref{eq:def_K(eps)},
we obtain the following immediate consequence of Lemma~\ref{lem:second-coord-at-exit}:

\begin{lemma}\label{lem:N_tau_tail}
Let \eqref{eq:condition_on_alpha_rho_gamma} hold, $\vk>0$, and $\tau,\zeta$ be given in \eqref{eq:tau_gamma}, \eqref{eq:exit-from-strip}, respectively. Then, under $\Pp^y$,
$\tau=\zeta$  w.h.p.\ uniformly in $y\in (\eps^\alpha K_\vk(\eps))\times \{L\}$  and
\begin{align*}
    \sup_{y\in (\eps^\alpha K_\vk(\eps))\times \{L\}}\Pp^y\{|N_{\tau}|>l_\eps\}=o_e(1),
\end{align*}
\end{lemma}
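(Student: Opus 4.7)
The plan is to derive Lemma~\ref{lem:N_tau_tail} as an immediate corollary of Lemma~\ref{lem:second-coord-at-exit} by a minor adjustment of the scaling exponent that absorbs the logarithmic thickening of the initial condition set.

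First I would reconcile the two sets of initial conditions. Condition~\eqref{eq:condition_on_alpha_rho_gamma} is an open constraint on $\alpha$, so one may choose $\alpha' \in (\theta(1+\rho^{-1}),\,\alpha)$, which still satisfies the same inequalities. Since $\eps^{\alpha} l_\eps^{\vk} = o(\eps^{\alpha'})$, for all sufficiently small $\eps$,
\[
(\eps^{\alpha} K_\vk(\eps))\times\{L\} \subset [-\eps^{\alpha'},\eps^{\alpha'}]\times\{L\}.
\]
Hence it is enough to establish both conclusions uniformly over $y$ in the rectangle on the right, which is precisely the set $I_\eps$ appearing in Lemma~\ref{lem:second-coord-at-exit} with $\alpha$ replaced by $\alpha'$.

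Next I would invoke Lemma~\ref{lem:second-coord-at-exit} (with exponent $\alpha'$) to produce a family of events $(B_\eps)$ and a constant $C>0$ such that $\sup_y \Pp^y(B_\eps^c) = o_e(1)$, the identities $\tau = \bar\tau = \zeta$ hold on $B_\eps$ by part~\ref{item:bartau=tau}, and $\Pp^y(B_\eps\cap\{|N_{\bar\tau}|>z\}) \le C e^{-z^2/C}$ uniformly in $y$, $z>0$, and small $\eps$ by part~\ref{item:1_lem:second-coord-at-exit}. The w.h.p.\ identification $\tau=\zeta$ follows at once from the first two of these. For the tail estimate, decompose
\[
\Pp^y\{|N_\tau|>l_\eps\} \le \Pp^y(B_\eps^c) + \Pp^y\bigl(B_\eps\cap \{|N_{\bar\tau}|>l_\eps\}\bigr) \le o_e(1) + C e^{-l_\eps^2/C},
\]
using $N_\tau = N_{\bar\tau}$ on $B_\eps$. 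Since $C e^{-l_\eps^2/C} = C \eps^{l_\eps/C}$ decays faster than any power of $\eps$, the right-hand side is $o_e(1)$ uniformly in the starting point, yielding the claim.

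Since all the substantive work has already been carried out in Lemma~\ref{lem:second-coord-at-exit}, there is no real obstacle here; the only mild subtlety is the passage from $\alpha$ to $\alpha'$, which is available precisely because the upper bound in~\eqref{eq:condition_on_alpha_rho_gamma} is strict, allowing a small logarithmic factor to be absorbed into a polynomial one.
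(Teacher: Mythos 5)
Your proposal is correct and is essentially the paper's own argument: the paper derives Lemma~\ref{lem:N_tau_tail} as an immediate consequence of Lemma~\ref{lem:second-coord-at-exit} by exactly the observation that the strict inequality in \eqref{eq:condition_on_alpha_rho_gamma} allows one to decrease $\alpha$ slightly so that $(\eps^\alpha K_\vk(\eps))\times\{L\}$ is absorbed into the set $I_\eps$ for the smaller exponent. Your explicit choice of $\alpha'\in(\theta(1+\rho^{-1}),\alpha)$ and the final tail decomposition over $B_\eps$ and $B_\eps^c$ are exactly what the paper intends.
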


\medskip

To prove Lemma~\ref{lem:second-coord-at-exit},
we need an auxiliary result. 
Our goal is to split the evolution until $\bar\tau$ into three parts.
Let us define
\begin{align*}
\notag
\tau^1=&\inf\{t\ge 0:\ Y\notin\Pi_{1}\},&
\\
\tilde \tau^2=&\inf\{t\ge \tau^1 :\ Y\notin \Pi_2\},& \tau^2=&\inf\{t\ge 0:\ Y\notin \Pi_2\},
\\
\notag
\tilde\tau^3=&\inf\{t\ge \tau^2:\ Y\notin \Pi_3\},& \tau^3=&\inf\{t\ge 0:\ Y\notin \Pi_3\},
\end{align*}
(all these times are a.s.-finite due to the ellipticity of the noise) and
\begin{align*}
I^1_\eps&= [-\e^\beta,\e^\beta]\times\{\e^\beta\},\\
I^2_\eps&=I^2_{\eps,+}\cup I^2_{\eps,-}=\left(\{\e^\beta\}\times [-2\e^\beta,2\e^\beta]\right) \cup \left(\{-\e^\beta\}\times [-2\e^\beta,2\e^\beta]\right).
\end{align*}

\smallskip

\begin{lemma} \label{lem:controlling-OU}
Under the setting in Lemma~\ref{lem:N_tau_tail}, the following hold.
\begin{enumerate}

\item \label{part:along-stable}
There are constants $C,\e_0>0$ 
and a family of events $(B_\e)_{\e>0}$  such that 
\[
\sup_{y\in I_\e}  \Pp^y(B_\e^c)=o_e(1),
\]
\[
\sup_{y\in I_\e} \Pp(B_\e\cap\{|S_{\tau^1}|>z\})\le C e^{-z^2/C},\quad \e<\e_0,\ z>0,
\]
\[
\Pp^y(Y_{\tau^1}\in I^1_\e |\ B_\e )=1,\quad y\in I_\e.
\]
Also, the stopping time $\tau^1$ is tame under $\Pp^y$, uniformly in $y\in I_\e$.

\item \label{part:small-square-neighb} There are constants $C,\e_0>0$  
and a family of events $(B_\e)_{\e>0}$
  such that 
 \[
\sup_{y\in I^1_\e}  \Pp(B_\e^c)=o_e(1),
 \]
\[
\sup_{y\in I^1_\e}\Pp^y\left(B_\e\cap\{e^{-\mu\tau_3}|S_{\tau^2}|>z\}\right)\le C e^{-z^2/C},\quad  \e<\e_0,\ z>0,
\]
\[
\Pp^y(Y_{\tau^2}\in I^2_\e|\ B_\e)=1,\quad y\in I^1_\e.
\]
Also, the stopping time $\tau^2$ is tame under $\Pp^y$, uniformly in $y\in I^1_\e$.
\item  \label{part:along-unstable}There are constants $C,\e_0>0$  
and a family of events $(B_\e)_{\e>0}$
such that 
\[
\sup_{y\in I^2_\e}  \Pp^y(B_\e^c)=o_e(1),
 \]
\[
\sup_{y\in I^2_\e}\Pp^y\left(B_\e\cap\{|S_{\tau^3}|>z\}\right)\le C e^{-z^2/C},\quad  \e<\e_0,\ z>0,
\]
\[
\Pp^y\left(Y_{\tau^3}\in \{r\e^\theta,-r\e^\theta\}\times [-3\e^\beta,3\e^\beta] \ |\ B_\e \right)=1,\quad y\in I^2_\e.
\]
Also, the stopping time $\tau^3$ is tame under $\Pp^y$, uniformly in $y\in I^2_\e$.
\end{enumerate}
\end{lemma}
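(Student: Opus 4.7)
The three parts share a common template. In each one I will define a good event $B_\eps$ on which $\sup_{t\leq T_\eps}|U^1_t|\leq l_\eps^\kappa$ and $\sup_{t\leq T_\eps}|N_t|\leq l_\eps^\kappa$ for a deterministic $T_\eps$ of order $l_\eps$; the bound $\Pp^y(B_\eps^c)=o_e(1)$, uniform in the starting point, then follows at once from Lemma~\ref{lem:estimates-for-terms}~\ref{item:tail-of_U_1} and~\ref{item:sup_of-N-over-large-intervals}. On $B_\eps$ I will use the Duhamel representations \eqref{eq:SDE_near_a_saddle_point_after_duhamel1}--\eqref{eq:SDE_near_a_saddle_point_after_duhamel2} to track both coordinates of $Y$, identify the side of the relevant rectangle through which the exit occurs, and localize the stopping time $\tau^j$ to a deterministic interval of bounded length. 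Tameness of $\tau^j$ is then immediate, and the sub-Gaussian tail on $|S_{\tau^j}|$ is supplied by the localization-of-times estimate, Lemma~\ref{lem:estimates-for-terms}~\ref{item:localization-of-times}.

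For Part~\ref{part:along-stable}, set $t^*_\eps=\mu^{-1}\log(L\eps^{-\beta})$ and $T_\eps=t^*_\eps+1$. On $B_\eps$ the identity $Y^2_t=e^{-\mu t}L+\eps N_t$ forces $Y^2_t$ to decrease from $L$ (up to an $\eps l_\eps^\kappa$ perturbation), first crossing $\eps^\beta$ at some $\tau^1\in[t^*_\eps-c,t^*_\eps+c]$ and never touching $L'$. Meanwhile $|Y^1_t|\leq e^{\lambda t}(\eps^\alpha+\eps l_\eps^\kappa)\leq C\eps^{-\beta/\rho}(\eps^\alpha+\eps l_\eps^\kappa)$ for $t\leq T_\eps$, and condition~\eqref{eq:condition_on_beta1} yields both $\alpha-\beta/\rho>\beta$ and $1-\beta/\rho>\beta$, so $|Y^1_t|<\eps^\beta$ throughout $[0,T_\eps]$. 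Therefore the exit lies in $I^1_\eps$, and the $O(1)$-localization of $\tau^1$ around $t^*_\eps$ delivers the Gaussian tail on $|S_{\tau^1}|$. Part~\ref{part:along-unstable} is essentially dual and less delicate: starting at $(\pm\eps^\beta,y^2)$ with $|y^2|\leq 2\eps^\beta$, the inequality $\eps|U^1_t|\leq\eps l_\eps^\kappa\ll\eps^\beta$ ensures that $Y^1_t=e^{\lambda t}(y^1+\eps U^1_t)$ keeps the sign of $y^1$ and its modulus grows to $r\eps^\theta$ at a time $\tau^3\in[t^*_\eps-c,t^*_\eps+c]$ with $t^*_\eps=\lambda^{-1}(\beta-\theta)l_\eps+O(1)$, while $|Y^2_t|\leq 2\eps^\beta+\eps l_\eps^\kappa\leq 3\eps^\beta$ throughout; the tail follows again from Lemma~\ref{lem:estimates-for-terms}~\ref{item:localization-of-times}.

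The main obstacle is Part~\ref{part:small-square-neighb}. Unlike Parts~\ref{part:along-stable} and~\ref{part:along-unstable}, the starting point $y\in I^1_\eps$ has first coordinate ranging over the whole interval $[-\eps^\beta,\eps^\beta]$, and the exit time $\tau^2$ depends sensitively on $|y^1|$: it is $\approx 0$ when $|y^1|\sim\eps^\beta$ and grows to $\approx\lambda^{-1}(1-\beta)l_\eps$ in the diffusive regime $|y^1|\lesssim\eps$. Consequently a single deterministic window does not localize $\tau^2$ uniformly in $y$, and the naive application of Lemma~\ref{lem:estimates-for-terms}~\ref{item:sup_of-N-over-large-intervals} over a window of length $l_\eps$ gives only a bound with a polylog prefactor. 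The remedy, which is also why the statement involves the combined quantity $e^{-\mu\tau_3}|S_{\tau^2}|$ rather than $|S_{\tau^2}|$ alone, is to exploit the subsequent exponential decay accumulated while the process crosses $\Pi_3$. On $B_\eps$ intersected with the high-probability event that the extra time spent in $\Pi_3$ after $\tau^2$ exceeds a positive fraction of $\lambda^{-1}(\beta-\theta)l_\eps$ (itself controlled by the Part~\ref{part:along-unstable} analysis applied from $I^2_\eps$), the factor $e^{-\mu\tau_3}\lesssim\eps^{c(\beta-\theta)}$ more than absorbs the polylog prefactor and yields the desired uniform sub-Gaussian bound. The second coordinate causes no trouble: $Y^2_t=e^{-\mu t}\eps^\beta+\eps N_t$ remains in $[-2\eps^\beta,2\eps^\beta]$ for $t\leq T_\eps=\lambda^{-1}(1-\beta)l_\eps+1$ on $B_\eps$, confirming that the exit indeed lands in $I^2_\eps$.
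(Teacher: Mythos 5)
Parts~\ref{part:along-stable} and~\ref{part:along-unstable} of your proposal are correct and follow essentially the same route as the paper: comparison with the deterministic flow $\overline Y_t=(e^{\lambda t}Y^1_0,e^{-\mu t}Y^2_0)$ on the good event where $\sup_t|U^1_t|$ and $\sup_t|N_t|$ are bounded by $l_\e^\vk$, identification of the exit side, localization of $\tau^1$ (resp.\ $\tau^3$) to a deterministic window of \emph{bounded} length, and then Lemma~\ref{lem:estimates-for-terms}~\eqref{item:localization-of-times} for the sub-Gaussian tail. The exponent arithmetic you quote from~\eqref{eq:condition_on_beta1} is the same as the paper's.

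For Part~\ref{part:small-square-neighb} you correctly diagnose the obstruction and you have the right mechanism for the tail bound (the factor $e^{-\mu\tau^3}\lesssim \e^{\mu(\beta-\theta)/(2\lambda)}$, guaranteed by a lower bound on the crossing time of $\Pi_3$, absorbs the prefactor coming from taking a supremum of $|S_t|$ over a long window). But there is a genuine gap in how you localize $\tau^2$. You set $T_\e=\lambda^{-1}(1-\beta)l_\e+1$ and assert that on $B_\e$ the exit has occurred by $T_\e$ and lands in $I^2_\e$. This is false with probability bounded away from zero: for $y^1=0$ the event $\{\tau^2>T_\e\}$ is, up to small errors, the event that a centered Gaussian of unit order lands in an interval of unit length (namely $|y^1+\e \overline M^1_{T_\e}|\le C\e^{1}e^{-\lambda T_\e}\cdot e^{\lambda T_\e}\sim C\e^\beta e^{-\lambda T_\e}$ rescaled), whose probability is a constant $c\in(0,1)$, not $o_e(1)$. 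Consequently your good event cannot satisfy $\Pp^y(B_\e^c)=o_e(1)$ while also forcing $Y_{\tau^2}\in I^2_\e$ and $\tau^2\le T_\e$. The paper closes this by an iterated Markov-chain argument: a single block of length $t_\e=\frac{1-\beta}{\lambda}l_\e$ yields escape only with probability $\ge 1-c$, and one iterates over $\sim(l_\e)^{\vk-1}$ blocks (with $\vk>2$) so that $c^{(l_\e)^{\vk-1}}=o_e(1)$, simultaneously checking at each step that the second coordinate has not left $[-2\e^\beta,2\e^\beta]$ (the paper's Lemmas~\ref{lem:Markov-chain-exit1} and~\ref{lem:exit-from_Pi-cannot-take-too-long}). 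This is also what fixes the length of the window over which $\sup|S_t|$ must be controlled: it is $(l_\e)^\vk$, not $l_\e$, and it is what proves the tameness of $\tau^2$, which is itself one of the conclusions of the lemma and is nowhere established in your argument. With that iteration supplied, the rest of your Part~\ref{part:small-square-neighb} (the lower bound on $\tau^3$ and the absorption of the $(l_\e)^\vk$ prefactor by $\exp\{-z^2\e^{-\mu(\beta-\theta)/\lambda}/C\}$) goes through exactly as in the paper.
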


Let us derive Lemma~\ref{lem:second-coord-at-exit} from Lemma~\ref{lem:controlling-OU} first  and then prove the latter.

\bpf[Proof of Lemma~\ref{lem:second-coord-at-exit}]
Decomposing the evolution into three stages corresponding to times $\tau^1, \tilde\tau^2, \tilde\tau^3$ and described in Lemma~\ref{lem:controlling-OU},
and using the strong Markov property, we obtain the existence of a set $B_\e$ with properties described in part~\ref{item:1_lem:second-coord-at-exit}, except \eqref{eq:lem:second-coord-at-exit-for-N}, which we still need to check. Also, decomposing $\bar\tau$ into a sum of three exit times and combining three parts of Lemma~\ref{lem:controlling-OU}, we  immediately obtain part~\ref{item:bartau-tame}

To prove \eqref{eq:lem:second-coord-at-exit-for-N}, it suffices (due to Lemma~\ref{lem:estimates-for-terms}~\eqref{item:V-bounded}) 
to check
\begin{equation}
\label{eq:lem:second-coord-at-exit-for-S}
\sup_{y\in I_\e}\Pp(B_\e\cap\{|S_{\bar \tau}|>z\})\le C e^{-z^2/C}, \quad  \e\in(0,\e_0),\ z>0,
\end{equation}
for some $C,\e_0>0$.

Applying~\eqref{eq:OU-property-stopping} twice, we obtain that, with probability 1,
\begin{multline*}
S_{\tilde\tau_3}(Y_0,W)=e^{-\mu (\tilde\tau^3-\tau^1)}S_{\tau^1}(Y_0,W)\\+e^{-\mu (\tilde\tau^3-\tilde\tau^2)}S_{\tilde\tau^2-\tau^1}(Y_{\tau^1},\Theta^{\tau^1}W)+S_{\tilde\tau^3-\tilde\tau^2}(Y_{\tilde \tau_2},\Theta^{\tilde\tau^2}W).
\notag
\end{multline*}
The estimate~\eqref{eq:lem:second-coord-at-exit-for-S} follows directly  from this representation  and Lemma~\ref{lem:controlling-OU}.
This completes the proof of part~\ref{item:1_lem:second-coord-at-exit}.

To prove Part~\ref{item:bartau=tau}, we recall that  $\bar \tau,\tau,\zeta$ are defined as the times of exit
 from sets
$D_\eps$, $[-r\eps^\theta, r\eps^\theta]\times [-L', L']$, and $[-r\eps^\theta, r\eps^\theta]\times \R$, respectively.  
On $B_\e$, the exit from~$D_\eps$ happens through the lateral sides of $\Pi_3$. Since they
belong to the boundaries of all these sets, we conclude that
$\tau=\bar\tau=\zeta$ holds on~$B_\eps$. Combining this with 
part~\ref{item:bartau-tame},we obtain
the tameness claim of part~\ref{item:bartau=tau}.
\epf

\medskip
Let us now prove  Lemma~\ref{lem:controlling-OU}.
We first prove its part~\ref{part:along-stable}, then  part~\ref{part:along-unstable}, and then~part~\ref{part:small-square-neighb}.

\medskip
\bpf[Proof of part~\ref{part:along-stable}]
We will assume 
\begin{equation}
\label{eq:ini-cond-assumption}
y\in I_\eps,
\end{equation}
throughout the proof.
In addition to $Y$, let us consider the deterministic process $(\overline Y_t)$ given by
\begin{align}
\label{eq:model-process}
\begin{split}
    \overline Y^1_t&=e^{\lambda t} Y^1_0,
\\
\overline Y^2_t&=e^{-\mu t} Y^2_0=e^{-\mu t} L,
\end{split}
\end{align}

We see that $\overline Y^2_t$ decreases in $t$.  For
\begin{equation}
\label{eq:def_of_exit_from_Pi_1}
t_\e=-\frac{1}{\mu}\log\frac{\e^\beta}{2L},
\end{equation}
we have 
\begin{equation*}
\overline Y^2_{t_\e}=\frac{1}{2}\e^{\beta}.
\end{equation*}

Due to \eqref{eq:condition_on_beta1}, $-\frac{\beta}{\rho}+\alpha > \beta$. We can use this
and~\eqref{eq:ini-cond-assumption} to see that  for some $\e_0$ (which does not depend on $Y_0=y$), all $\e<\e_0$, and all $t\in[0,t_\e]$,
\begin{equation*}\left|\overline Y^1_{t}\right|\le\ e^{\lambda t_\e}\e^\alpha<  \frac{1}{2}\e^{\beta}.
\end{equation*}
So 
$t_\e$ is the exit time from
$\widetilde \Pi_1=[-\frac{\e^\beta}{2}, \frac{\e^\beta}{2}]\times [\frac{\eps^\beta}{2}, L']$:
\begin{equation*}
t_\e=\inf\left\{t\ge 0:\ \overline Y_t\notin \widetilde \Pi_1\right\}.
\end{equation*}

Let us use $\overline Y$ to prove that $Y$ exits $\Pi_1$ through the bottom side w.h.p.

Parts~\ref{item:tail-of_U_1} and \ref{item:sup_of-N-over-large-intervals} of Lemma~\ref{lem:estimates-for-terms} and  assumption~\eqref{eq:condition_on_beta1}  imply that there are constants $h_1,h_2,h_3>0$ such that
for $\e<\e_0$,
and for all $y$ satisfying~\eqref{eq:ini-cond-assumption},
we have 
\begin{align*}
\Pp^{y}\left\{\sup_{t\in[0,t_\e]}(e^{\lam t}\eps \left|U^1_t\right|) >\frac{\e^\beta}{2}\right\}< h_1 e^{-h_3\e^{-h_2}}
\end{align*}
and
\begin{align*}
\Pp^{y}\left\{\sup_{t\in[0,t_\e]}\eps |N_t| >\frac{\e^\beta}{2} \right\}< h_1 e^{-h_3\e^{-h_2}}.
\end{align*}
This allows to define an event $B_\e$ with $\Pp^y(B_\e)>1-2 h_1 e^{-h_3\e^{-h_2}}$ such that on~$B_\e$,
\begin{equation*}
\sup_{t\in[0,t_\e]} |Y_t-\overline Y_t|_\infty < \frac{\e^\beta}{2},
\end{equation*}
where we used \eqref{eq:SDE_near_a_saddle_point_after_duhamel1} and \eqref{eq:SDE_near_a_saddle_point_after_duhamel2}.
In particular, on~$B_\e$, the exit from $\Pi_1$ happens through its bottom, before time $t_\e$.

Similarly to~\eqref{eq:def_of_exit_from_Pi_1}, we can define
\begin{equation*}
t'_\e=-\frac{1}{\mu}\log\frac{3\e^\beta}{2L}= t_\e - \frac{1}{\mu}\log 3,
\end{equation*}
interpret it as the exit time from a smaller rectangle $[-\frac{\e^\beta}{2},\frac{\e^\beta}{2}]\times [\frac{3\e^\beta}{2},L']$, through its bottom $[-\frac{\e^\beta}{2},\frac{\e^\beta}{2}]\times\{ \frac{3\e^\beta}{2}\}$ and derive that  $\tau^1\ge t'_\e$ on $B_\e$.
Therefore, on $B_\e$, we have
\begin{equation}
\label{eq:tau1-sandwiched}
t'_\e \le \tau^1\le  t_\e.
\end{equation}
so we can apply  Lemma~\ref{lem:estimates-for-terms}~\eqref{item:localization-of-times} with $\Delta=\frac{1}{\mu}\log 3$ to derive the first claim of part~\ref{part:along-stable}. The tameness of $\tau^1$ follows from the upper bound in~\eqref{eq:tau1-sandwiched}.\epf

\bigskip
\bpf[Proof of part~\ref{part:along-unstable}]
 We only consider initial conditions given by
 \begin{equation}
\label{eq:ini-condition-positive-side-square}
y\in  I^2_{\eps,+}.
\end{equation}
 The case of $y\in I^2_{\eps,-}$ is similar.
We recall the  process $\overline Y_t$ defined in~\eqref{eq:model-process}.
We see that $\overline Y^1_t$ increases in $t$ and for the time
\begin{equation*}
t_\e=\frac{1}{\lambda}\log\frac{(r+1)\eps^\theta}{\e^\beta},
\end{equation*}
we have
\[
\overline Y^1_{t_\e}= (r+1)\eps^\theta,
\]
and for some $\eps_0$, all $\e<\eps_0$, and all $t\in[0,t_\e]$,
\[
|\overline Y^2_{t_\e}|\le 2\e^\beta,
\]
so
$t_\e$ is the exit time from
$\widetilde \Pi_3=[-(r+1)\eps^\theta,(r+1)\eps^\theta]\times [-3\eps^\beta, 3\eps^\beta]$:
\begin{equation*}
t_\e=\inf\{t\ge 0:\ \overline Y\notin \widetilde \Pi_3\}.
\end{equation*}
We can use $\beta<1/2$ (guaranteed by~\eqref{eq:condition_on_beta1}) and parts~\ref{item:tail-of_U_1} and \ref{item:sup_of-N-over-large-intervals} of Lemma~\ref{lem:estimates-for-terms} to find constants $h_1,h_2,h_3>0$ such that 
for all $y$ satisfying~\eqref{eq:ini-condition-positive-side-square},
\begin{align*}
\Pp^{y}\left\{\sup_{t\in[0,t_\e]}e^{\lam t}\eps \left|U^1_t\right| >\e^\beta\ \right\}< h_1 e^{-h_3\e^{-h_2}}
\end{align*}
and 
\begin{align*}
\Pp^{y}\left\{\sup_{t\in[0,t_\e]}\eps |N_t| >\e^\beta\ \right\}< h_1 e^{-h_3\e^{-h_2}}.
\end{align*}
This allows to define an event $B_\e$ with $\Pp^y(B_\e)>1-2 h_1 e^{-h_3\e^{-h_2}}$ such that on~$B_\e$,
\begin{equation}
\label{eq:compare-to-deterministic-orbit}
\sup_{t\in[0,t_\e]} |Y_t-\overline Y_t|_\infty < \e^\beta.
\end{equation}
In particular, due to $\theta<\beta$ (see \eqref{eq:condition_on_beta1}), on~$B_\e$, the exit  from $\Pi_3$ happens through the right lateral side, before time~$t_\e$. One can also define 
\begin{equation*}
t'_\e=\frac{1}{\lambda}\log\frac{r\eps^\theta/2}{\e^\beta}
\end{equation*}
and see that, due to ~\eqref{eq:compare-to-deterministic-orbit}, for sufficiently small $\e$, $t'_\e \le \tau^3\le  t_\e$
and $t_\e-t'_\e=\frac{1}{\lambda}\log\frac{r+1}{r/2}$. Thus  $\tau^3$ is tame, and we can apply  Lemma~\ref{lem:estimates-for-terms}~\eqref{item:localization-of-times} to derive the remaining claim of  part~\ref{part:along-unstable}.\epf

\bigskip

To  prove part~\ref{part:small-square-neighb}, we need  several auxiliary results (Lemmas~\ref{lem:Markov-chain-exit1}, \ref{lem:exit-from_Pi-cannot-take-too-long}, \ref{lem:exit-from_Pi_3_cannot-be-short} below).
We define
\begin{align*}
\tau'&=\inf\{t\ge0: \left|Y^2_t\right|\ge 2\e^\beta\},\\
\hat\tau&=\tau^2\wedge\tau',\\
t_\e&=\frac{1-\beta}{\lambda} l_\eps,
\end{align*}
so that $\hat\tau$ is the exit time from the rectangle $[-\e^{\beta},\e^{\beta}]\times[-2\e^{\beta}, 2\e^\beta]$. We also define  $\widetilde \Pi_2=\widetilde \Pi_{2,\e}=[-\e^\beta,\e^\beta]^2$.

\begin{lemma}
\label{lem:Markov-chain-exit1}
There is $c\in(0,1)$ such that for sufficiently small $\e,$
\begin{equation}
\label{eq:prob-of-exit-lower}
\sup_{y\in\widetilde \Pi_2}\Pp^y\{\tau^2>t_\e\}<c.
 \end{equation}
We also have
\begin{equation}
\label{eq:prob-of-staying-in-Pi}
\sup_{y\in\widetilde \Pi_2}\Pp^y\left\{\tau^2>t_\e,\ |Y_{t_\e}|\notin\widetilde \Pi_2 \right\}=o_e(1),
 \end{equation}
\begin{equation}
\label{eq:prob-of-exiting-too-far}
\sup_{y\in\widetilde \Pi_2}\Pp^y\left\{\tau^2\le t_\e\ |Y^2_{\tau^2}|>2\e^\beta\right\}=o_e(1).
 \end{equation}
\end{lemma}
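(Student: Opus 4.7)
The plan is to reduce all three claims to applications of the Duhamel formulas \eqref{eq:SDE_near_a_saddle_point_after_duhamel1}--\eqref{eq:SDE_near_a_saddle_point_after_duhamel2} evaluated at time $t_\e$, exploiting the identities $e^{\lambda t_\e}=\eps^{-(1-\beta)}$ and $e^{-\mu t_\e}=\eps^{\rho(1-\beta)}$ together with the tameness/tail estimates of Lemma~\ref{lem:estimates-for-terms}.

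For \eqref{eq:prob-of-staying-in-Pi} and \eqref{eq:prob-of-exiting-too-far}, I work on the $Y^2$ coordinate. Since $|Y^2_0|\le\eps^\beta$, the formula $Y^2_t=e^{-\mu t}Y^2_0+\eps N_t$ yields $|Y^2_t|\le\eps^\beta+\eps|N_t|$ for every $t\ge 0$. Applying Lemma~\ref{lem:estimates-for-terms}~\eqref{item:sup_of-N-over-large-intervals} with $\Delta=t_\e$ and $r=l_\e$ gives $\Pp^y\{\sup_{t\le t_\e}|N_t|>l_\e\}=o_e(1)$; using $\beta<1$, the stochastic term $\eps|N_t|\le\eps l_\e$ is then negligible compared with $\eps^\beta$, which already proves $|Y^2_{\tau^2}|\le 2\eps^\beta$ on $\{\tau^2\le t_\e\}$ up to a low-probability event, i.e.~\eqref{eq:prob-of-exiting-too-far}. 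For \eqref{eq:prob-of-staying-in-Pi}, the extra contraction factor $e^{-\mu t_\e}=\eps^{\rho(1-\beta)}$ makes $|e^{-\mu t_\e}Y^2_0|\le\eps^{\beta+\rho(1-\beta)}\ll\eps^\beta$, so $|Y^2_{t_\e}|<\eps^\beta$ on the same high-probability event; combined with the trivial inclusion $\{\tau^2>t_\e\}\subset\{|Y^1_{t_\e}|\le\eps^\beta\}$, this places $Y_{t_\e}$ inside $\widetilde\Pi_2$.

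The main task is \eqref{eq:prob-of-exit-lower}. From \eqref{eq:SDE_near_a_saddle_point_after_duhamel1} and $e^{\lambda t_\e}=\eps^{-(1-\beta)}$ one gets
\[
\{\tau^2>t_\e\}\subset\{|Y^1_{t_\e}|\le\eps^\beta\}=\{|\eps^{-1}Y^1_0+U^1_{t_\e}|\le 1\}.
\]
Writing $U^1_{t_\e}=M^1_{t_\e}+\eps V^1_{t_\e}$ and using the deterministic bound $|\eps V^1_{t_\e}|\le C\eps$ from Lemma~\ref{lem:estimates-for-terms}~\eqref{item:V-bounded}, it suffices, for all sufficiently small $\eps$, to bound $\Pp^y\{|a+M^1_{t_\e}|\le 2\}$ uniformly in $a=\eps^{-1}Y^1_0\in\R$ by a constant strictly less than $1$. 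Uniform ellipticity of $F$ gives the quadratic-variation bound $\qd{M^1}_{t_\e}=\int_0^{t_\e}e^{-2\lambda s}|F^1(Y_s)|^2ds\ge\underline c>0$ for $t_\e$ large. Dambis--Dubins--Schwarz then represents $M^1_{t_\e}=B_{\qd{M^1}_{t_\e}}$ for a Brownian motion $B$ independent of $\Fc_0$; conditioning on $\qd{M^1}_{t_\e}$ and using the unimodality of the Gaussian density,
\[
\sup_{a\in\R}\Pp^y\{|a+M^1_{t_\e}|\le 2\}\le\sup_{T\ge\underline c}\Pp\{|B_T|\le 2\}=2\Phi(2/\sqrt{\underline c})-1<1
\]
uniformly in $y\in\widetilde\Pi_2$.

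The hard part will be to execute the Dambis--Dubins--Schwarz step rigorously: the auxiliary Brownian motion $B$ is a priori defined on an enlarged probability space, and one must check independence of $a=\eps^{-1}Y^1_0$ (which is $\Fc_0$-measurable) from the pair $(B,\qd{M^1}_{t_\e})$ to extract a uniform bound. A cleaner alternative, consistent with the rest of the paper, is to replace DDS by a direct density bound for $M^1_{t_\e}$ coming from the Malliavin-calculus machinery developed in Section~\ref{section:density_est}.
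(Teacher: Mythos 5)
Your treatment of \eqref{eq:prob-of-staying-in-Pi} and \eqref{eq:prob-of-exiting-too-far} is correct and coincides with the paper's: both follow from $Y^2_t=e^{-\mu t}Y^2_0+\eps N_t$, the bound $\sup_{t\le t_\e}|N_t|\le l_\e$ w.h.p.\ from Lemma~\ref{lem:estimates-for-terms}~\eqref{item:sup_of-N-over-large-intervals}, and $\eps l_\e\ll\eps^\beta$. Your reduction of \eqref{eq:prob-of-exit-lower} to a uniform anticoncentration bound $\sup_a\Pp^y\{|a+M^1_{t_\e}|\le 2\}<1$ is also the right first move.

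The gap is in the Dambis--Dubins--Schwarz step. Writing $M^1_{t_\e}=B_{\rho}$ with $\rho=\qd{M^1}_{t_\e}$, the inequality $\Pp\{|a+B_{\rho}|\le 2\}\le\sup_{T\ge\underline c}\Pp\{|B_T|\le 2\}$ requires $\rho$ to be independent of $B$; here $\rho$ is a stopping time for the time-changed filtration and is strongly correlated with $B$ (the integrand $F^1(Y_s)$ is adapted). ``Conditioning on $\qd{M^1}_{t_\e}$'' therefore does not produce a Gaussian law: a stopping time constrained only from below can aim for the window $\{|a+B_u|\le 2\}$ and push the probability arbitrarily close to $1$, so the displayed bound $2\Phi(2/\sqrt{\underline c})-1$ is unjustified. (The conclusion can be rescued, but only by also invoking the upper bound $\qd{M^1}_{t_\e}\le\bar c:=\|F\|_\infty^2/(2\lambda)$ and proving $\sup_a\Pp\{\inf_{u\in[\underline c,\bar c]}|a+B_u|\le 2\}<1$ --- a different argument from the one you wrote. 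Your worry about independence from $\Fc_0$ is not the real obstruction, since $y$ is deterministic under $\Pp^y$.) The paper avoids all of this by freezing the diffusion coefficient at the origin: it splits $M^1_{t_\e}=\overline M^1_{t_\e}+\widetilde M^1_{t_\e}$ with $\overline M^1_t=\int_0^te^{-\lambda s}F^1_l(0)\,dW^l_s$, which has a \emph{deterministic} integrand and is therefore genuinely Gaussian with variance in $[\underline c,\bar c]$, giving the uniform bound $<1$; the remainder $\widetilde M^1$ has integrand of size $O(\eps^\beta)$ up to the exit time $\hat\tau$ from the small square (Lipschitzness of $F$), so on $\{t_\e\le\hat\tau\}$ the exponential martingale inequality makes $|\eps\widetilde M^1_{t_\e}+\eps^2V^1_{t_\e}|>\eps$ a low-probability event. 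I recommend adopting this decomposition rather than the Malliavin route you suggest, which is far heavier and whose density estimates in Section~\ref{section:density_est} are stated only for times $T(\eps)\le\bar\theta l_\e$ and so do not directly cover $t_\e=\frac{1-\beta}{\lambda}l_\e$.
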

\bpf Throughout this proof, $\sup_y$ means $\sup_{y\in\widetilde \Pi_2}$. Part \ref{item:sup_of-N-over-large-intervals} of Lemma~\ref{lem:estimates-for-terms} implies
\begin{equation*}
\sup_{y}\Pp^y\left\{\sup_{t\in[0,t_\e]} |N_t|> l_\eps \right\}=o_e(1).
\end{equation*}
Therefore, due to \eqref{eq:SDE_near_a_saddle_point_after_duhamel2},
\begin{equation}
\label{eq:tau_second}
\sup_{y}\Pp^y\left\{\tau'\le t_\e\right\} =\sup_{y}\Pp^y\left\{\sup_{t\in[0,t_\e]}\left|Y^2_t\right|\ge 2\e^\beta\right\}=o_e(1)
\end{equation}
and 
\[
\sup_{y}\Pp^y\left\{|Y^2_{t_\e}|\ge \e^\beta\right\}=o_e(1).
\]
Estimates~\eqref{eq:prob-of-staying-in-Pi} and~\eqref{eq:prob-of-exiting-too-far} follow from these bounds.

Let us define 
\begin{align*}
 \overline M^1_t&=\int_0^te^{-\lam s}F^1_l(0)dW^l_s,\\
 \widetilde M^1_t&= \int_0^t e^{-\lambda s}(F^1_l(Y_s)-F^1_l(0))d W_s^l.
\end{align*}
Due to~\eqref{eq:tau_second},
\begin{align*}
\sup_{y}\Pp^y\{t_\e\le \tau^2 \}=& \sup_{y}\Pp^y\left\{t_\e\le \tau^2 ,\ e^{\lam t_\e}|y^1+\eps U^1_{t_\e}|\leq\e^\beta\right\}\\
= & \sup_{y} \Pp^y\left\{t_\e\le \hat\tau ,\ e^{\lam t_\e}|y^1+\eps U^1_{t_\e}|\leq \e^\beta\right\}+o_e(1)\\
\le & \sup_{y}\Pp^y\left\{t_\e\le \hat\tau ,\ |y^1+\eps \overline M^1_{t_\e} +\eps \widetilde M^1_{t_\e} +\eps^2 V^1_{t_\e}|\leq\e\right\}+o_e(1)
\\
\le &\sup_{y} \Pp^y\{|y^1+\eps \overline M^1_{t_\e}|\leq 2\e\} \\ &\ \qquad\qquad+ \sup_{y} \Pp^y\left\{t_\e\le \hat\tau ,\ |\eps \widetilde M^1_{t_\e} +\eps^2 V^1_{t_\e}|>\e\right\}+o_e(1).
\end{align*}
The first term on the r.h.s.\  is bounded away from $1$ because $\overline M_{t_\e}$ is a Gaussian r.v.\ with variance bounded away from 0.  Due to the exponential martingale inequality, 
the second term is $o_e(1)$ since  $V$ is bounded and the estimate $|F(Y_s)-F(0)|\le C\e^\beta$ holds for some $C>0$, all $\e>0$ and $s\le \hat\tau$. This completes the proof of~\eqref{eq:prob-of-exit-lower} and the entire lemma.\epf

\begin{lemma} \label{lem:exit-from_Pi-cannot-take-too-long}
For every $\vk>2$,
\begin{equation}
\label{eq:exit-from-Pi-tail}
\sup_{y\in\widetilde \Pi_2} \Pp^y\{\tau^2>( l_\eps )^{\vk}\}=o_e(1),
\end{equation}
and
\begin{equation}
\label{eq:prob-of-exiting-too-far-eventually}
\sup_{y\in\widetilde \Pi_2}\Pp^y\left\{\tau^2\le ( l_\eps )^{\vk},\ |Y^2_{\tau^2}|>2\e^\beta\right\}=o_e(1).
\end{equation}
\end{lemma}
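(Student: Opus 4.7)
\bpf[Plan of proof]
The plan is to iterate the one-step bounds of Lemma~\ref{lem:Markov-chain-exit1} via the strong Markov property, so as to extend them from time $t_\e = \frac{1-\beta}{\lambda} l_\e$ to time $(l_\e)^\vk$. Set $N_\e = \lceil \frac{\lambda}{1-\beta}(l_\e)^{\vk-1}\rceil$, so that $N_\e t_\e \geq (l_\e)^\vk$ for small $\e$, and for $y\in\widetilde\Pi_2$ and $k\geq 0$ introduce the events $A_k = \{\tau^2 > kt_\e\}$ and $E_k = \{Y_{kt_\e}\in \widetilde\Pi_2\}$ with $a_k = \Pp^y(A_k)$ and $b_k = \Pp^y(A_k\cap E_k^c)$. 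Denote $\eta_1 = \sup_{y\in\widetilde\Pi_2}\Pp^y\{\tau^2 > t_\e,\, Y_{t_\e}\notin\widetilde\Pi_2\}$ and $\eta_2 = \sup_{y\in\widetilde\Pi_2}\Pp^y\{\tau^2 \leq t_\e,\, |Y^2_{\tau^2}| > 2\e^\beta\}$, both of which are $o_e(1)$ by \eqref{eq:prob-of-staying-in-Pi} and \eqref{eq:prob-of-exiting-too-far}.

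The first step will be to establish the coupled recursive inequalities
\begin{align*}
a_{k+1} \leq c\, a_k + b_k,\qquad b_{k+1} \leq \eta_1 a_k + b_k,
\end{align*}
uniformly in $y\in\widetilde\Pi_2$. The first follows by applying the strong Markov property at $kt_\e$, splitting $A_{k+1} = (A_{k+1}\cap E_k)\cup(A_{k+1}\cap E_k^c)$, bounding the inner probability on $A_k\cap E_k$ by $c$ using \eqref{eq:prob-of-exit-lower}, and dominating the second piece by $b_k$. For the second, the inclusion $A_{k+1}\cap E_{k+1}^c \subset \{\tau^2 > kt_\e\}\cap\{\text{exit of $\widetilde\Pi_2$ in $[kt_\e,(k+1)t_\e]$ without exit of $\Pi_2$}\}$ and the strong Markov property at $kt_\e$ produce the bound $\eta_1 \Pp^y(A_k\cap E_k) + b_k \leq \eta_1 a_k + b_k$ using \eqref{eq:prob-of-staying-in-Pi}. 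Iterating gives $b_k \leq \eta_1 k$ and hence
\begin{align*}
a_k \leq c^k + \eta_1 \sum_{j=0}^{k-1} c^{k-1-j} j \leq c^k + \frac{\eta_1 k}{1-c}.
\end{align*}
Applied at $k=N_\e$, this bounds $\Pp^y\{\tau^2 > (l_\e)^\vk\}$ by $c^{N_\e} + C\eta_1 N_\e$. The condition $\vk > 2$ is exactly what guarantees $N_\e$ grows faster than $l_\e^{\vk'}$ for some $\vk' > 1$, so that $c^{N_\e} = o_e(1)$; and $\eta_1 N_\e = o_e(1)$ because $o_e(1)$ dominates any polynomial in $l_\e$. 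This proves \eqref{eq:exit-from-Pi-tail}.

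For \eqref{eq:prob-of-exiting-too-far-eventually}, the plan is to decompose
\begin{align*}
\{\tau^2 \leq (l_\e)^\vk,\, |Y^2_{\tau^2}| > 2\e^\beta\} \subset \bigcup_{k=0}^{N_\e - 1} \{kt_\e \leq \tau^2 < (k+1)t_\e,\, |Y^2_{\tau^2}| > 2\e^\beta\},
\end{align*}
and apply the strong Markov property at $kt_\e$ on $A_k$. On $A_k\cap E_k$, the conditional probability of the $k$-th piece is bounded by $\eta_2$ via \eqref{eq:prob-of-exiting-too-far}, while on $A_k\cap E_k^c$ it is trivially bounded by $1$, contributing $b_k \leq \eta_1 k$. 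Summing over $k$ yields an overall bound of $\eta_2 N_\e + \eta_1 N_\e^2 = o_e(1)$ since $\eta_1, \eta_2 = o_e(1)$ dominate any polynomial in $l_\e$.

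The main obstacle is the bookkeeping for the two distinct ``failure modes'' — the process failing to exit $\Pi_2$ and the process drifting out of the small square $\widetilde\Pi_2$ in the second coordinate — which forces us to carry along the two sequences $a_k,b_k$ rather than iterating a single scalar bound. Once the coupled recursion is set up, the rest is elementary, and the threshold $\vk > 2$ appears naturally from matching the single-step time scale $t_\e \sim l_\e$ to the target scale $(l_\e)^\vk$ against the requirement $N_\e \gg l_\e^{\vk'}$ for some $\vk' > 1$ inherent in the definition of $o_e(1)$.
\epf
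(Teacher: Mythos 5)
Your proof is correct and follows essentially the same route as the paper: both iterate the one-step bounds of Lemma~\ref{lem:Markov-chain-exit1} over roughly $(l_\eps)^{\vk-1}$ blocks of length $t_\eps$ via the Markov property, and your coupled recursion for $(a_k,b_k)$ is just a more explicit version of the paper's bookkeeping of the excursions of $Y_{kt_\eps}$ out of $\widetilde\Pi_2$. One cosmetic slip: for \eqref{eq:exit-from-Pi-tail} you need $N_\eps t_\eps\le (l_\eps)^{\vk}$ (i.e., take the floor) so that $\{\tau^2>(l_\eps)^{\vk}\}\subset\{\tau^2>N_\eps t_\eps\}$ and $a_{N_\eps}$ is indeed an upper bound; the ceiling is only what you want for the covering in \eqref{eq:prob-of-exiting-too-far-eventually}, and neither choice affects the asymptotics since $N_\eps\asymp (l_\eps)^{\vk-1}$ either way.
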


\bpf
Using~\eqref{eq:prob-of-exit-lower},~\eqref{eq:prob-of-staying-in-Pi},
and the Markov property iteratively, we obtain uniformly in $y\in\widetilde \Pi_2$ and $k=1,2,\ldots$:
\[
\Pp^y\{\tau^2 > kt_\eps \}\le (c+o_e(1))^k.
\]
Setting 
\[
k = n_\e=\left\lfloor\frac{( l_\eps )^{\vk}}{t_\e}\right\rfloor+1,
\]
gives~\eqref{eq:exit-from-Pi-tail}. To prove~\eqref{eq:prob-of-exiting-too-far-eventually}, we start by defining
\begin{align*}
    \eta = \min\{k\ge 1: Y_{kt_\e}\not\in\widetilde \Pi_2\}
\end{align*}
and estimating
\begin{multline*}
    \Pp^y\left\{\tau^2\le ( l_\eps )^{\vk},\ |Y^2_{\tau^2}|>2\e^\beta\right\}
    \leq \sum_{k=0}^{n_\e} \Pp^y\left\{\tau^2\in(kt_\e,(k+1)t_\e],\  |Y^2_{\tau^2}|>2\e^\beta\right\}\\
    \leq \sum_{k=0}^{n_\e}\left( \Pp^y\left\{\tau^2\in(kt_\e,(k+1)t_\e],\  |Y^2_{\tau^2}|>2\e^\beta,\ \eta >k\right\} + \Pp^y\left\{\tau^2> kt_\e,\ \eta\leq k\right\}\right).
\end{multline*}
To see that the first term  in the $k$-th summand  is uniformly $o_e(1)$, we condition  on~$Y_{kt_\eps}$ and apply the Markov property and~\eqref{eq:prob-of-exiting-too-far}. For the second term, we write
\begin{align*}
    \Pp^y\{\tau^2> kt_\e,\ \eta\leq k\} &= \sum_{i=1}^k\Pp^y\{\tau^2> kt_\e,\ \eta=i\}\\
    &\leq \sum_{i=1}^k \Pp^y\left\{\tau^2> kt_\e,\ Y_{it_\e}\not\in\widetilde \Pi_2,\  Y_{(i-1)t_\e}\in\widetilde \Pi_2\right\}\leq k o_e(1),
\end{align*}
uniformly in $y$, where the last inequality follows from~\eqref{eq:prob-of-staying-in-Pi} and 
conditioning on~$Y_{(i-1)t_\e}$.
Combining these estimates, we obtain
\begin{align*}
    \sup_{y\in \widetilde \Pi_2}\Pp^y\left\{\tau^2\le ( l_\eps )^{\vk},\ |Y^2_{\tau^2}|>2\e^\beta\right\} \leq (n_\e +1)^2o_e(1) = o_e(1),
\end{align*}
thus proving~\eqref{eq:prob-of-exiting-too-far-eventually}.
\epf

\begin{lemma}\label{lem:exit-from_Pi_3_cannot-be-short}
 Uniformly in $y\in\{\e^\beta, -\e^\beta\}\times[-2\e^\beta,2\e^\beta]$,
\[
\Pp^y\left\{\tau^3< \frac{\beta-\theta}{2\lambda} l_\eps \right\}=o_e(1).
\]
\end{lemma}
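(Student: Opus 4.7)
\bpf[Proof plan]
The plan is to let $t^*_\eps = \frac{\beta-\theta}{2\lambda} l_\eps$ and show that with high probability, uniformly in $y\in\{\e^\beta, -\e^\beta\}\times[-2\e^\beta,2\e^\beta]$, the trajectory $Y_t$ stays inside $\Pi_3 = [-r\eps^\theta,r\eps^\theta]\times[-3\eps^\beta,3\eps^\beta]$ for all $t\in[0,t^*_\eps]$. This immediately gives $\tau^3\ge t^*_\eps$, which is what we need.

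I would start from Duhamel's identities~\eqref{eq:SDE_near_a_saddle_point_after_duhamel1}--\eqref{eq:SDE_near_a_saddle_point_after_duhamel2}. For the vertical coordinate, I would estimate, for $t\in[0,t^*_\eps]$,
\[
|Y^2_t| \le e^{-\mu t}|Y^2_0| + \eps|N_t| \le 2\eps^\beta + \eps \sup_{t\in[0,t^*_\eps]}|N_t|,
\]
and invoke Lemma~\ref{lem:estimates-for-terms}\eqref{item:sup_of-N-over-large-intervals} with $\Delta = t^*_\eps = O(l_\eps)$ and $r=l_\eps^{\vk}$ for any fixed $\vk>1$, giving $\sup_{t\in[0,t^*_\eps]}|N_t|\le l_\eps^{\vk}$ w.h.p.\ uniformly in $y$. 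Since $\beta<1$, we have $\eps l_\eps^\vk = o(\eps^\beta)$, so $|Y^2_t|<3\eps^\beta$ uniformly on $[0,t^*_\eps]$ w.h.p.

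For the horizontal coordinate, using $|Y^1_0|=\eps^\beta$, I would write
\[
\sup_{t\in[0,t^*_\eps]}|Y^1_t| \le e^{\lambda t^*_\eps}\Big(|Y^1_0| + \eps\sup_{t\in[0,t^*_\eps]}|U^1_t|\Big) = \eps^{(\beta+\theta)/2} + \eps^{(2-\beta+\theta)/2}\sup_{t\in[0,t^*_\eps]}|U^1_t|.
\]
Both exponents $(\beta+\theta)/2$ and $(2-\beta+\theta)/2$ exceed $\theta$ (for the first, since $\beta>\theta$; for the second, since $\beta<1$). By Lemma~\ref{lem:estimates-for-terms}\eqref{item:tail-of_U_1}, $\sup_{t\ge0}|U^1_t|$ is tame uniformly in $y$, so it is dominated by any negative power of $\eps$ w.h.p. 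Hence the right-hand side is $o(\eps^\theta)$ w.h.p., so $\sup_{t\in[0,t^*_\eps]}|Y^1_t|<r\eps^\theta$ w.h.p.

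Intersecting the two w.h.p.\ events and taking the supremum over $y\in\{\e^\beta,-\e^\beta\}\times[-2\e^\beta,2\e^\beta]$, we obtain that $Y_t\in\Pi_3$ for every $t\in[0,t^*_\eps]$ w.h.p.\ uniformly in $y$, whence $\tau^3\ge t^*_\eps$ w.h.p., as desired. This is essentially a direct deterministic‐drift‐plus‐tame‐noise estimate, with no real obstacle; the only thing to verify carefully is the exponent book‐keeping using $0\le\theta<\beta<\tfrac{1}{2}$ from~\eqref{eq:condition_on_alpha_rho_gamma}--\eqref{eq:condition_on_beta1}. \epf
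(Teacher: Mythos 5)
Your proof is correct and follows essentially the same route as the paper's: both arguments split into the horizontal and vertical coordinates via the Duhamel formulas \eqref{eq:SDE_near_a_saddle_point_after_duhamel1}--\eqref{eq:SDE_near_a_saddle_point_after_duhamel2}, use $e^{\lambda t^*_\eps}=\eps^{(\theta-\beta)/2}$ together with the tameness of $\sup_t|U^1_t|$ from Lemma~\ref{lem:estimates-for-terms}~\eqref{item:tail-of_U_1} for $Y^1$, and the tail bound of Lemma~\ref{lem:estimates-for-terms}~\eqref{item:sup_of-N-over-large-intervals} on an interval of length $O(l_\eps)$ for $Y^2$. The only cosmetic difference is that the paper takes suprema over $[0,\tau^3\wedge t_\eps]$ while you take them over the full interval $[0,t^*_\eps]$; both are valid and your exponent book-keeping checks out.
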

\bpf Let us denote $t_\e=\frac{\beta-\theta}{2\lambda} l_\eps$
, and write
\begin{align*}
\Pp^y\{\tau^3< t_\e\}\le
\Pp^y\left\{\sup_{t\in[0, \tau^3\wedge t_\e]}\left|Y^1_t\right|\ge r\eps^\theta\right\}+\Pp^y\left\{\sup_{t\in[0, \tau^3\wedge t_\e]}\left|Y^2_t\right|\ge3\e^\beta\right\}=I_1+I_2.
\end{align*}
Since $e^{\lambda t_\e}=\e^{(\theta-\beta)/2}$, parts~\ref{item:tail-of_U_1} and \ref{item:sup_of-N-over-large-intervals} of Lemma~\ref{lem:estimates-for-terms} imply
\begin{align*}
I_1\le \Pp^y\left\{e^{\lambda t_\e}\left(\e^\beta+\sup_{t\in[0, \tau^3\wedge t_\e]}\left|\e U^1_t\right|\right)\ge r\eps^\theta \right\}=o_e(1)
\end{align*}
and
\begin{align*}
I_2\le \Pp^y\left\{2\e^\beta + \sup_{t\in[0, \tau^3\wedge t_\e]} \left|\e N_t\right|\ge 3\eps^\beta\right\}=o_e(1),
\end{align*}
uniformly in $y$, 
and our lemma follows.
\epf

\bigskip

\bpf[Proof of part~\ref{part:small-square-neighb}  of Lemma~\ref{lem:controlling-OU}] 
The tameness of the exit time has already been proven in Lemma~\ref{lem:exit-from_Pi-cannot-take-too-long}. To prove the remaining main claim of part~\ref{part:small-square-neighb}, we take an arbitrary $\vk>2$ and use Lemmas~\ref{lem:exit-from_Pi-cannot-take-too-long} and~\ref{lem:exit-from_Pi_3_cannot-be-short}, and  Lemma~\ref{lem:estimates-for-terms}~\eqref{item:sup_of-S-over-large-intervals}  to find uniformly high probability events $B_\e$ such that
\begin{align*}
\Pp^y\left(B_\e\cap\{e^{-\mu\tau_3}|S_{\tau^2}|>z\}\right)&\le \Pp^y\left(B_\e\cap\left\{\sup_{t\le ( l_\eps )^{\vk}}|S_{t}|>z \e^{-\mu(\beta-\theta)/(2\lambda)}\right\}\right)
\\&\le C (( l_\eps )^{\vk}+1) \exp\{-z^2\e^{-\mu(\beta-\theta)/\lambda}/C\}
\\&= Ce^{-p(z,\e)}, 
\end{align*}
where
\begin{align*}
p(z,\e)=z^2\e^{-\mu(\beta-\theta)/\lambda}/C-\log (( l_\eps )^\vk+1).
\end{align*}
There is $\eps_0>0$ such that for $z\ge1$ and $\eps\in(0,\eps_0)$, 
\begin{align*}
p(z,\e)\ge z^2\e^{-\mu(\beta-\theta)/\lambda}/(2C)  + \e^{-\mu(\beta-\theta)/\lambda}/(2C)-\log (( l_\eps )^\vk+1)\ge  z^2/(2C).
\end{align*}
For $z<1$, we estimate the probability by $1$. Combining these estimates and adjusting the value of the constant, we complete the proof of Lemma~\ref{lem:controlling-OU}~\eqref{part:small-square-neighb} and hence, Lemma~\ref{lem:second-coord-at-exit}.
\epf

\subsection{The setting in rectified coordinates.}\label{sec:rect-setting}
We recall that 
Condition~\ref{setting:conjugacy} introduces a family of linearizing conjugacies and implies that
for any  $R, L'>0$, we may assume that $f(U)$, the domain where the pushforward of $b$ under $f$ is linear, contains the rectangle $\fR$ defined in~\eqref{eq:main_rectangle}.

We are going to study the process $Y=f(X)$ until the time $\tau_\fR$, the exit time from $\fR$. 
The It\^o formula implies that 
until that time the evolution of $Y$ is governed by SDE~\eqref{eq:def_SDE_near_a_saddle_point}  with coefficients $F$ and $G$ 
given by
\begin{align*}
F_j^i(y)&=\partial_k f^i(f^{-1}(y))\sigma^k_j(f^{-1}(y)),\quad y\in f(U),
\\
G^i(y)&=\frac{1}{2}\partial_{jk}^2f^i(f^{-1}(y))\left(\, \sigma^j(f^{-1}(y))\cdot\sigma^k(f^{-1}(y)) \,\right),\quad y\in f(U),
\end{align*}
where the Einstein convention of summation over repeated indices is used.
Since~$\sigma$ is assumed to be $C^3_\mathrm{b}$ (see~\ref{setting:conjugacy}) and $f$ is assumed to be $C^5_\mathrm{b}$, we see that 
$F, G\in C^3_\mathrm{b}$, and we can extend them to $\R^2$ preserving smoothness and boundedness (but not the linearizing property) and study solutions of~\eqref{eq:def_SDE_near_a_saddle_point} with thus extended coefficients.  
Estimates from sections  \ref{sec:basic_estimates}, \ref{sec:est_N} hold for these solutions, hence, they apply to the process $f(X)$ stopped at $\tau_\fR$.

Let us describe the setting and show that it is compatible with \ref{setting:general}, \ref{setting:geometry-domain}, \ref{setting:initial-cond}, and \ref{setting:entrance-scaling-limit}, up to a small correction.

The role of vector field $b$ in \ref{setting:general} is played by $\bar b:x\mapsto (\lambda x^1, -\mu x^2)$.
The role of the diffusion $X_t$ is played by $Y_t$, so \ref{setting:general} holds only  up to a small correction given by $\eps^2G$ in the drift term.

The interior of $\fR$ plays the role of $D$ in \ref{setting:geometry-domain}, namely
\begin{align}\label{eq:D=(-R,R)...}
    D = (-R,R)\times (-L',L'),
\end{align}
and the origin $(0,0)$ is the saddle point $O$ associated with $\bar b$. We also assume that
\begin{align}\label{eq:R,L'>1}
    R,\,L'\geq 1,
\end{align}
which we can always arrange by scaling $f$. We set
\begin{align}\label{eq:x_0,v,q,v}
    x_0 = (0,L),\quad v=(1,0),\quad q_\pm =(\pm R,0), \quad v_\pm =(0,1)
\end{align}
where we choose $L>0$ sufficiently small so that \ref{setting:geometry-domain} is satisfied. One viable choice is $R,L'=1$ and $L=1/2$.

The process $Y$ starting near $x_0$ exits~$\fR$, at time $\tau=\tau_{\fR}$ given in \eqref{eq:tau_fR}, typically near $q_\pm$. See Figure~\ref{fig:rectified_coord} for this setting.

\begin{figure}[ht]
\centering
\begin{tikzpicture}

    \node (0) at (-3.75, 0) {};
    \node (1) at (3.75, 0) {};
    \node (2) at (0, 0) {};
    \node (3) at (-2.75, 0) {};
    \node (4) at (2.75, 0) {};
    \node (5) at (-2.75, 1.375) {};
    \node (6) at (2.75, 1.375) {};
    \node (7) at (-2.75, -2.75) {};
    \node (8) at (2.75, -2.75) {};
    \node (9) at (-2.75, 2.75) {};
    \node (10) at (2.75, 2.75) {};
    \node (11) at (0, -3.65) {};
    \node (12) at (0, 3.75) {};
    \node (13) at (0, 1.375) {};
    \node (14) at (-0.5, 2.25) {};
    \node (15) at (-2.25, 0.5) {};
    \node (16) at (0.5, 2.25) {};
    \node (17) at (2.25, 0.5) {};
    \node (18) at (-0.5, -2.25) {};
    \node (19) at (-2.25, -0.5) {};
    \node (20) at (0.5, -2.25) {};
    \node (21) at (2.25, -0.5) {};

    \node [below] at (0.2, 0.05) {$O$};
    \node [below] at (3, 0.05) {$q_+$};
    \node [below] at (-2.96, 0.05) {$q_-$};
    \node at (0.2, 1.55) {$x_0$};
    \node at (2, 1.55) {$v$};
    \node at (3.05, 2) {$v_+$};
    \node at (-3, 2) {$v_-$};
    \node at (2, -1.5) {$\Pi$};

\foreach \n in {3, 4, 2, 13}
        \node at (\n)[circle,fill,inner sep=1.25pt]{};

    \draw [semithick, dotted, -latex] (0.center) to (1.center);
    \draw [semithick, dotted, -latex] (11.center) to (12.center);
    \draw [semithick] (7.center) to (8.center);
    \draw [thick, dotted] (5.center) to (6.center);
    \draw [very thick, -latex] (13.center) to (2,1.375); \draw [semithick] (5.center) to (9.center);
    \draw [semithick] (9.center) to (10.center);
    \draw [semithick] (10.center) to (6.center);
    \draw [semithick] (7.center) to (3.center);
    \draw [semithick] (8.center) to (4.center);
    \draw [very thick, -latex] (3.center) to (-2.75,2); \draw [very thick, -latex] (4.center) to (2.75,2); \draw [semithick, -Stealth] [in=0, out=-90] (14.center) to (15.center);
    \draw [semithick, -Stealth] [in=180, out=-90] (16.center) to (17.center);
    \draw [semithick, -Stealth] [in=0, out=90] (18.center) to (19.center);
    \draw [semithick, -Stealth] [in=-180, out=90] (20.center) to (21.center);

\end{tikzpicture}

\caption{Dynamics in rectified coordinates}
{\label{fig:rectified_coord}}

\end{figure}
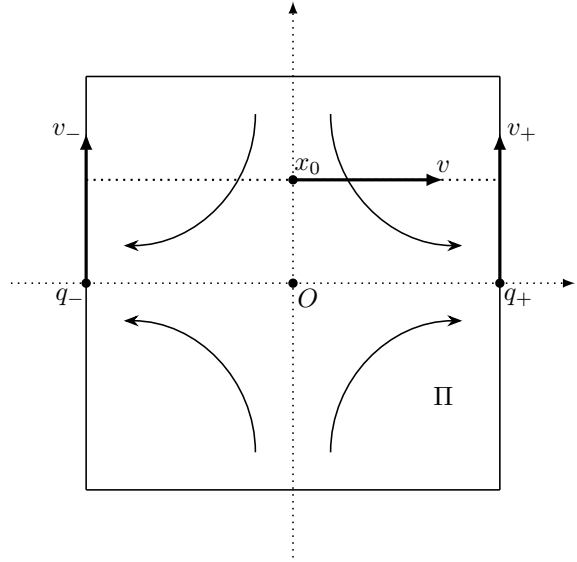

We are mostly interested in initial conditions described by Condition~\ref{setting:initial-cond} which can be rewritten as follows:  $\alpha\in(0,1]$;
the initial condition $Y_0=Y_{\e,0}$ is measurable with respect to $\Fc_0$ and satisfies
\begin{align}
\label{eq:ini-cond-rect}
Y_0=x_0 + \eps^\alpha\xi_\eps v = (\eps^\alpha\xi_\eps, L),
\end{align}
for some  real-valued r.v.'s~$\xi_\e$  such that $\e^\alpha \xi_\e\in[-1,1]$, $\e>0$.

We also assume that Condition~\ref{setting:entrance-scaling-limit} holds for some r.v.\ $\xi$.

In agreement with~\eqref{eq:entrance-scaling-conditioned} and  the definition of $\Psc^x$ above that display, 
in this section, 
$\Psc^x=\Pp^{x_0+\eps x v}=\Pp^{(\eps x, L)}$ denotes the distribution of the diffusion~\eqref{eq:def_SDE_near_a_saddle_point} with initial condition $Y_0=x_0+\eps x v= (\eps x, L)$.

As our main goal, in the next subsection, we prove lemmas stated in Sections~\ref{sec:2saddles} and~\ref{sec:long-escape-chains} for the ``rectified coordinates'' setting described above.
Let us summarize the setting for the convenience of reference:
\begin{remark}\label{rem:irc}
\rm
A lemma is said to hold \irc\ if it holds for $Y$ given in \eqref{eq:def_SDE_near_a_saddle_point} in place of $X$, $D$ given in \eqref{eq:D=(-R,R)...}, and $x_0,v,q_\pm,v_\pm$ given in \eqref{eq:x_0,v,q,v}, where $R,L'$ satisfy \eqref{eq:R,L'>1} and $L>0$ is sufficiently small so that \ref{setting:geometry-domain} is satisfied.
\end{remark}

\subsection{Proofs of lemmas \irc}\label{sec:proofs_of_lemmas_rectified}

Here, we collect proofs of some lemmas in Section~\ref{sec:2saddles} and~\ref{sec:long-escape-chains} \irc (see Remark~\ref{rem:irc}).
Some of our proofs use nontrivial local limit theorems that we postpone to Sections~\ref{sec:Gaussian-approx} and~\ref{sec:LLT}. These two sections assume the setting in Sections~\ref{sec:basic_estimates} and~\ref{sec:est_N} and, additionally, that $F$ and $G$ in \eqref{eq:def_SDE_near_a_saddle_point} are $C^3_\mathrm{b}$ (see the beginning of Section~\ref{sec:Gaussian-approx}). Hence, the results from those sections are applicable here.

We recall that  we are considering the initial conditions described in \eqref{eq:ini-cond-rect}, i.e., belonging to  $I=[-R,R]\times \{L\}$.
If $\xi_\e=x$ in~\eqref{eq:ini-cond-rect} is~deterministic, then the initial condition
is  \begin{align}
\label{eq:deterministic-ini-cond-rect}
y=(0,L)+\eps^\alpha (x,0).
\end{align}

Throughout this subsection, $\tau$ stands for $\tau_\fR$.

\begin{lemma} 
\label{lem:hp-events-in-rectangle}
Under $\Pp^y,$ events $A_{+,\e}\cup A_{-,\e}$ happen w.h.p., uniformly in $y\in I$. 
On that event,
\begin{gather}
\label{eq:expresssion_for-tau}
\tau=\frac{1}{\lambda}\log\frac{R}{\left|\eps^\alpha x+\e U^1_\tau\right|},
\\
\label{eq:dir-exit-rect}
Y^1_\tau =e^{\lam \tau}\left(\eps^\alpha x+\eps U^1_\tau\right),
\\
\label{eq:expr-for-second-coord-at-exit}
Y^2_\tau=e^{-\mu \tau}L+\e N_\tau =\frac{L}{R^\rho}\left|\e^\alpha x+\e U^1_\tau\right|^\rho +\e N_\tau, 
\end{gather}
and (recalling \eqref{eq:scaling-at-exit}) 
\begin{equation}
\label{eq:expr-for-xi-prime}
\xi'_\e=\begin{cases}
\displaystyle
\frac{L}{R^\rho}\left|x+\e^{1-\alpha} U^1_\tau\right|^\rho+\e^{1-\rho\alpha} N_\tau,& \alpha\rho \le 1, \vspace{2mm} \\ 
\displaystyle
\frac{L}{R^\rho}\left|\e^{\alpha-\frac{1}{\rho}}x+\e^{1-\frac{1}{\rho}} U^1_\tau\right|^\rho+N_\tau,&\alpha \rho>1.
\end{cases}
\end{equation}
\end{lemma}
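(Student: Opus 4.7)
The plan is to derive the four displayed formulas directly from Duhamel's principle \eqref{eq:SDE_near_a_saddle_point_after_duhamel1}--\eqref{eq:SDE_near_a_saddle_point_after_duhamel2}, and to establish the high-probability claim $A_{+,\e}\cup A_{-,\e}$ by invoking the detailed exit-location analysis of Lemma~\ref{lem:second-coord-at-exit}.

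First I would address the high-probability claim. Since \eqref{eq:condition_on_alpha_rho_gamma} is satisfied with $\theta=0$ and $r=R$ (so that $\tau=\tau_\fR$ by \eqref{eq:tau_fR}), for every $y=(\e^\alpha x, L)\in I$ Lemma~\ref{lem:second-coord-at-exit} produces events $B_\e$ of uniformly high probability on which $Y_\tau$ lies in $\{-R,R\}\times[-3\e^\beta,3\e^\beta]$. For sufficiently small $\e$, the set $\{-R,R\}\times[-3\e^\beta,3\e^\beta]$ is contained in $q_+ + [-1,1]v_+ \,\cup\, q_-+[-1,1]v_-$ (recall $v_\pm = (0,1)$ and \eqref{eq:x_0,v,q,v}), so $B_\e \subset A_{+,\e}\cup A_{-,\e}$, giving the first assertion. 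On this event the exit through the lateral sides means $|Y^1_\tau|=R$.

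Next I would read off \eqref{eq:dir-exit-rect} directly from \eqref{eq:SDE_near_a_saddle_point_after_duhamel1} evaluated at $t=\tau$, using $Y^1_0 = \e^\alpha x$. Taking absolute values and using $|Y^1_\tau|=R$ on $A_{+,\e}\cup A_{-,\e}$ yields
\[
R = e^{\lambda\tau}\bigl|\e^\alpha x + \e U^1_\tau\bigr|,
\]
which, after taking logarithms, gives \eqref{eq:expresssion_for-tau}. Then \eqref{eq:expresssion_for-tau} implies
\[
e^{-\mu\tau} = \left(e^{-\lambda\tau}\right)^{\rho} = \left(\frac{|\e^\alpha x+\e U^1_\tau|}{R}\right)^{\rho},
\]
and substituting this into the Duhamel formula \eqref{eq:SDE_near_a_saddle_point_after_duhamel2} for $Y^2_\tau$, with $Y^2_0=L$, produces the two expressions in \eqref{eq:expr-for-second-coord-at-exit}.

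Finally, \eqref{eq:expr-for-xi-prime} is a matter of dividing \eqref{eq:expr-for-second-coord-at-exit} by $\e^{\alpha'}$, where $\alpha'=(\alpha\rho)\wedge 1$. In the case $\alpha\rho\le 1$, $\alpha' = \alpha\rho$, so pulling $\e^{\alpha\rho}$ out of $|\e^\alpha x + \e U^1_\tau|^\rho$ and $\e^{1-\alpha\rho}$ out of $\e N_\tau$ gives the first branch. In the case $\alpha\rho>1$, $\alpha'=1$, so pulling $\e$ out of the second summand and rewriting $|\e^\alpha x+\e U^1_\tau|^\rho = \e \cdot |\e^{\alpha-1/\rho}x + \e^{1-1/\rho} U^1_\tau|^\rho$ gives the second branch. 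The algebra here is routine; the only nontrivial input is already-established tameness of $U^1_\tau$ and $N_\tau$ (Lemmas~\ref{lem:estimates-for-terms} and~\ref{lem:second-coord-at-exit}), which is not needed for the identities themselves but would be needed to justify that these expressions make sense as r.v.'s on the event of interest. The single genuinely nontrivial step is the exit-through-lateral-sides assertion, and that is precisely what Lemma~\ref{lem:second-coord-at-exit} delivers.
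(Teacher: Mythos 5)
The algebraic part of your argument — reading \eqref{eq:dir-exit-rect} off Duhamel's formula, inverting it to get \eqref{eq:expresssion_for-tau}, substituting $e^{-\mu\tau}=(e^{-\lambda\tau})^{\rho}$ into \eqref{eq:SDE_near_a_saddle_point_after_duhamel2} for \eqref{eq:expr-for-second-coord-at-exit}, and dividing by $\e^{\alpha'}$ for \eqref{eq:expr-for-xi-prime} — is exactly the paper's route and is correct.

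There is, however, a gap in the high-probability claim. You assert that ``for every $y=(\e^\alpha x,L)\in I$ Lemma~\ref{lem:second-coord-at-exit} produces events $B_\e$ of uniformly high probability,'' but the uniformity in Lemma~\ref{lem:second-coord-at-exit} is only over $y\in I_\e=[-\e^\alpha,\e^\alpha]\times\{L\}$, i.e.\ over initial conditions within distance $\e^\alpha$ of the stable manifold (equivalently $|x|\le 1$ in your parametrization). The lemma you are proving claims uniformity over the whole segment $I=[-R,R]\times\{L\}$, which includes points with $y^1$ of order~$1$, where Lemma~\ref{lem:second-coord-at-exit} says nothing. The paper handles this by splitting: it applies Lemma~\ref{lem:second-coord-at-exit} on $I_{\e}=[-\e^{\alpha'},\e^{\alpha'}]\times\{L\}$ for some $\alpha'\in(0,1)$, and disposes of $y\in I\setminus I_\e$ by a simple large deviation (or direct Duhamel plus exponential-martingale) estimate: for such $y$ one has $|y^1|\ge\e^{\alpha'}\gg \e\,l_\e^{\vk}\ge |\e U^1_t|$ w.h.p., so $Y^1_t=e^{\lambda t}(y^1+\e U^1_t)$ reaches $\pm R$ by time $\sim\frac{\alpha'}{\lambda}l_\e$ while $Y^2_t=e^{-\mu t}L+\e N_t$ remains in $(-L',L')$ by the tameness of $N$, so the exit is again through a lateral side. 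You need to add this second case (or an equivalent argument) to justify the uniformity over all of~$I$.
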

\bpf  Lemma~\ref{lem:second-coord-at-exit} directly implies that $A_{+,\e}\cup A_{-,\e}$ happens w.h.p., uniformly in $y\in I_\e =[-\e^{\alpha'},\e^{\alpha'}]\times\{L\}$ for any $\alpha'\in(0,1)$.
 It also happens w.h.p., uniformly in $y\in I\setminus I_\e$ due to a simple large deviation estimate.
Identities~\eqref{eq:expresssion_for-tau}, \eqref{eq:dir-exit-rect}, \eqref{eq:expr-for-second-coord-at-exit} follow from \eqref{eq:SDE_near_a_saddle_point_after_duhamel1}, \eqref{eq:SDE_near_a_saddle_point_after_duhamel2} and \eqref{eq:deterministic-ini-cond-rect};  \eqref{eq:SDE_near_a_saddle_point_after_duhamel2} and
 \eqref{eq:scaling-at-exit} imply~\eqref{eq:expr-for-xi-prime}.
\epf

In the proof of Lemma~\ref{lem:postive-limit-prob-if-close-to-manifold} and multiple other instances throughout the paper, we will need 
 the following obvious lemma.

\begin{lemma}\label{lem:gaussian_sym_diff}
Suppose that $\Nc$ is a r.v.\ with density bounded by a constant $C$. Then, for any Borel sets $A_1,A_2\subset\R$,
\[
|\Pp\{\Nc\in A_1\} - \Pp\{\Nc\in A_2\}|\le C\, \Leb(A_1\triangle A_2).
\]
\end{lemma}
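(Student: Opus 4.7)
The statement is indeed essentially immediate once one unpacks the definitions. The plan is to write the two probabilities as Lebesgue integrals of the common density $f$ of $\Nc$ against $\ONE_{A_i}$, use the identity
\[
\Pp\{\Nc\in A_1\}-\Pp\{\Nc\in A_2\}=\int_{A_1\setminus A_2} f(x)\,dx-\int_{A_2\setminus A_1} f(x)\,dx,
\]
apply the triangle inequality to bound the absolute value of the difference by $\int_{A_1\triangle A_2} f(x)\,dx$, and finally invoke the hypothesis $f\le C$ to conclude.

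There is no real obstacle here: the entire argument fits in the two displays above followed by the estimate $\int_{A_1\triangle A_2} f\,d\Leb \le C\,\Leb(A_1\triangle A_2)$. The only mild point to mention is that the existence of the density is exactly what makes the distribution of $\Nc$ absolutely continuous with respect to Lebesgue measure, so that $\Leb(A_1\triangle A_2)=0$ controls the probabilistic difference — but this is immediate from the definition of a density. Accordingly, I would present the proof as a single three-line computation without any additional lemmas.
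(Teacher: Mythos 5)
Your argument is correct and is exactly the elementary computation the paper has in mind — the paper states this lemma without proof, calling it obvious. Decomposing the difference of probabilities over $A_1\setminus A_2$ and $A_2\setminus A_1$, applying the triangle inequality, and bounding the density by $C$ is a complete proof.
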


\subsubsection{Proof of Lemma~\ref{lem:exit-straight} in rectified coordinates} 
\label{sec:rec-lem:exit-straight}
The representation for $\xi'$ in~\eqref{eq:expr-for-xi-prime} holds w.h.p., uniformly in $x\in K_{\vk}(\e)$, due to 
Lemma~\ref{lem:hp-events-in-rectangle}. The lemma follows, since~$N_\tau$ and $U_\tau^1$ are uniformly tame due to Lemmas~\ref{lem:estimates-for-terms}~\eqref{item:tail-of_U_1} and \ref{lem:second-coord-at-exit}~\eqref{item:bartau=tau}.
\epf

\subsubsection{Proof of Lemma~\ref{lem:exit-on-the-same-whp} in rectified coordinates.}
\label{sec:proofof-lem:exit-on-the-same-whp}
Let $I_\e=[\e l_\e^\vk, R]\times\{L\}$.  Using~\eqref{eq:dir-exit-rect}  and  Lemma~\ref{lem:estimates-for-terms}~\eqref{item:tail-of_U_1}, we obtain
\[
\sup_{y\in I_\e}\Pp^y\left(A_{-,\e}\right)\le \sup_{y \in I_\e}\Pp^y\left\{Y^1_\tau < 0\right\}\le \sup_{y\in I_\e}\Pp^y\left\{\sup_{t\ge 0}\left|U^1_t\right|>l_\e^\vk\right\}=o_e(1),
\]
for sufficiently large $\vk$. \epf

\subsubsection{Proof of Lemma~\ref{lem:postive-limit-prob-if-close-to-manifold} in rectified coordinates}
\label{sec:rec-lem:postive-limit-prob-if-close-to-manifold}
Using~\eqref{eq:dir-exit-rect}, we obtain
\[
\Psc^{x}\left(A_{-,\e}\right)=\Psc^x\left\{x+ U^1_\tau<0\right\}+o_e(1) = 1 - \Psc^x\left\{x+ U^1_\tau\geq 0\right\}+ o_e(1),
\]
uniformly over $x\in K_\vk(\eps)$. 
Choosing $\vk'>\vk$ and using Lemma~\ref{lem:estimates-for-terms}, we have
\begin{align*}
    \Psc^x\left\{x+ U^1_\tau\geq 0\right\}& = \Psc^x\left\{x+ U^1_\tau\in\left[0,\,l^{\vk'}_\eps\right]\right\} + \Psc^x\left\{ U^1_\tau >l^{\vk'}_\eps -x\right\} \\
    &= \Psc^x\left\{x+ U^1_\tau\in\left[0,\,l^{\vk'}_\eps\right]\right\} + o_e(1),
\end{align*}
uniformly in $x\in K_\vk(\eps)$. 
Applying Lemma~\ref{lem:gaussian_approx}
 with $1,0,0,\tau$ substituted for $\alpha,\xi,\theta,\zeta$, we have
\begin{align*}
    \sup_{x\in K_\vk(\eps)}\left|\Psc^x\left\{x+ U^1_\tau\in\left[0,\,l^{\vk'}_\eps\right]\right\}- \Pp\left\{x+ \frU\in \left[\mp \eps^\eta,\, l^{\vk'}_\eps\pm \eps^\eta\right]\right\}\right|=\smallo{\eps^\delta},
\end{align*}
for some $\delta,\eta>0$ and a centered Gaussian r.v.\ $\frU$ with variance $\cc_1$ defined in~\eqref{eq:def_cc_1}. 
Using the choice $\vk'>\vk$, the Gaussian tail of $\frU$, and Lemma~\ref{lem:gaussian_sym_diff}, we can verify that
\begin{align*}
    \sup_{x\in K_\vk(\eps)}\left|\Pp\left\{x+ \frU\in \left[\mp \eps^\eta,\, l^{\vk'}_\eps\pm \eps^\eta\right]\right\}-\Pp\{x+ \frU\geq 0\}\right|=\smallo{\eps^{\delta'}}
\end{align*}
for some $\delta'>0$. Setting $s=\cc_1$, we have $\Pp\{y+ \frU\geq 0\} = 1 - \psi_s(-y)$. Combining these estimates, we complete the proof.
\epf

\subsubsection{Proof of Lemma~\ref{lem:alpha<1-and-rho-alpha<1-- concentration at exit} in rectified coordinates}
\label{sec:rec-lem:alpha<1-and-rho-alpha<1-- concentration at exit}
Using our assumption $\alpha<1$, $\alpha\rho<1$, the fact that  $\xi_\eps\in [l_\eps^{-\vk},l_\eps^{\vk}]$ 
w.h.p., and Lemma~\ref{lem:estimates-for-terms} in ~\eqref{eq:expr-for-xi-prime}, we obtain that $\xi'_\eps\in [l_\eps^{-\vk'},l_\eps^{\vk'}]$ w.h.p.\ for sufficiently large $\vk'$.

\subsubsection{Proof of Lemma~\ref{lem:local-limit-theorem} in rectified coordinates}
\label{sec:rec-lem:local-limit-theorem}
The proof relies on results from Section~\ref{sec:LLT}.
The first part of Lemma~\ref{lem:local-limit-theorem} is a combination of Propositions~\ref{prop:exit_loc_beta<1} and~\ref{prop:exit_loc}.

For part~\eqref{item:exit-below-unlikely}, it suffices to rewrite~\eqref{eq:expr-for-second-coord-at-exit} (holding w.h.p.~uniformly in $x\in K_\vk(\e)$):
\begin{align}
\label{eq:Y_if_exits_normally}
   Y_\tau^2= \eps^\rho\left|x+U^1_\tau\right|^\rho R^{-\rho}L + \eps N^2_\tau,
\end{align}
and use Lemma~\ref{lem:second-coord-at-exit}~\eqref{item:1_lem:second-coord-at-exit} to write
\begin{align*}
    \sup_{x\in K_\vk(\e)}\Psc^x\left\{\eps^\rho\left|x+U^1_\tau\right|^\rho R^{-\rho}L + \eps N^2_\tau \leq -\eps l^{\vk'}_\eps\right\}\leq \sup_{x\in K_\vk(\e)}\Psc^x\{ N^2_\tau \leq - l^{\vk'}_\eps\} = o_e(1)
\end{align*}
for $\vk'>1/2$.

To prove part~\eqref{item:large-negative-values-w.l.p.}, we first note that, for any $C\in\R$, due to Proposition~\ref{prop:long_exit_time_asym_linear_box_case} (with $1, 0, \frac{\beta(1+\delta)}{\rho}, R,-C$ substituted for $\alpha, \theta, \beta, r,c$, respectively),
\begin{align}\label{eq:Q(tau+C>...)}
    \sup_{x\in K_\vk(\e)}\Psc^x\left\{\tau + C\geq \frac{\beta(1+\delta)}{\mu} l_\eps \right\} = O\left(\eps^{\frac{\beta(1+\delta)}{\rho}-1}\right),
\end{align}
uniformly in $x\in K_{\vk}(\e)$. Note that the upper bound in part~\eqref{item:large-negative-values-w.l.p.} is a specific case of this estimate, with $C=0$. For the proof in original coordinates, we will need \eqref{eq:Q(tau+C>...)} with nonzero $C$.
To prove a matching lower bound for $\tau$, we note that, due to~\eqref{eq:Y_if_exits_normally},
the symmetric difference between events 
$D_{1,\e}=\left\{Y_\tau \in \{R\}\times (-\infty,\eps^\beta l^{\vk'}_\eps]\right\}$ and \\ $D_{2,\e}=\left\{\eps^\rho(x+U^1_\tau)^\rho R^{-\rho}L + \eps N^2_\tau \leq \eps^\beta l^{\vk'}_\eps,\, x+U^1_\tau>0\right\}$
is a small probability event under $\Psc^x$, uniformly in $x\in K_\vk(\e)$.

Due to~\eqref{eq:expresssion_for-tau}, on $D_{2,\eps}$ we have
\begin{gather*}
    \tau \geq \frac{\beta}{\mu} l_\eps +\frac{1}{\mu}\log \frac{L}{l^{\vk'}_\eps - \eps^{1-\beta }N^2_\tau },\\
    l^{\vk'}_\eps - \eps^{1-\beta }N^2_\tau>0.
\end{gather*}
Therefore, on $D_{2,\eps}$,  $\tau - \frac{\beta}{\mu} l_\eps <- \frac{\beta\delta}{\mu} l_\eps $ implies $ |N^2_\tau |> L\eps^{-\beta\delta} - l^{\vk'}_\eps$, 
but the latter occurs (uniformly) w.l.p.\ due to Lemma~\ref{lem:second-coord-at-exit}~\eqref{item:1_lem:second-coord-at-exit}. 
\epf

\subsubsection{Proof of Lemma~\ref{lem:exit_time_is_log} in rectified coordinates}
\label{sec:rec-lem:exit_time_is_log}
Using~\eqref{eq:expresssion_for-tau}, we obtain
\begin{align*}
    \Psc^x\left\{\tau<\frac{1-\delta}{\lambda} l_\eps \right\} = \Psc^x\left\{ \left|x+U^1_\tau\right|>\eps^{-\delta}R\right\} +o_e(1),
\end{align*}
uniformly in $x\in K_\vk(\eps)$.
Due to Lemma~\ref{lem:estimates-for-terms}~\eqref{item:tail-of_U_1}, the r.h.s.\  is $o_e(1)$, uniformly in $x\in K_\vk(\eps)$.
A matching lower bound is implied by Proposition~\ref{prop:long_exit_time_asym_linear_box_case} with $1,0,1+\delta$ substituted for $\alpha, \theta, \beta$:
\begin{align*}
    \Psc^x\left\{\tau>\frac{1+\delta}{\lambda} l_\eps \right\}=O\left(\eps^{\delta}\right),
\end{align*}
uniformly in $x$. 
\epf

\subsubsection{Proof of Lemma~\ref{lem:if-entrance-far-exit-far} in rectified coordinates}
\label{sec:rec-lem:if-entrance-far-exit-far}
Using~\eqref{eq:expr-for-xi-prime}  from Lemma~\ref{lem:hp-events-in-rectangle}
along with Lemmas~\ref{lem:estimates-for-terms}~\eqref{item:tail-of_U_1} and~\ref{lem:second-coord-at-exit}~\eqref{item:bartau=tau}, we obtain that 
if $\vk>\vk'/\rho$, then
$\xi'_\e>l_\e^{\vk'}$ w.h.p., uniformly in $x\in(l_\e^{\vk},\e^{-\alpha}]$,  and our claim follows. \epf

\subsubsection{Proof of Lemma~\ref{lem:one-step_lower_bound} in rectified coordinates}
\label{sec:rec-lem:one-step_lower_bound}
Using~\eqref{eq:expr-for-second-coord-at-exit} from Lemma~\ref{lem:hp-events-in-rectangle}, we can bound the probability of interest, up to a $o_e(1)$ term, by
\begin{align*}
    \Pp^{y }\left\{\eps^{\alpha\rho}R^{-\rho}L\left|x+\eps^{1-\alpha}U^1_\tau\right|^\rho+\eps N_\tau <-\eps^\beta l_\eps^{\vk'}\right\}\leq \Pp^{y }\left\{|N_\tau|>\eps^{\beta-1}l_\eps^{\vk'}\right\} =o_e(1),
\end{align*}
uniformly in $x\in  K_\vk(\eps)$,
where we used Lemma~\ref{lem:second-coord-at-exit}~\eqref{item:1_lem:second-coord-at-exit}. \epf

\subsubsection{Proof of Lemma~\ref{lem:exit_time_is_log_alpha<1} in rectified coordinates}
\label{sec:rec-lem:exit_time_is_log_alpha<1}
Using~\eqref{eq:expresssion_for-tau}, we obtain
\begin{align*}
    \Pp^{y}\left\{\tau<\frac{\alpha-\delta}{\lambda} l_\eps \right\} &\leq \Pp^{y}\left\{ \left|x+\eps^{1-\alpha}U^1_\tau\right|>\eps^{-\delta}R\right\}+o_e(1)
    \\ & \leq \Pp^{y}\left\{ \eps^{1-\alpha}\left|U^1_\tau\right|>\eps^{-\delta}R-|x|\right\} +o_e(1),  
    \\
    \Pp^{y}\left\{\tau>\frac{\alpha+\delta}{\lambda} l_\eps \right\} &\leq \Pp^{y}\left\{ \left|x+\eps^{1-\alpha}U^1_\tau\right|<\eps^{\delta}R\right\}+o_e(1)\\ &\leq \Pp^{y}\left\{ |x|<\eps^{\delta}R+\eps^{1-\alpha}\left|U^1_\tau\right|\right\}+o_e(1),
\end{align*}
uniformly in $x\in K_\vk(\eps)$.
Applying Lemma~\ref{lem:estimates-for-terms}~\eqref{item:tail-of_U_1} to $U^1_\tau$, we see that the first display is $o_e(1)$ uniformly in $x \in K_\vk(\eps)$, and the second display is bounded from above by $\ONE_{|x|\leq \eps^{\delta'}}+ o_e(1)$ for some $\delta'>0$.\epf

\section{Proofs in the original coordinates}
\label{sec:original-coords}

The goal of this section is to prove the results from Sections~\ref{sec:2saddles} and~\ref{sec:long-escape-chains} in full generality. The plan is to 
use the results obtained in Section~\ref{sec:rectified} \irc\ to study the diffusion inside the domain of the linearizing conjugacy, and 
combine these results with the analysis of motion along heteroclinic orbits outside of that domain.  We begin with the latter.

\subsection{Diffusion along a heteroclinic orbit}
\label{sec:finite-time-horizon-lemmas}
The results in this section concern finite time horizon and are close to those in~\cite{FW} and~\cite{Almada-Bakhtin:MR2739004}.

Given a vector field $b$, we call a $C^1$ curve $\trcurve:[0,1]\to \R^2$ $b$-transversal if, for every $t\in[0,1]$,

\begin{align*}
    b(\trcurve(t))\cdot \frac{d}{dt}\trcurve(t) \neq 0.
\end{align*}
For brevity, we often use  $\trcurve$ to denote $\trcurve([0,1])\subset \R^2$, the image of $\trcurve$. In addition, we denote by $\mathring \trcurve $ the set $\trcurve((0,1))$. 
We recall the definition of the flow $(\flow^t)_{t\in\R}$ from~\eqref{eq:flow}.

\begin{lemma}\label{lem:hit_loc}
 Let $E\subset \R^2$ be compact and let $\trcurve:[0,1]\to \R^2\setminus E$ be $C^2$, 
$b$-transversal. Suppose further that for every $z\in E$, there is a minimal time $t_z>0$ such that $\flow^{t_z}z \in \mathring \trcurve$. 
Let $\zeta=\inf\{t\ge 0:\ X_t\in\trcurve \}$, where $X_t$ is a solution of~\eqref{eq:basic-sde}. Then there is
a constant $C$ such that 
\begin{equation}
\label{eq:whp-bounded-time}
    \sup_{z\in E} \Pp^z\{\zeta >C\} = o_e(1),
\end{equation}
 and there are constants $c_1,c_2$ such that
  for all $\eta>0$,
 \[
    \sup_{z\in E} \Pp^z\left\{|X_{\zeta} - \flow^{t_z}z|>\eta;\ \zeta <\infty\right\} \leq c_1 e^{-c_2 \eta^2\eps^{-2}}.
\]
In particular,
for any fixed $(\beta,\vk)\in([0,1)\times \R)\cup (\{1\}\times (1/2,+\infty))$, 
\begin{align*}
    \sup_{z\in E} \Pp^z\left\{|X_{\zeta} - \flow^{t_z}z|>\eps^\beta l^\vk_\eps;\ \zeta <\infty\right\} =o_e(1).
\end{align*}
\end{lemma}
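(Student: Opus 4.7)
\bpf[Proof sketch]
The plan is to couple the stochastic trajectory $X_t$ to the deterministic flow $\flow^t z$ on a finite time horizon, estimate the coupling error via Gronwall plus the exponential martingale inequality, and then use transversality to convert a uniform $C^0$-closeness bound into the desired first-hitting estimate. First, by the implicit function theorem applied to the transversal crossing $\flow^{t_z}z \in \mathring\trcurve$, the map $z \mapsto t_z$ is continuous on $E$, so $T:=\sup_{z\in E} t_z < \infty$ by compactness. Moreover, since $\trcurve \cap E = \emptyset$ and $t_z$ is minimal, the compact arc $\Gamma_z = \{\flow^s z : s\in[0,t_z]\}$ meets $\trcurve$ only at the endpoint $\flow^{t_z}z$. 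By compactness of $E$ and joint continuity of the flow, one can fix $\delta_0 \in (0,1)$ and $\rho_0 > 0$, independent of $z$, such that for every $z \in E$ the set $\{\flow^s z : s \in [0, t_z - \delta_0]\}$ lies at distance at least $\rho_0$ from $\trcurve$, and such that at $\flow^{t_z}z$ the flow crosses $\trcurve$ with transversality angle bounded below by some $\theta_0 > 0$ uniform in~$z$.

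Next, set $Z^z_t = X_t - \flow^t z$ and write
\[
Z^z_t = \int_0^t \left[b(X_s) - b(\flow^s z)\right] ds + \eps \int_0^t \sigma(X_s) dW_s.
\]
Since $b$ is $C^2_{\mathrm{b}}$ and hence Lipschitz with some constant $\Lf$, Gronwall's inequality gives
\[
\sup_{t \in [0, T+1]} |Z^z_t| \le \eps e^{\Lf(T+1)} \sup_{t \in [0, T+1]} \left|\int_0^t \sigma(X_s) dW_s\right|.
\]
The boundedness of $\sigma$ yields a deterministic bound $C_0(T+1)$ on the quadratic variation of the It\^o martingale above, so by the exponential martingale inequality (Lemma~\ref{lem:exp-marting-ineq})
\[
\sup_{z \in E} \Pp^z\!\left\{\sup_{t \in [0, T+1]} |Z^z_t| > \eta\right\} \le c_1 e^{-c_2 \eta^2 / \eps^2},
\]
for constants $c_1, c_2 > 0$ independent of $z$ and $\eps$.

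On the good event $A^z_\eta = \{\sup_{t \le T+1} |Z^z_t| \le \eta\}$ with $\eta$ smaller than a geometric threshold $\eta_0(\rho_0, \theta_0) > 0$: (i) the avoidance distance $\rho_0$ forces $X_t \notin \trcurve$ for $t \in [0, t_z - \delta_0]$; and (ii) the transversality angle $\theta_0$ at $\flow^{t_z}z$ implies, via elementary planar geometry applied to the $\eta$-tube around the arc $\{\flow^s z : s \in [t_z - \delta_0, t_z + \delta_0]\}$, that $X_t$ must hit $\trcurve$ at some time $\tau \in [t_z - \delta_0, t_z + \delta_0]$, with the hitting location satisfying $|X_\tau - \flow^{t_z}z| \le K \eta$ for a constant $K = K(\theta_0)$. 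Thus on $A^z_\eta$ one has $\zeta \le T+1$ and $|X_\zeta - \flow^{t_z}z| \le K\eta$. Setting $C = T+1$ and choosing $\eta = \eta_0$ gives~\eqref{eq:whp-bounded-time}; choosing $\eta = C\eta/K$ rescaled gives the quantitative bound with adjusted $c_1, c_2$. The final statement follows by taking $\eta = \eps^\beta l_\eps^\vk$: the ratio $\eta^2/\eps^2 = \eps^{2\beta-2} l_\eps^{2\vk}$ grows faster than any power of $l_\eps$ when $\beta < 1$, and equals $l_\eps^{2\vk}$ with $2\vk > 1$ when $\beta = 1$, so $e^{-c_2 \eta^2/\eps^2} = o_e(1)$ in both cases. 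The only subtle point in the argument is the uniformity in $z \in E$ of the geometric constants $\rho_0$, $\theta_0$, and hence $\eta_0$; this is supplied by the compactness of $E$ together with the transversality hypothesis and the $C^2$ regularity of $\trcurve$.
\epf
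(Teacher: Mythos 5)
Your argument is correct, but it takes a genuinely different route from the paper: the paper's entire proof is a citation of the classical Freidlin--Wentzell large deviation principle, uniform in the initial condition (\cite[Chapter 5, Theorem 3.2]{FW}), whereas you rederive the needed estimate from scratch via the Gronwall comparison $\sup_{t\le T+1}|X_t-\flow^tz|\le \eps e^{\Lf(T+1)}\sup_{t\le T+1}|\int_0^t\sigma(X_s)dW_s|$ and the exponential martingale inequality, followed by the compactness/transversality argument converting tube-closeness into control of the first hitting time and location. Your approach buys a clean, non-asymptotic Gaussian-type bound $c_1e^{-c_2\eta^2\eps^{-2}}$ with explicit provenance of the constants, and makes visible exactly where the uniformity in $z\in E$ comes from (compactness of $E$, minimality of $t_z$, and uniform transversality); the paper's citation buys brevity and defers all of this to the standard reference, where the same tube estimate is the underlying mechanism in this non-degenerate (Levinson) regime. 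Two small points worth tightening if you write this out in full: (i) for the quantitative bound to hold for \emph{all} $\eta>0$ you should note that $X_\zeta\in\trcurve$ and $\flow^{t_z}z\in\trcurve$, so the event is empty for $\eta>\mathrm{diam}(\trcurve)$ and the intermediate range $\eta\in(K\eta_0,\mathrm{diam}(\trcurve)]$ is absorbed by adjusting $c_2$; (ii) in the geometric step the first hitting time should be localized within $O(\eta)$ of $t_z$ (not merely within $\delta_0$), since the bound $|X_\zeta-\flow^{t_z}z|\le|X_\zeta-\flow^\zeta z|+|\flow^\zeta z-\flow^{t_z}z|\le\eta+C|\zeta-t_z|$ is what produces the constant $K$ — this follows from the uniform transversality angle exactly as you indicate.
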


\bpf The lemma follows from the classical Freidlin--Wentzell Large Deviation Principle, which holds  uniformly with respect  to the initial condition $z$, 
see~\cite[Chapter 5, Theorem 3.2]{FW}.
\epf

\begin{lemma}\label{lem:bi-lip}
 Let $\trcurve_1, \trcurve_2:[0,1]\to \R^2$ be $C^1$ and $b$-transversal. Suppose for every $z \in \trcurve_1$, there is a minimal time $t_z>0$ such that $\flow^{t_z}z \in \trcurve_2$. Then the map~$\phi$ defined by
 \begin{equation}
 \label{eq:def-phi}
 \phi(z)=\flow^{t_z}z. 
 \end{equation}
is a diffeomorphism on $\trcurve_1$. 
\end{lemma}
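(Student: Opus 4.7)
The plan is to deduce from the implicit function theorem that the first-hit time $z \mapsto t_z$ is $C^1$ on $\trcurve_1$, then combine this with the smooth dependence of the flow on initial data to conclude $\phi$ is $C^1$; verify that its derivative along $\trcurve_1$ is nowhere zero via a direct computation that exploits $b$-transversality at both ends; and finally upgrade the resulting local diffeomorphism to a global one using the fact that a continuous, locally injective map from $[0,1]$ into a $1$-dimensional arc must be monotone, hence injective.

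First I would fix $z_0 \in \trcurve_1$ and, using the $b$-transversality of $\trcurve_2$ at $\phi(z_0)$, choose a $C^1$ function $g$ defined in a neighborhood of $\phi(z_0)$ such that $\trcurve_2 = \{g = 0\}$ locally and $\nabla g(\phi(z_0)) \cdot b(\phi(z_0)) \neq 0$. Since $b \in C^2_{\mathrm{b}}$ by~\ref{setting:general}, the flow map $(t,z) \mapsto \flow^t z$ is $C^2$, so $G(z,t) := g(\flow^t z)$ is $C^1$ with $\partial_t G(z_0, t_{z_0}) = \nabla g(\phi(z_0)) \cdot b(\phi(z_0)) \neq 0$. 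The implicit function theorem then produces a $C^1$ local solution $z \mapsto t_z$ of $G(z,t_z) = 0$, and a standard continuity argument (using that $\flow^s z_0$ stays away from the compact set $\trcurve_2$ for $0 < s < t_{z_0}$) identifies this branch with the minimal first-hit time. Composing gives $\phi \in C^1$ near $z_0$.

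Next I would show $\phi$ is an immersion. Parametrizing $\trcurve_1$ by a $C^1$ map $\gamma_1:[0,1]\to \R^2$ and setting $T(s) = t_{\gamma_1(s)}$, differentiation yields
\[
(\phi\circ \gamma_1)'(s) \;=\; D\flow^{T(s)}\big|_{\gamma_1(s)}\gamma_1'(s) \;+\; T'(s)\, b\bigl(\phi(\gamma_1(s))\bigr).
\]
If this vanished at some $s_0$, then applying $D\flow^{-T(s_0)}$ and using the autonomous-flow identity $D\flow^{-t}\big|_{\flow^t z}\, b(\flow^t z) = b(z)$, one would obtain $\gamma_1'(s_0) = -T'(s_0)\, b(\gamma_1(s_0))$, i.e.\ $\gamma_1'(s_0)$ parallel to $b(\gamma_1(s_0))$, contradicting the $b$-transversality of $\trcurve_1$. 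Hence $\phi$ is a $C^1$ local diffeomorphism of $\trcurve_1$ into $\trcurve_2$, and the inverse function theorem supplies a $C^1$ local inverse.

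Finally, for global injectivity I would identify $\trcurve_1$ and $\trcurve_2$ with intervals via their parametrizations and regard $\phi$ as a continuous, locally injective map $[0,1] \to [0,1]$. Any such map is automatically monotone (a failure of monotonicity would produce arbitrarily small neighborhoods on which $\phi$ is not injective), hence globally injective; combined with the local diffeomorphism property this yields the claim. I expect this last step to be the main obstacle, since orbits starting at distinct points of $\trcurve_1$ can in principle re-cross $\trcurve_1$ before hitting $\trcurve_2$, making a direct ``disjoint orbit'' argument fail; the monotonicity observation is the clean way to bypass that concern, and checking the continuity and local injectivity hypotheses it requires is what actually has to be done.
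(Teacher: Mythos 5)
Your proof is correct and follows the same route as the paper, whose entire argument is the one-line remark that the claim is ``a consequence of the implicit function theorem'' together with transversality. You have simply filled in the details that remark leaves implicit: the IFT applied to $g\circ\flow^t$ to get $z\mapsto t_z$ in $C^1$, the chain-rule computation showing $\phi$ is an immersion (using the flow-equivariance of $b$ and transversality of $\trcurve_1$), and the monotonicity argument upgrading local to global injectivity.
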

\bpf Due to the transversality assumption, this is a consequence of the implicit function theorem. \epf

The following result is an extension of Theorem~\ref{th:along-hetero}.

\begin{lemma}\label{lem:expansion}
Let $\alpha\in(0,1]$ and  $\vk>0$. Assume that $\trcurve$ is $b$-transversal. 
Let $x_0\in\R^2$ and let
$T>0$ be the minimal time such that $\flow^{T}x_0\in \mathring \trcurve$.
 Let ~$(X_t)$ be a solution of~\eqref{eq:basic-sde} and $\zeta=\inf\{t>0: X_t \in\trcurve\}$.
 Then there is $\eta>0$, a deterministic rank-one matrix~$A$, a~centered Gaussian vector $\NProj$ (once $x_0$ is fixed, $\NProj$ is a function of the
 noise realization $W$, so it does not depend on the initial condition $z\in\R_2$),
 it is concentrated on
 the tangent line to $\chi$ at $\flow^Tx_0$, and a family of random vectors~$(r_{z,\eps})_{ |z-x_0|\le \eps^\alpha l^\vk_\e,\ \e>0}$ 
  such that under $\Pp^z$, w.h.p., uniformly in $z$, $\zeta<\infty$ and 
\begin{align*}
    X_\zeta = \flow^Tx_0 + \eps^\alpha(A\bar z + \eps^{1-\alpha} \NProj + r_{z,\eps}),
\end{align*}
where 
\begin{equation}
\label{eq:bar_x}
\bar z=\eps^{-\alpha}(z-x_0)
\end{equation}
and $|r_{z,\eps}|\leq \eps^\eta$ w.h.p.\ uniformly in $z$ satisfying $|z-x_0|\le \eps^\alpha l^\vk_\e$.
More precisely, there is $\eta>0$ such that
\begin{gather*}
    \sup_{|z-x_0|\le \e^\alpha l^\vk_\e}\Pp^z\left\{\left|\eps^{-\alpha}(X_\zeta - \flow^Tx_0)-A\bar z-\eps^{1-\alpha} \NProj\right|>\eps^\eta,\quad \zeta<\infty\right\} = o_e(1),
    \\
    \sup_{|z-x_0|\le \e^\alpha l^\vk_\e} \Pp^z\{\zeta =\infty\} = o_e(1).
\end{gather*}
\end{lemma}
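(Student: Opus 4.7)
\textbf{Proof plan for Lemma~\ref{lem:expansion}.} The strategy is to linearize the SDE along the deterministic orbit $(\flow^t x_0)_{t\in[0,T]}$, use a Gaussian approximation valid up to time $T$, and then transport this approximation from time $T$ to the hitting time $\zeta$ using the transversality of $b(\flow^Tx_0)$ to $\trcurve$. This is in the spirit of \cite{Almada-Bakhtin:MR2739004}, which handled the case $\alpha=1$; the generalization to $\alpha\in(0,1]$ amounts to tracking scales more carefully.

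First I set up the Gaussian approximation. Let $\Psi_t = Db(\flow^t x_0)$ and let $\Phi_t$ denote the fundamental matrix of the linearized variational equation $\dot\Phi_t = \Psi_t\Phi_t$, $\Phi_0 = I$. Define
\[
\widetilde X_t = \flow^t x_0 + \eps^\alpha\Phi_t\bar z + \eps\,\Phi_t\!\int_0^t\!\Phi_s^{-1}\sigma(\flow^s x_0)\,dW_s,
\]
with $\bar z$ as in~\eqref{eq:bar_x}. A standard Picard/Gronwall argument combined with the exponential martingale inequality (Lemma~\ref{lem:exp-marting-ineq}) shows that for some $\eta_0>0$,
\[
\sup_{|z-x_0|\le\eps^\alpha l_\eps^\vk}\Pp^z\!\left\{\sup_{t\in[0,T+1]}|X_t-\widetilde X_t|>\eps^{\alpha+\eta_0}\right\}=o_e(1).
\]
Combined with the tameness of $\eps^\alpha|\Phi_t\bar z|$ and the Gaussianity of the stochastic integral, this uniformly confines $X_t$ to a small neighborhood of $\flow^t x_0$ on a w.h.p.\ event.

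Next I localize the hitting time. By Lemma~\ref{lem:hit_loc} applied to a compact neighborhood of $\{\flow^s x_0 : s\in[0,T]\}\setminus\trcurve$ (and then $\trcurve$ slightly thickened), $\zeta<\infty$ w.h.p.\ uniformly in $z$, and by the transversality of $\trcurve$ to $b$ at $\flow^Tx_0$, inverting the smooth hit map furnished by Lemma~\ref{lem:bi-lip} yields $|\zeta-T|\le\eps^{\alpha-\eta_1}$ for some small $\eta_1>0$, again w.h.p.\ uniformly in $z$.

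Finally I extract the rank-one linear part by projection. Let $T_\chi\subset\R^2$ be the tangent line to $\trcurve$ at $\flow^Tx_0$. Transversality gives the direct sum $\R^2 = T_\chi\oplus\R\, b(\flow^Tx_0)$; let $P$ be the projection onto $T_\chi$ parallel to $b(\flow^Tx_0)$. Using a local $C^2$ chart centered at $\flow^Tx_0$ in which $\trcurve$ becomes a coordinate line, the condition $X_\zeta\in\trcurve$ implicitly determines $\zeta-T$ as a smooth function of $X_T-\flow^Tx_0$, with
\[
X_\zeta - \flow^Tx_0 \;=\; P\bigl(X_T-\flow^Tx_0\bigr) + O\bigl(|X_T-\flow^Tx_0|^2\bigr).
\]
Substituting $\widetilde X_T$ for $X_T$ and unpacking its definition yields
\[
X_\zeta - \flow^Tx_0 \;=\; \eps^\alpha P\Phi_T\bar z \;+\; \eps\, P\Phi_T\!\int_0^T\!\Phi_s^{-1}\sigma(\flow^sx_0)\,dW_s \;+\; r'_{z,\eps},
\]
with $|r'_{z,\eps}|\le\eps^{\alpha+\eta}$ w.h.p.\ uniformly in $z$, for some $\eta>0$. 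Setting $A := P\Phi_T$ (rank one because $\mathrm{Im}(P)=T_\chi$ is one-dimensional) and $\NProj := P\Phi_T\int_0^T\Phi_s^{-1}\sigma(\flow^sx_0)\,dW_s$ (centered Gaussian, taking values in $T_\chi$, and depending only on $W$) delivers the claimed representation after dividing by $\eps^\alpha$.

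The principal technical difficulty lies in obtaining the error bound $|r_{z,\eps}|\le\eps^\eta$ \emph{uniformly} over $|z-x_0|\le\eps^\alpha l_\eps^\vk$. Three error sources must be simultaneously dominated by $\eps^{\alpha+\eta}$ w.h.p.: the Gaussian approximation error $X_T-\widetilde X_T$, the nonlinear remainder in the local parametrization of $\trcurve$, and the error from replacing evaluation at $\zeta$ by evaluation at $T$ plus a flow correction. Uniformity is achieved by combining exponential martingale tail bounds (as in Lemma~\ref{lem:estimates-for-terms}) with the uniform Freidlin--Wentzell large deviation estimates underlying Lemma~\ref{lem:hit_loc}, and tracking scales of $\eps^\alpha$ versus $\eps^1$ carefully throughout.
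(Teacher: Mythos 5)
Your proposal is correct and follows essentially the same route as the paper's proof: Duhamel linearization along the orbit with a Gronwall/exponential-martingale control of the remainder, localization of $\zeta$ near $T$ via transversality and Lemma~\ref{lem:hit_loc}, and projection onto the tangent line parallel to $b(\flow^Tx_0)$, giving $A=P\Phi_T$ and $\NProj=P\Phi_T\int_0^T\Phi_s^{-1}\sigma(\flow^sx_0)\,dW_s$ exactly as in the paper. The only point to phrase more carefully is the implicit-function step, since $X_\zeta$ is not a deterministic function of $X_T$; the paper instead bounds the tangential variation of $X_t-\flow^Tx_0$ uniformly over the whole window $(T-\eps^{\alpha'},T+\eps^{\alpha'})$ (controlling the flow displacement, the noise increments over the short window, and the linearization remainder), which is precisely the third error source you already flag in your final paragraph.
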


\bpf  
By Taylor's theorem, functions $Q_1(\cdot,\cdot)$ and $Q_2(\cdot,\cdot)$ defined by 
\begin{align}
\label{eq:def_Q1}b(z)&=b(y)+Db(y)(z-y)+Q_1(y,z-y),\quad z,y\in\R^2,\\
\label{eq:def_Q2}\sigma(z)&=\sigma(y)+Q_2(y,z-y),\quad z,y\in\R^2,
\end{align}
are continuous and  satisfy, for some $K>0$, 
\begin{align}
\label{eq:quadratic_term}
|Q_1(y,v)|\le K |v|^2,\quad y\in\R^2,\ |v|\le1,
\\
\label{eq:linear_term}
|Q_2(y,v)|\le K(1\wedge |v|),\quad y,\,v\in\R^2.
\end{align}
On the left-hand side of the last inequality,  we use $|\cdot|$ to denotes the operator norm of a matrix.  
We define the linearization (fundamental solution) near the orbit $(\flow^tx_0)$ by
\begin{align*}
\frac{d}{dt}\Lf (t)=Db(\flow^tx_0) \Lf(t),\quad \Lf(0)=I,
\end{align*}
where $I$ is the $2\times 2$ identity matrix. The standard theory of differential equations combined with the properties of $Db(\cdot)$ under our assumptions on $b$ then imply that $(\Lf(t))_{t\geq 0}$ has the semigroup property $\Lf(t+s) = \Lf(t)\Lf(s)$ and there is a constant $c>0$ such that $|\Lf(t)|\leq e^{c t}$. We set 
\begin{equation}
\label{eq:N_t}
N_t=\int_0^t\Lf(t-s)\sigma(\flow^sx_0)dW_s,\quad t\ge 0,
\end{equation}
then, recalling that $\bar z$ and $z$ are related by \eqref{eq:bar_x}, we set
\begin{equation}
\label{eq:two-terms-for-Theta}
\Theta_\e(t,z)=\Lf(t)\bar z+\e^{1-\alpha}N_t,\quad z\in\R^2,\ t\ge0, \ \e>0,
\end{equation}
and define  $r_{\e}(t,z)$ by
\begin{equation}
\label{eq:def-of-r_e}
X_t=\flow^tx_0+\e^\alpha(\Theta_\e(t,z)+r_{\e}(t,z)),\quad  z\in\R^2,\ t\ge 0,\ \e>0.
\end{equation}

\begin{lemma}\label{lem:linearization-error}
For any $T'>0$ and  $\beta\in(0,\alpha)$,
\[
\sup_{|z-x_0|\le \eps^\alpha l^\vk_\e}\Pp^z\left\{\sup_{t\in[0,T']}|r_{\e}(t,z)|> \e^\beta\right\}=o_e(1).
\]
\end{lemma}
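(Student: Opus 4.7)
The plan is to derive an integral equation for $Y_t := \eps^\alpha r_\eps(t,z) = X_t - \flow^t x_0 - \eps^\alpha\Theta_\eps(t,z)$, then bound its right-hand side on a suitably chosen good event. Using the Taylor remainders introduced in~\eqref{eq:def_Q1}--\eqref{eq:def_Q2} and the fact that $A(t)$ and $N_t$ are by design the linearized solution and its stochastic forcing, one checks that $Y_0=0$ and
\[
dY_t=Db(\flow^t x_0)\,Y_t\,dt+Q_1(\flow^t x_0,X_t-\flow^t x_0)\,dt+\eps Q_2(\flow^t x_0,X_t-\flow^t x_0)\,dW_t.
\]
Variation of constants with the fundamental matrix $A(t)$ then yields
\[
Y_t=A(t)\!\int_0^t\!A(s)^{-1}Q_1(\flow^s x_0,X_s-\flow^s x_0)\,ds+\eps A(t)\!\int_0^t\!A(s)^{-1}Q_2(\flow^s x_0,X_s-\flow^s x_0)\,dW_s.
\]
On $[0,T']$ the matrices $A(t)$ and $A(t)^{-1}$ are bounded by some constant $C_0=C_0(T')$, independent of $z$.

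The second step is to set up a bootstrap. Define the good event
\[
E_\eps=\Big\{\sup_{t\in[0,T']}|N_t|\le l_\eps^{\vk'}\Big\}
\]
for some $\vk'$ to be chosen later; since $N_t$ is a centered Gaussian with covariance bounded uniformly on $[0,T']$, Lemma~\ref{lem:exp-marting-ineq} gives $\Pp(E_\eps^c)=o_e(1)$ uniformly in $z$. Next introduce the stopping time $\tau_\eps=\inf\{t\ge 0:|r_\eps(t,z)|>\eps^\beta\}\wedge T'$. On $[0,\tau_\eps]\cap E_\eps$ the identity~\eqref{eq:two-terms-for-Theta} together with $|\bar z|\le l_\eps^\vk$ yields
\[
|X_s-\flow^s x_0|\le\eps^\alpha\big(|A(s)\bar z|+\eps^{1-\alpha}|N_s|+|r_\eps(s,z)|\big)\le C_1\eps^\alpha l_\eps^{\vk''}
\]
for some $\vk''$ depending on $\vk,\vk'$. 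In particular, for small $\eps$, $|X_s-\flow^s x_0|\le 1$, so~\eqref{eq:quadratic_term}--\eqref{eq:linear_term} give $|Q_1|\le K C_1^2\eps^{2\alpha}l_\eps^{2\vk''}$ and $|Q_2|\le K C_1\eps^\alpha l_\eps^{\vk''}$ on that set.

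The third step is to combine these with the representation of $Y_t$. The deterministic integral is bounded by $C_0 T' K C_1^2\eps^{2\alpha}l_\eps^{2\vk''}$, which is $o(\eps^{\alpha+\beta})$ because $\beta<\alpha$; in particular for small $\eps$ it is less than $\tfrac12\eps^{\alpha+\beta}$ on $\{\tau_\eps\le T'\}\cap E_\eps$. Hence on that event the stochastic term must satisfy $\eps|M_{\tau_\eps}|>\tfrac12\eps^{\alpha+\beta}$, where $M_t=\int_0^t A(s)^{-1}Q_2(\flow^s x_0,X_s-\flow^s x_0)\mathbf 1_{s\le\tau_\eps}dW_s$. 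Its quadratic variation at $\tau_\eps$ is at most $C_2\eps^{2\alpha}l_\eps^{2\vk''}$. Lemma~\ref{lem:exp-marting-ineq} then gives
\[
\Pp^z\{\tau_\eps\le T',\,E_\eps\}\le 2\exp\!\Big(-\frac{c\,\eps^{2(\alpha+\beta-1)}}{\eps^{2\alpha}l_\eps^{2\vk''}}\Big)=2\exp\!\Big(-\frac{c\,\eps^{-2(1-\beta)}}{l_\eps^{2\vk''}}\Big),
\]
and because $\beta<1$ (as $\beta<\alpha\le 1$) the exponent grows faster than any power of $l_\eps$, giving $o_e(1)$ uniformly in $z\in\{|z-x_0|\le\eps^\alpha l_\eps^\vk\}$. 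Combined with $\Pp(E_\eps^c)=o_e(1)$ this proves the lemma.

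The only delicate point is the self-consistency of the bootstrap: the bound $|X_s-\flow^s x_0|\le C_1\eps^\alpha l_\eps^{\vk''}$ used to control $Q_1,Q_2$ is itself valid only up to $\tau_\eps$, and this is exactly why the stopping time argument must be phrased in terms of $\{\tau_\eps\le T'\}$ rather than directly on $\{\sup_t|r_\eps|>\eps^\beta\}$. Otherwise the estimate is a standard linearization-plus-exponential-martingale argument, whose uniformity in $z$ is automatic because every constant above depends only on $T'$, $b$, $\sigma$, $\vk$ and the choice of $\vk'$.
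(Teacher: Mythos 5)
Your proof is correct and follows essentially the same route as the paper's: a Duhamel/variation-of-constants representation of the error, bounds on the Taylor remainders $Q_1,Q_2$ valid up to a bootstrap stopping time, and the exponential martingale inequality for the stochastic term. The only (cosmetic) difference is that you stop when $|r_\eps|$ itself exceeds $\eps^\beta$, whereas the paper stops when $|X_t-\flow^tx_0|$ exceeds $\eps^{\beta'}$ for an auxiliary exponent $\beta'\in(0,\alpha)$ and then closes the bootstrap by improving $\beta'$ to some $\beta''>\beta'$; both bootstraps close for the same reason.
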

\bpf
Let $\Delta_t=X_t-\flow^t x_0$, so  $\Pp^z\{\Delta_0=\eps^\alpha \bar z\}=1$. Using~\eqref{eq:def-of-r_e}, we obtain 
\begin{equation}
\label{eq:Delta_via_Theta}
\Delta_t=\e^\alpha (\Theta_\e(t,z) + r_\e(t,z)).
\end{equation}
Then, since $X_0=z$, we have
\begin{align*}
d\Delta_t
&=(b(X_t)-b(\flow^t x_0))dt+\e \sigma(X_t)dW_t\\
&=Db(\flow^t x_0)\Delta_t dt+ \e \sigma(\flow^tx_0)dW_t
+Q_1(\flow^tx_0,\Delta_t) dt + \e Q_2(\flow^tx_0,\Delta_t) dW_t.
\end{align*}
Applying Duhamel's principle to this identity, using~\eqref{eq:Delta_via_Theta} and~\eqref{eq:two-terms-for-Theta},  we obtain
\begin{equation}
\label{eq:r_eps}
r_{\e}(t,z)=\Theta'_\e(t,z)+\Theta''_\e(t,z),
\end{equation}
where
\begin{align*}
\Theta'_\e(t,z)&=\e^{-\alpha}\int_0^t \Lf(t-s)Q_1(\flow^sx_0,\Delta_s)ds,
\\ \Theta''_\e(t,z)&=\e^{1-\alpha}\int_0^t \Lf(t-s)Q_2(\flow^sx_0,\Delta_s)dW_s.
\end{align*}
 Let us take an arbitrary $\beta'\in(0,\alpha)$ and define $\ell=\inf\{t\ge 0: |\Delta_t|>\e^{\beta'}\}\wedge T'$.
Then, using \eqref{eq:quadratic_term}, \eqref{eq:linear_term}, and the exponential martingale inequality, we obtain that for some constant $C>0$ and for small~$\e$,
\begin{equation}
\label{eq:est_Theta-prime}
\sup_{|z-x_0|\le \eps^\alpha l^\vk_\e}\sup_{t\le \ell} |\Theta'_\e(t,z)|\le C \e^{2\beta'-\alpha}
\end{equation}
and
\begin{equation}
\label{eq:est_Theta-pprime}
\sup_{|z-x_0|\le \eps^\alpha l^\vk_\e}\Pp^z\left\{\sup_{t\le \ell} |\Theta''_\e(t,z)|\ge \e^{2\beta'-\alpha}\right\}=o_e(1). 
\end{equation}
In addition, $\sup_{t\le T'} |N_t|$ is tame, so $\sup_{t\le \ell} |\Theta_\e(t,z)|$ is  tame, uniformly in $z$ satisfying $|z-x_0|\le \eps^\alpha l^\vk_\e$. Using this, \eqref{eq:est_Theta-prime},
and~\eqref{eq:est_Theta-pprime} in~\eqref{eq:Delta_via_Theta}, we obtain that  for any $\beta''<\alpha\wedge (2\beta')$,  w.h.p, uniformly in $z$,
$\sup_{t\le \ell}|\Delta_t|\le \e^{\beta''}.$
Choosing $\beta''\in(\beta',\alpha \wedge (2\beta'))$, we thus obtain that $\ell=T'$ w.h.p., uniformly in $z$.
Combining this with~\eqref{eq:r_eps},~\eqref{eq:est_Theta-prime},~\eqref{eq:est_Theta-pprime},
we complete the proof.
\epf

Going back to the proof of Lemma~\ref{lem:expansion}, 
we first note that its last claim follows from Lemma~\ref{lem:hit_loc}. To prove the main claim,
we choose some $\alpha'\in(0,\alpha)$ (we will impose a tighter requirement later) and note that \eqref{eq:def-of-r_e}, \eqref{eq:two-terms-for-Theta},
Lemma~\ref{lem:linearization-error}, and the $b$-transversality of $\trcurve$ imply that under~$\Pp^z$,
\begin{align}\label{eq:zeta_in_(T-,T+)}
    \zeta\in \big(T-\e^{\alpha'}, T+\e^{\alpha'}\big)
\end{align}
w.h.p., uniformly in~$z$ satisfying $|z-x_0|\le \eps^\alpha l^\vk_\e$. Let us study the path $X_t$ on this time interval.

First, let us introduce projection operators $\pi_{b}$ and $\pi_\trcurve$ via a unique decomposition
\begin{equation}
\label{eq:projecting1}
v=\pi_b v + \pi_\trcurve v,\quad  v\in\R^2,
\end{equation}
where $\pi_b v$ is collinear with $b(\flow^Tx_0)$ and $\pi_\trcurve v$ is tangent to $\trcurve$ at $\flow^Tx_0$.
We will prove that the lemma holds with 
\begin{align}
A\bar z&=\pi_\trcurve A (T)\bar z,\notag
\\
\label{eq:projecting3}
\NProj&= \pi_\trcurve N_T.
\end{align}
so that
\[
A \bar z+\e^{1-\alpha} \NProj=\pi_\trcurve(\Theta_\e(T,z)).
\]
Let us impose an additional requirement that $\alpha'>\alpha/2$ and  
 prove that for any $\beta$ satisfying
 \[
 0<\beta< \alpha' \wedge (1-\alpha+\alpha'/3)\wedge (2\alpha'-\alpha),
 \]
 we have that  w.h.p.\ under~$\Pp^z$, uniformly in $z$ satisfying $|z-x_0|\le \eps^\alpha l^\vk_\e$,
\begin{equation}
\label{eq:intersect-at-transversal-level}
\sup_{t\in (T-\e^{\alpha'}, T+\e^{\alpha'})}|\e^{-\alpha}\pi_\trcurve (X_t-\flow^Tx_0) - \pi_\trcurve\Theta_\e(T,z) |\le \e^{\beta}.
\end{equation}
To that end, let us  use \eqref{eq:def-of-r_e} to write
\begin{multline*}
|\e^{-\alpha} \pi_\trcurve (X_t-\flow^Tx_0) -\pi_\trcurve \Theta_\e(T,z)|\le I_1+I_2+I_3\\
=\e^{-\alpha} |\pi_\trcurve (\flow^tx_0-\flow^Tx_0) |+ |\pi_\trcurve(\Theta_\e(t,z) -\Theta_\e(T,z))|
+|\pi_\trcurve(r_{\e}(t,z))|,
\end{multline*}
and estimate each term on the right-hand side.
Since $\flow^tx_0$ is a $C^2$ function of $t$ and $\frac{d}{dt}\flow^tx_0\big\rvert_{t=T}=b(\flow^Tx_0)$, we have 
\[
I_1\le  \e^{-\alpha} C(t-T)^2\le \e^{2\alpha'-\alpha} ,\quad t \in (T-\e^{\alpha'}, T+\e^{\alpha'}).
\]
To estimate $N_t-N_T$, we assume $t\leq T$ the opposite case following by interchanging the role of $t$ and $T$. Since $A(t)$ is smooth in $t$, we obtain 
\[
|A(t-s)-A(T-s)|\le C\e^{\alpha'},\quad t \in (T-\e^{\alpha'}, T+\e^{\alpha'}).
\]
Using this, $|A(T-s)|<e^{cT}$, and the exponential martingale inequality, we obtain that w.h.p.\ under~$\Pp^z$, uniformly in~$z$,
\begin{multline*}
|N_t-N_T|\le\\ \left|\int_t^TA(T-s)\sigma(\flow^sx_0)dW_s\right| + \left|\int_0^t(A(T-s)-A(t-s))\sigma(\flow^sx_0)dW_s\right| < C\e^{\alpha'/3},
\end{multline*}
for all $t \in (T-\e^{\alpha'}, T+\e^{\alpha'})$. So, w.h.p.\ uniformly in $z$,
\[
 \sup_{t \in (T-\e^{\alpha'}, T+\e^{\alpha'})} I_2\le C (\e^{\alpha'} +\e^{1-\alpha+\alpha'/3}).
\]
Finally, due to Lemma~\ref{lem:linearization-error}, we know that w.h.p.\ under $\Pp^z$, uniformly in $z$,
\[
 \sup_{t \in (T-\e^{\alpha'}, T+\e^{\alpha'})} I_3\le \e^{\alpha'}.
\]
Combining these estimates for $I_1,I_2,I_3$, we obtain that \eqref{eq:intersect-at-transversal-level} holds w.h.p.\ under~$\Pp^z$, uniformly in 
$z$ satisfying $|z-x_0|\le \eps^\alpha l^\vk_\e$.
Therefore, due to \eqref{eq:zeta_in_(T-,T+)}, w.h.p., uniformly in~$z$,
\begin{equation}
\label{eq:intersect-at-transversal-level2}
|\e^{-\alpha} \pi_\trcurve (X_\zeta-\flow^Tx_0)-\pi_\trcurve \Theta_\e(T,z) |\le \e^{\beta}.
\end{equation}
For small $\e$, this estimate implies
$|\pi_\trcurve (X_\zeta-\flow^Tx_0)| \le \e^{3\alpha/4 }$ and, 
since $\trcurve\in C^2$, 
$|\pi_b (X_\zeta-\flow^Tx_0)|\le C\e^{3\alpha/2}$. Combining the latter with \eqref{eq:intersect-at-transversal-level2} and choosing any  $\eta\in(0,\beta\wedge (\alpha/2))$,
we complete the proof of the lemma.
\epf

\bigskip

We will need another extension of Theorem~\ref{th:along-hetero}. Let us adopt the setting of Lemma~\ref{lem:expansion}. Then for all $z$ in a small
neighborhood of $x_0$, the minimal time $t_z$
such that $\flow^{t_z}z\in \trcurve$ is well-defined and finite.
In that entire neighborhood, we can define the map~$\phi$ by \eqref{eq:def-phi}.

We recall the definition of $N_t$ in \eqref{eq:N_t} and define a random vector $\NProj$ by~\eqref{eq:projecting1} and \eqref{eq:projecting3}. 
Note that $\NProj$
is a function of the noise realization~$W$. We can now state one more extension of Theorem~\ref{th:along-hetero} that we need.

\begin{lemma}\label{lem:scal_lim}
In the setting of Lemma~\ref{lem:expansion}, there is $\eta>0$ and a family of random vectors $(h_{z,\eps})_{|z-x_0|<c,\,\e>0 }$
such that for each $\alpha\in(0,1]$ and $\vk>0$, the following holds w.h.p.\ under $\Pp^z$, uniformly in $z$ satisfying  $|z-x_0|<\eps^\alpha l^\vk_\eps$:
\begin{align*}
\zeta&<\infty,\\
X_{\zeta}&= \phi(z)+\eps \NProj  +\eps h_{z,\eps},\\
|h_{z,\eps}|&\le \eps^{\eta}.
\end{align*}

\end{lemma}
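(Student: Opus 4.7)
The strategy is to combine Lemma~\ref{lem:expansion} with a second-order Taylor expansion of the deterministic map $\phi$ at $x_0$. By Lemma~\ref{lem:bi-lip}, $\phi$ is a smooth diffeomorphism of a neighborhood of $x_0$ onto an open subset of $\trcurve$. Differentiating the identity $\phi(z)=\flow^{t_z}z$ at $z=x_0$ and using that $D\phi(x_0)v$ must be tangent to $\trcurve$ at $\flow^Tx_0$ (because $\phi$ takes values in $\trcurve$) pins down $D\phi(x_0)=\pi_\trcurve A(T)=A$, where $A$ is the matrix produced by Lemma~\ref{lem:expansion} and $\pi_\trcurve$ is the projection from~\eqref{eq:projecting1}. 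A second-order Taylor expansion therefore yields, uniformly for $|z-x_0|\le\eps^\alpha l^\vk_\eps$,
\[
\phi(z)=\flow^Tx_0+\eps^\alpha A\bar z+R(z),\qquad |R(z)|\le C\eps^{2\alpha}l^{2\vk}_\eps,
\]
where $\bar z=\eps^{-\alpha}(z-x_0)$.

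Applying Lemma~\ref{lem:expansion} with the same $\alpha,\vk$ gives, w.h.p.\ under $\Pp^z$ and uniformly in the same range, $\zeta<\infty$ and $X_\zeta=\flow^Tx_0+\eps^\alpha A\bar z+\eps\NProj+\eps^\alpha r_{z,\eps}$ with $|r_{z,\eps}|\le\eps^{\eta_0}$ for some $\eta_0>0$. Subtracting the two displays and setting $h_{z,\eps}:=\eps^{-1}(X_\zeta-\phi(z)-\eps\NProj)$, on the high-probability event we have $|h_{z,\eps}|\le \eps^{\alpha-1+\eta_0}+C\eps^{2\alpha-1}l^{2\vk}_\eps$. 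When $\alpha>1/2$ and $\eta_0>1-\alpha$, both terms are $O(\eps^\eta)$ for a suitable $\eta>0$ and the claim follows immediately.

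For small $\alpha$ the Taylor remainder $\eps^{2\alpha-1}$ is not small, and this naive route fails. In that regime I would repeat the Duhamel and linearization argument behind Lemma~\ref{lem:expansion} (cf.\ Lemma~\ref{lem:linearization-error}) but center it on the orbit $(\flow^tz)$ rather than $(\flow^tx_0)$: since $\Delta_t:=X_t-\flow^tz$ starts at $\Delta_0=0$ with no initial offset, the quadratic remainders from~\eqref{eq:def_Q1}--\eqref{eq:def_Q2} are of order $\eps^{2-o(1)}$ rather than $\eps^{2\alpha-o(1)}$, so Duhamel gives $\Delta_t=\eps N^z_t+O(\eps^{1+\beta_0})$ w.h.p.\ uniformly, with $\beta_0>0$ independent of $\alpha$ and $N^z_t=\int_0^t\Lf^z(t-s)\sigma(\flow^sz)\,dW_s$, where $\Lf^z$ is the fundamental matrix of the linearization along $\flow^\cdot z$. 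The transversal hitting-time analysis at the end of the proof of Lemma~\ref{lem:expansion} then yields
\[
X_\zeta=\phi(z)+\eps\,\pi_{\trcurve,\phi(z)}N^z_{t_z}+\eps^{1+\eta'}\cdot O(1)
\]
w.h.p.\ uniformly. It remains to replace the $z$-dependent Gaussian $\pi_{\trcurve,\phi(z)}N^z_{t_z}$ by the fixed vector $\NProj=\pi_\trcurve N_T$, which I would do by combining the smooth dependence of $\Lf^z$, $\flow^\cdot z$, $t_z$ and $\pi_{\trcurve,\phi(z)}$ on $z$ with the exponential martingale inequality (Lemma~\ref{lem:exp-marting-ineq}) applied to the difference of stochastic integrals, to obtain $|\pi_{\trcurve,\phi(z)}N^z_{t_z}-\NProj|\le C|z-x_0|\cdot R$ w.h.p.\ with $R$ tame; this contributes $O(\eps^{\alpha-o(1)})$ to $h_{z,\eps}$, so $|h_{z,\eps}|\le\eps^\eta$ w.h.p.\ uniformly for a suitable $\eta>0$. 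The main obstacle is this last step: the quantitative, uniform-in-$z$ comparison of the family of Gaussian processes $N^z$ with the single fixed process $N$, which requires careful bookkeeping of the smooth dependence together with the exponential martingale machinery of Lemma~\ref{lem:linearization-error}.
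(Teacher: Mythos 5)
Your proposal is correct, and its second (essential) part follows the same overall architecture as the paper's proof: center the Duhamel expansion on the orbit $(\flow^t z)$ so that $\Delta_0=0$ and the quadratic remainders from~\eqref{eq:def_Q1}--\eqref{eq:def_Q2} are $O(\eps^{2-o(1)})$ rather than $O(\eps^{2\alpha-o(1)})$, then redo the transversal hitting-time analysis, and finally swap the $z$-dependent Gaussian for the fixed $\NProj$. (Your first paragraph, the Taylor-expansion shortcut for $\alpha>1/2$, is fine but is subsumed by the general argument and could be dropped.)

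The one place where you diverge from the paper is instructive. You linearize along $\flow^\cdot z$, producing a whole family of fundamental matrices $\Lf^z$ and Gaussian processes $N^z_t=\int_0^t\Lf^z(t-s)\sigma(\flow^sz)\,dW_s$, and you then must compare $N^z$ with the single process $N$ of Lemma~\ref{lem:expansion} --- the step you yourself flag as the main obstacle. The paper avoids manufacturing this obstacle: in Lemma~\ref{lem:linearization-error2} it keeps the deviation $\Delta_t=X_t-\flow^tz$ (so the initial offset vanishes) but linearizes the drift and freezes $\sigma$ along the \emph{reference} orbit $\flow^\cdot x_0$, writing $Db(X_t)-Db(\flow^tx_0)$ and $\sigma(\flow^tz)-\sigma(\flow^tx_0)$ as extra correction terms ($\Xi'_\eps,\Xi''_\eps$) of size $O(\eps^{\alpha-o(1)})$ controlled deterministically via the Lipschitz bounds on $Q_2,Q_3$. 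The Gaussian process is therefore the fixed $N_t$ from the start, and the only residual $z$-dependence to remove is in the hitting time $t_z$ and the projection $\pi_{\trcurve,z}$, handled exactly as you indicate (Lipschitzness in $z$ plus the modulus of continuity of the diffusion $N$ and its Gaussian tail). Your route does work --- $N^z_t-N_t$ is a stochastic integral of a deterministic integrand of operator norm $O(|z-x_0|)$, so the exponential martingale inequality gives $\sup_{t\le T'}|N^z_t-N_t|\le\eps^{\alpha-o(1)}$ w.h.p.\ uniformly in $z$, and the remaining $t_z$-vs-$T$ and projection comparisons cost only $O(\eps^{\alpha/2-o(1)})$ --- but it requires the extra bookkeeping on the smooth dependence of $\Lf^z$ on $z$ that the paper's choice of reference orbit makes unnecessary. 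Either way one ends with $|h_{z,\eps}|\le\eps^\eta$ for an $\eta>0$ that in both arguments effectively depends on $\alpha$ (through exponents like $\beta_3<\alpha/2$), which matches how the lemma is actually used.
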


\begin{remark}\label{rem:initial_line}\rm
Let us restrict $\phi$ to a small segment $\trcurve$ such that $x_0\in\trcurve \subset x_0+\R v$ for some $v$ transversal to $b(x_0)$.
 Then we can write $\NProj = \NProj'D\phi(x_0)v$ for some centered Gaussian r.v.~$\NProj'$, where $D\phi$ is the differential of the restriction of $\phi$. Extending $\phi$ smoothly to the entire $x_0 +\R v$, we also have 
\begin{align*}
 \left|\phi(x_0+(\eps^\alpha x+\eps \NProj')v) - \phi(x_0+\eps^\alpha xv) - \eps \NProj\right|\leq C(\eps^2|\NProj'|^2+\eps^{1+\alpha}|\NProj'||x|).
\end{align*}
The error can be absorbed into $h'_{x,\eps}=h_{x_0+\eps^\alpha xv,\,\eps}$, and hence,  w.h.p.\  under $\Pp^{x_0+\eps^\alpha xv}$, uniformly in $x\in K_\vk(\eps)$,
\begin{align*}
    X_\zeta=\phi(x_0+(\eps^\alpha x+\eps M')v)+\eps h'_{x,\eps}
\end{align*}
and $|h'_{x,\eps}|\leq \eps^{\eta}$.
\end{remark}

The proof of Lemma~\ref{lem:scal_lim} is similar to that of Lemma~\ref{lem:expansion}. 
First, we prove the following auxiliary result:

\begin{lemma}\label{lem:linearization-error2}
  Under the conditions stated above, for $h_{\e}(t,z)$ defined via
  \[
  X_t=\flow^tz+\e N_{t}+\e h_{\e}(t,z),\quad z\in\R^2,\ t\ge 0,\ \e>0,
  \]
  the following holds:  if $T'>0$ and  $\beta\in(0,\alpha)$, then
  \[
  \sup_{|z-x_0|\le \eps^\alpha l^\vk_\eps}\Pp^z\left\{\sup_{t\in[0,T']}|h_{\e}(t,z)|> \e^\beta\right\}=o_e(1).
  \]
\end{lemma}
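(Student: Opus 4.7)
My plan is to follow the approach of Lemma~\ref{lem:linearization-error}, but to linearize the SDE around the orbit $(\flow^t z)$ rather than $(\flow^t x_0)$, so that the initial offset is absorbed into the deterministic drift automatically. Setting $\Delta_t = X_t - \flow^t z$ (hence $\Delta_0 = 0$), Taylor-expanding $b(X_t)$ around $\flow^t z$ via \eqref{eq:def_Q1}--\eqref{eq:quadratic_term} and applying Duhamel's principle with the linearization $\Lf_z$ along $(\flow^t z)$ (defined by $\dot\Lf_z(t) = Db(\flow^t z)\Lf_z(t)$, $\Lf_z(0) = I$) gives
\begin{equation*}
\Delta_t = \eps \int_0^t \Lf_z(t-s)\sigma(X_s)\, dW_s + \int_0^t \Lf_z(t-s) Q_1(\flow^s z, \Delta_s)\, ds.
\end{equation*}

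The new ingredient compared to Lemma~\ref{lem:linearization-error} is to compare this stochastic integral with $\eps N_t$ (see \eqref{eq:N_t}), which uses $\Lf$ and $\sigma(\flow^s x_0)$ rather than $\Lf_z$ and $\sigma(\flow^s z)$. I would write $\sigma(X_s) = \sigma(\flow^s z) + Q_2(\flow^s z, \Delta_s)$ via \eqref{eq:def_Q2}--\eqref{eq:linear_term} and split the integral into $\eps N_t$, a discrepancy $\eps(N^z_t - N_t)$ with $N^z_t := \int_0^t \Lf_z(t-s)\sigma(\flow^s z)\, dW_s$, and a $Q_2$ error term. Because $b \in C^2_{\mathrm{b}}$ makes $\flow^t$ and $\Lf_z$ depend smoothly on $z$, standard Gronwall estimates give $\sup_{s\le T'}\bigl(|\Lf_z(s) - \Lf(s)| + |\sigma(\flow^s z) - \sigma(\flow^s x_0)|\bigr) \le C|z - x_0| \le C\eps^\alpha l_\eps^\vk$, so Lemma~\ref{lem:exp-marting-ineq} yields $\sup_{t \le T'}|N^z_t - N_t| \le \eps^{1+\alpha} l_\eps^{\vk'}$ w.h.p.\ uniformly in $z$.

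Next I would run the same type of bootstrap as in Lemma~\ref{lem:linearization-error}. Fix $\beta' \in \bigl(\tfrac{1+\beta}{2},\, 1\bigr)$ (nonempty because $\beta < 1$) and set $\ell = \inf\{t \ge 0 : |\Delta_t| > \eps^{\beta'}\} \wedge T'$. On $\{t \le \ell\}$, the $Q_1$ integral contributes $O(\eps^{2\beta'})$ deterministically from \eqref{eq:quadratic_term}, and the $Q_2$ stochastic integral contributes $O(\eps^{1+\beta'} l_\eps^{\vk'})$ w.h.p.\ via \eqref{eq:linear_term} and Lemma~\ref{lem:exp-marting-ineq}, while $\sup_{t\le T'}|N_t|$ is tame. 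Combining these estimates forces $|\Delta_t| \le \eps l_\eps^{\vk''} + o(\eps^{\beta'}) < \eps^{\beta'}$ on $\{t \le \ell\}$ for small $\eps$ (using $\beta' < 1$), so $\ell = T'$ w.h.p.\ uniformly in $z$.

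Once the bootstrap closes, the decomposition yields
\begin{equation*}
|\eps h_\eps(t,z)| = |\Delta_t - \eps N_t| \le C\bigl(\eps^{1+\alpha} + \eps^{1+\beta'} + \eps^{2\beta'}\bigr) l_\eps^{\vk''}
\end{equation*}
w.h.p.\ on $[0,T']$, uniformly in $z$ with $|z - x_0| \le \eps^\alpha l_\eps^\vk$. The choice $\beta' > (1+\beta)/2$ guarantees $2\beta' - 1 > \beta$, and $\beta < \alpha$, $\beta' > \beta$ handle the other two terms, so $|h_\eps(t,z)| \le \eps^\beta$ w.h.p., as required. The main technical subtlety is the balancing of $\beta'$: it must be large enough to both close the bootstrap and give $2\beta' - 1 > \beta$, yet less than $1$ to keep the noise scale dominant; because $\beta < 1$, the interval $\bigl((1+\beta)/2,\, 1\bigr)$ is always nonempty, which is what makes the whole argument work for every $\beta \in (0,\alpha)$.
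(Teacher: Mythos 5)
Your argument is correct and closes all the exponent bookkeeping properly, but it is organized around a different decomposition than the paper's. The paper keeps the linearization $\Lf$ anchored to the reference orbit $(\flow^t x_0)$ throughout: it writes $Db(X_t)\approx Db(\flow^t x_0)$ and $\sigma(X_t)\approx\sigma(\flow^t x_0)$, absorbing the mismatch between the orbits of $z$ and $x_0$ into two extra correction terms --- a drift term $Q_3(\flow^t x_0,\flow^t z-\flow^t x_0)\Delta_t$ and a stochastic term driven by $Q_2(\flow^t x_0,\flow^t z-\flow^t x_0)$ --- each of which contributes $O(\eps^{\alpha+\beta''-1})$ resp.\ $O(\eps^{\alpha'})$ to $h_\eps$. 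You instead anchor the linearization at $(\flow^t z)$ via $\Lf_z$, so those two correction terms never appear, and the price is paid in one place: the comparison $\sup_{t\le T'}|N^z_t-N_t|\lesssim|z-x_0|\,l_\eps^{\vk'}$, which requires the Gronwall estimate $|\Lf_z-\Lf|\le C|z-x_0|$ on the fundamental solution (valid here since $b\in C^2_{\mathrm b}$ makes $Db\circ\flow^\cdot$ Lipschitz in the initial point). The bootstrap on $\ell=\inf\{t:|\Delta_t|>\eps^{\beta'}\}\wedge T'$ and the final exponent conditions $2\beta'-1>\beta$, $\beta'>\beta$, $\alpha>\beta$ are the same in both proofs; your explicit choice $\beta'\in((1+\beta)/2,1)$ is a clean way to package what the paper phrases as ``choose $\beta',\beta''$ sufficiently close to $1$.'' Both versions implicitly rely on the semigroup factorization $\Lf_z(t-s)=\Lf_z(t)\Lf_z(s)^{-1}$ to apply the exponential martingale inequality to $t$-dependent integrands; that is worth a sentence if you write this up. One harmless slip: you state $\sup_{t\le T'}|N^z_t-N_t|\le\eps^{1+\alpha}l_\eps^{\vk'}$, but the martingale inequality gives $\eps^{\alpha}l_\eps^{\vk'}$ for the unscaled difference; since the discrepancy enters $\Delta_t$ multiplied by $\eps$, your final bound $\eps|N^z_t-N_t|\le\eps^{1+\alpha}l_\eps^{\vk'}$ and the resulting $\eps^\alpha$ contribution to $h_\eps$ are exactly right.
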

\bpf
Let 
\begin{equation}
\label{eq:Delta-via-r}
\Delta_t=X_t-\flow^t z=\e N_t+\e h_\e(t,z).
\end{equation}
In addition to the definitions of $Q_1,Q_2$ in \eqref{eq:def_Q1}, \eqref{eq:def_Q2},
 we define 
\[
Q_3(z,y)=Db(z+y)- Db(z),\quad z,y\in\R^2,
\]
and adjust the  constant $K$ in \eqref{eq:quadratic_term}, \eqref{eq:linear_term}, to ensure that 
\begin{equation}
\label{eq:Q_3_lin_bound}
|Q_3(z,y)|\le K(1\wedge |y|),\quad z,\,y\in\R^2.
\end{equation}
Then
\begin{align*}
d\Delta_t
=&(b(X_t)-b(\flow^t z))dt+\e \sigma(X_t)dW_t\\
=&Db(\flow^t x_0)\Delta_t dt+Q_3(\flow^t x_0, \flow^t z-\flow^t x_0)\Delta_t dt
+Q_1(\flow^tz,\Delta_t) dt \\ &+ \e \sigma(\flow^tx_0)dW_t + \e Q_2(\flow^tx_0,    \flow^tz-\flow^tx_0 )  dW_t
+ \e Q_2(\flow^tz,\Delta_t) dW_t.
\end{align*}
Applying the Duhamel principle to this identity, we obtain that
\begin{equation}
\label{eq:Delta_via_Thetaprime}
h_\e(t,z)=\e^{-1}\Delta_t-N_t=\Theta'_\e(t,z)+\Theta''_\e(t,z)+\Xi'_\e(t,z)+\Xi''_\e(t,z), 
\end{equation}
where
\begin{align*}
\Theta'_\e(t,z)&=\e^{-1}\int_0^t \Lf(t-s)Q_1(\flow^sz,\Delta_s)ds,
\\ \Theta''_\e(t,z)&=\int_0^t \Lf(t-s)Q_2(\flow^sz,\Delta_s)dW_s, 
\\
\Xi'_\e(t,z)&=\e^{-1}\int_0^t \Lf(t-s) Q_3(\flow^s x_0, \flow^s z-\flow^s x_0)\Delta_s   ds,
\\ \Xi''_\e(t,z)&=\int_0^t \Lf(t-s)Q_2(\flow^sx_0,    \flow^sz-\flow^sx_0 )dW_s.
\end{align*}

Let us take an arbitrary $\beta'\in(0,1)$ and define $\ell=\inf\{t\ge 0: |\Delta_t|\ge\e^{\beta'}\}\wedge T'$.
Then, using \eqref{eq:quadratic_term}, \eqref{eq:linear_term}, \eqref{eq:Q_3_lin_bound} and
the Lipschitzness of $\flow^t$,  we obtain that for all $\beta''\in(0,\beta')$, $\alpha'\in(0,\alpha)$,
\begin{equation}
\label{eq:est_Theta-prime1}
\sup_{|z-x_0|<\eps^\alpha l^\vk_\eps}\, \sup_{t\le \ell} |\Theta'_\e(t,z)|=o(\e^{2\beta''-1}),
\end{equation}
\begin{equation*}
\sup_{|z-x_0|<\eps^\alpha l^\vk_\eps}\Pp^z\left\{\sup_{t\le \ell} |\Theta''_\e(t,z)|\ge \e^{\beta''}\right\}=o_e(1), 
\end{equation*}
\begin{equation}
\label{eq:est_Xi-prime1}
\sup_{|z- x_0|<\eps^\alpha l^\vk_\eps}\, \sup_{t\le \ell} |\Xi'_\e(t,z)|=o(\e^{\alpha+\beta''-1}),
\end{equation}
\begin{equation}
\label{eq:est_Xi-pprime1}
\sup_{|z-x_0|<\eps^\alpha l^\vk_\eps}\Pp^z\left\{\sup_{t\le \ell} |\Xi''_\e(t,z)|\ge \e^{\alpha'}\right\}=o_e(1). 
\end{equation}
Choosing $\beta'$ and $\beta''$  sufficiently close to $1$ and $\alpha'$ sufficiently close to $\alpha$,
using these relations along with \eqref{eq:Delta-via-r} and the tameness of $\sup_{t\le T'} |N_t|$,
we obtain that 
\[
\sup_{|z- x_0|<\eps^\alpha l^\vk_\eps }\Pp^z\left\{\sup_{t\le \ell} 
|\Delta_t|\ge \e^{\beta'}
\right\}=o_e(1), 
\]
which implies that w.h.p.\ under $\Pp^z$, uniformly in $|z- x_0|<\eps^\alpha l^\vk_\eps$, we have $\ell=T'$.
Therefore,
\eqref{eq:est_Theta-prime1}--\eqref{eq:est_Xi-pprime1} hold with $\ell$ replaced by $T'$ (w.h.p.\ for \eqref{eq:est_Theta-prime1} and \eqref{eq:est_Xi-prime1}). 
Once $\alpha',\beta',\beta''$ are chosen to ensure relations $2\beta''-1>\beta$,\quad $\beta'>\beta$,\quad $\alpha+\beta''-1>\beta$, \quad $\alpha'>\beta$, we can use 
these estimates in 
\eqref{eq:Delta_via_Thetaprime} to complete the proof.
\epf

\bpf[Proof of Lemma~\ref{lem:scal_lim}]
Let $\beta_1\in(0,1)$ and note that 
Lemma~\ref{lem:linearization-error2} implies that
\begin{equation}
\label{eq:sandwich-for-hitting-time}
\zeta\in (t_z-\e^{\beta_1}, t_z+\e^{\beta_1})
\end{equation}
w.h.p.\ under $\Pp^z$, uniformly in~$z$. Let us study the path $X_t$ on this time interval.

First, we define projection operators $\pi_{b,z}$ and $\pi_{\trcurve,z}$ via a unique decomposition 
\begin{equation*}
v=\pi_{b,z} v + \pi_{\trcurve,z} v,\quad  v\in\R^2,
\end{equation*}
where $\pi_{b,z}$ is collinear with $b(\phi(z))$ and $\pi_{\trcurve,z} v$ is tangent to $\trcurve$ at $\phi(z)$. We define  $\NProj(z)=\pi_{\trcurve,z}N_{t_z}$. In particular, $\NProj=\NProj(x_0)=\pi_{\trcurve,x_0}N_{t_{x_0}}$.

We claim that there is $\beta_2>0$ such that w.h.p.\ under $\Pp^z$, uniformly in $z$,
\begin{equation}
\label{eq:intersect-at-transversal-level1}
\sup_{t\in (t_z-\e^{\beta_1}, t_z+\e^{\beta_1})}|\e^{-1}\pi_{\trcurve,z} (X_t-\phi(z)) - M|\le \e^{\beta_2}.
\end{equation}
To prove this, let us use the representation for $X_t$ from Lemma~\ref{lem:linearization-error2} and write
\begin{multline*}
|\e^{-1} \pi_{\trcurve,z} (X_t-\phi(z)) -\NProj|\le I_1+I_2+I_3+I_4\\
=\e^{-1} |\pi_{\trcurve,z} (\flow^tz-\phi(z)) |+ |\pi_{\trcurve,z}N_t -\NProj(z)| + |\NProj(z)-\NProj|
+|\pi_{\trcurve,z}(h_{\e}(t,z))|,
\end{multline*}
and estimate each term on the right-hand side. Since $|t-t_z|<\e^{\beta_1}$, and the tangent vector to the $C^2$ trajectory $(\flow^tz)_{t\in  (t_z-\e^{\beta_1}, t_z+\e^{\beta_1})}$ 
 at $t=t_z$ is $b(\phi(z))$, we see that
\[
\sup_{t\in  (t_z-\e^{\beta_1}, t_z+\e^{\beta_1})} I_1\le \e^{2\beta_1-1}.
\] 
Using the exponential martingale inequality to control $N$, we obtain that, w.h.p., uniformly in $z$,
\[
\sup_{t\in  (t_z-\e^{\beta_1}, t_z+\e^{\beta_1})} I_2\le \e^{\beta_1/3}.
\]
Let us estimate $I_3$. The definitions of $\NProj(z)$ and $\NProj$ imply that
\[
I_3\le |\pi_{\trcurve,z}(N_{t_z}-N_{t_{x_0}})|+|(\pi_{\trcurve,z}-\pi_{\trcurve,x_0})N_{t_{x_0}}|
=I_{3,1}+I_{3,2}.
\]
The operator norm of $\pi_{\trcurve,z}$ is bounded,  so for a constant $C>0$ and an arbitrary $\beta_3\in(0,\alpha/2)$, we have w.h.p.\ under $\Pp^z$, uniformly in $|z-x_0|\le \e^{\alpha}l_\e^\vk$,
\[
I_{3,1}\le C |N_{t_z}-N_{t_{x_0}}|\le \e^{\beta_3},
\]
where in the second inequality we used the Lipschitzness of $t_z$ in $z$ and the fact that~$N_{t}$ is a diffusion process. Since the projection operator $\pi_{\trcurve,z}$
is Lipschitz in $z$, we also conclude that for $\beta'_3\in(\beta_3,\alpha)$, w.h.p.\ under $\Pp^z$, uniformly in $|z-x_0|\le \e^{\alpha}l_\e^\vk$, 
\[
I_{3,2}\le \e^{\beta'_3} |N_{t_{x_0}}|\le \e^{\beta_3},
\]
where the last estimate follows from the fast decay of the Gaussian tail.
We also use  Lemma~\ref{lem:linearization-error2} to find $\beta_4>0$ such that
\[
\sup_{t\in  (t_z-\e^{\beta_1}, t_z+\e^{\beta_1})} I_4\le e^{\beta_4}.
\]
Combining these estimates and choosing $\beta_1$ sufficiently close to $1$, we obtain our 
claim~\eqref{eq:intersect-at-transversal-level1}. Using \eqref{eq:sandwich-for-hitting-time}, we obtain that w.h.p., uniformly in $z$,
\begin{equation*}
|\pi_{\trcurve,z} (X_\zeta-\phi(z)) - \e \NProj|\le \e^{1+\beta_2}.
\end{equation*}
Since $\trcurve\in C^2$, this estimate implies that for some $K>0$ and any  $\beta_5\in(0,1)$, w.h.p.\ under $\Pp^z$, uniformly in $z$,
\begin{equation*}
|\pi_{b,z} (X_\zeta-\phi(z))|\le K(\e|\NProj|+\e^{1+\beta_2})^2\le \e^{1+\beta_5}.
\end{equation*}
Combining the last two estimates, we complete the proof of the lemma.\epf

\subsection{Proofs of lemmas from Sections~\ref{sec:2saddles} and~\ref{sec:long-escape-chains} in the original coordinates}
\label{sec:proofs-lemmas-original-coords}

We recall that the initial conditions for all the results we need to prove are described in 
assumption~\ref{setting:initial-cond} where $\alpha\in(0,1]$, $x_0\in \Ws$, $v$ is transversal to $\Ws$ at $x_0$, and in addition 
$\xi_\e$ is assumed to be tame. In other words, w.h.p.,
initial conditions belong to $x_0+\eps^{\alpha}K_{\vk}(\eps)v$ and we will restrict ourselves to these initial values only.

We are going to split the evolution into three stages (and rely on the strong Markov property for solutions of It\^o SDE's), see Figure~\ref{fig:dyn_3_stages}: (i)~along the stable manifold~$\Ws$, (ii)~in a small neighborhood of the saddle point $O$, (iii)~along the unstable manifold~$\Wu$. 

To that end, we recall that our choice of  parameters $R,L,L'>0$ and the drift-linearizing conjugacy~$f$ defined on a neighborhood $U$ of the saddle point ensures that the rectangle $\fR$ defined by~\eqref{eq:main_rectangle} satisfies $\fR\subset f(U)$, i.e., $f^{-1}(\fR)\subset U$ (see Section~\ref{sec:rect-setting}).

In the first stage, the process $X$ evolves mostly outside $\fR$. This stage ends at time  
$\zeta=\inf\{t\ge 0:  X_t\in \chi \},$ 
when the process $X$ hits $\chi=f^{-1}([-R,R]\times \{L\})$.
 The outcome of this first stage can be studied using results of Section~\ref{sec:finite-time-horizon-lemmas}.
 In particular,  $\zeta<\infty$ and $X_\zeta$ belongs to a small neighborhood of $f^{-1}(0,L)$ w.h.p.

 This means that, w.h.p., the evolution of $X$ after $\zeta$ is well-defined and, while~$X$ stays within $U$, can be described in terms of the process $Y$ given by $Y_t=f(X_{\zeta+t})$. This
 process solves the rectified SDE~\eqref{eq:def_SDE_near_a_saddle_point} with initial condition $Y_0 = f(X_\zeta)$ 
 (belonging to $\chi$ and close to $f^{-1}(0,L)$  w.h.p.), and $W$ replaced by $W(\cdot + \zeta) -W(\zeta)$. 
 The second stage lasts while the process $Y$ stays within $\fR$
 (i.e., the process $X_{\zeta+t}$ stays within $f^{-1}(\fR)$), i.e., until time $\tau_\fR=\inf\{t\ge 0:\ Y_t\in \partial\fR\}$ (in terms of $Y$), or until time $\zeta+\tau_\fR$ (in terms of $X$).  
 The exit time $\tau_\fR$ and exit location $Y_{\tau_\Pi}$ are studied in detail in Section~\ref{sec:rectified}. In particular,
 w.h.p.,  $\tau_\fR<\infty$, 
 events 
 \begin{equation}
 \label{eq:exit-lateral}
 A_{\fR,\pm,\eps}=\left\{Y_{\tau_\fR}\in \{\pm R\}\times [-L',L']\right\}.
 \end{equation}
 get realized (i.e., the exit happens through one of the lateral sides of $\fR$), and
  $Y_{\tau_\Pi}$ 
  is close to  $(-R,0)$ or $(R,0)$, i.e., $X_{\zeta+\tau_\Pi}$ is close to $f(-R,0)$ or $f(R,0)$. 
 
 This, in turn, means that, w.h.p., the evolution of $X$ after $\zeta+\tau_\Pi$ is well-defined.
 The process $\widetilde X$ given by $\widetilde X_t=X_{\zeta+\tau +t}$ 
 solves SDE~\eqref{eq:basic-sde} with $W$ replaced by $W(\cdot+\zeta+\tau_\Pi)-W(\zeta+\tau_\Pi)$ and 
 satisfies $\widetilde X_0=X_{\zeta+\tau_\Pi}$. The third stage lasts for time $\tilde \tau=\inf\{t\ge 0: \widetilde X_t\in \partial D\}$.
 For this stage, we can study the exit time $\tilde\tau$ and exit location $\widetilde X_{\tilde \tau}$ using
 the results of Section~\ref{sec:finite-time-horizon-lemmas}. 
 In particular, we can conclude that  w.h.p.\ $\tilde \tau<\infty$ and $\widetilde X_{\tilde \tau}$ belongs to a small
 neighborhood of $q_\pm$. 
 
 \smallskip

 There are nonrigorous elements in this description of the three-stage evolution. 
 Let us convert them into rigorous statements. To that end, let us define
 the following curves:
\begin{gather*}
        \chi_0 = x_0 +[-c_0,c_0]v,\qquad \chi_1 = f^{-1}\left([-R,R]\times\{L\}\right),\\
    \chi_{2,\pm} = f^{-1}\left(\{\pm R\}\times [-L',L']\right),\qquad \chi_2= \chi_{2,+}\cup \ \chi_{2,-},
    \\ \chi_{3,\pm} = q_\pm+[-1,1]v_+,\qquad \chi_3= \chi_{3,+}\cup \ \chi_{3,-},
\end{gather*}
where the constant $c_0\in(0,1)$ is chosen  to ensure that the deterministic flow $(\flow^t)_{t\ge0}$ transports $\chi_0$ into $\mathring\chi_1$. Note that $\chi_{2}$ is transported by  $(\flow^t)_{t\ge0}$ into $\mathring\chi_{3}$ due to  the part of condition~\ref{setting:conjugacy} on transport from $U$. We also define $t_x=\min\{t:\ \flow^tx\in \chi_1\}$, 
$\phi(x)=\flow^{t_x}x$ 
for $x\in \chi_0$, and $\tilde t_x=\min\{t:\ \flow^tx\in \chi_{3}\}$, $\tilde \phi(x)=\flow^{\tilde t_x}x$  for $x\in \chi_{2}$. It is easy to see that \begin{gather}
    \phi(x_0 )= f^{-1}(0,L), \label{eq:flow_x_0to...}
    \\
\tilde \phi\left(q_{\fR,\pm}\right) = q_\pm, \label{eq:flow...to_q_+}
\end{gather}
where 
\begin{align}\label{eq:q_fR,pm}
    q_{\fR,\pm} = f^{-1}(\pm R,0).
\end{align}

We will prove the following lemma in Section~\ref{subsec:3-stages}:

\begin{lemma}  \label{lem:3-stages}
The following holds  w.h.p.\ under~$\Pp^z$, uniformly in $z\in\chi_0$:
\begin{align}
    X_\zeta\in \chi_1, \qquad Y_0\in [-R,R]\times\{L\}, \label{eq:X_zeta_in_chi}\\
    Y_{\tau_\fR}\in \{\pm R\}\times [-L',L'],\qquad  X_{\zeta+\tau_\Pi}=\widetilde{X}_0 \in\chi_{2}, \label{eq:X_zeta_tau}\\
    X_{\tau} =
    \widetilde X_{\tilde \tau} \in \chi_{3},  \label{eq:tilde_X=X}\\
     \tau = \zeta + \tau_\fR + \tilde \tau,\label{eq:tau=sigma+tau_R+t_tau}
\end{align}
and for every $\vk>\frac{1}{2}$
\begin{gather}
    |X_\zeta - \phi(X_0)|\leq \eps l_\eps^{\vk}, \label{eq:|X_zeta-phi(X_0)|}
    \\
    |\widetilde X_{\tilde\tau} - \tilde \phi(\widetilde X_0)|\leq \eps l_\eps^{\vk}. \label{eq:|t_X_t_tau-phi(t_X_0)|}
\end{gather}
In addition,
\begin{equation}
\label{eq:A_Pi-vs-A}
\sup_{z\in \chi_0} \Pp^z(A_{\pm, \e}\triangle A_{\fR,\pm, \e})=o_e(1).
\end{equation}
\end{lemma}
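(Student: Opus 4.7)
The plan is to split the evolution at the two hitting times $\zeta$ and $\zeta+\tau_\fR$ into three finite-time stages, controlled respectively by Lemmas~\ref{lem:hit_loc}--\ref{lem:scal_lim} for stage~1 (along the deterministic flow from $\chi_0$ to $\chi_1$), Lemma~\ref{lem:second-coord-at-exit} combined with the strong Markov property for stage~2 (traversal of the rectified saddle neighborhood), and Lemmas~\ref{lem:hit_loc}--\ref{lem:scal_lim} once more for stage~3 (from $\chi_2$ to $\chi_3$ along the unstable manifold); the conclusion is then obtained by intersecting the three resulting high-probability events. For stage~1, note that $v$ is transversal to $b(x_0)$ by \ref{setting:geometry-domain} and $\chi_1$ crosses the stable manifold at $f^{-1}(0,L)$ transversally to $b$, so both $\chi_0$ and $\chi_1$ are $b$-transversal, and for every $z\in\chi_0$ the orbit $\flow^t z$ hits $\mathring\chi_1$ at a unique minimal time $t_z<\infty$ with $\phi(z)=\flow^{t_z}z$ and $\phi(x_0)=f^{-1}(0,L)$. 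Applying Lemma~\ref{lem:scal_lim} (in the form of Remark~\ref{rem:initial_line} with $\alpha=1$) to the path started at $X_0=z$ then yields, w.h.p.\ uniformly in $z\in\chi_0$, that $\zeta<\infty$, $X_\zeta\in\chi_1$ and $|X_\zeta-\phi(z)|\le \eps l_\eps^\vk$ for any $\vk>1/2$, which proves \eqref{eq:|X_zeta-phi(X_0)|} and the first half of \eqref{eq:X_zeta_in_chi}.

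For stage~2, I condition on the stage~1 event and invoke the strong Markov property at $\zeta$: the first coordinate of $Y_0=f(X_\zeta)$ is then of order at most $\eps l_\eps^\vk$, so $Y_0\in[-\eps^\alpha,\eps^\alpha]\times\{L\}$ w.h.p.\ for any fixed $\alpha\in(0,1)$, which is exactly the initial setup of Section~\ref{sec:rect-setting}. Lemma~\ref{lem:second-coord-at-exit} (applied with this $\alpha$, $\theta=0$ and $r=R$, noting that $\tau_\fR$ coincides with the exit time of that lemma via \eqref{eq:tau_fR}) then gives that w.h.p.\ $\tau_\fR<\infty$ and $Y_{\tau_\fR}\in\{\pm R\}\times[-3\eps^\beta,3\eps^\beta]\subset\{\pm R\}\times[-L',L']$, so $\widetilde X_0=X_{\zeta+\tau_\fR}=f^{-1}(Y_{\tau_\fR})\in\chi_{2,\pm}$ lies within $O(\eps^\beta)$ of $q_{\fR,\pm}$, establishing \eqref{eq:X_zeta_tau}. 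For stage~3, the curves $\chi_2$ and $\chi_3$ are $b$-transversal by the last clause of \ref{setting:geometry-domain} and $\tilde\phi(q_{\fR,\pm})=q_\pm$; applying Lemma~\ref{lem:scal_lim} (again via Remark~\ref{rem:initial_line} with $\alpha=1$, using the strong Markov property at $\zeta+\tau_\fR$) to $\widetilde X$ started at $\widetilde X_0$ with target $\chi_{3,\pm}$ yields w.h.p.\ $\tilde\tau<\infty$, $\widetilde X_{\tilde\tau}\in\chi_{3,\pm}$ and $|\widetilde X_{\tilde\tau}-\tilde\phi(\widetilde X_0)|\le\eps l_\eps^\vk$, giving \eqref{eq:tilde_X=X} and \eqref{eq:|t_X_t_tau-phi(t_X_0)|}. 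Bound \eqref{eq:A_Pi-vs-A} follows immediately, since on the intersection of the three stages' high-probability events, $X_\tau\in\chi_{3,\pm}=q_\pm+[-1,1]v_\pm$ if and only if $Y_{\tau_\fR}\in\{\pm R\}\times[-L',L']$.

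The delicate point, and what I expect to be the main obstacle, is the additivity \eqref{eq:tau=sigma+tau_R+t_tau}: it requires $X_t\in D$ for all $t\in[0,\zeta+\tau_\fR+\tilde\tau)$ and that the first exit of $X$ from $D$ occurs precisely at this endpoint. During stage~1 the deterministic orbit $\flow^t z$ lies in $D$ for $t\in[0,t_z]$ by the construction of $\chi_0$ and \ref{setting:geometry-domain}, and the Freidlin--Wentzell estimate underlying Lemma~\ref{lem:hit_loc} transfers this containment to $X$ w.h.p. During stage~2, $X=f^{-1}(Y)$ moves inside $f^{-1}(\fR)$, and by the freedom in \ref{setting:conjugacy} to enlarge $f(U)$ we may assume $f^{-1}(\fR)\subset D$ with $\partial f^{-1}(\fR)\cap \partial D\subset \chi_{3,+}\cup\chi_{3,-}$. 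During stage~3 the last clause of \ref{setting:geometry-domain} guarantees that $\flow^t\widetilde X_0$ stays in $D$ until crossing $\partial D$ transversally at $\tilde\phi(\widetilde X_0)\in\chi_{3,\pm}$, and another Freidlin--Wentzell estimate transfers this to $\widetilde X$. Intersecting these events with the ones produced in the three stages and using the strong Markov property completes the proof.
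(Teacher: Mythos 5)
Your three-stage decomposition coincides with the paper's, and your treatment of the additivity \eqref{eq:tau=sigma+tau_R+t_tau} is, if anything, more explicit than the published argument (which simply adds the three durations). However, there is a genuine gap concerning the required uniformity over $z\in\chi_0$. The segment $\chi_0=x_0+[-c_0,c_0]v$ is macroscopic, whereas Lemma~\ref{lem:scal_lim} (and Remark~\ref{rem:initial_line}) is stated only for initial conditions $z$ with $|z-x_0|<\eps^\alpha l_\eps^\vk$; with $\alpha=1$ this is a shrinking neighborhood of $x_0$, so it cannot deliver stage~1 uniformly over $\chi_0$. The correct tool --- which you name in your opening plan but then do not actually use --- is Lemma~\ref{lem:hit_loc}, whose last display with $\beta=1$, $\vk>\tfrac12$ and $E=\chi_0$ gives \eqref{eq:X_zeta_in_chi} and \eqref{eq:|X_zeta-phi(X_0)|} uniformly over the compact set $\chi_0$; the same remark applies to stage~3 with $E=\chi_{2,\pm}$.

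The more serious consequence is in stage~2. For $z\in\chi_0$ at macroscopic distance from $x_0$, the hitting point $\phi(z)\in\chi_1$ is at macroscopic distance from $f^{-1}(0,L)$ (by Lemma~\ref{lem:bi-lip}, $\phi$ maps $\chi_0$ diffeomorphically onto a macroscopic sub-arc of $\chi_1$), so your claim that $Y_0^1$ is of order at most $\eps l_\eps^\vk$ --- hence that $Y_0\in[-\eps^\alpha,\eps^\alpha]\times\{L\}$ w.h.p. --- is false for such $z$; it holds only for $z\in x_0+\eps^\alpha K_\vk(\eps)v$, which is the content of Lemma~\ref{lem:Y_0}. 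Consequently Lemma~\ref{lem:second-coord-at-exit}, proved only for entrance points in $I_\e=[-\eps^\alpha,\eps^\alpha]\times\{L\}$, does not apply, and your assertion that $Y_{\tau_\fR}$ lies within $3\eps^\beta$ of the unstable manifold also fails for macroscopic entrance points. The paper instead invokes Lemma~\ref{lem:hp-events-in-rectangle}, which covers all of $[-R,R]\times\{L\}$ by combining Lemma~\ref{lem:second-coord-at-exit} near the stable manifold with a separate large-deviation estimate away from it. This uniformity is not cosmetic: Lemma~\ref{lem:3-stages} is later applied (e.g.\ in the proof of Lemma~\ref{lem:exit-on-the-same-whp}) to initial conditions ranging over $x_0+[\eps l_\eps^\vk, c_0]v$, which your argument does not cover.
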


In the proofs below we will combine 
the finite time horizon results obtained in Section~\ref{sec:finite-time-horizon-lemmas}
with the {\it rectified coordinates} versions of the lemmas proved in Section~\ref{sec:rectified}. 
 In our three-stage analysis, we will obviously rely on the strong Markov property for diffusions without mentioning it explicitly.

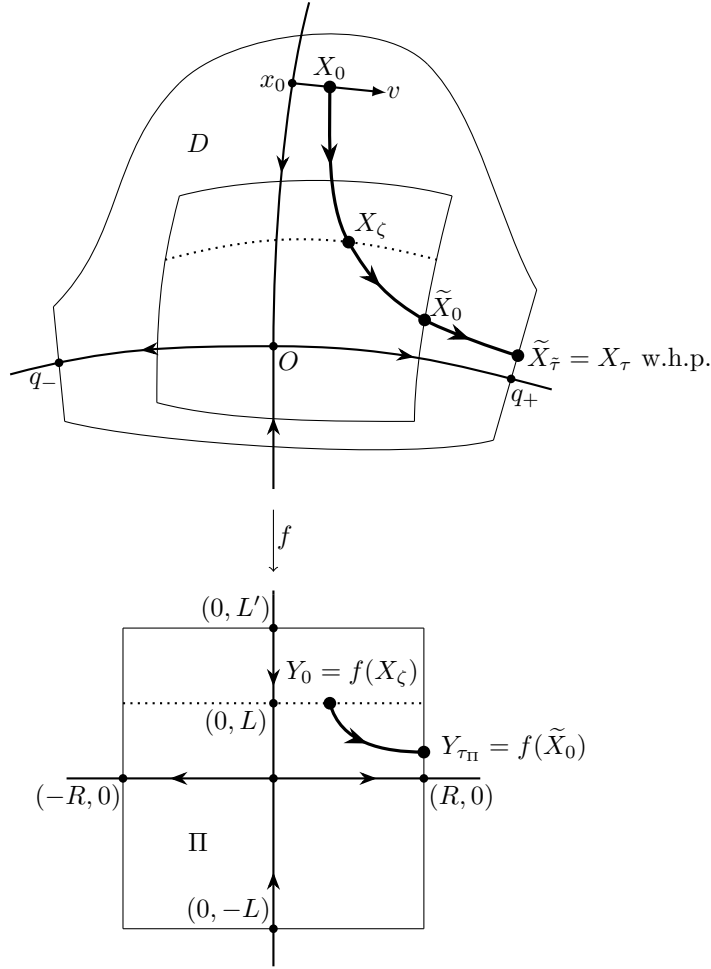
\begin{figure}[ht]
\centering
\begin{tikzpicture}

    \node (0) at (0, 0) {};
    \node (1) at (-3.5, -0.375) {};
    \node (2) at (3.7, -0.575) {};
    \node (3) at (0.475, 4.575) {};
    \node (4) at (0, -1.9) {};
    \node (5) at (-2.925, 0.525) {};
    \node (6) at (-2.775, -1) {};
    \node (7) at (3.5, 0.75) {};
    \node (8) at (2.925, -1.25) {};
    \node (9) at (-1.25, 3.5) {};
    \node (10) at (2.05, 3.75) {};
    \node (11) at (-1.25, 2) {};
    \node (12) at (-1.55, -0.75) {};
    \node (13) at (1.875, -1) {};
    \node (14) at (2.375, 2) {};
    \node (15) at (0.25, 3.5) {};
    \node (16) at (1.5, 3.375) {};
    \node (17) at (0.75, 3.449) {};
    \node (18) at (1, 1.385) {};
    \node (19) at (2.01, 0.35) {};
    \node (20) at (3.25, -0.125) {};
    \node (21) at (2, -2.25) {};
    \node (22) at (0, -2.175) {};
    \node (23) at (0, -3) {};
    \node (24) at (0, -5.75) {};
    \node (25) at (0, -3.25) {};
    \node (26) at (0, -8.25) {};
    \node (27) at (-2.75, -5.75) {};
    \node (28) at (2.75, -5.75) {};
    \node (29) at (-2, -4.75) {};
    \node (30) at (-2, -7.75) {};
    \node (31) at (2, -7.75) {};
    \node (32) at (2, -4.75) {};
    \node (33) at (-2, -3.75) {};
    \node (34) at (2, -3.75) {};
    \node (35) at (-1.44, 1.15) {};
    \node (36) at (2.175, 1.15) {};
    \node (37) at (0.75, -4.75) {};
    \node (38) at (2, -5.4) {};
    \node (39) at (0, -3.3) {};
    \node (40) at (0, -3.75) {};
    \node (41) at (0, -4.75) {};
    \node (42) at (2, -5.75) {};
    \node (43) at (-2, -5.75) {};
    \node (44) at (0, -7.75) {};
    \node (45) at (-2.85, -0.22) {};
    \node (46) at (3.16, -0.435) {};
 
    \node at (0, 3.5) {$x_0$};
    \node at (1.6, 3.375) {$v$};
    \node at (0.75, 3.7) {$X_0$};
    \node at (1.3, 1.6) {$X_\zeta$};
    \node at (2.3, 0.55) {$\widetilde X_{0}$};
    \node at (4.6, -0.125) {$\widetilde X_{\tilde\tau} = X_\tau \text{ w.h.p.}$};
    \node at (0.15, -2.55) {$f$};
    \node at (1.05, -4.35) {$Y_0 = f(X_\zeta)$};
    \node at (3.2, -5.3) {$Y_{\tau_\Pi} = f(\widetilde X_{0})$};
    \node at (0.2, -0.2) {$O$};
    \node at (-1, 2.7) {$D$};
    \node at (-1, -6.6) {$\Pi$};
    \node at (-0.5, -3.5) {$(0,L')$};
    \node at (-0.5, -5) {$(0,L)$};
    \node at (2.5, -6) {$(R,0)$};
    \node at (-2.6, -6) {$(-R,0)$};
    \node at (-0.6, -7.5) {$(0,-L)$};
    \node at (-3.05, -0.47) {$q_-$};
    \node at (3.35, -0.7) {$q_+$};

\foreach \n in {0, 15, 24, 40, 41, 42, 43, 44, 45, 46}
        \node at (\n)[circle,fill,inner sep=1.15pt]{};
    \foreach \n in {17, 18, 19, 20, 37, 38}
        \node at (\n)[circle,fill,inner sep=1.7pt]{};

    \draw [thick, postaction={on each segment={mid arrow=black}}] [in=90, out=-105, looseness=0.75] (3.center) to (0.center);
    \draw [thick, postaction={on each segment={mid arrow=black}}] (4.center) to (0.center);
    \draw [thick, postaction={on each segment={mid arrow=black}}] [in=15, out=180] (0.center) to (1.center);
    \draw [thick, postaction={on each segment={mid arrow=black}}] [in=165, out=360] (0.center) to (2.center);
    \draw (5.center) to (6.center);
    \draw (7.center) to (8.center);
    \draw [in=-135, out=45] (5.center) to (9.center);
    \draw [in=135, out=45, looseness=0.75] (9.center) to (10.center);
    \draw [in=120, out=-45, looseness=0.75] (10.center) to (7.center);
    \draw [in=-165, out=-15, looseness=0.50] (6.center) to (8.center);
    \draw [in=90, out=-105] (11.center) to (12.center);
    \draw [in=-180, out=-15, looseness=0.75] (12.center) to (13.center);
    \draw [in=165, out=15, looseness=0.75] (11.center) to (14.center);
    \draw [in=90, out=-105, looseness=0.75] (14.center) to (13.center);
    \draw [thick, -latex](15.center) to (16.center);
    \draw [very thick, postaction={on each segment={mid arrow=black}}] [in=120, out=-90, looseness=0.75] (17.center) to (18.center);
    \draw [very thick, postaction={on each segment={mid arrow=black}}] [in=150, out=-60] (18.center) to (19.center);
    \draw [very thick, postaction={on each segment={mid arrow=black}}] [in=165, out=-30, looseness=0.50] (19.center) to (20.center);
    \draw [-to] (22.center) to (23.center);
    \draw [thick] (25.center) to (39.center);
    \draw [thick, postaction={on each segment={mid arrow=black}}] (39.center) to (24.center);
    \draw [thick, postaction={on each segment={mid arrow=black}}] (26.center) to (24.center);
    \draw [thick, postaction={on each segment={mid arrow=black}}] (24.center) to (27.center);
    \draw [thick, postaction={on each segment={mid arrow=black}}] (24.center) to (28.center);
    \draw (33.center) to (30.center);
    \draw (30.center) to (31.center);
    \draw (31.center) to (34.center);
    \draw (34.center) to (33.center);
    \draw [thick, dotted] (29.center) to (32.center);
    \draw [thick, dotted, in=165, out=15] (35.center) to (36.center);
    \draw [very thick, postaction={on each segment={mid arrow=black}}] [in=180, out=-75] (37.center) to (38.center);

\end{tikzpicture}

\caption{Dynamics in three stages.}
\label{fig:dyn_3_stages}

\end{figure}

 \subsubsection{Proof of Lemma~\ref{lem:3-stages}}\label{subsec:3-stages}

In this proof we shorten ``w.h.p.\ under~$\Pp^z$ uniformly in $z\in \chi_0$'' to ``w.h.p.''

Lemma~\ref{lem:hit_loc} applied to the process $X$ traveling from $\chi_0$ to $\chi_1$ implies that \eqref{eq:X_zeta_in_chi} and \eqref{eq:|X_zeta-phi(X_0)|} hold w.h.p. Therefore, we can apply Lemma~\ref{lem:hp-events-in-rectangle} and conclude that
$A_{\fR,+,\eps}\cup A_{\fR,-,\eps}$ happens w.h.p.  Therefore,  \eqref{eq:X_zeta_tau} holds w.h.p.

Applying Lemma~\ref{lem:hit_loc} on each of the disjoint events $A_{\fR,+,\eps}$, and $A_{\fR,-,\eps}$ to the process $\widetilde X$ traveling
between $\chi_2$ and $\chi_3$, 
we obtain that \eqref{eq:tilde_X=X}, \eqref{eq:|t_X_t_tau-phi(t_X_0)|} and \eqref{eq:A_Pi-vs-A} hold w.h.p. Identity \eqref{eq:tau=sigma+tau_R+t_tau} simply computes
the total time spent by the process $X$ in all three stages.
 \epf

 \subsubsection{Proof of Lemma~\ref{lem:exit-straight}}For initial conditions in $x_0+\eps^{\alpha}K_{\vk}(\eps)v$ (which is a subset of $\chi_0$ for small $\e$), a strengthening of~\eqref{eq:X_zeta_in_chi}
follows from Lemmas~\ref{lem:hit_loc},~\ref{lem:bi-lip} and the smoothness of $f$:
\begin{lemma}\label{lem:Y_0}
For every $\vk >0$ and every $\alpha\in(0,1]$, there is $\vk'>0$ such that under~$\Pp^{x_0+\eps^\alpha x v}$
  w.h.p., uniformly in $x\in K_\vk(\eps)$, the following holds:
\begin{gather}
    |X_\zeta - f^{-1}(0,L)|\leq \eps^\alpha l_\eps^{\vk'}, \notag
\\
 \label{eq:Y_at_entrance}   Y_0\in (\eps^\alpha K_{\vk'}(\eps))\times \{L\}.
\end{gather}
\end{lemma}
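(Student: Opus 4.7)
The plan is to reduce Lemma~\ref{lem:Y_0} to Lemma~\ref{lem:expansion} applied along the heteroclinic connection reaching $\chi_1$, and then transport the resulting estimate through the smooth conjugacy $f$.

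First, I will verify the applicability of Lemma~\ref{lem:expansion} with $\trcurve = \chi_1$. Under $\Pp^{x_0+\eps^\alpha xv}$, the initial condition is $z = x_0 + \eps^\alpha xv$ with $|x|\le l_\eps^\vk$, so $|z-x_0|\le |v|\,\eps^\alpha l_\eps^\vk$, which lies in the scope of Lemma~\ref{lem:expansion} after an innocuous adjustment of $\vk$. The curve $\chi_1=f^{-1}([-R,R]\times\{L\})$ is smooth and $b$-transversal: in rectified coordinates $b(y)=(\lambda y^1,-\mu y^2)$ has vertical component $-\mu L\neq 0$ along $[-R,R]\times\{L\}$, and this transversality is preserved by the diffeomorphism $f$. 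Moreover, $x_0\in\Ws$ and the conjugacy sends $\Ws\cap U$ to the vertical axis in rectified coordinates, so the deterministic orbit of $x_0$ reaches $\mathring\chi_1$ at the minimal time $T$ such that $\flow^Tx_0 = f^{-1}(0,L) = \phi(x_0)$ (this is exactly \eqref{eq:flow_x_0to...}).

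Applying Lemma~\ref{lem:expansion} with $\bar z = \eps^{-\alpha}(z-x_0) = xv$ yields a constant $\eta>0$, a deterministic rank-one matrix $A$, and a centered Gaussian vector $\NProj$, such that w.h.p.\ uniformly in $x\in K_\vk(\eps)$,
\begin{equation*}
    X_\zeta = f^{-1}(0,L) + \eps^\alpha\bigl(A(xv) + \eps^{1-\alpha}\NProj + r_{x,\eps}\bigr),\qquad |r_{x,\eps}|\le \eps^\eta.
\end{equation*}
Since $|x|\le l_\eps^\vk$, $|A(xv)|\le C l_\eps^\vk$; the Gaussian vector $\NProj$ is tame by standard tail bounds, so $\eps^{1-\alpha}|\NProj|\le l_\eps^{\vk'_1}$ w.h.p.\ for some $\vk'_1$ (using $\alpha\le 1$); and $|r_{x,\eps}|\le\eps^\eta\le 1$. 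Combining these, for some $\vk'_2>0$ we have $|X_\zeta - f^{-1}(0,L)|\le \eps^\alpha l_\eps^{\vk'_2}$ w.h.p.\ uniformly in $x\in K_\vk(\eps)$, which is the first claim.

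For the second claim, note that by construction $\zeta$ is the hitting time of $\chi_1$, so $X_\zeta\in\chi_1$ w.h.p.\ (by the first claim, for small $\eps$), hence $Y_0=f(X_\zeta)$ satisfies $Y_0^2 = L$. For the first coordinate, Taylor expansion of $f$ around $f^{-1}(0,L)$ (recall $f\in C^5_{\mathrm b}$ by \ref{setting:conjugacy}, and $f(f^{-1}(0,L))=(0,L)$) gives
\begin{equation*}
    |Y_0 - (0,L)|\le \|Df\|_\infty\,|X_\zeta - f^{-1}(0,L)| + C|X_\zeta - f^{-1}(0,L)|^2\le C'\eps^\alpha l_\eps^{\vk'_2},
\end{equation*}
w.h.p., so $|Y_0^1|\le \eps^\alpha l_\eps^{\vk'}$ for $\vk' = \vk'_2 + 1$ and all sufficiently small $\eps$ (absorbing $C'$ into an extra logarithmic factor). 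This yields \eqref{eq:Y_at_entrance}. There is no serious obstacle here; the one point that requires minor care is verifying that the transversality and the minimal-time hypothesis of Lemma~\ref{lem:expansion} genuinely hold in our geometric setting, which is why I have spelled it out above.
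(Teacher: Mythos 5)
Your proposal is correct. It verifies the hypotheses of Lemma~\ref{lem:expansion} carefully (transversality of $\chi_1$, minimality of the hitting time, the identity $\flow^{t_{x_0}}x_0=f^{-1}(0,L)$ from \eqref{eq:flow_x_0to...}), extracts the uniform bound $|X_\zeta-f^{-1}(0,L)|\le\eps^\alpha l_\eps^{\vk'}$ from the expansion, and transports it through $f$; the only cosmetic slip is attributing $X_\zeta\in\chi_1$ to the first claim, whereas it holds deterministically on $\{\zeta<\infty\}$ since $\zeta$ is the hitting time of the closed curve $\chi_1$ — but your first claim does confirm the hitting point lies near the interior point $f^{-1}(0,L)$, so nothing breaks.

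The route differs slightly from the paper's. The paper obtains the same conclusion from the cruder pair of finite-horizon results: Lemma~\ref{lem:hit_loc} gives $|X_\zeta-\flow^{t_z}z|\le\eps^\alpha l_\eps^{\vk''}$ w.h.p.\ uniformly over initial conditions $z$ in a compact set (taking $\beta=\alpha$ when $\alpha<1$, or $\beta=1$ with $\vk''>1/2$ when $\alpha=1$), and Lemma~\ref{lem:bi-lip} gives Lipschitz dependence of the Poincar\'e map $\phi(z)=\flow^{t_z}z$ on $z$, so that $|\phi(z)-\phi(x_0)|\le C\eps^\alpha l_\eps^{\vk}$; combining these and using the smoothness of $f$ finishes the argument. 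You instead invoke Lemma~\ref{lem:expansion}, which packages the same large-deviation control together with the linearization along the orbit into a full first-order expansion $X_\zeta=\flow^Tx_0+\eps^\alpha(A\bar z+\eps^{1-\alpha}\NProj+r_{z,\eps})$, and then discard most of that structure by bounding each term by a logarithmic power. This is heavier machinery than needed for the crude bound at hand, but it is perfectly valid, and it has the minor advantage of reusing exactly the lemma that the subsequent, finer arguments (e.g.\ the proofs of Lemmas~\ref{lem:postive-limit-prob-if-close-to-manifold} and~\ref{lem:local-limit-theorem} in original coordinates) rely on anyway.
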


Lemma~\ref{lem:Y_0} allows us to apply Lemma~\ref{lem:exit-straight} \irc{} (proved in Section \ref{sec:rec-lem:exit-straight}), so recalling~\eqref{eq:exit-lateral} and using
the smoothness of $f^{-1}$ and the identity $\widetilde X_{0} = f^{-1}(Y_{\tau_\fR})$, 
we obtain the
following:

\begin{lemma} \label{lem:outcome-of-stage-2}
Let $\vk >0$ and $\alpha\in(0,1]$.  Then under~$\Pp^{x_0+\eps^\alpha x v}$,
  w.h.p.\  uniformly in $x\in K_\vk(\eps)$ the event
  $A_{\fR,-,\eps}\cup A_{\fR,+,\eps}$
happens  and, moreover (for all sufficiently large  $\vk'>0$),
\begin{gather}
  Y_{\tau_\fR} \in \{-R,R\} \times  (\eps^{\alpha'}K_{\vk'}(\eps)), \notag
  \\
  \label{eq:exit-fR}
  |\widetilde  X_0 - f^{-1}(R,0)| \wedge  |\widetilde  X_0 - f^{-1}(-R,0)|  \leq \eps^{\alpha'} l_\eps^{\vk'}.
\end{gather}
\end{lemma}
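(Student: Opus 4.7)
The plan is to chain three observations together: (i) use Lemma~\ref{lem:Y_0} to transfer the tame scaling of the initial condition at $x_0$ into tame scaling of $Y_0$ on the curve $\chi_1$; (ii) apply Lemma~\ref{lem:exit-straight} \irc{} (proved in Section~\ref{sec:rec-lem:exit-straight}) to the diffusion $Y$ inside the rectangle $\fR$; and (iii) push the resulting bound on $Y_{\tau_\fR}$ back to $\widetilde X_0 = f^{-1}(Y_{\tau_\fR})$ using smoothness of $f^{-1}$, together with Lemma~\ref{lem:3-stages} to identify which event in original coordinates matches which lateral exit in rectified coordinates.

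More concretely, first, Lemma~\ref{lem:Y_0} applied to initial conditions of the form $x_0 + \eps^\alpha x v$ with $x \in K_\vk(\eps)$ produces, w.h.p.\ uniformly in $x \in K_\vk(\eps)$, an exponent $\vk_1 > 0$ such that $Y_0 \in (\eps^\alpha K_{\vk_1}(\eps)) \times \{L\}$. In the rectified coordinates this is exactly the scaling of the initial condition assumed by \ref{setting:initial-cond} with $\xi_\eps$ tame. Then Lemma~\ref{lem:exit-straight} \irc{} yields, under $\Pp^{Y_0}$ and hence under $\Pp^{x_0+\eps^\alpha x v}$, that the event $A_{\fR,+,\eps}\cup A_{\fR,-,\eps} = \{Y_{\tau_\fR} \in \{\pm R\}\times [-L',L']\}$ happens w.h.p.\ uniformly in $x\in K_\vk(\eps)$, and that on this event
\[
Y_{\tau_\fR} \in \{-R,R\}\times (\eps^{\alpha'} K_{\vk_2}(\eps))
\]
for some $\vk_2 > 0$ which we may freely enlarge; taking any $\vk' \ge \vk_2$ proves the first displayed inclusion of the lemma.

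For the second conclusion, Lemma~\ref{lem:3-stages} together with \eqref{eq:q_fR,pm} identifies $\widetilde X_0 = f^{-1}(Y_{\tau_\fR})$ w.h.p., with $q_{\fR,\pm} = f^{-1}(\pm R, 0)$. Since $f^{-1}$ is $C^1$ on a neighborhood of $\{\pm R\}\times[-L',L']\subset \fR$, it is Lipschitz there with some constant $C$, and hence on the event $\{Y_{\tau_\fR} \in \{\pm R\}\times (\eps^{\alpha'} K_{\vk'}(\eps))\}$ we have
\[
\bigl|\widetilde X_0 - f^{-1}(\pm R, 0)\bigr| \le C\, \eps^{\alpha'} l_\eps^{\vk'},
\]
with the $\pm$ sign matching that of $Y^1_{\tau_\fR}$. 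Absorbing the constant $C$ into $l_\eps^{\vk'}$ by enlarging $\vk'$ by any positive amount (which is permitted, since $C \le l_\eps^{\delta}$ for every $\delta>0$ once $\eps$ is small) yields~\eqref{eq:exit-fR}. All estimates are uniform in $x\in K_\vk(\eps)$ because each of Lemmas~\ref{lem:Y_0}, \ref{lem:exit-straight} \irc{}, and~\ref{lem:3-stages} is stated with the required uniformity. The only mildly delicate point is bookkeeping of the sequence of enlargements of $\vk$: we simply note that at each step the constant produced depends only on the previous constants and on $\alpha$, $v$, and $f$, so a single final choice of $\vk'$ works for both conclusions of the lemma.
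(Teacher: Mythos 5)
Your proposal is correct and follows essentially the same route as the paper, which derives this lemma in one sentence by combining Lemma~\ref{lem:Y_0}, Lemma~\ref{lem:exit-straight} \irc, the identity $\widetilde X_0 = f^{-1}(Y_{\tau_\fR})$, and the smoothness of $f^{-1}$. Your expansion of the bookkeeping (the chain of enlargements of $\vk$ and the absorption of the Lipschitz constant into $l_\eps^{\vk'}$) is accurate and adds nothing that contradicts the paper's argument.
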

Relation~\eqref{eq:exit-fR} allows to
 apply Lemmas~\ref{lem:hit_loc},~\ref{lem:bi-lip} to $\widetilde X$ strengthening 
 relation~\eqref{eq:tilde_X=X}  of Lemma~\ref{lem:3-stages} and obtaining relations~\eqref{eq:stronger_exit-straight1},~\eqref{eq:stronger_exit-straight} of Lemma~\ref{lem:exit-straight}.
 \epf

\subsubsection{Proof of Lemma~\ref{lem:exit-on-the-same-whp}}

For initial conditions in $x_0+ [c_0,1] v$, we can use the last part of assumption~\ref{setting:geometry-domain} and Lemma~\ref{lem:hit_loc} to show that $A_{-,\eps}$ happens w.l.p.\ under $\Psc^x$ uniformly over those initial conditions. So it suffices to consider only 
initial conditions in $ x_0+  \eps[l_\eps^\vk, c_0\eps^{-1}]v=x_0+  [\eps l_\eps^\vk, c_0]v\subset \chi_0$.

Using \eqref{eq:|X_zeta-phi(X_0)|} of Lemma~\ref{lem:3-stages}, the smoothness of $f$,  for an arbitrary $\vk''$,
we can find
$\vk$ large enough to guarantee that 
 $Y^1_0>\eps l^{\vk''}_\eps$ w.h.p.\ under~$\Psc^x$, uniformly in $x\in [l_\eps^\vk, c_0\eps^{-1}]$. 
 Lemma~\ref{lem:exit-on-the-same-whp} \irc\   (proved in Section~\ref{sec:proofof-lem:exit-on-the-same-whp}) implies that $A_{\fR,-,\eps}$ happens w.l.p. Now, 
 applying \eqref{eq:A_Pi-vs-A} of Lemma~\ref{lem:3-stages},
 we conclude that  and  $A_{-,\eps}$ happens w.l.p.\ (uniformly in $x \in [l_\eps^\vk, \eps^{-1}]$). \epf

\subsubsection{Proof of Lemma~\ref{lem:postive-limit-prob-if-close-to-manifold}}

Uniformly in $x\in K_\vk(\eps)$, we have, for some $\vk',\vk''>0$,
\begin{align}\notag
\Psc^x(A_{-,\e})&=\Psc^x\left(A_{-,\e}\cap A_{\fR,-,\e}\cap\{ Y_{\tau_\fR}^2 \in \eps^{\alpha'}K_{\vk''}(\eps)\}\right)
\\&\qquad\qquad+
\Psc^x\left(A_{-,\e}\cap A_{\fR,+,\e}\cap\{ Y_{\tau_\fR}^2 \in \eps^{\alpha'}K_{\vk''}(\eps)\}\right)+o_e(1)
\notag
\\
\notag
&=\Psc^x\left(A_{\fR,-,\e}\cap\{ Y_{\tau_\fR}^2 \in \eps^{\alpha'}K_{\vk''}(\eps)\}\right)+o_e(1)
\\
\notag
&=\Psc^x(A_{\fR,-,\e})+o_e(1)
\\
\notag
&=\Psc^x\left(A_{\fR,-,\e}\cap \{Y_0 \in (\eps K_{\vk'}(\eps))\times \{L\}\}\right)+o_e(1)
\\
\label{eq:P(A)_4.4}
&=
 \E^{x_0 + \eps x v}\left[ \Pp(A_{\fR,-,\eps}|Y_0) \ONE_{Y_0 \in (\eps K_{\vk'}(\eps))\times \{L\}}\right]+o_e(1).
\end{align}
Here the first identity follows from Lemma~\ref{lem:outcome-of-stage-2}, the second one from 
 \eqref{eq:A_Pi-vs-A} of Lemma~\ref{lem:3-stages}, the third one from 
Lemma~\ref{lem:outcome-of-stage-2}, the fourth one from Lemma~\ref{lem:Y_0}, and the last one is simply a disintegration with respect to $Y_0$.

To compute the expectation in \eqref{eq:P(A)_4.4}, we use Lemma~\ref{lem:postive-limit-prob-if-close-to-manifold} \irc{}
(proved in Section~\ref{sec:rec-lem:postive-limit-prob-if-close-to-manifold})
and obtain for some $s>0$:
\begin{align}\label{eq:sup_y_P(A)}
     \sup_{y\in K_{\vk'}(\eps)}\left|\Pp\left(A_{\fR,-,\eps}|Y_0 = (\eps y,L)\right) - \psi_s(-y)\right|=\smallo{\eps^\delta}.
\end{align}

To study the asymptotics of $\eps^{-1}Y^1_0 =\eps^{-1}f^1(X_\zeta)$ as $\eps \to 0$, where $f^1$ is the first coordinate of $f$, we will  apply Lemma~\ref{lem:expansion} with $\chi=\chi_1$,  $T=t_{x_0}$. Using $A, \NProj$ introduced in that lemma to define $\bar c = \nabla f^1(\phi(x_0))\cdot(Av)$, $\overline \NProj = \nabla f^1(\phi (x_0))\cdot \NProj$, and using \eqref{eq:flow_x_0to...} to see that $f^1(\phi (x_0))=0$,
 we obtain, due to  the smoothness of~$f^1$, that there is $\eta>0$ such that w.h.p.
\begin{align}\label{eq:eps^-1Y_0}
    |\eps^{-1}Y^1_0 - (\bar c x +\overline \NProj )|\leq  \eps^\eta.
\end{align}

Combining \eqref{eq:P(A)_4.4},~\eqref{eq:sup_y_P(A)}, and \eqref{eq:eps^-1Y_0}, choosing sufficiently large $\vk'>0$, using
 the Gaussianity of $\overline \NProj$, and the fact that $\psi_s$ is bounded and Lipschitz, we obtain that, for some $\delta'>0$,
\begin{align*}
    \sup_{x\in K_\vk(\eps)}\left|\Psc^{x}(A_{-,\eps}) - \E \psi_s(-\bar c x-\overline \NProj)\right| = \smallo{\eps^{\delta'}}.
\end{align*}
Since $\overline \NProj$ is centered and Gaussian, the function $x\mapsto \E\psi_s(-\bar c x-\overline \NProj)$ is  given by $x\mapsto\psi_{s'}(-x)$ for some $s'>0$ and thus the proof is complete. \epf

\subsubsection{Proof of Lemma~\ref{lem:alpha<1-and-rho-alpha<1-- concentration at exit}}

In this proof, we shorten ``w.h.p.\ under $\P^{x_0+\eps^\alpha xv}$, uniformly in $x\in K_\vk(\eps)$'' into ``w.h.p.''
Let us study three stages sequentially. First, using \eqref{eq:|X_zeta-phi(X_0)|} of Lemma~\ref{lem:3-stages} and the tameness of $\xi_\eps$, we have that \eqref{eq:|X_zeta-phi(X_0)|} holds w.h.p.\ for $X_0 = x_0 + \eps^\alpha \xi_\eps v$ and some $\vk>0$.
 Due to Lemma~\ref{lem:bi-lip}, the function $\phi$  is Lipschitz on its natural domain. Thus, \eqref{eq:flow_x_0to...} and the assumption that $\alpha<1 $ imply
\begin{align*}
    f\left(\phi(X_0)\right)\in [\eps^\alpha l_\eps^{-\vk_1}, \eps^\alpha l_\eps^{\vk_1}]\times \{L\},
\end{align*}
for some $\vk_1>0$. Since $f$ is Lipschitz and $Y_0= f(X_\zeta)$, the above two displays imply that, for some $\vk_2>0$,   w.h.p.\ the outcome of the first stage satisfies 
\begin{align*}
    Y_0^1 \in [\eps^\alpha l_\eps^{-\vk_2}, \eps^\alpha l_\eps^{\vk_2}].
\end{align*}

Combining this  with Lemma~\ref{lem:alpha<1-and-rho-alpha<1-- concentration at exit} \irc{} (proved in Section~\ref{sec:rec-lem:alpha<1-and-rho-alpha<1-- concentration at exit}), we obtain that
for some $\vk_3>0$, w.h.p.\ the outcome of the second stage satisfies
\begin{align*}
    Y_{\tau_\fR}\in \{R\}\times \eps^{\alpha\rho}[l_\eps^{-\vk_3},l_\eps^{\vk_3}].
\end{align*}
Using this, \eqref{eq:|t_X_t_tau-phi(t_X_0)|}, the fact that $\widetilde X_0=f^{-1}(Y_{\tau_\fR})$, that $f$ is smooth and orientation-preserving (see \ref{setting:conjugacy}),
  property \eqref{eq:flow...to_q_+},
  the Lipschitzness of the function $z\mapsto  \tilde\phi(f^{-1}(z))$ (due to Lemma~\ref{lem:bi-lip}), and the assumption $\alpha\rho<1$, we obtain 
 that  for some $\vk_4>0$,
 $\widetilde X_{\tilde \tau}\in q_+ + \eps^{\alpha\rho}[l_\eps^{-\vk_4},l_\eps^{\vk_4}]v_+$ w.h.p., which completes the proof.  \epf

\subsubsection{Proof of Lemma~\ref{lem:local-limit-theorem}}
      The lemma was proved \irc{} in Section~\ref{sec:rec-lem:local-limit-theorem}.
      We prove the lemma in the following order: part~\eqref{item:exit-below-unlikely},  part~\eqref{item:large-negative-values-w.l.p.}, part~\eqref{item:power-asymp}. In this proof,  ``w.h.p.'' is understood as w.h.p.\ under $\Psc^x$ uniformly in $x\in K_\vk(\eps)$ for a fixed $\vk>0$, and all $o_\e(1)$ are understood to be uniform in $x\in K_\vk(\eps)$.

Part~\eqref{item:exit-below-unlikely}.
Let 
\begin{gather*}
    H = A_{+,\eps}\cap\left\{\widetilde X_{\tilde \tau} \notin q_++[-\eps l^{\vk'}_\eps,+\infty)v_+\right\},
    \\
    E = \left\{Y_{ \tau_\fR} \not\in \{R\}\times [-\eps l^{\vk''}_\eps,+\infty)\right\},
\end{gather*}
where  $\vk''$ is to be chosen later.
In view of \eqref{eq:tilde_X=X}, it suffices to show that $\Psc^x(H) = o_e(1)$. 
Identity~\eqref{eq:A_Pi-vs-A} of Lemma~\ref{lem:3-stages} implies
$\Psc^x(A_{+,\eps}\cap A_{\fR,-,\eps})=o_e(1)$.  Also, $f^{-1}(Y_{\tau_\fR}) = \widetilde X_0 \in \chi_{2,+}$ on $A_{\fR,+,\eps}$. Hence
\begin{align}
    \Psc^x(H)&\leq \Psc^x(H\cap A_{\fR,+,\eps})+o_e(1) \notag
    \\
    & \leq \Psc^x(H\cap E^c\cap \{\widetilde X_0 \in \chi_{2,+}\}) +  \Psc^x(A_{\fR,+,\eps}\cap E) +o_e(1) \notag
    \\
    & \leq \Psc^x(H\cap E^c\cap \{\widetilde X_0 \in \chi_{2,+}\}) + o_e(1), \label{eq:Q^x(H)<}
\end{align}
where the last inequality follows from Lemma~\ref{lem:local-limit-theorem}~\eqref{item:exit-below-unlikely} \irc\ for sufficiently large $\vk''$. On $E^c\cap \{\widetilde X_0 \in \chi_{2,+}\}$, we have w.h.p.\
\begin{align*}
    f(\widetilde X_0) \in f(q_{\fR,+}) + \{0\}\times (-\infty, -\eps l^{\vk''}_\eps).
\end{align*}
Using \eqref{eq:|t_X_t_tau-phi(t_X_0)|}, the above display, \eqref{eq:flow...to_q_+}, the fact that $f$ and  $\tilde\phi$ are orientation-preserving diffeomorphisms
(see Lemma~\ref{lem:bi-lip}), we obtain that w.h.p.\
\begin{align*}
    \widetilde X_{\tilde \tau} \in q_+ + (-\infty, -C_1\eps l^{\vk''}_\eps + C_2\eps l^{\tilde\vk} )v_+
\end{align*}
on $E^c\cap \{\widetilde X_0 \in \chi_{2_+}\}$
for some constants $C_1,C_2>0$. Choosing $\vk''$ sufficiently large, we can use this to ensure w.h.p.
\begin{align}\label{eq:tilde_X_in_q_+...}
    \widetilde X_{\tilde \tau} \in q_+ + (-\infty, -\eps l^{\vk'}_\eps )v_+
\end{align}
on $E^c\cap \{\widetilde X_0 \in \chi_{2_+}\}$.
Then, the definition of $H$ implies that
\begin{align*}
    \Psc^x(H\cap E^c\cap \{\widetilde X_0 \in \chi_{2_+}\}) = o_e(1).
\end{align*}
Using this in \eqref{eq:Q^x(H)<}, we obtain $\Psc^x(H) = o_e(1)$ thus
completing the proof of part~\eqref{item:exit-below-unlikely}.

Part~\eqref{item:large-negative-values-w.l.p.}. 
Due to \eqref{eq:tilde_X=X}, up to an $o_e(1)$ error uniformly in $x\in K_\vk(\eps)$, the left-hand side of \eqref{eq:tau_cvg_in_prob_quantified} can be rewritten as
\begin{align}
\label{eq:proving_time_concentration}
    \Psc^x\left\{\left|\frac{\tau}{\frac{\beta}{\mu} l_\eps }-1\right|>\delta,\ A_{+,\eps},\ H^c \right\},
\end{align}
where
\[
 H=\left\{\widetilde X_{\tilde \tau} \in q_+ + \left(\eps^\beta l^{\vk'}_\eps, \infty\right)v_+\right\}.
\]
Let us estimate this expression.
Using arguments similar to those for~\eqref{eq:tilde_X_in_q_+...}, we can choose $\vk''>0$ sufficiently large to ensure
\begin{align}
\label{eq:Q(H_2-H_3)}
    \Psc^x\left(E\cap H^c\right)=o_e(1),
\end{align}
where
\begin{align*}
    E = \left\{Y_{\tau_\fR}\in \{R\}\times \left(\eps^\beta l^{\vk''}_\eps, \infty\right)\right\}.
\end{align*} 
Now, using \eqref{eq:A_Pi-vs-A} of Lemma~\ref{lem:3-stages} and
\eqref{eq:Q(H_2-H_3)}, representing $\tau$ via~\eqref{eq:tau=sigma+tau_R+t_tau}, 
and applying estimate~\eqref{eq:whp-bounded-time} of Lemma~\ref{lem:hit_loc} to times $\zeta$ and $\tilde\tau$, 
we can bound the expression in~\eqref{eq:proving_time_concentration} by
\begin{align*}
    \Psc^x\left\{\frac{\tau_\fR+C}{\frac{\beta}{\mu} l_\eps }-1>\delta,\ A_{\fR,+,\eps},\ E^c \right\}+\Psc^x\left\{\frac{\tau_\fR}{\frac{\beta}{\mu} l_\eps }-1<-\delta,\ A_{\fR,+,\eps},\ E^c \right\}+o_e(1),
\end{align*}
for some constant $C>0$.
Using \eqref{eq:Q(tau+C>...)} and Lemma~\ref{lem:local-limit-theorem}~\eqref{item:large-negative-values-w.l.p.} \irc, we conclude that the quantity above is $O(\eps^{\frac{\beta(1+\delta)}{\rho}-1})$, completing the proof of part~\ref{item:large-negative-values-w.l.p.}.

Part~\eqref{item:power-asymp}. In this part, we abbreviate ``w.h.p.\ in $\Psc^x$ uniformly in $x\in K_\vk(\eps)$ and $[a,b]\in K_{\vk'}(\eps)$'' to ``w.h.p.''; also all estimates involving $o(\cdot)$ are understood to hold  uniformly in $x\in K_\vk(\eps)$ and $[a,b]\in K_{\vk'}(\eps)$.
We start by analyzing the third stage. 
Setting
\begin{align*}
    E = \left\{Y_{\tau_\fR}\in \{R\}\times \eps^\beta K_{\vk''}(\eps)\right\},
\end{align*}
and using arguments similar to those for \eqref{eq:tilde_X_in_q_+...}, for sufficiently large $\vk''>0$, we have that
\begin{align}\label{eq:Q^x(...cap_E^c)}
    \Psc^{x}\left( \left\{\widetilde X_{\tilde \tau}\in q_++\eps^\beta[a,b]v_+\right\}\cap E^c\right) = o_e(1).
\end{align}
On the event $E$, we rewrite $\widetilde X_0 = f^{-1}(Y_{\tau_\fR})$ as
\begin{align*}
    \widetilde X_0 = q_{\fR,+} + \eps^\beta \frac{f^{-1}(Y_{\tau_\fR})-q_{\fR,+}}{\eps^\beta}
\end{align*}
where $q_{\fR,+}$ is given in \eqref{eq:q_fR,pm}.

We apply Lemma~\ref{lem:expansion} to the transition from $\chi_2$ to $\chi_3$ with $\chi_3,q_{\fR,+},t^{3}_{q_{\fR,+}},\beta$ substituted for $\chi,x_0,T,\alpha$ therein.
Let $A,M,r$ be given by that lemma. We set $\widetilde \NProj =\frac{ v_+}{|v_+|^2}\cdot \NProj$, $\tilde r_\eps = 
\frac{ v_+}{|v_+|^2}\cdot r_{\widetilde X_0,\eps}$, and define $\tilde f$ on a suitable subset of real numbers via its inverse
\begin{align*}
    \tilde f^{-1}(y) =\frac{ v_+}{|v_+|^2}\cdot A(f^{-1}(R,y) - q_{\fR,+}).
\end{align*}
Assumption~\ref{setting:conjugacy} implies that $\tilde f$ is an increasing $C^5_\mathrm{b}$-diffeomorphism,

Using \eqref{eq:flow...to_q_+} and Lemma \ref{lem:expansion}, we have on $E$,
\begin{align*}
   \frac{ v_+}{|v_+|^2}\cdot(\widetilde X_{\tilde \tau}- q_+)= \tilde f^{-1}(Y^2_{\tau_\fR}) + \eps\widetilde \NProj + \eps^\beta\tilde r_\eps,   
\end{align*}
where $\widetilde \NProj$ is a centered Gaussian variable independent of $Y_{\tau_\fR}$, and the r.v.\ $\tilde r_\eps$ satisfies $|\tilde r_\eps|\leq \eps^\eta$ w.h.p. for some $\eta>0$.
Then, we can write 
\begin{align*}
    \Psc^x&\left\{\widetilde X_{\tilde \tau}\in q_++\eps^\beta [a, b] v_+\right\}  
    \\ 
    &=\Psc^x\left(\left\{\tilde f^{-1}(Y^2_{\tau_\fR})+\eps\widetilde M+\eps^\beta\tilde r_\eps \in \eps^\beta[a,b],\, Y^1_{\tau_\fR}=R\right\}\cap E\right) + o_e(1)
    \\
     &= \Psc^x\left(\left\{Y_{\tau_\fR}\in\{R\}\times [\tilde f(\eps^\beta a-\eps\widetilde \NProj-\eps^\beta\tilde r_\eps),\  \tilde f(\eps^\beta b-\eps\widetilde \NProj-\eps^\beta\tilde r_\eps)]\right\}\cap E\right) + o_e(1), 
\end{align*}
where we used \eqref{eq:Q^x(...cap_E^c)} in the first equality, and the monotonicity of  $\tilde f$  in the second identity.
Let $\tilde c= \tilde f'(0)$. Since $\tilde f(0)=0$, there are deterministic constants $C,\eta_1>0$ such that w.h.p.
\begin{align*}
    \left|\frac{\tilde f(\eps^\beta a-\eps\widetilde \NProj-\eps^\beta\tilde r_\eps)}{\eps^\beta}-\tilde c\left(a-\eps^{1-\beta}\widetilde \NProj\right)\right|\leq \tilde c |\tilde r_\eps|+ C\eps^\beta\left(a-\eps^{1-\beta}\widetilde \NProj - \tilde r_\eps\right)^2\leq \eps^{\eta_1}.
\end{align*}
 Here, in the last inequality, we used the Gaussian tail of $\widetilde \NProj$ and the bound on $|\tilde r_\eps|$.  A similar estimate also holds for $a$ replaced by $b$. Set $\widehat \NProj = \widetilde \NProj \ONE_{\beta=1}$. For brevity, let us use the notation $\eqpm$ introduced in \eqref{eq:eqpm_notation}.
The above two displays yield that 
\begin{multline} \label{eq:3rd_dyn}
    \Psc^x\left\{\widetilde X_{\tilde \tau}\in q_++\eps^\beta [a, b] v_+\right\}
    \\ \eqpm \Psc^x
    \left\{Y_{\tau_\fR}\in\{R\}\times \eps^\beta\tilde c \left[a-\widehat \NProj \mp\eps^{\eta_1}, b-\widehat \NProj\pm \eps^{\eta_1}\right]\right\}
    + o_e(1),
\end{multline}
for some $\eta_2>0$, where we chose $\vk''$ sufficiently large and used the Gaussian tail of~$\widehat \NProj$ to drop the conditioning on $E$.

Next, we study the second stage of the dynamics and apply Lemma~\ref{lem:local-limit-theorem}~\eqref{item:power-asymp} \irc\ to see that  for some $\vk'',\delta,c>0,$ and $\nu\in \GoodMeasures$, uniformly in $y\in K_{\vk''}(\eps)$,
\begin{align}
    \eps^{-(\frac{\beta}{\rho}-1)}\Pp\left\{Y_{\tau_\fR}\in\{R\}\times \eps^\beta\tilde c \left[a-\widehat \NProj \mp\eps^{\eta_1}, b-\widehat \NProj\pm \eps^{\eta_1}\right]\,\Big|\,Y_0=(\eps y, L)\right\} \label{eq:2nd_dyn}
    \\
    = g_c(y)\E\nu\left(B_{\pm}-\widehat \NProj\right) + \smallo{\eps^\delta},\notag
\end{align}
where $B_\pm = [a\mp\eps^{\eta_1},b\mp\eps^{\eta_1}]$.

We want to evaluate the above with $y$ replaced by $\eps^{-1} Y_0^1$. To do so, we need to consider the dynamics in the first stage. 
Recall that~\eqref{eq:eps^-1Y_0} holds
w.h.p.\ for some $\eta>0$.
Using that $g_c$ is bounded and Lipschitz, properties \eqref{eq:GoodMeasures} and \eqref{eq:GoodMeasure_density} of $\nu$, Gaussian tails of $\overline M$ and $\widehat \NProj$, and the decay of $\bar r_\eps$, we can verify that  
\begin{align*}
    \left|\E^{x_0+\eps x v} g_c\left(\eps^{-1}Y^1_0\right)\E\nu\left(B_{\pm}-\widehat \NProj\right)- \E g_c\left(\bar c x +\overline M\right)\E\nu\left([a,b]-\widehat \NProj\right)\right|=\smallo{\eps^{\delta'}}
\end{align*}
for some $\delta'>0$. 
This together with~\eqref{eq:tilde_X=X}, \eqref{eq:3rd_dyn}, \eqref{eq:2nd_dyn}, and Lemma~\ref{lem:Y_0} completes the proof. \epf

\subsubsection{Proof of Lemma~\ref{lem:exit_time_is_log}}
The lemma follows from its version \irc{}
(proved in Section~\ref{sec:rec-lem:exit_time_is_log}) and
exactly the same argument based on~\eqref{eq:tau=sigma+tau_R+t_tau} as in the proof of Lemma~\ref{lem:local-limit-theorem}~\eqref{item:large-negative-values-w.l.p.}. \epf

\subsubsection{Proof of Lemma~\ref{lem:if-entrance-far-exit-far}}

For $x\in (c_0\eps^{-\alpha},\eps^{-\alpha}]$, we have $X_0 \in x_0 + (c_0,1]v$. In view of \ref{setting:geometry-domain}, applying Lemma~\ref{lem:hit_loc} to the transition from $x_0+(c_0,1]v$ to $q_++[-1,1]v_+$, we have $X_\tau \in q_++(c',1]v_+$ and thus $\xi'\ge\e^{-\alpha\rho}c'>l^{\vk'}_\eps$ for some $c'>0$ w.h.p.\ uniformly in $x\in(c_0\eps^{-\alpha},\eps^{-\alpha}]$.

For $x \in (l^\vk_\eps, c_0\eps^{-\alpha}]$, we use \eqref{eq:|X_zeta-phi(X_0)|} and \eqref{eq:|t_X_t_tau-phi(t_X_0)|} in Lemma~\ref{lem:3-stages} to obtain that
\begin{align*}
    \eps^{-\alpha}Y^1_0 \in \left(l^\vk_\eps\left(c_1 - c_3 \eps^{1-\alpha}l^{\vk''-\vk}_\eps\right),\ \eps^{-\alpha}\left(\left(c_2 + c_3 \eps l^{\vk''}_\eps\right)\wedge R\right)\right]
\end{align*}
for some $\vk''>\frac{1}{2}$ to be chosen and constants $c_1,c_2,c_3>0$, w.h.p.\ uniformly  in $x\in(l^\vk_\eps,c_0\eps^{-\alpha}]$, and that
\begin{align*}
    &\Pp^{x_0+\eps^\alpha x v}\left\{ \xi_\eps'\leq l_\eps^{\vk'},\ A_{+,\eps}\right\}
    \\ \leq\ & \Pp^{x_0+\eps^\alpha x v}\left\{Y_{\fR}\in \{R\}\times  c_4\eps^{\alpha'}(-\infty,\ l^{\vk'}_\eps(1+\eps^{1-\alpha'}l^{\vk''-\vk'}_\eps)]\right\}+o_e(1)
\end{align*}
uniformly in $x \in (l^\vk_\eps,c_0\eps^{-\alpha}]$ for some $c_4>0$. Choosing $\vk''$  sufficiently large, and then~$\vk$ sufficiently large, we can now deduce the desired result from these displays and Lemma~\ref{lem:if-entrance-far-exit-far} \irc{} proved in~Section~\ref{sec:rec-lem:if-entrance-far-exit-far}. \epf

\subsubsection{Proof of Lemma~\ref{lem:one-step_lower_bound}}
Using Lemma~\ref{lem:3-stages} and applying
Lemma~\ref{lem:hit_loc}  to the third stage, we obtain that for some $\vk''>0$,
\begin{align*}
    &\Pp^{x_0+\eps^{\alpha} xv}\left\{X_\tau \in q_++\eps^\beta\left(-\infty, -l^{\vk'}_\eps\right)v_+,\, A_{+,\eps}\right\}
    \\
    \leq\ & \Pp^{x_0+\eps^{\alpha} xv}\left\{Y_{\tau_\fR} \in {R}\times\left(-\infty, -\eps^\beta l^{\vk''}_\eps\right)\right\}+o_e(1)
\end{align*}
 uniformly in $x\in K_{\vk}(\eps)$.
 The desired result follows from this display, \eqref{eq:Y_at_entrance} of Lemma~\ref{lem:Y_0}, and Lemma~\ref{lem:one-step_lower_bound} \irc{}
 (proved in Section~\ref{sec:rec-lem:one-step_lower_bound}).

\subsubsection{Proof of Lemma~\ref{lem:exit_time_is_log_alpha<1}}

Using \eqref{eq:tau=sigma+tau_R+t_tau}, for any $\delta>0$, we have
\begin{align}
   & \Pp^{x_0 + \eps^{\alpha}xv_0}\left\{\left|\frac{\tau}{\frac{\alpha}{\lambda} l_\eps }-1\right|>3\delta\right\} \notag
    \\ \label{eq:sigma+tau_R+tilde_sigma}
    \leq\ & \Pp^{x_0 + \eps^{\alpha}xv_0}\left\{\zeta \geq \frac{\alpha\delta}{\lambda} l_\eps \right\} + \Pp^{x_0 + \eps^{\alpha}xv_0}\left\{\left|\frac{\tau_\fR}{\frac{\alpha}{\lambda} l_\eps }-1\right|>\delta\right\} \\ 
   & \qquad\qquad\qquad\qquad\qquad\qquad\qquad+ \Pp^{x_0 + \eps^{\alpha}xv_0}\left\{\tilde\tau \geq \frac{\alpha\delta}{\lambda} l_\eps \right\}.
     \notag
\end{align}
Lemma~\ref{lem:hit_loc} implies that $\zeta$ and $\tilde\tau$ are bounded by a positive constant w.h.p.\ uniformly in~$x\in K_\vk(\eps)$.
Hence, the first and third terms in \eqref{eq:sigma+tau_R+tilde_sigma} are $o_e(1)$.  Rewriting the second term in~\eqref{eq:sigma+tau_R+tilde_sigma} as
\begin{align*}
    \E^{x_0 + \eps^{\alpha}xv_0}\left[\Pp\left\{\left|\frac{\tau_\fR}{\frac{\alpha}{\lambda} l_\eps }-1\right|>\delta\ \bigg|\ Y_0 \right\}
    \right],
\end{align*}
and using Lemma~\ref{lem:Y_0} and Lemma~\ref{lem:exit_time_is_log_alpha<1} \irc{} 
(proved in Section~\ref{sec:rec-lem:exit_time_is_log_alpha<1}), we obtain that the second term in \eqref{eq:sigma+tau_R+tilde_sigma} is bounded from above by
\begin{align*}
    \Pp^{x_0 + \eps^{\alpha}xv_0}\left\{\left|\eps^{-\alpha}f^1(X_\zeta)\right|\leq\eps^{\delta'}\right\}+o_e(1)
\end{align*}
for some $\delta'>0$ uniformly in $x\in K_\vk(\eps)$. Using \eqref{eq:|X_zeta-phi(X_0)|} of Lemma~\ref{lem:3-stages}, \eqref{eq:flow_x_0to...}, the smoothness of $f\circ \phi$, and the fact that  $\frac{d}{dr}f^1\circ\phi(x_0+rv)\big \vert_{r=0}> 0$, we can bound the main term in the above display by $\ONE_{|x|\leq \eps^{\delta''}}+o_e(1)$ for some $\delta''>0$ uniformly in $x\in K_\vk(\eps)$, completing the proof. \epf

\subsection{Proof of Lemma~\ref{lem:typical_loc_lim}}\label{sec:pf_one-step_typical_trans}

We consider the dynamics in three stages as described in Section~\ref{sec:proofs-lemmas-original-coords}. We will use the notation $\phi, r$, etc.\ in the analysis of the first stage
and we will use~$\tilde \phi, \tilde r$,~etc.\ in the third stage. We shorten ``w.h.p.\ under $\P^{x_0+\eps^\alpha x v}$ uniformly in $x\in K_\vk(\eps)$ and $[a,b]\in K_{\vk'}(\eps)$'' to ``w.h.p.''

Applying Lemma~\ref{lem:scal_lim} and Remark~\ref{rem:initial_line} to the first stage, we have that, w.h.p.,
\begin{align}\label{eq:X_sigma=phi'...}
    X_\zeta = \phi(x_0 + (\eps^\alpha x + \eps \NProj)v)+\eps r_{\eps},
\end{align}
where $\NProj=\NProj'$ and $r_\eps = h'_{x,\eps}$ for $\NProj',h'$ given in Remark~\ref{rem:initial_line} (we suppress the dependence on~$x$ in the notation). Moreover,
\begin{align}\label{eq:r}
    |r_\eps|\leq \eps^{\eta'}, \quad\text{w.h.p.,}
\end{align}
for some $\eta'\in(0,1)$. 
Since $Y_0 = f(X_\zeta)$, using~\eqref{eq:X_sigma=phi'...}, we can write
\begin{align}\label{eq:Y_0=(y_eps,L)}
    Y_0= (\eps^\alpha y_\eps, L),
\end{align}
where
\begin{align}\label{eq:def_y_eps}
    y_\eps = \eps^{-\alpha} f^1(\phi(x_0+(\eps^\alpha x+\eps \NProj)v)+\eps r_\eps).
\end{align}

Since $f$ is orientation-preserving (see \ref{setting:conjugacy}), we can see from \eqref{eq:flow_x_0to...} that $s\mapsto f^1(\phi(x_0+sv))$ is nondecreasing in a neighborhood of $0$. For later use, we extend $f$ and $\phi$ as diffeomorphisms so that the function $s\mapsto f^1(\phi(x_0+sv))$ is nondecreasing on $\R$ and, moreover, its derivative is bounded above and below by positive constants.

Applying Lemma~\ref{lem:scal_lim} to the third stage, we get
\begin{align}\label{eq:tilde_X=phi(tilde_X_0)+...}
    \widetilde X_{\tilde \tau} = \tilde\phi(\widetilde X_0) + \eps (\widehat \NProj+\hat r_\eps)
\end{align}
w.h.p., where $\widehat \NProj = \NProj $ and $\hat r_\eps = h_{\widetilde X_0,\eps}$ for $\NProj,h$ given in that lemma. Moreover,
\begin{align}\label{eq:hat_r<}
    |\hat r_\eps|\leq \eps^{\hat \eta},\quad \text{w.h.p.,}
\end{align}
for some $\hat \eta>0$.

Recall $q_{\fR,+}$ in \eqref{eq:q_fR,pm}, and we set 
\begin{align}\label{eq:def_g(s)}
    g(s) =  \frac{v_+}{|v_+|^2}\cdot(\tilde\phi(f^{-1}(R,s))-\tilde\phi(q_{\fR,+})).
\end{align}
Since both $\tilde\phi$ and $f$ are diffeomorphisms we have that $g$ is invertible on $[-L',L']$, which contains the range of~$Y^2_{\tau_\fR}$. Due to \eqref{eq:flow...to_q_+} and the assumption that $f$ is orientation-preserving (see \ref{setting:conjugacy}), we can see that $g$ is nondecreasing and, moreover,
 its derivative is bounded below by a positive constant. For later use, we extend~$g$ smoothly to $\R$ preserving these properties.
 
Let  $\widetilde \NProj=\frac{v_+}{|v_+|^2}\cdot \widehat \NProj$ and $\tilde r_\eps=\frac{v_+}{|v_+|^2}\cdot \hat r_\eps$. Note that $\widetilde \NProj$ is a Gaussian r.v.\ and $\tilde r_\eps$ satisfies 
\begin{align}\label{eq:r<}
    |\tilde r_\eps|\leq \eps^{\eta''},\quad \text{w.h.p.,}
\end{align}
for some $\eta''>0$ (due to~\eqref{eq:hat_r<}). Using~\eqref{eq:tilde_X=phi(tilde_X_0)+...},~\eqref{eq:flow...to_q_+}, and \eqref{eq:def_g(s)}, we obtain that, uniformly in $x\in K_\vk(\eps)$,
\begin{align}
    &\Pp\left\{X_\tau \in q_++\eps^{\alpha'}[a,b]v_+\right\} = \Pp\left\{\widetilde X_{\tilde\tau} \in q_++\eps^{\alpha'}[a,b]v_+\right\}+o_e(1)\notag\\
    &= \Pp\left\{g(Y^2_{\tau_\fR})\in \left[\eps^{\alpha'}a-\eps(\widetilde \NProj+\tilde r_\eps),\eps^{\alpha'}b-\eps(\widetilde \NProj+\tilde r_\eps)\right],\ Y^1_{\tau_\fR}=L\right\}+o_e(1)\label{eq:Y_in_[a_eps,b_eps]}\\
    & = \Pp\left\{Y_{\tau_\fR}\in \{L\}\times \eps^{\alpha'}[a_\eps,b_\eps]\right\}+o_e(1),\notag
\end{align}
where
\begin{align}\label{eq:def_a_eps}
    a_\eps = \eps^{-\alpha'}g^{-1}\left(\eps^{\alpha'}a-\eps\left(\widetilde \NProj+\tilde r_\eps\right)\right),\\
    \label{eq:def_b_eps}
    b_\eps = \eps^{-\alpha'}g^{-1}\left(\eps^{\alpha'}b-\eps\left(\widetilde \NProj+\tilde r_\eps\right)\right).
\end{align}

Due to~\eqref{eq:Y_in_[a_eps,b_eps]} and~\eqref{eq:def_y_eps}, we can apply Proposition~\ref{prop:typical} to the dynamics in the second stage where $Y\in \fR$ evolves between times $0$ and $\tau_\fR$, with 
 $y_\eps$ and $[a_\eps,b_\eps]$ substituted for $y$ and $[a,b]$ in that proposition. Since there are four cases in Proposition~\ref{prop:typical}, we treat them separately here. We recall that $\frU$, $\NN$, and  $c$ are introduced just before the statement of Proposition~\ref{prop:typical}. 

\bigskip

\textbf{Case 1.} Let us consider the first case $\rho<1$. In this case, $\alpha'=\alpha\rho$.
Proposition~\ref{prop:typical}~\eqref{item:loc_lim_1} along with \eqref{eq:Y_0=(y_eps,L)}, \eqref{eq:Y_at_entrance} and \eqref{eq:Y_in_[a_eps,b_eps]} yields
\begin{align}\label{eq:P(X_tau)_to_be_comp}
    &\Pp\left\{X_\tau \in q_++\eps^{\alpha'}[a,b]v_+\right\} = P_\e
     + \smallo{\eps^\delta}
\end{align}
for some $\delta>0$, uniformly in $x\in K_\vk(\eps)$ and $[a,b]\in K_{\vk'}(\eps)$,
where 
\[
P_\eps=\Pp\left\{c|y_\eps+\eps^{1-\alpha}\frU|^\rho\in[a_\eps,b_\eps],\ y_\eps+\eps^{1-\alpha}\frU\geq 0\right\},
\]
with  $c = R^{-\rho}L$.

The next step is to get rid of $r_\eps$ and $\tilde r_\eps$ in our approximations, so that the only remaining randomness in the resulting approximations is Gaussian. The key properties to use are~\eqref{eq:r} and~\eqref{eq:r<}. We want to compare the right-hand side of~\eqref{eq:P(X_tau)_to_be_comp} to
\begin{align*}\widetilde P_\eps= \Pp\left\{c|\tilde y_\eps+\eps^{1-\alpha}\frU|^\rho\in\left[\tilde a_\eps,\tilde b_\eps\right],\ \tilde y_\eps+\eps^{1-\alpha}\frU\geq 0\right\},
\end{align*}
where
\begin{gather}
    \tilde y_\eps = \eps^{-\alpha} f^1(\phi(x_0+(\eps^\alpha x+\eps \NProj)v)),\label{eq:tilde_y_eps}
    \\
    \tilde a_\eps = \eps^{-\alpha'}g^{-1}\left(\eps^{\alpha'}a-\eps \widetilde \NProj\right)\label{eq:tilde_a_eps},
    \\
    \tilde b_\eps = \eps^{-\alpha'}g^{-1}\left(\eps^{\alpha'}b-\eps \widetilde \NProj\right)\label{eq:tilde_b_eps}.
\end{gather}
We can write
\begin{align*}P_\eps=\Pp\left\{\eps^{1-\alpha}\frU \in A_\eps\right\},\qquad \widetilde P_\eps=\Pp\left\{\eps^{1-\alpha}\frU \in \widetilde A_\eps\right\},
\end{align*} 
where
\begin{align*}
     A_\eps &= \left[\left((c^{-1} a_\eps)\vee0\right)^\frac{1}{\rho} -  y_\eps, \ \left((c^{-1}  b_\eps)\vee0\right)^\frac{1}{\rho} -  y_\eps\right]
    ,\\
    \widetilde A_\eps &= \left[\left((c^{-1}\tilde a_\eps)\vee0\right)^\frac{1}{\rho} - \tilde y_\eps, \ \left((c^{-1} \tilde b_\eps)\vee0\right)^\frac{1}{\rho} - \tilde y_\eps\right]
    .
\end{align*}
Comparing~\eqref{eq:def_y_eps},~\eqref{eq:def_a_eps},~\eqref{eq:def_b_eps} with~\eqref{eq:tilde_y_eps},~\eqref{eq:tilde_a_eps},~\eqref{eq:tilde_b_eps}, using the Lipschitzness of various functions involved, along with~\eqref{eq:r} and~\eqref{eq:r<}, we can verify that
\begin{gather}
    |y_\eps - \tilde y_\eps|\leq C\eps^{1-\alpha}|r_\eps|\leq C\eps^{1-\alpha+\eta'},\quad \text{w.h.p.,}\label{eq:y-tilde_y}\\
    |a_\eps - \tilde a_\eps|,\ |b_\eps - \tilde b_\eps|\leq C \eps^{1-\alpha'}|\tilde r_\eps|\leq C\eps^{1-\alpha'+\eta''},\quad\text{w.h.p.} \label{eq:a-tilde_a}
\end{gather}
Using the Gaussianity of $\widetilde \NProj,\NProj$, the Lipschitzness of $g^{-1}$ and the assumption that $[a,b]\subset K_{\vk'}(\eps)$, we can see that $a_\eps,b_\eps,\tilde a_\eps,\tilde b_\eps \in K_{\tilde \vk}(\eps)$ w.h.p.\ for some $\tilde \vk>0$. 
Using these together with $\rho<1$, we can see that the Lebesgue measure of the symmetric difference between $A_\eps$ and $\widetilde A_\eps$
is bounded w.h.p.\ by 
\begin{align*}
    C\left(\left(\left(l_\eps^{\tilde\vk}\right)^{\frac{1}{\rho}-1}\eps^{1-\alpha'+\eta''}\right)\vee \eps^{1-\alpha+\eta'}\right)<\eps^{1-\alpha+\eta'''},
\end{align*}
for $\eps$ sufficiently small and some small $\eta'''>0$, where the last equality is due to $\rho<1$ and thus $\alpha'=\alpha \rho<\alpha$.
Then, Lemma~\ref{lem:gaussian_sym_diff} implies that,
for some $\delta'>0$, 
\[
P_\eps=\widetilde P_\eps+\smallo{\eps^{\delta'}},
\]
uniformly in $x\in K_\vk(\eps)$ and $[a,b]\in K_{\vk'}(\eps)$. Using~\eqref{eq:tilde_y_eps},~\eqref{eq:tilde_a_eps},~\eqref{eq:tilde_b_eps}, and the obvious monotonicity of the function $g$,
 we can write
\begin{align*}
   \widetilde P_\eps= \Pp\left\{\Phi_{1,\eps}\left(x,\NProj,\frU,\widetilde \NProj\right)\in [a,b],\ \Phi_{2,\eps}(x,\NProj,\frU)\geq 0\right\},
\end{align*}
where
\begin{align}
    \Phi_{1,\eps}(x,y^1,y^2,y^3)&=\eps^{-\alpha'}\left(g\left(c\eps^{\alpha'}|\Phi_{2,\eps}(x,y^1,y^2)|^\rho\right)+\eps y^3\right), \notag\\
    \Phi_{2,\eps}(x,y^1,y^2)&= \eps^{-\alpha} f^1(\phi(x_0+(\eps^\alpha x+\eps y^1)v))+\eps^{1-\alpha}y^2. \label{eq:def-phi_2,eps}
\end{align}
We can also write  $\Phi_{2,\eps}(x,y^1,y^2)=\Phi_{2,\eps}(x, y^1,y^2,y^3)$ although it does not depend on~$y^3$ at all.
This completes the main part of the proof of~\eqref{item:comp_in_lem_typical_loc_lim}, with $m=3$.

Then, we verify the properties of $\Phi_{i,\eps}$, $i=1,2$, claimed in \eqref{item:cvg_Phi}.
Using the smoothness of functions involved and the identities
\begin{align}\label{eq:g(0)=0}
    f^1(\phi(x_0))=0, \quad\text{and}\quad g(0)=0
\end{align} 
(which are due to~\eqref{eq:flow_x_0to...} and~\eqref{eq:def_g(s)}), we can see that $\Phi_{i,\eps}$, $i=1,2$, converges in LU as $\eps\to 0$, and the limits are of the form described in \eqref{item:cvg_Phi}. The remaining properties follow from these expressions.

Let us verify \eqref{item:nondecreasing_Phi}.
We recall the extensions described below \eqref{eq:def_y_eps}.
Since $z\mapsto f^1(\phi(x_0+zv))$ is nondecreasing, we know that for fixed realizations of $\NProj$ and $\frU$, the function $x\mapsto\Phi_{2,\eps}(x,\NProj,\frU)$ is nondecreasing. Since the function~$g$ is also nondecreasing, we can see that on $\{x:\Phi_{2,\eps}(\cdot,\NProj,\frU)\geq 0\}$, the function $x\mapsto \Phi_{1,\eps}(x,\NProj,\frU,\widetilde \NProj)$ is nondecreasing for every fixed realization of randomness. Hence~\eqref{item:nondecreasing_Phi} holds. To prove~\eqref{item:phi_pm_decomp}, it suffices now to define monotone functions
\begin{align*}
    \phi_{+,\eps}(x) & = \Pp\left\{\Phi_{1,\eps}\left(x,\NProj,\frU,\widetilde \NProj\right) \geq a,\ \Phi_{2,\eps}(x,\NProj,\frU)\geq 0\right\},\\
    \phi_{-,\eps}(x) & = - \Pp\left\{\Phi_{1,\eps}\left(x,\NProj,\frU,\widetilde \NProj\right) >b,\ \Phi_{2,\eps}(x,\NProj,\frU)\geq 0\right\}.
\end{align*}
Then, we turn to~\eqref{item:Phi_1_range}. Using the fact that $\alpha' =\alpha\rho$, that  $f^1$, $\phi$, and $g$ are Lipschitz and~\eqref{eq:g(0)=0}, we derive 
\begin{align*}
    &\Pp\left\{\left|\Phi_{1,\eps}\left(x,\NProj,\frU,\widetilde \NProj\right)\right|\geq K_{\vk'}(\eps)\right\}\\
    &\leq \Pp\left\{c_1\eps^\rho|\frU|^\rho +c_2\eps^\rho|\NProj|^\rho +\eps\left|\widetilde \NProj\right| \geq \eps^{\alpha'}l^{\vk'}_\eps - c_3 \eps^{\alpha\rho}|x|^\rho\right\},
\end{align*}
for some positive constants $c_1,c_2,c_3$.
Since $\alpha'=\alpha\rho\le \rho<1$, the Gaussianity of $\frU,\NProj,\widetilde \NProj$, implies~\eqref{item:Phi_1_range}.

Lastly, we verify~\eqref{item:Phi>|x|^p}. Using \eqref{eq:g(0)=0}, and that $g$ and $s\mapsto f^1(\phi(x_0+sv))$ have derivatives bounded below by positive constants, we have that, for some constants $C,C'>0$,
\begin{align*}
    \left|\Phi_{2,\eps}\left(x,y^1,y^2\right)\right|&\geq C|x| - C\left|y^1\right|-\left|y^2\right|,
    \\
    \left|\Phi_{1,\eps}\left(x,y^1,y^2,y^3\right)\right|&\geq C'\left|\Phi_{2,\eps}\left(x,y^1,y^2\right)\right|^\rho-\left|y^3\right|.
\end{align*}
Choosing $q>0$ sufficiently small, and $R>0$ sufficiently large, we can see that for $|x|>R$ and $|y|_\infty<|x|^q$,
\begin{align*}
    \left|\Phi_{1,\eps}\left(x,y^1,y^2,y^3\right)\right|\geq C''|x|^\rho-\left|y^3\right|\geq C'''|x|^\rho.
\end{align*}

\medskip

\textbf{Case 2.} Let us treat the second case: $\rho=1$. Here, $\alpha'=\alpha\rho =\alpha$. Proposition~\ref{prop:typical}~\eqref{item:loc_lim_2} along with \eqref{eq:Y_0=(y_eps,L)}, \eqref{eq:Y_at_entrance} and \eqref{eq:Y_in_[a_eps,b_eps]}  gives that
\begin{align}
    &\Pp\{X_\tau \in q_++\eps^{\alpha'}[a,b]v_+\} \notag\\
    &= \Pp\left\{c| y_\eps+\eps^{1-\alpha}\frU|+\eps^{1-\alpha}\NN\in[ a_\eps, b_\eps],\  y_\eps+\eps^{1-\alpha}\frU\geq 0\right\}+\smallo{\eps^\delta}\notag\\
    & = \Pp\{\eps^{1-\alpha}\frU\in A_\eps\}+\smallo{\eps^\delta}, \label{eq:eps^(1-alpha)U_in_A}
\end{align}
where we redefine, for $y_\eps,  a_\eps, b_\eps$ given previously in \eqref{eq:def_y_eps} and \eqref{eq:def_a_eps},
\begin{align*}
    A_\eps = -y_\eps + \left[c^{-1}(a_\eps - \eps^{1-\alpha}\NN)\vee0,\ c^{-1}(b_\eps - \eps^{1-\alpha}\NN)\vee0\right].
\end{align*}
We want to compare \eqref{eq:eps^(1-alpha)U_in_A} with
\begin{align}
     \Pp\{c|\tilde y_\eps+\eps^{1-\alpha}\frU|+\eps^{1-\alpha}\NN\in[\tilde a_\eps,\tilde b_\eps],\ \tilde y_\eps+\eps^{1-\alpha}\frU\geq 0\}\label{eq:tilde_int_case_2}
= \Pp\{\eps^{1-\alpha}\frU\in \widetilde A_\eps\}, \end{align}
where we define, for $\tilde y_\eps, \tilde   a_\eps, \tilde b_\eps$ given previously in \eqref{eq:tilde_y_eps} and \eqref{eq:tilde_a_eps},
\begin{align*}
    \widetilde A_\eps = -\tilde y_\eps + \left[c^{-1}(\tilde a_\eps - \eps^{1-\alpha}\NN)\vee0,\ c^{-1}(\tilde b_\eps - \eps^{1-\alpha}\NN)\vee0\right].
\end{align*}
Using~\eqref{eq:y-tilde_y} and~\eqref{eq:a-tilde_a}, we can see that the symmetric difference between $A_\eps$ and~$\widetilde A_\eps$ has Lebesgue measure bounded w.h.p.\ by
\begin{align*}
    C\left(\eps^{1-\alpha+\eta'}\vee \eps^{1-\alpha'+\eta''}\right) < \eps^{1-\alpha'+\eta'''},
\end{align*}
for some $\eta'''>0$. Therefore, Lemma~\ref{lem:gaussian_sym_diff} implies that the difference between the Gaussian probabilities in~\eqref{eq:eps^(1-alpha)U_in_A} and~\eqref{eq:tilde_int_case_2} is $o(\eps^{\delta'})$ for some $\delta'>0$. 
Inserting the expressions for $\tilde a_\eps,\tilde b_\eps,\tilde y_\eps$ into~\eqref{eq:tilde_int_case_2}, we obtain \eqref{item:comp_in_lem_typical_loc_lim},
with $m=4$,
if~$\Phi_{2,\eps}$ is defined by~\eqref{eq:def-phi_2,eps} and
\begin{gather*}
    \Phi_{1,\eps}\left(x,y^1,y^2,y^3,y^4\right)=\eps^{-\alpha}\left(g\left(c\eps^{\alpha}\left|\Phi_{2,\eps}\left(x,y^1,y^2\right)\right|+\eps y^3\right)+\eps y^4\right).
\end{gather*}
The properties of $\Phi_{i,\eps}$ in \eqref{item:cvg_Phi}--\eqref{item:Phi>|x|^p} can be verified similarly to  Case~1.

\medskip

\textbf{Case 3.} We turn to the third case: $\rho>1$ and $\alpha\rho\leq 1$. In this case, we have
\begin{align}\label{eq:alpha'>alpha}
    \alpha' =\alpha \rho >\alpha.
\end{align}
Applying Proposition~\ref{prop:typical}~\eqref{item:loc_lim_3} and its modification in Remark~\ref{rem:loc_lim_3} to~\eqref{eq:Y_in_[a_eps,b_eps]}, we get
\begin{align}
    &\Pp\left\{X_\tau \in q_++\eps^{\alpha'}[a,b]v_+\right\} \notag\\
    &= \Pp\left\{c| y_\eps+\eps^{1-\alpha}\frU|^\rho+\eps^{1-\alpha'}\NN\in[ a_\eps, b_\eps],\quad  y_\eps+\eps^{1-\alpha}\frU\geq 0\right\}+\smallo{\eps^\delta}\notag\\
    & = \Pp\left\{\eps^{1-\alpha'}\NN\in A_\eps,\quad \eps^{1-\alpha}\frU\in B_\eps\right\}+\smallo{\eps^\delta}, \label{eq:eps^(1-alpha)U_in_A_B}
\end{align}
where
\begin{align*}
    A_\eps  = -c\left|y_\eps+\eps^{1-\alpha}\frU\right|^\rho + \big[a_\eps, b_\eps  \big],\qquad
    B_\eps  = [-y_\eps,\infty).
\end{align*}
We want to compare \eqref{eq:eps^(1-alpha)U_in_A_B} with
\begin{multline}
     \Pp\left\{c|\tilde y_\eps+\eps^{1-\alpha}\frU|^\rho+\eps^{1-\alpha'}\NN\in\left[\tilde a_\eps,\tilde b_\eps\right],\quad   \tilde y_\eps+\eps^{1-\alpha}\frU\geq 0\right\}\label{eq:tilde_int_case_3}
    \\
    = \Pp\left\{\eps^{1-\alpha'}\NN\in \widetilde A_\eps,\quad \eps^{1-\alpha}\frU\in \widetilde B_\eps\right\}, \end{multline}
where
\begin{align*}
    \widetilde A_\eps  = -c\left|\tilde y_\eps+\eps^{1-\alpha}\frU\right|^\rho + \left[\tilde a_\eps, \tilde b_\eps  \right],\qquad
    \widetilde B_\eps  = \left[-\tilde y_\eps,\infty\right).
\end{align*}
Using $\rho>1$,~\eqref{eq:alpha'>alpha},~\eqref{eq:y-tilde_y} and~\eqref{eq:a-tilde_a}, we can see that the symmetric difference between $A_\eps$ and $\widetilde A_\eps$ has Lebesgue measure bounded w.h.p.\ by
\begin{align*}
    C\left(\left(\left(l_\eps^{\tilde \vk}\right)^{\rho-1}\eps^{1-\alpha+\eta'}\right)\vee \eps^{1-\alpha'+\eta''}\right) < \eps^{1-\alpha'+\eta'''},
\end{align*}
for some $\eta'''>0$.
The symmetric difference between $B_\eps$ and $\widetilde B_\eps$ is bounded by~$C\eps^{1-\alpha+\eta'}$ w.h.p. 
Thus, due to Lemma~\ref{lem:gaussian_sym_diff}, the Gaussian probabilities
 in~\eqref{eq:eps^(1-alpha)U_in_A_B} and~\eqref{eq:tilde_int_case_3} differ by an error $o(\eps^{\delta'})$ for some $\delta'>0$. Inserting the expressions for $\tilde a_\eps,\tilde b_\eps,\tilde y_\eps$ in~\eqref{eq:tilde_a_eps} and~\eqref{eq:tilde_y_eps} into~\eqref{eq:tilde_int_case_3}, we obtain \eqref{item:comp_in_lem_typical_loc_lim}, with $m=4$, if we define~$\Phi_{2,\eps}$ as in \eqref{eq:def-phi_2,eps} and
\begin{gather*}
    \Phi_{1,\eps}\left(x,y^1,y^2,y^3,y^4\right)=\eps^{-\alpha'}\left(g\left(c\eps^{\alpha'}\left|\Phi_{2,\eps}\left(x,y^1,y^2\right)\right|^\rho+\eps y^3\right)+\eps y^4\right).
\end{gather*}
The properties of $\Phi_{i,\eps}$ in \eqref{item:cvg_Phi}--\eqref{item:Phi>|x|^p} can be verified similarly to Case~1.

\medskip

\textbf{Case 4.} Lastly, we consider the case: $\rho>1$ and $\alpha\rho>1$, implying $\alpha' =1$. Proposition~\ref{prop:typical}~\eqref{item:loc_lim_4} applied to~\eqref{eq:Y_in_[a_eps,b_eps]} yields
\begin{align}
    \Pp\left\{X_\tau \in q_++\eps^{\alpha'}[a,b]v_+\right\} 
    &= \Pp\left\{\NN\in[ a_\eps, b_\eps], \quad y_\eps+\eps^{1-\alpha}\frU\geq 0\right\}+\smallo{\eps^\delta}\notag
    \\
    & = \Pp\left\{\eps^{1-\alpha'}\NN\in A_\eps, \quad \eps^{1-\alpha}\frU\in B_\eps\right\}+\smallo{\eps^\delta}, \label{eq:N_in_A_B}
\end{align}
where $A_\eps  = \left[a_\eps, b_\eps  \right]$, $B_\eps  = [-y_\eps,\infty)$.
We want to compare \eqref{eq:N_in_A_B} with
\begin{align}
    \Pp\left\{\NN\in\left[\tilde a_\eps,\tilde b_\eps\right], \quad  \tilde y_\eps+\eps^{1-\alpha}\frU\geq 0\right\}
    \label{eq:tilde_int_case_4}
    = \Pp\left\{\NN\in \widetilde A_\eps, \quad \eps^{1-\alpha}\frU\in \widetilde B_\eps\right\}, \end{align}
where $\widetilde A_\eps  =   [\tilde a_\eps, \tilde b_\eps  ]$, $\widetilde B_\eps  = [-\tilde y_\eps,\infty)$.
Using~\eqref{eq:y-tilde_y} and~\eqref{eq:a-tilde_a}, we can see that the Lebesgue measure of the symmetric difference between $A_\eps$ and $\widetilde A_\eps$ is bounded by~$C \eps^{1-\alpha'+\eta''}$,
and the Lebesgue measure of the symmetric difference between $B_\eps$ and~$\widetilde B_\eps$ is bounded by $C\eps^{1-\alpha+\eta'}$.
Hence, due to Lemma~\ref{lem:gaussian_sym_diff}, the difference between the Gaussian probabilities in~\eqref{eq:N_in_A_B} and~\eqref{eq:tilde_int_case_4} is $o(\eps^{\delta'})$ for some $\delta'>0$. 
Inserting the expressions for $\tilde a_\eps,\tilde b_\eps,\tilde y_\eps$ in~\eqref{eq:tilde_a_eps} and~\eqref{eq:tilde_y_eps} into~\eqref{eq:tilde_int_case_4}, we obtain \eqref{item:comp_in_lem_typical_loc_lim}, with $m=4$, if $\Phi_{2,\eps}$ is defined by~\eqref{eq:def-phi_2,eps} and
\begin{gather*}
    \Phi_{1,\eps}\left(y^1,y^2,y^3,y^4\right)=\eps^{-1}\left(g\left(\eps y^3\right)+\eps y^4\right).
\end{gather*}
Since $\alpha\rho>1$ in this case, we do not need to verify~\eqref{item:Phi>|x|^p}. All the other properties of~$\Phi_{i,\eps}$ can be verified similarly as in Case~1.
\epf

\section{Gaussian approximation for the stopped process \texorpdfstring{$U^1$}{U}}
\label{sec:Gaussian-approx}

In this section  we assume the setting \irc{} and the notation described in
Sections~\ref{sec:basic_estimates} and~\ref{sec:est_N} with the additional assumption that $F$ and $G$ in \eqref{eq:def_SDE_near_a_saddle_point} are $C^3_{\mathrm{b}}$. 
 Throughout this section, we fix $L>0$ and  $\alpha\in (0,1]$, and study solutions of~\eqref{eq:def_SDE_near_a_saddle_point} 
 with initial conditions of the form  $(\eps^\alpha \scx ,L)$.
 For brevity, we write
\begin{align}\label{eq:def_Probxy}
    \Pp^{\eps^\alpha \scx }=\Pp^{(\eps^\alpha \scx ,L)}.
\end{align}

Our main goal here is to prove, for a family of stopping times,  a local Gaussian approximation (Lemma~\ref{lem:gaussian_approx} and its corollary) 
for the process $U^1$ (defined in \eqref{eq:def_V_M_U_N}) stopped at those times. It will be used then in Section~\ref{sec:LLT} to prove local limit theorems 
for the exit location and a precise estimate on the exit time \irc,
crucial for the proof of Lemma~\ref{lem:local-limit-theorem} in Section~\ref{sec:rec-lem:local-limit-theorem}. 
Our results here are based on the density estimates of Section~\ref{section:density_est}  which are collected in Lemma~\ref{lem:density_est_R^2}. The smoothness assumptions on $F$ and $G$ allow us to apply these results.

The exit times we consider are $\tau=\tau_{r,\theta,\eps}$ and $\zeta=\zeta_{r,\theta,\eps}$
defined in  \eqref{eq:tau_gamma} and~\eqref{eq:exit-from-strip}. 
The main results of this section are stated for $\zeta$.
Lemma~\ref{lem:N_tau_tail} implies though that if $\theta>0$ is small enough to 
satisfy~\eqref{eq:condition_on_alpha_rho_gamma}, then we can ignore the distinction between these exit times: for every $\vk>0$, we have 
 $\tau=\zeta$  w.h.p.\ under~$\Pp^{\eps^\alpha \scx}$ uniformly over $\scx\in K_\vk(\eps)$ (for any $\alpha\in(0,1]$). Thus, under ~\eqref{eq:condition_on_alpha_rho_gamma}, the results of this section with~$\tau$ replaced by $\zeta$ also hold.

Let us generalize the model case definition of \eqref{eq:var-of-N} and set
\begin{align}\label{eq:def_cc_1}
    \cc_1 = \int_0^\infty e^{-2\lam s}|F^1(0,e^{-\mu s}L)|^2ds.
\end{align}

\begin{lemma}\label{lem:gaussian_approx}
Let $\vk, \vk', r>0$, $\alpha\in(0,1]$, $\xi>-\alpha$, $\theta\in[0,\alpha)$ and $\zeta = \zeta_{r,\theta,\eps}$ be given in~\eqref{eq:exit-from-strip}. 
Then, for each $c>0$ and $\eta\in(0,1-\theta)$, there is $\delta>0$ such that
\begin{align*}
    \sup_{\substack{\scx \in K_\vk(\e), \\ [a,b]\subset K_{\vk'}(\e)}} \left|\P^{\eps^\alpha \scx }\left\{\scx +\eps^{1-\alpha}U^1_\zeta \in \eps^\xi [a,b] \right\}-\Pp\left\{\scx +\eps^{1-\alpha}\frU \in \eps^\xi [a\mp c\eps^\eta,b\pm c\eps^\eta] \right\}\right|
    \\= \smallo{\eps^{((\xi+\alpha -1)\vee0)+\delta}},
\end{align*}
where $\frU$ is a centered Gaussian r.v.\ with variance~$\cc_1$.
\end{lemma}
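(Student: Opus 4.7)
The natural candidate Gaussian is $\frU = \int_0^\infty e^{-\lambda s} F^1_l(0, e^{-\mu s}L)\, dW^l_s$, which by Itô's isometry has variance exactly $\cc_1$ and is a functional of the driving Brownian motion on the original probability space. The plan is to first construct a pathwise coupling of $U^1_\zeta$ to $\frU$, then translate this into a local probability comparison, invoking the density estimates of Section~\ref{section:density_est} in the regime where the coupling error alone is insufficient.

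Step 1 (Coupling). I decompose
$U^1_\zeta - \frU = T_1 + T_2 + \eps V^1_\zeta,$
where $T_1 = \int_0^\zeta e^{-\lambda s}\bigl(F^1_l(Y_s) - F^1_l(0, e^{-\mu s}L)\bigr)\, dW^l_s$ and $T_2 = -\int_\zeta^\infty e^{-\lambda s} F^1_l(0, e^{-\mu s}L)\, dW^l_s$. The Lipschitzness of $F^1$ yields $|F^1_l(Y_s) - F^1_l(0, e^{-\mu s}L)| \le C(|Y^1_s| + \eps|N_s|)$; combined with the Duhamel representation $Y^1_s = e^{\lambda s}(\eps^\alpha \scx + \eps U^1_s)$ and the tame bounds on $\sup_{s\le\zeta}|U^1_s|$, $\sup_{s\le\zeta}|N_s|$ from Lemmas~\ref{lem:estimates-for-terms} and~\ref{lem:second-coord-at-exit}, the exponential martingale inequality controls the quadratic variation of $T_1$ uniformly in $\scx\in K_\vk(\eps)$. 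For $T_2$, whose variance is bounded by $Ce^{-2\lambda\zeta}$, an a priori lower bound of the form $\zeta\ge \frac{\alpha-\theta}{\lambda}(1-\delta)l_\eps$ w.h.p.\ — obtained by inverting the Duhamel identity for $Y^1_\zeta$ and using the tame bound on $U^1_\zeta$ together with $\scx\in K_\vk(\eps)$ — gives a polynomial power of $\eps$. Combined with $|\eps V^1_\zeta|\le C\eps$ from Lemma~\ref{lem:estimates-for-terms}~\eqref{item:V-bounded}, this produces an event $E$ with $\P(E^c)=o_e(1)$ uniform in $\scx\in K_\vk(\eps)$ on which $|U^1_\zeta - \frU| \le \eps^{\eta''}$ for some fixed $\eta''>0$.

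Step 2 (Moderate scales). In the regime $(\xi+\alpha-1)\vee 0 + \eta < \eta''$, the coupling suffices. On $E$, the inclusion
$\{\scx + \eps^{1-\alpha}U^1_\zeta \in \eps^\xi[a,b]\}\cap E \subset \{\scx + \eps^{1-\alpha}\frU \in \eps^\xi[a,b] + [-\eps^{1-\alpha+\eta''},\eps^{1-\alpha+\eta''}]\}$
holds, and its analogue with $[a,b]$ narrowed provides the reverse bound; the hypothesis $\eps^{1-\alpha+\eta''}\le c\eps^{\xi+\eta}$ allows both to be expressed in terms of the widened or narrowed target intervals in the lemma. The contribution from $E^c$ is $o_e(1)$, which is $o(\eps^{k})$ for any $k>0$ and hence absorbed.

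Step 3 (Fine scales). The main obstacle is the regime $\xi+\alpha-1 \ge \eta''-\eta$, where the coupling error $\eps^{1-\alpha+\eta''}$ is too large to be absorbed by widening the Gaussian target interval. Here the pathwise argument must be supplemented by a quantitative density comparison between $U^1_\zeta$ and $\frU$, which is precisely the content of the density estimates proved in Section~\ref{section:density_est} through Malliavin calculus (Lemma~\ref{lem:density_est_R^2}): one obtains not only a uniform $L^\infty$ bound on the density of $U^1_\zeta$ but also an $\eps^{\delta}$-quantitative closeness to $g_{\cc_1}$, derived by Malliavin integration by parts applied to the stopped diffusion. Integrating the pointwise density difference over $\eps^\xi[a,b]$ and accounting for the $\eps^{1-\alpha}$ rescaling yields the claimed rate $o(\eps^{(\xi+\alpha-1)\vee 0 + \delta})$. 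The principal technical difficulty, which pervades Section~\ref{section:density_est}, is establishing these density estimates uniformly in the initial condition $\scx\in K_\vk(\eps)$ and in the random, $\eps$-dependent stopping time $\zeta$; this requires careful control of the Malliavin covariance matrix along the stopped trajectory and is the step that makes the $C^3_{\mathrm{b}}$ regularity of $F$ and $G$ essential.
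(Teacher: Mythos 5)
Your Steps 1--2 are sound as far as they go: the decomposition $U^1_\zeta-\frU=T_1+T_2+\eps V^1_\zeta$ is correct, and the exponential martingale inequality together with the tameness bounds does give $|U^1_\zeta-\frU|\le\eps^{\eta''}$ w.h.p.\ uniformly in $\scx\in K_\vk(\eps)$ --- but note that your $\eta''$ is capped by roughly $\alpha-\theta$ (the quadratic variation of $T_1$ is of order $\eps^{2\alpha}$ up to logs, and your unconditional lower bound on $\zeta$ only makes $T_2$ of order $\eps^{\alpha-\theta}$). So the inclusion argument covers only $\xi+\alpha-1+\eta<\alpha-\theta$, i.e.\ small $\xi$, whereas the lemma is applied with $\xi=\frac{\beta}{\rho}-1$ arbitrarily large (Propositions~\ref{prop:exit_loc_beta<1} and~\ref{prop:exit_loc}). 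The entire difficulty of the lemma lives in your Step~3, and there the argument has a genuine gap. Lemma~\ref{lem:density_est_R^2} does \emph{not} provide a density for the stopped variable $U^1_\zeta$, nor a direct comparison with $g_{\cc_1}$: it compares the density of $U^1_{T(\eps)}$ at a \emph{deterministic} time with that of the Gaussian $Z^1_{T(\eps)}$ (whose variance is a truncation of $\cc_1$), and only for $T(\eps)\le\bar\theta l_\eps$ with $\bar\theta$ small. Malliavin integration by parts ``applied to the stopped diffusion'' is not something Section~\ref{section:density_est} establishes, and it is not routine --- randomly stopped functionals are precisely what the paper's architecture is designed to avoid.

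The two missing ideas are the following. First (Lemma~\ref{lem:rough_approx_new}), the reduction from $\zeta$ to a deterministic time must exploit the conditioning: on the event $\{\scx+\eps^{1-\alpha}U^1_\zeta\in\eps^\xi[a,b]\}$ one has $|\eps^\alpha\scx+\eps U^1_\zeta|\lesssim\eps^{\xi+\alpha}l_\eps^{\vk'}$, hence automatically $\zeta\ge T(\eps)$ with $T(\eps)=\frac{1}{\lambda}\log\frac{r\eps^{\theta-\xi-1}}{l_\eps^{\vk'+\delta}}$, after which the increment $U^1_{\zeta\vee T}-U^1_{T}$ has quadratic variation $\lesssim e^{-2\lambda T(\eps)}\approx\eps^{2(1+\xi-\theta)}$ --- far smaller than the required resolution $\eps^{\xi+\alpha-1+\eta}$ for \emph{every} $\xi$, because $2(1+\xi-\theta)-(\xi+\alpha-1+\eta)=\xi+(1-\alpha)+(1-\theta-\eta)+(1-\theta)>0$. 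Your unconditional bound on $\zeta$ cannot produce this $\xi$-dependent gain. Second (Lemma~\ref{lem:iterative_est}), since $T(\eps)$ grows like $\frac{\xi+\alpha-\theta}{\lambda}l_\eps$ while the density estimates hold only up to $\bar\theta l_\eps$, one must split $[0,T(\eps)]$ into $N$ equal pieces and iterate the density comparison through the Markov property, replacing the process by a Gaussian one subinterval at a time and tracking the accumulated variance. Without these two steps your Step~3 does not constitute a proof.
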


Using Lemma~\ref{lem:gaussian_sym_diff}~and adjusting $\delta$, we have the following consequence.

\begin{corollary}\label{cor:gaussian_approx}
In the setting of Lemma~\ref{lem:gaussian_approx}, if $\alpha=1$ and $\xi\geq 0$, then there is $\delta>0$ such that
\begin{align*}
    \sup_{\substack{x\in K_\vk(\e), \\ [a,b]\subset K_{\vk'}(\e)}} \left|\P^{\eps \scx }\left\{\scx +U^1_\zeta \in \eps^\xi [a,b] \right\}-\Pp\left\{\scx +\frU \in \eps^\xi [a,b] \right\}\right|= \smallo{\eps^{\xi+\delta}}.
\end{align*}
\end{corollary}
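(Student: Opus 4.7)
The corollary is an almost immediate consequence of Lemma~\ref{lem:gaussian_approx}; the only thing to add is a control on how much the Gaussian probability changes when the interval $\eps^\xi[a,b]$ is slightly inflated or deflated. I would organize the argument as follows.

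First, fix any $\eta\in(0,1-\theta)$ (for instance $\eta=(1-\theta)/2$) and set $c=1$. Applying Lemma~\ref{lem:gaussian_approx} with $\alpha=1$ and with this choice of $c$ and $\eta$, I obtain some $\delta_0>0$ such that, uniformly in $\scx\in K_\vk(\eps)$ and $[a,b]\subset K_{\vk'}(\eps)$,
\[
\left|\P^{\eps \scx }\!\left\{\scx +U^1_\zeta \in \eps^\xi [a,b]\right\}-\Pp\!\left\{\scx +\frU \in \eps^\xi [a\mp\eps^\eta,\,b\pm\eps^\eta]\right\}\right|=\smallo{\eps^{\xi+\delta_0}},
\]
where the sign convention $\mp,\pm$ packages the two inequalities giving upper and lower bounds on the left-hand side (as in the $\eqpm$ notation used elsewhere in the paper).

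Next, I would estimate the difference between the two Gaussian probabilities
$\Pp\{\scx+\frU\in\eps^\xi[a,b]\}$ and $\Pp\{\scx+\frU\in\eps^\xi[a\mp\eps^\eta,b\pm\eps^\eta]\}$. Since the symmetric difference of these two intervals is a union of two intervals of length $\eps^{\xi+\eta}$ each, it has Lebesgue measure $2\eps^{\xi+\eta}$. Because $\frU$ is a centered Gaussian r.v.\ with fixed variance $\cc_1>0$, its density is bounded by some constant $C$, so Lemma~\ref{lem:gaussian_sym_diff} applied to the centered Gaussian $\scx+\frU$ (whose density, after translation, is still bounded by the same $C$) gives
\[
\sup_{\scx\in\R,\, [a,b]\subset\R}\left|\Pp\!\left\{\scx +\frU \in \eps^\xi [a,b]\right\}-\Pp\!\left\{\scx +\frU \in \eps^\xi [a\mp\eps^\eta,\,b\pm\eps^\eta]\right\}\right|\le 2C\,\eps^{\xi+\eta}.
\]

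Combining these two estimates through the triangle inequality yields the corollary with any $\delta\in(0,\delta_0\wedge\eta)$. There is no real obstacle here, just the bookkeeping of picking the new exponent; the assumption $\xi\ge0$ is used only to guarantee that $\eps^\xi$ (rather than $\eps^{(\xi-1+\alpha)\vee0}$) is the right natural scale when $\alpha=1$, so that $\smallo{\eps^{((\xi+\alpha-1)\vee0)+\delta_0}}$ in Lemma~\ref{lem:gaussian_approx} becomes $\smallo{\eps^{\xi+\delta_0}}$ and matches the $\smallo{\eps^{\xi+\delta}}$ in the conclusion.
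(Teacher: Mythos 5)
Your argument is correct and is exactly the route the paper takes: the paper derives the corollary from Lemma~\ref{lem:gaussian_approx} by invoking Lemma~\ref{lem:gaussian_sym_diff} to absorb the $\eps^\eta$-inflation of the interval (whose symmetric difference has measure $O(\eps^{\xi+\eta})$, controlled by the bounded Gaussian density) and then adjusting $\delta$. Nothing is missing.
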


The rest of the section is devoted to the proof of Lemma~\ref{lem:gaussian_approx} which is divided into two steps:
an approximation by the process stopped at a  deterministic time
and a Gaussian approximation of the latter based on an iteration scheme. They are implemented separately in Section~\ref{subsection:rough_approx} and Section~\ref{subsection:iteration}.

Since notation is simpler at the scale $\eps^1$, we will primarily work under $\Pp^{\eps x}$ for $x$ in a  set larger than $K_\vk(\eps)$, which allows us to recover the desired result under $\Pp^{\eps^\alpha x}$ by substituting $\eps^{\alpha-1} x$ for $x$.

\subsection{Approximation by the process stopped at a deterministic time}\label{subsection:rough_approx}

\begin{lemma}\label{lem:rough_approx_new}
Let $\vk',r,c>0$, $\xi>-1$, $\theta\in[0,1)$, $\eta\in(0,1-\theta)$, and $\zeta = \zeta_{r,\theta,\eps}$ be given in \eqref{eq:exit-from-strip}.
Then, for every $\delta>0$,
\begin{align*}
    \sup_{\substack{\scx \in r\eps^{\theta-1}(-1,1)\\ [a,b]\subset K_{\vk'}(\e)}} \left|\P^{\eps \scx }\left\{\scx +U^1_\zeta \in \eps^\xi [a,b] \right\}-\P^{\eps \scx }\left\{\scx +U^1_\dtime \in \eps^\xi [a\mp c\eps^\eta,b\pm c\eps^\eta] \right\} \right| = \superpoly,
\end{align*}
where
\begin{align}\label{eq:def_dtime(eps)}
    \dtime=\dtime(\eps) = \frac{1}{\lambda}\log \frac{r\eps^{\theta - \xi -1}}{ l_\eps^{\vk'+\delta}}.
\end{align}
\end{lemma}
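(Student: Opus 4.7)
\bpf[Proof plan]
The strategy is to sandwich
\[
\Pp^{\eps\scx}(B_-) - \superpoly \;\le\; \Pp^{\eps\scx}(A) \;\le\; \Pp^{\eps\scx}(B_+) + \superpoly
\]
uniformly in $\scx\in r\eps^{\theta-1}(-1,1)$ and $[a,b]\subset K_{\vk'}(\eps)$, where $A=\{\scx+U^1_\zeta\in\eps^\xi[a,b]\}$ and $B_\pm$ denotes $\{\scx+U^1_\dtime\in\eps^\xi[a\mp c\eps^\eta,b\pm c\eps^\eta]\}$ (the two sign conventions in the statement). The mechanism throughout is the damping factor $e^{-\lambda s}$ in the integrand defining $U^1$.

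First I observe that $A$ forces $\zeta\ge\dtime$ for small $\eps$: the exit identity $e^{\lambda\zeta}|\scx+U^1_\zeta|=r\eps^{\theta-1}$ together with $|\scx+U^1_\zeta|\le\eps^\xi l_\eps^{\vk'}$ on $A$ gives $e^{\lambda\zeta}\ge r\eps^{\theta-\xi-1}/l_\eps^{\vk'}=e^{\lambda\dtime}l_\eps^\delta>e^{\lambda\dtime}$. On $\{\zeta\ge\dtime\}$ I express $U^1_\zeta-U^1_\dtime$ as an integral of $e^{-\lambda s}F^1\,dW_s+\eps e^{-\lambda s}G^1\,ds$ over $[\dtime,\zeta]$, factor out $e^{-\lambda\dtime}$ by a deterministic time shift, and invoke the Markov property at $\dtime$ together with Lemma~\ref{lem:estimates-for-terms}(iii) applied to the shifted process to see that $e^{\lambda\dtime}\sup_{t\ge\dtime}|U^1_t-U^1_\dtime|$ is tame uniformly in $Y_\dtime$. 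Since $e^{-\lambda\dtime}=r^{-1}\eps^{\xi+1-\theta}l_\eps^{\vk'+\delta}$ and $\eta<1-\theta$, this yields $|U^1_\zeta-U^1_\dtime|\le c\eps^{\xi+\eta}$ with superpolynomial probability. On that high-probability event $A\subset B_+$ and $B_-\cap\{\zeta\ge\dtime\}\subset A$, which immediately gives the upper bound and reduces the lower bound to proving $\Pp^{\eps\scx}(B_-\cap\{\zeta<\dtime\})=\superpoly$.

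For the remaining piece, the decisive observation is that $B_-\cap\{\zeta<\dtime\}$ simultaneously enforces $|\scx+U^1_\dtime|\le\eps^\xi l_\eps^{\vk'}+c\eps^{\xi+\eta}$ and $|\scx+U^1_\zeta|=r\eps^{\theta-1}e^{-\lambda\zeta}>\eps^\xi l_\eps^{\vk'+\delta}$. These bounds differ by a factor of order $l_\eps^\delta\to\infty$, so the triangle inequality forces $|U^1_\dtime-U^1_\zeta|\ge\tfrac12|\scx+U^1_\zeta|=\tfrac{r}{2}\eps^{\theta-1}e^{-\lambda\zeta}$ for small $\eps$. Applying the strong Markov property at $\zeta$ I write $U^1_\dtime-U^1_\zeta=e^{-\lambda\zeta}\widetilde U^1_{\dtime-\zeta}$, where $\widetilde U^1_s$ has the same structure as $U^1_s$ but is driven by the Wiener process shifted by $\zeta$ and started from $Y_\zeta$. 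The lower bound becomes $|\widetilde U^1_{\dtime-\zeta}|\ge\tfrac{r}{2}\eps^{\theta-1}$, and since Lemma~\ref{lem:estimates-for-terms}(iii) gives a Gaussian tail $Ce^{-z^2/C}$ for $\sup_{s\ge 0}|\widetilde U^1_s|$ uniformly in the initial condition, conditioning on $\Fc_\zeta$ yields $\Pp^{\eps\scx}(B_-\cap\{\zeta<\dtime\})\le C\exp(-r^2\eps^{2(\theta-1)}/C)=\superpoly$ since $\theta<1$.

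The main obstacle is precisely this last step: when $\xi>0$ the permitted displacement $\eps^\xi l_\eps^{\vk'+\delta}$ in $B_-$ is polynomially small, so the bare tameness bound on $|U^1_\dtime-U^1_\zeta|$ alone is too weak. The resolution is the structural observation above, that $B_-$ combined with $\zeta<\dtime$ in fact forces the macroscopic deviation $|U^1_\dtime-U^1_\zeta|\sim\eps^{\theta-1}e^{-\lambda\zeta}$, which after multiplication by $e^{\lambda\zeta}$ inside $\widetilde U^1$ places the event safely in the polynomially large Gaussian-tail regime of $\sup_s|\widetilde U^1_s|$.
\epf
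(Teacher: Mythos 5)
Your proposal is correct and follows essentially the same route as the paper: the same observation that the target event forces $\zeta\ge\dtime$, the same control of $U^1_\zeta-U^1_\dtime$ via the exponential damping past time $\dtime$ (you phrase it through a Markov time-shift and the uniform Gaussian tail of $\sup_t|U^1_t|$, the paper through the quadratic-variation bound $\qd{M^1}_{\zeta\vee\dtime}-\qd{M^1}_\dtime\le Ce^{-2\lambda\dtime}$ plus the deterministic bound on $V^1$), and the same treatment of the residual event $\{\zeta<\dtime\}$ by showing it forces a deviation of order $\eps^{\theta-1}$ of the post-$\zeta$ process, which the paper equivalently expresses as $|Y^1_\dtime|$ dipping back to $r\eps^\theta l_\eps^{-\delta}$ after the exit.
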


\begin{proof}
All estimates in this proof are understood to hold uniformly in $\scx \in\R$ and $[a,b]\subset K_{\vk'}(\e)$. For convenience, we set
\begin{align}\label{eq:def_A_{pm,eps}}
    A_{\pm,\eps} = \eps^\xi [a\mp c\eps^\eta,b\pm c\eps^\eta].
\end{align}

First, we establish an upper bound. The definition of $\zeta$ in~\eqref{eq:exit-from-strip} along with \eqref{eq:SDE_near_a_saddle_point_after_duhamel1} implies that
\begin{align}\label{eq:sigma_formula}
    r\eps^\theta=\eps e^{\lam \zeta}\left|\scx +U^1_\zeta\right|, \quad\quad \zeta=\frac{1}{\lam}\log\frac{r\eps^{\theta -1}}{\left|\scx +U^1_\zeta\right|}.
\end{align}
Let us start by showing $\zeta\geq\dtime$ on the relevant event. Indeed, using~\eqref{eq:def_dtime(eps)},~\eqref{eq:sigma_formula} and the definition of $K_{\vk'}(\e)$ in~\eqref{eq:def_K(eps)}, we have
\begin{align*}
    &\Probxy{\scx +U^1_\zeta \in \eps^\xi [a,b],\ \zeta<\dtime}\\
    &\leq \Probxy{\left|\scx +U^1_\zeta\right|\leq \eps^{\xi} l_\eps^{\vk'},\ \left|\scx +U^1_\zeta\right|> \eps^{\xi} l_\eps^{\vk'+\delta}} = 0.
\end{align*}
This implies that
\begin{align}\label{eq:upper_bound_approx_exit_loc_RA}
    \Probxy{\scx +U^1_\zeta \in \eps^\xi [a,b]}\leq \Probxy{\scx +U^1_{\zeta\vee\dtime} \in \eps^\xi [a,b]}.
\end{align}
Then, we compare $U^1_{\zeta\vee\dtime}$ with $U^1_\dtime$. We recall $U^1_t=M^1_t +\eps V^1_t$ (see~\eqref{eq:def_V_M_U_N}). 
Let us take any $\delta'\in(0,1-\theta-\eta)$.
The boundedness of $F^1$ implies that 
\begin{align*}
    \qd{M^1}_{\zeta\vee\dtime}-\qd{M^1}_\dtime\leq Ce^{-2\lam \dtime}\leq C \eps^{2(1+\xi-\theta-\delta')}.
\end{align*}
Applying the exponential martingale inequality (Lemma~\ref{lem:exp-marting-ineq}), we see that
\begin{align*}
    \Probxy{\left|M^1_{\zeta\vee\dtime}-M^1_\dtime\right|>\tfrac{1}{2}c\eps^{\xi+\eta}}\leq 2\exp\left(-C\eps^{2(\eta+\theta-1+\delta')}\right)=\superpoly.
\end{align*}
Using Lemma~\ref{lem:estimates-for-terms}~\eqref{item:V-bounded}, we also have
\begin{align*}
    \Probxy{\left|\eps V^1_{\zeta\vee\dtime}-\eps V^1_\dtime\right|>\tfrac{1}{2}c\eps^{\xi+\eta}}=0
\end{align*}
for small $\eps$.
From the above two displays, we obtain
\begin{align}\label{eq:|U^1_htau-U^1_T|_est_RA}
 \Probxy{\left|U^1_{\zeta\vee\dtime}-U^1_\dtime\right|>c\eps^{\xi+\eta}}=\superpoly,   
\end{align}
which together with~\eqref{eq:upper_bound_approx_exit_loc_RA} gives an upper bound.

\medskip

To find a lower bound,  we start with
\begin{align*}
\begin{split}
    \Probxy{\scx +U^1_\zeta \in \eps^\xi[a,b]}&\geq \Probxy{\scx +U^1_\zeta \in \eps^\xi[a,b];\ \left|U^1_\zeta - U^1_\dtime\right|\leq c\eps^{\xi+\eta}}\\
    &\geq \Probxy{\scx +U^1_\dtime\in A_{-,\eps};\ \left|U^1_\zeta - U^1_\dtime\right|\leq c\eps^{\xi+\eta}}\\
    &\geq \Probxy{\scx +U^1_\dtime\in A_{-,\eps}}-\Probxy{\scx +U^1_\dtime\in A_{-,\eps};\ \left|U^1_\zeta - U^1_\dtime\right|> c\eps^{\xi+\eta}}.
\end{split}
\end{align*}
To show that the second term on the right-hand side is $\superpoly$, we bound it by
\begin{align}\label{eq:lower_bound_split_exit_loc_RA}
    \Probxy{\zeta\geq\dtime;\ \left|U^1_\zeta - U^1_\dtime\right|>c\eps^{\xi+\eta}}+\Probxy{\zeta<\dtime;\ \scx +U^1_\dtime\in A_{-,\eps}}.
\end{align}
The first term is $\superpoly$ due to~\eqref{eq:|U^1_htau-U^1_T|_est_RA}. For the second term, we apply~\eqref{eq:SDE_near_a_saddle_point_after_duhamel1}, the definition of $\dtime$ in~\eqref{eq:def_dtime(eps)}, the definition of $A_{-,\eps}$ in~\eqref{eq:def_A_{pm,eps}} and the strong Markov property to see that
\begin{align*}
\begin{split}
    \Probxy{\zeta <\dtime;\ \scx +U^1_\dtime\in A_{-,\eps} } & = \Probxy{\zeta <\dtime;\ Y^1_\dtime\in \eps e^{\lam \dtime}A_{-,\eps}}\\
    &\leq \Probxy{\zeta <\dtime;\ |Y^1_\dtime|\leq r\eps^{\theta}  l_\eps^{-\delta} }\\
    &\leq \Exy{\left[\Probx{Y_\htau}{\inf_{t\in[0,\dtime]}\left|Y^1_t\right|\leq r\eps^{\theta}  l_\eps^{-\delta} }\right]}.
\end{split}
\end{align*}
 We have $\Pp^{Y_\htau}\{|Y^1_0|=r\eps^\theta\}=1$.
Hence~\eqref{eq:SDE_near_a_saddle_point_after_duhamel1} implies $\left|Y^1_t\right|= |e^{\lam t}(Y^1_0+\eps U^1_t)|\geq r\eps^\theta - \eps|U^1_t|$. From this, we can obtain
\begin{align*}
\begin{split}
\Probx{Y_\htau}{\inf_{t\in[0,\dtime]}\left|Y^1_t\right|\leq r\eps^{\theta}  l_\eps^{-\delta} }&\leq \Probx{Y_\htau}{\inf_{t\in[0,\dtime]}(r\eps^\theta-\eps\left|U^1_t\right|)\leq r\eps^{\theta}  l_\eps^{-\delta} }
\\
&\leq \Probx{Y_\htau}{\sup_{t\in[0,\dtime]}\left|U^1_t\right|\geq r \eps^{\theta-1}\left(1-  l_\eps^{-\delta}\right)}=\superpoly,
\end{split}
\end{align*}
where we used $\theta<1$ and Lemma~\ref{lem:estimates-for-terms}~\eqref{item:tail-of_U_1} in the third equality. This shows that~\eqref{eq:lower_bound_split_exit_loc_RA} is $\superpoly$ and completes the proof.
\end{proof}

\subsection{Gaussian approximation for the deterministically stopped process}\label{subsection:iteration}

\begin{lemma}\label{lem:iterative_result}  In the setting of Lemma~\ref{lem:rough_approx_new}, let  $\varrho>\varrho'>0$.
Let $\dtime(\eps)$ be a deterministic function of $\eps$ satisfying
\begin{align}\label{eq:condition_dtime(eps)}
    \dtime(\eps)\in [\varrho' \lam^{-1} l_\eps, \varrho \lam^{-1} l_\eps],\quad \eps \in (0,1/2).
\end{align}
For
\begin{align}\label{eq:zeta_xi_condition_iteration}
   \xi\ge -1+\varrho,
\end{align}
$ \kappa' >0$, and 
$\upsilon\in(0,1)$, there is $\delta>0$ such that
\begin{align*}
    \sup_{|\scx |\leq \eps^{\upsilon-1},\ A\subset \eps^\xi K_{\vk'}(\eps)}\left|\Probxy{\scx +U^1_{\dtime(\eps)}\in A} -  \Prob{\scx +\frU \in A}\right|=\smallo{\eps^{(\xi\vee 0)+\delta}},
\end{align*}
where $\frU$ is a centered Gaussian r.v.\ with variance $\cc_1$ defined in~\eqref{eq:def_cc_1}.
\end{lemma}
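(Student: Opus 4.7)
The strategy is to isolate the Gaussian contribution in $U^1_{\dtime(\eps)}$ and compare it with a frozen-coefficient Gaussian surrogate, then upgrade this weak comparison to a density-level statement using the Malliavin estimates of Section~\ref{section:density_est}.

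First, I would split $U^1=M^1+\eps V^1$ (using the notation~\eqref{eq:def_V_M_U_N}). Since $V^1$ is uniformly bounded by a constant $C$ due to Lemma~\ref{lem:estimates-for-terms}~\eqref{item:V-bounded}, the inclusion $\{\scx+U^1_{\dtime(\eps)}\in A\}\subset\{\scx+M^1_{\dtime(\eps)}\in A^{+C\eps}\}$, together with the matching lower containment with the inward thickening $A^{-C\eps}$, reduces the problem to comparing the law of $\scx+M^1_{\dtime(\eps)}$ with that of $\scx+\frU$, modulo an extra error of order $\eps\cdot\|p_{\eps,\scx}\|_\infty$ (times the boundary measure of $A$), where $p_{\eps,\scx}$ is the Lebesgue density of $\scx+M^1_{\dtime(\eps)}$; a uniform-in-$\eps$ bound on $\|p_{\eps,\scx}\|_\infty$ is provided by Lemma~\ref{lem:density_est_R^2}. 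I would then introduce the frozen-coefficient surrogate
\begin{equation*}
\frU_t=\int_0^t e^{-\lambda s}\bar F(s)\cdot dW_s,\qquad \bar F(s)=F^1(0,e^{-\mu s}L)\in\R^2,
\end{equation*}
a centered Gaussian martingale with variance $\cc_1(t)\to\cc_1$ exponentially; since $\cc_1-\cc_1(\dtime(\eps))=O(e^{-2\lambda\dtime(\eps)})=O(\eps^{2\varrho'})$, the total-variation distance between $\frU_{\dtime(\eps)}$ and $\frU$ is $O(\eps^{\varrho'})$, which can be absorbed into the error via Lemma~\ref{lem:gaussian_sym_diff}.

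The weak comparison between $M^1_{\dtime(\eps)}$ and $\frU_{\dtime(\eps)}$ then rests on the It\^o-isometry identity $\E|M^1_{\dtime(\eps)}-\frU_{\dtime(\eps)}|^2=\int_0^{\dtime(\eps)}e^{-2\lambda s}\E|F^1(Y_s)-\bar F(s)|^2\,ds$, combined with the Lipschitz property of $F^1$ and the pointwise bound $|Y_s-(0,e^{-\mu s}L)|\le \eps e^{\lambda s}(|\scx|+|U^1_s|)+\eps|S_s|$ coming from \eqref{eq:SDE_near_a_saddle_point_after_duhamel1}--\eqref{eq:SDE_near_a_saddle_point_after_duhamel2}, with $|U^1_s|$ and $|S_s|$ tame by Lemma~\ref{lem:estimates-for-terms}. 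A technical subtlety is that $|Y^1_s|$ is not necessarily small throughout the whole range $|\scx|\le\eps^{\upsilon-1}$ when $\upsilon<\varrho$; I would handle this by splitting on $|\scx|$: for $|\scx|\le l_\eps^K$ with $K$ large, the direct Lipschitz bound yields $L^2$-closeness of polynomial order in $\eps$ (up to logarithmic factors), whereas for $|\scx|>l_\eps^K$ both $\Pp\{\scx+\frU\in A\}$ and $\Pp^{\eps\scx}\{\scx+U^1_{\dtime(\eps)}\in A\}$ are super-polynomially small by the Gaussian tail of $\frU$ and by Lemma~\ref{lem:estimates-for-terms}~\eqref{item:tail-of_U_1}, making the comparison trivial in that regime.

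The main obstacle is the upgrade from the weak/$L^2$ comparison to the local interval-level bound required by the lemma: when $\xi>0$ the target error $o(\eps^{\xi+\delta})$ is strictly finer than the length $\eps^\xi l_\eps^{\vk'}$ of the intervals $A$, so density-level control is unavoidable. For this I would adapt the iteration scheme of \cite{long_exit_time}, \cite{long_exit_time-1d-part-2}, \cite{YB-and-HBC:10.1214/20-AAP1599}, \cite{YB-and-HBC:10.1214/20-AOP1479}: partition $[0,\dtime(\eps)]$ into unit-length blocks $[t_k,t_{k+1}]$, and on each block apply the Malliavin density estimates of Lemma~\ref{lem:density_est_R^2} conditionally on $\mathcal{F}_{t_k}$ to certify that the block-increment of $M^1$ admits a smooth density whose sup-norm distance to the corresponding Gaussian (with variance $\int_{t_k}^{t_{k+1}}e^{-2\lambda s}|\bar F(s)|^2\,ds$) is polynomially small in $\eps$. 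Thanks to the exponential weight $e^{-\lambda s}$, only $O(1)$ blocks near the origin of time carry quadratic variation of order one, while the later blocks contribute vanishingly and are controlled by the rough $L^2$ estimate above. Propagating the per-block density bounds through the convolution structure and combining with Lemma~\ref{lem:gaussian_sym_diff} at the end then yields the claimed local equidistribution, with an explicit $\delta>0$ depending on the exponents $\varrho,\varrho',\upsilon,\xi$.
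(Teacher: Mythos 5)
Your overall strategy --- compare $U^1_{T(\eps)}$ to a frozen-coefficient Gaussian by iterating over time blocks and upgrading to the density level with the Malliavin estimates of Lemma~\ref{lem:density_est_R^2} --- is the right one and is the one used in the paper (Lemma~\ref{lem:iterative_est}). But two concrete steps in your plan do not deliver the precision $\smallo{\eps^{(\xi\vee0)+\delta}}$, which is strictly finer than the length $\eps^{\xi}l_\eps^{\vk'}$ of the target intervals; note also that $\xi$ has no upper bound here.

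First, peeling off the drift by writing $U^1=M^1+\eps V^1$ and thickening $A$ by $C\eps$ produces an additive error $\Leb(A^{+C\eps}\triangle A)\cdot\|p_{\eps,\scx}\|_\infty=\Theta(\eps)$, since the density of $\scx+M^1_{T(\eps)}$ near the origin is bounded below for $|\scx|=O(1)$. This is \emph{not} $\smallo{\eps^{\xi+\delta}}$ once $\xi\ge 1$, a case that occurs in the applications (e.g.\ $\xi=\tfrac{\beta}{\rho}-1$ with $\rho$ small). The paper never separates the drift: the Malliavin density estimates are proved for the full $U^1$ (the SDE with the $\eps^2G$ term), so the drift's effect enters only through a sup-norm bound on a \emph{difference of densities}, whose contribution then scales with $\Leb(A)$ and stays below $\eps^{\xi+\delta}$. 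The same objection applies to your use of a total-variation bound $O(\eps^{\varrho'})$ between $\frU_{T(\eps)}$ and $\frU$: you need a pointwise density comparison integrated over $A$, not a TV bound.

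Second, the block structure. The paper partitions $[0,T(\eps)]$ into a \emph{fixed} number $N$ of blocks of length $T(\eps)/N$, with $N$ chosen so that each block length is at most $\bar\theta l_\eps$, the validity range of Lemma~\ref{lem:density_est_R^2}; the induction then has only $N$ steps. Your unit-length blocks give $\Theta(l_\eps)$ steps, and the induction cannot be closed: (i) your claim that only $O(1)$ early blocks need density-level treatment is false --- to resolve sets of size $\eps^{\xi}$ with error $\smallo{\eps^{\xi+\delta}}$ you may discard the martingale increment after time $T_0$ only if $e^{-\lam T_0}\ll\eps^{\xi\vee 0}$, i.e.\ $T_0\gtrsim((\xi\vee 0)+\delta)\lam^{-1}l_\eps$, so essentially \emph{all} $\Theta(l_\eps)$ blocks require the careful analysis (this is exactly why $\dtime(\eps)$ in Lemma~\ref{lem:rough_approx_new} is taken as large as it is); (ii) each induction step in the paper costs a H\"older loss (the probability is raised to a power $1/p<1$ to decouple the error factor involving $U^2_{t_{k-1}}$) and a strict decrease of the exponent $\delta_k$; compounding such multiplicative losses over $\Theta(l_\eps)$ steps destroys any fixed polynomial gain; (iii) for late blocks the starting point satisfies $|Y^1_{t_k}|\sim e^{\lam t_k}\eps^{\upsilon}$, so the error factor $\eps^{-\upsilon}|y^1|$ in Lemma~\ref{lem:density_est_R^2} blows up like $e^{\lam t_k}$; the paper controls this by localizing on the event that the terminal point lands in $A$ (see \eqref{eq:|z(u)|_est}) together with the choice \eqref{eq:condition_upsilon'} of the auxiliary exponent $\upsilon'$, and that choice uses crucially that each block has length a fixed fraction $\varrho/N$ of $\lam^{-1}l_\eps$; with unit blocks the admissible $\upsilon'$ degenerates to $0$ in the boundary case $\xi=-1+\varrho$. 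Replacing your unit blocks by $N$ blocks of length $T(\eps)/N$ and keeping the drift inside the density estimates would essentially reproduce the paper's argument.
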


To prove Lemma~\ref{lem:iterative_result}, we need the following iterative scheme.

\begin{lemma}\label{lem:iterative_est}
Under the assumptions of Lemma~\ref{lem:iterative_result}, 
there is $N\in\mathbb{N}$ such that for each $\upsilon\in(0,1)$, there are positive constants  $\eps_k,C_k,\delta_k$, $k=1,2,...,N$ and~$\upsilon'$ such that
\begin{align}
    \sup_{\substack{|\scx |\leq \eps^{\upsilon-1},\  |w|\leq \eps^{\upsilon'-1}\\ A\subset \eps^\xi K_{\vk'}(\eps)}}\left|\Probxy{\scx +U^1_{t_k}+e^{-\lam t_k}w\in A} -  \Prob{\scx +\cU_k+e^{-\lam t_k}w\in A}\right|\notag\\
    \leq C_k \eps^{(\xi\vee0)+\delta_k}, \label{eq:induction_step_conclusion}
\end{align}
holds for all $k=1,2,..., N$ and $\eps\in(0,\eps_k]$. Here, for each $k=1,2,\dots, N$, 
\begin{align}\label{eq:t_k}
    t_k=t_k(\eps)=\frac{k}{N}\dtime(\eps)
\end{align}
and $\cU_k$ is a centered Gaussian random variable with variance
\begin{align}\label{eq:variance_cU_k}
    \E|\cU_k|^2 = \int_0^{t_1}e^{-2\lam s}\left|F^1(0,e^{-\mu s}L)\right|^2ds + \frac{1}{2\lam}\left|F^1(0,0)\right|^2\left(e^{-2\lam t_1}-e^{-2\lam t_k}\right).
\end{align}
\end{lemma}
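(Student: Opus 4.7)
The proof plan is to induct on $k$, using the Markov structure of $Y$ on the time grid $\{t_j\}_{j=0}^{N}$ to reduce each induction step to a one-step local Gaussian approximation over an interval of length $t_1=\dtime/N$. The variance formula \eqref{eq:variance_cU_k} decomposes naturally into a ``transient'' contribution $\int_0^{t_1}e^{-2\lam s}|F^1(0,e^{-\mu s}L)|^2\,ds$, arising from the initial descent of $Y^2_s$ from $L$ toward $0$, and a ``stationary'' contribution $\tfrac{1}{2\lam}|F^1(0,0)|^2(e^{-2\lam t_1}-e^{-2\lam t_k})$ coming from subsequent increments during which $Y_s$ stays in a small neighborhood of the origin and $F^1(Y_s)\approx F^1(0,0)$. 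The integer $N$ is fixed large enough that a single step $t_1=\dtime/N$ is short on the logarithmic scale $l_\eps$, so that the one-step comparison requires only a small portion of the variance budget, with uniform constants extracted from the Malliavin estimates in Lemma~\ref{lem:density_est_R^2}.

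For the base case $k=1$, I would compare the density of $U^1_{t_1}$ under $\Pp^{\eps x}$ to the density of the Gaussian $\cU_1$ of variance $\cc_1^{(1)}=\int_0^{t_1}e^{-2\lam s}|F^1(0,e^{-\mu s}L)|^2\,ds$, directly invoking Lemma~\ref{lem:density_est_R^2}. The shifts by $x$ and $e^{-\lam t_1}w$ are deterministic, so the desired bound on windows $A\subset\eps^\xi K_{\vk'}(\eps)$ follows by integrating the density difference over $A-(x+e^{-\lam t_1}w)$. The error has two sources: the sup-norm density gap of order $\eps^\delta$ from the Malliavin bounds, and the correction coming from replacing $F^1(Y_s)$ by $F^1(0,e^{-\mu s}L)$ and absorbing the $\eps^2 G^1(Y_s)$ drift; these latter corrections are $O(\eps)$ by the uniform tameness estimates of Lemma~\ref{lem:estimates-for-terms} and Lemma~\ref{lem:second-coord-at-exit}, so they are absorbed into $\eps^{\delta_1}$ for small $\delta_1$.

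For the induction step $k\to k+1$, I would introduce the shifted objects $\tilde W_u=W_{t_k+u}-W_{t_k}$, $\tilde Y_u=Y_{t_k+u}$, and $\tilde U^1_{t_1}$ defined from them as in \eqref{eq:def_V_M_U_N}. A direct computation gives $U^1_{t_{k+1}}-U^1_{t_k}=e^{-\lam t_k}\tilde U^1_{t_1}$, hence
\begin{align*}
x+U^1_{t_{k+1}}+e^{-\lam t_{k+1}}w = \bigl(x+U^1_{t_k}\bigr)+e^{-\lam t_k}\bigl(\tilde U^1_{t_1}+e^{-\lam t_1}w\bigr).
\end{align*}
Conditioning on $\mathcal{F}_{t_k}$, I apply a base-case-type one-step Gaussian approximation to the inner bracket, started from $Y_{t_k}$. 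On the uniformly high-probability event that $|Y^1_{t_k}|\le r\eps^\theta$ and $|Y^2_{t_k}|\le \eps\,l_\eps^{\vk''}$ (by Lemma~\ref{lem:second-coord-at-exit}~\eqref{item:1_lem:second-coord-at-exit}), $\tilde Y^2_u$ stays of order $\eps$ throughout $[0,t_1]$, so $F^1(\tilde Y_u)\approx F^1(0,0)$ and the approximating variance is $\tfrac{1}{2\lam}|F^1(0,0)|^2(1-e^{-2\lam t_1})$. Because the shift satisfies $|e^{-\lam t_1}w|\le|w|\le\eps^{\upsilon'-1}$, the range on $w$ is preserved by the contraction; combining the inner comparison with the induction hypothesis at step $k$ via a standard convolution argument, in the spirit of Lemma~\ref{lem:trans_ker}, yields \eqref{eq:induction_step_conclusion} at level $k+1$ with the variance matching \eqref{eq:variance_cU_k} after multiplication by $e^{-2\lam t_k}$.

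The main obstacle is the base case, which demands a local density comparison with polynomial rate $\eps^{(\xi\vee 0)+\delta_1}$ at scales down to $\eps^\xi$; this is exactly what the Malliavin-calculus machinery of Section~\ref{section:density_est} provides. A secondary technical point is that the base case must be robust against a random initial condition $Y_{t_k}$ ranging over $|y^1|\le r\eps^\theta$ and $|y^2|\le \eps\,l_\eps^{\vk''}$, so its proof must be set up with constants uniform in the starting point, not only for the specific initial condition $(\eps x, L)$. The parameter $\upsilon'$ can be taken equal to, or a fixed fraction of, $\upsilon$, and the choice of $N$ is dictated solely by the Malliavin estimates: $N$ must be large enough that $t_1(\eps)$ lies in the regime where Lemma~\ref{lem:density_est_R^2} applies with the desired uniform polynomial rate.
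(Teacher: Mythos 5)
Your architecture is the paper's: induction along the grid $t_k=k\dtime/N$ with $N$ fixed so that $t_1\le\bar\theta l_\eps$, the identity $\scx+U^1_{t_{k+1}}+e^{-\lam t_{k+1}}w=(\scx+U^1_{t_k})+e^{-\lam t_k}(\tilde U^1_{t_1}+e^{-\lam t_1}w)$ via the Markov property, a one-step density comparison from Lemma~\ref{lem:density_est_R^2}, and the telescoping of the variance into the transient piece plus $e^{-2\lam t_k}$-scaled copies of $\tfrac{1}{2\lam}|F^1(0,0)|^2(1-e^{-2\lam t_1})$. The base case is correct. The gap is in your error control for the induction step. You restrict to ``the uniformly high-probability event that $|Y^1_{t_k}|\le r\eps^\theta$ and $|Y^2_{t_k}|\le\eps\,l_\eps^{\vk''}$'', but neither is a high-probability event at the deterministic time $t_k$, and Lemma~\ref{lem:second-coord-at-exit} (which concerns the process stopped at $\bar\tau$) does not give it. Indeed $Y^2_{t_k}=e^{-\mu t_k}L+\eps N^2_{t_k}$ and, since $t_k\le\varrho\lam^{-1}l_\eps$, the deterministic part $e^{-\mu t_k}L$ is only a small positive power of $\eps$ --- far larger than $\eps\,l_\eps^{\vk''}$ for large $N$ or small $k$. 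Likewise $Y^1_{t_k}=e^{\lam t_k}\eps(\scx+U^1_{t_k})$ is typically of order $e^{\lam t_k}\eps^{\upsilon}$, which exceeds $r\eps^\theta$ for $k$ comparable to $N$: the typical path leaves the strip long before $\dtime$, and it is exactly the atypical confinement compatible with landing in $A$ that the lemma quantifies. Conditioning on such an event would also be circular.

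What closes the step in the paper is not an event restriction but two separate devices. For the second coordinate, the one-step error from Lemma~\ref{lem:density_est_R^2}~\eqref{item:den_est_2_R^2} is $C(|y^2|+\cdots)e^{-c|s|^2}$ with $y^2=Y^2_{t_{k-1}}$; one bounds $|Y^2_{t_{k-1}}|\le\eps^aL+\eps e^{-\mu t_{k-1}}|U^2_{t_{k-1}}|$ for some small $a>0$ and handles the random part by H\"older against bounded moments --- a small positive power of $\eps$ suffices, $O(\eps)$ is neither true nor needed. For the first coordinate, the term $\eps^{\hat\delta}\,\eps^{1-\upsilon'}|z^1(u)|$ with $z^1(u)=\eps^{-1}Y^1_{t_{k-1}}$ is tamed pointwise on the integration domain: if $z^1(u)+s+e^{-\lam t_1}w\in e^{\lam t_{k-1}}A$ with $A\subset\eps^\xi K_{\vk'}(\eps)$ and $|w|\le\eps^{\upsilon'-1}$, then $\eps^{1-\upsilon'}|z^1(u)|\le C+|s|$, and $|s|$ is absorbed by the Gaussian kernel. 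This is precisely where the true constraint on $\upsilon'$ enters, namely $e^{\lam t_{k-1}}\eps^{\xi}l_\eps^{\vk'}\le\eps^{\upsilon'-1}$, i.e.\ $\upsilon'<(\tfrac{\varrho}{N}+\xi-\varrho+1)\wedge 1$; it is coupled to $\varrho$, $\xi$ and $N$, not ``equal to or a fixed fraction of $\upsilon$''. Finally, after replacing the one-step increment by the Gaussian $\cZZ$, applying the induction hypothesis requires the new shift $e^{-\lam t_1}w+\cZZ$ to stay within $|{\cdot}|\le\eps^{\upsilon'-1}$, which forces a truncation $|\cZZ|\le l_\eps$ up to a superpolynomial error; your appeal to the contraction preserving the range of $w$ covers $w$ but not the added Gaussian.
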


Let us first use this lemma to prove Lemma~\ref{lem:iterative_result}.

\begin{proof}[Proof of Lemma~\ref{lem:iterative_result}]
Setting $k=N$ and $w = 0$, we obtain, for some $\delta>0$,
\begin{align*}
    \sup_{|\scx |\leq \eps^{\upsilon-1},\ A\subset \eps^\xi K_{\vk'}(\eps)}\left|\Probxy{\scx +U^1_{\dtime(\eps)}\in A} -  \Prob{\scx +\cU_N \in A}\right|=\smallo{\eps^{(\xi\vee0)+\delta}}.
\end{align*}

It remains to compare $\cU_N$ with $\frU$.    Using 
the definition of $\cc_1$, identities $t_1=\frac{1}{N}\dtime(\eps)$, $t_N = \dtime(\eps)$,~\eqref{eq:condition_dtime(eps)},~\eqref{eq:variance_cU_k}, 
and the boundedness of $F^1$, we obtain that there is $a>0$ such that
\begin{align*}
    \left|\E|\cU_N|^2 -\E|\frU|^2\right|\leq C\int_{\frac{\dtime(\eps)}{N}}^\infty e^{-2\lam s} ds + C e^{-2\lam \frac{\dtime(\eps)}{N}} + C e^{-2\lam \dtime(\eps)} \leq C\eps^a.
\end{align*}
Since $\cU_N$ with $\frU$ are Gaussian and centered, it can be checked that there is $a'>0$ such that the difference of densities $|\varphi_{\cU_N}(x)-\varphi_{\frU}(x)|\leq C\eps^{a'}e^{-c|x|^2}$, for all $x\in \R$. Therefore, for some $\delta>0$,

\begin{align*}
    \sup_{|\scx |\leq \eps^{\upsilon-1},\, A\subset \eps^\xi K_{\vk'}(\eps)}\left|  \Prob{\scx +\cU_N \in A}- \Prob{\scx +\frU \in A}\right|&\leq C  \eps^{a'}\left(\left(\eps^\xi l_\eps^{\vk'}\right)\vee 1\right),
    \\
    &=\smallo{\eps^{(\xi\vee0)+\delta}}
\end{align*}
which completes the proof.
\end{proof}

\begin{proof}[Proof of Lemma~\ref{lem:iterative_est}]
Recalling the range of $T(\eps)$ in~\eqref{eq:condition_dtime(eps)}, we fix $N\in\N$  sufficiently large to satisfy
\begin{align*}
    \frac{\dtime(\eps)}{N}\leq \bar\theta l_\eps,\quad \eps\in(0,1/2)
\end{align*}
for $\bar\theta$ given in Lemma~\ref{lem:density_est_R^2}. Then we use ~\eqref{eq:zeta_xi_condition_iteration} to fix  $\upsilon'$ satisfying
\begin{align}\label{eq:condition_upsilon'}
    0<\upsilon' < \left(\frac{1}{N}\varrho+\xi-\varrho+1\right)\wedge 1.
\end{align}

For $k=1$, the choice of $N$ allows us to apply Lemma~\ref{lem:density_est_R^2}~\eqref{item:den_est_1_R^2} to the deterministic time $t_1$ (given in \eqref{eq:t_k}) to obtain that, for some $\delta,\delta_1>0$,
\begin{align*}
     &\sup_{\substack{|\scx |\leq \eps^{\upsilon-1},\ |w|\leq \eps^{\upsilon'-1}\\ A\subset\eps^\xi K_{\vk'}(\eps)}}\left|\Probxy{\scx +U^1_{t_1}+e^{-\lam t_1}w\in A} -  \Prob{\scx +\cU_1+e^{-\lam t_1}w\in A}\right|
     \\
     &\leq \sup_{\substack{|\scx |\leq \eps^{\upsilon-1},\ |w|\leq \eps^{\upsilon'-1}\\ A\subset\eps^\xi K_{\vk'}(\eps)}} \int_{\{s\in\R:\scx +s+e^{-\lam t_1}w\in A\}}C\eps^\delta\left(1+\eps^{1-\upsilon}|\scx |\right)e^{-c|s|^2}ds
     \\
     &\leq C \eps^\delta \left(\left(\eps^\xi l_\eps^{\vk'}\right)\wedge 1\right)\leq C\eps^{(\xi\vee0)+\delta_1},
\end{align*}
as desired.

Then, we proceed by induction. 
Let $k\leq N$ and let us assume that~\eqref{eq:induction_step_conclusion} holds for $k-1$.
For $u\in \R^2$, we set
\begin{align}\label{eq:def_z(u)}
    z(u)= \left(z^1(u),\ z^2(u)\right)=\left(e^{\lam t_{k-1}}(\scx +u^1), e^{-\mu t_{k-1}}(\eps^{-1}L+u^2)\right),
\end{align}
where we suppressed the dependence on $\eps$ in the notation.
Using~\eqref{eq:SDE_near_a_saddle_point_after_duhamel1} and~\eqref{eq:SDE_near_a_saddle_point_after_duhamel2}, we have 
$\Pp^{\eps \scx }\{\eps z(U_{t_{k-1}}) = Y_{t_{k-1}}\}=1$. 
The Markov property of $Y$ together with~\eqref{eq:SDE_near_a_saddle_point_after_duhamel1} implies 
\begin{align}
    \begin{split}
        &\Probxy{\scx +U^1_{t_k}+e^{-\lam t_k}w\in A}  =  \Probxy{Y^1_{t_k}+\eps w\in \eps e^{\lam t_k}A} \\ 
\label{eq:strong_markov_Y}        
        &= \Exy{\left[\Probx{ Y_{t_{k-1}}}{Y^1_{t_1}+\eps w\in \eps e^{\lam t_k}A}\right]}  = \E^{\eps \scx }A_\eps(U_{t_{k-1}},w),
    \end{split}
\end{align}
where
\begin{align*}
    A_\eps(u,w)&=\Probx{ \eps z(u)}{z^1(u)+U^1_{t_1}+e^{-\lam t_1}w\in e^{\lam t_{k-1}}A}.
\end{align*}

Let $\cZZ$ be a centered Gaussian r.v.\ with variance 
\begin{align}\label{eq:var_Z_iteration}
    \E|\cZZ|^2 
    = \frac{1}{2\lambda}\left|F^1(0,0)\right|^2 \left(1-e^{-2\lambda t_1}\right)
\end{align}
and independent of all the other randomness. To check~\eqref{eq:induction_step_conclusion} for $k$ and complete the induction
step, we must show that
the error caused by replacing $U^1_{t_1}$ and $U^1_{t_{k-1}}$ by $\cZZ$ and $\cU_{k-1}$, respectively, in~\eqref{eq:strong_markov_Y} is small.
More precisely, ~\eqref{eq:induction_step_conclusion} for $k$ will follow immediately
once we prove that there are $\eps_k,\delta',\delta''>0$ such that
the following relations hold uniformly in $|\scx |\leq\eps^{\upsilon-1}$, $|w|\leq \eps^{\upsilon'-1}$, $A\subset\eps^\xi K_{\vk'}(\eps)$  and $\eps \in(0,\eps_k]$:
\begin{gather}
\label{eq:replacement-1}
|\E^{\eps \scx }A_\eps(U_{t_{k-1}},w)- \E^{\eps \scx } B_\eps(U_{t_{k-1}},w)|=\smallo{\eps^{(\xi\vee0)+\delta'}},
\\
\label{eq:replacement-2}
\left|\E^{\eps \scx }{ B_\eps(U_{t_{k-1}},w)}-C_\eps(\scx,w)\right|
=\smallo{\eps^{(\xi\vee0)+\delta''}},
\end{gather}
where
\begin{align*}
B_\eps(u,w)&=\Prob{z^1(u)+\cZZ+e^{-\lam t_1}w\in e^{\lam t_{k-1}}A},
\\
C_\eps(\scx,w)&=\Prob{\scx +\cU_k+e^{-\lam t_k}w\in A}.
\end{align*}

Let us derive~\eqref{eq:replacement-1}.
The choice of $N$ and definition of $t_1$ allow us to apply Lemma~\ref{lem:density_est_R^2}~\eqref{item:den_est_2_R^2}, by which there are $\hat\delta,\hat c>0$ such that
\begin{align}\label{eq:|A_eps(u,w)-B_eps(u,w)|_est}
\begin{split}
&|A_\eps(u,w)-B_\eps(u,w)|\\
&\leq \int_{\{s\in\R:z^1(u)+s+e^{-\lam t_1}w\in e^{\lam t_{k-1}}A\}}C\left(\eps |z^2(u)|+\eps^{\hat\delta}\left(1+\eps^{1-\upsilon'}|z^1(u)|\right)\right)e^{-\hat c|s|^2}ds.
\end{split}
\end{align}
Let us estimate the right-hand side. Using~\eqref{eq:zeta_xi_condition_iteration},~\eqref{eq:condition_dtime(eps)},~\eqref{eq:t_k} and ~\eqref{eq:condition_upsilon'}, we have, for $\eps$ sufficiently small,
\begin{align*}
    e^{\lam t_{k-1}}\eps^{\xi} l_\eps^{\vk'}\leq e^{ \frac{N-1}{N}\varrho l_\eps}\eps^{\xi} l_\eps^{\vk'} \leq  \eps^{\frac{1}{N}\varrho +\xi-\varrho } l_\eps^{\vk'} <  \eps^{\upsilon'-1},
\end{align*}
which along with $A\subset \eps^\xi K_{\vk'}(\eps)$ implies that 
if $z^1(u)+s+e^{-\lam t_1}w\in e^{\lam t_{k-1}}A$  and $|w|\leq \eps^{\upsilon'-1}$, then
\begin{align}\label{eq:|z(u)|_est}
    \eps^{1-\upsilon'}|z^1(u)| \leq C+\eps^{1-\upsilon'}|s|\leq C+|s|.
\end{align}
On the other hand, from~\eqref{eq:def_z(u)},~\eqref{eq:condition_dtime(eps)} and~\eqref{eq:t_k}, one can see that, for some $a>0$,
\begin{align}\label{eq:est_z^2(u)}
    \eps|z^2(u)|\leq e^{-\mu t_{k-1}}(L+\eps|u^2|)\leq \eps^{a}L+\eps e^{-\mu t_{k-1}}|u^2|.
\end{align}

Using $e^{-\hat c|s|^2}$ to absorb polynomials of $|s|$, from~\eqref{eq:|A_eps(u,w)-B_eps(u,w)|_est}, ~\eqref{eq:|z(u)|_est} and~\eqref{eq:est_z^2(u)} we obtain that, for some $\tilde \delta,  \tilde c>0$,
\begin{align*}
     &|A_\eps(u,w)-B_\eps(u,w)|\\
     &\leq  \eps^{\tilde \delta}\int_{\{s\in\R:z^1(u)+s+e^{-\lam t_1}w\in e^{\lam t_{k-1}}A\}}C(1+e^{-\mu t_{k-1}}|u^2|)e^{-\tilde c|s|^2}ds, \qquad |w|\leq \eps^{\upsilon'-1}.
\end{align*}
Let $\NN$ be a centered Gaussian r.v.\ with density proportional to $e^{-\tilde c|x|^2}$ and independent of other randomness. The last display implies that, if $|w|\leq \eps^{\upsilon'-1}$, then
\begin{align*}
&\left|\E^{\eps \scx }A_\eps(U_{t_{k-1}},w)- \E^{\eps \scx } B_\eps(U_{t_{k-1}},w)\right|\\
&\leq  C\eps^{\tilde \delta}\E^{\eps \scx }\left[(1+e^{-\mu t_{k-1}} |U^2_{t_{k-1}}|)\Ind{\scx +U^1_{t_{k-1}}+e^{-\lam t_k}w+e^{-\lam t_{k-1}}\NN \in A}\right].
\end{align*}
Let $p,p'> 1$ satisfy $\frac{1}{p}+\frac{1}{p'}=1$. We will choose $p$ very close to $1$ later. Using~\eqref{eq:def_V_M_U_N} and Lemma~\ref{lem:estimates-for-terms}~\eqref{item:sup_of-N-over-large-intervals}, we have that $\E^{\eps \scx }(e^{-\mu t_{k-1}} |U^2_{t_{k-1}}|)^{p'}<C$. Hence, applying H\"older's inequality to the above display, we have
\begin{align*}
    &\left|\E^{\eps \scx }A_\eps(U_{t_{k-1}},w)- \E^{\eps \scx } B_\eps(U_{t_{k-1}},w)\right|\\
&\leq  C\eps^{\tilde \delta}\left(\Probxy{\scx +U^1_{t_{k-1}}+e^{-\lam t_k}w+e^{-\lam t_{k-1}}\NN \in A}\right)^{\frac{1}{p}}.
\end{align*}
Since $e^{-\lambda t_1}$ decays like a small positive power of $\eps$, we have that, for small $\eps$,
\begin{align}\label{eq:induction_assumption_satisfied}
    |w| \leq \eps^{\upsilon'-1} \quad\text{ implies }\quad |e^{-\lam t_1}w|+  l_\eps\leq \eps^{\upsilon'-1}.
\end{align}
Therefore,
\begin{align*}\begin{split}
&|\E^{\eps \scx }A_\eps(U_{t_{k-1}},w)- \E^{\eps \scx } B_\eps(U_{t_{k-1}},w)|
\\
        &\leq C\eps^{\tilde \delta}\left(\Probx{\eps \scx }{\scx +U^1_{t_{k-1}}+e^{-\lam t_{k-1}}(e^{-\lam t_1}w+\NN) \in A;\ |\NN|\leq   l_\eps} + \superpoly\right)^{\frac{1}{p}}
        \\
    &\leq C\eps^{\tilde \delta}\left(\Prob{\scx +\cU_{k-1}+e^{-\lam t_{k-1}}(e^{-\lam t_1}w+\NN)\in A}+\smallo{\eps^{(\xi\vee 0)+\delta_{k-1}}}\right)^{\frac{1}{p}}
    \\
    & \leq C\eps^{\tilde \delta}\left((\eps^\xi l_\eps^{\vk'})\wedge 1+\smallo{\eps^{(\xi\vee 0)+\delta_{k-1}}}\right)^{\frac{1}{p}} = \mathcal{O}\left(\eps^{\frac{\xi\vee0}{p}+\tilde \delta}\, l_\eps^\frac{\vk'}{p}\right), 
\end{split}
\end{align*}
uniformly in $|\scx |\leq \eps^{\upsilon-1}$,  $|w|\leq \eps^{\upsilon'-1}$ and $A\subset\eps^\xi K_{\vk'}(\eps)$.
Here, in the second inequality we used the induction assumption \eqref{eq:induction_step_conclusion} for $k-1$ allowed by~\eqref{eq:induction_assumption_satisfied}, the Gaussian tail of $\NN$, and Fubini's theorem along with the independence of $\NN$. In the last line we used~$A\subset\eps^\xi K_{\vk'}(\eps)$, the uniform boundedness of the density of $\cU_{k-1}$ (see~\eqref{eq:variance_cU_k}), independence of $\NN$ and Fubini's theorem. Choosing $p$ sufficiently close to $1$ completes the proof of~\eqref{eq:replacement-1}.

Let us now prove~\eqref{eq:replacement-2}. 
The following holds uniformly in $|\scx |\leq \eps^{\upsilon-1}$,  $|w|\leq \eps^{\upsilon'-1}$ and $A\subset\eps^\xi K_{\vk'}(\eps)$:
\begin{align*} 
&\E^{\eps \scx } B_\eps(U_{t_{k-1}},w)
\\
    &= \Probx{\eps \scx }{\scx +U^1_{t_{k-1}}+e^{-\lam t_{k-1}}(e^{-\lam t_1}w+\cZZ)\in A}\\
    & = \Probx{\eps \scx }{\scx +U^1_{t_{k-1}}+e^{-\lam t_{k-1}}(e^{-\lam t_1}w+\cZZ)\in A;\ |\cZZ|\leq   l_\eps}+\superpoly\\
    &= \Prob{\scx +\cU_{k-1}+e^{-\lam t_{k-1}}(e^{-\lam t_1}w+\cZZ)\in A;\ |\cZZ|\leq   l_\eps}+\smallo{\eps^{(\xi\vee0)+\delta_{k-1}}}\\
    &= \Prob{\scx +\cU_{k-1}+e^{-\lam t_{k-1}}\cZZ+e^{-\lam t_k}w\in A}+\smallo{\eps^{(\xi\vee0)+\delta_{k-1}}},
    \\
    &=C_\eps(y,w) +\smallo{\eps^{(\xi\vee0)+\delta_{k-1}}}.
\end{align*}
 In the third identity, we used the induction assumption allowed by~\eqref{eq:induction_assumption_satisfied}, independence of $\cZZ$, and Fubini's theorem. In the last line, we used the identity in distribution between $\cU_{k-1}+e^{-\lam t_{k-1}}\cZZ$ and $\cU_k$ (see \eqref{eq:variance_cU_k} and \eqref{eq:var_Z_iteration}).
This proves~\eqref{eq:replacement-2} with $\delta''=\delta_{k-1}$
completing the induction step and the entire proof.
\end{proof}

Now, we are ready to prove Lemma~\ref{lem:gaussian_approx}.

\begin{proof}[Proof of Lemma~\ref{lem:gaussian_approx}]
We substitute
$1, \eps^{\alpha-1}\scx , \xi+\alpha-1$ for  $c,\scx ,\xi$  in both Lemmas~\ref{lem:rough_approx_new} and~\ref{lem:iterative_result}. 
We choose an arbitrary $\upsilon\in (0,\alpha)$ in Lemma~\ref{lem:iterative_result}. We  set $\varrho = \xi+\alpha - \theta$ and 
choose an arbitrary
$\varrho'\in (0,\varrho)$ for Lemma~\ref{lem:iterative_result}. Then, with $\xi$ replaced by $\xi+\alpha-1$,~\eqref{eq:zeta_xi_condition_iteration} holds and~\eqref{eq:condition_dtime(eps)} is satisfied for $T(\eps)$ given in \eqref{eq:def_dtime(eps)}, for sufficiently small $\eps$. Combining Lemma~\ref{lem:rough_approx_new} and Lemma~\ref{lem:iterative_result} (with $\vk'$ therein replaced by any $\vk''>\vk'$), we obtain the desired result.
\end{proof}

\section{Local limit theorems}
\label{sec:LLT}

In this section, we adopt the setting of Section~\ref{sec:Gaussian-approx}. The goal is to compute the tail asymptotics 
for exit times and obtain  
local limit theorems for exit locations, \irc.

We recall the notation $\Pp^{\eps^\alpha x}$ in \eqref{eq:def_Probxy} and the notation for Gaussian densities in~\eqref{eq:g_c_Gaussian}.

\subsection{Exit times}
\begin{proposition}\label{prop:long_exit_time_asym_linear_box_case}
Let $\vk,r>0$, $\alpha\in (0,1]$, $\theta\in[0,\alpha)$, $\beta\geq 1-\theta$, $c\in\R$, $\zeta = \zeta_{r,\theta,\eps}$ be given in \eqref{eq:exit-from-strip} and $\cc_1$ be given in \eqref{eq:def_cc_1}. 
There is $\delta>0$ such that the following hold:
If $\theta+\beta-\alpha>0$, then 
\begin{align*}
    \sup_{\scx \in K_\vk(\e)}\left|\eps^{-(\theta+\beta-\alpha)}\Pp^{\eps^\alpha \scx }\{\zeta\geq \beta\lam^{-1} l_\eps+c\}-2re^{-\lambda c} g_{\cc_1}\left(\eps^{\alpha-1}\scx \right) \right|=\smallo{\eps^\delta};
\end{align*}

If $\theta+\beta-\alpha=0$, then 
\begin{align*}
    \sup_{\scx \in K_\vk(\e)}\left|\Pp^{\eps^\alpha \scx }\{\zeta\geq \beta\lam^{-1} l_\eps+c\}-\int_{[-re^{-\lambda c}, re^{-\lambda c}]}g_{\cc_1}\left(\eps^{\alpha-1}\scx -s\right)ds \right|=\smallo{\eps^\delta}.
\end{align*}

\end{proposition}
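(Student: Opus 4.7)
The strategy is to convert the tail event for $\zeta$ into a small-ball event for $U^1_\zeta$, apply the Gaussian approximation provided by Lemma~\ref{lem:gaussian_approx}, and then evaluate the resulting Gaussian integral explicitly.

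First, using the Duhamel representation \eqref{eq:SDE_near_a_saddle_point_after_duhamel1} and the definition \eqref{eq:exit-from-strip} of $\zeta$, I would note that $|Y^1_\zeta| = r\eps^\theta$ almost surely on the full-probability event $\{\zeta < \infty\}$, so that
$$r\eps^\theta = e^{\lambda \zeta}|\eps^\alpha x + \eps U^1_\zeta|.$$
Solving for the event of interest yields the key reformulation
$$\{\zeta \geq \beta\lambda^{-1}l_\eps + c\} = \bigl\{x + \eps^{1-\alpha}U^1_\zeta \in \eps^{\theta + \beta - \alpha}[-re^{-\lambda c},\,re^{-\lambda c}]\bigr\}.$$

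Next, with $\xi := \theta + \beta - \alpha$ and $[a, b] := [-re^{-\lambda c}, re^{-\lambda c}]$, I would apply Lemma~\ref{lem:gaussian_approx}. The hypothesis $\xi > -\alpha$ reduces to $\theta + \beta > 0$, which is immediate, and the standing assumption $\beta \geq 1 - \theta$ gives $\xi + \alpha - 1 = \theta + \beta - 1 \geq 0$. The lemma then replaces $U^1_\zeta$ by a centered Gaussian $\frU$ of variance $\cc_1$ with error $\smallo{\eps^{\theta + \beta - 1 + \delta'}}$ for some $\delta' > 0$, at the cost of enlarging or shrinking the target interval by $\pm c'\eps^\eta$ for some fixed $c' > 0$ and $\eta \in (0, 1 - \theta)$.

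Finally, I would evaluate the Gaussian probability by an explicit change of variables. Writing the density of $\eps^{1-\alpha}\frU$ as $\eps^{\alpha-1}g_{\cc_1}(\eps^{\alpha-1}\cdot)$ and substituting $u = -x + \eps^\xi v$ reduces the probability to a one-dimensional integral whose domain length is order $\eps^\xi$ modulo the small perturbation $\pm c'\eps^\eta$. In the first case $\xi > 0$, the interval $\eps^\xi[a,b]$ collapses and a Taylor expansion of $g_{\cc_1}$ about $\eps^{\alpha-1}x$, controlled by smoothness of the Gaussian density, reduces the integral to its leading term $2re^{-\lambda c}\, g_{\cc_1}(\eps^{\alpha-1}x)$ times $\eps^\xi$ plus strictly smaller corrections; dividing by $\eps^{\theta + \beta - \alpha} = \eps^\xi$ then yields the first assertion. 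In the borderline case $\xi = 0$ the interval does not collapse and the Gaussian probability is directly the convolution $\int_{[-re^{-\lambda c}, re^{-\lambda c}]} g_{\cc_1}(\eps^{\alpha-1}x - s)\,ds$ of the second assertion.

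The main obstacle is the bookkeeping of the three error sources: the $\smallo{\eps^{\theta + \beta - 1 + \delta'}}$ from the Gaussian substitution, the symmetric-difference error from the interval perturbation $\pm c'\eps^\eta$ (handled via Lemma~\ref{lem:gaussian_sym_diff}), and the Taylor remainder on $g_{\cc_1}$. One must verify that, after the overall rescaling by $\eps^{-(\theta + \beta - \alpha)}$, all three contributions collapse into a single $\smallo{\eps^\delta}$ error uniformly in $x \in K_\vk(\eps)$; this is where the precise choice of $\eta$ close to $1 - \theta$ and the inequality $\xi + \alpha - 1 \geq 0$ are essential.
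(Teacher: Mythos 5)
Your proposal is correct and follows essentially the same route as the paper: the same reformulation of $\{\zeta\geq\beta\lambda^{-1}l_\eps+c\}$ as the small-ball event $\{|x+\eps^{1-\alpha}U^1_\zeta|\leq re^{-\lambda c}\eps^{\theta+\beta-\alpha}\}$ via \eqref{eq:SDE_near_a_saddle_point_after_duhamel1}, the same application of Lemma~\ref{lem:gaussian_approx} with $\xi=\theta+\beta-\alpha$, and the same final Gaussian-integral estimate via Lemma~\ref{lem:gaussian_sym_diff}. The only difference is that you spell out the endgame (Taylor expansion of $g_{\cc_1}$ in the case $\xi>0$ versus the convolution form when $\xi=0$), which the paper compresses into a single sentence.
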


\begin{proof}[Proof of Proposition~\ref{prop:long_exit_time_asym_linear_box_case}]
Due to $\theta<\alpha$,  for sufficiently small $\eps$, the initial condition we are interested in satisfies 
\begin{align*}
    |Y^1_0|=\eps^\alpha|\scx |\leq \eps^\alpha l^\vk_\eps\leq r\eps^\theta
\end{align*}
for all $x\in K_\vk(\eps)$. The definition of $\zeta$ thus ensures that $|Y^1_\zeta|= r\eps^\theta$, which along with the formula~\eqref{eq:SDE_near_a_saddle_point_after_duhamel1} implies that  $\Pp^{\eps^\alpha \scx }$-a.s.\
\begin{align*}e^{\lam \zeta}|\eps^\alpha \scx +\eps U^1_\zeta|= r\eps^{\theta},\quad\text{or equivalently,}\quad \zeta = \frac{1}{\lambda}\log\frac{r\eps^{\theta}}{|\eps^\alpha \scx +\eps U^1_\zeta|}.
\end{align*}
From this, we have
\begin{align*}\{\zeta\geq \beta\lambda^{-1} l_\eps+c\} \stackrel{\Pp^{\eps^\alpha \scx }}{=} \{|\scx +\eps^{1-\alpha}U^1_\zeta|\leq re^{-\lambda c}\eps^{\theta+\beta-\alpha}\}.
\end{align*}
Applying Lemma~\ref{lem:gaussian_approx} with $\xi=\theta+\beta-\alpha$ and arbitrary $\eta\in(0,1-\theta)$, we obtain
\begin{align*}
    \sup_{\scx \in K_\vk(\e)}\left|\Pp^{\eps^\alpha \scx }\{\zeta\geq \beta\lambda^{-1} l_\eps +c\}- \Prob{|\scx +\eps^{1-\alpha}\frU|\leq re^{-\lambda c}\eps^{\theta+\beta-\alpha}\pm\eps^\eta}\right|
    \\
    = \smallo{\eps^{(\theta+\beta-1)\vee 0 +\delta'}}
\end{align*}
for some $\delta'>0$. Rewriting the probability involving $\frU$, we have 
\begin{align*}
     &\eps^{-((\theta+\beta-\alpha)\vee 0)}\Prob{|\scx +\eps^{1-\alpha}\frU|\leq re^{-\lambda c}\eps^{\theta+\beta-\alpha}\pm\eps^\eta} \\
     &= \eps^{-((\theta+\beta-\alpha)\vee 0)}\int_{[-re^{-\lambda c}\eps^{\theta+\beta-\alpha}\mp\eps^\eta,\ re^{-\lambda c}\eps^{\theta+\beta-\alpha}\pm\eps^\eta]}g_{\cc_1}\left(\eps^{\alpha-1}\scx -s\right)ds.
\end{align*}
Estimating the right-hand side with the help of Lemma~\ref{lem:gaussian_sym_diff}, we obtain
the desired result.
\end{proof}

\subsection{Atypical exit locations}
Recalling stopping times given in~\eqref{eq:tau_gamma}, for $R>0$, we set \begin{align}\label{eq:def_tau_exit_loc}
    \tau = \tau_{R,0,\eps}=\tau_\fR.
\end{align}
We also recall the definition of stability index $\rho$ in \eqref{eq:rho}.

\begin{proposition}\label{prop:exit_loc_beta<1}
Suppose $\rho<1$. Let $\vk,\vk'>0$. Let $\tau$ be defined by~\eqref{eq:def_tau_exit_loc}, and~$\cc_1$ by~\eqref{eq:def_cc_1}. Then for each $\beta\in(\rho,1)$, there is $\delta >0$ such that
\begin{align*}
    \sup_{\substack{\scx \in K_\vk(\e) \\ [a,b]\subset K_{\vk'}(\e)}}\left|\eps^{-(\frac{\beta}{\rho}-1)}\Probxy{Y_\tau \in\{R\}\times \eps^\beta [a,b]}-RL^{-\frac{1}{\rho}}g_{\cc_1}\left(\scx \right)\left(|b\vee 0|^\frac{1}{\rho}-|a\vee 0|^\frac{1}{\rho}\right)\right|
    \\
    = \smallo{\eps^\delta}.
\end{align*}
\end{proposition}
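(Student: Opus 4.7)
\medskip

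\noindent\textbf{Plan of proof.} The strategy is to reduce the joint event involving the exit location to a one-dimensional event for $x+U^1_\tau$, and then apply the Gaussian approximation of Corollary~\ref{cor:gaussian_approx}.

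First, the proposition is intended for $\alpha=1$ (as it is used in Lemma~\ref{lem:local-limit-theorem}~\eqref{item:power-asymp} with $\alpha=1$), so I take this initial condition throughout. By Lemma~\ref{lem:hp-events-in-rectangle}, uniformly in $\scx\in K_\vk(\eps)$ we have $A_{+,\eps}\cup A_{-,\eps}$ w.h.p.; on $A_{-,\eps}$ the event of interest does not occur, while on $A_{+,\eps}$ relation~\eqref{eq:expr-for-second-coord-at-exit} gives
\[
Y^2_\tau=\eps^\rho\tfrac{L}{R^\rho}|\scx+U^1_\tau|^\rho+\eps N_\tau,\qquad \scx+U^1_\tau>0,
\]
so that
\[
\Pp^{\eps\scx}\{Y_\tau\in\{R\}\times\eps^\beta[a,b]\}=\Pp^{\eps\scx}\left\{\eps^\rho\tfrac{L}{R^\rho}|\scx+U^1_\tau|^\rho+\eps N_\tau\in\eps^\beta[a,b],\ \scx+U^1_\tau>0\right\}+o_e(1).
\]
Next, I apply Lemma~\ref{lem:N_tau_tail} (with an arbitrarily small admissible $\theta>0$) to replace $\tau$ by $\zeta=\zeta_{R,0,\eps}$ up to a $o_e(1)$ error, and restrict to the tame event $G_\eps=\{|N_\zeta|\le l_\eps^{\vk''}\}$ for some $\vk''$ to be fixed, whose complement also has superpolynomially small probability.

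On $G_\eps$, since $\scx+U^1_\tau>0$, the condition can be inverted to
\[
\scx+U^1_\zeta\in A_\eps(N_\zeta):=\tfrac{R}{L^{1/\rho}}\,\eps^{\beta/\rho-1}\bigl[((a-\eps^{1-\beta}N_\zeta)\vee 0)^{1/\rho},\ ((b-\eps^{1-\beta}N_\zeta)\vee 0)^{1/\rho}\bigr].
\]
Since $\rho<1$ implies $1/\rho>1$, the map $t\mapsto(t\vee 0)^{1/\rho}$ is Lipschitz on $K_{\vk'}(\eps)$ with Lipschitz constant at most $C l_\eps^{\vk'(1/\rho-1)}$. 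Combined with $|\eps^{1-\beta}N_\zeta|\le\eps^{1-\beta}l_\eps^{\vk''}$ on $G_\eps$, the endpoints of $A_\eps(N_\zeta)$ differ from those of the deterministic set
\[
\hat A_\eps:=\tfrac{R}{L^{1/\rho}}\,\eps^{\beta/\rho-1}\bigl[(a\vee 0)^{1/\rho},\ (b\vee 0)^{1/\rho}\bigr]
\]
by at most $C\eps^{1/\rho-1}l_\eps^{p}$ for some $p>0$. Let $\hat A_\eps^{\pm}$ be the inflated/deflated intervals obtained by perturbing the endpoints of $\hat A_\eps$ by $\pm C\eps^{1/\rho-1}l_\eps^p$. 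Then on $G_\eps$ we have the sandwich $\hat A_\eps^-\subset A_\eps(N_\zeta)\subset \hat A_\eps^+$, hence
\[
\Pp^{\eps\scx}\{\scx+U^1_\zeta\in\hat A_\eps^-\}-\Pp(G_\eps^c)\le\Pp^{\eps\scx}\{\scx+U^1_\zeta\in A_\eps(N_\zeta)\}\le\Pp^{\eps\scx}\{\scx+U^1_\zeta\in\hat A_\eps^+\}.
\]

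Both $\hat A_\eps^{\pm}$ are contained in $\eps^{\beta/\rho-1}K_{\tilde\vk}(\eps)$ for some $\tilde\vk$. Applying Corollary~\ref{cor:gaussian_approx} with $\xi=\beta/\rho-1>0$ replaces $U^1_\zeta$ by a centered Gaussian $\frU$ of variance $\cc_1$, with error $o(\eps^{\beta/\rho-1+\delta'})$ for some $\delta'>0$. A direct calculation using the uniform boundedness and Lipschitz continuity of $g_{\cc_1}$ (together with Lemma~\ref{lem:gaussian_sym_diff} to absorb the $\eps^{1/\rho-1}l_\eps^p$-perturbations, which are lower-order compared to the leading length $\eps^{\beta/\rho-1}$ since $\beta<1$) yields
\[
\Pp\{\scx+\frU\in \hat A_\eps^{\pm}\}=g_{\cc_1}(\scx)\,\tfrac{R}{L^{1/\rho}}\,\eps^{\beta/\rho-1}\bigl((b\vee 0)^{1/\rho}-(a\vee 0)^{1/\rho}\bigr)+\smallo{\eps^{\beta/\rho-1+\delta''}}
\]
uniformly in $\scx\in K_\vk(\eps)$ and $[a,b]\subset K_{\vk'}(\eps)$. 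Dividing through by $\eps^{\beta/\rho-1}$ gives the claim.

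The main technical obstacle is the sandwich step: one must carefully track how $A_\eps(N)$ depends on $N$ near the kink of $(t\vee 0)^{1/\rho}$ at $t=0$ and verify that the resulting perturbation $O(\eps^{1/\rho-1}l_\eps^p)$ of the endpoints is genuinely of lower order than the leading scale $\eps^{\beta/\rho-1}$ (which holds precisely because $\beta<1$ forces $1/\rho-1>\beta/\rho-1$, with gap $1-\beta>0$). Everything else — concentrating on $A_{+,\eps}$, the identification $\tau=\zeta$ w.h.p., tameness of $N_\zeta$, and the final Gaussian evaluation — is a direct application of already-established lemmas.
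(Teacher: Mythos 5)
Your proof is correct and follows essentially the same route as the paper's: reduce via Lemma~\ref{lem:hp-events-in-rectangle} and Lemma~\ref{lem:N_tau_tail} to an interval event for $\scx+U^1_\zeta$, sandwich away the $N$-contribution, apply Corollary~\ref{cor:gaussian_approx} with $\xi=\beta/\rho-1$, and finish with an elementary Gaussian integral estimate (the paper perturbs $[a,b]$ by $\eps^\eta$ before inverting the $\rho$-power rather than after, which is only a cosmetic reordering). One tiny slip: the endpoint perturbation is of order $\eps^{\beta(1/\rho-1)}l_\eps^p$ rather than $\eps^{1/\rho-1}l_\eps^p$, but since $\beta(1/\rho-1)=(\beta/\rho-1)+(1-\beta)$ this is still of lower order than the leading scale $\eps^{\beta/\rho-1}$ by the margin $1-\beta>0$, exactly as you argue.
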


\begin{proof} 
In this proof and further on, we often use the notation $\eqpm$ introduced in~\eqref{eq:eqpm_notation}.
Using Lemma~\ref{lem:hp-events-in-rectangle}, we have that, under $\Pp^{\eps x}$ uniformly in $x\in K_\vk(\eps)$ and $[a,b]\in K_{\vk'}(\eps)$,
\begin{align*}
    \left\{Y_\tau \in \{R\}\times \eps^\beta[a,b] \right\} 
    \eqmodlp \left\{\scx +U^1_\tau \geq 0;\ R^{-\rho}L\eps^\rho\left(\scx +U^1_\tau\right)^\rho+ \eps N^2_\tau \in \eps^\beta[a,b]\right\}.
\end{align*}
Then, Lemma~\ref{lem:N_tau_tail} implies that, for any $\eta\in(0,1-\beta)$,
\begin{align*}
    &\P^{\eps \scx }\left\{Y_\tau \in \{R\}\times \eps^\beta[a,b] \right\} 
    \\
    &\eqpm \P^{\eps \scx }\left\{\scx +U^1_\tau \geq 0;\ R^{-\rho}L\eps^\rho(\scx +U^1_\tau)^\rho\in \eps^\beta[a\mp\eps^\eta,b\pm \eps^\eta]\right\} \pm \superpoly
    \\
    & =  \P^{\eps \scx }\left\{\scx +U^1_\tau \in \Aepm\right \} \pm \superpoly,
\end{align*}
uniformly in $x\in K_\vk(\eps)$ and $[a,b]\subset K_{\vk'}(\eps)$, where
\begin{align*}
    \Aepm = RL^{-\frac{1}{\rho}}\eps^{\frac{\beta}{\rho}-1}\left[\left((a\mp\eps^\eta)\vee 0 \right)^\frac{1}{\rho},\left((b\mp\eps^\eta)\vee 0 \right)^\frac{1}{\rho}\right].
\end{align*}
Lemma~\ref{lem:N_tau_tail} ensures that
\begin{align}\label{eq:tau=zeta_R,0,eps_whp}
   \sup_{x\in K_\vk(\eps)}\Pp^{\eps^\alpha x}\{\tau \neq  \zeta_{R,0,\eps}\} = o_e(1),\quad \alpha \in(0,1].
\end{align}
Using \eqref{eq:tau=zeta_R,0,eps_whp} with $\alpha=1$, and Corollary~\ref{cor:gaussian_approx} with $R,\frac{\beta}{\rho}-1,0$ substituted for $r,\xi,\theta$, we obtain 
\begin{align*}
     \Probxy{Y_\tau \in\{R\}\times \eps^\beta [a,b]}\eqpm\Probxy{\scx +\frU \in \Aepm }\pm \smallo{\eps^{\frac{\beta}{\rho}-1+\delta}}
\end{align*}
for some $\delta>0$,
uniformly in $\scx \in K_\vk(\e)$ and $ [a,b]\subset K_{\vk'}(\e)$. Since the variance of~$\frU$ is~$\cc_1$, an elementary Gaussian integral estimate (see the proof of \eqref{eq:integral_est} below for a similar argument) yields that
\begin{align*}
    \sup_{\substack{x\in K_\vk(\e) \\ [a,b]\subset K_{\vk'}(\e)}}\left|\eps^{-(\frac{\beta}{\rho}-1)}\Probxy{\scx +\frU \in \Aepm } - RL^{-\frac{1}{\rho}}g_{\cc_1}\left(\scx \right)\left(|b\vee 0|^\frac{1}{\rho}-|a\vee 0|^\frac{1}{\rho}\right)\right|
    \\
    = \smallo{\eps^{\delta'}},
\end{align*}
for some $\delta'>0$. Combining
the last two displays we complete the proof.
\end{proof}

Let us now consider the case $\beta=1$.
In addition to $\cc_1$, we define
\begin{align}
    \label{eq:def_cc_2}
\cc_2&=\int_{-\infty}^0 e^{2\mu s}\left|F^2(Re^{-\lambda s},0)\right|^2 ds. 
\end{align}

\begin{proposition}\label{prop:exit_loc} Suppose  $\rho<1$.
Let $\vk,\vk'>0$. Let $\tau$ be given in~\eqref{eq:def_tau_exit_loc}, $\cc_1$~in~\eqref{eq:def_cc_1}, $\cc_2$ in~\eqref{eq:def_cc_2}. Then there is $\delta >0$ such that
\begin{align*}
    \sup_{\substack{\scx \in K_\vk(\e) \\ [a,b]\subset K_{\vk'}(\e)}}\left|\eps^{-(\frac{1}{\rho}-1)}\Probxy{Y_\tau \in\{R\}\times \eps [a,b]}-RL^{-\frac{1}{\rho}}g_{\cc_1}\left(\scx \right)\E h(a,b;\NN)\right|= \smallo{\eps^\delta}
\end{align*}
where
\begin{align}\label{eq:h(a,b;z)}
    h(a,b;z)=|(b-z)\vee 0|^\frac{1}{\rho}-|(a-z)\vee 0|^\frac{1}{\rho}
\end{align}
and $\NN$ is a centered Gaussian r.v.\ with variance $\cc_2$.
\end{proposition}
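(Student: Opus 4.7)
}

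The plan is to imitate the proof of Proposition~\ref{prop:exit_loc_beta<1}, but at the critical scale $\beta=1$ the term $\eps N^2_\tau$ is no longer a perturbation and must be tracked. First, Lemmas~\ref{lem:hp-events-in-rectangle} and~\ref{lem:N_tau_tail} reduce the event $\{Y_\tau\in\{R\}\times\eps[a,b]\}$, up to a w.l.p.\ event uniformly in $x\in K_\vk(\eps)$ and $[a,b]\subset K_{\vk'}(\eps)$, to
\[
\bigl\{x+U^1_\tau\geq 0,\ R^{-\rho}L\eps^\rho(x+U^1_\tau)^\rho+\eps N^2_\tau\in\eps[a,b]\bigr\}.
\]
Solving the second inclusion for $x+U^1_\tau$ rewrites this as
\[
\bigl\{x+U^1_\tau\in RL^{-1/\rho}\eps^{\frac{1}{\rho}-1}\bigl[((a-N^2_\tau)\vee 0)^{1/\rho},\,((b-N^2_\tau)\vee 0)^{1/\rho}\bigr]\bigr\}.
\]

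The key new ingredient I would establish is a \emph{joint} local Gaussian approximation showing that $(U^1_\tau,N^2_\tau)$ is asymptotically distributed like $(\frU,\NN)$, a pair of independent centered Gaussians with variances $\cc_1,\cc_2$. The heuristic is that $U^1_t$ accumulates its variance near $t=0$ (the $e^{-\lambda s}$ damping picks out the regime where $Y_s\approx(0,e^{-\mu s}L)$, giving $\cc_1$), whereas $N^2_\tau=e^{-\mu\tau}\int_0^\tau e^{\mu s}F^2_l(Y_s)dW^l_s$ accumulates its variance near $t=\tau$ (the $e^{-\mu(\tau-s)}$ factor picks out the regime where $Y_s\approx(Re^{\lambda(s-\tau)},0)$, giving $\cc_2$ after the change of variable $u=s-\tau$). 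These two time windows are essentially disjoint, which is what yields independence. To implement this rigorously I would split the evolution at a deterministic time $T_\eps$ with $l_\eps\ll T_\eps\ll \tau$: Lemma~\ref{lem:gaussian_approx} (with $\zeta$ replaced by $\tau$, justified by Lemma~\ref{lem:N_tau_tail}) already gives the Gaussian approximation for $x+U^1_\tau$, and the post-$T_\eps$ martingale that generates $N^2_\tau$ is measurable with respect to $\Theta^{T_\eps}W$ up to an error damped by $e^{-\mu(\tau-T_\eps)}$, hence asymptotically independent of $U^1_\tau$. A cleaner alternative, parallel to what is done in Section~\ref{sec:Gaussian-approx} for $U^1$ alone, is to invoke the joint density estimate coming from Lemma~\ref{lem:density_est_R^2} and run the same iteration scheme on the two-dimensional process, obtaining directly a joint Gaussian local limit theorem with independent components.

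Once the joint approximation is available, the probability of interest becomes, up to $o(\eps^{1/\rho-1+\delta})$,
\[
\E\,\Pp\!\left\{x+\frU\in RL^{-1/\rho}\eps^{\frac{1}{\rho}-1}\bigl[((a-\NN)\vee 0)^{1/\rho},\,((b-\NN)\vee 0)^{1/\rho}\bigr]\,\Big|\,\NN\right\}.
\]
The inner integral is a Gaussian mass on an interval of length $O(\eps^{1/\rho-1})$, so by the smoothness of $g_{\cc_1}$ and an application of Lemma~\ref{lem:gaussian_sym_diff} (exactly as in the $\beta<1$ case) it equals
\[
\eps^{\frac{1}{\rho}-1}\,RL^{-1/\rho}\,g_{\cc_1}(x)\,h(a,b;\NN)+o\bigl(\eps^{\frac{1}{\rho}-1+\delta}\bigr),
\]
uniformly in $x\in K_\vk(\eps)$, $[a,b]\subset K_{\vk'}(\eps)$, and in $\NN$ tame. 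Taking expectation in $\NN$ (the contribution from $|\NN|>l_\eps^{\vk''}$ being superpolynomially small by the Gaussian tail, so that the restriction $[a,b]\subset K_{\vk'}(\eps)$ is irrelevant) yields the stated asymptotics.

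The main obstacle is precisely the joint Gaussian approximation with \emph{asymptotic independence}: the univariate result Lemma~\ref{lem:gaussian_approx} controls $U^1_\tau$ but says nothing about the joint law. Establishing this either requires extending the Malliavin-based density estimates of Section~\ref{section:density_est} to the pair $(U^1,N^2)$, or implementing the time-splitting at $T_\eps$ carefully, estimating both the replacement error of the pre-$T_\eps$ contribution to $N^2_\tau$ by its (small) expected size and the remaining martingale increments via exponential martingale inequalities as in Lemma~\ref{lem:estimates-for-terms}. A secondary, more routine, technical point is to verify that the error made in the reduction to the $(U^1_\tau,N^2_\tau)$ event is truly $o(\eps^{1/\rho-1+\delta})$ and not merely $o_e(1)$, which is needed to match the target rate.
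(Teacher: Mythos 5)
Your outline coincides with the paper's strategy: reduce to the event $\{x+U^1_\tau\ge 0,\ R^{-\rho}L\eps^\rho(x+U^1_\tau)^\rho+\eps N^2_\tau\in\eps[a,b]\}$, prove that $(U^1,N^2)$ near the exit decouples into independent Gaussians with variances $\cc_1,\cc_2$, and finish with the Gaussian integral estimate. But the decisive step — the asymptotic independence — is exactly the content of the paper's Lemma~\ref{lem:asymp_decouple}, and you leave it as an acknowledged obstacle with two sketched routes, neither of which closes the gap as stated. Route (a) asserts that the post-$T_\eps$ martingale generating $N^2_\tau$ is ``measurable with respect to $\Theta^{T_\eps}W$''; this is false, because the integrand $F^2_l(Y_s)$ for $s>T_\eps$ depends on $Y_{T_\eps}$ and hence on the early noise, and moreover $\tau$ itself is determined by the whole path. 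What is actually needed is that the \emph{conditional} law of $N^2$ at the end of the second stage, given the state at the splitting time, is close to one \emph{fixed} Gaussian law, uniformly over all states the process can occupy at that time — this is what the paper proves (Lemmas~\ref{lem:unlikely_position_Y_htau}--\ref{lem:swap_by_gaussian}) using the joint Malliavin density estimate Lemma~\ref{lem:density_est_R^2}~\eqref{item:den_est_4_R^2}. The paper also splits at the \emph{stopping} time $\htau=\zeta_{1,\theta,\eps}$ (exit from the strip $|Y^1|\le\eps^\theta$) rather than at a deterministic time: this pins down $Y^1_{\htau}=\eps^\theta$, makes the remaining travel time $T_1=\lambda^{-1}\log(R\eps^{-\theta})$ deterministic, and forces the second coordinate to be small there (Lemma~\ref{lem:unlikely_position_Y_htau}), all of which is essential for the uniform density comparison; a deterministic split with $l_\eps\ll T_\eps\ll\tau$ is not even available, since $\tau$ is itself of order $l_\eps$. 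Your route (b), running the iteration scheme directly on the pair $(U^1,N^2)$ up to the random time $\tau$, faces the additional problem that the limiting variance $\cc_2$ is determined by the trajectory near the exit point $(R,0)$, so the relevant Gaussian comparison is only valid for the post-$\htau$ segment, not for the process started at $(\eps^\alpha x,L)$.

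Two smaller points. First, once the decoupling is granted, your Gaussian computation is the paper's \eqref{eq:integral_est} and is fine, except that Lemma~\ref{lem:asymp_decouple} produces a second error term $\eps^\delta\,\Pp\{\cdot\in B^\eps_\pm(\cZZ)\}$ with an auxiliary Gaussian $\cZZ$ (coming from the density-comparison error), which must also be shown to be $O(\eps^{1/\rho-1}l_\eps^{p})$; this is routine but should be recorded. Second, your ``secondary technical point'' about the reduction error being only $o_e(1)$ is moot: by definition $o_e(1)$ decays faster than any power of $\eps$, so it is automatically $o(\eps^{1/\rho-1+\delta})$.
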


Recall the family of stopping times given in \eqref{eq:exit-from-strip}.
We need the next lemma, which is slightly more general than the setting of Proposition \ref{prop:exit_loc}.
In particular, we are not requiring $\rho<1$ here.

\begin{lemma}\label{lem:asymp_decouple}
Suppose $\alpha\in(0,1]$, $\rho>0$.
Let $\beta,\vk,\vk'>0$. For $\theta\in (0,1)$, we set
\begin{align}\label{eq:def_htau}
    \htau = \zeta_{1,\theta,\eps}.
\end{align}
Then for any sufficiently small $\theta>0$
and sufficiently small $\eta>0$, there is $\delta>0$ such that the following holds uniformly in $\scx \in K_\vk(\e)$ and $[a,b]\subset K_{\vk'}(\e)$:
\begin{align*}
    \Pp^{\eps^\alpha \scx }\left\{Y_\tau\in\{R\}\times\eps^\beta[a,b]\right\}\eqpm &\Pp^{\eps^\alpha \scx }\left\{\scx +\eps^{1-\alpha}U^1_\htau\in \Bepm(\NN)\right\}
    \\
    &\pm \eps^{\delta}\Pp^{\eps^\alpha \scx }\left\{\scx +\eps^{1-\alpha}U^1_\htau\in \Bepm(\cZZ)\right\}\pm \superpoly,
\end{align*}
where $\tau$ is given in \eqref{eq:def_tau_exit_loc}, and, for $s\in\R$,
\begin{align}\label{eq:def_Bepm}
    \Bepm(s)= \eps^{-\alpha}RL^{-\frac{1}{\rho}}\left[\left|(\eps^\beta a-\eps s\mp\eps^{1+\eta})\vee 0\right|^\frac{1}{\rho},\ \left|(\eps^\beta b-\eps s\pm\eps^{1+\eta})\vee 0\right|^\frac{1}{\rho}\right]\subset \R,
\end{align}
and $\NN, \cZZ$ are centered Gaussian r.v.'s (defined on an extended probability space) such that  the random vector $(U_\zeta^1,\NN,\cZZ)$ has independent components.
The variance of $\NN$ equals $\cc_2$ given in~\eqref{eq:def_cc_2}, and the variance of $\cZZ$ does not depend on $x$.

\end{lemma}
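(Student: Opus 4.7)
\textbf{Proof plan for Lemma~\ref{lem:asymp_decouple}.}

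The plan is to first characterize the exit event algebraically via Duhamel, then decompose the noise term $N_\tau$ into two pieces by means of the quasi-semigroup identity \eqref{eq:OU-property-stopping} applied at the intermediate stopping time $\htau$, and finally approximate each piece separately. Starting from the identities \eqref{eq:dir-exit-rect}--\eqref{eq:expr-for-second-coord-at-exit} in Lemma~\ref{lem:hp-events-in-rectangle}, up to a w.l.p.\ exceptional set the event $\{Y_\tau\in\{R\}\times \eps^\beta[a,b]\}$ coincides with
\[
\{\scx+\eps^{1-\alpha}U^1_\tau\ge 0\}\cap\bigl\{LR^{-\rho}|\eps^\alpha\scx+\eps U^1_\tau|^\rho+\eps N_\tau\in\eps^\beta[a,b]\bigr\}.
\]
Solving for $\scx+\eps^{1-\alpha}U^1_\tau$, this event is exactly $\{\scx+\eps^{1-\alpha}U^1_\tau\in \widetilde B_\eps(N_\tau)\}$ with $\widetilde B_\eps(s)$ the same as $\Bepm(s)$ from~\eqref{eq:def_Bepm} but without the $\pm\eps^{1+\eta}$ cushion. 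Hence the argument reduces to replacing the pair $(U^1_\tau,N_\tau)$ by $(U^1_\htau,\NN)$ with $\NN$ independent and Gaussian with variance $\cc_2$, while absorbing all mismatches into the $\pm\eps^{1+\eta}$ widening in $\Bepm$ (for terms of order at most $\eps^{1+\eta}$) or into the secondary error term proportional to $\eps^\delta\Pp\{\scx+\eps^{1-\alpha}U^1_\htau\in\Bepm(\cZZ)\}$ (for terms of order $\eps^\delta$ times a Gaussian-supported comparison set).

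For the replacement $U^1_\tau\leadsto U^1_\htau$, the martingale $U^1_t-U^1_\htau=M^1_t-M^1_\htau+\eps(V^1_t-V^1_\htau)$ has, for $t\ge\htau$, quadratic variation bounded by $Ce^{-2\lambda\htau}$; since $|Y^1_\htau|=\eps^\theta$, on the relevant good event from Lemma~\ref{lem:second-coord-at-exit} we have $e^{-\lambda\htau}\lesssim l_\eps^{C}\eps^{\alpha-\theta}\cdot |\scx+\eps^{1-\alpha}U^1_\htau|$, and combined with the exponential martingale inequality (Lemma~\ref{lem:exp-marting-ineq}) and Lemma~\ref{lem:estimates-for-terms}\eqref{item:V-bounded}, $\eps^{1-\alpha}|U^1_\tau-U^1_\htau|$ is of order at most $\eps^{1-\theta}$ times a tame factor; for $\theta$ sufficiently small this is $\le\eps^{1+\eta}$ up to logarithmic corrections, so this discrepancy is absorbed into the widening of $\Bepm$.

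The crux is the analysis of $N_\tau$. Applying~\eqref{eq:OU-property-stopping} with $\tau'=\tau,\,\tau=\htau$ and using $N_t=S_t+\eps e^{-\mu t}V^2_t$, I would write
\[
N_\tau=e^{-\mu(\tau-\htau)}S_\htau+S_{\tau-\htau}(Y_\htau,\Theta^\htau W)+\eps e^{-\mu\tau}V^2_\tau.
\]
Since on the good event $e^{-\mu(\tau-\htau)}\asymp(\eps^\theta/R)^\rho$ and $S_\htau$, $V^2_\tau$ are tame (Lemmas~\ref{lem:estimates-for-terms}, \ref{lem:N_tau_tail}), the first and third summands contribute errors of order $\eps^{\theta\rho}$ and $\eps$ respectively in units of $N_\tau$, which translate into $\eps^{1+\theta\rho}$ and $\eps^2$ shifts inside $\Bepm$ and are therefore absorbed into the $\pm\eps^{1+\eta}$ cushion once $\eta<\theta\rho$. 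The middle term $\mathcal{N}_\eps:=S_{\tau-\htau}(Y_\htau,\Theta^\htau W)$ depends only on the Wiener increments after $\htau$ and hence, conditional on $\mathcal{F}_\htau$, is independent of $U^1_\htau$; its conditional distribution is a stochastic integral whose quadratic variation, using $Y^1_{\htau+s}\approx\eps^\theta e^{\lambda s}$ along the outgoing trajectory, is close to $\cc_2$ as $\eps\to 0$.

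The main obstacle is therefore the last step: showing that $\mathcal{N}_\eps$ is well approximated by a Gaussian r.v.\ $\NN$ with variance $\cc_2$, independent of $(U^1_\htau,\cZZ)$, uniformly over initial conditions, and in a form whose error can be written as $\eps^\delta$ times a probability of the same type evaluated at a surrogate Gaussian $\cZZ$. To handle this, I would run the same iterative Gaussian approximation scheme developed in Section~\ref{sec:Gaussian-approx} (Lemmas~\ref{lem:rough_approx_new} and~\ref{lem:iterative_est}) but applied to the process $S$ instead of $U^1$: first replacing the random time $\tau-\htau$ by the deterministic surrogate $T=\lambda^{-1}\log(R\eps^{-\theta})$ (the small-probability exceptional set being controlled by Lemma~\ref{lem:second-coord-at-exit} and Proposition~\ref{prop:long_exit_time_asym_linear_box_case}), then freezing the coefficient $F^2(Y_{\htau+s})$ along its deterministic trajectory $(Re^{-\lambda(T-s)},0)$, and finally invoking the density estimates of Section~\ref{section:density_est} together with the Markov property applied at~$\htau$. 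The auxiliary variable $\cZZ$ records the residual Gaussian coming from the replacement of $F^2$ by its frozen value along the trajectory; its variance, being determined purely by the deterministic reference trajectory, does not depend on $\scx$, in agreement with the statement. Combining these estimates with the algebraic manipulation of the first paragraph and tracking errors uniformly in $\scx\in K_\vk(\eps)$ and $[a,b]\subset K_{\vk'}(\eps)$ yields the asserted two-sided bound.
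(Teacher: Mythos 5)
Your overall route is essentially the paper's: split the evolution at $\htau$, observe that the post-$\htau$ contribution to the second coordinate is asymptotically an independent Gaussian with variance $\cc_2$, replace the random exit time by the deterministic surrogate $T_1=\lambda^{-1}\log(R\eps^{-\theta})$, invoke the Malliavin density estimates of Section~\ref{section:density_est} (with $\cZZ$ recording the swap error), and absorb the remaining small terms into the $\pm\eps^{1+\eta}$ cushion. Two remarks on execution. First, the iterative scheme of Section~\ref{sec:Gaussian-approx} is not needed for the post-$\htau$ piece: since $T_1\asymp\frac{\theta}{\lambda}l_\eps$ with $\theta$ small, a single application of the density comparison (Lemma~\ref{lem:density_est_R^2}~\eqref{item:den_est_4_R^2}) suffices, which is exactly what the paper does.

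Second, and this is a genuine gap as written: your replacement $U^1_\tau\leadsto U^1_\htau$ is justified by the claim that $\eps^{1-\alpha}|U^1_\tau-U^1_\htau|\lesssim\eps^{1-\theta}l_\eps^{C}$ ``is $\le\eps^{1+\eta}$ up to logarithmic corrections.'' That inequality is false: $\eps^{1-\theta}\gg\eps^{1+\eta}$ for every $\theta,\eta>0$. The two quantities live in different units — the discrepancy $\eps^{1-\theta}l_\eps^C$ is a shift of the test point $\scx+\eps^{1-\alpha}U^1$, whereas the $\eps^{1+\eta}$ cushion is a shift of the interval $[\eps^\beta a,\eps^\beta b]$ \emph{before} the $1/\rho$-th power and the $\eps^{-\alpha}$ rescaling in \eqref{eq:def_Bepm}. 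Converting the cushion into test-point units through $u\mapsto LR^{-\rho}|\eps^\alpha u|^\rho$ shows the widening of $\Bepm(s)$ near a boundary point $u_0$ is of order $\eps^{1+\eta}w^{-1}|u_0|$ with $w=\eps^\beta c-\eps s\lesssim\eps^\beta l_\eps^{C}$, while the discrepancy there is of order $\eps^{1-\theta}|u_0|l_\eps^{C}$ (since $e^{-\lambda\htau}$ is proportional to the test point); the absorption then requires $\eta+\theta<\beta$ and a careful case analysis near the kink where $w$ is small — none of which is in your sketch. The paper sidesteps this entirely: by conditioning on $Y_\htau$ via the strong Markov property (Lemmas~\ref{lem:unlikely_position_Y_htau}--\ref{lem:approximation_starting_at_ty}), the first-coordinate noise enters the final formula only through $Y^2_\htau=L\eps^{(\alpha-\theta)\rho}|\scx+\eps^{1-\alpha}U^1_\htau|^\rho+\eps N^2_\htau$, so $U^1_\htau$ appears by construction and no comparison of $U^1_\tau$ with $U^1_\htau$ is ever needed; the only cushion absorptions are the genuinely small shifts $\eps^{1+\theta\rho}N^2_\htau$ and $\eps^{1+\theta\rho}S_\htau$ in $Y^2$-units, for which $\eta<\theta\rho$ suffices. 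Either repair your conversion between units or reorganize as the paper does.
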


\begin{remark}\rm
In principle, the nonlinear dynamical system we are considering entangles the noisy perturbations in various directions in a sophisticated way. 
However, this key lemma describes the asymptotic disentanglement of noisy contributions in two coordinate directions and 
gives the asymptotics of the exit distribution in terms of
independent r.v.'s $U^1_\htau$ and $\NN$, 
 These two r.v.'s can be viewed as contributions from the white noise accumulated along two coordinate axes, $\NN$ being the distributional  limit of $N^2_\tau$. The asymptotic independence emerges since the determining noisy contributions along the first axis and the second axis 
 are mostly accumulated during two non-overlapping time intervals: 
 (i) during the motion along the stable manifold (until $\htau$), and (ii) during the motion along the unstable manifold (after $\htau$). 
\end{remark}

\begin{proof}[Proof of Proposition~\ref{prop:exit_loc}]
Let $\NN$ and $\cZZ$ be given in Lemma~\ref{lem:asymp_decouple} for $\alpha =1$ and $\beta =1$. 

The treatment for terms involving $\NN$ and $\cZZ$ is exactly the same since they are both independent centered Gaussian r.v.'s. Hence, we only present the argument for $\NN$ and estimate
\begin{align*}
    \Probxy{\scx +U^1_\htau \in \Bepm(\NN)}.
\end{align*}
We apply Corollary~\ref{cor:gaussian_approx} with 
$r=1$, $\xi=\frac{1}{\rho}-1$ to see that for some $\delta'>0$ 
\begin{align*}
    \sup_{\substack{\scx \in K_\vk(\e)\\ [a,b]\subset K_{\vk'}(\e)\\ |z|\leq  l_\eps}}\left|\Probxy{\scx +U^1_\htau\in \Bepm(z)}-\Prob{\scx +\frU\in \Bepm(z)} \right|=\smallo{\eps^{\frac{1}{\rho}-1+\delta'}}.
\end{align*}
Using the above display and the Gaussian tail of $\NN$, and integrating in $z$ with respect to the law of $\NN$, we obtain that
\begin{align*}\begin{split}
    \Probxy{\scx +U^1_\htau\in\Bepm(\NN)}&=  \Prob{\scx +\frU\in\Bepm(\NN)}+\smallo{\eps^{\frac{1}{\rho}-1+\delta'}}
\end{split}
\end{align*}
uniformly in $\scx \in K_\vk(\e)$ and $[a,b]\subset K_{\vk'}(\e)$.

Let us write
\begin{align*}
    \Prob{\scx +\frU\in \Bepm(\NN)}=\E \int_{\Bepm(\NN)}g_{\cc_1}\left(s-\scx \right)ds.
\end{align*}
We need the following estimate, the proof of which is postponed:
\begin{align}
    \sup_{\substack{\scx \in K_\vk(\e) \\ [a,b]\subset K_{\vk'}(\e)\\ z\in\R}}\left|\eps^{-(\frac{1}{\rho}-1)}\int_{\Bepm(z)}g_{\cc_1}\left(s-\scx \right)ds - RL^{-\frac{1}{\rho}}g_{\cc_1}\left(\scx \right) h(a,b;z)\right| \notag
    \\
    \leq C\eps^{\delta''}(|z|^p+1), \label{eq:integral_est}
\end{align}
for some $\delta'', p >0$, where $h$ is defined in~\eqref{eq:h(a,b;z)}.

Hence, the three displays above yield
\begin{align*}
    \sup_{\substack{\scx \in K_\vk(\e) \\ [a,b]\subset K_{\vk'}(\e)}}\left|\eps^{-(\frac{1}{\rho}-1)}\Probxy{\scx +U^1_\htau\in\Bepm(\NN)}-RL^{-\frac{1}{\rho}}g_{\cc_1}\left(\scx \right)\E h(a,b;\NN)\right|
    \\
    =\smallo{\eps^{\delta'\wedge\delta''}}.
\end{align*}
A similar result holds with $\NN$ replaced by $\cZZ$, which gives, due to $|h(a,b;z)|\leq C(|a|^\frac{1}{\rho}+|b|^\frac{1}{\rho}+|z|^\frac{1}{\rho})$ (see the definition of $h$ in \eqref{eq:h(a,b;z)}), that
\begin{align*}
    \sup_{\substack{x\in K_\vk(\e) \\ [a,b]\subset K_{\vk'}(\e)}} \Probxy{\scx +U^1_\htau\in \Bepm(\cZZ)}=\mathcal{O}\left(\eps^{\frac{1}{\rho}-1}l_\eps^\frac{\vk'}{\rho}\right).
\end{align*}
The above two displays together with Lemma~\ref{lem:asymp_decouple} imply the desired result.
\end{proof}

\begin{proof}[Proof of~\eqref{eq:integral_est}]
All statements below are understood to hold uniformly in $\scx \in K_\vk(\e)$ and $[a,b]\subset K_{\vk'}(\e)$.
Let us set
\begin{align*}
    B^\eps(z) = \eps^{\frac{1}{\rho}-1}RL^{-\frac{1}{\rho}}\left[\left|(a-z)\vee 0\right|^\frac{1}{\rho},\ \left|(b-z)\vee 0\right|^\frac{1}{\rho}\right]\subset \R.
\end{align*}
We shall compare the terms in \eqref{eq:integral_est} with
\begin{align*}
    \mathtt{I}= \eps^{-(\frac{1}{\rho}-1)}\int_{B^\eps(z)} g_{\cc_1}\left(s-\scx \right)ds.
\end{align*}
Using the definitions of $B^\eps_\pm(z)$ in~\eqref{eq:def_Bepm} (with $\alpha=\beta=1$), $B^\eps(z)$ above, $K_\vk(\eps)$ and $K_{\vk'}(\eps)$ in~\eqref{eq:def_K(eps)}, we have
\begin{align*}
    \left|\eps^{-(\frac{1}{\rho}-1)}\int_{\Bepm(z)}g_{\cc_1}\left(s-\scx \right)ds -\mathtt{I}\right|\leq C\eps^{-(\frac{1}{\rho}-1)}\left|\Bepm(z) \triangle B^\eps(z)\right|\leq C\eps^\delta(|z|^p+1),
\end{align*}
for some $\delta, p >0$.
The definition of $h(a,b;z)$ in~\eqref{eq:h(a,b;z)} implies that
\begin{align*}
     RL^{-\frac{1}{\rho}}g_{\cc_1}\left(\scx \right) h(a,b;z) = \eps^{-(\frac{1}{\rho}-1)}\int_{B^\eps(z)}g_{\cc_1}\left(\scx \right) ds.
\end{align*}
Due to the definitions of $B^\eps(z)$, $K_\vk(\eps)$ and $K_{\vk'}(\eps)$ and the fact that $|g_{\cc_1}\left(\scx \right) -g_{\cc_1}\left(s-\scx \right) |\leq C|s|$, we obtain, for some $\delta',p'>0$,
\begin{align*}
    \left|\mathtt{I}- \eps^{-(\frac{1}{\rho}-1)}\int_{B^\eps(z)}g_{\cc_1}\left(\scx \right)ds\right|\leq C\int_{B^\eps(z)}\eps^{-(\frac{1}{\rho}-1)}|s|ds\leq C\eps^{\delta'}(|z|^{p'}+1).
\end{align*}
Combining the above three displays, we arrive at~\eqref{eq:integral_est}.
\end{proof}

\subsubsection{Proof of Lemma~\ref{lem:asymp_decouple}}

Let us outline the plan. We will stop the process $Y$ at $\htau$ (given in~\eqref{eq:def_htau}) using the strong Markov property and show that from $\htau$ onward, the exit event can be approximated by a simpler event involving only $Y^2_\Tone$ (equivalently, $N^2_\Tone$ due to~\eqref{eq:SDE_near_a_saddle_point_after_duhamel2}) at a deterministic time $\Tone$ (Lemma~\ref{lem:approximation_starting_at_ty}); then we apply a density estimate result to show that this simpler event can be approximated by replacing $N^2_\Tone$ by $\NN$ (Lemma~\ref{lem:swap_by_gaussian}); finally, we undo the stopping at $\htau$ and complete the proof of Lemma~\ref{lem:asymp_decouple}.

In this proof, if not otherwise specified, all statements are understood to hold uniformly in $x\in K_\vk(\eps)$ and $[a,b]\subset K_{\vk'}(\eps)$.

Before proceeding, let us make a further notational simplification. It is easier to work with stopping times for exiting a vertical strip
as in \eqref{eq:exit-from-strip}. So, let us redefine 
\begin{align}\label{eq:tau=zeta}
    \tau = \zeta_{R,0,\eps}.
\end{align}
Due to \eqref{eq:tau=zeta_R,0,eps_whp}, working with this definition of $\tau$ instead of  the original one, we introduce a uniform probability error of order $o_e(1)$. Therefore, although we prove all the results in this section using the definition in~\eqref{eq:tau=zeta}, they 
also automatically
hold true for the original definition in~\eqref{eq:def_tau_exit_loc}.
 
In view of \eqref{eq:def_htau} and~\eqref{eq:tau=zeta}, we have 
\begin{align*}
    \htau\leq \tau.
\end{align*}
Using~\eqref{eq:SDE_near_a_saddle_point_after_duhamel1}, we have that, whenever $|Y^1_0|<\eps^\theta$,
\begin{align}\label{eq:htau_formula}
    \eps^\theta= e^{\lam \htau}|\eps^\alpha \scx +\eps U^1_\htau|, \quad\text{or equivalently,}\quad \htau=\frac{1}{\lam}\log\frac{\eps^\theta}{|\eps^\alpha \scx +\eps U^1_\htau|},
\end{align}
and, whenever $|Y^1_0|<R$, 
\begin{align}\label{eq:tau(=zeta)_formula}
    R= e^{\lam \tau}|Y_0^1 +\eps U^1_\tau|, \quad\text{or equivalently,}\quad \tau=\frac{1}{\lam}\log\frac{R}{|Y^1_0 +\eps U^1_\tau|}.
\end{align}

Let us disintegrate the distribution of $Y_\tau$ with respect to $Y_\htau$ using only the  typical values of the latter:

\begin{lemma}\label{lem:unlikely_position_Y_htau}
If
\begin{align}\label{eq:theta_0_condition}
    0<\vartheta_0<\beta\wedge 1,
\end{align}
then, for sufficiently small $\theta>0$, the following holds uniformly in $\scx \in K_\vk(\e)$
and in $[a,b]\in K_{\vk'}(\e)$,
\begin{multline}\label{eq:Prob(Y^2_tau_in_eps[a,b])_approx_1}
\Pp^{\eps^\alpha \scx }\{Y_\tau\in\{R\}\times\eps^\beta [a,b]\}\\
=\E^{\eps^\alpha \scx }\left[\Probx{Y_\htau}{Y_\tau\in\{R\}\times\eps^\beta [a,b]}\ONE_{\{Y^1_\htau = \eps^\theta,\  |Y^2_\htau|<\eps^{\vartheta_0}\}}\right]+\superpoly.
\end{multline}
\end{lemma}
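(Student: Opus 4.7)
By the strong Markov property of the diffusion $Y$ applied at the stopping time $\htau$,
\[
\Pp^{\eps^\alpha \scx}\{Y_\tau \in \{R\}\times\eps^\beta[a,b]\}
= \E^{\eps^\alpha \scx}\bigl[\Pp^{Y_\htau}\{Y_{\tau'}\in\{R\}\times\eps^\beta[a,b]\}\bigr],
\]
where $\tau'$ denotes the exit time from $\fR$ for the process freshly started at $Y_\htau$. Since $|Y^1_\htau|=\eps^\theta$ $\Pp^{\eps^\alpha\scx}$-a.s., the right-hand side splits naturally; the main term on $\{Y^1_\htau=\eps^\theta,\,|Y^2_\htau|<\eps^{\vartheta_0}\}$ is the one to be kept, so it remains to bound
\begin{align*}
\mathtt{I} &= \E^{\eps^\alpha \scx}\bigl[\Pp^{Y_\htau}\{Y_{\tau'}\in\{R\}\times\eps^\beta[a,b]\}\,\ONE_{Y^1_\htau=-\eps^\theta}\bigr],\\
\mathtt{II} &= \E^{\eps^\alpha \scx}\bigl[\Pp^{Y_\htau}\{Y_{\tau'}\in\{R\}\times\eps^\beta[a,b]\}\,\ONE_{Y^1_\htau=\eps^\theta,\,|Y^2_\htau|\geq\eps^{\vartheta_0}}\bigr],
\end{align*}
and show that each is $\superpoly$ uniformly in $\scx\in K_\vk(\eps)$ and $[a,b]\subset K_{\vk'}(\eps)$.

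For $\mathtt{I}$, the plan is to bound the inner probability crudely by $\Pp^{Y_\htau}\{Y^1_{\tau'}=R\}$. On the event $\{Y^1_\htau=-\eps^\theta\}$, Duhamel's identity~\eqref{eq:SDE_near_a_saddle_point_after_duhamel1} applied to the process shifted by $\htau$ shows that the exit through the right side forces the corresponding auxiliary integral of the shifted Wiener process to exceed $\eps^{\theta-1}$, which is $\superpoly$ by the uniform Gaussian tail of Lemma~\ref{lem:estimates-for-terms}~\eqref{item:tail-of_U_1}, since $\theta<1$.

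For $\mathtt{II}$, the plan is to bound the inner probability by $1$ and then show $\Pp^{\eps^\alpha\scx}\{|Y^2_\htau|\geq\eps^{\vartheta_0}\}=\superpoly$. Using $Y^2_\htau=e^{-\mu\htau}L+\eps N_\htau$ together with the uniform tameness of $N_\htau$ under $\Pp^{\eps^\alpha\scx}$ (via Lemma~\ref{lem:estimates-for-terms}~\eqref{item:lemma:N^2_sigma} combined with the tameness of $\htau$), one has $|\eps N_\htau|<\eps^{\vartheta_0}/2$ w.h.p., since $\vartheta_0<1$. Consequently, up to a $\superpoly$ exceptional set, the event reduces to $\{\htau\leq \mu^{-1}\log(2L\eps^{-\vartheta_0})\}$, i.e., $\{\htau \leq \vartheta_0\mu^{-1}\,l_\eps+O(1)\}$. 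By formula~\eqref{eq:htau_formula} and rearrangement, this is equivalent to $|\eps^\alpha\scx+\eps U^1_\htau|\geq c_0\,\eps^{\theta+\vartheta_0/\rho}$ for some constant $c_0>0$. Taking $\theta$ sufficiently small so that $\theta+\vartheta_0/\rho<1$ and the tame term $|\eps^\alpha\scx|\leq\eps^\alpha l_\eps^\vk$ is dominated by $\eps^{\theta+\vartheta_0/\rho}$ uniformly in $\scx\in K_\vk(\eps)$, this forces $|U^1_\htau|\geq \tfrac{1}{2}c_0\,\eps^{\theta+\vartheta_0/\rho-1}$, which is $\superpoly$ by a second application of Lemma~\ref{lem:estimates-for-terms}~\eqref{item:tail-of_U_1}.

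The main subtlety is the coordinated choice of $\theta$: it must be small enough to simultaneously make the exponent $\theta+\vartheta_0/\rho-1$ strictly negative and to suppress the initial contribution $\eps^\alpha\scx$ uniformly over $\scx\in K_\vk(\eps)$, while the two Gaussian tail bounds must remain uniform in the initial condition of the fresh process (for $\mathtt{I}$) and in $\scx$ (for $\mathtt{II}$). These uniformities are built into the statements of Lemma~\ref{lem:estimates-for-terms}, and the quantitative coordination parallels that in the proofs of Lemma~\ref{lem:second-coord-at-exit} and Lemma~\ref{lem:exit_time_is_log_alpha<1}.
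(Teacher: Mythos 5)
Your decomposition into the main term plus $\mathtt{I}$ (wrong-side exit) and $\mathtt{II}$ (second coordinate too large at $\htau$) is the right structure, and your treatment of $\mathtt{I}$ is exactly the paper's argument. However, the treatment of $\mathtt{II}$ has a genuine gap: you bound the inner probability by $1$ and then claim $\Pp^{\eps^\alpha\scx}\{|Y^2_\htau|\geq\eps^{\vartheta_0}\}=\superpoly$. This is false in the relevant regime. Your own reduction shows the event is essentially $\{|\eps^\alpha\scx+\eps U^1_\htau|\geq c_0\eps^{\theta+\vartheta_0/\rho}\}$, and for this to be rare you need $\eps^\alpha l_\eps^\vk\ll\eps^{\theta+\vartheta_0/\rho}$, i.e.\ $\alpha>\theta+\vartheta_0/\rho$. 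Since $\alpha\leq 1$ and $\vartheta_0$ is only constrained by $\vartheta_0<\beta\wedge1$ while $\rho$ may be small (the case $\rho<1$ is precisely the one needed for Propositions~\ref{prop:exit_loc_beta<1} and~\ref{prop:exit_loc}, where later $\vartheta_0=\tfrac{3(\beta\wedge1)}{4}$ is taken), one can easily have $\vartheta_0/\rho>1\geq\alpha$, and no choice of small $\theta$ repairs this. In that regime the event is in fact \emph{typical}: $\htau\approx\tfrac{\alpha-\theta}{\lambda}l_\eps$, so $|Y^2_\htau|\approx L\eps^{(\alpha-\theta)\rho}\gg\eps^{\vartheta_0}$ whenever $(\alpha-\theta)\rho<\vartheta_0$.

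The fix — and the paper's actual argument — is that you must \emph{not} discard the exit constraint at time $\tau$. Conditioning on $Y_\htau$ with $|Y^2_\htau|\geq\eps^{\vartheta_0}$ and $|Y^1_\htau|=\eps^\theta$, the remaining evolution contracts the second coordinate only by the factor $e^{-\mu(\tau-\htau)}=R^{-\rho}|Y^1_\htau+\eps U^1_{\tau}|^\rho\approx\eps^{\theta\rho}$ (using \eqref{eq:tau(=zeta)_formula} for the restarted process), so on the joint event $Y^2_\tau\gtrsim\eps^{\theta\rho+\vartheta_0}$ up to tame noise $\eps N^2_\tau$. Since $\vartheta_0<\beta\wedge1$, choosing $\theta$ (and an auxiliary $\delta$) small enough that $\theta\rho+\vartheta_0<(\beta\wedge1)-\delta$ makes this incompatible with $Y^2_\tau\in\eps^\beta K_{\vk'}(\eps)$ unless $|Y^1_\htau+\eps U^1_\tau|$ is far below $\eps^\theta$, which forces $|U^1_\tau|\gtrsim\eps^{\theta-1}$ and is $\superpoly$ by Lemma~\ref{lem:estimates-for-terms}~\eqref{item:tail-of_U_1}. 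So the mechanism is the post-$\htau$ contraction measured against the target scale $\eps^\beta$, not the rarity of $\{|Y^2_\htau|\geq\eps^{\vartheta_0}\}$ by itself.
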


\begin{proof}
Let us first exclude unlikely values of $Y_\htau$ and prove the following:
\begin{gather}
\label{eq:unlikely_position_Y_htau--1}
        \Pp^{\eps^\alpha \scx }\{Y_\tau \in \{R\}\times\eps^\beta K_{\vk'}(\e),\ |Y^2_\htau|\geq \eps^{\vartheta_0}\}=\superpoly,\\\label{eq:unlikely_position_Y_htau--2} 
        \Pp^{\eps^\alpha \scx }\{Y_\tau \in \{R\}\times\eps^\beta K_{\vk'}(\e), \ Y^1_\htau = -\eps^\theta\}=\superpoly.
\end{gather}

Since $|Y^1_0|\leq\eps^\alpha l_\eps^\vk\leq \eps^\theta$ for sufficiently small $\eps$, we know that
\begin{align}\label{eq:Y^1_htau}
    |Y^1_\htau|= \eps^\theta.
\end{align}
To estimate $Y^2_\htau$, we use the strong Markov property and~\eqref{eq:SDE_near_a_saddle_point_after_duhamel2} to see
\begin{align}\label{eq:strong_markov_Y_htau}
\begin{split}
    &\Pp^{\eps^\alpha \scx}\{Y^2_\tau\in\eps^\beta K_{\vk'}(\e),\ |Y^2_\htau|\geq \eps^{\vartheta_0}\}=\E^{\eps^\alpha \scx}\left[\Probx{Y_\htau}{Y^2_\tau\in\eps^\beta K_{\vk'}(\e)}\ONE_{|Y^2_\htau|\geq \eps^{\vartheta_0}}\right]\\
    &=\E^{\eps^\alpha \scx}\left[\Probx{Y_\htau}{e^{-\mu\tau}Y^2_0+\eps N^2_\tau\in\eps^\beta K_{\vk'}(\e)}\ONE_{|Y^2_\htau|\geq \eps^{\vartheta_0}}\right].
\end{split}
\end{align}
Due to \eqref{eq:Y^1_htau}, we can use~\eqref{eq:tau(=zeta)_formula} to see that 
\begin{align}\label{eq:e^(-mutau)Y}
    \Probx{Y_\htau}{e^{-\mu\tau}Y^2_0+\eps N^2_\tau\in\eps^\beta   K_{\vk'}(\e) }= \Probx{Y_\htau}{R^{-\rho }|Y^1_0+\eps U^1_\tau|^\rho Y^2_0+\eps N^2_\tau\in\eps^\beta K_{\vk'}(\e)}.
\end{align}
We want to control $N^2_\tau$ in the above display using Lemma~\ref{lem:estimates-for-terms}~\eqref{item:lemma:N^2_sigma}. Using~\eqref{eq:tau(=zeta)_formula} with $Y_0 = z$ satisfying $|z^1|=\eps^\theta$, we have for $q>\theta\lam^{-1}$ and $\eps$ sufficiently small,
\begin{align*}
    \Probx{z}{\tau \geq q l_\eps}\leq \Probx{z}{|U^1_\tau|>\eps^{\theta-1}-R\eps^{q\lam-1}}=\superpoly,
\end{align*}
where we used Lemma~\ref{lem:estimates-for-terms}~\eqref{item:tail-of_U_1}. This along with Lemma~\ref{lem:estimates-for-terms}~\eqref{item:lemma:N^2_sigma} implies that
\begin{align*}
    \Probx{Y_\htau}{|N^2_\tau|>\eps^{-p}} =\superpoly
\end{align*}
for every $p>0$. Using this, $[a,b]\subset K_{\vk'}(\eps)$ and the fact that $|Y^2_0|\geq\eps^{\vartheta_0}$ holds a.s.\ under $\Pp^{Y_\htau}$ with $|Y^2_\htau|\geq\eps^{\vartheta_0}$, we bound the left-hand side of~\eqref{eq:e^(-mutau)Y} from above by
\begin{align*}
    \Probx{Y_\htau}{C|Y^1_0+\eps U^1_\tau|^\rho \eps^{\vartheta_0}\leq \eps^{(\beta\wedge1)-\delta}} + \superpoly,\quad\text{if }|Y^2_\htau|\geq\eps^{\vartheta_0},
\end{align*}
for arbitrary $\delta>0$.
Since $|Y_0^1|=\eps^\theta$ holds a.s.\ under $\Pp^{Y_\htau}$ due to ~\eqref{eq:Y^1_htau}, we use Lemma~\ref{lem:estimates-for-terms}~\eqref{item:tail-of_U_1} 
and~\eqref{eq:theta_0_condition} to see that, for sufficiently small $\theta$, we can choose $\delta$ so that
 the main term in this display can be bounded by
\begin{align*}
     \Probx{Y_\htau}{\eps^{-1}(\eps^\theta- C\eps^{\frac{1}{\rho}((\beta\wedge1)-\delta-\vartheta_0)})\leq |U^1_\tau|}=\superpoly,\quad\text{if }|Y^2_\htau|\geq\eps^{\vartheta_0}.
\end{align*}
Hence, the left-hand side of~\eqref{eq:e^(-mutau)Y} is $\superpoly$ when $|Y^2_\htau|\geq\eps^{\vartheta_0}$. Inserting this into~\eqref{eq:strong_markov_Y_htau}, we obtain
\begin{align*}\Pp^{\eps^\alpha \scx }\{Y^2_\tau\in\eps^\beta K_{\vk'}(\e),\ |Y^2_\htau|\geq \eps^{\vartheta_0}\}=\superpoly.
\end{align*}
and \eqref{eq:unlikely_position_Y_htau--1} follows. To prove \eqref{eq:unlikely_position_Y_htau--2}, we apply the strong Markov property:
\begin{align*}
    &\Pp^{\eps^\alpha \scx } \{Y^1_\tau=R,\ Y_\htau = -\eps^\theta\}\leq \E^{\eps^\alpha \scx }\left[\Probx{Y_\htau}{Y^1_\tau>0}\ONE_{Y^1_\htau = -\eps^\theta}\right]\\
    &\leq \E^{\eps^\alpha \scx }\Probx{Y_\htau}{-\eps^\theta+\eps U^1_\tau>0}\leq  \E^{\eps^\alpha \scx }\Probx{Y_\htau}{ U^1_\tau>\eps^{\theta-1}}=\superpoly,
\end{align*}
where we used~\eqref{eq:SDE_near_a_saddle_point_after_duhamel1} in the second estimate and Lemma~\ref{lem:estimates-for-terms}~\eqref{item:tail-of_U_1} in the last one.
Finally, applying the strong Markov property and 
relations~\eqref{eq:unlikely_position_Y_htau--1},~\eqref{eq:unlikely_position_Y_htau--2}, 
we see that uniformly in $x\in K_{\vk}(\e)$ and $[a,b]\in K_{\vk'}(\e)$,
\begin{align*}
\begin{split}
&\Pp^{\eps^\alpha \scx }\{Y_\tau\in\{R\}\times\eps^\beta [a,b]\}\\
&= \Pp^{\eps^\alpha \scx }\{Y_\tau\in\{R\}\times\eps^\beta [a,b],\ Y^1_\htau = \eps^\theta, \  |Y^2_\htau|<\eps^{\vartheta_0}\}+\superpoly\\
&=\E^{\eps^\alpha \scx }\left[\Probx{Y_\htau}{Y_\tau\in\{R\}\times\eps^\beta [a,b]}\ONE_{\{Y^1_\htau = \eps^\theta,\  |Y^2_\htau|<\eps^{\vartheta_0}\}}\right]+\superpoly,
\end{split}
\end{align*}
so \eqref{eq:Prob(Y^2_tau_in_eps[a,b])_approx_1} holds, and the proof is completed. \end{proof}

Now, we investigate the dynamics after $\htau$. Taking into account the indicator function in the above display, we study $\Probx{\tscx}{Y^2_\tau\in\eps^\beta[a,b]}$ for $[a,b]\subset K_{\vk'}(\e)$ and $\tscx \in \R^2$ satisfying
\begin{align}\label{eq:range_of_ty_tilde_y}
    \tscx^1=\eps^\theta,  \quad\quad |\tscx^2|<\eps^{\vartheta_0}.
\end{align}
For these values of $y$, due to~\eqref{eq:tau(=zeta)_formula}, we have $\Pp^\tscx$-a.s.
\begin{align}\label{eq:tau_formula_under_ty}
    \tau =\frac{1}{\lam}\log\frac{R}{|\tscx^1+\eps U^1_\tau|},
\end{align}
which is to be compared with the following deterministic time
\begin{align}\label{eq:def_Tone_T_1_range}
    \Tone = \Tone(\eps)=\frac{1}{\lam}\log\frac{R}{|\tscx^1|}=\frac{1}{\lam}\log\frac{R}{\eps^\theta}.
\end{align}
We emphasize that $\Tone$ is in fact independent of $\tscx$ under assumption~\eqref{eq:range_of_ty_tilde_y}. The next result shows that $\Tone$ is a good approximation of $\tau$ under $\Pp^{\tscx}$. 

\begin{lemma}\label{lem:approximation_starting_at_ty}
If $ \vartheta_0,\vartheta_1>0$ satisfy
\begin{align}\label{eq:parameter_condition_2}
    \vartheta_1<1,\qquad \vartheta_1+\vartheta_0>1,
\end{align}
then, for sufficiently small $\theta,\eta>0$, the following holds uniformly in~$\tscx$ satisfying~\eqref{eq:range_of_ty_tilde_y} and~$[a,b]\subset K_{\vk'}(\eps)$,
\begin{align*}
    \Probx{\tscx}{Y_\tau\in\{R\}\times\eps^\beta[a,b]}\eqpm \Probx{\tscx}{Y^2_\Tone\in\left[\eps^\beta a\mp\tfrac{\eps^{1+\eta}}{2},\, \eps^\beta b\pm\tfrac{\eps^{1+\eta}}{2}\right]}\pm\superpoly.
\end{align*}
\end{lemma}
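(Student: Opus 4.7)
The strategy is to show that on a high probability event, $\tau$ is within $\eps^{\vartheta_1}$ of the deterministic time $T_1$, and on that event the increment $Y^2_\tau-Y^2_{T_1}$ is bounded by $\tfrac{1}{2}\eps^{1+\eta}$. Once this is established, the events $\{Y^2_\tau\in\eps^\beta[a,b]\}$ and $\{Y^2_{T_1}\in[\eps^\beta a\mp \tfrac{1}{2}\eps^{1+\eta},\eps^\beta b\pm\tfrac{1}{2}\eps^{1+\eta}]\}$ differ by an event of probability $\superpoly$, which gives the claim.

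For the time concentration, identity~\eqref{eq:tau_formula_under_ty} together with~\eqref{eq:def_Tone_T_1_range} and $|\tilde{y}^1|=\eps^\theta$ yields
\[
\tau-\Tone=-\lam^{-1}\log\bigl|1+\eps^{1-\theta}\sgn(\tilde{y}^1)U^1_\tau\bigr|.
\]
Lemma~\ref{lem:estimates-for-terms}\eqref{item:tail-of_U_1} tells us $U^1_\tau$ is tame uniformly in the relevant initial conditions, so $\eps^{1-\theta}|U^1_\tau|$ is much smaller than $1$ w.h.p.\ provided $\theta<1$, and Taylor expansion gives $|\tau-\Tone|\le C\eps^{1-\theta}l_\eps^{C}$ w.h.p. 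Choosing $\theta$ small enough that $1-\theta>\vartheta_1$ yields $|\tau-\Tone|\le\eps^{\vartheta_1}$ w.h.p. Next, by~\eqref{eq:SDE_near_a_saddle_point_after_duhamel2},
\[
Y^2_\tau-Y^2_{\Tone}=\bigl(e^{-\mu\tau}-e^{-\mu \Tone}\bigr)\tilde{y}^2+\eps\bigl(N^2_\tau-N^2_{\Tone}\bigr).
\]
For the first (drift) term, $|e^{-\mu \Tone}|=R^{-\rho}\eps^{\rho\theta}$, $|\tilde{y}^2|<\eps^{\vartheta_0}$, and $|e^{-\mu\tau}-e^{-\mu \Tone}|\le C\eps^{-\mu|\tau-\Tone|}|\tau-\Tone|\le C\eps^{\vartheta_1}$ w.h.p.\ by Step 1, so the drift contribution is bounded by $C\eps^{\rho\theta+\vartheta_0+\vartheta_1}$ w.h.p. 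Since $\vartheta_0+\vartheta_1>1$ by~\eqref{eq:parameter_condition_2}, this is $o(\eps^{1+\eta})$ for $\eta$ small enough (and any $\theta\geq 0$).

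The main obstacle is bounding the noise term $\eps(N^2_\tau-N^2_{\Tone})$. Since $N_t=S_t+\eps e^{-\mu t}V^2_t$ and $V^2$ satisfies $\sup_{t\ge T}|V^2_t-V^2_T|\le Ce^{-\mu T}\cdot (\text{bounded})$ by Lemma~\ref{lem:estimates-for-terms}\eqref{item:V-bounded}, the contribution from $\eps V^2$ gives $O(\eps^{1+\rho\theta})$, which is absorbed. For the $S$-contribution, apply the semigroup identity~\eqref{eq:OU-property-stopping} with the pair $(\Tone\wedge\tau,\Tone\vee\tau)$ (both cases $\tau\lessgtr\Tone$ are treated analogously). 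This gives
\[
S_{\Tone\vee\tau}=e^{-\mu|\tau-\Tone|}S_{\Tone\wedge\tau}+S_{|\tau-\Tone|}\bigl(Y_{\Tone\wedge\tau},d(\Theta^{\Tone\wedge\tau}W)_\cdot\bigr),
\]
so $|S_\tau-S_{\Tone}|\le(1-e^{-\mu|\tau-\Tone|})|S_{\Tone\wedge\tau}|+|S_{|\tau-\Tone|}(Y_{\Tone\wedge\tau},\cdot)|$. The first summand is at most $C\eps^{\vartheta_1}|S_{\Tone\wedge\tau}|$ w.h.p., where $|S_{\Tone\wedge\tau}|$ is tame by Lemma~\ref{lem:estimates-for-terms}\eqref{item:sup_of-S-over-large-intervals}. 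For the second, condition on $\Tone\wedge\tau$ and the corresponding filtration, apply the strong Markov property, and note that the fresh OU-increment has quadratic variation $\le C|\tau-\Tone|\le C\eps^{\vartheta_1}$ (since $F^2$ is bounded and $e^{\mu s}$ is close to $1$ for $s\le\eps^{\vartheta_1}$); the exponential martingale inequality of Lemma~\ref{lem:exp-marting-ineq} then bounds the fresh piece by $C\eps^{\vartheta_1/2}l_\eps^C$ w.h.p. Hence $\eps|N^2_\tau-N^2_{\Tone}|\le C\eps^{1+\vartheta_1/2}l_\eps^C$ w.h.p., which is $o(\eps^{1+\eta})$ for $\eta<\vartheta_1/2$.

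Combining these bounds, we have on a w.h.p.\ event $|Y^2_\tau-Y^2_{\Tone}|<\tfrac{1}{2}\eps^{1+\eta}$, from which the sandwich inclusion and the two-sided comparison in the lemma follow immediately. The uniformity in $\tilde{y}$ satisfying~\eqref{eq:range_of_ty_tilde_y} and in $[a,b]\subset K_{\vk'}(\eps)$ is preserved throughout, since all the auxiliary tameness estimates from Lemma~\ref{lem:estimates-for-terms} hold uniformly in the initial condition. The delicate part is ensuring the localization of $\tau$ near $\Tone$ is strong enough (controlled by $\vartheta_1$) to dominate simultaneously the drift and noise errors under the constraint $\vartheta_0+\vartheta_1>1$, which forces $\theta$ and $\eta$ to be chosen only after $\vartheta_0$ and $\vartheta_1$ are fixed.
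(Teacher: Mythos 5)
Your proof follows essentially the same route as the paper's: localize $\tau$ within $\eps^{\vartheta_1}$ of $\Tone$ using the explicit formula for $\tau$ and the tameness of $U^1_\tau$ (which is where $\vartheta_1<1$ enters, via $\theta<1-\vartheta_1$), then split $Y^2_\tau-Y^2_{\Tone}$ into the drift part, controlled by $\vartheta_0+\vartheta_1>1$, and the noise part, controlled by the exponential martingale inequality over a window of length $O(\eps^{\vartheta_1})$, yielding the same constraint $\eta<\vartheta_1/2$. The only technical variation is in the noise increment: the paper truncates $\tau$ to $\overline\tau\in[\Tone-\eps^{\vartheta_1},\Tone+\eps^{\vartheta_1}]$ and bounds the increments of $M^2$ over that deterministic interval directly, whereas you invoke the semigroup identity \eqref{eq:OU-property-stopping} at the stopping time $\Tone\wedge\tau$ together with the strong Markov property; both are valid. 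One small omission: the two-sided comparison is for the event $\left\{Y_\tau\in\{R\}\times\eps^\beta[a,b]\right\}$, not merely $\left\{Y^2_\tau\in\eps^\beta[a,b]\right\}$, so the lower-bound direction also needs $Y^1_\tau=R$ w.h.p.; this is immediate from $\tscx^1=\eps^\theta>0$ and Lemma~\ref{lem:estimates-for-terms}~\eqref{item:tail-of_U_1} (as in the first display of the paper's proof), but it should be stated.
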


\begin{proof}
In this proof, if not otherwise specified, all statements are understood to hold uniformly in $\tscx$ satisfying \eqref{eq:range_of_ty_tilde_y} and $[a,b]\subset K_{\vk'}(\eps)$.
Since $Y^1_0=\tscx^1=\eps^\theta$ holds a.s.\ under~$\Pp^{\tscx}$, using \eqref{eq:SDE_near_a_saddle_point_after_duhamel1} and Lemma~\ref{lem:estimates-for-terms}~\eqref{item:tail-of_U_1}, we obtain
\begin{align*}
    \Probx{\tscx}{Y^1_\tau \neq R}\leq \Probx{\tscx}{\eps^\theta+\eps U^1_\tau <0}\leq\Probx{\tscx}{|U^1_\tau| >\eps^{\theta-1}}=\superpoly.
\end{align*}
The desired result will follow once we show that, for all sufficiently small $\eta>0$, 
\begin{align}\label{eq:Y_tau_Y_Tone_comparison}
    \Probx{\tscx}{|Y^2_\tau-Y^2_\Tone|> \tfrac{1}{2}\eps^{1+\eta}}=\superpoly.
\end{align}

To show~\eqref{eq:Y_tau_Y_Tone_comparison}, we start by controlling $\tau - \Tone$. Using~\eqref{eq:tau_formula_under_ty},~\eqref{eq:def_Tone_T_1_range},~\eqref{eq:range_of_ty_tilde_y}, Lemma~\ref{lem:estimates-for-terms}~\eqref{item:tail-of_U_1}, the fact the $e^s-1\geq s $,  and~\eqref{eq:parameter_condition_2}, we have
\begin{align*}
\begin{split}
        \Probx{\tscx}{\tau - T_1 \geq \eps^{\vartheta_1}}&\leq \Probx{\tscx}{\log\frac{|\tscx^1|}{|\tscx^1+\eps U^1_\tau|}\geq \lam\eps^{\vartheta_1}}\leq \Probx{\tscx}{e^{\lam\eps^{\vartheta_1}}\eps|U^1_\tau|\geq (e^{\lam\eps^{\vartheta_1}}-1)|\tscx^1|}\\
    &\leq \Probx{\tscx}{e^{\lam\eps^{\vartheta_1}}|U^1_\tau|\geq \lam \eps^{\vartheta_1-1+\theta}}=\superpoly,
\end{split}
\end{align*}
if $\theta$ is small enough to ensure $\vartheta_1-1+\theta<0$. 
Similarly, 
\begin{align*}
\begin{split}
        \Probx{\tscx}{T_1-\tau  \geq \eps^{\vartheta_1}}&\leq \Probx{\tscx}{\log\frac{|\tscx^1+\eps U^1_\tau|}{|\tscx^1|}\geq \lam\eps^{\vartheta_1}}\\ &\leq \Probx{\tscx}{|U^1_\tau|\geq \lam \eps^{\vartheta_1-1+\theta}}=\superpoly.
\end{split}
\end{align*}
In conclusion, we have
\begin{align}\label{eq:|T_1-tau|_est}
    \Probx{\tscx}{|T_1-\tau|  \geq \eps^{\vartheta_1}}=\superpoly.
\end{align}
With this estimate at hand, let us compare $Y^2_\tau $ and $Y^2_\Tone$. 
Using~\eqref{eq:SDE_near_a_saddle_point_after_duhamel2}, we have
\begin{align}\label{eq:|Y_tau-Y_Tone|_est}
    \Probx{\tscx}{|Y^2_\tau-Y^2_\Tone|\geq \tfrac{1}{2}\eps^{1+\eta}}\leq \Probx{\tscx}{|\tscx^2||e^{-\mu \tau}-e^{-\mu \Tone}|\geq \tfrac{1}{4}\eps^{1+\eta}}+ \Probx{\tscx}{|N^2_\tau - N^2_\Tone|\geq \tfrac{1}{4}\eps^{\eta}}.
\end{align}
Let us estimate the first term on the right of~\eqref{eq:|Y_tau-Y_Tone|_est}. On $\{|\Tone-\tau|< \eps^{\vartheta_1}\}$, we have 
\begin{align*}
    |e^{-\mu \tau}-e^{-\mu \Tone}| \leq \mu|\tau -\Tone|\leq \mu\eps^{\vartheta_1}.
\end{align*}
Hence, using~\eqref{eq:|T_1-tau|_est}, ~\eqref{eq:range_of_ty_tilde_y} and~\eqref{eq:parameter_condition_2}, we obtain
\begin{align}\label{eq:|e^(-mu_tau)-e^(-mu_Tzero)|_est}
\Probx{\tscx}{|\tscx^2||e^{-\mu \tau}-e^{-\mu \Tone}|\geq \tfrac{1}{4}\eps^{1+\eta}}\leq \Probx{\tscx}{\mu\eps^{\vartheta_1+\vartheta_0}\geq \tfrac{1    }{4}\eps^{1+\eta}}+\superpoly=\superpoly,
\end{align}
for sufficiently small $\eta>0$.

Then, we turn to the second term on the right of~\eqref{eq:|Y_tau-Y_Tone|_est}.
Due to~\eqref{eq:|T_1-tau|_est},
\begin{align}\label{eq:|N_tau-N_Tone|>0.5eps^eta}
    \Probx{\tscx}{|N^2_\tau - N^2_\Tone|\geq \tfrac{1}{4}\eps^{\eta}}\leq \Probx{\tscx}{|N^2_{\overline\tau} - N^2_\Tone|\geq \tfrac{1}{4}\eps^{\eta}}+\superpoly,
\end{align}
where we have set
\begin{align*}
    \overline{\tau }= (\tau \vee(\Tone -\eps^{\vartheta_1}))\wedge (\Tone +\eps^{\vartheta_1}).
\end{align*}
 Then, we write
\begin{multline}\label{eq:P(N)_split}
    \Probx{\tscx}{|N^2_{\overline\tau} - N^2_\Tone|\geq \tfrac{1}{4}\eps^{\eta}} \leq \Probx{\tscx}{e^{-\mu \Tone}|U^2_{\overline\tau} - U^2_\Tone|\geq \tfrac{1}{8}\eps^{\eta}}\\ + \Probx{\tscx}{|e^{-\mu {\overline\tau} }-e^{-\mu \Tone}||U^2_{\overline\tau}|\geq \tfrac{1}{8}\eps^{\eta}}.
\end{multline}
Due to \eqref{eq:def_V_M_U_N}, the first term on the right of \eqref{eq:P(N)_split} can be estimated as 
\begin{align*}
    & \Probx{\tscx}{e^{-\mu \Tone}|U^2_{\overline\tau} - U^2_\Tone|\geq \tfrac{1}{8}\eps^{\eta}} 
    \leq \Probx{\tscx}{e^{-\mu \Tone}|M^2_{\overline\tau} - M^2_{\Tone-\eps^{\vartheta_1}}|\geq \tfrac{1}{32}\eps^{\eta}} 
    \\
    &\qquad+ \Pp^{\tscx}\left\{e^{-\mu \Tone}|M^2_{\Tone} - M^2_{\Tone-\eps^{\vartheta_1}}|\geq \tfrac{1}{32}\eps^{\eta}\right\}
    + \Probx{\tscx}{e^{-\mu \Tone}|V^2_{\overline\tau} - V^2_\Tone|\geq \tfrac{1}{16}\eps^{\eta-1}}
    \\
    &\leq 2\Pp^{\tscx}\left\{\sup_{t\in[\Tone-\eps^{\vartheta_1},\Tone+\eps^{\vartheta_1}]}|M^2_t - M^2_{\Tone-\eps^{\vartheta_1}}|\geq \tfrac{1}{32}e^{\mu \Tone}\eps^{\eta}\right\}
    \\ &{\ }\qquad\qquad\qquad\qquad\qquad\qquad\qquad\qquad\qquad\qquad+ \Probx{\tscx}{|V^2_{\overline\tau} - V^2_\Tone|\geq \tfrac{1}{16}e^{\mu \Tone}\eps^{\eta-1}}.
\end{align*}
For $t\in[\Tone-\eps^{\vartheta_1},\Tone+\eps^{\vartheta_1}]$,
we have $\langle M^2\rangle_t- \langle M^2\rangle_{\Tone-\eps^{\vartheta_1}}\le Ce^{2\mu\Tone}\eps^{\vartheta_1}$ (see \eqref{eq:def_V_M_U_N}). Also, $|V^2_{\overline\tau} - V^2_\Tone|$ is bounded by $Ce^{\mu\Tone}\eps^{\vartheta_1}$. Using these and the exponential martingale inequality (Lemma~\ref{lem:exp-marting-ineq}), the above is $o_e(1)$ provided 
$\eta$ is small enough to guarantee
$2\eta-\vartheta_1<0$ and $\eta-1-\vartheta_1<0$.

The second term on the right of \eqref{eq:P(N)_split} can similarly be bounded from above by
\begin{align*}
    \Probx{\tscx}{\left(e^{-\mu (\Tone-\eps^{\vartheta_1}) }-e^{-\mu \Tone}\right)\sup_{t\in[\Tone-\eps^{\vartheta_1},\Tone+\eps^{\vartheta_1}]}|M^2_t|\geq \tfrac{1}{8}\eps^{\eta}}
    \\
    + \Probx{\tscx}{\left(e^{-\mu (\Tone-\eps^{\vartheta_1}) }-e^{-\mu \Tone}\right)|V^2_{\overline\tau}|\geq \tfrac{1}{8}\eps^{\eta-1}}.
\end{align*}
For $t\in[\Tone-\eps^{\vartheta_1},\Tone+\eps^{\vartheta_1}]$, $\langle M^2\rangle_t < Ce^{2\mu\Tone}$. In addition, $|V^2_{\overline\tau}|$ is bounded by $Ce^{\mu\Tone}$. Thus the the exponential martingale inequality (Lemma~\ref{lem:exp-marting-ineq}) and
\begin{align*}
    e^{-\mu (\Tone-\eps^{\vartheta_1}) }-e^{-\mu \Tone}\leq Ce^{-\mu T_1}\eps^{\vartheta_1}
\end{align*}
imply that both terms in the previous display are $o_e(1)$ 
provided $\eta>0$ is small enough to ensure $\eta-\vartheta_1<0$ and $\eta-1-\vartheta_1<0$.

In conclusion, for $\eta$ sufficiently small, we obtain that the left-hand sides in \eqref{eq:P(N)_split} and thus \eqref{eq:|N_tau-N_Tone|>0.5eps^eta} are $o_e(1)$, the latter of which combined with \eqref{eq:|e^(-mu_tau)-e^(-mu_Tzero)|_est} and \eqref{eq:|Y_tau-Y_Tone|_est} verifies \eqref{eq:Y_tau_Y_Tone_comparison}. This completes the proof.
\end{proof}

Let us now choose concrete values $\vartheta_0 = \frac{3(\beta\wedge 1)}{4}$ and $ \vartheta_1=1-\frac{\beta\wedge1}{4}$
satisfying~\eqref{eq:theta_0_condition} and~\eqref{eq:parameter_condition_2} thus making Lemmas~\ref{lem:unlikely_position_Y_htau} and~\ref{lem:approximation_starting_at_ty} applicable.
\medskip

Due to~\eqref{eq:SDE_near_a_saddle_point_after_duhamel2} and~\eqref{eq:def_Tone_T_1_range}, we have 
\begin{align*}
   Y^2_\Tone = e^{-\mu \Tone}\tscx^2+\eps N^2_\Tone=R^{-\rho }\eps^{\theta \rho} \tscx^2+\eps N^2_\Tone,\quad\text{$\Pp^\tscx$-a.s.}
\end{align*}
We define a family of sets $E^\eps_\pm(s)$ for $s\in\R$ by
\begin{align}\label{eq:def_A^eps_pm}
    E^\eps_\pm(s)= \left\{r\in\R:\ R^{-\rho }\eps^{\theta \rho} r+\eps s\in \left[\eps^\beta a\mp\tfrac{\eps^{1+\eta}}{2},\, \eps^\beta b\pm\tfrac{\eps^{1+\eta}}{2}\right] \right\},
\end{align}
which allows us to rewrite
\begin{align}\label{eq:equivalent_formulation_using_E^eps}
    \Probx{\tscx}{Y^2_\Tone\in\left[\eps^\beta a\mp\tfrac{\eps^{1+\eta}}{2},\, \eps^\beta b\pm\tfrac{\eps^{1+\eta}}{2}\right]} = \Probx{\tscx}{\tscx^2 \in E^\eps_\pm(N^2_\Tone)}.
\end{align}
We are suppressing the dependence of $E^\eps_\pm$ on $[a,b]$ in our notation.

Let us estimate the the error caused by replacing  $N^2_\Tone$ by a Gaussian r.v.\ in~\eqref{eq:equivalent_formulation_using_E^eps}. 

\begin{lemma}\label{lem:swap_by_gaussian} 
There are independent centered Gaussian r.v.'s $\NN$ and $\cZZ$ with constant variances such that \begin{align*}
\sup_{\substack{\tscx^1=\eps^\theta ,\, |\tscx^2|<\eps^{\vartheta_0}\\ [a,b]\subset \R}}\left|\Probx{\tscx}{\tscx^2 \in E^\eps_\pm(N^2_\Tone)}- \Prob{\tscx^2 \in E^\eps_\pm(\NN)}\right|
\leq \eps^{\delta} \Prob{\tscx^2 \in E^\eps_\pm(\cZZ)},
\end{align*}
for some $\delta>0$.
In addition, $\NN$ has variance $\cc_2$ given in~\eqref{eq:def_cc_2}.
\end{lemma}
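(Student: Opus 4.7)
The plan is to adapt the iterative Gaussian approximation scheme of Lemma~\ref{lem:iterative_est} to the functional $N^2_\Tone$, replacing it by a centered Gaussian of variance $\cc_2$ up to an error controlled by a dominating Gaussian probability. Writing $N^2_\Tone = S_\Tone + \eps e^{-\mu\Tone}V^2_\Tone$ via \eqref{eq:def_V_M_U_N}, the drift correction $\eps e^{-\mu\Tone}V^2_\Tone$ is uniformly bounded by $C\eps$ thanks to Lemma~\ref{lem:estimates-for-terms}~\eqref{item:V-bounded}, so, up to widening the error intervals in $E^\eps_\pm$ by $O(\eps)$ (easily absorbed into the $\pm\tfrac{1}{2}\eps^{1+\eta}$ slack by choosing $\eta$ small enough), the task reduces to the Gaussian approximation of $S_\Tone$ under $\Pp^\tscx$.

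First I would partition $[0,\Tone]$ into $N$ subintervals $[s_{k-1},s_k]$ of equal length, with $N$ chosen large enough that $\Tone/N \le \bar\theta\, l_\eps$ for the $\bar\theta$ of Lemma~\ref{lem:density_est_R^2}, and iterate the semigroup identity~\eqref{eq:OU-property-stopping} to decompose
\begin{align*}
  S_\Tone \;=\; \sum_{k=1}^N e^{-\mu(\Tone - s_k)}\,\widetilde S_k,
  \qquad \widetilde S_k := S_{\Tone/N}\bigl(Y_{s_{k-1}},\, d\Theta^{s_{k-1}}W\bigr).
\end{align*}
On an event of overwhelming probability on which $Y^1_s$ closely tracks the deterministic unstable trajectory $\eps^\theta e^{\lambda s}$ and $|Y^2_s|$ stays negligibly small (the argument is the one used to prove Lemma~\ref{lem:controlling-OU}~\eqref{part:along-unstable}, combined with the tail bound on $U^1$ from Lemma~\ref{lem:estimates-for-terms}~\eqref{item:tail-of_U_1}), I would perform a backward induction on $k$ from $N$ down to $1$: at each step, the density estimate Lemma~\ref{lem:density_est_R^2}~\eqref{item:den_est_2_R^2} allows one to replace $\widetilde S_k$, conditionally on $Y_{s_{k-1}}$, by a centered Gaussian whose conditional variance converges to $\int_{s_{k-1}-\Tone}^{s_k-\Tone} e^{2\mu u}|F^2(Re^{\lambda u},0)|^2\,du$ as $\eps\to 0$. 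Summing produces a single centered Gaussian whose variance converges to $\cc_2$ given by \eqref{eq:def_cc_2}, after the change of variables $u = s-\Tone$ along the deterministic trajectory; this yields the reference variable $\NN$.

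The per-step error produced by Lemma~\ref{lem:density_est_R^2}~\eqref{item:den_est_2_R^2} has the form $\eps^\delta$ times an integral against a Gaussian density; as in Lemma~\ref{lem:iterative_est}, this can be reinterpreted as $\eps^\delta$ times the probability that the same linear functional of $\tscx^2$ and a Gaussian $\cZZ$ lands in the target interval. Making the variance of $\cZZ$ slightly larger than the worst intermediate variance (and independent of $\NN$ on an extended probability space) absorbs all $N$ increments into the single dominating Gaussian probability $\Pp\{\tscx^2 \in E^\eps_\pm(\cZZ)\}$, yielding the announced bound.

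The main obstacle is the backward induction step: because $\Tone\sim\lambda^{-1}\log\eps^{-1}$ grows logarithmically in $\eps^{-1}$, one has to ensure that per-step Malliavin density errors of size $\eps^\delta$ accumulate additively rather than multiplicatively. This requires exploiting the exponential decay factor $e^{-\mu(\Tone-s_k)}$ in front of each $\widetilde S_k$ to damp errors propagated from earlier subintervals, and uniformly controlling $L^p$ moments of the conditioning variables $Y_{s_{k-1}}$ via Lemma~\ref{lem:estimates-for-terms}~\eqref{item:sup_of-N-over-large-intervals}. Verifying that the Malliavin covariance of each increment $\widetilde S_k$ is uniformly nondegenerate along the exponentially growing first coordinate (so that the constants in Lemma~\ref{lem:density_est_R^2}~\eqref{item:den_est_2_R^2} can be taken uniform in $k$ and $\eps$) is the most delicate technical point.
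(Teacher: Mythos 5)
Your target is right — replace $N^2_{\Tone}$ by a centered Gaussian of variance $\cc_2$ via a Malliavin density comparison and dominate the error by $\eps^\delta$ times a Gaussian probability — but the route you take rests on a misreading of the size of $\Tone$, and this leads you into an iteration that is both unnecessary and, as set up, would not close. By \eqref{eq:def_Tone_T_1_range}, $\Tone=\frac{1}{\lambda}\log(R\eps^{-\theta})=\frac{\theta}{\lambda}l_\eps+O(1)$, where $\theta$ is the free parameter controlling the intermediate stopping time $\htau=\zeta_{1,\theta,\eps}$; it is \emph{not} of order $\lambda^{-1}l_\eps$ with a fixed coefficient. The whole point of stopping at the strip of width $\eps^\theta$ rather than at $R$ is that one may choose $\theta\le\lambda\bar\theta$, so that $\Tone\le\bar\theta l_\eps$ and Lemma~\ref{lem:density_est_R^2}~\eqref{item:den_est_4_R^2} applies in a \emph{single} shot to the deterministic time $\Tone$, with starting point $\tscx$ satisfying $\tscx^1=\eps^\theta$, $|\tscx^2|<\eps^{\vartheta_0}$, and $\upsilon<\theta$: the error term $C(|\tscx^2|+\eps^\delta(1+\eps^{-\upsilon}|\tscx^1|))e^{-c|\z|^2}$ is then $\le\eps^{\delta'}e^{-c|\z|^2}$. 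One compares $\bZ^2_{\Tone}$ (whose variance is $\int_{-\Tone}^0 e^{2\mu s}|F^2(Re^{\lambda s},0)|^2ds$, polynomially close to $\cc_2$) with $\NN$, takes $\cZZ$ to have density proportional to the dominating Gaussian $e^{-\bar c|s|^2}$, and integrates over $\{s:\tscx^2\in E^\eps_\pm(s)\}$. No decomposition of $[0,\Tone]$, no induction, and no separate treatment of the drift $\eps e^{-\mu\Tone}V^2_{\Tone}$ is needed, since the density estimate is stated for the full process $N^2$.

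The backward induction you propose instead is not merely overkill; it runs into a concrete obstruction that you flag but do not resolve. The error in Lemma~\ref{lem:density_est_R^2}~\eqref{item:den_est_2_R^2}/\eqref{item:den_est_4_R^2} contains the factor $\eps^\delta(1+\eps^{-\upsilon}|y^1|)$, which is small only when the first coordinate of the conditioning point satisfies $|y^1|\lesssim\eps^{\upsilon}$. In your scheme the conditioning points $Y_{s_{k-1}}$ for $k$ near $N$ have first coordinate of order $\eps^\theta e^{\lambda s_{k-1}}$, i.e.\ of order $1$, so the per-step error is of order $\eps^{\delta-\upsilon}$, which is not small for the $\upsilon$ one is forced to use; the exponential damping $e^{-\mu(\Tone-s_k)}$ affects the \emph{variance contribution} of early increments, not the density-comparison errors of the late ones. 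You would also need to justify replacing the random conditional variances (which depend on the random path $Y^1_s$, not on $\eps^\theta e^{\lambda s}$) by their deterministic counterparts, which is exactly the content of the one-shot estimate you are trying to avoid. Finally, note that part~\eqref{item:den_est_2_R^2} concerns the first coordinate $U^1$ only; the relevant comparison for $N^2_{\Tone}$ is part~\eqref{item:den_est_4_R^2} for the joint vector $(U^1,N^2)_{\Tone}$ versus $\bZ_{\Tone}$, followed by marginalization.
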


\begin{proof}
Recalling the definition of $\Tone=\Tone(\eps)$ in~\eqref{eq:def_Tone_T_1_range}, we choose $\theta>0$ sufficiently small so that $\Tone \leq \bar\theta l_\eps$ for all small $\eps$, where $\bar\theta$ is given in Lemma~\ref{lem:density_est_R^2}. This allows us to apply Lemma~\ref{lem:density_est_R^2}~\eqref{item:den_est_4_R^2} with $\upsilon<\theta$ to see that there are constants $\delta,\delta',c>0$ such that, for all $\z\in \R^2$,
\begin{align}\label{eq:density_est_applied_in_aymptotic_indep}
\begin{split}
    &\sup_{\substack{\tscx^1=\eps^\theta \\ |\tscx^2|<\eps^{\vartheta_0}}}\left|\varphi^\tscx_{(U^1_\Tone,N^2_\Tone)}(\z)-\varphi^\tscx_{\bZ_\Tone}(\z)\right|
    \\
    &\leq \sup_{\substack{\tscx^1=\eps^\theta \\ |\tscx^2|<\eps^{\vartheta_0}}} C\left( |\tscx^2|+\eps^{\delta}\left(1+\eps^{-\upsilon}|\tscx^1|\right)\right)e^{-c|\z|^2} \leq \eps^{\delta'} e^{-c|\z|^2},
\end{split}
\end{align}
where $\bZ_t$ is defined in \eqref{eq:def_Z_R^2}. Note that $\bZ_t$ does not depend on $\tscx$ once we impose the constraint
$\tscx^1=\eps^\theta$, so we will write $\varphi_{\bZ_\Tone}$ instead of $\varphi^\tscx_{\bZ_\Tone}$.

Using $e^{\lam T_1}\eps^\theta = R$ (due to~\eqref{eq:def_Tone_T_1_range}) and a change of variables, we can get
\begin{align*}
    \E\left|\bZ^2_\Tone\right|^2 = \int_{-\Tone(\eps)}^0e^{2\mu s}\left|F^2(Re^{\lam s},0)\right|^2ds.
\end{align*}
Let $\NN$ be a centered Gaussian r.v.\ with variance $\cc_2$ given in~\eqref{eq:def_cc_2}. It can be easily checked that, for some $\delta'',c''>0$,
\begin{align*}
    \left|\varphi_{\bZ^2_\Tone}(s)-\varphi_{\NN}(s)\right|\leq \eps^{\delta''} e^{-c''|s|^2}, \quad s\in \R.
\end{align*}
Therefore, we conclude from this and~\eqref{eq:density_est_applied_in_aymptotic_indep} that, for some $\bar\delta,\bar c>0$,
\begin{align*}
    \sup_{\substack{\tscx^1=\eps^\theta \\ |\tscx^2|<\eps^{\vartheta_0}}}\left|\varphi^\tscx_{N^2_\Tone}(s)-\varphi_{\NN}(s)\right|\leq \eps^{\bar\delta} e^{-\bar c|s|^2}, \quad s\in \R.
\end{align*}
We emphasize that $\NN$ is independent of $\eps$, $\tscx$. Extending the probability space if necessary, we can assume that $\NN$ is independent of $Y$, and we can also take $\cZZ$ to be an independent centered Gaussian r.v.\ with density proportional to $ e^{-\bar c|\z|^2}$, $\z\in\R$. 
Then, using the above display  and integrating over the region $\{s\in\R: \tscx^2\in E^\eps_\pm(s)\}$, we obtain the desired result.
\end{proof}

Now let us combine the evolution before and after $\htau$. Recalling~\eqref{eq:Prob(Y^2_tau_in_eps[a,b])_approx_1}, we set 
\begin{align*}
    C^\eps = \{Y^1_\htau=\eps^\theta,\ |Y^2_\htau|<\eps^{\vartheta_0} \}=\{Y^1_\htau\geq0,\ |Y^2_\htau|<\eps^{\vartheta_0}\}.
\end{align*}
Hence, \eqref{eq:Prob(Y^2_tau_in_eps[a,b])_approx_1}, Lemma~\ref{lem:approximation_starting_at_ty}, \eqref{eq:equivalent_formulation_using_E^eps}, and Lemma~\ref{lem:swap_by_gaussian} imply
\begin{align}\label{eq:Prob(Y^2_tau_in_eps[a,b])_upper_and_lower_bound_with_constraint}
\begin{split}
    &\Pp^{\eps^\alpha \scx }\left\{Y^2_\tau\in\eps^\beta[a,b]\right\}
    \\
    &\eqpm \Pp^{\eps^\alpha \scx }\{Y^2_\htau \in E^\eps_\pm(\NN),\ C^\eps\}\pm \eps^{\delta} \Pp^{\eps^\alpha \scx }\{Y^2_\htau \in E^\eps_\pm(\cZZ),\ C^\eps\}\pm \superpoly.
\end{split}
\end{align}
The next result removes the constraint $\{|Y^2_\htau|<\eps^{\vartheta_0}\}$.

\begin{lemma}\label{lem:lift_constraint}
The following holds uniformly in $\scx \in K_\vk(\e)$ and $[a,b]\subset K_{\vk'}(\e)$:
\begin{align*}
    \Pp^{\eps^\alpha \scx }\left\{Y_\tau\in\{R\}\times\eps^\beta[a,b]\right\}&\eqpm \Pp^{\eps^\alpha \scx }\left\{Y^2_\htau \in E^\eps_\pm(\NN),\ Y^1_\htau\geq0\right\}\\
    &\pm \eps^{\delta} \Pp^{\eps^\alpha \scx }\left\{Y^2_\htau \in E^\eps_\pm(\cZZ),\ Y^1_\htau\geq0\right\}\pm\superpoly.
\end{align*}
\end{lemma}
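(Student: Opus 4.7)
The plan is to show that each term on the right-hand side of \eqref{eq:Prob(Y^2_tau_in_eps[a,b])_upper_and_lower_bound_with_constraint} differs from the corresponding term in the statement of Lemma~\ref{lem:lift_constraint} only by a superpolynomially small error. First I would observe that, since $\htau = \zeta_{1,\theta,\eps}$ is the exit time from the vertical strip $\{|y^1|\le\eps^\theta\}$, we have $|Y^1_\htau|=\eps^\theta$ and hence $\{Y^1_\htau\ge 0\}=\{Y^1_\htau=\eps^\theta\}$ almost surely under $\Pp^{\eps^\alpha \scx}$. Thus the only real task is to lift the constraint $\{|Y^2_\htau|<\eps^{\vartheta_0}\}$ inside $C^\eps$; that is, I must show
\begin{align*}
 \Pp^{\eps^\alpha \scx}\left\{Y^2_\htau\in E^\eps_\pm(\NN),\ Y^1_\htau=\eps^\theta,\ |Y^2_\htau|\ge\eps^{\vartheta_0}\right\}=\superpoly,
\end{align*}
and the analogous estimate with $\NN$ replaced by $\cZZ$, both uniformly in $\scx\in K_\vk(\eps)$ and $[a,b]\subset K_{\vk'}(\eps)$.

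To prove these estimates I would exploit the independence between $\NN$ (resp.\ $\cZZ$) and the process $Y$ granted by Lemma~\ref{lem:swap_by_gaussian}. Conditioning on $Y_\htau$, and recalling the definition of $E^\eps_\pm(s)$ in~\eqref{eq:def_A^eps_pm}, the event $\{Y^2_\htau\in E^\eps_\pm(\NN)\}$ translates into $\NN$ lying in the interval
\begin{align*}
 I_\eps(Y^2_\htau)=-R^{-\rho}\eps^{\theta\rho-1}Y^2_\htau+\eps^{-1}\left[\eps^\beta a\mp\tfrac{\eps^{1+\eta}}{2},\ \eps^\beta b\pm\tfrac{\eps^{1+\eta}}{2}\right],
\end{align*}
which has length at most $C\eps^{\beta-1}l_\eps^{\vk'}$ and is centered at the point $-R^{-\rho}\eps^{\theta\rho-1}Y^2_\htau$.

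The key numerical point is that, by the choices $\vartheta_0=\tfrac{3(\beta\wedge 1)}{4}$ and $\vartheta_1=1-\tfrac{\beta\wedge 1}{4}$ fixed just before the statement of the lemma, together with sufficiently small $\theta,\eta>0$, we have $\theta\rho+\vartheta_0-1<(\beta\wedge 1)-1\le 0$. Hence on the event $\{|Y^2_\htau|\ge\eps^{\vartheta_0}\}$ the center of $I_\eps(Y^2_\htau)$ has absolute value at least $R^{-\rho}\eps^{\theta\rho+\vartheta_0-1}$, which grows like a strictly negative power of $\eps$ and dominates the length of $I_\eps(Y^2_\htau)$. Since $\NN$ is a centered Gaussian with variance $\cc_2$ independent of $\eps$, standard Gaussian tail bounds give
\begin{align*}
 \Pp\{\NN\in I_\eps(Y^2_\htau)\}\le C\exp\left(-c\eps^{2(\theta\rho+\vartheta_0-1)}\right)=\superpoly,
\end{align*}
uniformly over all realizations of $Y^2_\htau$ with $|Y^2_\htau|\ge\eps^{\vartheta_0}$. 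Taking expectation over $Y_\htau$ and using Fubini yields the required superpolynomial bound for the $\NN$-term; the $\cZZ$-term is handled by the identical argument since $\cZZ$ is also centered Gaussian with constant variance. I do not expect any serious obstacle here: the content of the argument is essentially the large-deviation estimate on the independent Gaussian $\NN$, and the hypothesis that the exponent $\theta\rho+\vartheta_0-1$ is strictly smaller than $\beta-1$ was precisely the reason for the prior choice of $\vartheta_0$.
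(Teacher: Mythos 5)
Your proposal is correct and follows essentially the same route as the paper: both reduce the lemma to showing that the two right-hand terms of \eqref{eq:Prob(Y^2_tau_in_eps[a,b])_upper_and_lower_bound_with_constraint} change only by a $\superpoly$ error when the constraint $\{|Y^2_\htau|<\eps^{\vartheta_0}\}$ is dropped, and both hinge on the inequality $\theta\rho+\vartheta_0<\beta\wedge1$ guaranteed by \eqref{eq:theta_0_condition} together with the Gaussian tails of $\NN$ and $\cZZ$. The only (immaterial) difference is that you condition on $Y^2_\htau$ and bound $\Pp\{\NN\in I_\eps(Y^2_\htau)\}$ directly, whereas the paper first restricts $\NN$ to a set of magnitude $\eps^{-\delta'}$ and then observes that the resulting deterministic inequality $\eps^{\theta\rho+\vartheta_0}\le C\eps^{(\beta\wedge1)-\delta'}$ is impossible for small $\eps$.
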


\begin{proof}
Using the definition of $E^\eps_\pm$ in~\eqref{eq:def_A^eps_pm} and Gaussian tail of $\NN$, we have that, for sufficiently small $\delta'>0$,
\begin{align*}
    &\Pp^{\eps^\alpha \scx }\left\{Y^2_\htau\in E^\eps_\pm(\NN),\ Y^1_\htau\geq0,\ \left|Y^2_\htau\right|\geq \eps^{\vartheta_0}\right\}\\
    &\leq \Pp^{\eps^\alpha \scx }\left\{R^{-\rho }\eps^{\theta \rho} Y^2_\htau+\eps \NN\in\left[\eps^\beta a\mp\tfrac{\eps^{1+\eta}}{2},\, \eps^\beta b\pm\tfrac{\eps^{1+\eta}}{2}\right],\ \left|Y^2_\htau\right|\geq \eps^{\vartheta_0}\right\}\\
    &\leq \Pp^{\eps^\alpha \scx }\left\{\eps^{\theta \rho +\vartheta_0}\leq C\eps^{(\beta\wedge 1)-\delta'}\right\}+\superpoly=\superpoly.
\end{align*}
where the last equality is guaranteed by~\eqref{eq:theta_0_condition}.
Replacing $\NN$ in the above argument by $\cZZ$, we also have
\begin{align*}
    \Pp^{\eps^\alpha \scx }\left\{Y^2_\htau\in E^\eps_\pm(\cZZ),\ Y^1_\htau\geq0,\ \left|Y^2_\htau\right|\geq \eps^{\vartheta_0}\right\}=\superpoly.
\end{align*}
These two displays above together with~\eqref{eq:Prob(Y^2_tau_in_eps[a,b])_upper_and_lower_bound_with_constraint} yield the desired result.
\end{proof}

Then, we proceed to approximating $\Pp^{\eps^\alpha \scx }\{Y^2_\htau \in E^\eps_\pm(\NN),\ Y^1_\htau\geq0\}$ and \\ $\Pp^{\eps^\alpha \scx }\{Y^2_\htau \in E^\eps_\pm(\cZZ),\ Y^1_\htau\geq0\}$. The treatment is similar for both of them because they are both independent centered Gaussian r.v.'s.

The displays~\eqref{eq:SDE_near_a_saddle_point_after_duhamel2} and~\eqref{eq:htau_formula} imply that
\begin{align*}
    Y^2_\htau=e^{-\mu \htau}L+\eps N^2_\htau=L\eps^{(\alpha-\theta)\rho }\left|\scx +\eps^{1-\alpha}U^1_\htau\right|^\rho +\eps N^2_\htau,\quad\Pp^{\eps^\alpha \scx} \text{-a.s.}
\end{align*}
Using this and the definition of $E^\eps_\pm$ in \eqref{eq:def_A^eps_pm}, we have that
\begin{align}\label{eq:final_steo_asymp_indep}
\begin{split}
    &\Pp^{\eps^\alpha \scx }\left\{Y^2_\htau \in E^\eps_\pm(\NN),\ Y^1_\htau\geq0\right\}
    \\
    &=\Pp^{\eps^\alpha \scx }\left\{R^{-\rho }\eps^{\theta \rho} Y^2_\htau+\eps \NN\in \left[\eps^\beta a\mp\tfrac{\eps^{1+\eta}}{2},\, \eps^\beta b\pm\tfrac{\eps^{1+\eta}}{2}\right],\ Y^1_\htau\geq0\right\}
    \\
    &= \Pp^{\eps^\alpha \scx }\left\{L R^{-\rho }\eps^{\alpha\rho} \left|\scx +\eps^{1-\alpha}U^1_\htau\right|^\rho +R^{-\rho }\eps^{1+\theta \rho }N^2_\htau+\eps \NN\in\left[\eps^\beta a\mp\tfrac{\eps^{1+\eta}}{2},\, \eps^\beta b\pm\tfrac{\eps^{1+\eta}}{2}\right],\ Y^1_\htau\geq0\right\}.
\end{split}
\end{align}
We can apply Lemma~\ref{lem:N_tau_tail} to get that
\begin{align*}
    \sup_{\scx \in K_\vk(\e)}\Pp^{\eps^\alpha \scx }\left\{\left|R^{-\rho}\eps^{\theta \rho}N^2_\htau\right|>\tfrac{1}{2}\eps^\eta\right\}=\superpoly,
\end{align*}
for all $\eta>0$ satisfying $\eta<\theta\rho$.
This along with~\eqref{eq:final_steo_asymp_indep} yields that
\begin{align*}
    &\Pp^{\eps^\alpha \scx }\left\{Y^2_\htau \in E^\eps_\pm(\NN),\ Y^1_\htau\geq0\right\}\\
    &= \Pp^{\eps^\alpha \scx }\left\{LR^{-\rho }\eps^{\alpha\rho} \left|\scx +\eps^{1-\alpha} U^1_\htau\right|^\rho +\eps \NN\in\left[\eps^\beta a\mp\eps^{1+\eta},\, \eps^\beta b\pm\eps^{1+\eta}\right],\ Y^1_\htau\geq0\right\}+\superpoly.
\end{align*}
Lastly, due to~\eqref{eq:SDE_near_a_saddle_point_after_duhamel1},  
\[\left\{Y^1_\htau\geq 0\right\}\stackrel{\Pp^{\eps^\alpha x}}{=}\left\{\scx +\eps^{1-\alpha}U^1_\htau\geq 0\right\}.\] 
Using these estimates and Lemma~\ref{lem:lift_constraint}, we complete the proof of Lemma~\ref{lem:asymp_decouple}. \epf

\subsection{Typical exit locations}

\begin{proposition}\label{prop:typical}
Let $\alpha \in (0,1]$, $\rho>0$, $\alpha'=(\alpha\rho)\wedge 1$, $c = R^{-\rho}L$.
Let $\tau$ be defined by \eqref{eq:def_tau_exit_loc}.
Let $\frU$ and ~$\NN$ be centered independent Gaussian r.v.'s with variance $\cc_1$ and $\cc_2$ given in~\eqref{eq:def_cc_1} and~\eqref{eq:def_cc_2}, respectively. For $a,b,\scx\in\R$, set
\begin{align*}
    P^{\alpha,\rho}_\eps(\scx ,[a,b])= \Pp^{\eps^\alpha \scx }\left\{Y_\tau \in \{R\}\times \eps^{\alpha'}[a,b]\right\}.
\end{align*}
For every $\vk,\vk'>0$, the following hold for some $\delta>0$:
\begin{enumerate}
    \item \label{item:loc_lim_1} If $\rho<1$, then
    \begin{align*}
        \sup_{\substack{\scx \in K_\vk(\eps) \\ [a,b]\subset K_{\vk'}(\eps)}}\left|P^{\alpha,\rho}_\eps(\scx ,[a,b])-\Pp\left\{c\left|\scx +\eps^{1-\alpha}\frU\right|^\rho \in [a,b],\ \scx +\eps^{1-\alpha}\frU\geq 0\right\} \right|=\smallo{\eps^\delta}.
    \end{align*}
    
    \item \label{item:loc_lim_2} If $\rho= 1$, then
    \begin{align*}
        \sup_{\substack{\scx \in K_\vk(\eps) \\ [a,b]\subset K_{\vk'}(\eps)}}\Big|P^{\alpha,\rho}_\eps(\scx ,[a,b])-\Pp\left\{c\left|\scx +\eps^{1-\alpha}\frU\right|+\eps^{1-\alpha}\NN\in [a,b],\ \scx +\eps^{1-\alpha}\frU \geq 0\right\} \Big|=\smallo{\eps^\delta}.
    \end{align*}
    
    \item \label{item:loc_lim_3} If $\rho>1$ and $\alpha\rho\leq 1$, then
    \begin{align*}
        \sup_{\substack{\scx \in K_\vk(\eps) \\ [a,b]\subset K_{\vk'}(\eps)}}\left|P^{\alpha,\rho}_\eps(\scx ,[a,b])-\Pp\left\{c|\scx |^\rho+\eps^{1-\alpha\rho}\NN\in [a,b]\right\}\Pp\left\{\scx +\eps^{1-\alpha}\frU\geq 0\right\} \right|=\smallo{\eps^\delta}.
    \end{align*}
    
    \item \label{item:loc_lim_4} If $\rho>1$ and $\alpha\rho>1$, then
    \begin{align*}
        \sup_{\substack{\scx \in K_\vk(\eps) \\ [a,b]\subset K_{\vk'}(\eps)}}\left|P^{\alpha,\rho}_\eps(\scx ,[a,b])-\Pp\left\{\NN\in [a,b]\right\}\Pp\left\{\scx +\eps^{1-\alpha}\frU\geq 0\right\} \right|=\smallo{\eps^\delta}.
    \end{align*}

\end{enumerate}

\end{proposition}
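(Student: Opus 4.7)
The plan is to unify all four cases by applying Lemma~\ref{lem:asymp_decouple} with $\beta = \alpha'$ and a sufficiently small auxiliary parameter $\eta>0$, which reduces $P^{\alpha,\rho}_\eps(\scx,[a,b])$, up to an $\eps^\delta$ error times $\Pp^{\eps^\alpha\scx}\{\scx + \eps^{1-\alpha}U^1_\htau \in B^\eps_\pm(\cZZ)\}$ and an $o_e(1)$ remainder, to
\[
\Pp^{\eps^\alpha \scx}\left\{\scx + \eps^{1-\alpha}U^1_\htau \in B^\eps_\pm(\NN)\right\},
\]
where $B^\eps_\pm(s)$ is as in~\eqref{eq:def_Bepm} with $\beta = \alpha'$. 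By Lemma~\ref{lem:N_tau_tail}, $\tau = \zeta_{R,0,\eps}$ w.h.p.\ uniformly, so the Gaussian approximation of $U^1_\htau$ supplied by Lemma~\ref{lem:gaussian_approx} (applied conditionally on the independent Gaussian $\NN$, then integrated against the law of $\NN$ using its Gaussian tail) replaces $U^1_\htau$ by an independent centered Gaussian $\frU$ with variance $\cc_1$. The spurious $\cZZ$-error is bounded by the same procedure applied to $B^\eps_\pm(\cZZ)$: the resulting Gaussian probability is $O(1)$, giving a total contribution of $O(\eps^\delta)$. The $\pm\eps^{1+\eta}$ fuzz in $B^\eps_\pm$ is removed with Lemma~\ref{lem:gaussian_sym_diff}.

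What remains is purely algebraic manipulation of Gaussian probabilities. On the sign-event $\{\scx + \eps^{1-\alpha}\frU \geq 0\}$, inverting the monotone map $r \mapsto cr^\rho$ on $[0,\infty)$ rewrites $\scx + \eps^{1-\alpha}\frU \in B^\eps_\pm(\NN)$ as
\[
c\left(\scx + \eps^{1-\alpha}\frU\right)^\rho + \eps^{1-\alpha'}\NN \in \left[a \mp \eps^{1+\eta-\alpha'},\ b \pm \eps^{1+\eta-\alpha'}\right].
\]
In case~\eqref{item:loc_lim_1} ($\rho<1$) we have $1-\alpha' = 1-\alpha\rho > 0$, so $\eps^{1-\alpha'}\NN$ is a uniformly $o(\eps^{\eta'})$ shift for some $\eta'>0$ by the Gaussian tail of $\NN$, and is absorbed via Lemma~\ref{lem:gaussian_sym_diff}. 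In case~\eqref{item:loc_lim_2} ($\rho=1$), the formula already coincides with the target. In case~\eqref{item:loc_lim_4} ($\alpha\rho>1$, $\alpha'=1$), tameness of $\frU$ and polylogarithmic control of $\scx \in K_\vk(\eps)$ yield $c(\scx + \eps^{1-\alpha}\frU)^\rho = o(\eps^{\alpha\rho - 1 - \delta'})$ w.h.p., which is negligible on the scale $\eps^{1+\eta-\alpha'} = \eps^\eta$; only $\NN \in [a,b]$ and the sign condition survive, and by independence of $\NN$ and $\frU$ these factor as stated.

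The main obstacle is case~\eqref{item:loc_lim_3} ($\rho>1$, $\alpha\rho \leq 1$), where the target replaces $c(\scx + \eps^{1-\alpha}\frU)^\rho$ by $c|\scx|^\rho$ but $1-\alpha' = 1-\alpha\rho$ may equal $0$. The Taylor remainder $c(\scx + \eps^{1-\alpha}\frU)^\rho - c|\scx|^\rho$ is bounded on $\{\scx + \eps^{1-\alpha}\frU \geq 0\}$ by a polylogarithmic multiple of $\eps^{1-\alpha}$, while the density of the random shift $\eps^{1-\alpha\rho}\NN$ against which one integrates is of size $\eps^{-(1-\alpha\rho)}$; thus the probability error produced by this Taylor replacement is of order $\eps^{1-\alpha}\cdot\eps^{-(1-\alpha\rho)} = \eps^{\alpha(\rho-1)}$ (up to polylogs), and $\alpha(\rho-1)>0$ since $\rho>1$ and $\alpha>0$. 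Once $c(\scx + \eps^{1-\alpha}\frU)^\rho$ is replaced by $c|\scx|^\rho$ in the event, the condition on $\NN$ no longer involves $\frU$; independence of $\NN$ and $\frU$ then decouples the sign factor $\Pp\{\scx + \eps^{1-\alpha}\frU \geq 0\}$ from $\Pp\{c|\scx|^\rho + \eps^{1-\alpha\rho}\NN \in [a,b]\}$, yielding the product form of~\eqref{item:loc_lim_3}. The parameters $\theta,\eta$ in Lemma~\ref{lem:asymp_decouple} and the exponent $\delta$ in the statement are chosen simultaneously small enough that all intermediate errors close to $o(\eps^\delta)$.
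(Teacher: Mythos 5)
Your proposal is correct and follows essentially the same route as the paper's proof: Lemma~\ref{lem:asymp_decouple} to decouple the two noise contributions, Lemma~\ref{lem:gaussian_approx} applied conditionally on the independent Gaussian $\NN$ (and on $\cZZ$) to replace $U^1_\htau$ by $\frU$, and Lemma~\ref{lem:gaussian_sym_diff} together with the observation that $\alpha(\rho-1)>0$ to handle the replacement of $c(\scx+\eps^{1-\alpha}\frU)^\rho$ by $c|\scx|^\rho$ in case~\eqref{item:loc_lim_3}; the only organizational difference is that you also route case~\eqref{item:loc_lim_1} through the decoupling lemma, whereas the paper treats that case directly from the Duhamel representation and discards $\eps^{1-\alpha\rho}N^2_\tau$ by tameness, which amounts to the same estimates. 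One cosmetic slip worth fixing: your displayed reformulation omits the prefactor $\eps^{\alpha\rho-\alpha'}$ in front of the $\rho$-power term; it equals $1$ in cases~\eqref{item:loc_lim_1}--\eqref{item:loc_lim_3} but equals $\eps^{\alpha\rho-1}$ in case~\eqref{item:loc_lim_4}, and it is this factor --- not any decay of $c(\scx+\eps^{1-\alpha}\frU)^\rho$ itself, which is merely polylogarithmic --- that renders the term negligible there.
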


\begin{remark}\label{rem:loc_lim_3}
\rm
Sometimes, it is useful to replace $|\scx |^\rho$ in part~\eqref{item:loc_lim_3} by $|\scx +\eps^{1-\alpha}\frU|^\rho$. 
We claim that
\begin{multline*}
    \Big|\Pp\big\{c|\scx |^\rho+\eps^{1-\alpha\rho}\NN\in [a,b],\ \scx +\eps^{1-\alpha}\frU\geq 0\big\} 
    \\
    - \Pp\left\{c|\scx +\eps^{1-\alpha}\frU|^\rho+\eps^{1-\alpha\rho}\NN\in [a,b],\ \scx +\eps^{1-\alpha}\frU\geq 0\right\}\Big| =\smallo{\eps^{\delta'}}
\end{multline*}
for some $\delta'>0$, uniformly in $\scx \in K_\vk(\eps)$ and $[a,b]\subset K_{\vk'}(\eps)$. 
To see this, we first  restrict $\frU$ to $[-l_\e,l_\e]$, introducing a probability error of at most $o_e(1)$. Then,
 we rewrite thus modified probabilities above as Gaussian integrals, first integrating over $\NN$ and then over $\frU$.   For a given $\frU$, the Lebesgue measure of the symmetric difference of the domains of integration for $\NN$ is bounded by $l^{p'}_\eps \eps^{(1-\alpha)-(1-\alpha\rho)}=l^{p'}_\eps \eps^{\alpha(\rho-1)}=o(\eps^{\delta'})$ for some $p',\delta'>0$, uniformly in $\scx \in K_\vk(\eps)$ and $[a,b]\subset K_{\vk'}(\eps)$. The domains of integration for $\frU$ are always the same. Hence, the above display holds, implying the following version of the estimate in \eqref{item:loc_lim_3}:
\begin{multline*}
    \sup_{\substack{\scx \in K_\vk(\eps) \\ [a,b]\subset K_{\vk'}(\eps)}}\Big|P^{\alpha,\rho}_\eps(\scx ,[a,b])
    \\
    -\Pp\left\{c|\scx +\eps^{1-\alpha}\frU|^\rho+\eps^{1-\alpha\rho}\NN\in [a,b],\ \scx +\eps^{1-\alpha}\frU\geq 0\right\} \Big|=\smallo{\eps^\delta}.
    \end{multline*}
\end{remark}

\begin{proof}[Proof of Proposition~\ref{prop:typical}]
In this proof, all statements are understood to hold uniformly in $x\in K_\vk(\eps)$ and $[a,b]\subset K_{\vk'}(\eps)$. We also shorten ``w.h.p.\ under $\Pp^{\eps^\alpha x}$ uniformly in $x\in K_\vk(\eps)$'' into ``w.h.p.'' 
For brevity, we often write $P= P^{\alpha,\rho}_\eps(\scx ,[a,b])$. 
For $a,b\in\R$, we 
define \begin{align*}
    B_{a,b} = c^{-\frac{1}{\rho}}\left[(a\vee0)^\frac{1}{\rho},\, (b\vee0)^\frac{1}{\rho}\right].
\end{align*}
Note that $x \in B_{a,b}$ is equivalent to $x\geq 0$ and $cx^\rho \in [a,b]$.
For $a,b,h \in \R$, we introduce an $h$-perturbation of $B_{a,b}$ by:
\begin{align*}
    A^{\pm h}_{a,b} = c^{-\frac{1}{\rho}}\left[(a\vee0)^\frac{1}{\rho}\mp h,\, (b\vee0)^\frac{1}{\rho}\pm h\right],
\end{align*}
 Recall $\htau$ given in~\eqref{eq:def_htau} is controlled by the parameter $\theta \in(0,\alpha)$. Later, we will choose $\theta$ to be sufficiently small.

Part~\eqref{item:loc_lim_1}.
Note that in this case, we automatically have $\alpha'=\alpha\rho<1$. Lemma~\ref{lem:hp-events-in-rectangle} implies that 
\begin{align}
    &\Pp^{\eps^\alpha x}\left\{Y_\tau \in\{R\}\times \eps^{\alpha\rho} [a,b]\right\} \notag
    \\
    &= \Pp^{\eps^\alpha x}\left\{c\left|\scx +\eps^{1-\alpha}U^1_\tau\right|^\rho +\eps^{1-\alpha\rho} N^2_\tau \in [a,b],\ \scx +\eps^{1-\alpha} U^1_\tau\geq 0\right\}+o_e(1).\label{eq:exit_loc_equiv}
\end{align}
Using Lemma~\ref{lem:N_tau_tail} (with $\theta=0$ therein), we can choose $\delta'>0$ as small as needed so that $|N^2_\tau|<\eps^{-\delta'}$ w.h.p. Hence, due to~\eqref{eq:exit_loc_equiv},
\begin{align*}
    P\eqpm \Pp^{\eps^\alpha \scx }\left\{\scx +\eps^{1-\alpha}U^1_\tau \in B_{a\mp \eps^\upsilon, b\pm\eps^\upsilon}\right\}+o_e(1),
\end{align*}
where $\upsilon = 1- \alpha\rho-\delta'$.
Here, let us use \eqref{eq:tau=zeta} to redefine $\tau$ thus introducing a probability error of order $o_e(1)$. This allows us to apply Lemma~\ref{lem:gaussian_approx} to $\tau$.
Using the above display and Lemma~\ref{lem:gaussian_approx} with $R,0,0$ substituted for $r,\theta,\xi$ therein, we have that, for all $\eta \in(0,1)$,
\begin{align*}
    P\eqpm \Pp\left\{\scx +\eps^{1-\alpha}\frU \in A^{\pm\eps^\eta}_{a\mp\eps^\upsilon, b\pm\eps^\upsilon}\right\}+\smallo{\eps^\delta}.
\end{align*}
Note that $\Leb(A^{\pm\eps^\eta}_{a\mp\eps^\upsilon, b\pm\eps^\upsilon}\triangle B_{a,b})\leq C\eps^{\frac{\upsilon}{\rho}\wedge\eta}$. Due to $\rho <1$, by choosing $\delta'$ sufficiently small and $\eta$ sufficiently close to $1$, we can ensure $\frac{\upsilon}{\rho}\wedge\eta>1-\alpha$. Using Lemma~\ref{lem:gaussian_sym_diff}, we obtain
\begin{align*}
    \Pp\left\{\scx +\eps^{1-\alpha}\frU \in A^{\pm\eps^\eta}_{a\mp\eps^\upsilon, b\pm\eps^\upsilon}\right\} = \Pp\left\{\scx +\eps^{1-\alpha}\frU \in B_{a,b}\right\}+o(\eps^{\delta''})
\end{align*}
for some $\delta''>0$ completing the proof.

Part~\eqref{item:loc_lim_2}. Applying Lemma~\ref{lem:asymp_decouple} with $\beta=\alpha\rho$, we obtain
\begin{align*}
    P\eqpm\Pp^{\eps^\alpha \scx }\left\{\scx +\eps^{1-\alpha}U^1_\htau\in B_{a-\eps^{1-\alpha }(\NN\pm\eps^{\eta}),\ b-\eps^{1-\alpha }(\NN\mp\eps^{\eta})} \right\}+\smallo{\eps^\delta}
\end{align*}
for some $\eta,\delta>0$.
Then, applying Lemma~\ref{lem:gaussian_approx} with $1, 0$ substituted for $r, \xi$ therein, we obtain
\begin{align*}
    P\eqpm \Pp^{\eps^\alpha \scx }\left\{\scx +\eps^{1-\alpha}\frU\in A^{\pm}_\e\right\}+\smallo{\eps^{\delta'}},
\end{align*}
where
\[
A^\pm_\e= A^{\pm \eps^\upsilon}_{a-\eps^{1-\alpha }(\NN\pm\eps^{\eta}),\ b-\eps^{1-\alpha }(\NN\mp\eps^{\eta})} 
\]
for $\upsilon\in(0,1-\theta)$ to be chosen and some $\delta'>0$. Due to $\rho=1$, we have $\Leb(A^\pm_\eps\triangle B_{a-\eps^{1-\alpha }\NN, b-\eps^{1-\alpha}\NN})\leq C\eps^{\upsilon \wedge (1-\alpha+\eta )}$.
Choosing $\theta$ close to zero, we can ensure that $\upsilon$ is close to $1$ to ensure that the exponent satisfies $\upsilon \wedge (1-\alpha+\eta )>1-\alpha$.
Using Lemma~\ref{lem:gaussian_sym_diff} to estimate the difference between two Gaussian integrals, we obtain the
the desired result.

Part~\eqref{item:loc_lim_3}. Note that $\alpha<1$ is necessary for this case. Using Lemma~\ref{lem:asymp_decouple} with $\beta=\alpha\rho$, we get that
\begin{align*}
    P\eqpm\Pp^{\eps^\alpha \scx }\left\{\scx +\eps^{1-\alpha}U^1_\htau\in B_{a-\eps^{1-\alpha\rho }(\NN\pm\eps^{\eta}),\ b-\eps^{1-\alpha\rho }(\NN\mp\eps^{\eta})} \right\}+\smallo{\eps^\delta}
\end{align*}
for some $\eta,\delta>0$.
Due to Lemma~\ref{lem:estimates-for-terms}~\eqref{item:tail-of_U_1}, we have $|U^1_\htau|<\eps^{-\delta'}$ w.h.p.\ for $\delta'>0$ as small as needed. Using this and the independence of $\NN$, we have
\begin{align}
\label{eq:asymp_of_P}
    P\eqpm
    \Pp\left\{c|\scx|^\rho+\eps^{1-\alpha\rho}\NN\in [a_{\pm,\eps}, b_{\pm,\eps}]\right\}
    \Pp^{\eps^\alpha \scx }\left\{\scx + \eps^{1-\alpha}U^1_\htau\geq 0\right\}+\smallo{\eps^\delta},
\end{align}
where
\begin{align*}
a_{\pm,\eps}&= a\mp\eps^{1-\alpha\rho+\eta}-c((\scx\pm \eps^{1-\alpha-\delta'})\vee0)^\rho+c|\scx|^\rho,\\
b_{\pm,\eps}&= b\pm\eps^{1-\alpha\rho+\eta}-c((\scx\mp \eps^{1-\alpha-\delta'})\vee0)^\rho+c|\scx|^\rho.
\end{align*}
If $\scx\geq -\eps^{1-\alpha-\delta'}$, then, due to $\scx \in K_\vk(\eps)$ and $\rho> 1$, the Lebesgue measure of $[a,b]\triangle[a_{\pm,\eps},b_{\pm,\eps}]$ 
is bounded by $C\eps^{(1-\alpha\rho+\eta)\wedge(1-\alpha-\delta'')}$, where $\delta''>\delta'$ still can be made as small as needed. Due to $\rho>1$, the exponent is strictly larger than $1-\alpha\rho$ for sufficiently small $\delta''$. 
Applying Lemma~\ref{lem:gaussian_sym_diff},
we obtain that the first factor on the right of~\eqref{eq:asymp_of_P} is
\begin{align*}
    \left(\Pp\{c|\scx|^\rho +\eps^{1-\alpha\rho}\NN\in[a,b]\} \pm  o(\eps^{\delta'''})\right)\ONE_{[-\eps^{1-\alpha-\delta'},\infty)}(x) + \mathcal{O}(1)\ONE_{(-\infty,-\eps^{1-\alpha-\delta'})}(x)
\end{align*}
for some $\delta'''>0$. 

For the second factor on the right of \eqref{eq:asymp_of_P}, choosing $\vk''$ sufficiently large and using Lemma~\ref{lem:estimates-for-terms}~\eqref{item:tail-of_U_1}, we have
\begin{align*}
    \Pp^{\eps^\alpha \scx }\left\{\scx +\eps^{1-\alpha}U^1_\htau \geq0\right\} & = \Pp^{\eps^\alpha \scx }\left\{\scx +\eps^{1-\alpha}U^1_\htau \in\left[0,l^{\vk''}_\eps\right]\right\} + \Pp^{\eps^\alpha \scx }\left\{\eps^{1-\alpha}U^1_\htau >l^{\vk''}_\eps-\scx \right\}\\
    &= \Pp^{\eps^\alpha \scx }\left\{\scx +\eps^{1-\alpha}U^1_\htau \in\left[0,l^{\vk''}_\eps\right]\right\}+o_e(1).
\end{align*}
Invoking Lemma~\ref{lem:gaussian_approx} 
with $1,0$ substituted for $r,\xi$ therein, we get that, for arbitrary $\upsilon\in(0,1-\theta)$ to be chosen and some $\bar\delta,\hat\delta>0$,
\begin{align*}
    \Pp^{\eps^\alpha \scx }\left\{\scx +\eps^{1-\alpha}U^1_\htau \geq0\right\}= \Pp\left\{\scx +\eps^{1-\alpha}\frU\in [0\mp\eps^\upsilon,l^{\vk''}_\eps\pm \eps^\upsilon\right\}+\smallo{\eps^{\bar\delta}}\\
    = \Pp\left\{\scx +\eps^{1-\alpha}\frU \geq 0\right\}+\smallo{\eps^{\hat\delta}},
\end{align*}
where the last equality follows, once we  choose $\upsilon$ close enough to $1-\theta$ to ensure $\upsilon >1-\alpha$, from Lemma~\ref{lem:gaussian_sym_diff}, and the Gaussian tail of $\frU$. We also have
\begin{align*}
    \Pp\left\{\scx +\eps^{1-\alpha}\frU \geq 0\right\}\ONE_{(-\infty,-\eps^{1-\alpha-\delta'})}(\scx) = o_e(1),
\end{align*}
taking into account the Gaussian tail of $\frU$.
Combining the results on both factors in \eqref{eq:asymp_of_P} completes the proof of part~\eqref{item:loc_lim_3}.

Part~\eqref{item:loc_lim_4}. Applying Lemma~\ref{lem:asymp_decouple} with $\beta=1$, we obtain that, for some $\delta>0$,
\begin{align*}
    P&\eqpm 
    \Pp^{\eps^\alpha \scx }\left\{c\eps^{\alpha\rho-1}(\scx +\eps^{1-\alpha}U^1_\htau)^\rho + \NN \in [a\mp\eps^\eta,b\pm\eps^\eta],\ \scx +\eps^{1-\alpha}U^1_\htau\geq 0\right\} + \smallo{\eps^\delta}.
\end{align*}
Due to Lemma~\ref{lem:estimates-for-terms}~\eqref{item:tail-of_U_1}, we have $|\scx +\eps^{1-\alpha}U^1_\htau|\leq l^{\vk''}_\eps$ w.h.p.\ for some sufficiently large $\vk''>0$. This along with $\alpha\rho>1$ and the independence of $\NN$ implies that, for some $\eta'>0$,
\begin{align*}
    P\eqpm \Pp\left\{\NN\in \left[a\mp\eps^{\eta'}, b\pm\eps^{\eta'}\right]\right\}\Pp^{\eps^\alpha \scx }\left\{\scx +\eps^{1-\alpha}U^1_\htau \geq0\right\}+\smallo{\eps^\delta}.
\end{align*}
Due to Lemma~\ref{lem:gaussian_sym_diff}, the first factor on the right differs from $\Pp\{\NN\in [a, b]\}$ by an error term $o(\eps^{\delta'})$ for some $\delta'$. The second one can be shown, with an argument similar to the proof of part~\eqref{item:loc_lim_3}, to be $\Pp\{\scx +\eps^{1-\alpha}\frU\geq 0\}$ up to an $o(\eps^{\delta''})$ error for some $\delta''>0$. Combining these estimates, we obtain the desired result.
\end{proof}

\section{Density estimates}\label{section:density_est}

In this section, we prove Lemma~\ref{lem:density_est_R^2}, which has been used in Sections~\ref{sec:Gaussian-approx} and~\ref{sec:LLT}. We first introduce the setting for this lemma. 

Consider the process $Y_t$ in $\R^2$ given in \eqref{eq:def_SDE_near_a_saddle_point}. Recall the associated processes $U_t$ and $N_t$ defined in \eqref{eq:def_V_M_U_N}. For $y\in \R^2$ and $t\geq 0$, we define $\R^2$-valued Gaussian vectors $Z$ and $\bZ$ by
\begin{align}\label{eq:def_Z_R^2} \begin{split}
    Z^1_t & = \int_0^te^{-\lam  s}F^1_l\left(0,e^{-\mu s}y^2\right)dW^l_s,
    \\
    Z^2_t & = e^{-\mu t}\int_0^te^{\mu s}F^2_l\left(e^{\lam s}y^1, e^{-\mu s}y^2\right)dW^l_s,
    \\
    \bZ^1_t & = \int_0^te^{-\lam  s}F^1_l\left(0,0\right)dW^l_s,
    \\
    \bZ^2_t & = e^{-\mu t}\int_0^te^{\mu s}F^2_l\left(e^{\lam s}y^1,0\right)dW^l_s,
\end{split}    
\end{align}
where we suppressed the dependence on $y$ in the notation. 

Recall that, for $y\in\R^2$, the probability measure under which $Y_0 = y$ a.s.\ is denoted by $\Pp^y$. For a random vector $\mathcal X$, we denote its probability density function (with respect to the Lebesgue measure) under $\Pp^y$ by $\varphi^y_{\mathcal X}$. Since $\bZ^1_t$ is independent of $y$, we write its density simply as $\varphi_{\bZ^1_t}$.
\begin{lemma}\label{lem:density_est_R^2} 
There is $\bar\theta >0$ such that for each $\upsilon\in(0,1)$, there are constants $C, c,\delta>0$ such that, for $\eps$ sufficiently small and all $y\in \R^2$,
    \begin{enumerate}
        \item \label{item:den_est_1_R^2} $\left|\varphi^y_{U^{1}_{T(\eps)}}(s)-\varphi^y_{Z^{1}_{T(\eps)}}(s)\right|\leq C\eps^\delta\left(1+\eps^{-\upsilon}|y^{1}|\right) e^{-c|s|^2}$ for all $s \in \R$;
        \item \label{item:den_est_2_R^2} $\left|\varphi^y_{U^{1}_{T(\eps)}}(s)-\varphi_{\bZ^{1}_{T(\eps)}}(s)\right|\leq C \left( |y^{2}|+\eps^\delta\left(1+\eps^{-\upsilon}|y^{1}|\right)\right)e^{-c|s|^2}$ for all $s \in \R$;
        \item $\left|\varphi^y_{(U^{1},N^{2})_{T(\eps)}}(\z)-\varphi^y_{Z_{T(\eps)}}(\z)\right|\leq C \eps^\delta\left(1+\eps^{-\upsilon}|y^{1}|\right) e^{-c|\z|^2}$ for all $\z\in \R^2$;
        \item \label{item:den_est_4_R^2} $\left|\varphi^y_{(U^{1},N^{2})_{T(\eps)}}(\z)-\varphi^y_{\bZ_{T(\eps)}}(\z)\right|\leq C\left( |y^{2}|+\eps^\delta\left(1+\eps^{-\upsilon}|y^{1}|\right)\right)e^{-c|\z|^2}$ for all $\z\in\R^2$
    \end{enumerate}
    hold for all deterministic functions $T(\cdot)$ satisfying $1\leq T(\eps)\leq \bar \theta l_\eps $.
\end{lemma}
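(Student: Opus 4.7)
\textbf{Proof plan for Lemma~\ref{lem:density_est_R^2}.}

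The proof will rely on Malliavin calculus density estimates in the spirit of Bally--Caramellino \cite{bally2014}, adapted to the two-dimensional saddle setting as in the iterative scheme of~\cite{YB-and-HBC:10.1214/20-AAP1599}, \cite{YB-and-HBC:10.1214/20-AOP1479}. The common thread is a quantitative density comparison: if $F,G$ are non-degenerate in the Malliavin sense, then $|\varphi_F(\xi)-\varphi_G(\xi)|$ is bounded by some Malliavin--Sobolev norm of $F-G$ multiplied by a Gaussian weight. The four items of the lemma then reduce to showing that the $L^p$-type norms of $U^1_{T(\eps)}-Z^1_{T(\eps)}$, $U^1_{T(\eps)}-\bZ^1_{T(\eps)}$, $(U^1,N^2)_{T(\eps)}-Z_{T(\eps)}$, and $(U^1,N^2)_{T(\eps)}-\bZ_{T(\eps)}$, together with those of their Malliavin derivatives, are of order $\eps^\delta(1+\eps^{-\upsilon}|y^1|)$ or $|y^2|+\eps^\delta(1+\eps^{-\upsilon}|y^1|)$, as appropriate.

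The first step is a pathwise comparison of $Y_s$ to the deterministic saddle dynamics $(e^{\lam s}y^1,e^{-\mu s}y^2)$ on the time interval $[0,T(\eps)]$. Using the Duhamel formulas \eqref{eq:SDE_near_a_saddle_point_after_duhamel1}--\eqref{eq:SDE_near_a_saddle_point_after_duhamel2} and the uniform moment bounds on $\sup_{s\le T(\eps)}|U^1_s|$ and $\sup_{s\le T(\eps)}|N_s|$ from Lemma~\ref{lem:estimates-for-terms}, one obtains, for every $p\ge 1$,
\begin{align*}
\bigl\|Y^1_s - e^{\lam s}y^1\bigr\|_{L^p} &\le C\eps e^{\lam s},\qquad
\bigl\|Y^2_s - e^{-\mu s}y^2\bigr\|_{L^p} \le C\eps,
\end{align*}
uniformly in $s\le T(\eps)$. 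Writing the difference $U^1_{T(\eps)}-Z^1_{T(\eps)}$ as the stochastic integral of $e^{-\lam s}[F^1_l(Y_s)-F^1_l(0,e^{-\mu s}y^2)]$ against $dW^l_s$, plus the drift term $\eps V^1_{T(\eps)}$, and exploiting the $C^3_\mathrm{b}$ regularity of~$F$, yields via the Burkholder--Davis--Gundy inequality a bound of the form $C(\eps+\eps e^{\lam T(\eps)}|y^1|\cdot e^{-\upsilon\log\eps^{-1}})\le C\eps^{1-\bar\theta\lam}(1+\eps^{-\upsilon}|y^1|)$. The comparison $U^1-\bZ^1$ picks up an additional term coming from $F^1_l(0,e^{-\mu s}y^2)-F^1_l(0,0)$, which contributes $C|y^2|$ by Lipschitz continuity of $F^1$. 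An analogous analysis handles the $N^2$ component and thus the joint laws, making crucial use of the prefactor $e^{-\mu t}$ in $N_t$.

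The second step is to upgrade these $L^p$ estimates to density estimates using the Bally--Caramellino representation. The Gaussian variables $Z^1_{T(\eps)}$ and $\bZ^1_{T(\eps)}$ have variances bounded below thanks to uniform ellipticity of $\sigma\sigma^*$ and the identity $F(x)=(Df)(f^{-1}(x))\sigma(f^{-1}(x))$, hence their densities satisfy Gaussian upper bounds $\le Ce^{-c|\cdot|^2}$. The Malliavin covariance of $U^1_{T(\eps)}$, and jointly of $(U^1_{T(\eps)},N^2_{T(\eps)})$, can be shown non-degenerate by a perturbative argument: it is a small perturbation of the Gaussian covariance of $Z_{T(\eps)}$ by the previous step, so all negative moments of its determinant are bounded uniformly in $\eps$ and $y$, after rescaling by $\eps^2$. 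The Malliavin--Sobolev norms of the relevant random variables are controlled by the same $L^p$-type estimates above, applied now to first and second Malliavin derivatives, which in turn satisfy SDEs obtained by differentiating \eqref{eq:def_SDE_near_a_saddle_point}; the $C^3_\mathrm{b}$ assumption on $F,G$ is exactly what makes this closed. Combining these ingredients with the Bally--Caramellino estimate gives the four claimed bounds, with Gaussian factor $e^{-c|s|^2}$ (resp.~$e^{-c|\z|^2}$) and prefactor of the stated form.

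The main obstacle, and the reason $\bar\theta$ must be chosen small, is the interplay between the logarithmic time horizon $T(\eps)\le\bar\theta l_\eps$ and the exponential blow-up $e^{\lam T(\eps)}\sim\eps^{-\bar\theta\lam}$ coming from the unstable direction. Every estimate based on Gr\"onwall or BDG on $[0,T(\eps)]$ produces factors of this form, and negative moments of the Malliavin covariance determinant pick up similar exponential penalties. Fixing $\bar\theta$ strictly smaller than the reciprocal of finitely many constants determined by $\lam,\mu$, $\|F\|_{C^3_\mathrm{b}}$, and the ellipticity constant, ensures that all these penalties can be absorbed into $\eps^\delta$ with $\delta>0$, which is exactly the margin exploited throughout Sections~\ref{sec:Gaussian-approx}--\ref{sec:LLT}.
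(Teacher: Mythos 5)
Your overall architecture matches the paper's: both routes go through a Bally--Caramellino-type density comparison (the paper's Theorem~\ref{Thm:DensityDifference}), reducing the four bounds to (i) Malliavin--Sobolev estimates on the differences $U^1_T-Z^1_T$, $(U^1,N^2)_T-Z_T$, etc., (ii) Gaussian tail bounds for the comparison laws, and (iii) uniform bounds on negative moments of the Malliavin covariance determinant. Items (i) and (ii) of your sketch are essentially right, although your displayed chain of inequalities for the first step is not a correct computation: after Duhamel, the integrand $e^{-\lam s}\bigl(F^1(Y_s)-F^1(0,e^{-\mu s}y^2)\bigr)$ has the $e^{-\lam s}$ cancelling the $e^{\lam s}$ inside $Y^1_s$, so no factor $e^{\lam T(\eps)}|y^1|$ appears, and the factor $e^{-\upsilon\log\eps^{-1}}$ is inserted without justification. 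A correct direct route is to write $T|y^1|^2 = T\eps^{2\upsilon}\cdot(\eps^{-\upsilon}|y^1|)^2$ and absorb $T\eps^{2\upsilon}\le \bar\theta l_\eps\,\eps^{2\upsilon}$ into $\eps^\delta$; the paper instead runs a stopping-time argument (first exit of $Y^1$ from $[-\eps^\beta,\eps^\beta]$) together with the bound $\E e^{-p\lam\eta}\lesssim (\eps^{-\beta}|y^1|)^{p}+\eps^{(1-\beta)p}$, which is what produces the prefactor $1+\eps^{-\upsilon}|y^1|$.

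The genuine gap is in step (iii). You assert that the negative moments $\E|\det\sigma_{(U^1_T,N^2_T)}|^{-p}$ are bounded ``by a perturbative argument'' because $\sigma_{(U^1,N^2)_T}$ is $L^p$-close to the deterministic nondegenerate covariance of $Z_T$. This does not work: closeness in every $L^p$ only shows $\Pp\{\det\sigma\le c_0/2\}\le C_p\eps^{\delta p}$, i.e.\ a polynomially small exceptional event, and on that event $\det\sigma$ can still be arbitrarily small or zero, so $\E|\det\sigma|^{-p}$ cannot be controlled without an all-order small-ball estimate $\Pp\{\det\sigma\le\zeta\}\le C_p\zeta^p$. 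Establishing that estimate is the hardest part of the argument and occupies most of the paper's Section~\ref{section:density_est}: one factors $\sigma_{U_T}=\YY(T)\,\CC_T\,\YY(T)^\intercal$ with $\CC_T=\int_0^T\ZZ(s)A(s)A(s)^\intercal\ZZ(s)^\intercal ds$, reduces to the smallest eigenvalue of $\Lambda\CC_T\Lambda^\intercal$ via a quantitative eigenvalue lemma, and then proves a Norris-type small-ball bound for $\int_0^T|R_s|^2\,ds$ using the exponential martingale inequality and a delicate case analysis on the stable versus unstable index (this is where the rescaling matrix $\Lambda=\mathrm{diag}(1,e^{-\mu T})$ and the restriction $T\ge 1$ enter). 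Without this ingredient your proof does not close. A second, minor point: Theorem~\ref{Thm:DensityDifference} requires control of $\|\mathcal{X}_i\|_{3,\gamma,\mathcal{T}}$, i.e.\ third-order Malliavin derivatives, not just first and second as you state; this is precisely why the paper assumes $F,G\in C^3_{\mathrm b}$.
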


This lemma is a special case of a more general result, Lemma~\ref{lem:density_est}, in higher dimensions. Our goal is to prove Lemma~\ref{lem:density_est}. We start by describing the general setting. We will deduce Lemma~\ref{lem:density_est_R^2} from Lemma~\ref{lem:density_est} in the next subsection.

\subsection{General setting and main result}
Let $\nu, d$ be positive integers satisfying $\nu < d$, and let $\lam\in \R^d$ satisfy
\begin{align}\label{eq:lam_i}
    \lam^1>\lam^2>\dots >\lam^\nu>0>\lam^{\nu+1}>\dots>\lam^d,
\end{align}
so the origin is a saddle point of the vector field $x\mapsto (\lambda^i x^i)_{1\leq i\leq d}$. The coordinates $1,\ldots,\nu$  correspond to the unstable directions near the origin, and the remaining coordinates $\nu+1,\ldots,d$ correspond to the stable directions.

We consider the following SDE
\begin{equation}
 \label{eq:Y_SDE_before_Duhamel}
dY^i_t = \lam^i Y^i_t dt + \eps F^i_j(Y_t) dW^j_t + \eps^2G^i(Y_t)dt,\quad i =1,2,\dots, d,
\end{equation}
assuming that
\begin{gather}\label{eq:modified_F}
\begin{split}
    &c_0:=\min_{|u|=1,\ u\in\R^d}|u^\intercal F(x)|^2>0, \text{ for all }x\in \R^d; 
    \\
    & F, G \text{ and their derivatives up to the third order are bounded}.
    \end{split}
\end{gather}

We consider the initial conditions, for $y\in \R^d$, 
\begin{align}\label{eq:initial_Y_0}
Y_0= y.
\end{align}
By Duhamel's principle, we can solve \eqref{eq:Y_SDE_before_Duhamel} with~\eqref{eq:initial_Y_0} by 
\begin{equation}
\label{eq:Y_after_Duhamel}
Y^j_t = 
e^{\lj t}(y^j+\eps U^j_t ) =  e^{\lj t}y^j +\eps N^j_t,
\end{equation}
where
\begin{equation}
\label{eq:U}
U^j_t=M^j_t + \eps V^j_t, \quad N^j_t=e^{\lj t}U^j_t,
\end{equation}
and
\begin{align}
\label{eq:M}
M^j_t &= \int_0^t e^{-\lj s}F^j_l(Y_s)dW^l_s,\\
\label{eq:V}
V^j_t &= \int^t_0e^{-\lj s}G^j(Y_s)ds.
\end{align}
We emphasize that $U_t$, $N_t$, $M_t$, and $V_t$ depend on $y$ and $\eps$.

For $x \in \R^d, t\in\R$, we denote
\begin{align*}\begin{split}
        x^{\leq \nu} &= (x^1,\ x^2,\dots x^\nu)\in\R^\nu,\\
        x^{>\nu} &=   (x^{\nu+1},\ x^{\nu+2},\dots x^d)\in\R^{d-\nu},\\
        e^{\lam t}x &= (e^{\lj t}x^j)_{j=1}^d.
\end{split}
\end{align*}

Define 
\begin{align}\label{eq:def_Z}
\begin{split}
    Z^i_t&=
    \begin{cases}
    \int_0^te^{-\li s}F^i_l\left(0^{\leq \nu},(e^{\lam s}y)^{>\nu}\right)dW^l_s,\  &\for i\leq \nu,\\
    e^{\li t}\int_0^te^{-\li s}F^i_l\left(e^{\lam s}y\right)dW^l_s,\  &\for i>\nu,
    \end{cases}\\
    \bZ^i_t&=\begin{cases}
    \int_0^te^{-\li s}F^i_l(0)dW^l_s,\  &\for i\leq \nu,\\
    e^{\li t}\int_0^te^{-\li s}F^i_l((e^{\lam s}y)^{\leq\nu},0^{>\nu})dW^l_s,\  &\for i>\nu.
    \end{cases}
\end{split}    
\end{align}

For a r.v.\ $\mathcal{X}$ with values in a Euclidean space, its Lebesgue density, if exists, is denoted by $\varphi_\mathcal{X}$. Since $U_t$, $N_t$, $Z_t$ and $\bZ_t$ depend on $y$, we add a superscript $y$ to the density notation to emphasize this dependence. For example, we write the density of $U_t$ as $\varphi^y_{U_t}$. Since $\bZ^{\leq\nu}_t$ is independent of $y$, we denote the density of $\bZ^{\leq\nu}_t$ by $\varphi_{\bZ^{\leq \nu}_t}$.

\begin{lemma}\label{lem:density_est} 
Consider~\eqref{eq:Y_after_Duhamel} with initial condition~\eqref{eq:initial_Y_0}. Let  
     \begin{align}\label{eq:pp}
         \pp(x)=\sum_{j,k=1}^\nu x^\frac{\lj}{\lk},\quad \text{for }x\geq 0.
     \end{align} 
    Then there is $\bar{\theta} >0$ 
     such that for each $\upsilon\in(0,1)$, there are constants $C, c,\delta>0$ such that, for $\eps$ sufficiently small and for all deterministic functions $T(\cdot)$ satisfying
    \begin{align}\label{eq:T(eps)_range}
        1\leq T(\eps)\leq \bar\theta l_\eps,
    \end{align}
    the following hold:
    \begin{enumerate}
        \item \label{item:den_est_1} $\left|\varphi^y_{U^{\leq \nu}_{T(\eps)}}(x)-\varphi^y_{Z^{\leq \nu}_{T(\eps)}}(x)\right|\leq C\eps^\delta\left(1+\pp(\eps^{-\upsilon}|y^{\leq \nu}|)\right) e^{-c|x|^2}$, for all $x \in \R^\nu$ and $y\in \R^d$;
        \item \label{item:den_est_2} $\left|\varphi^y_{U^{\leq \nu}_{T(\eps)}}(x)-\varphi_{\bZ^{\leq \nu}_{T(\eps)}}(x)\right|\leq C \left( |y^{>\nu}|+\eps^\delta\left(1+\pp(\eps^{-\upsilon}|y^{\leq \nu}|)\right)\right)e^{-c|x|^2}$, for all $x \in \R^\nu$ and $y\in \R^d$;
        \item \label{item:den_est_3} $\left|\varphi^y_{(U^{\leq \nu},N^{>\nu})_{T(\eps)}}(x)-\varphi^y_{Z_{T(\eps)}}(x)\right|\leq C \eps^\delta\left(1+\pp(\eps^{-\upsilon}|y^{\leq \nu}|)\right) e^{-c|x|^2}$, for all $x , y\in \R^d$;
        \item \label{item:den_est_4} $\left|\varphi^y_{(U^{\leq \nu},N^{>\nu})_{T(\eps)}}(x)-\varphi^y_{\bZ_{T(\eps)}}(x)\right|\leq C\left( |y^{>\nu}|+\eps^\delta\left(1+\pp(\eps^{-\upsilon}|y^{\leq \nu}|)\right)\right)e^{-c|x|^2}$, for all $x , y\in \R^d$.
    \end{enumerate}
\end{lemma}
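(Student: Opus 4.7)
The plan is to use Malliavin calculus in the spirit of [bally2014] to obtain density estimates with Gaussian tails, and to compare densities by interpolating between the true processes and their Gaussian comparison processes. I would treat all four bounds in a unified way, deriving bounds \eqref{item:den_est_3}, \eqref{item:den_est_4} on the joint density in $\R^d$ first and reading off \eqref{item:den_est_1}, \eqref{item:den_est_2} by marginalization. The threshold $\bar\theta$ is to be chosen so that the exponentially growing factor $e^{\lambda^1 T(\eps)}\le \eps^{-\bar\theta\lambda^1}$ is still small when multiplied by $\eps^{1-\upsilon}$, for any prescribed $\upsilon\in(0,1)$; concretely we take $\bar\theta\lambda^1 < 1-\upsilon$, after which all the subsequent error bounds are powers of $\eps$ multiplied by polynomial expressions in $\pp(\eps^{-\upsilon}|y^{\le\nu}|)$.

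First, I would verify the Malliavin calculus prerequisites. Using the uniform ellipticity and boundedness assumptions \eqref{eq:modified_F}, the Malliavin derivatives of $(U^{\le\nu}_{T(\eps)}, N^{>\nu}_{T(\eps)})$ up to the order needed by the integration-by-parts formula have bounded moments (uniformly for $\eps$ small and $T(\eps)$ in the range \eqref{eq:T(eps)_range}), and the Malliavin covariance matrix $\gamma$ is non-degenerate with $\det\gamma^{-1}\in L^p$ for all $p$. The sub-Gaussian tails of $U^{\le\nu}_{T(\eps)}$ and $N^{>\nu}_{T(\eps)}$ needed to convert density bounds into Gaussian-tailed ones are supplied by Lemma~\ref{lem:estimates-for-terms}~\eqref{item:tail-of_U_1}, \eqref{item:sup_of-N-over-large-intervals}. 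The same holds for the Gaussian processes $Z_{T(\eps)}$ and $\bZ_{T(\eps)}$ directly from their explicit covariance structure.

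Second, I would apply a standard Bally--Caramellino type representation. For a generic vector of functionals $X,\widetilde X$ satisfying the previous hypotheses, one has a quantitative comparison
\begin{align*}
\bigl|\varphi^y_X(x)-\varphi^y_{\widetilde X}(x)\bigr|\le C\,\bigl(\|X-\widetilde X\|_{k,p}+\|\gamma_X-\gamma_{\widetilde X}\|_{k,p}\bigr)\,e^{-c|x|^2},
\end{align*}
where $\|\cdot\|_{k,p}$ are appropriate Malliavin Sobolev norms and the constant $C$ is controlled by the non-degeneracy of $\gamma_X,\gamma_{\widetilde X}$. The core of the proof then reduces to estimating $\|X-\widetilde X\|_{k,p}$ and the analogous covariance difference, where $X=(U^{\le\nu},N^{>\nu})_{T(\eps)}$ and $\widetilde X$ is either $Z_{T(\eps)}$ or $\bZ_{T(\eps)}$.

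Third, the heart of the argument is estimating these differences. Writing $F^i_l(Y_s)$ and comparing it term by term with the coefficients appearing in the definition \eqref{eq:def_Z} of $Z$, the discrepancy for $i\le\nu$ is
\begin{align*}
F^i_l(Y_s)-F^i_l\bigl(0^{\le\nu},(e^{\lambda s}y)^{>\nu}\bigr)
=\sum_{j\le\nu}\partial_j F^i_l\,Y^j_s+O\bigl(\eps|N^{>\nu}_s|\bigr)+O\bigl(|Y^{\le\nu}_s|^2\bigr),
\end{align*}
and $Y^j_s=e^{\lambda^j s}(y^j+\eps U^j_s)$ is bounded by $e^{\lambda^1 s}(|y^{\le\nu}|+\eps|U^{\le\nu}_s|)$. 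Integrating against $e^{-\lambda^i s}\,dW^l_s$ on $[0,T(\eps)]$, raising to powers and using the exponential martingale inequality together with Lemma~\ref{lem:estimates-for-terms}, the resulting norm is bounded by a finite sum of terms of the form $\eps^{1-\bar\theta(\lambda^j/\lambda^k)\cdot\lambda^1}\cdot|y^j/\eps^\upsilon|^{\lambda^j/\lambda^k}$ plus smaller terms; this is where the polynomial $\pp$ from \eqref{eq:pp} arises naturally, through the iterated self-referential structure of $U$ in $F(Y_s)$. The drift contribution $\eps V^i$ gives a factor of $\eps$ directly. For the transition from $Z$ to $\bZ$, the additional replacement of $(e^{\lambda s}y)^{>\nu}$ by $0$ costs one factor of $|y^{>\nu}|e^{\lambda^{\nu+1} s}$, which is dominated by $|y^{>\nu}|$. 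The same estimates transfer to the Malliavin derivatives and Malliavin covariance matrix, since these satisfy linear SDEs driven by the same coefficients (Jacobians of $F$, $G$).

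The main obstacle is the tension between the exponential growth of $Y^{\le\nu}$ and the logarithmic time horizon: any sloppy estimate loses a power of $e^{\lambda^1 T(\eps)}$ which can be as large as $\eps^{-\bar\theta\lambda^1}$, so care is needed to ensure that such factors are always paired with at least one full $\eps^{1-\upsilon}|y^{\le\nu}|$, which requires bookkeeping of Taylor remainders for $F$ combined with the explicit form \eqref{eq:Y_after_Duhamel} at each iteration. This book-keeping is precisely what forces the appearance of the polynomial $\pp$ defined in \eqref{eq:pp}: iterating the substitution of $Y^j$ by $e^{\lambda^j s}(y^j+\eps U^j_s)$ through each partial derivative of $F$ produces monomials in $|y^{\le\nu}|$ with exponents in the multiplicative semigroup generated by the ratios $\lambda^j/\lambda^k$. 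Once $\bar\theta$ is fixed so that $\bar\theta\lambda^1<1-\upsilon$, all resulting error terms are $o(\eps^\delta)$ for some $\delta>0$, completing the argument.
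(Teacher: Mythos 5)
Your overall strategy — a Bally--Caramellino type density comparison via Malliavin calculus, proving the joint bounds \eqref{item:den_est_3}, \eqref{item:den_est_4} and marginalizing, with $\bar\theta$ small enough to tame the factors $e^{\lam^1 T(\eps)}$ — is the same as the paper's, and the marginalization step and the sub-Gaussian tail inputs are fine. But there are two genuine gaps. First, you assert that the Malliavin covariance matrix of $(U^{\leq\nu},N^{>\nu})_{T(\eps)}$ has negative moments of its determinant bounded uniformly in $\eps$, in $T(\eps)\le\bar\theta l_\eps$, and in $y\in\R^d$, as a consequence of uniform ellipticity. This is not automatic and is the hardest part of the proof: the integrands $e^{-\li r}F^i_j(Y_r)$ degenerate across the spectrum of $\lam$, the stable components must be rescaled by $e^{\li T}$, and one needs a small-ball estimate $\Pp\{v\cdot(\Lambda\CC_T\Lambda^\intercal v)\le\zeta\}\le C_p\zeta^p$ uniformly over unit vectors $v$, over $y$, and over the growing time horizon. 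In the paper this occupies all of the Malliavin-matrix section and requires a delicate stopping-time/martingale argument (the events $B_0^{\zeta,\eps},B_1^{\zeta,\eps},B_2^{\zeta,\eps}$ and the contradiction argument around the process $R_t$). Your proposal gives no argument for this, and a routine citation of ellipticity does not supply the required uniformity.

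Second, your explanation of where $\pp(x)=\sum_{j,k\le\nu}x^{\lj/\lk}$ comes from does not work. Taylor-expanding $F^i_l(Y_s)$ and substituting $Y^j_s=e^{\lj s}(y^j+\eps U^j_s)$ produces only integer powers of $|y^{\le\nu}|$; no amount of iteration of that substitution yields the fractional exponents $\lj/\lk$. Moreover, since the lemma must hold for all $y\in\R^d$, the quadratic Taylor remainder $O(|Y^{\le\nu}_s|^2)$ integrated up to $T(\eps)$ is not controllable when $|y^{\le\nu}|$ is of order one: $|Y^{\le\nu}_s|$ then reaches order one well before $T(\eps)$ and the remainder bound is useless. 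The actual mechanism is different: one introduces the exit time $\eta^j=\inf\{t:|Y^j_t|\ge\eps^\beta\}$ (with $\beta=\upsilon/2$), uses the linear-with-cutoff bound $|H^i(t,Y_t)|\lesssim(|Y^{\le\nu}_t|+\eps|N^{>\nu}_t|)\wedge 1$, splits $\int_0^T e^{-2\li s}|H^i(s,Y_s)|^2\,ds$ at $\eta=\min_j\eta^j$, and on $[\eta,T]$ bounds the integral by $e^{-2\li\eta}$; the moment bound $\E e^{-q\eta}\lesssim\sum_j(\eps^{-\beta}|y^{\le\nu}|)^{q/\lj}+\sum_j\eps^{(1-\beta)q/\lj}$ with $q=p\li$ is the sole source of the exponents $\li/\lj$ and hence of $\pp$. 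Without this splitting your error estimates fail for $|y^{\le\nu}|$ bounded away from zero. A minor additional point: you choose $\bar\theta$ so that $\bar\theta\lam^1<1-\upsilon$, making $\bar\theta$ depend on $\upsilon$, whereas the statement requires $\bar\theta$ to be fixed before $\upsilon$; the dependence on $\upsilon$ should enter only through the choice of $\beta$ (and hence $\delta$), not through $\bar\theta$.
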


\subsection{Preliminaries}

Let us introduce the necessary notation from the Malliavin calculus.

For any $\mathcal{T}\in(0,\infty)$, we let $\Omega_\mathcal{T}$ be the standard Wiener space for $\R^d$-valued Wiener processes on $[0,\mathcal{T}]$. We also set
\begin{align}\label{eq:H_T}
    \mathscr{H}_\mathcal{T} = L^2\left([0,\mathcal{T}];\R^d\right)
\end{align}
with the inner product denoted by $\langle\cdot,\cdot\rangle_{\mathcal{H}_\mathcal{T}}$.
Note that $\{W(h)\}_{h\in\mathscr{H}}$ given by
\begin{align*}
    W(h) = \int_0^\mathcal{T}\sum_{i=1}^dh^i(s)dW^i_s,\quad h \in \mathscr{H}_\mathcal{T},
\end{align*}
is an isonormal Gaussian process (real-valued) indexed by $\mathscr{H}_\mathcal{T}$ (meaning that $W$ is a centered Gaussian process satisfying $\E W(h)W(h') =\langle h,h'\rangle_{\mathscr{H}_\mathcal{T}}$ for all $h,h'\in \mathscr{H}_\mathcal{T}$). For $p\in[1,\infty)$, let $L^p(\Omega_\mathcal{T};\mathscr{H}_\mathcal{T})$ be the set of $\mathscr{H}_\mathcal{T}$-valued random variables with finite norm $ (\E\|\cdot\|_{\mathscr{H}_\mathcal{T}}^p)^\frac{1}{p}$. Then, the Malliavin derivative operator is an unbounded operator $\D:L^p(\Omega;\R)\to L^p([0,\mathcal{T}];\mathscr{H}_\mathcal{T})$ defined initially for ``smooth'' random variables of the form
\begin{align*}
    \mathcal{X}=f(W(h_1),\dots,W(h_m))
\end{align*}
by
\begin{align*}
    \D \mathcal{X} = \sum_{i=1}^m \partial_i f(W(h_1),\dots,W(h_m))h_i,
\end{align*}
where $f:\R^m\to\R$ is smooth and compactly supported for some $m\in\N$. It is extended to a closed operator under the graph norm
\begin{align*}
    \|\mathcal{X}\|_{1,p,\mathcal{T}} = \left(\E|\mathcal{X}|^p+ \E\|\D\mathcal{X}\|_{\mathscr{H}_\mathcal{T}}^p\right)^\frac{1}{p}.
\end{align*}
We denote the domain of $\D$ by $\mathbb{D}^{1,p}_\mathcal{T}$. For each $n\in\N$, this construction can be extended to $\D:L^p(\Omega;\mathscr{H}_\mathcal{T}^{\otimes n})\to L^p(\Omega;\mathscr{H}_\mathcal{T}^{\otimes n+1})$ with norm
\begin{align*}
    \|\mathcal{X}\|_{1,p,\mathcal{T}} = \left(\E\|\mathcal{X}\|^p_{\mathscr{H}_\mathcal{T}^{\otimes n}}+ \E\|\D\mathcal{X}\|_{\mathscr{H}_\mathcal{T}^{\otimes n+1}}^p\right)^\frac{1}{p}.
\end{align*}
Here, we omitted $n$, the degree of the tensor product, in the notation for simplicity. In the same fashion, we denote the associated domain still by $\mathbb{D}^{1,p}_\mathcal{T}$. The degree of the tensor product will be clear from the context. For $k\in \N$, the $k$-th order derivative operator $\D^{(k)}$ can be defined inductively. Its domain is denoted by $\mathbb{D}^{k,p}_\mathcal{T}$ and the associated graph norm by $\|\cdot\|_{k,p,\mathcal{T}}$. In particular, it sends an $\mathscr{H}_\mathcal{T}^{\otimes n}$-valued random variable $\mathcal{X}$ in $\mathbb{D}^{k,p}_\mathcal{T}$ to an $\mathscr{H}_\mathcal{T}^{\otimes n+k}$-valued random variable $\D^{(k)}\mathcal{X}$, for $n\in\N\cup\{0\}$ with the understanding that $\mathscr{H}_\mathcal{T}^{\otimes 0}= \R$. 
Moreover, we have
\begin{align*}
    \|\mathcal{X}\|_{k,p,\mathcal{T}}= \left(\E\|\mathcal{X}\|_{\mathscr{H}^{\otimes n}_\mathcal{T}}^p+\sum_{i=1}^k\|\D^i \mathcal{X}\|^p_{\mathscr{H}^{\otimes n+i}_\mathcal{T}}\right)^\frac{1}{p}.
\end{align*}
It is clear that $\mathbb{D}^{k,p}_\mathcal{T}\subset \mathbb{D}^{k',p'}_\mathcal{T}$ for $p'\geq p$ and $k'\geq k$. For $k\in\N$, we set $\mathbb{D}^{k,\infty}_\mathcal{T}=\cap_{p\in[0,\infty)}\mathbb{D}^{k,p}_\mathcal{T}$.

We refer to \cite[Chapter~1]{nualart} for more details on the basics of Malliavin calculus. Later, we will also need results from \cite[Chapter~2]{nualart} on the application of the Malliavin calculus to solutions of SDE.

For an $\R^m$-valued random vector $\mathcal{X}$ satisfying $\mathcal{X}^i \in \mathbb{D}^{1,1}_\mathcal{T}$ for all $i=1,2,\dots,m$, the associated \textit{Malliavin matrix} of $\mathcal{X}$ is an $m\times m$ random matrix given by
\begin{align}\label{eq:def_malliavin_matrix}
    \sigma_{\mathcal{X}}=\left(\left\langle \D \mathcal{X}^i,\,\D\mathcal{X}^j\right\rangle_{\mathscr{H}_\mathcal{T}} \right)_{1\leq i,j\leq m}.
\end{align}
If the components of $\mathcal{X}$ are in $\mathbb{D}^{k,p}_\mathcal{T}$, we write $\|\mathcal{X}\|_{k,p,\mathcal{T}}=\sum_{i=1}^m \|\mathcal{X}^i\|_{k,p,\mathcal{T}}$.

Let us recall \cite[Theorem~2.14.B]{bally2014} (see also \cite[Theorem~2.4.6]{bally}) which estimates the difference between derivatives of two densities in terms of Sobolev norms and the Malliavin matrix. For our purposes, in our statement of this result, Theorem~\ref{Thm:DensityDifference} below, we simplify the conditions of the original theorem by setting the localization random variable~$\mathbf{\Theta}$ to be $1$, the derivative order $q=0$ (i.e., we compare densities themselves, without derivatives) and using Meyer's inequality (c.f. \cite[Theorem~1.5.1]{nualart}) to bound the Ornstein--Uhlenbeck operator. We stress that, although the conditions 
of  Theorem~2.14.B as it is stated in~\cite{bally2014} do not formally allow for $q=0$, that theorem is still
valid for this value of~$q$. In fact,  in~\cite{bally2014}, Theorem~2.14 is derived from Theorem~2.1 via an approximation argument. In turn, part B of Theorem 2.1 is restated and proved in the form of 
Theorem 3.10, where~$q$ is allowed to be $0$.

\begin{theorem}[\cite{bally2014}] \label{Thm:DensityDifference}
For $i=1,2$, let $\mathcal{X}_i$ be an $\R^d$-valued random vector with components in $\mathbb{D}^{3,\infty}_\mathcal{T}$ satisfying
$\E{(\det\sigma_{\mathcal{X}_i})^{-p} }<\infty$ for every $p>1$. Then, there exist positive constants $C,a,b, \gamma$ only depending on $d$ such that for all $x\in\R^d$
\begin{align*}
    |\varphi_{\mathcal{X}_1}(x)-\varphi_{\mathcal{X}_2}(x)| \leq & C \|\mathcal{X}_1-\mathcal{X}_2 \|_{2,\gamma,\mathcal{T}}\left(\prod_{i=1,2}\left(1\vee \E{(\det\sigma_{\mathcal{X}_i})^{-\gamma} }\right)\left(1+\| \mathcal{X}_i\|_{3,\gamma,\mathcal{T}}\right) \right)^a\\
    & \cdot \left(\sum_{i=1,2}\Prob{|\mathcal{X}_i-x|<2}\right)^b.
\end{align*}
\end{theorem}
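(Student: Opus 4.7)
The plan is to follow the standard Malliavin calculus paradigm for density estimates, in the spirit of Nualart's book and the Bally--Caramellino paper cited. The starting point is the Malliavin integration by parts formula: for a nondegenerate random vector $\mathcal{X}$ with components in $\mathbb{D}^{3,\infty}_\mathcal{T}$, and for any multi-index $\alpha$ with $|\alpha|\le 3$, one has
\[
\E\bigl[\partial^\alpha f(\mathcal{X})\bigr] = \E\bigl[f(\mathcal{X})\, H_\alpha(\mathcal{X},1)\bigr]
\]
for test functions $f$, where the Malliavin weight $H_\alpha$ is an explicit polynomial expression in $\D^{(k)}\mathcal{X}$ for $k\le |\alpha|$ and in the entries of $\sigma_\mathcal{X}^{-1}$, derived recursively using the duality between $\D$ and the Skorokhod integral. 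The standard consequence is the representation $\varphi_\mathcal{X}(x)=\E[\mathbf{1}_{\mathcal{X}\ge x} H_{(1,\ldots,1)}(\mathcal{X},1)]$, but for the bound we seek it is more convenient to start from a smooth localized form.

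First, I would fix a smooth cutoff $\psi:\R^d\to[0,1]$ supported in the ball $\{|z|\le 2\}$ and equal to $1$ on $\{|z|\le 1\}$. Then apply integration by parts to the identity $\varphi_\mathcal{X}(x)=\E[\delta_0(\mathcal{X}-x)]$ after writing $\delta_0$ as a $d$-fold derivative of a Heaviside-type function multiplied by $\psi(\cdot-x)$, obtaining a representation
\[
\varphi_\mathcal{X}(x) = \E\bigl[\Phi_x(\mathcal{X})\, H_\mathcal{X}(x)\bigr]
\]
where $\Phi_x$ is bounded with support in $\{|\cdot-x|\le 2\}$ and $H_\mathcal{X}(x)$ is a polynomial expression in $\D^{(k)}\mathcal{X}$ (up to order $3$, hence the use of $\mathbb{D}^{3,\infty}$) and in $\sigma_\mathcal{X}^{-1}$ (producing the factors $\det\sigma_\mathcal{X}^{-1}$ via Cramer's rule). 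The support property of $\Phi_x$ yields, via H\"older's inequality with exponents tuned so as to extract $\Pr\{|\mathcal{X}-x|<2\}^b$, the desired localization factor.

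Next, to compare the two densities, I would write
\[
\varphi_{\mathcal{X}_1}(x) - \varphi_{\mathcal{X}_2}(x) = \E\bigl[(\Phi_x(\mathcal{X}_1)-\Phi_x(\mathcal{X}_2))\,H_{\mathcal{X}_1}(x)\bigr] + \E\bigl[\Phi_x(\mathcal{X}_2)\,(H_{\mathcal{X}_1}(x)-H_{\mathcal{X}_2}(x))\bigr].
\]
The first term is controlled by $\|\Phi_x\|_{C^1}\|\mathcal{X}_1-\mathcal{X}_2\|_{L^p}\|H_{\mathcal{X}_1}\|_{L^q}$ together with the support-of-$\Phi_x$ localization; the bound on $\|H_{\mathcal{X}_1}\|_{L^q}$ follows from Meyer's inequality together with polynomial control of $\sigma_{\mathcal{X}_1}^{-1}$ and $\D^{(k)}\mathcal{X}_1$, producing the factors $\E[(\det\sigma_{\mathcal{X}_1})^{-\gamma}]$ and $\|\mathcal{X}_1\|_{3,\gamma,\mathcal{T}}$ raised to appropriate powers. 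For the second term, one expands $H_{\mathcal{X}_1}-H_{\mathcal{X}_2}$ by a telescoping sum in the polynomial structure of the Malliavin weight. Each term is a product of factors that are either $\D^{(k)}\mathcal{X}_1 - \D^{(k)}\mathcal{X}_2$ (bounded by $\|\mathcal{X}_1-\mathcal{X}_2\|_{k,p,\mathcal{T}}$ with $k\le 2$, explaining the index $2$ in $\|\cdot\|_{2,\gamma,\mathcal{T}}$) or $\sigma_{\mathcal{X}_1}^{-1}-\sigma_{\mathcal{X}_2}^{-1}$ (which, via $A^{-1}-B^{-1}=A^{-1}(B-A)B^{-1}$, costs two inverse-determinant powers and one factor of $\|\mathcal{X}_1-\mathcal{X}_2\|_{1,p,\mathcal{T}}$), and the remaining factors are uniformly bounded by the Sobolev-norm and inverse-determinant controls on the two vectors. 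Collecting exponents and applying H\"older carefully produces the product structure $\prod_{i=1,2}(\cdot)^a$ in the statement.

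The main obstacle is the bookkeeping of exponents when combining the generalized H\"older inequalities: the weights $H_{\mathcal{X}_i}$ mix high powers of $\|\D^{(k)}\mathcal{X}_i\|$ and $\det\sigma_{\mathcal{X}_i}^{-1}$, so one must choose an exponent $\gamma$ (depending only on $d$) large enough that every factor admits a finite moment and the application of Meyer's inequality closes. Once $\gamma$ and the integrability split are fixed, the exponents $a$ and $b$ are determined. A secondary difficulty is ensuring that the localization via $\Phi_x$ interacts cleanly with the IBP weights; this is handled by choosing $\Phi_x$ to be a tensor product of one-dimensional bump functions and performing IBP coordinate by coordinate, so that only derivatives of $\Phi_x$ up to order $d$ appear, each of which is bounded independently of $x$.
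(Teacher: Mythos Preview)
The paper does not give its own proof of this theorem: it is quoted from \cite{bally2014} (specifically, their Theorem~2.14.B with the localization variable set to $1$ and the derivative order $q=0$, combined with Meyer's inequality to pass from the Ornstein--Uhlenbeck norm to the Sobolev norm $\|\cdot\|_{3,\gamma,\mathcal{T}}$). So there is nothing in the present paper to compare your argument against.

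That said, your outline is in the correct spirit and tracks the Bally--Caramellino strategy: represent each density via Malliavin integration by parts with a localized test function, then telescope the difference of the Malliavin weights $H_{\mathcal{X}_1}-H_{\mathcal{X}_2}$ and bound each piece by H\"older. One point worth flagging is your claim that the weight involves derivatives of $\mathcal{X}$ ``up to order $3$, hence the use of $\mathbb{D}^{3,\infty}$'' while the difference $\|\mathcal{X}_1-\mathcal{X}_2\|_{2,\gamma,\mathcal{T}}$ only needs order $2$. The reason the $3$ appears on the individual $\|\mathcal{X}_i\|_{3,\gamma,\mathcal{T}}$ but only $2$ on the difference is that in the telescoped product each difference factor $\D^{(k)}\mathcal{X}_1-\D^{(k)}\mathcal{X}_2$ is accompanied by undifferenced factors of strictly higher total order; you gesture at this but a full proof has to track it carefully, and in \cite{bally2014} this bookkeeping is carried out via their abstract framework of ``integration by parts weights'' rather than by a direct telescoping computation. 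Your sketch would work as a proof, but the details of the exponent accounting are the substance of the argument, not a mere obstacle.
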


We will use this theorem to derive Lemma~\ref{lem:density_est}. Thus our goal is to estimate all the factors on the right-hand side of this bound for 
the choices of $\Xc_1$ and $\Xc_2$ relevant for Lemma~\ref{lem:density_est}.
In particular, we will need to estimate moments of Malliavin derivatives and negative moments the Malliavin covariance matrix.

The fact that $C,a,p$ do not depend on $\mathcal{T}$ is important because we will apply this estimate to times $\mathcal{T}$ given by a function of $\eps$ growing to $\infty$ as $\eps \to 0$.

Let us fix
\begin{align}\label{eq:bar_theta}
    \bar\theta = \frac{1}{8\max\{\lam^1, |\lam^d|,1\}}.
\end{align}

Hence, if $T=T(\eps)$ satisfies \eqref{eq:T(eps)_range} for all $\eps\in(0,1)$,
then
\begin{align}\label{eq:choice_of_theta}
    e^{|\lj|T}\leq \eps^{-\frac{1}{8}}\and T\leq \eps^{-\frac{1}{8}},\quad j=1,2,\dots,d, \ \ \eps\in(0,1).
\end{align}
Let us arbitrarily fix $T=T(\eps)$ satisfying~\eqref{eq:T(eps)_range} and thus~\eqref{eq:choice_of_theta}.

For a random variable $\xi$, we define 
\begin{align*}
    [\xi]_p= (\E{|\xi|^p})^\frac{2}{p}.
\end{align*}
Let us  derive a few basic inequalities.
\begin{lemma}\label{lem:pbracket_properties}
Let $p\geq 2$.
\begin{itemize}
    \item There is a positive constant $C$ depending only on $p, d$ such that, for any $t_2>t_1\geq 0$ and any adapted $\R^d$-valued process $(\mathcal{X}_{s})_{s\geq 0} = ((\mathcal{X}_{l,s})_{1\leq l\leq d})_{s\geq 0} $,
    \begin{align}\label{eq:pBracket_property_1}
        \pbracket{\int_{t_1}^{t_2}\mathcal{X}_{l,s}dW^l_s} \leq C   \int_{t_1}^{t_2}\pbracket{\mathcal{X}_{s}}ds.
    \end{align}
    \item For any $t_2>t_1\geq 0$, any measurable process $(\mathcal{X}_{s})_{s\geq 0}$,
    \begin{align}\label{eq:pBracket_property_2}
    \pbracket{\int_{t_1}^{t_2}\mathcal{X}_s ds }\leq |t_2-t_1|\int_{t_1}^{t_2}\pbracket{\mathcal{X}_s}ds.
    \end{align}
    \item For any $\mathcal{T}>0$, any $n\in \N$, and any measurable process $(\mathcal{X}_{s})_{s\in[0,\mathcal{T}]^n}$,
    \begin{align}\label{eq:pBracket_property_3}
    \pbracket{\|\mathcal{X}\|_{\mathscr{H}_\mathcal{T}^{\otimes n}}}\leq  \int_{[0,T]^n}\pbracket{\mathcal{X}_{s^1,s^2,\dots,s^n}}ds^1ds^2\cdots ds^n,
\end{align}
    where $\mathscr{H}_\mathcal{T}$ is given in \eqref{eq:H_T}.
\end{itemize}
\end{lemma}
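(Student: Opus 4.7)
The three inequalities are classical consequences of Burkholder--Davis--Gundy (BDG), Minkowski's integral inequality, and Cauchy--Schwarz, all of which are available since $p\geq 2$. I will prove them in the order (i), (iii), (ii), since (iii) uses the same Minkowski step as (i) without requiring BDG.

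For (i), the plan is to apply the BDG inequality to the vector-valued It\^o integral, which gives a constant $C=C(p,d)$ such that
\[
\E\left|\int_{t_1}^{t_2}\mathcal{X}_{l,s}dW^l_s\right|^p\le C\, \E\left(\int_{t_1}^{t_2}|\mathcal{X}_s|^2\,ds\right)^{p/2}.
\]
Raising to the power $2/p$ and using Minkowski's integral inequality in $L^{p/2}(\Omega)$ (valid since $p/2\geq 1$) yields
\[
\left(\E\left(\int_{t_1}^{t_2}|\mathcal{X}_s|^2\,ds\right)^{p/2}\right)^{2/p}\le \int_{t_1}^{t_2}\left(\E |\mathcal{X}_s|^p\right)^{2/p}ds=\int_{t_1}^{t_2}\pbracket{\mathcal{X}_s}\,ds,
\]
which is exactly the claimed bound \eqref{eq:pBracket_property_1}.

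For (iii), by definition of the Hilbert tensor norm, $\|\mathcal{X}\|_{\mathscr{H}_\mathcal{T}^{\otimes n}}^2=\int_{[0,\mathcal{T}]^n}|\mathcal{X}_{s^1,\dots,s^n}|^2\,ds^1\cdots ds^n$, so the same Minkowski integral inequality as above, applied to the product measure on $[0,\mathcal{T}]^n$ in $L^{p/2}(\Omega)$, yields
\[
\pbracket{\|\mathcal{X}\|_{\mathscr{H}_\mathcal{T}^{\otimes n}}}=\left(\E\left(\int_{[0,\mathcal{T}]^n}|\mathcal{X}|^2\right)^{p/2}\right)^{2/p}\le\int_{[0,\mathcal{T}]^n}\pbracket{\mathcal{X}_{s^1,\dots,s^n}}\,ds^1\cdots ds^n,
\]
which is \eqref{eq:pBracket_property_3}.

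For (ii), I would combine two classical inequalities. First, Minkowski's integral inequality (in $L^p(\Omega)$) gives
\[
\left(\E\left|\int_{t_1}^{t_2}\mathcal{X}_s\,ds\right|^p\right)^{1/p}\le \int_{t_1}^{t_2}\left(\E|\mathcal{X}_s|^p\right)^{1/p}ds.
\]
Squaring and applying Cauchy--Schwarz on the Lebesgue integral on the right-hand side, we bound the square of the integral by $|t_2-t_1|\int_{t_1}^{t_2}(\E|\mathcal{X}_s|^p)^{2/p}\,ds$, which is precisely $|t_2-t_1|\int_{t_1}^{t_2}\pbracket{\mathcal{X}_s}\,ds$, giving \eqref{eq:pBracket_property_2}.

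All three steps are standard and the only assumption used on $p$ is $p\geq 2$, which ensures that both Minkowski's integral inequality (requiring conjugate exponent $\geq 1$) and BDG with the given constant apply. There is no serious obstacle here; the statement is essentially a bookkeeping exercise recording in the $\pbracket{\cdot}_p$-notation the fact that $L^p$-norms of It\^o and Lebesgue integrals are controlled by integrals of $L^p$-norms of the integrand, with constants uniform in the time interval (apart from the explicit $|t_2-t_1|$ factor in (ii)).
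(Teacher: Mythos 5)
Your proof is correct and follows essentially the same route as the paper's: BDG followed by Minkowski's integral inequality in $L^{p/2}(\Omega)$ for \eqref{eq:pBracket_property_1}, Minkowski in $L^p(\Omega)$ followed by Cauchy--Schwarz for \eqref{eq:pBracket_property_2}, and Minkowski over the product measure for \eqref{eq:pBracket_property_3}. The reordering of the three parts is immaterial.
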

\begin{proof}
For the reader's convenience we recall the Minkowski integral inequality:  for any $q\in[1,\infty)$, $n\geq 1$, and $[t_1,t_2]\subset \R$,
\[
    \left(\E\left|\int_{[t_1,t_2]^n}\mathcal{X}_{s^1,s^2,\dots, s^n}ds^1ds^2\dots ds^n\right|^q\right)^{\frac{1}{q}}\leq\int_{[t_1,t_2]^n}\left(\E|\mathcal{X}_{s^1,s^2,\dots,s^n}|^q\right)^{\frac{1}{q}}ds^1ds^2\dots ds^n.
\]
Using the BDG inequality, and the Minkowski integral inequality (with $q=p/2$) together with $p\geq 2$, we have
\begin{align*} 
    &\pbracket{\int_{t_1}^{t_2}\mathcal{X}_{l,s}dW^l_s} = \left(\E\left|\int_{t_1}^{t_2}\mathcal{X}_{l,s}dW^l_s\right|^p\right)^\frac{2}{p} \leq C   \left(\E\left|\int_{t_1}^{t_2}|\mathcal{X}_{s}|^2 ds\right|^\frac{p}{2}\right)^\frac{2}{p} \\
    &\leq C\int_{t_1}^{t_2}\left(\E{|\mathcal{X}_{s}|^p }\right)^\frac{2}{p} ds =  C\int_{t_1}^{t_2}\pbracket{\mathcal{X}_{s}}ds,
\end{align*}
where $C$ only depends on $p$ and $d$ due to the BDG inequality.
This is \eqref{eq:pBracket_property_1}. Using the Minkowski integral inequality and the Cauchy--Schwarz inequality, we have
\begin{align*}
    \pbracket{\int_{t_1}^{t_2}\mathcal{X}_s ds }=\left(\E\left|\int_{t_1}^{t_2}\mathcal{X}_{s}ds\right|^p\right)^\frac{2}{p} \leq \left(\int_{t_1}^{t_2}
    \left(\E{|\mathcal{X}_{s}|^p}\right)^\frac{1}{p}ds\right)^2\leq |t_2-t_1|\int_{t_1}^{t_2}\pbracket{\mathcal{X}_s}ds
\end{align*}
yielding \eqref{eq:pBracket_property_2}. Lastly, \eqref{eq:pBracket_property_3} follows from 
\begin{align*}
    \|\mathcal{X}\|^2_{\mathscr{H}_\mathcal{T}^{\otimes n}} = \int_{[0,\mathcal{T}]^n}\left|\mathcal{X}_{s^1,s^2,\dots,s^n}\right|^2 ds^1ds^2\cdots ds^n
\end{align*} 
and the Minkowski integral inequality.
\end{proof}

We set, for small $\eps$ and $T$ given in \eqref{eq:T(eps)_range},
\begin{align}\label{eq:def_Hil}
    \Hil=\Hil(\eps) = \mathscr{H}_{T(\eps)},
\end{align}
 and will apply \eqref{eq:pBracket_property_3} to processes indexed by $[0,T(\eps)]^n$.
We emphasize that $\Hil$ depends on $\eps$.

Henceforth, we fix an arbitrary $\upsilon\in(0,1)$ (as in the statement of Lemma~\ref{lem:density_est})
Most of the estimates below are obtained for all $p\geq 2$. We need this restriction in order to apply Lemma~\ref{lem:pbracket_properties} in intermediate steps. However,  it is easy to extend our results to $p\in[1,2)$ using Jensen's inequality.

 For $A,B\in\R$, we write $A\lesssim B$ if and only if there is a constant
$C>0$, only depending on $p, \lambda, F, G, \bar\theta,\nu$, such that the inequality $A\le CB$ holds. Here, $d$ is the dimension of the Euclidean space fixed at the beginning of this subsection; $\lambda,F,G$ determined the SDE given in \eqref{eq:Y_SDE_before_Duhamel}; and $\bar\theta$ has been fixed in \eqref{eq:bar_theta}. Note that, in particular, the constant $C$ is independent of~$\eps, y$. Occasionally, we stress the dependence of the constant on $p$ by writing $\lesssim_p$. 

\subsection{Derivative estimates}

In this subsection, we assume $p\geq 2$ if not otherwise specified.

We need  bounds on all the factors on the right-hand side of the estimate provided by Theorem~\ref{Thm:DensityDifference}.   The Malliavin matrix will be estimated in the next subsection.  The main goal of this subsection is to
estimate $\|\mathcal{X}_1-\mathcal{X}_2\|_{2,\gamma,\mathcal{T}}$ and $\|\mathcal{X}_i\|_{3,\gamma,\mathcal{T}}$. Thus we need to estimate
Malliavin derivatives of $ \mathcal{X}_i$ and $\mathcal{X}_1-\mathcal{X}_2$, where $\mathcal{X}_1$ is 
$(U^{\leq \nu}_{T(\eps)},N^{>\nu}_{T(\eps)})$ and $\mathcal{X}_2$ is one of the Gaussian approximations defined via~\eqref{eq:def_Z}.

To compute the Malliavin derivatives of those processes viewed as solutions of SDEs, we will use \cite[Theorems~2.2.1 and~2.2.2]{nualart}, so let us recall the notation from \cite[Section~2.2]{nualart}.
For a real-valued measurable process $(\mathcal{X}_t)_{t\in[0,T]}$, its $\Hil$-valued derivative $\mathcal{D}\mathcal{X}_t$ at any fixed $t\in[0,T]$, if it exists, can be written in (real-valued) coordinates as
\begin{align*}
     \mathcal{D}\mathcal{X}_t = \left(\mathcal{D}^{j}_{r}\mathcal{X}_t\right)_{ j\in\{1,2,\dots, d\},\, r \in [0,T]}.
\end{align*}
Similar notation applies to higher order Malliavin derivatives. For $k\in\N$, the $\Hil^{\otimes k}$-valued derivative  $\mathcal{D}^{(k)}\mathcal{X}_t$, if it exists, can be written in coordinates as
 (see, e.g., the proof of \cite[Theorem~2.2.2]{nualart})
\begin{align*}
    \mathcal{D}^{(k)}\mathcal{X}_t = \left(\mathcal{D}^{j_1,j_2,\dots,j_k}_{r_1,r_2,\dots,r_k}\mathcal{X}_t\right)_{\substack{ j_1,j_2,\dots,j_k\in\{1,2,\dots, d\} \\ r_1,r_2,\dots,r_k \in [0,T]}}.
\end{align*}
We need estimates of all these components of the Malliavin derivatives up to order~3.

\medskip

We will need to make approximations to $F(Y_s)$, and it is convenient to introduce notation for the resulting errors. For $x \in \R^d$ and $t\in \R$, we set
\begin{align*}
\begin{split}
    H^i(t,x)&=
    \begin{cases}
     F^i(x)-F^i(0^{\leq \nu}, (e^{\lam t}y)^{>\nu}), \quad & i \leq \nu,\\
     F^i(x)-F^i(e^{\lam t}y), \quad & i > \nu,\\
    \end{cases}\\
    \bH^i(t,x)&=
    \begin{cases}
     F^i(x)-F^i(0), \quad &i \leq \nu,\\
     F^i(x)-F^i((e^{\lam t}y)^{\leq\nu}, 0^{>\nu}), \quad &i > \nu.\\
    \end{cases}
\end{split}
\end{align*}
Note that we use different deterministic approximations for the unstable and stable components, which will allow for more compact formulas later on.
Using~\eqref{eq:modified_F} and~\eqref{eq:Y_after_Duhamel}, we have that, uniformly in $t$, 
\begin{align}
    |H^i(t,Y_t)|&\lesssim
        \begin{cases}
         \left(|Y^{\leq \nu}_t|+\eps|N^{>\nu}_t|\right)\wedge 1,\quad & i\leq \nu,\label{eq:H_estimate}\\
         \left(\eps |(e^{\lam t}U_t)^{\leq \nu}|+\eps|N^{>\nu}_t|\right)\wedge 1, \quad & i>\nu,
        \end{cases}\\
        |\bH^i(t,Y_t)|&\lesssim \left(|H^i(t, Y_t)|+\sum_{j>\nu}e^{\lj t}|y^j|\right)\wedge 1, \quad i\in \{1,2,\dots,d\}\label{eq:H_bar_estimate}.
\end{align}

Using the definitions~\eqref{eq:M},~\eqref{eq:V},~\eqref{eq:U}, and~\eqref{eq:modified_F}, we have that for each $q\geq 1$ there is a constant $C_q>0$ such that, for all $t$,
\begin{align}\label{eq:prelim_est_U,V,M,N}
    \begin{split}
        \E\left|M^j_t\right|^q,\ \E\left|V^j_t\right|^q,\ \E\left|U^j_t\right|^q \leq C_q, \quad &j\leq \nu;\\
        \E\left|e^{\lj t}M^j_t\right|^q,\ \E\left|e^{\lj t}V^j_t\right|^q,\ \E\left|N^j_t\right|^q \leq C_q, \quad &j> \nu.
    \end{split}
\end{align}

\subsubsection{0th order derivatives}
Our first goal is to obtain $L^p$ estimates on
\begin{equation}\label{eq:deviations}
\begin{array}{cc}
U_T^i-Z_T^i,\  U_T^i-\bar{Z}_T^i,& i\leq \nu,\\
N_T^i-Z_T^i,\  N_T^i-\bar{Z}_T^i,& i > \nu.
\end{array}
\end{equation}
Taking arbitrary $\beta\in(0,1)$ to be determined later, we define
\begin{align*}\begin{split}
        \eta^j=\inf\left\{t>0:|Y^j_t|\geq \eps^\beta\right\}, \quad j\leq \nu; \quad\quad
        \eta = \min_{1\leq j \leq \nu}\eta^j.
    \end{split}
\end{align*}
We first derive a few estimates involving $\eta^j$. The above definition implies $\eps^\beta \leq |Y^j_{\eta^j}|= e^{\lj \eta^j}|y^j+\eps U^j_{\eta^j}|$. Hence, $\eta^j \geq  \frac{1}{\lj}\log(\eps^\beta|y^j+\eps U^j_{\eta^j}|^{-1})$, which implies that 
\begin{align}\label{eq:eta_exponential_moment_est}
    \begin{split}
        \E{e^{-q \eta}}&\leq \sum_{j=1}^\nu\E{e^{-q \eta^j}}\leq \sum_{j=1}^\nu \eps^{-\beta\frac{q}{\lj}}\E{\left|y^j+\eps U^j_{\eta^j}\right|^{\frac{q}{\lj}}}\\
    &\lesssim \sum_{j=1}^\nu \left(\eps^{-\beta}\left|y^{\leq \nu}\right|\right)^{\frac{q}{\lj}}+ \sum_{j=1}^\nu \eps^{(1-\beta)\frac{q}{\lj}}\E{\left|U^j_{\eta^j}\right|^{\frac{q}{\lk}}}\\
    &\lesssim  \sum_{j=1}^\nu \left(\eps^{-\beta}\left|y^{\leq \nu}\right|\right)^{\frac{q}{\lj}}+ \sum_{j=1}^\nu \eps^{(1-\beta)\frac{q}{\lj}}, \quad q>0,
    \end{split}
\end{align}
where $\E{|U^j_{\eta^j}|^{\frac{q}{\lk}}}\lesssim 1$ follows from the definition of $U_t$ in~\eqref{eq:U} and BDG inequality. 
    
Let us consider $i\le \nu$. Recall the definition of $\pp$ from \eqref{eq:pp}. Using BDG,~\eqref{eq:H_estimate},~\eqref{eq:prelim_est_U,V,M,N} and~\eqref{eq:eta_exponential_moment_est} with $\beta = \frac{1}{2}\upsilon$ and $q=p\lam^i$, we obtain, for some $\delta'>0$, 
\begin{align}\label{eq:est_int_H_i<nu}
    \begin{split}
        &\E\left|\int_0^T\left|e^{-\li s}H^i(s, Y_s)\right|^2 ds\right|^\frac{p}{2}
        \\
        &\lesssim \sum_{j\leq \nu}\E\left|\int_0^{T\wedge \eta}\left|e^{-\li s}Y^j_s\right|^2 ds\right|^\frac{p}{2}+\sum_{j> \nu}\E\left|\int_0^{T\wedge \eta}\left|e^{-\li s}\eps N^j_s\right|^2 ds\right|^\frac{p}{2} + \E\left|\int_{T\wedge \eta}^T\left|e^{-\li s}\right|^2 ds\right|^\frac{p}{2}
        \\
        &\lesssim \eps^{\beta p}+\eps^p + \E e^{-p\li \eta} \lesssim \eps^{\beta p}+\eps^p + \pp\left(\left(\eps^{-\frac{1}{2}\upsilon}|y^{\leq \nu}|\right)^p\right) + \pp\left(\eps^{(1-\frac{\upsilon}{2})p}\right)
        \\
        &\lesssim \eps^{\delta'}\left(1+\pp\left(\eps^{-\upsilon}\left|y^{\leq \nu}\right|\right)\right)^{p},\quad i\leq \nu,
    \end{split}
\end{align}
where in the last inequality we also used that for $r\ge0$, $\pp(r^p)\lesssim_p(\pp(r))^p$. Due to~\eqref{eq:H_bar_estimate} and the fact that $\lj<0$ for $j>\nu$, \eqref{eq:est_int_H_i<nu} also implies
\begin{align}\label{eq:est_int_bH_i<nu}
\begin{split}
    &\E\left|\int_0^T\left|e^{-\li s}\bH^i(s,Y_s)\right|^2 ds\right|^\frac{p}{2}
    \\
    &\lesssim  \E\left|\int_0^T\left|e^{-\li s}H^i(s,Y_s)\right|^2 ds\right|^\frac{p}{2}+ \sum_{j>\nu}\E\left|\int_0^T\left|e^{-\li s}e^{\lj s}y^j\right|^2 ds\right|^\frac{p}{2}
    \\
    &\lesssim \eps^{\delta'}\left(1+\pp\left(\eps^{-\upsilon}\left|y^{\leq \nu}\right|\right)\right)^p + \left|y^{>\nu}\right|^p,\quad i\leq \nu.
\end{split}
\end{align}

Due to~\eqref{eq:est_int_H_i<nu} and~\eqref{eq:prelim_est_U,V,M,N}, for some $\delta_0>0$,
\begin{align}\label{eq:0th_der_U-Z_est_i<nu}
\begin{split}
    &\E\left|U^i_T-Z^i_T\right|^p\lesssim \E\left|M^i_T-Z^i_T\right|^p + \eps^p\E\left|V^i_T\right|^p\lesssim \E\left|\int_0^T\left|e^{-\li s}H^i(s, Y_s)\right|^2 ds\right|^\frac{p}{2}+\eps^p
    \\
    &\lesssim \eps^{\delta_0}\left(1+\pp\left(\eps^{-\upsilon}\left|y^{\leq \nu}\right|\right)\right)^p,\quad i\leq \nu.
\end{split}
\end{align}
Similarly, using~\eqref{eq:est_int_bH_i<nu} and~\eqref{eq:prelim_est_U,V,M,N}, we have, for some $\delta'_0>0$,
\begin{align}\label{eq:0th_der_U-bZ_est_i<nu}
    \begin{split}
        \E\left|U^i_T-\bZ^i_T\right|^p&\lesssim  \E\left|\int_0^T\left|e^{-\li s}\bH^i(s,Y_s)\right|^2 ds\right|^\frac{p}{2}+\eps^p\\
        &\lesssim \eps^{\delta'_0}\left(1+\pp\left(\eps^{-\upsilon}\left|y^{\leq \nu}\right|\right)\right)^p + \left|y^{>\nu}\right|^p,\quad i\leq \nu.
    \end{split}
\end{align}

Then, we consider $i>\nu$. Let us estimate, using Minkowski's integral inequality,~\eqref{eq:H_estimate} and~\eqref{eq:prelim_est_U,V,M,N},\begin{align}\label{eq:est_int_H_i>nu}
    \begin{split}
        &\E\left|e^{2\li T}\int_0^T\left|e^{-\li s}H^i(s,Y_s)\right|^2 ds\right|^\frac{p}{2}
        \\
        &\lesssim  \sum_{j\leq \nu}\E\left|e^{2\li T}\int_0^{T}\left|e^{-\li s}\eps e^{\lj s} U^j_s\right|^2 ds\right|^\frac{p}{2}+\sum_{j> \nu}\E\left|e^{2\li T}\int_0^{T}\left|e^{-\li s}\eps N^j_s\right|^2 ds\right|^\frac{p}{2}
        \\
        &\lesssim \sum_{j\leq \nu}\eps^{ p}e^{p\li T}+\eps^p \lesssim \eps^{\frac{p}{2}},\quad i>\nu.
    \end{split}
\end{align}
Since $\lj<0$ for $j>\nu$, due to~\eqref{eq:H_bar_estimate} and~\eqref{eq:est_int_H_i>nu}, similar to the derivation of~\eqref{eq:est_int_bH_i<nu}, one can see
\begin{align}\label{eq:est_int_bH_i>nu}
    \begin{split}
        &\E\left|e^{2\li T}\int_0^T\left|e^{-\li s}\bH^i(s,Y_s)\right|^2 ds\right|^\frac{p}{2} \lesssim \eps^{\frac{p}{2}}+\left|y^{>\nu}\right|^p,\quad i>\nu.
    \end{split}
\end{align}

Using~\eqref{eq:est_int_H_i>nu} and~\eqref{eq:prelim_est_U,V,M,N}, we obtain 
\begin{align}\label{eq:0th_der_U-Z_est_i>nu}
\begin{split}
        \E\left|N^i_T-Z^i_T\right|^p&\lesssim \E\left|e^{\li T}M^i_T-Z^i_T\right|+\eps^p\E\left|e^{\li T}V^i_T\right|
        \\
        &\lesssim \E\left|e^{2\li T}\int_0^T\left|e^{-\li s}H^i(s,Y_s)\right|^2 ds\right|^\frac{p}{2}+\eps^p \lesssim \eps^{\frac{p}{2}},\quad i>\nu.
\end{split}
\end{align}

From~\eqref{eq:est_int_bH_i>nu} and~\eqref{eq:H_bar_estimate} it can be derived that
\begin{align}\label{eq:0th_der_U-bZ_est_i>nu}
    \E\left|N^i_T-\bZ^i_T\right|^p\lesssim \eps^{\frac{p}{2}} + \left|y^{>\nu}\right|^p,\quad i> \nu.
\end{align}

\subsubsection{1st order derivatives}
In order to estimate the Malliavin derivatives of the r.v.'s in~\eqref{eq:deviations}, we need to estimate the derivatives 
of $U_t^i$ and $N_t^i$. These derivatives are, in turn, related to those of  $Y_t^i$ due to ~\eqref{eq:Y_after_Duhamel} and~\eqref{eq:U}:
\begin{align}\label{eq:DY_formula}
    \D^{(k)}Y^i_t = 
    \eps e^{\li t }\D^{(k)}U^i_t = \eps \D^{(k)}N^i_t,
\end{align}
where the superscript in parentheses indicates the order of differentiation. For $j=1,2,\dots, d$, \cite[Theorem 2.2.1]{nualart} implies
\begin{align}\label{eq:formula_of_DU}
    \D^j_rU^i_t = e^{-\li r}F^i_j(Y_r) + \int_r^te^{ -\li s}\partial_k F^i_l(Y_s)\D^j_rY^k_sdW^l_s+\eps\int_r^t e^{-\li s}\partial_k G^i(Y_s)\D^j_r Y^k_s ds.
\end{align}

Let $0\leq r\leq t\leq T$. We use \eqref{eq:formula_of_DU},~\eqref{eq:pBracket_property_1}, and~\eqref{eq:pBracket_property_2} to obtain the first inequality below; we plug in~\eqref{eq:DY_formula} to obtain the second inequality; and use estimates~\eqref{eq:modified_F} and~\eqref{eq:choice_of_theta} to obtain that, uniformly in $r,t$,
\begin{multline}
    \label{eq:estim-1st-der-of-U}
        \pbracket{\D^j_r U^i_t}  \lesssim \pbracket{e^{-\li r}F^i_j(Y_r)} +  \sum_{k=1}^d\left(\int_r^te^{ -2\li s}\pbracket{\D^j_rY^k_s}ds+\eps^2 T\int_r^te^{-2\li s}\pbracket{ \D^j_r Y^k_s }ds\right)
        \\ \lesssim e^{-2\li r} + \left(\eps^2+\eps^4 T\right) \sum_{k\leq \nu}\int_r^te^{2(\lk -\li)s}\pbracket{\D^j_rU^k_s}ds+\left(\eps^2+\eps^4T\right)\sum_{k>\nu}\int_r^te^{-2\li s}\pbracket{\D^j_rN^k_s}ds
    \\
    \lesssim e^{-2\li r} + \eps \int_r^t \sum_{k\leq \nu}\pbracket{\D^j_rU^k_s}ds+\sum_{k>\nu}\pbracket{\D^j_rN^k_s}ds, \quad i\leq \nu,
\end{multline}
Similarly, we have that, uniformly in $r,t$,
\begin{multline}
    \label{eq:estim-1st-der-of-N}
        \pbracket{\D^j_r N^i_t}  \lesssim \pbracket{e^{\li(t- r)}F^i_j(Y_r)} +  \sum_{k=1}^de^{2\li t}\left(\int_r^te^{ -2\li s}\pbracket{\D^j_rY^k_s}ds+\eps^2 T\int_r^te^{-2\li s}\pbracket{ \D^j_r Y^k_s }ds\right)\\
    \lesssim e^{2\li(t-r)} + \left(\eps^2+\eps^4 T\right) \sum_{k\leq \nu}\int_r^te^{2\lk s}\pbracket{\D^j_rU^k_s}ds+\left(\eps^2+\eps^4T\right)\sum_{k>\nu}\int_r^t\pbracket{\D^j_rN^k_s}ds\\
    \lesssim e^{2\li(t-r)} + \eps \int_r^t \sum_{k\leq \nu}\pbracket{\D^j_rU^k_s}ds+\sum_{k>\nu}\pbracket{\D^j_rN^k_s}ds, \quad i> \nu.
\end{multline}

We need the following lemma.
\begin{lemma} \label{lem:solve_inequality_system}
Let $d,l\in\N$ and $m\geq 0$. Write $\mathbf{r}=(r_i)_{i=1}^l$ with all $r_i$ nonnegative, and $\hat{\mathbf{r}}=\max_{1\leq i\leq d}r_i$. Suppose $c(\mathbf{r},t)\geq 0$ for all $\mathbf{r}, t$ and that $t\mapsto c(\mathbf{r},t)$ is nondecreasing for each fixed $\mathbf{r}$. Then, the system of inequalities
\begin{align}\label{eq:system_of_ineq}
    0\leq a^i(\mathbf{r},t) \leq C\left(  \eps^m c^i(\mathbf{r},t) +  \eps \sum_{k=1}^d\int_{\hat{\mathbf{r}}}^ta^k(\mathbf{r},s)ds\right), \quad  \hat{\mathbf{r}}\leq t\leq T,\ i=1,2,...,d,
\end{align}
with $T$ satisfying \eqref{eq:choice_of_theta}, implies that there is a constant $C$ independent of $\eps,\ T$ such that $a^i(\mathbf{r},t)\le C\eps^m\sum_{k=1}^d c^k(\mathbf{r},t)$ for all
$t\in [0,T]$, $\mathbf{r}$ satisfying $\hat{\mathbf{r}}\leq t$, and $i=1,2,\dots,d$.
\end{lemma}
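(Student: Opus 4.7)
The plan is to sum the inequalities in \eqref{eq:system_of_ineq} over $i$ and apply an elementary Gronwall-type argument to the total $S(\mathbf{r},t)=\sum_{i=1}^d a^i(\mathbf{r},t)$. First, I would fix $\mathbf{r}$ (so we may as well suppress it from the notation) and define $\phi(t)=\sum_{k=1}^d c^k(\mathbf{r},t)$, which is nonnegative and nondecreasing in $t$ by hypothesis. Summing~\eqref{eq:system_of_ineq} over $i=1,\dots,d$ gives
\[
0\le S(t)\le C\eps^m\phi(t)+Cd\eps\int_{\hat{\mathbf{r}}}^{t}S(s)\,ds,\qquad \hat{\mathbf{r}}\le t\le T.
\]

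Since $\phi$ is nondecreasing, the standard integral form of Gronwall's inequality yields
\[
S(t)\le C\eps^m\phi(t)\exp\!\bigl(Cd\eps(t-\hat{\mathbf{r}})\bigr)\le C\eps^m\phi(t)\exp(Cd\eps T).
\]
Now the key point is that the assumption~\eqref{eq:choice_of_theta} forces $T\le\eps^{-1/8}$, hence $\eps T\le\eps^{7/8}\le 1$ for all $\eps\in(0,1)$, so $\exp(Cd\eps T)$ is bounded by a constant $C'$ that does not depend on $\eps$ or $T$. Therefore $S(\mathbf{r},t)\le C''\eps^m\sum_{k=1}^d c^k(\mathbf{r},t)$, and since each $a^i\ge0$ we have $a^i(\mathbf{r},t)\le S(\mathbf{r},t)$, giving the desired conclusion.

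There is essentially no obstacle here; the only thing to notice is that one must use a total sum rather than trying to decouple coordinates, because the $k$-th integral on the right couples all components. The monotonicity assumption on $c(\mathbf{r},\cdot)$ is what allows the clean Gronwall statement (otherwise one would need Gronwall's inequality in its slightly more general form, which would still close because $\phi$ can always be replaced by its running maximum without loss). Thus the whole argument is a one-line application of Gronwall plus the crucial observation that the window length $T$ is logarithmically small in $\eps^{-1}$ with a small enough constant, making $\eps T$ (indeed much more) bounded.
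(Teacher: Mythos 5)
Your proof is correct and follows essentially the same route as the paper: sum over $i$, apply the integral form of Gronwall's inequality to the total, and observe that \eqref{eq:choice_of_theta} forces $\eps T\le\eps^{7/8}$ so the exponential factor is uniformly bounded. Nothing further is needed.
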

\begin{proof}[Proof of Lemma~\ref{lem:solve_inequality_system}]
Let $b(\mathbf{r},t)= \sum_{i=1}^d a^i(\mathbf{r},t)$. We sum up the inequalities~\eqref{eq:system_of_ineq} in $i$ to obtain
\begin{align*}
    0\leq b(\mathbf{r},t)\leq C\left(\eps^m \sum_{k=1}^d c^k(\mathbf{r},t) +\eps \int_0^t b(\mathbf{r},s)ds\right).
\end{align*}
Gronwall's inequality implies that for some constant $C$ independent of $\eps$ 
\begin{align*}
    0\leq b(\mathbf{r},t) \leq C \eps^m\sum_{k=1}^d c^k(\mathbf{r},t) e^{C\eps T} .
\end{align*}
Due to~\eqref{eq:choice_of_theta}, $ e^{C\eps T}$ is bounded. Using $a^i(\mathbf{r},t)\leq b(\mathbf{r},t)$, we complete the proof.
\end{proof}

Applying this result
with $l=1$, $m=0$, $c^i(r,t)=e^{-2\li r} $ and $a^i(r,t)=\pbracket{\D^j_rU^i_t} $ for $i\le \nu$, 
$c^i(r,t)=e^{2\li (t-r)} $ 
and $a^i(r,t)= \pbracket{\D^j_rN^i_t}$
for $i>\nu$
to~\eqref{eq:estim-1st-der-of-U} and~\eqref{eq:estim-1st-der-of-N}, we obtain, for $i\leq \nu,$ $m>\nu$, $1\leq j\leq d$
\
\begin{align}\label{eq:DN,DU_pbracket_est}
    \pbracket{\D^j_rU^i_t},\ \pbracket{\D^j_rN^m_t}\lesssim \sum_{k\leq\nu} e^{-2\lk r} +\sum_{k>\nu} e^{2\lk(t-r)} \lesssim 1 , \quad   r\leq t\leq T,
\end{align}
which implies due to~\eqref{eq:DY_formula} and~\eqref{eq:choice_of_theta} that
\begin{align}\label{eq:DY_est}
    \pbracket{\D^j_rY^i_t}\lesssim \eps, \quad r\leq t\leq T,\ 1\leq i,j\leq d.
\end{align}
The estimates~\eqref{eq:DN,DU_pbracket_est} together with~\eqref{eq:pBracket_property_3} give
\begin{align}\label{eq:DU_est}
\pbracket{\left\|\D (U^{\leq \nu}_T, N^{>\nu}_T) \right\|_{\Hil}} \lesssim 1.
\end{align}

For $i\leq \nu$, we have that, due to \eqref{eq:def_Z},
\begin{align*}
    \D^j_r Z^i_t = e^{-\li r}F^i_j\left(0^{\leq \nu},(e^{\lambda s}y)^{>\nu}\right),
\end{align*}
which along with \eqref{eq:formula_of_DU} yields that
\begin{align}\label{eq:D(U-Z)_formula}
    \D^j_r(U^i_t-Z^i_t)=e^{-\li r}H^i_j(r,Y_r)+\int_r^te^{-\li s}\partial_kF^i_l(Y_s)\D^j_rY^k_sdW^l_s+\eps\int_r^te^{-\li s}\partial_kG^i(Y_s)\D^j_rY^k_sds.
\end{align}
Hence, we obtain 
\begin{align}\label{eq:D(U-Z)_hil_norm_est}
\begin{split}
        &\pbracket{\|\D(U^i_T-Z^i_T)\|_\Hil}\lesssim \left( \E\left|\int_0^T\left|e^{-\li r}H^i(r,Y_r)\right|^2 dr \right|^\frac{p}{2}\right)^\frac{2}{p} \\
    & + \sum_{j, k = 1}^d \left[\left\|\int_\cdot^T e^{ -\li s}\partial_k F^i_l(Y_s)\D^j_\cdot Y^k_sdW^l_s\right\|_\Hil\right]_p \\
    & + \sum_{j, k = 1}^d \eps^2\left[\left\| \int_\cdot^T e^{ -\li s}\partial_k G^i(Y_s)\D^j_\cdot Y^k_sds\right\|_\Hil\right]_p.
\end{split}
\end{align}
Due to~\eqref{eq:est_int_H_i<nu}, the first term on the right is $\lesssim \eps^{2\delta'/p}(1+\pp(\eps^{-\upsilon}|y^{\leq \nu}|))^2$. For the next two terms, we first invoke properties~\eqref{eq:pBracket_property_1},~\eqref{eq:pBracket_property_2} and~\eqref{eq:pBracket_property_3}, and then apply the boundedness of derivatives of $F$,~\eqref{eq:DY_est},~\eqref{eq:choice_of_theta} to get 
\begin{align}\label{eq:D(U-Z)_hil_norm_est_step_1}
\begin{split}
        &\left[\left\|\int_\cdot^T e^{ -\li s}\partial_k F^i_l(Y_s)\D^j_\cdot Y^k_sdW^l_s\right\|_\Hil\right]_p \leq \int_0^T \left[\int_r^T e^{ -\li s}\partial_k F^i_l(Y_s)\D^j_r Y^k_sdW^l_s\right]_pdr\\
    &\lesssim \int_0^T \int_r^T \sum_{l}\left[e^{ -\li s}\partial_k F^i_l(Y_s)\D^j_r Y^k_s\right]_p ds dr \lesssim \int_0^T \int_r^T \pbracket{\D^j_r Y^k_s} ds\, dr \lesssim \eps T^2 \leq \eps^\frac{1}{2}
\end{split}
\end{align}
and, similarly,
\begin{align}\label{eq:D(U-Z)_hil_norm_est_step_2}
        \eps^2\left[\left\| \int_\cdot^T e^{ -\li s}\partial_k G^i(Y_s)\D^j_\cdot Y^k_sds\right\|_\Hil\right]_p  \lesssim \eps^2 T\int_0^T \int_r^T \pbracket{\D^j_r Y^k_s} ds\, dr \leq \eps^2.
\end{align}
Therefore, these yield, for some $\delta_1>0$,
\begin{align}\label{eq:D(U-Z)_<_nu_est}
    \pbracket{\left\|\D(U^{\leq \nu}_T-Z^{\leq \nu}_T)\right\|_\Hil} \lesssim \eps^{\delta_1}\left(1+\pp\left(\eps^{-\upsilon}\left|y^{\leq \nu}\right|\right)\right)^2.
\end{align}

For $i>\nu$, we can compute 
\begin{align}\label{eq:D(N-Z)_formula}
\begin{split}
       \D^j_r(N^i_t-Z^i_t)=e^{\li(t- r)}H^i_j(r,Y_r)+e^{\li t}\int_r^te^{-\li s}\partial_kF^i_l(Y_s)\D^j_rY^k_sdW^l_s\\
    +\eps e^{\li t}\int_r^te^{-\li s}\partial_kG^i(Y_s)\D^j_rY^k_sds.
\end{split}
\end{align}
Note that now $\li <0$. To bound $\pbracket{\|\D(N^i_T-Z^i_T)\|_\Hil}$, 
we first estimate it  similarly to~\eqref{eq:D(U-Z)_hil_norm_est}, and then apply ~\eqref{eq:est_int_H_i>nu} and estimates analogous to~\eqref{eq:D(U-Z)_hil_norm_est_step_1} and~\eqref{eq:D(U-Z)_hil_norm_est_step_2} to obtain, for some $\delta'_1>0$,
\begin{align}\label{eq:D(U-Z)_>_nu_est}
    \pbracket{\left\|\D(N^{> \nu}_T-Z^{> \nu}_T)\right\|_\Hil} \lesssim \eps^{\delta'_1}.
\end{align}

To compare with $\bZ_T$, we note that $\D^j_r(U^i_t-\bZ^i_t)$ and $\D^j_r(N^i_t-\bZ^i_t)$ have representations similar to~\eqref{eq:D(U-Z)_formula} and~\eqref{eq:D(N-Z)_formula}, respectively, only with $H$ replaced by~$\bH$. Repeating estimations ~\eqref{eq:D(U-Z)_formula}---\eqref{eq:D(U-Z)_>_nu_est} and using~\eqref{eq:est_int_bH_i<nu} and~\eqref{eq:est_int_bH_i>nu} in place of~\eqref{eq:est_int_H_i<nu} and~\eqref{eq:est_int_H_i>nu}, we obtain
\begin{align}
    &\pbracket{\left\|\D(U^{\leq \nu}_T-\bZ^{\leq \nu}_T)\right\|_\Hil} \lesssim \eps^{\delta_1}\left(1+\pp\left(\eps^{-\upsilon}\left|y^{\leq \nu}\right|\right)\right)^2+\left|y^{>\nu}\right|^2,\label{eq:D(U-bZ)_<_nu_est} 
    \\
    &\pbracket{\left\|\D(N^{>\nu}_T-\bZ^{> \nu}_T)\right\|_\Hil} \lesssim \eps^{\delta'_1}+\left|y^{>\nu}\right|^2.\label{eq:D(U-bZ)_>_nu_est} 
\end{align}

\subsubsection{2nd order derivatives}
Note that~\eqref{eq:def_Z} implies that $Z_t$ and $\overline Z_t$ are integrals of deterministic functions and thus
\begin{align}\label{eq:higher_order_der_Z_bZ}
    \D^{(k)}Z_t=\D^{(k)}\bZ_t=0, \quad k\geq 2,\ t\geq 0.
\end{align}

To compute higher order derivatives of $U_t^i$, $i=1,\ldots,d$, let us rewrite~\eqref{eq:Y_after_Duhamel} as
\begin{align*}U^i_t&=\int_0^t e^{-\li s}F^i_l(Y_s)dW^l_s+\eps\int^t_0e^{-\li s}G^i(Y_s)ds\\
\notag  
   &=\int_0^t e^{-\li s}F^i_l\left( e^{\lam s}(y+\eps U_s)\right)dW^l_s+\eps\int^t_0e^{-\li s}G^i\left( e^{\lam s }(y+\eps U_s)\right)ds   
\end{align*}
and apply formula (2.54) in~\cite[Section 2.2]{nualart} to this equation which plays the role of 
of equation~(2.37) therein.  For $r_1, r_2\leq t \leq T$, we obtain
\begin{align}
    \begin{split}\label{eq:D^2U}
    &\D^{j_1,j_2}_{r_1,r_2}U^i_t \quad =\quad  e^{-\li r_1}\partial_k F^i_{j_1}(Y_{r_1})\D^{j_2}_{r_2}Y^k_{r_1}+e^{-\li r_2}\partial_k F^i_{j_2}(Y_{r_2})\D^{j_1}_{r_1}Y^k_{r_2}
    \\
    & + \int_{r_1\vee r_2}^t e^{-\li s }\left(\partial^2_{k_1,k_2} F^i_l(Y_s)\right)\left(\D^{j_1}_{r_1}Y^{k_1}_s\right)\left(\D^{j_2}_{r_2}Y^{k_2}_s\right)dW^l_s +\int_{r_1\vee r_2}^t e^{-\li s }\partial_{k} F^i_l(Y_s)\D^{j_1,j_2}_{r_1,r_2}Y^k_s  dW^l_s
    \\
    & +  \eps\int_{r_1\vee r_2}^t e^{-\li s }\left(\partial^2_{k_1,k_2} G^i(Y_s)\right)\left(\D^{j_1}_{r_1}Y^{k_1}_s\right)\left(\D^{j_2}_{r_2}Y^{k_2}_s\right)ds + \eps\int_{r_1\vee r_2}^t e^{-\li s }\partial_{k} G^i(Y_s)\D^{j_1,j_2}_{r_1,r_2}Y^k_s  ds.
        \end{split}
\end{align}
We can also derive this formula directly from~\eqref{eq:formula_of_DU}.

Let us use this identity to estimate $  \pbracket{\D^{j_1,j_2}_{r_1,r_2}U^i_t} $ for $i\leq\nu$. In this case, we have $e^{-\li s}\leq 1$ for all $s\geq 0$.
We use properties~\eqref{eq:pBracket_property_1} and ~\eqref{eq:pBracket_property_2}, the boundedness of the derivatives of $F$ and $G$
assumed in \eqref{eq:modified_F}, 
the estimate $\pbracket{(\D^{j_1}_{r_1}Y^{k_1}_s)(\D^{j_2}_{r_2}Y^{k_2}_s)}\leq \pbracketX{\D^{j_1}_{r_1}Y^{k_1}_s}{2p} \pbracketX{\D^{j_2}_{r_2}Y^{k_2}_s}{2p} $ implied by the Cauchy--Schwarz inequality, and 
~\eqref{eq:DY_formula} to obtain that, uniformly in $r_1,r_2\leq t\leq T$, 
\begin{align*}
\begin{split}
    \pbracket{\D^{j_1,j_2}_{r_1,r_2}U^i_t} & \lesssim \pbracket{\D^{j_2}_{r_2}Y^k_{r_1}} + \pbracket{\D^{j_1}_{r_1}Y^k_{r_2}}
    \\
    &+ \left(1+\eps^2T\right)\sum_{k_1,k_2 = 1}^d\int_{r_1\vee r_2}^t  \pbracketX{\D^{j_1}_{r_1}Y^{k_1}_s}{2p} \pbracketX{\D^{j_2}_{r_2}Y^{k_2}_s}{2p} ds
    \\
    &+\left(1+\eps^2T\right) \sum_{k=1}^d \int_{r_1\vee r_2}^t \pbracket{\D^{j_1,j_2}_{r_1,r_2}Y^k_s}  ds,\quad i \leq\nu.
\end{split}
\end{align*}
Similarly, using \eqref{eq:D^2U}, the relation in \eqref{eq:DY_formula}, and that $e^{\li(t-s)}\leq 1$ for all $s\leq t$ when $i>\nu$, we have exactly the same bound for $\pbracket{\D^{j_1,j_2}_{r_1,r_2}N^i_t}$, $i>\nu$, uniformly in $r_1,r_2\leq t\leq T$: 
\begin{align*}
    \pbracket{\D^{j_1,j_2}_{r_1,r_2}N^i_t} & \lesssim \pbracket{\D^{j_2}_{r_2}Y^k_{r_1}} + \pbracket{\D^{j_1}_{r_1}Y^k_{r_2}}
    \\
    &+ \left(1+\eps^2T\right)\sum_{k_1,k_2 = 1}^d\int_{r_1\vee r_2}^t  \pbracketX{\D^{j_1}_{r_1}Y^{k_1}_s}{2p} \pbracketX{\D^{j_2}_{r_2}Y^{k_2}_s}{2p} ds
    \\
    &+\left(1+\eps^2T\right) \sum_{k=1}^d \int_{r_1\vee r_2}^t \pbracket{\D^{j_1,j_2}_{r_1,r_2}Y^k_s}  ds,\quad i >\nu.
\end{align*}
Applying~\eqref{eq:DY_est} to bound the first order derivatives of $Y$, using \eqref{eq:DY_formula} to rewrite the second order derivatives of $Y$ in terms of $U$ for $k\leq \nu$ and in terms of $N$ for $k>\nu$, and then applying~\eqref{eq:choice_of_theta} to bound $T$ and $e^{2\lk s}$ for $k\leq \nu$, one can see that, uniformly in $r_1,r_2\leq t\leq T$, $i\leq \nu$ and $m>\nu$, 
\begin{align}\label{eq:methods_treating_2nd_der} 
    \pbracket{\D^{j_1,j_2}_{r_1,r_2}U^i_t}, \ \pbracket{\D^{j_1,j_2}_{r_1,r_2}N^m_t} \lesssim \eps +\eps  \int_{r_1\vee r_2}^t \bigg(\sum_{k\leq \nu}\pbracket{\D^{j_1,j_2}_{r_1,r_2}U^k_s} + \sum_{k > \nu} \pbracket{\D^{j_1,j_2}_{r_1,r_2}N^k_s}\bigg)ds.
\end{align}
Let us momentarily fix $j_1,j_2$, and set
\begin{align*}
    a^i(r_1,r_2,t)=
    \begin{cases}
     \pbracket{\D^{j_1,j_2}_{r_1,r_2}U^i_t}, \quad i\leq \nu,\\
     \pbracket{\D^{j_1,j_2}_{r_1,r_2}N^i_t}=e^{2\lj t}\pbracket{\D^{j_1,j_2}_{r_1,r_2}U^i_t}, \quad i> \nu.
    \end{cases}
\end{align*}
Plug this into~\eqref{eq:methods_treating_2nd_der} to obtain that, uniformly in $r_1,r_2\leq t\leq T$,
\begin{align*}
    a^i(r_1,r_2,t)\lesssim \eps + \eps \sum_{k=1}^d\int_{r_1\vee r_2}^t a^k(r_1,r_2,s)ds, \quad i\in\{1,2,\dots,d\}.
\end{align*}

Lemma~\ref{lem:solve_inequality_system} implies
\begin{align}\label{eq:D^2U,D^2_est}
        \pbracket{\D^{j_1,j_2}_{r_1,r_2}U^i_t},\ \pbracket{\D^{j_1,j_2}_{r_1,r_2}N^m_t} \lesssim \eps, \quad \for i\leq \nu,\ m>\nu; \ r_1, r_2, t\leq T,\ 1\leq j_1,j_2\leq d,
\end{align}
This result, due to~\eqref{eq:DY_formula} and~\eqref{eq:choice_of_theta}, yields
\begin{align}\label{eq:D^2Y_est}
    \pbracket{\D^{j_1,j_2}_{r_1,r_2}Y^i_t} \lesssim \eps^2, \quad \for 1\leq i\leq d; \ r_1, r_2, t\leq T,\ 1\leq j_1,j_2\leq d,
\end{align}
which is for later use.

From~\eqref{eq:pBracket_property_3} and~\eqref{eq:D^2U,D^2_est}, we obtain
\begin{align*}
    \pbracket{\left\|\D^{(2)} U^i_T \right\|_{\Hil^{\otimes  2}}} \lesssim  \sum_{j_1,j_2=1}^d\int_{[0,T]^2}\pbracket{\D^{j_1,j_2}_{r_1,r_2}U^i_T}dr_1dr_2 \lesssim \eps T^2,\quad i\le \nu,
\end{align*}
and a similar bound for $\pbracket{\|\D^{(2)} N^i_T \|_{\Hil^{\otimes  2}}}$, $i\ge \nu$. This along with~\eqref{eq:choice_of_theta},~\eqref{eq:D^2U,D^2_est} and~\eqref{eq:higher_order_der_Z_bZ} implies
\begin{align}\label{eq:D^2(U-Z)_est}
\begin{split}
        &
        \pbracket{\left\|\D^{(2)} \left(U^{\leq \nu}_T, N^{>\nu}_T\right) \right\|_{\Hil^{\otimes  2}}} \lesssim \eps^\frac{1}{2}, 
        \\
        &\pbracket{\left\|\D^{(2)}\left( \left(U^{\leq \nu}_T, N^{>\nu}_T\right)-Z_T \right)\right\|_{\Hil^{\otimes  2}}}\lesssim\eps^\frac{1}{2},
        \\
        &\pbracket{\left\|\D^{(2)}\left( \left(U^{\leq \nu}_T, N^{>\nu}_T\right)-\bZ_T\right)\right \|_{\Hil^{\otimes  2}}}\lesssim \eps^\frac{1}{2}.
\end{split}
\end{align}

\subsubsection{3rd order derivatives} Similarly to the above argument for second order derivatives, we apply
~(2.54) from~\cite[Section~2.2]{nualart} to obtain that
for $r_1,r_2,r_3\leq t\leq T$,
\begin{align*}
    &\D^{j_1,j_2,j_3}_{r_1,r_2,r_3}U^i_t 
    = A^{j_1,j_2,j_3}_{r_1,r_2,r_3}+ 
    \int_{r_1\vee r_2\vee r_3}^t e^{-\lambda^i s}B^{j_1,j_2,j_3}_{r_1,r_2,r_3,l}(s)dW_s^l+ \eps \int_{r_1\vee r_2\vee r_3}^t e^{-\lambda^i s}C^{j_1,j_2,j_3}_{r_1,r_2,r_3}(s)ds,
\end{align*}
where: $A^{j_1,j_2,j_3}_{r_1,r_2,r_3}$ is a linear combination of terms 
\begin{align*}
e^{-\li r_{n_0}}\partial^2_{k_1,k_2}F^i_{j_{n_0}}(Y_{r_{n_0}})\prod_{m=1}^2\D^{j_{n_m}}_{r_{n_m}}Y^{k_m}_{r_{n_0}},\quad 
e^{-\li r_{n_0}} \partial_{k}F^i_{j_{n_0}}(Y_{r_{n_0}})\D^{j_{n_1},j_{n_2}}_{r_{n_1},r_{n_2}}Y^{k}_{r_{n_0}}, 
\end{align*}
$B^{j_1,j_2,j_3}_{r_1,r_2,r_3,l}(s)$ is a linear combination of terms 
\begin{align*}
\partial^3_{k_1,k_2,k_3}F^i_l(Y_s)\prod_{m=1}^3\D^{j_{m}}_{r_{m}}Y^{k_m}_s,\quad
\partial^2_{k_1,k_2}F^i_l(Y_s)\left(\D^{j_{n_1},j_{n_2}}_{r_{n_1},r_{n_2}}Y^{k_1}_s\right)\left(\D^{j_{n_3}}_{r_{n_3}}Y^{k_2}_s\right),\quad 
\partial_{k}F^i_l(Y_s)\D^{j_{1},j_{2},j_{3}}_{r_{1},r_{2},r_{3}}Y^{k}_s, 
\end{align*}
$C^{j_1,j_2,j_3}_{r_1,r_2,r_3}(s)$ is a linear combination of terms 
\begin{align*}
\partial^3_{k_1,k_2,k_3}G^i(Y_s)\prod_{m=1}^3\D^{j_{m}}_{r_{m}}Y^{k_m}_s,\quad
\partial^2_{k_1,k_2}G^i(Y_s)\left(\D^{j_{n_1},j_{n_2}}_{r_{n_1},r_{n_2}}Y^{k_1}_s\right)\left(\D^{j_{n_3}}_{r_{n_3}}Y^{k_2}_s\right),\quad 
\partial_{k}G^i(Y_s)\D^{j_{1},j_{2},j_{3}}_{r_{1},r_{2},r_{3}}Y^{k}_s.
\end{align*}
In all these terms, $\{n_0,n_1,n_2\} = \{1,2,3 \}$.

Following the same steps as in the analysis of~\eqref{eq:D^2U}, 
applying the estimates of first and second derivatives that we already have established in~\eqref{eq:DY_est} and~\eqref{eq:D^2Y_est}  we obtain, for $i\leq \nu$, $m>\nu$, uniformly in $r_1,r_2,r_3\leq t\leq T$,
\begin{align*}
    \pbracket{\D^{j_1,j_2,j_3}_{r_1,r_2,r_3}U^i_t},\ \pbracket{\D^{j_1,j_2,j_3}_{r_1,r_2,r_3}N^m_t} \lesssim \eps^2+ \eps \left(\int_r^t\sum_{k\leq \nu} \pbracket{\D^{j_1,j_2,j_3}_{r_1,r_2,r_3}U^k_s} +\sum_{k>\nu} \pbracket{\D^{j_1,j_2,j_3}_{r_1,r_2,r_3}N^k_s} ds\right),
\end{align*}
where $r=r_1\vee r_2\vee r_3$.
Lemma~\ref{lem:solve_inequality_system} then implies that for $i\leq \nu$, $m>\nu$, and $1\leq j_1,j_2,j_3\leq d$,
\begin{align*}
        \pbracket{\D^{j_1,j_2,j_3}_{r_1,r_2,r_3}U^i_t},\ \pbracket{\D^{j_1,j_2,j_3}_{r_1,r_2,r_3}N^m_t} \lesssim \eps^2, \quad  \ r_1, r_2, r_2, t\leq T.
\end{align*}
This along with~\eqref{eq:choice_of_theta} and~\eqref{eq:pBracket_property_3} implies
\begin{align}\label{eq:D^3U_est}
        \pbracket{\left\|\D^{(3)} \left(U^{\leq \nu}_T, N^{>\nu}_T\right) \right\|_{\Hil^{\otimes  3}}} \lesssim \eps.
\end{align}

\subsubsection{Sobolev norms} Note that estimates above are obtained for an arbitrarily fixed $p\geq 2$. 
Then, 
0th order derivative estimates~\eqref{eq:prelim_est_U,V,M,N}, \eqref{eq:0th_der_U-Z_est_i<nu},~\eqref{eq:0th_der_U-Z_est_i>nu},~\eqref{eq:0th_der_U-bZ_est_i<nu},~\eqref{eq:0th_der_U-bZ_est_i>nu}, 1st order derivative estimates~\eqref{eq:DU_est},~\eqref{eq:D(U-Z)_<_nu_est},~\eqref{eq:D(U-Z)_>_nu_est},~\eqref{eq:D(U-bZ)_<_nu_est},~\eqref{eq:D(U-bZ)_>_nu_est}, 2nd order derivative estimates
~\eqref{eq:D^2(U-Z)_est} and 3rd order derivative estimates~\eqref{eq:D^3U_est} along with Jensen's inequality yield the following bounds on Sobolev norms: for every $p\geq 1$, 
there is $\delta>0$ such that
\begin{align}\label{eq:derivative_est_conclusion_result_U_N}
    \begin{split}
        \left\|\left(U^{\leq \nu}_T,N^{>\nu}_T\right)\right\|_{3, p}
        &\lesssim_p 1,
        \\
        \left\|\left(U^{\leq \nu}_T,N^{>\nu}_T\right)- Z_T\right\|_{2, p}&\lesssim_p \eps^\delta \left(1+\pp\left(\eps^{-\upsilon}\left|y^{\leq\nu}\right|\right)\right),
        \\
        \left\|\left(U^{\leq \nu}_T,N^{>\nu}_T\right)- \bZ_T\right\|_{2, p}&\lesssim_p \eps^\delta \left(1+\pp\left(\eps^{-\upsilon}\left|y^{\leq\nu}\right|\right)\right)+\left|y^{>\nu}\right|.
    \end{split}
\end{align}
Since $Z_T$ and $\bZ_T$ are linear in $W$, it is easy to compute
\begin{align}\label{eq:derivative_est_conclusion_result_Z}
\left\|Z_T \right\|_{3,p},\quad 
\left\|\bZ_T \right\|_{3,p} \lesssim_p 1, \quad p\geq 1.
\end{align}

\subsection{Malliavin matrix estimates}
We recall the definition of Malliavin matrices given in \eqref{eq:def_malliavin_matrix}. We replace $\mathcal{T}$ therein by $T$ given in \eqref{eq:t_k}, or equivalently, replace $\mathscr{H}_\mathcal{T}$ therein by $\Hil$ given in \eqref{eq:def_Hil}. We want to show that for each $p\geq 1$ there is a constant $C_p$ such that
\begin{align}\label{eq:goal_of_malliavin_matrix_est}
   \E\left|\det\sigma_{(U^{\leq \nu}_T, N^{>\nu}_T)}\right|^{-p},\
        \E\left|\det\sigma_{Z_T}\right|^{-p},\ \ \E\left|\det\sigma_{\bZ_T}\right|^{-p}&\leq C_p, \quad \eps\in (0,1).
\end{align}

 Since the Malliavin matrices of $Z_T,Z^{\leq \nu},\bZ^{\leq \nu}_T,\bZ_T$ are deterministic,  the corresponding bounds are, in fact, trivial, and we need to 
consider only the negative moments of and $\det\sigma_{(U^{\leq \nu}_T, N^{>\nu}_T)}$.

\subsubsection{Boundedness of  $\E|\det\sigma_{(U^{\leq \nu}_T, N^{>\nu}_T)}|^{-p}$} 
We express $Y_s$ in terms of $U_s$ using~\eqref{eq:Y_after_Duhamel}, and rewrite~\eqref{eq:formula_of_DU} as 
\begin{align*}
    \D^j_rU^i_t = A^i_j(r) + \int_r^t \bA^i_{k,l}(s)\D^j_rU^k_sdW^l_s +\int_r^t \bB^i_k(s)\D^j_rU^k_s ds,
\end{align*}
where 
\begin{align}\label{eq:def_A_bA_bB}
        A^i_j(r)=e^{-\li r}F^i_j(Y_r), \quad \bA^i_{k,l}(s)=\eps e^{(\lk-\li)s}\partial_k F^i_l(Y_s),\quad 
    \bB^i_k(s)=\eps^2 e^{(\lk-\li)s}\partial_k G^i(Y_s).
\end{align}
Due to~\eqref{eq:modified_F} and~\eqref{eq:choice_of_theta}, for all $i,\ j ,\ k,\ l$, we have
\begin{align}\label{eq:A,bA,bB_est}
    \left|A^i_j(r)\right|\lesssim e^{-\li r},\ r\le T;\qquad \sup_{s\le T} \left|\bA^i_{k,l}(s)\right|\lesssim \eps^\frac{3}{4},\ \sup_{s\le T} \left|\bB^i_k(s)\right|\lesssim \eps^\frac{7}{4}.
\end{align}

Two useful $d\times d$-matrix-valued processes are given by 
\begin{align}\label{eq:def_YY_ZZ}
    \begin{split}
        \YY^i_j(t)=\delta^i_j &+ \int_0^t\bA^i_{k,l}(s)\YY^k_j(s)dW^l_s +\int_0^t\bB^i_k(s)\YY^k_j(s)ds,\\
        \ZZ^i_j(t)= \delta^i_j &-\int_0^t \bA^k_{j,l}(s)\ZZ^i_k(s)dW^l_s -\int_0^t\left(\bB^k_j(s)-\sum_{l=1}^d\bA^k_{m,l}(s)\bA^m_{j,l}(s)\right)\ZZ^i_k(s)ds,
    \end{split}
\end{align}
where $\delta^i_j$ is the Kronecker symbol.
They correspond to (2.57) and (2.58) in \cite[Section 2.3.1]{nualart}.
Using the It\^o's formula, one can check that (see the computations below (2.58) in \cite[Section 2.3.1]{nualart})
\begin{align}\label{eq:ZY=YZ=I}
    \ZZ(t)\YY(t)=\YY(t)\ZZ(t)=I,
\end{align}
where $I$ the identity matrix . Furthermore, (2.60) and (2.61) from \cite[Section 2.3.1]{nualart} show that 
\begin{align}\label{eq:sigma_U_T_formula}
    \begin{split}
        \sigma_{U_t}=\YY(t)\CC_t\YY(t)^\intercal
    \end{split}
\end{align}
where $\intercal$ denotes the matrix transpose operation and 
\begin{align}\label{eq:def_of_C_t}
    \CC^{ij}_t=\sum_{l=1}^d \int_0^t \ZZ^i_k(s)A^k_l(s)\ZZ^j_m(s)A^m_l(s)ds.
\end{align}

Let $\Lambda=\Lambda(T(\eps))$ be a $d\times d$ diagonal matrix with diagonal entries
\begin{align}\label{eq:def_of_Lambda}
    \Lambda^i_i=
    \begin{cases}
    1 \quad &\for i\leq \nu,\\
    e^{\li T}\leq 1 \quad &\for i>\nu.
    \end{cases}
\end{align}

Due to~\eqref{eq:U}, we have $\D^j_rN^i_T=e^{\li T}\D^j_rU^i_T=\Lambda^i_i\D^j_rU^i$ for $i>\nu$, which together with~\eqref{eq:sigma_U_T_formula} implies that 
\begin{align*}\sigmaUN=\Lambda \sigma_{U_T}\Lambda^\intercal= \Lambda \YY(T)\CC_T\YY(T)^\intercal\Lambda^\intercal.
\end{align*}

Let us define a $d\times d$-matrix valued process $\tYY(t)$ by (no summation over repeated indices is involved)
\begin{align*}\tYY^i_j(t)= \frac{\Lambda^i_i}{\Lambda^j_j}\YY^i_j(t),\quad 1\leq i, j\leq d,
\end{align*}
which satisfies
\begin{align*}\Lambda\YY(T)= \tYY(T)\Lambda,
\end{align*}
which, due to~\eqref{eq:ZY=YZ=I}, implies that
\begin{align}
    \det \ZZ(T)&=(\det \YY(T))^{-1}=(\det \tYY(T))^{-1},\label{eq:detZZ=dettYY^-1} \\
    \sigmaUN &= \tYY(T)\Lambda \CC_T\Lambda^\intercal\tYY(T)^\intercal.\label{eq:sigmaUN_under_tYY}
\end{align}
Then,~\eqref{eq:detZZ=dettYY^-1},~\eqref{eq:sigmaUN_under_tYY} and the Cauchy--Schwarz inequality yield
\begin{align}\label{eq:Cauchy--Schwarz_ineq_sigmaUN}
    \E\left|\det\sigmaUN\right|^{-p}\leq \left(\E\left|\det\Lambda \CC_T\Lambda^\intercal\right|^{-2p}\right)^\frac{1}{2}\left(\E\left|\det \ZZ(T)\right|^{4p}\right)^\frac{1}{2}.
\end{align}
To estimate $\E|\det \ZZ(T)|^{p}$ for $p\geq 1$, we study objects related to $\ZZ(t)$, which will be needed later. Let us define 
\begin{align}\label{eq:def_bZZ_hZZ}
    \bZZ^i_j(t)=\sup_{0\leq s\leq t}\left|\ZZ^i_j(s)\right|, \quad \hZZ^i_j(t)=\ZZ^i_j(t)-\delta^i_j.
\end{align}
Displays~\eqref{eq:def_YY_ZZ} and~\eqref{eq:A,bA,bB_est} imply that
\begin{align*}
    \bZZ^i_j(T) \lesssim \delta^i_j + \sup_{0\leq r\leq T}\left|\int_0^r \bA^k_{j,l}\ZZ^i_k(s)dW^l_s\right|+\int_0^T \eps^\frac{3}{2} \sum_{k=1}^d\bZZ^i_k(s)ds.
\end{align*}
We take $[\,\cdot\,]_p$ of both sides and use~\eqref{eq:A,bA,bB_est} and~\eqref{eq:choice_of_theta} to obtain, for $\eps \in (0,1)$,
\begin{align*}
    \pbracket{\bZZ^i_j(T)}&\lesssim_p \delta^i_j+\sum_{k=1}^d\int_0^T\left(\eps^\frac{3}{2} + \eps^3T\right)\pbracket{\bZZ^i_k(s)}ds \lesssim_p \delta^i_j+\eps^\frac{3}{2} \sum_{k=1}^d\int_0^T\pbracket{\bZZ^i_k(s)}ds.
\end{align*}
Lemma~\ref{lem:solve_inequality_system} implies now that for each $p\geq 2$,
\begin{align}\label{eq:bZZ_est}
    \pbracket{\bZZ^i_j(T)} \lesssim_p 1, \quad \eps \in (0,1).
\end{align}
A similar calculation reveals that
\begin{align*}
    \pbracket{\sup_{0\leq t\leq T}\left|\hZZ^i_j(t)\right|}
    \lesssim_p \eps^\frac{3}{2} \sum_{k=1}^d\int_0^T\pbracket{\bZZ^i_k(s)}ds.
\end{align*}
Plugging~\eqref{eq:bZZ_est} into the above display we obtain, for each $p\geq 2$,
\begin{align}\label{eq:hZZ_est}
    \pbracket{\sup_{0\leq t\leq T}\left|\hZZ^i_j(t)\right|}\lesssim_p \eps, \quad \eps \in(0,1). 
\end{align}

Expressing $\det \ZZ(T)$ as a polynomial of the matrix entries, applying~\eqref{eq:bZZ_est} and H\"older's inequality, we see that for each $p\geq 1$, there is $C_p>0$ such that
\begin{align*}
    \E\left|\det \ZZ(T)\right|^p\leq C_p, \quad \eps \in (0,1).
\end{align*}

In view of the above display and~\eqref{eq:Cauchy--Schwarz_ineq_sigmaUN}, to bound $\E|\det\sigmaUN|^{-p}$, it remains to show that $\E|\det\Lambda \CC_T\Lambda^\intercal|^{-2p}$ is bounded.

Let $\mu_{\Lambda \CC_T\Lambda^\intercal}$ be the smallest eigenvalue of $\Lambda \CC_T\Lambda^\intercal$, which is nonnegative since $\Lambda \CC_T\Lambda^\intercal$ is positive semi-definite. Then, it suffices to show, for each $p\geq 1$,
 there is $C_p>0$ such that
\begin{align}\label{eq:smalles_eigenvalue_for_Lambda_C_Lambda}
    \Prob{\mu_{\Lambda \CC_T\Lambda^\intercal}\leq \zeta}\leq C_p\zeta^p,\quad \zeta\geq 0.
\end{align}
To this end, we will use the following lemma  (\cite[Lemma 5.4]{YB-and-HBC:10.1214/20-AAP1599}):
\begin{lemma}\label{lem:smallest_eigenvalue}
Let $\mathcal{A}$ be a symmetric positive semi-definite
random $d\times d$ matrix. Let~$\mu$ be its smallest eigenvalue. Then for each $p\geq 1$, there is $C_{p,d}>0$ such that
\begin{align*}
    \Prob{\mu \leq \zeta}\leq C_{p,d} \left(\sup_{|v|=1} \E{| v\cdot \mathcal{A}v|^{-(p+2d)} }+ \E\left|\sum_{i,j =1}^d |\mathcal{A}^{ij}|^2\right|^\frac{p}{2}\right)\zeta^p,\quad \zeta\geq 0.
\end{align*}
\end{lemma}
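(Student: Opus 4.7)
The statement is a general, $\eps$-free fact about random symmetric positive semi-definite matrices, so the proof is a standard combination of an $\eta$-net argument on the sphere with a Markov-type truncation on the size of $\mathcal{A}$. For $\zeta\geq 1$ the inequality is trivial (absorb constants into $C_{p,d}$), so I will assume $\zeta\in(0,1]$ from here on and write $\|\mathcal{A}\|_{\mathrm{HS}}^2=\sum_{i,j}|\mathcal{A}^{ij}|^2$.

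The plan is to split
\begin{equation*}
\Pp\{\mu\leq \zeta\}\leq \Pp\{\mu\leq\zeta,\ \|\mathcal{A}\|_{\mathrm{HS}}\leq \zeta^{-1}\}+\Pp\{\|\mathcal{A}\|_{\mathrm{HS}}>\zeta^{-1}\}.
\end{equation*}
Markov's inequality immediately handles the second term, giving the bound $\zeta^{p}\,\E\|\mathcal{A}\|_{\mathrm{HS}}^{p}$, which is precisely the second contribution on the right-hand side of the lemma. So the real work is in bounding the first term, call it $\mathtt{P}$, by a constant multiple of $\zeta^{p}\sup_{|v|=1}\E|v\cdot\mathcal{A}v|^{-(p+2d)}$.

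On the first event, let $v_*$ be a (random) unit eigenvector of $\mathcal{A}$ corresponding to~$\mu$, so that $v_*\cdot\mathcal{A}v_*=\mu\leq \zeta$. Fix a deterministic $\eta$-net $\{v_1,\dots,v_N\}\subset S^{d-1}$ with $\eta=\zeta^{2}$ and $N\leq C_d\,\eta^{-(d-1)}\leq C_d\,\zeta^{-2d}$. Pick the net point $v_k$ nearest to $v_*$, so $|v_*-v_k|\leq\eta$. Writing
\begin{equation*}
v_k\cdot\mathcal{A}v_k=v_*\cdot\mathcal{A}v_*+(v_k-v_*)\cdot\mathcal{A}v_k+v_*\cdot\mathcal{A}(v_k-v_*),
\end{equation*}
and using $|u\cdot\mathcal{A}w|\leq\|\mathcal{A}\|_{\mathrm{op}}|u||w|\leq\|\mathcal{A}\|_{\mathrm{HS}}|u||w|$, we get $|v_k\cdot\mathcal{A}v_k|\leq\zeta+2\eta\|\mathcal{A}\|_{\mathrm{HS}}\leq\zeta+2\zeta^{2}\cdot\zeta^{-1}=3\zeta$ on the truncation event. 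Therefore
\begin{equation*}
\mathtt{P}\leq \sum_{k=1}^{N}\Pp\{|v_k\cdot\mathcal{A}v_k|\leq 3\zeta\}\leq N\,(3\zeta)^{q}\sup_{|v|=1}\E|v\cdot\mathcal{A}v|^{-q}
\end{equation*}
by Markov's inequality in the form $\Pp\{X\leq t\}\leq t^{q}\E|X|^{-q}$ for nonnegative $X$. Choosing $q=p+2d$ turns the power of $\zeta$ into $q-2d=p$, which matches the claimed exponent exactly; combining with the tail bound for $\|\mathcal{A}\|_{\mathrm{HS}}$ and setting $C_{p,d}:=\max\{1,\,3^{p+2d}C_d\}$ finishes the argument.

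The only potential obstacle is arithmetic bookkeeping—the exponents $\eta=\zeta^{2}$, $R=\zeta^{-1}$, $q=p+2d$ must be matched so that (i) Markov on $\|\mathcal{A}\|_{\mathrm{HS}}$ contributes $\zeta^{p}$, (ii) the net error $2\eta R$ does not dominate $\zeta$, and (iii) the cardinality $N=O(\zeta^{-2(d-1)})$ of the net is absorbed by $\zeta^{q}$. The particular choice above is essentially forced by these three constraints, and everything else is bounded linear algebra. No probabilistic subtlety enters beyond the two applications of Markov's inequality.
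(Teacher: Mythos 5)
Your argument is correct. A point of context first: the paper does not prove Lemma~\ref{lem:smallest_eigenvalue} itself but imports it from \cite[Lemma 5.4]{YB-and-HBC:10.1214/20-AAP1599}, and the scheme you use — truncate $\|\mathcal{A}\|_{\mathrm{HS}}$ at $\zeta^{-1}$ by Markov, cover $S^{d-1}$ by a $\zeta^{2}$-net of $O(\zeta^{-2d})$ points, move the minimizing eigenvector to the nearest net point at cost $2\eta\|\mathcal{A}\|_{\mathrm{HS}}\le 2\zeta$, take a union bound, and apply Markov to the negative moment of order $q=p+2d$ — is exactly the standard proof of such statements; the exponent $p+2d$ in the statement is the fingerprint of that choice of net, so you have in effect reconstructed the cited argument. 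All steps check out: the inclusion $\{\mu\le\zeta,\ \|\mathcal{A}\|_{\mathrm{HS}}\le\zeta^{-1}\}\subset\bigcup_k\{v_k\cdot\mathcal{A}v_k\le 3\zeta\}$ is a pathwise set inclusion, so no measurability issue arises from the random eigenvector, and positive semi-definiteness gives $|v_k\cdot\mathcal{A}v_k|=v_k\cdot\mathcal{A}v_k$. The one step you wave at is the reduction to $\zeta\le 1$: to ``absorb constants'' when $\zeta\ge 1$ you need the bracket on the right-hand side to be bounded below by a positive constant depending only on $p,d$; this does hold, since $v\cdot\mathcal{A}v\le\|\mathcal{A}\|_{\mathrm{HS}}$ for unit $v$ gives
\begin{align*}
\sup_{|v|=1}\E|v\cdot\mathcal{A}v|^{-(p+2d)}+\E\|\mathcal{A}\|_{\mathrm{HS}}^{p}\ \ge\ \E\left[\|\mathcal{A}\|_{\mathrm{HS}}^{-(p+2d)}+\|\mathcal{A}\|_{\mathrm{HS}}^{p}\right]\ \ge\ \min_{x>0}\left(x^{-(p+2d)}+x^{p}\right)>0,
\end{align*}
but it deserves a line in the write-up. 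This is a cosmetic omission, not a gap.
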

We will apply this lemma to $\mathcal{A}=\Lambda \CC_T\Lambda^\intercal$.

For the second term in the parentheses, it suffices to fix arbitrary $p\geq 1$ and estimate $\E|(\Lambda C_T\Lambda^\intercal)^{ij}|^p$.
Note that, due to~\eqref{eq:def_of_C_t} and~\eqref{eq:def_of_Lambda}, 
\begin{align*}
    (\Lambda \CC_T\Lambda^\intercal)^{ij} = \sum_{1\leq k,l,m\leq d}\int_0^T \Lambda^i_i\ZZ^i_k(s)A^k_l(s)\Lambda^j_j\ZZ^j_m(s)A^m_l(s)ds.
\end{align*}
We split terms on the right of the above display into three cases.

The first case is where $k\neq i$ and $m\neq j$, in which $\ZZ^i_k(s)=\hZZ^i_k(s)$ and $\ZZ^j_m(s)=\hZZ^j_m(s)$ (recall the definition of $\hZZ$ in~\eqref{eq:def_bZZ_hZZ}). Using~\eqref{eq:pBracket_property_2},~\eqref{eq:A,bA,bB_est},~\eqref{eq:def_of_Lambda},~\eqref{eq:hZZ_est}, and~\eqref{eq:choice_of_theta}, we obtain (with no summation over repeated indices) by the Cauchy--Schwartz inequality 
\begin{align*}
    &\pbracket{\sum_{l=1}^d\int_0^T \Lambda^i_i\ZZ^i_k(s)A^k_l(s)\Lambda^j_j\ZZ^j_m(s)A^m_l(s)ds}
    \\&\lesssim_p T\int_0^T e^{2|\lam^k|T}e^{2|\lam^m|T}\pbracketX{\hZZ^i_k(s)}{2p}\pbracketX{\hZZ^j_m(s)}{2p}ds\\
    &\lesssim_p T\int_0^T \eps^{-\frac{1}{4}}\eps^{-\frac{1}{4}}\eps^2ds\leq T^2\eps^\frac{3}{2}\leq 1.
\end{align*}
The second case is where $k=i$ and $m=j$. Applying the same estimates but with~\eqref{eq:bZZ_est} in place of~\eqref{eq:hZZ_est}, we obtain 
\begin{align*}
    &\E\left|\sum_{l=1}^d\int_0^T \Lambda^i_i\ZZ^i_i(s)A^i_l(s)\Lambda^j_j\ZZ^j_j(s)A^j_l(s)ds\right|^p\\
    &\lesssim_p \sum_{l=1}^d\left(\int_0^T \left(\E\left| \Lambda^i_i\ZZ^i_i(s)A^i_l(s)\Lambda^j_j\ZZ^j_j(s)A^j_l(s)\right|^p\right)^\frac{1}{p}ds\right)^p \\ &\lesssim_p \left(\int_0^T \Lambda^i_ie^{-\li s }\Lambda^j_je^{-\lj s}  ds\right)^p\lesssim_p 1,
\end{align*}
where the last $\lesssim$ follows from~\eqref{eq:lam_i} and~\eqref{eq:def_of_Lambda}.

The third case is where either $k=i$ and $m\neq j$, or $k\neq i$ and $m=j$. It can be treated using a combination of above arguments.

Therefore, we conclude that $\E|(\Lambda C_T\Lambda^\intercal)^{ij}|^p\lesssim_p 1$, for each $p\geq 1$. 
Thus to derive~\eqref{eq:smalles_eigenvalue_for_Lambda_C_Lambda} from Lemma~\ref{lem:smallest_eigenvalue}, we only need to verify that for each $p\geq1$ there is $C_p$ such that
\begin{align}\label{eq:Prob<v,Lambda_C_T_Lambda_v>leq_zeta}
    \Prob{ v\cdot \left( \Lambda C_T\Lambda^\intercal v\right)\leq \zeta}\leq C_p \zeta^p, \quad \zeta>0,\ v\in \Sph^{d-1},\  \eps\in(0,1),
\end{align}
where $\Sph^{d-1}=\{x\in\R^d:\ |x|=1\}$ is the unit sphere.

\begin{proof}[Proof of~\eqref{eq:Prob<v,Lambda_C_T_Lambda_v>leq_zeta}] Due to~\eqref{eq:def_of_C_t}, one can see
\begin{align*}
    v\cdot( \Lambda C_T\Lambda^\intercal v) =\int_0^T|A(s)^\intercal \ZZ(s)^\intercal\Lambda v|^2ds.
\end{align*}
Using~\eqref{eq:def_A_bA_bB} and~\eqref{eq:modified_F}, we have
\begin{align}\label{eq:<v,C_Tv>_geq_c_0_integral}
    v\cdot( \Lambda C_T\Lambda^\intercal v) \geq 
    \int_0^T |R_s|^2ds,
\end{align}
where $R_t=(R_t^1,\ldots,R_t^d)$ is defined by
\begin{align}\label{eq:R^j_t}
    R^j_t= \sqrt{c_0}\sum_{i=1}^d  e^{-\lj t}\ZZ^i_j(t)\Lambda^i_iv^i,\quad j=1,2,\ldots,d,\ t\in[0,T],
\end{align}
with $c_0$ introduced in~\eqref{eq:modified_F} and the dependence on $v\in \Sph^{d-1}$ suppressed.
In this notation,~\eqref{eq:<v,C_Tv>_geq_c_0_integral} and~\eqref{eq:def_R^j_t} imply that
\begin{align*}
    \Prob{  v\cdot( \Lambda C_T\Lambda^\intercal v ) \leq \zeta }\leq \Prob{\int_0^T|R_s|^2ds\leq \zeta}.
\end{align*}
The desired result~\eqref{eq:Prob<v,Lambda_C_T_Lambda_v>leq_zeta} follows from the next lemma.
\end{proof}

\begin{lemma}\label{lem:near_zero_probability_est}
Let $R_s$ be given in~\eqref{eq:R^j_t} which depends on the choice of $v\in \Sph^{d-1}$. 
For each $p\geq 1$, there is $C_p>0$ independent of $v$ such that
\begin{align}\label{eq:desired_result_lemma_near_zero_probability_est}
    \Prob{\int_0^T|R_s|^2ds \leq \zeta}\leq C_p \zeta^{\frac{1}{16}p}, \quad \zeta>0,\ \eps\in(0,1).
\end{align}
\end{lemma}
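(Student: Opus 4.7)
The strategy is to exploit the fact that the deterministic approximation $R^{(0),j}_t := \sqrt{c_0}e^{-\lj t}\Lambda^j_j v^j$ (obtained by replacing $\ZZ(t)$ with the identity) satisfies a uniform lower bound $\int_0^T |R^{(0)}|^2 ds \geq c_* > 0$, independent of $v \in \Sph^{d-1}$ and $\eps$; and then to show that the stochastic perturbation $R - R^{(0)}$ cannot erase this lower bound except on an event of small probability. The uniform lower bound follows from a direct computation: for each $j \leq \nu$, $\int_0^T c_0 e^{-2\lj s}|v^j|^2 ds \geq c_0 |v^j|^2 (1 - e^{-2\lj})/(2\lj)$, and for each $j > \nu$, the change of variables $u = T - s$ together with $\Lambda^j_j = e^{\lj T}$ yields $\int_0^T c_0 e^{2\lj(T-s)}|v^j|^2 ds \geq c_0|v^j|^2(1 - e^{-2|\lj|})/(2|\lj|)$. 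Summing over $j$ and using $|v|^2 = 1$ and $T \geq 1$ gives $\int |R^{(0)}|^2 \geq c_*$.

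Next, since $|v|^2 = 1$, we dichotomize: either $|v^{\leq\nu}|^2 \geq 1/2$ (Case A, unstable mass) or $|v^{>\nu}|^2 > 1/2$ (Case B, stable mass). In each case pick $j_* \in \{1,\dots,d\}$ with $|v^{j_*}|^2 \geq 1/(2d)$ in the corresponding half, and choose an \emph{anchor time} $t_0 = 0$ in Case A and $t_0 = T$ in Case B. Both anchors have the property that $R^{(0),j_*}_{t_0} = \sqrt{c_0}v^{j_*}$ is bounded below by a constant $a_* > 0$ independent of $\eps$.

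In Case A the anchor value $R^{j_*}_0 = \sqrt{c_0}v^{j_*}$ is \emph{deterministic}, because $\ZZ(0) = I$. Writing $w^j(s) := (\ZZ(s)^\intercal \Lambda v)^j$, so that $R^{j_*}_s = \sqrt{c_0}e^{-\lj_* s}w^{j_*}(s)$, and using the linear SDE for $w$ (obtained from the SDE for $\ZZ$ in \eqref{eq:def_YY_ZZ}) together with \eqref{eq:A,bA,bB_est} and BDG, I would establish $\E \sup_{s \in [0,\Delta]} |R^{j_*}_s - R^{j_*}_0|^p \lesssim_p \Delta^{p/2}$. On the event $\{\sup_{[0,\Delta]} |R^{j_*}_s - R^{j_*}_0| \leq a_*/2\}$ we have $\int_0^\Delta |R^{j_*}|^2 ds \geq \Delta a_*^2/4$, so for $\Delta = 8\zeta/a_*^2$ the occurrence of $\{\int|R|^2 \leq \zeta\}$ forces the complementary bad event, whose Markov probability is $\lesssim_p \Delta^{p/2}/a_*^p \lesssim_p \zeta^{p/2}$. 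This is strictly stronger than the required $\zeta^{p/16}$.

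In Case B the anchor $R^{j_*}_T$ is \emph{not deterministic}: decomposing $R^{j_*}_T - \sqrt{c_0}v^{j_*} = \sqrt{c_0}\sum_i e^{-\lj_* T}\hZZ^i_{j_*}(T)\Lambda^i_i v^i$, the coefficients $c^i := e^{-\lj_* T}\Lambda^i_i$ satisfy $|c^i| \leq e^{|\lj_*|T} \leq \eps^{-1/8}$ thanks to the choice $\bar\theta \leq 1/(8\max|\lam|)$. Together with \eqref{eq:hZZ_est} (i.e.\ $\E \sup_t |\hZZ|^q \lesssim_q \eps^{q/2}$) this gives $\E|R^{j_*}_T - \sqrt{c_0}v^{j_*}|^q \lesssim_q \eps^{3q/8}$. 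The fluctuation $\sup_{s \in [T-\Delta,T]}|R^{j_*}_s - R^{j_*}_T|$ is again controlled in $L^q$ by $\Delta^{q/2}$ via BDG (after absorbing the $e^{-\lj_* T}$ factor into the smallness of $w^{j_*}(T)$). Combining, on the event where both deviations are smaller than $a_*/4$ we obtain $\int_{T-\Delta}^T|R^{j_*}|^2 ds \geq \Delta a_*^2/16$, whence
\[
\Prob\!\left\{\int_0^T|R|^2 \leq \zeta\right\} \leq C_q\left(\eps^{3q/8} + \zeta^{q/2}\right)/a_*^{2q}, \qquad \Delta = C\zeta/a_*^2.
\]
For $\zeta \geq \eps^6$ taking $q = p$ gives $\eps^{3p/8} \leq \zeta^{p/16}$ directly. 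For $\zeta < \eps^6$, one takes $q$ growing with $\log\zeta^{-1}/\log\eps^{-1}$, or alternatively replaces the Markov bound on the initial deviation by the exponential martingale inequality (Lemma~\ref{lem:exp-marting-ineq}) applied to $\hZZ^i_{j_*}(T)$, yielding stretched-exponential decay that beats any polynomial in $\zeta$.

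\textbf{Main obstacle.} The delicate step is Case B: the stable anchor $R^{j_*}_T$ carries a stochastic correction whose $L^p$ norm is only $\eps^{3/8}$, and the exponent $1/16$ in the final bound reflects precisely this factor (via $\bar\theta = 1/(8\max|\lam|)$ and the squaring in $|R|^2$). The bookkeeping must carefully track the interplay between the moment order $q$, the window width $\Delta$, the deviation thresholds, and the relative sizes of $\zeta$ and $\eps^6$. All other steps (the deterministic lower bound, the dichotomy, the BDG-based fluctuation estimates, and the Case A analysis) are relatively routine given the moment bounds \eqref{eq:bZZ_est} and \eqref{eq:hZZ_est} already established.
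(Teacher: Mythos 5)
Your Case A (the index $j_*\le\nu$ with $|v^{j_*}|^2\ge 1/(2d)$, anchored at $t=0$) is sound and in fact simpler than the paper's argument: the anchor value $R^{j_*}_0=\sqrt{c_0}v^{j_*}$ is deterministic, the window fluctuation is $O_{L^p}(\Delta^{1/2})$, and you get the stronger rate $\zeta^{p/2}$. The genuine gap is in Case B. There the anchor value $R^{j_*}_T$ is random, and your bound has the structure
\begin{align*}
\Pp\left\{\int_0^T|R_s|^2ds\le\zeta\right\}\ \le\ \Pp\left\{\left|R^{j_*}_T-\sqrt{c_0}v^{j_*}\right|>a_*/4\right\}\ +\ C_p\zeta^{p/2},
\end{align*}
where the first term depends on $\eps$ but \emph{not on $\zeta$}. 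For fixed $\eps$ it is a fixed positive number (generically), so as $\zeta\to0$ your upper bound converges to that number while the required bound $C_p\zeta^{p/16}$ (with $C_p$ independent of $\eps$) tends to $0$. Neither of your proposed repairs closes this: the exponential martingale inequality applied to $\hZZ^i_{j_*}(T)$ yields decay that is stretched-exponential in $\eps^{-1}$, which does not dominate $\zeta^{p/16}$ once $\zeta$ is super-polynomially small in $\eps$; and letting $q\sim\log\zeta^{-1}/\log\eps^{-1}$ fails because the constants in the moment bounds $\E\sup_t|\hZZ^i_j(t)|^q\lesssim_q\eps^{q/2}$ grow like $(Cq)^{q/2}$ (BDG), which exactly cancels the gain. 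The case $v=e_d$ shows you cannot fall back on Case A either, since then $R^j_0=0$ for all $j\le\nu$.

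The paper's proof is built precisely to avoid this: it never isolates a single random anchor time. Instead it uses the It\^o identity $\int_0^T\qd{M^j}_t\,dt=\int_0^T|R^j_t|^2dt-T|R^j_0|^2-2\int_0^T\!\int_0^tR^j_s\,dR^j_s\,dt$ to convert the hypothesis $\int_0^T|R^j|^2\le\zeta$ into smallness of $\qd{M^j}_T$ (of order $\zeta^{1/8}$ after a maximal-function step), then applies the exponential martingale inequality on events whose thresholds are \emph{powers of $\zeta$}, so that the exceptional probabilities decay like $\exp(-c\zeta^{-1/16})$; finally the deterministic identity $R^j_0+B^j_T=\sqrt{c_0}v_j$ turns the resulting smallness of $|R^j_0+A^j_t+B^j_t|$ at $t=T$ into a contradiction with $|v_j|\ge d^{-1/2}$. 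The essential structural feature — every exceptional event has probability controlled by a power of $\zeta$, uniformly in $\eps$ — is what your Case B lacks. To salvage your route you would need to replace the fixed anchor at $t=T$ by an argument that extracts, from the hypothesis $\int_0^T|R|^2\le\zeta$ itself, control on the martingale fluctuations over the whole interval; that is in effect what the paper does.
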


\begin{proof}[Proof of Lemma~\ref{lem:near_zero_probability_est}]
We can rewrite
\begin{align}\label{eq:def_R^j_t}
    \begin{split}
        R^j_t&= R^j_0+M^j_t+A^j_t +B^j_t \\
        &= R^j_0 + \int_0^t u^j_l(s)dW^l_s+\int_0^t a^j(s)ds+\int_0^t b^j(s)ds, \quad j=1,2,\dots,d,
    \end{split}
\end{align}
where  $R_0$, $u(s)$, $a(s), b(s)$ are obtained as follows: we first apply It\^o's formula using the definition of $\ZZ(s)$ given in~\eqref{eq:def_YY_ZZ}, which determines $R_0, u(s)$ and $a(s)+b(s)$; then we write $\ZZ^i_j(s)=\delta^i_j+\hZZ^i_j(s)$ (see~\eqref{eq:def_bZZ_hZZ}) in one of the summations in $a(s)+b(s)$; finally, we collect the terms with $\delta^i_j$ to be $b(s)$ and all the rest to be $a(s)$. Thus
\begin{align}\label{eq:_def_R_0,u,a,b}
    \begin{split}
        R^j_0&=\sqrt{c_0}\Lambda^j_jv_j = 
        \begin{cases}
        \sqrt{c_0}v_j, &\quad  j\leq \nu,\\
        \sqrt{c_0}e^{\lj T}v_j, &\quad  j>\nu,
        \end{cases}\\
        u^j_l(s)&=-\sqrt{c_0}\sum_{i,k=1}^d\Lambda^i_iv_i e^{-\lj s}\bA_{j,l}^k(s)\ZZ^i_k(s),\\
        a^j(s)&=-\left(\sqrt{c_0}\sum_{i=1}^d \Lambda^i_iv_i \lj e^{-\lj s}\hZZ^i_j(s)\right)\\
        &\quad -\left(\sqrt{c_0}\sum_{i,k,m} \Lambda^i_iv_ie^{-\lj s}\left(\bB^k_j(s)-\sum_l\bA^k_{m,l}(s)\bA^m_{j,l}(s)\right)\ZZ^i_k(s)\right)\\
        b^j(s)&=-\sqrt{c_0}\Lambda^j_jv_j\lam^j e^{-\lj s}=
        \begin{cases}
        -\sqrt{c_0}v_j\lj e^{-\lj s}, \quad &  j\leq \nu,\\
        -\sqrt{c_0}v_j\lj e^{\lj (T- s)}, \quad &  j>\nu.
        \end{cases}
    \end{split}
\end{align}

We estimate
\begin{align*}
    \E\sup_{0\leq s\leq T}|u(s)|^p\lesssim_p \sum_{i,j,k,l}e^{p|\lj|T}\E \left|\sup_{0\leq s\leq T}\bA^k_{j,l}(s)\bZZ^i_k(T)\right|^p\lesssim_p \eps^{-\frac{p}{8}}\eps^\frac{3p}{4}\leq \eps^\frac{p}{2}, \quad\eps \in (0,1),
\end{align*}
where the first inequality follows from the expression of $u(s)$ in~\eqref{eq:_def_R_0,u,a,b}, and the second inequality is due to~\eqref{eq:choice_of_theta},~\eqref{eq:A,bA,bB_est}, and~\eqref{eq:bZZ_est}. Similarly, first use the definition of $a(s)$ in~\eqref{eq:_def_R_0,u,a,b} and then estimate terms according to~\eqref{eq:A,bA,bB_est},~\eqref{eq:hZZ_est}, and~\eqref{eq:choice_of_theta} to see
\begin{align*}
\begin{split}
    \E\left(\int_0^T|a(s)|^2ds\right)^p&\lesssim_p \sum_{i,j}\E\left(\left|\sup_{0\leq s\leq T}|\hZZ^i_j(s)|\right|^2\int_0^Te^{2|\lj|s}ds\right)^p
    \\
    &+\sum_{i,j,k,l}\E\left(\left(\eps^\frac{7}{4}+\eps^\frac{3}{2}\right)^{2p}\left|\bZZ^i_j(T)\right|^{2p}\left(\int_0^Te^{2|\lj|s}ds\right)^p\right)
    \\
    &\lesssim_p \eps^{3p}\eps^{-\frac{2p}{8}}\le \eps^\frac{p}{2},\quad \eps\in(0,1).
\end{split}
\end{align*}

The above two estimates and Markov's inequality imply that for some $C_p>0$ independent of $v\in \Sph^{d-1}$,
\begin{align}\label{eq:tail_est_sup_theta_s}
     \Prob{\sup_{0\leq s\leq T}\left(|u(s)|+\int_0^s|a(r)|^2dr\right) >\eps^\frac{1}{2} \zeta^{-\frac{1}{16}}}\leq C_p\zeta^{\frac{1}{16}p},\quad \zeta>0,\  \eps\in(0,1).
\end{align}

Let $j$ be the index that satisfies $|v_j|=\max_{1\leq j\leq  d}|v_i|$. Since $v\in\Sph^{d-1}$, we have
\begin{align}\label{eq:lower_bound_|v_j|}
    |v_j|\geq d^{-\frac{1}{2}}.
\end{align}

In addition, let
\begin{align}\label{eq:def_blam_ulam}
    \blam = \max_{0\leq i\leq d}|\li|, \quad \ulam =\min_{0\leq i\leq d}|\li|.
\end{align}
Recalling the definition of $M_t$ in~\eqref{eq:def_R^j_t}, introducing one more auxiliary process
\begin{align*}N^j_t=\int_0^t R^j(s)u^j_l(s)dW^l_s, \quad j=1,2,\dots, d,
\end{align*}
we define, for each $\zeta>0$ and each $\eps \in(0,1)$,
\begin{align}\label{eq:def_of_B^zeta_eps}
\begin{split}
    B_{0}^{\zeta,\eps}&=\left\{\int_0^T\left|R_s\right|^2ds \leq \zeta,\ \sup_{0\leq s\leq T}\left(|u(s)|+\int_0^s|a(r)|^2dr\right)\leq \eps^\frac{1}{2}\zeta^{-\frac{1}{16}}\right\},
    \\
    B^{\zeta, \eps}_1&=\left\{\qd{M^j}_T \leq (c_1+1)\zeta^\frac{1}{8}, \sup_{0\leq t\leq T}\left|M^j_t\right|\geq \zeta^\frac{1}{32}\right\},
    \\
    B^{\zeta, \eps}_2&=\left\{\qd{N^j}_T \leq\eps \zeta^\frac{7}{8}, \sup_{0\leq t\leq T}\left|N^j_t\right|\geq \eps^\frac{1}{2}\zeta^\frac{3}{8}\right\},
\end{split}
\end{align}
where 
\begin{align}\label{eq:def_c_1}
    c_1=\sqrt{2c_0\ulam^{-1}}+5.
\end{align}
These sets depend on $\eps$ since $T=T(\eps)$, $R_s$, $u(s)$, $a(s)$, $M_s$, and $N_s$ do.
The exponential martingale inequality implies that, for some $C_p>0$ 
independent of $v\in \Sph^{d-1}$,
\begin{align*}
    \Prob{ B^{\zeta,\eps}_1\cup  B^{\zeta,\eps}_2} \leq 2 \exp\left(-\tfrac{\zeta^{-\frac{1}{16}}}{2(c_1+1)}\right)+ 2 \exp\left(-\tfrac{\zeta^{-\frac{1}{8}}}{2}\right)\leq C_p \zeta^{\frac{1}{16}p}, \quad \zeta>0,\ \eps\in(0,1).
\end{align*}

This and~\eqref{eq:tail_est_sup_theta_s} imply that to derive the desired result~\eqref{eq:desired_result_lemma_near_zero_probability_est} it remains
to show that there is $\zeta_0>0$ such that
\begin{align}\label{eq:key_set_inclusion}
    B^{\zeta,\eps}_{0}\subset   B^{\zeta,\eps}_1\cup  B^{\zeta,\eps}_2, \quad \zeta \in (0, \zeta_0),\ \eps\in(0,1).
\end{align}
Let us fix the following two constants
\begin{align}\label{eq:def_c_2_c_3}
    c_2=2+\sqrt{2}+\sqrt{c_0\blam}, \quad \quad c_3=\left(\frac{c_2}{\sqrt{c_0d^{-1}}}\right)^3,
\end{align}
and derive~\eqref{eq:key_set_inclusion} for $\zeta_0$ chosen small enough to ensure 
\begin{align}\label{eq:zeta_0_condition}
  \sqrt{2c_0\blam}\zeta_0^\frac{1}{2}< c_0d^{-1},\quad  \zeta_0^\frac{1}{3}<\frac{1}{2}, \quad c_2\zeta_0^\frac{1}{32}< \sqrt{c_0d^{-1}},\quad (c_2+\sqrt{c_3})\zeta_0^\frac{1}{64}< \sqrt{c_0d^{-1}} .
\end{align}
Suppose~\eqref{eq:key_set_inclusion} is false. Then we can choose $\zeta \in (0,\zeta_0)$, $\eps\in(0,1)$ and $\omega$ such that
\begin{align}\label{eq:omega_condition}
    \omega \in  B^{\zeta,\eps}_{0} \setminus \left( B^{\zeta,\eps}_1\cup  B^{\zeta,\eps}_2\right).
\end{align}
Since $\omega \in  B^{\zeta,\eps}_{0}$ due to~\eqref{eq:omega_condition}, we have
\begin{align*}
    \qd{N^j}_T \leq \int_0^T\left|R^j_s u^j(s)\right|^2 ds \leq \left(\sup_{0\leq s\leq T}|u(s)|^2\right)\int_0^T|R_s|^2ds \leq \left(\eps^\frac{1}{2}\zeta^{-\frac{1}{16}}\right)^2 \zeta=\eps\zeta^\frac{7}{8}.
\end{align*}
Since $\omega \not\in  B^{\zeta,\eps}_2$, this implies
\begin{align}\label{eq:int_R_s_u(s)ds_est}
    \sup_{0\leq t\leq T}\left|\int_0^tR^j_su^j_l(s)dW^l_s\right|=\sup_{0\leq t\leq T}\left|N^j_t\right|<\eps^\frac{1}{2}\zeta^\frac{3}{8}.
\end{align}
Since $\omega \in  B^{\zeta,\eps}_{0}$,  the Cauchy--Schwarz inequality implies
\begin{align}
    \sup_{0\leq t\leq T}\left|\int_0^tR^j_sa^j(s)ds\right|&\leq \left(\int_0^T\left|R^j_s\right|^2ds\right)^\frac{1}{2}\left(\int_0^T\left|a^j(s)\right|^2ds\right)^\frac{1}{2}
    \notag
    \\
    &\leq \zeta^{\frac{1}{2}}\left(\eps^\frac{1}{2}\zeta^{-\frac{1}{16}}\right)^\frac{1}{2}=\eps^\frac{1}{4}\zeta^\frac{15}{32}.
    \label{eq:int_R_s_a(s)ds_est}
\end{align}
We 
recall $b(s)$ defined in~\eqref{eq:_def_R_0,u,a,b}. 

The It\^o formula applied to~\eqref{eq:def_R^j_t} gives
\begin{align*}
    \left|R^j_t\right|^2 &=\left|R^j_0\right|^2 + 2\int_0^t R_s^j dR^j_s +\qd{M^j}_t\\
    &= \left|R^j_0\right|^2 + 2\left(\int_0^t R_s^j u^j_ldW^l_s + \int_0^tR^j_sa^j(s)ds+\int_0^tR^j_sb^j(s)ds\right)+\qd{M^j}_t.
\end{align*}
This together with~\eqref{eq:int_R_s_u(s)ds_est},~\eqref{eq:int_R_s_a(s)ds_est} and $\omega \in  B^{\zeta,\eps}_{0}$ due to~\eqref{eq:omega_condition} implies
\begin{align*}
    \int_0^T\qd{M^j}_tdt &= \int_0^T\left|R^j_s\right|^2 dt-T\left|R^j_0\right|^2  - 2\int_0^T \int_0^t R_s^j dR^j_s dt \\
    &\leq \zeta - T\left|R_0\right|^2 + 2\int_0^T\left|\int_0^tR^j_sb^j(s)ds\right|dt+2T\left(\eps^\frac{1}{2}\zeta^\frac{3}{8}+\eps^\frac{1}{4}\zeta^\frac{15}{32}\right).
\end{align*}
We treat cases $j\leq \nu$ and $j> \nu$ separately. 

If $j\leq \nu$, i.e.,  $\lj>0$,  we 
use the definition of $R^j_0$ in~\eqref{eq:_def_R_0,u,a,b} and~\eqref{eq:lower_bound_|v_j|} to bound $|R^j_0|$ from below, 
use $\omega\in B^{\zeta,\eps}_0$ 
to estimate 
the iterated integral term by
\begin{align*}\begin{split}
    \left|\int_0^t R^j_sb^j(s)ds\right|&\leq \left(\int_0^T\left|R^j_s\right|^2ds\right)^\frac{1}{2}\left(\int_0^t\left|b^j(s)\right|^2ds\right)^\frac{1}{2}\leq \zeta^\frac{1}{2}v_j\sqrt{c_0}\lj \left(\int_0^te^{-2\lj s}ds\right)^\frac{1}{2}\\
    &\leq\zeta^\frac{1}{2}\sqrt{c_0}\lj\frac{1}{\sqrt{2\lj}}= \sqrt{\frac{c_0|\lj|}{2}}\zeta^\frac{1}{2},\quad t\leq T,
\end{split}
\end{align*}
 and use the first condition in~\eqref{eq:zeta_0_condition} to deduce
\begin{align*}
    - T\left|R^j_0\right|^2 + 2\int_0^T\left|\int_0^tR^j_sb^j(s)ds\right|dt &\leq -T\frac{c_0}{d}+2T\sqrt{\frac{c_0\lj}{2}}\zeta^\frac{1}{2}
    \\
    &\leq T\left(\sqrt{2c_0\blam}\zeta^\frac{1}{2}-c_0d^{-1}\right)\leq 0.
\end{align*}
where $\blam$ was defined in~\eqref{eq:def_blam_ulam}. 
If $j>\nu$, i.e., $\lj<0$, we use
\begin{align*}\begin{split}
    \left|\int_0^t R^j_sb^j(s)ds\right|&\leq \zeta^\frac{1}{2}\sqrt{c_0}\left|\lj\right|e^{\lj T} \left(\int_0^te^{-2\lj s}ds\right)^\frac{1}{2}
    \\
    &\leq \zeta^\frac{1}{2}\sqrt{c_0}\left|\lj\right|e^{\lj T}\frac{e^{-\lj t}}{\sqrt{2\left|\lj\right|}}= \sqrt{\frac{c_0\left|\lj\right|}{2}}\zeta^\frac{1}{2}e^{\lj (T-t)},\quad t\leq T,
\end{split}
\end{align*}
to obtain
\begin{align*}
    - T\left|R^j_0\right|^2 + 2\int_0^T\left|\int_0^tR^j_sb^j(s)ds\right|dt &\leq 0+ 2\int_0^T\sqrt{\frac{c_0|\lj|}{2}}\zeta^\frac{1}{2}e^{\lj(T-t)}dt\\
    &\leq \sqrt{2c_0|\lj|}e^{\lj T}\frac{e^{-\lj T}-1}{|\lj|}\zeta^\frac{1}{2}\leq \sqrt{2c_0\ulam^{-1}}\zeta^\frac{1}{2},
\end{align*}
where $\ulam$ was defined in~\eqref{eq:def_blam_ulam}. Recall $c_1$ given in~\eqref{eq:def_c_1}. These estimates along with~\eqref{eq:choice_of_theta} show that, in both cases,
\begin{align*}
    \int_0^T\qd{M^j}_tdt \leq \zeta + \sqrt{2c_0\ulam^{-1}}\zeta^\frac{1}{2} + 2\left(\zeta^\frac{3}{8}+\zeta^\frac{3}{8}\right)\leq c_1\zeta^\frac{3}{8}.
\end{align*}
Since $t\mapsto \qd{M^j}_t$ is nondecreasing, we conclude that
\begin{align*}
    \gamma \qd{M^j}_{T-\gamma}\leq c_1\zeta^\frac{3}{8}, \quad 0< \gamma \leq T.
\end{align*}
Since $\omega \in  B^{\zeta,\eps}_{0}$ implies $\sup_{0\leq s\leq T}|u(s)|\leq \eps^\frac{1}{2} \zeta^{-\frac{1}{16}}\leq \zeta^{-\frac{1}{16}}$, using the definition of~$M_t$ in~\eqref{eq:def_R^j_t}, we get
\begin{align*}
    \qd{M^j}_T-\qd{M^j}_{T-\gamma}\leq \gamma \zeta^{-\frac{1}{8}}.
\end{align*}
The above two displays yield $\qd{M^j}_T\leq c_1\gamma^{-1}\zeta^\frac{3}{8} + \gamma \zeta^{-\frac{1}{8}}$. Recall that in the statement of Lemma~\ref{lem:density_est}, it is required that $T\geq 1$. The second condition in~\eqref{eq:zeta_0_condition} thus guarantees that $\zeta^\frac{1}{4} <\zeta_0^\frac{1}{4}<(\frac{1}{2})^\frac{3}{4}<1\leq T$.
Therefore, we can set $\gamma = \zeta^\frac{1}{4}$ and obtain
\begin{align*}
    \qd{M^j}_T \leq c_1\zeta^{-\frac{1}{4}+\frac{3}{8}}+\zeta^{\frac{1}{4}-\frac{1}{8}}\leq (c_1+1)\zeta^\frac{1}{8}.
\end{align*}
Since $\omega \not \in B^{\zeta,\eps}_1$ due to~\eqref{eq:omega_condition}, the definition of $B^{\zeta,\eps}_1$ in~\eqref{eq:def_of_B^zeta_eps} indicates that
\begin{align}\label{eq:sup_|M_t|_est}
    \sup_{0\leq t\leq T}\left|M_t^j\right| < \zeta^\frac{1}{32}.
\end{align}
On the other hand, Markov's inequality and $\omega \in B^{\zeta,\eps}_{0}$ imply that
\begin{align*}
    \Leb\left\{t\in[0,T]:\left|R^j_t\right|\geq \zeta^\frac{1}{3}\right\}\leq \frac{1}{\zeta^\frac{2}{3}}\int_0^T\left|R^j_t\right|^2dt \leq \zeta^\frac{1}{3}.
\end{align*}
Using~\eqref{eq:sup_|M_t|_est} and~\eqref{eq:def_R^j_t}, we thus have
\begin{align*}
    \Leb\left\{t\in[0,T]:\left|R^j_0+A^j_t+B^j_t\right|\geq \zeta^\frac{1}{3}+\zeta^\frac{1}{32}\right\}\leq \zeta^\frac{1}{3}.
\end{align*}
Note that $\zeta^\frac{1}{3}<\zeta_0^\frac{1}{3}\leq \frac{1}{2} \leq \frac{1}{2}T$ due to the second condition in~\eqref{eq:zeta_0_condition} and $T\geq 1$. Hence, for each $t\in[0,T]$, there is $t'\in[0,T]$ satisfying $|t-t'|\leq 2\zeta^\frac{1}{3}$ and $|R^j_0+A^j_{t'}+B^j_{t'}|<\zeta^\frac{1}{3}+\zeta^\frac{1}{32}$. Recall the definitions $A^j_t$ and $B^j_t$ in~\eqref{eq:def_R^j_t} and $b(s)$ in~\eqref{eq:_def_R_0,u,a,b}. Then, for each $t\in[0,T]$, we obtain, regardless of whether $j\leq \nu$ or $j>\nu$,
\begin{align}\label{eq:|R_0+A_t+B_t|_est}
\begin{split}
    &\left|R^j_0+A^j_t+B^j_t\right|\leq \left|R^j_0+A^j_{t'}+B^j_{t'}\right|+\left|\int_{t'}^ta^j(s)ds\right| +\left|\int_{t'}^tb^j(s)ds\right|
    \\
    &< \zeta^\frac{1}{3}+\zeta^\frac{1}{32} + \left|\int_{t'}^t|a(s)|^2ds\right|^\frac{1}{2}\left|t-t'\right|^\frac{1}{2}+\left|\int_{t'}^t|b^j(s)|^2ds\right|^\frac{1}{2}\left|t-t'\right|^\frac{1}{2}
    \\
    &\leq \zeta^\frac{1}{3}+\zeta^\frac{1}{32}+ \eps^\frac{1}{4}\zeta^{-\frac{1}{32}}\sqrt{2}\zeta^{\frac{1}{6}}+\sqrt{c_0}\left|\lj\right| \left(2\lj\right)^{-\frac{1}{2}}\sqrt{2}\zeta^\frac{1}{6}
    \\
    &\leq \left(2+\sqrt{2}+\sqrt{c_0\blam}\right)\zeta^\frac{1}{32}= c_2 \zeta^\frac{1}{32}, \quad t \leq T.
\end{split}
\end{align}
where $c_2$ was given in~\eqref{eq:def_c_2_c_3}. We used the assumption $\omega \in B^{\zeta,\eps}_0$ to bound the integral of $|a(s)|^2$
and the definition of $b^j$ to bound the integral of $|b^j(s)|^2$.

Setting $t=0$ in the above display we obtain
\begin{align}\label{eq:|R^j_0|_bound}
    \left|R^j_0\right|<c_2\zeta^\frac{1}{32}.
\end{align}

If $j\leq \nu$, then, using the expression for $R^j_0$ in~\eqref{eq:_def_R_0,u,a,b} and~\eqref{eq:lower_bound_|v_j|}, we obtain $|R^j_0|\geq \sqrt{c_0d^{-1}}$, which along with~\eqref{eq:|R^j_0|_bound} and the third condition in~\eqref{eq:zeta_0_condition} implies
\begin{align*}
    \sqrt{c_0d^{-1}}<  c_2\zeta^\frac{1}{32} <c_2\zeta_0^\frac{1}{32}\leq \sqrt{c_0d^{-1}},
\end{align*}
a contradiction.

If $j>\nu$,  then, due to~\eqref{eq:choice_of_theta}, we have $e^{\lj T}=e^{-|\lj|T}\geq \eps^\frac{1}{8}$. Due to the formula for $R^j_0$ in~\eqref{eq:_def_R_0,u,a,b},~\eqref{eq:lower_bound_|v_j|} and~\eqref{eq:|R^j_0|_bound}, we have
\begin{align}\label{eq:bound_for_R^j_0}
    \sqrt{c_0d^{-1}}\eps^\frac{1}{8}\leq \left|R^j_0\right|< c_2 \zeta^\frac{1}{32}.
\end{align}
Since~\eqref{eq:choice_of_theta} gives $T\leq \eps^{-\frac{1}{8}}$ and $\omega\in B^{\zeta,\eps}_0$,~\eqref{eq:bound_for_R^j_0} implies
\begin{align*}
    T\int_0^T\left|a^j(s)\right|^2ds\leq T\eps^\frac{1}{2}\zeta^{-\frac{1}{16}}\leq \eps^\frac{3}{8}\zeta^{-\frac{1}{16}}
    \leq \left(\frac{c_2\zeta^\frac{1}{32}}{\sqrt{c_0d^{-1}}}\right)^3\zeta^{-\frac{1}{16}} = c_3 \zeta^\frac{1}{32},
\end{align*}
where $c_3$ was given in~\eqref{eq:def_c_2_c_3}. Setting $t=T$ in~\eqref{eq:|R_0+A_t+B_t|_est} and recalling that $A^j_t$ is defined in~\eqref{eq:def_R^j_t}, we see that the above display implies:
\begin{align}\label{eq:contradiction_ingredient_1}
    \left|R^j_0+B^j_T\right|\leq c_2\zeta^\frac{1}{32}+\left|A^j_T\right|\leq c_2\zeta^\frac{1}{32}+T^\frac{1}{2}\left|\int_0^T\left|a^j(s)\right|^2ds\right|^\frac{1}{2}\leq \left(c_2+\sqrt{c_3}\right)\zeta^\frac{1}{64}.
\end{align}
On the other hand, expressions for $R^j_0$, $B^j_t$ in~\eqref{eq:def_R^j_t},~\eqref{eq:_def_R_0,u,a,b} show that
\begin{align}\label{eq:contradiction_ingredient_2}
    R^j_0+B^j_T=\sqrt{c_0}e^{\lj T}v_j - \int_0^T\sqrt{c_0}v_j\lj e^{\lj (T-s)}ds = \sqrt{c_0}v_j.
\end{align}
Lastly, we have
\begin{align*}
    \sqrt{c_0d^{-1}}\leq \sqrt{c_0}\left|v_j\right|\leq  \left(c_2+\sqrt{c_3}\right)\zeta^\frac{1}{64}<\left(c_2+\sqrt{c_3}\right)\zeta_0^\frac{1}{64}<\sqrt{c_0d^{-1}},
\end{align*}
where the first inequality follows from~\eqref{eq:lower_bound_|v_j|}, the second one from~\eqref{eq:contradiction_ingredient_1} and~\eqref{eq:contradiction_ingredient_2}, the last one from the fourth condition in~\eqref{eq:zeta_0_condition}. But, the above display is absurd.

By contradiction, $\eqref{eq:key_set_inclusion}$ holds for $\zeta_0$ satisfying~\eqref{eq:zeta_0_condition}. This completes the proof of
\eqref{eq:key_set_inclusion} and thus 
Lemma~\ref{lem:near_zero_probability_est}.
\end{proof}

In conclusion, we have shown that for each $p\geq 1$ there is $C_p>0$ such that
\begin{align*}
    \E\left|\det\sigma_{(U^{\leq \nu}_T, N^{>\nu}_T)}\right|^{-p}\leq C_p, \quad \eps\in(0,1).
\end{align*}

\subsection{Proof of Lemma~\ref{lem:density_est}}
Using the exponential martingale inequality, the boundedness of $V^i_t$ for $i\leq \nu$ and that of $e^{\li t}V^t_t$ for $i>\nu$, one can see that there are constants $C,\ c>0$ independent of $y$, $\eps$, $\theta$, and any particular choice of $T=T(\eps)$ such that, uniformly in $y\in\R^d$,
\begin{align*}
    \Prob{\left|(U^{\leq \nu}_T,N^{>\nu}_T)-x\right|<2},\ \Prob{\left|Z_T-x\right|<2},\ \Prob{\left|\bZ_T-x\right|<2}&\leq Ce^{-c|x|^2}, \quad x\in\R^d.
\end{align*}
This display, along with~\eqref{eq:derivative_est_conclusion_result_U_N}, ~\eqref{eq:derivative_est_conclusion_result_Z},  ~\eqref{eq:goal_of_malliavin_matrix_est} and Theorem \ref{Thm:DensityDifference} implies parts~\eqref{item:den_est_3} 
and~\eqref{item:den_est_4} of Lemma~\ref{lem:density_est}. Parts~\eqref{item:den_est_1} and ~\eqref{item:den_est_2} follow then straightforwardly. \epf

\bibliographystyle{alpha}
\newcommand{\etalchar}[1]{$^{#1}$}

\end{document}